 \newtheorem{thm}{Theorem}[section]
 \newtheorem{prop}[thm]{Proposition}
 \newtheorem{lem}[thm]{Lemma}
 \newtheorem{cor}[thm]{Corollary}
\theoremstyle{definition}
 \newtheorem{exm}[thm]{Example}
 \newtheorem{dfn}[thm]{Definition}
\theoremstyle{remark}
 \newtheorem{rem}[thm]{Remark}
 \numberwithin{equation}{section}
\theoremstyle{definition}
\theoremstyle{remark}
 \numberwithin{equation}{section}
\renewcommand{\le}{\leqslant}\renewcommand{\leq}{\leqslant}
\renewcommand{\ge}{\geqslant}\renewcommand{\geq}{\geqslant}
\renewcommand{\setminus}{\smallsetminus}
\theoremstyle{definition}
\newcommand{\bbC}{\mathbb{C}}
\newcommand{\bbF}{\mathbb{F}}
\newcommand{\bbN}{\mathbb{N}}
\newcommand{\bbQ}{\mathbb{Q}}
\newcommand{\bbR}{\mathbb{R}}
\newcommand{\bbZ}{\mathbb{Z}}   
\newcommand{\cN}{\mathcal{N}}
\renewcommand{\and}{\quad \mbox{and} \quad}  
\renewcommand{\le}{\leqslant}\renewcommand{\leq}{\leqslant}
\renewcommand{\ge}{\geqslant}\renewcommand{\geq}{\geqslant}
\renewcommand{\setminus}{\smallsetminus}
\begin{document}

\pagenumbering{roman}

\thispagestyle{empty}

\begin{center}

\large{\textbf{LOCAL CONSTANTS FOR GALOIS REPRESENTATIONS - SOME EXPLICIT RESULTS}}\\

\vspace{.6in}
A thesis submitted during 2015 to the University of Hyderabad in partial fulfillment of the award of a 
\textbf{Doctor of Philosophy degree} in {\bf Mathematics}\\
by\\
\textbf{SAZZAD ALI BISWAS}\\
\vspace{.5in}

\vspace{.6in}
School of Mathematics and Statistics\\
University of Hyderabad\\
Hyderabad - 500046, India\\
August $2015$

\end{center}







\newpage

\thispagestyle{empty}
\begin{center}
 \large{\textbf{DECLARATION}}
\end{center}

I, \textbf{SAZZAD ALI BISWAS} hereby declare that this thesis entitled
``\textbf{LOCAL CONSTANTS FOR GALOIS REPRESENTATIONS - SOME EXPLICIT RESULTS}'' submitted by me under the guidance and 
supervision of Professor Rajat Tandon is a bona fide research work which is also free from plagiarism. 
I also declare that it has not been 
submitted previously in part or in full to this University or any other University or Institution for the award of any degree or 
diploma. I hereby agree that my thesis can be deposited in {\bf Shodganga-INFLIBNET}.\\
{\bf A report on plagiarism statistics from the University Librarian is enclosed}\\

\vspace{1in}

Candidate\\

\hspace{13cm}Date:\\
 \textbf{SAZZAD ALI BISWAS}\\
 Regd. No: 08MMPP04\\
 \begin{center}
  \hspace{3cm}//\Large{Countersigned}//
 \end{center}
 \vspace{1cm}
 Signature of the Supervisor:

\newpage

\thispagestyle{empty}
\begin{center}
 \large{\textbf{CERTIFICATE}}
\end{center}
\vspace{.5in}
School of Mathematics and Statistics\\
University of Hyderabad\\
Hyderabad - 500046, India

\vspace{.3in}
This is to certify that the thesis entitled ``\textbf{LOCAL CONSTANTS FOR GALOIS REPRESENTATIONS - SOME EXPLICIT RESULTS}''
submitted by Mr. {\bf SAZZAD ALI BISWAS} bearing Regd. No: {\bf 08MMPP04} in partial fulfillment of the requirements for the award of 
Doctor of Philosophy in {\bf MATHEMATICS} is a bona fide work carried out by him under my supervision and guidance which 
is a plagiarism free thesis.\\

{\bf The thesis has not been submitted previously in part or in full to this or any other University or Institution for 
the award of any degree or diploma.}\\
\vspace{.5in}

\begin{center}
\begin{minipage}{4.5in}
Supervisor \hfill$\underset{\hbox{(Prof. Rajat Tandon)}}{\rule{3in}{1pt}}$\\[15pt]
\bigskip
\vfill
Dean of the School\hfill$\underset{\hbox{(Prof. B. Sri Padmavati)}}{\rule{3in}{1pt}}$\\[15pt]

\end{minipage}\\

\end{center}
 
\newpage

\chapter*{Dedication}
\vspace*{\fill}
\begin{center}
 \Large{To my parents\\
  Chhakina Bibi, Mainuddin Biswas}
\end{center}
\vspace*{\fill}

\newpage

\chapter*{Acknowledgments}
\addcontentsline{toc}{chapter}{Acknowledgments}

I would like to thank my advisor Prof. Rajat Tandon for teaching me so many things and for his patience, unending encouragements
 and helps. I extend my gratitude to him for allowing me to visit Berlin during my Ph.D.
 
 I would also like to thank Prof. E.-W. Zink, Humboldt University, Berlin, for suggesting the problems in the thesis 
 and his constant 
 guidance, valuable suggestions and comments. Without his helps, this thesis work would not have been possible. 
 Many ways I am indebted to him. I also express my sincere 
 gratitude to Prof. Elmar Grosse-Kl\"{o}nne, Humboldt University, for providing me a very good mathematical environment during my stay 
 in Berlin.
 
 In the last year of my Ph.D program, I had very useful discussion with Prof. Dipendra Prasad, TIFR, India. He is always very kind to 
 me. I express my deep gratitude to him for his constant support. I also express my gratitude to Prof. Robert Boltje, Department 
 of Mathematics, University of California, Santa Cruz, USA for sending me hard copy of his Ph.D thesis and paper which were 
 very much useful for my works.
 
 I would also like to thank Prof. T. Amaranath (former dean of School of Mathematics and Statistics) for his all kind of
 help and encouragement. Many thanks to my doctoral committee members- Prof. V. Kannan and Prof. V. Suresh (currently 
 Professor at Emory University, USA) for their advices, discussions and valuable comments.
 
 I express my very deep gratitude to Prof. Kumaresan  and all the members of School of Mathematics and Statistics of University of 
 Hyderabad.

 I am grateful to CSIR, Delhi and IMU-Berlin Einstein Foundation, Berlin for 
 their financial support during my Ph.D.

 I would also like to thank all my colleagues, specially to Deborjoti for his help and very good friendship which made my life 
 enjoyable in Hyderabad. I also thank to Niraj, Srikant, Sampath, Rakhee, Ambika, Chiru, Sharan and Bharbhava for their love and 
 beautiful company and making my stay very easy in Hyderabad.
 
I am grateful to all my teachers during my school days. At school, specially I thank my teachers Mr. Nurul Islam 
(General Secretary, Al-Ameen Mission, Howrah), Dr. Musarraf Hossain (Chemistry), Dr. Mrinal Kanti Dwari (Physics),
Mr. Sk Hasibul Alam (Maths) and Mr. Amir 
Hossain Mandal (Maths) for their love, encouragement and very good teaching. At my undergraduate and post graduate level 
(Jadavpur University, Calcutta, India),
I would like to thank Dr. Sujit Kumar Sardar, Dr. Kallol Paul, Dr. Shamik Ghosh and Dr. Indranath Sengupta for their beautiful
teaching.

I would also like to thank all my friends, Sudip, Prabir, Somnath, Angshuman, Atanu, Tapas, Subhodip, Tilak, Prabin, 
Khan, Nasarul, Hira, Biswa, Ekta, for their
very good company throughout my life.

Finally, I want to thank my family, specially my elder brother, Manajat Ali Biswas, for their constant unconditional support, 
encouragement and freedom to achieve any goal to which I desire.

 \newpage

 \chapter*{Abstract}
\addcontentsline{toc}{chapter}{Abstract}

We can associate local constant to every  continuous finite dimensional
complex representation of the absolute Galois group $G_F$ of a non-archimedean local field 
$F/\bbQ_p$ by Deligne and Langlands. For linear characters of $F^\times$, 
Tate \cite{JT1} gives an explicit formula for the abelian local constants, 
but in general, there is no such explicit formula of local constant for any arbitrary representation of a local Galois group. 
To give explicit formula of local constant of a representation, we need to compute $\lambda$-functions explicitly. 
In this thesis we compute $\lambda_{K/F}$ explicitly, where $K/F$ is a finite degree Galois extension of 
a non-archimedean local field $F$, except when $K/F$ is a wildly ramified quadratic extension with $F\ne\bbQ_2$.

Then by using this $\lambda$-function computation, in general, we give an invariant formula of local constant of finite dimensional 
Heisenberg representations 
of the absolute Galois group $G_F$ of a non-archimedean local field $F$. But for explicit invariant formula of local constant
for a Heisenberg representation, we should have information about the dimension of a Heisenberg representation and the arithmetic
description of the determinant of a Heisenberg representation. In this thesis,
we give explicit arithmetic description of the determinant of 
Heisenberg representation.

We also construct all Heisenberg representations of dimensions prime to $p$, and study their various properties.
By using $\lambda$-function computation and arithmetic description of the determinant of Heisenberg representations, we give an 
invariant formula of local constant for a Heisenberg representation of dimension prime to $p$.\\

\textbf{Keywords:} Local Fields, Extendible functions, Local constants, Lambda functions, Classical Gauss sums, 
Heisenberg representations, Transfer map, Determinant, Artin conductors, Swan conductors.

\newpage

\chapter*{Notation}

\addcontentsline{toc}{chapter}{Notation}

By $\mathbb{N}$, $ \mathbb{Z}$, $\mathbb{Q}$, $\mathbb{R}$ and $\mathbb{C}$ we denote the natural numbers,
integers, rational numbers, real numbers and complex numbers respectively.

For two integers $m, n$, we denote $gcd(m,n)$ and $\rm{lcm}(m,n)$ as the greatest common divisor
and least common multiple of $m,n$ respectively.

For two sets $A$ and $B$, $A\subset B$ means $A$ is contained in $B$. The number of elements in a finite set $A$ will be 
denoted by $|A|$. 

For any homomorphism (group or ring) $\varphi$, we denote $\mathrm{Ker}(\varphi)=$ kernel of $\varphi$ and $\mathrm{Im}(\varphi)=$
image of the homomorphism $\varphi$.

Representation of a group means complex representation, otherwise it will be stated.
For a group $G$ we denote by $\widehat{G}$ the group of linear characters of $G$, by
$\mathrm{Irr}(G)$ the set of irreducible
representations of $G$, by $\rm{PI}(G)$ the set of all projective irreducible representation of $G$,
by $Z(G)$ the center of $G$ and by $[G,G]$ the commutator subgroup of $G$. 

For a group $G$, $H\leq G$, $H<G$, $H\vartriangleleft G$ denotes that $H$ is a subgroup, a proper subgroup,
 a normal subgroup of $G$ and for index of $H$ in $G$, we write $[G:H]$.
 
 For a representation $\rho$ of $G$, if $H\leq G$ we denote by $\rho|_H$ the restriction of $\rho$ to $H$. 
 For $G$-set we denote by $M^{G}$ the set of $G$-invariant
 elements of $M$. For representation $\rho$ of a group, we define $\mathrm{dim}\,\rho=$ dimension of $\rho$. 
 
 Throughout this thesis, $F$ will denote a non-archimedean local field, that is a finite extension of $\bbQ_p$ for some 
 prime $p$. We write $O_F$ for the ring of integers in $F$, $P_F$ for the prime ideal in $O_F$ and $\nu_F$ for the 
 valuation of $F$. The residue field $O_F/P_F$ is denoted by $k_{q_F}$, and its number of elements is denoted by 
 $q_F$. We write $U_F$ for $O_{F}^{\times}$, i.e., the group of units of $F$ and $\overline{F}$ for an algebraic closure of $F$. 
 
 The degree of finite fields extension
$K/F$ is denoted by $[K:F]$; if the extension is Galois, we write $\mathrm{Gal}(K/F)$ for the Galois group of $K/F$.
We denote $\det(\rho)$ for the determinant of the representation $\rho$.
We also denote $\Delta_{K/F}$ for the determinant of the representation $\rm{Ind}_{K/F}(1)$.
$N_{K/F}$ denotes the norm map from $K^\times$ to $F^\times$ and $\rm{Tr}_{K/F}$ for the trace map from $K$ to $F$. 

We denote $\mu_{p^\infty}$ as the group of roots of unity of $p$-power order, and $T_{G/H}$ as the transfer map from 
$G\to H/[H,H]$.

\newpage

\tableofcontents


\newpage

\pagenumbering{arabic}  

\chapter{\textbf{Introduction}}

Let $F$ be a non-archimedean local field 
(i.e., finite extension of the $p$-adic field $\mathbb{Q}_p$, for some prime $p$). Let $\overline{F}$ be an algebraic closure
of $F$, and $G_F:=\rm{Gal}(\overline{F}/F)$ be the absolute Galois group of $F$. Let
\begin{center}
 $\rho:G_F\to \mathrm{Aut}_{\mathbb{C}}(V)$ 
\end{center}
be a finite dimensional continuous complex representation of the Galois group $G_F$. For this
$\rho$, we can associate a constant 
$W(\rho)$ with absolute value $1$ by Langlands (cf. \cite{RL}) and Deligne (cf. \cite{D1}).
This constant is called the \textbf{local constant} (also known as local epsilon factor) of the representation $\rho$. 
Langlands also proves that 
these local constants are weakly extendible functions (cf. \cite{JT1}, p. 105, Theorem 1).

The existence of this local constant is proved by Tate for one-dimensional representation in  
\cite{JT3} and 
the general proof of the existence of the local constants is proved by Langlands (see \cite{RL}). 
In 1972 Deligne also gave a proof  using global methods in \cite{D1}.
But in Deligne's terminology this local constant $W(\rho)$ is 
 $\epsilon_{D}(\rho,\psi_F,\mathrm{dx},1/2)$, where $\mathrm{dx}$ is the Haar
 measure on $F^{+}$ (locally compact abelian group) which is self-dual with respect to the canonical additive character $\psi_F$ of $F$. 
Tate in his article \cite{JT2} denotes this Langlands convention of local constants as $\epsilon_{L}(\rho,\psi)$. 
According to Tate (cf. \cite{JT2}, p. 17),
 the Langlands factor $\epsilon_{L}(\rho,\psi)$ is 
 $\epsilon_{L}(\rho,\psi)=\epsilon_{D}(\rho\omega_{\frac{1}{2}},\psi,\mathrm{dx_{\psi}})$, where $\omega$ denotes the normalized 
 absolute value of $F$, i.e., $\omega_{\frac{1}{2}}(x)=|x|_{F}^{\frac{1}{2}}=q_{F}^{-\frac{1}{2}\nu_{F}(x)}$ 
 (in Section 2.3 we discuss them in more details)
 which we may consider as a character of $F^\times$, and where
 $\mathrm{dx_{\psi}}$ is the self-dual Haar measure corresponding to the additive character $\psi$ and 
 $q_F$ is the cardinality of the residue field of $F$. According to Tate (cf. \cite{JT1}, p. 105) 
 the relation among three conventions of the local constants is:
 \begin{equation}
  W(\rho)=\epsilon_{L}(\rho,\psi_F)=\epsilon_{D}(\rho\omega_{\frac{1}{2}},\psi_F,\mathrm{dx_{\psi_F}}).
 \end{equation}

In Chapter 2 we recall almost all necessary ingredients for this thesis, and they 
will be used in our next chapters. First we mention the basic properties of local fields and their extensions.
We study the Lemma \ref{Lemma 2.21} of J-P. Serre and it will be used in Chapter 3. This result has a great role for the 
computation of $\lambda$-functions.
The central keywords  of this thesis are the local constants and Heisenberg representations. Therefore in Chapter 2 we spend little
more time on them.
All properties of local constants are well-studied by various people:
Langlands \cite{RL}, Tate \cite{JT1}, Deligne \cite{D1}, Bushnell-Henniart \cite{BH}. We show the connection
between all conventions of local constants. We also know that there is a connection between the classical Gauss sums and the 
local constants of representations of a local Galois group. When the conductor of a multiplicative character of a local field is one,
then by proper choice of additive character we see that the computation 
of local constants comes down to the computation of the classical Gauss sums. Therefore for our lambda-functions computation we 
need the Theorem 
\ref{Theorem 3.5} (regarding classical Gauss sum of quadratic character).
In Section 2.6 we discuss the group theoretical structure of Heisenberg representations, and for this we follow the articles 
\cite{Z2}, \cite{Z3}, \cite{Z5}.

In Chapter 3 we compute the lambda-functions for finite Galois extensions explicitly, except wildly ramified quadratic extensions.  
Let $K/F$ be a finite Galois extension of a non-archimedean local field $F/\bbQ_p$ and $G=\rm{Gal}(K/F)$. 
The lambda function 
for the extension $K/F$ is $\lambda_{K/F}(\psi_F):=W(\mathrm{Ind}_{K/F} 1,\psi_F)$, where $W$ denotes the 
local constant  and $\psi_F$ is the canonical additive character of $F$. We also can define $\lambda$-function
via Deligne's constant $c(\rho):=\frac{W(\rho)}{W(\det(\rho))}$, where $\rho$ is a representation of $G$ and 
\begin{center}
 $\lambda_{1}^{G}=W(\rho)=c(\rho)\cdot W(\det(\rho))=c(\rho)\cdot W(\Delta_{1}^{G})$,
\end{center}
where $\rho=\rm{Ind}_{1}^{G}(1)$ and $\Delta_{1}^{G}:=\det(\rm{Ind}_{1}^{G}(1))$.

Firstly, in Section 3.2 we compute the $\lambda$-function for odd degree Galois extension by using some functoriality 
properties of $\lambda$-functions and the following lemmas (cf. Lemma \ref{Lemma 4.1}, Lemma \ref{Lemma 4.2});
\begin{lem}\label{Lemma 4.1}
Let $L/F$ be a finite Galois extension and $F\subset K\subset L$ with $H=\rm{Gal}(L/K)$ and  $G=\rm{Gal}(L/F)$.
If $H\leq G$ is a normal subgroup and if $[G:H]$ is odd, then $\Delta_{K/F}\equiv 1$ and 
$\lambda_{K/F}^{2}=1.$
\end{lem}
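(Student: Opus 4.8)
The plan is to establish the determinant statement first and then to deduce the root-number statement from it. Write $n=[G:H]$, which equals $[K:F]$ and is odd by hypothesis. The first observation I would record is that, because $H\trianglelefteq G$, the $G$-action on the cosets $G/H$ factors through $\overline{G}:=G/H=\mathrm{Gal}(K/F)$ acting on itself by left translation; consequently $\mathrm{Ind}_{K/F}(1)=\mathrm{Ind}_{H}^{G}(1)$ is the inflation to $G$ of the regular representation $R$ of $\overline{G}$, and $\Delta_{K/F}=\det(R)$ is a character of $\overline{G}$. It is exactly here that normality is used: for non-normal $H$ the permutation module need not be regular and the determinant can be a nontrivial quadratic character.

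For the determinant itself, I would evaluate $\det(R)$ pointwise. For $g\in\overline{G}$, left translation by $g$ permutes the standard basis $\{e_{x}\}_{x\in\overline{G}}$ of the group algebra, and $\det(R)(g)$ is the sign of this permutation. Left translation by $g$ is a product of $n/d$ disjoint cycles each of length $d:=\mathrm{ord}(g)$, so its sign is $(-1)^{(d-1)n/d}$. Since $n$ is odd, every $d\mid n$ is odd, hence $d-1$ is even and the sign is $1$. Thus $\det(R)(g)=1$ for all $g$, i.e. $\Delta_{K/F}\equiv 1$; here oddness of the index is what does the work.

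To pass to $\lambda_{K/F}$, set $\rho=\mathrm{Ind}_{K/F}(1)$, so $\lambda_{K/F}=W(\rho,\psi_{F})$. Since $\rho$ is a rational permutation representation it is self-dual, $\rho^{\vee}\cong\rho$, and in particular $\overline{\rho}\cong\rho$. I would then combine three standard properties of the local constant (all available from Chapter 2): $|W(\rho,\psi_{F})|=1$; the conjugation law $\overline{W(\rho,\psi_{F})}=W(\overline{\rho},\overline{\psi_{F}})$; and the additive-character scaling $W(\rho,\psi_{a})=\det(\rho)(a)\,W(\rho,\psi_{F})$, where $\psi_{a}(x)=\psi_{F}(ax)$. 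Applying these with $a=-1$ (so that $\overline{\psi_{F}}=\psi_{-1}$, and noting $|-1|_{F}=1$ so that no absolute-value factor intervenes) gives $\overline{W(\rho,\psi_{F})}=\det(\rho)(-1)\,W(\rho,\psi_{F})=\Delta_{K/F}(-1)\,W(\rho,\psi_{F})$. Since $|W|=1$ forces $\overline{W}=W^{-1}$, this yields $\lambda_{K/F}^{2}=\Delta_{K/F}(-1)$, and by the first part $\Delta_{K/F}\equiv 1$, so $\lambda_{K/F}^{2}=1$.

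The genuine content, and the step I expect to be most delicate, is matching conventions in the last paragraph: one must invoke the conjugation and scaling properties of the local constant in precisely the normalization used here, so that the twisting factor comes out exactly as $\Delta_{K/F}(-1)$. Once that is in place the argument is formal, and the two hypotheses play complementary and indispensable roles — normality to identify the induced representation with the regular representation, and oddness of $[G:H]$ to annihilate the sign character.
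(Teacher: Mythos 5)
Your proposal is correct and follows essentially the same route as the paper: both use normality to identify $\mathrm{Ind}_{H}^{G}1_H$ with the regular representation of $G/H$, conclude $\Delta_{K/F}\equiv 1$ from the oddness of $|G/H|$ (the paper by noting the determinant of the regular representation is the quadratic character of $G/H$, you by an explicit sign-of-permutation computation), and then deduce $\lambda_{K/F}^{2}=\Delta_{K/F}(-1)=1$ from the functional equation for the local constant applied to the self-dual representation $\mathrm{Ind}_{K/F}(1)$. Your last paragraph merely re-derives that functional equation (the paper's equation for $W(\rho)W(\rho^{V})=\det_{\rho}(-1)$) from the conjugation and scaling properties, which the paper instead quotes directly.
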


\begin{lem}\label{Lemma 4.2}
The $G$ and $H$ are as in the previous lemma.
\begin{enumerate}
 \item If $H\leq G$ is a normal subgroup of odd index $[G:H]$, then $\lambda_{H}^{G}=1$.
 \item If there exists a normal subgroup $N$ of $G$ such that $N\leq H\leq G$ and $[G:N]$ odd, then $\lambda_{H}^{G}=1$.
\end{enumerate}
\end{lem}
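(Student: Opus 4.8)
The plan is to treat part (2) first and recover part (1) as the special case $N=H$: when $H\vartriangleleft G$ one may take $N=H$, which is then normal of odd index $[G:N]=[G:H]$. So suppose $N\vartriangleleft G$ with $N\leq H\leq G$ and $[G:N]$ odd, and write $\overline{G}=G/N$, $\overline{H}=H/N$, $E=L^N$. First I would observe that $N$ acts trivially on the coset space $G/H$: for $n\in N$ and $g\in G$ one has $g^{-1}ng\in N\leq H$ since $N$ is normal, so $ngH=gH$. Hence the permutation representation $\mathrm{Ind}_{H}^{G}1$ is inflated from the permutation representation $\mathrm{Ind}_{\overline{H}}^{\overline{G}}1$ of the \emph{odd order} group $\overline{G}$, and therefore $\lambda_{H}^{G}=W(\mathrm{Ind}_{\overline{H}}^{\overline{G}}1)$, computed over $F$ through the quotient $G_F\twoheadrightarrow\mathrm{Gal}(E/F)=\overline{G}$. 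This collapses both parts into one statement about permutation representations of odd order groups.

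Next I would decompose $\mathrm{Ind}_{\overline{H}}^{\overline{G}}1=\bigoplus_{\chi}m_\chi\,\chi$ over $\chi\in\mathrm{Irr}(\overline{G})$, where $m_\chi=\langle 1,\chi|_{\overline{H}}\rangle$ by Frobenius reciprocity, and use additivity of the local constant on direct sums to get
\begin{equation*}
\lambda_{H}^{G}=\prod_{\chi\in\mathrm{Irr}(\overline{G})}W(\chi)^{m_\chi}.
\end{equation*}
The two analytic inputs are the normalized functional equation $W(\chi)\,W(\overline{\chi})=\det(\chi)(-1)$ (which follows from $|W(\chi)|=1$, from the conjugation relation $\overline{W(\rho,\psi)}=W(\check{\rho},\overline{\psi})$, and from the change-of-character rule $W(\rho,\psi_{-1})=\det(\rho)(-1)\,W(\rho,\psi)$, legitimate here since $-1$ is a unit), together with the symmetry $m_\chi=m_{\overline{\chi}}$, which holds because $\mathrm{Ind}_{\overline{H}}^{\overline{G}}1$ is a real (indeed rational) representation.

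Finally I would feed in two facts about the odd order group $\overline{G}$. Since $\det(\chi)$ is a character of $\overline{G}^{\mathrm{ab}}$ it has odd order, so $\det(\chi)(-1)$ is at once a root of unity of odd order and a square root of $1$, forcing $\det(\chi)(-1)=1$; hence $W(\chi)W(\overline{\chi})=1$ for every $\chi$. Moreover, in a group of odd order the only self-inverse conjugacy class is $\{1\}$ (a nontrivial self-inverse class would carry a fixed-point-free involution $x\mapsto x^{-1}$ and thus have even size, contradicting that its size divides $|\overline{G}|$), so the trivial character is the only self-dual irreducible. Grouping the product over complex-conjugation orbits on $\mathrm{Irr}(\overline{G})$, each genuine pair $\{\chi,\overline{\chi}\}$ contributes $(W(\chi)W(\overline{\chi}))^{m_\chi}=1$ and the lone self-dual orbit $\{1\}$ contributes $W(1)^{m_1}=1$, whence $\lambda_{H}^{G}=1$.

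I expect the main obstacle to be pinning down the exact normalized functional equation $W(\chi)W(\overline{\chi})=\det(\chi)(-1)$ in the correct convention, rather than an unnormalized version carrying conductor or $\|\cdot\|$ factors, and then verifying $\det(\chi)(-1)=1$; this is precisely where the oddness of $[G:N]$ is used, and it is consistent with Lemma \ref{Lemma 4.1}, which already gives $\lambda_{K/F}^{2}=1$ in the normal case. The second genuinely oddness-dependent point is the nonexistence of nontrivial self-dual irreducibles of $\overline{G}$, since a self-dual $\chi\neq 1$ with $W(\chi)=-1$ and odd multiplicity $m_\chi$ would otherwise spoil the sign and leave us only with $\lambda_{H}^{G}=\pm1$.
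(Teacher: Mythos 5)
Your proof is correct, but it takes a genuinely different route from the paper's. The paper proves (1) by writing $\lambda_{H}^{G}=c(\mathrm{Ind}_{H}^{G}1_H)\cdot W(\Delta_{H}^{G})$, killing the determinant term via Lemma \ref{Lemma 4.1} and killing the Deligne constant via Deligne's theorem identifying $c(\rho)$ with the image of the second Stiefel--Whitney class $s_2(\rho)\in H^{2}(G/H,\bbZ/2\bbZ)=\{1\}$ for $|G/H|$ odd; it then deduces (2) from (1) by the chain rule $\lambda_{N}^{G}=\lambda_{N}^{H}\cdot(\lambda_{H}^{G})^{[H:N]}$ together with the fact that $\lambda_{H}^{G}$ is a fourth root of unity and $[H:N]$ is odd. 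You instead collapse both parts at once: the inflation step $\lambda_{H}^{G}=\lambda_{H/N}^{G/N}$ (which is exactly property (3) of Lemma \ref{Lemma 4.1.1}) reduces everything to a permutation representation of the odd-order group $\overline{G}=G/N$, which you then expand into irreducibles and pair off under complex conjugation, using additivity, the functional equation $W(\rho)W(\rho^{V})=\det_\rho(-1)$ (equation (\ref{eqn 2.3.23}), already in the normalized Langlands convention, so the normalization worry you flag is moot), the vanishing of $\det\chi(-1)$ as a $2$-torsion element of an odd-order group, and Burnside's fact that an odd-order group has no nontrivial self-dual irreducible. Your argument buys independence from Stiefel--Whitney classes and Deligne's orthogonal-representation theorem, at the cost of invoking local constants of higher-dimensional irreducibles rather than only the abelian ones; the paper's route buys a one-line part (2) once part (1) is known. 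Both arguments are complete, and both isolate the same two points where oddness enters.
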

 we obtain the following result (cf. Theorem \ref{General Theorem for odd case})
 \begin{thm}
Let $F$ be a non-archimedean local field and $E/F$ be an odd degree Galois extension. If 
$L\supset K\supset F $ be any finite extension inside $E$, then $\lambda_{L/K}=1$. 
\end{thm}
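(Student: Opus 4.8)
The plan is to strip the statement down to pure group theory and then quote Lemma~\ref{Lemma 4.2}. The key preliminary observation is that, although $L/K$ itself need not be Galois, the extension $E/K$ certainly is: since $E/F$ is Galois, every intermediate extension $E/M$ with $F\subseteq M\subseteq E$ is again Galois. In particular both $E/K$ and $E/L$ are Galois, and their degrees divide $[E:F]$, hence are odd. So the whole picture can be transported to the new base field $K$, over which we still have an odd-degree Galois extension $E/K$.

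First I would set up the dictionary between fields and groups. Write $G:=\mathrm{Gal}(E/K)$ and $H:=\mathrm{Gal}(E/L)$, so that $H\leq G$ by the Galois correspondence, and $\mathrm{Ind}_{L/K}1=\mathrm{Ind}_{H}^{G}1$. By the definition of the lambda function as the local constant of an induced representation, this identifies $\lambda_{L/K}=\lambda_{H}^{G}$, a relation that is meaningful whether or not $L/K$ is Galois, equivalently whether or not $H\vartriangleleft G$.

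Next I would verify the hypotheses of Lemma~\ref{Lemma 4.2}(2) for the pair $(G,H)$. The crucial and mildly counterintuitive point is that one is free to take the \emph{trivial} normal subgroup $N=\{1\}$: it is normal in $G$, it trivially satisfies $N\leq H\leq G$, and its index is $[G:N]=|G|=[E:K]$, which divides $[E:F]$ and is therefore odd. Thus every condition of Lemma~\ref{Lemma 4.2}(2) is met, and the lemma delivers $\lambda_{H}^{G}=1$, that is $\lambda_{L/K}=1$.

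I expect the only real subtlety to be the non-Galois case: when $L/K$ is not normal, $H$ fails to be a normal subgroup of $G$, so the cleaner criterion Lemma~\ref{Lemma 4.2}(1) (and Lemma~\ref{Lemma 4.1}) does not apply directly. This is precisely what forces the use of part~(2) with the trivial choice of $N$, and it explains why the argument proceeds by rebasing to $K$ and exploiting that $E/K$ is still an odd-degree Galois extension, rather than attempting to argue over $F$ from the outset.
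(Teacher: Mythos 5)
Your proposal is correct and follows essentially the same route as the paper: rebase to $K$ using that $E/K$ and $E/L$ remain Galois of odd degree, identify $\lambda_{L/K}=\lambda_{H}^{G}$ for $G=\mathrm{Gal}(E/K)$, $H=\mathrm{Gal}(E/L)$, and invoke Lemma~\ref{Lemma 4.2}(2). The only cosmetic difference is that the paper first runs the argument with $N=\cap_{g\in G}gHg^{-1}$, the core of $H$ in $G$, and only then observes that $N=\{1\}$ also works, whereas you go straight to the trivial choice $N=\{1\}$, which is a legitimate and slightly cleaner instance of the same lemma.
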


And in Section 3.3 we compute $\lambda_{1}^{G}$, where $G$ is a local Galois group for a finite Galois extension.
By using Bruno Kahn's results (cf. \cite{BK}, Theorem 1) and Theorem \ref{Theorem 4.1} (due to Deligne)
we obtain the following result (cf. Theorem \ref{Theorem 4.3}).
\begin{thm}\label{Theorem 4.3}
 Let $G$ be a finite local Galois group of a non-archimedean local field $F$. Let $S$ be a Sylow 2-subgroup of $G$. 
 Denote $c_{1}^{G}=c(\rm{Ind}_{1}^{G}(1))$.
 \begin{enumerate}
 \item If $S=\{1\}$, then we have $\lambda_{1}^{G}=1$. 
  \item If the Sylow 2-subgroup $S\subset G$ is nontrivial cyclic (\textbf{exceptional case}), then
  \begin{equation*}
   \lambda_{1}^{G}=\begin{cases}
                    W(\alpha) & \text{if $|S|=2^n\ge 8$}\\
                    c_{1}^{G}\cdot W(\alpha) & \text{if $|S|\le 4$},
                   \end{cases}
\end{equation*}
where $\alpha$ is a uniquely determined quadratic 
  character of $G$.
  \item If $S$  is metacyclic but not cyclic ({\bf invariant case}), then 
  \begin{equation*}
   \lambda_{1}^{G}=\begin{cases}
                    \lambda_{1}^{V} & \text{if $G$ contains Klein's $4$ group $V$}\\
                    1 &  \text{if $G$ does not contain Klein's $4$ group $V$}.
                   \end{cases}
\end{equation*}
  \item If $S$ is nontrivial and not metacyclic, then $\lambda_{1}^{G}=1$.
 \end{enumerate}
\end{thm}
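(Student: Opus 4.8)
The starting point is the factorization $\lambda_1^G = c_1^G \cdot W(\Delta_1^G)$ recorded above, which splits the problem into two essentially independent pieces: the determinant character $\Delta_1^G = \det(\mathrm{Ind}_1^G 1)$ and Deligne's constant $c_1^G = c(\mathrm{Ind}_1^G 1)$. First I would pin down $\Delta_1^G$. Since $\mathrm{Ind}_1^G 1$ is the regular representation, its determinant is the sign character of the left-multiplication action of $G$ on itself: an element $g$ of order $m$ decomposes into $|G|/m$ cycles of length $m$, so it acts by an odd permutation precisely when $m$ is even and $|G|/m$ is odd, i.e. when $\langle g \rangle$ contains a full Sylow $2$-subgroup. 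Hence $\Delta_1^G$ is nontrivial if and only if the Sylow $2$-subgroup $S$ is nontrivial and cyclic, and in that case $\Delta_1^G = \alpha$ is a uniquely determined quadratic character of $G$. This already separates the exceptional case (2), where $W(\Delta_1^G) = W(\alpha)$, from cases (1), (3), (4), where $\Delta_1^G \equiv 1$ and $W(\Delta_1^G) = 1$, reducing each of the latter to the bare assertion $\lambda_1^G = c_1^G$.

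Next I would handle $c_1^G$. The regular representation is self-dual, so $\mathrm{Ind}_1^G 1$ is an orthogonal representation of $G_F$, and Deligne's Theorem \ref{Theorem 4.1} expresses its Deligne constant as a cohomological invariant built from the Stiefel--Whitney classes of the representation in $H^2(G_F,\mathbb{Z}/2\mathbb{Z})$ — dominated by the second class $sw_2$, with a correction involving $sw_1$ cupped with the class of $-1$. (Note $sw_1(\mathrm{Ind}_1^G 1)$ is exactly the class of $\Delta_1^G$, which is why the splitting by $\Delta_1^G$ is the natural one.) Thus $c_1^G$ is controlled entirely by group-theoretic data. Because these invariants are $2$-primary, they are detected after restriction to a Sylow $2$-subgroup $S$, the restriction map on $H^*(-,\mathbb{Z}/2\mathbb{Z})$ being injective on $2$-torsion since $[G:S]$ is odd; and restricted to $S$ the regular representation of $G$ is an odd number of copies of the regular representation of $S$. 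This reduces the evaluation of $c_1^G$ to the structure of $S$ alone.

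It then remains to compute these Stiefel--Whitney classes for the regular representation, which is precisely the content of Bruno Kahn's result (cf. \cite{BK}, Theorem 1). Feeding in his classification yields the four cases. The relevant class vanishes when $S$ is trivial (case (1), consistent with the odd-degree statement of Theorem \ref{General Theorem for odd case}) and when $S$ is nontrivial but not metacyclic (case (4)), forcing $c_1^G = 1$; for $S$ cyclic the class vanishes once $|S| = 2^n \geq 8$ but can survive for $|S| \leq 4$, producing the dichotomy $\lambda_1^G = W(\alpha)$ versus $\lambda_1^G = c_1^G \cdot W(\alpha)$ in the exceptional case (2); and for $S$ metacyclic but not cyclic the class is pulled back from, and detected on, a Klein four-subgroup $V$ when one is present, giving $\lambda_1^G = \lambda_1^V$, while it vanishes otherwise, giving $\lambda_1^G = 1$ — this is case (3).

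The main obstacle will be the precise translation between Deligne's constant and Kahn's Stiefel--Whitney computation, together with the careful bookkeeping of when the cohomological invariant is genuinely nonzero. In particular, verifying the threshold behaviour for cyclic $S$ (why $|S| \geq 8$ kills the class while $|S| \leq 4$ does not) and justifying the metacyclic-to-Klein-four reduction — including that restriction to $V$ computes $\lambda_1^G$ on the nose rather than up to an ambiguity — requires matching Deligne's orthogonal-representation formula with the functoriality of the Stiefel--Whitney classes under restriction and inflation, and this is the technical heart of the argument.
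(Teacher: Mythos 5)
Your overall strategy coincides with the paper's: factor $\lambda_{1}^{G}=c_{1}^{G}\cdot W(\Delta_{1}^{G})$, compute $\Delta_{1}^{G}$ from the sign of the regular representation (nontrivial exactly when $S$ is nontrivial cyclic), and feed Bruno Kahn's computation of $s_2(r_G)$ into Deligne's theorem to control $c_{1}^{G}$. Cases (1), (2) and (4) go through essentially as in the paper. But there is a genuine gap in your treatment of case (3) when $G$ contains a Klein four-group $V$. You propose to prove $\lambda_{1}^{G}=\lambda_{1}^{V}$ by arguing that the relevant cohomology class is "detected on" $V$. The detection-by-restriction argument works for the Sylow subgroup $S$ only because $[G:S]$ is odd: the restriction $H^{2}(G_F,\bbZ/2)\to H^{2}(G_{F'},\bbZ/2)$ is multiplication by $[F':F]$ on the $2$-torsion of the Brauer group, hence an isomorphism for odd index. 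For the subgroup $V$, however, $[G:V]$ is even as soon as $|S|\ge 8$ (e.g.\ $S$ dihedral of order $8$), so this restriction map is \emph{zero} on $H^{2}(-,\bbZ/2)$, and correspondingly $\mathrm{res}_V(r_G)=[G:V]\cdot r_V$ has vanishing $s_2$ by the Whitney formula; both sides of the comparison die and no relation between $c_{1}^{G}$ and $c_{1}^{V}$ is obtained. The paper sidesteps this entirely with the elementary tower identity $\lambda_{1}^{G}=\lambda_{1}^{V}\cdot(\lambda_{V}^{G})^{|V|}=\lambda_{1}^{V}\cdot(\lambda_{V}^{G})^{4}=\lambda_{1}^{V}$, using only that $\lambda$-functions are fourth roots of unity. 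You should replace your detection argument by this.

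A second, smaller omission is in the complementary sub-case of (3): when $S$ is metacyclic, not cyclic, and contains no Klein four-group, Kahn's theorem does not apply until you identify $S$ as a generalized quaternion group. The paper supplies this via the criterion that a finite $2$-group with a unique subgroup of order $2$ is cyclic or generalized quaternion (the absence of a Klein four-group forcing the centre, and hence the whole group, to have a unique involution). Your phrase "the class vanishes otherwise" conceals exactly this step, and also the fact that one must separately note $\Delta_{1}^{G}\equiv 1$ there so that $\lambda_{1}^{G}=c_{1}^{G}=1$.
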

From the above theorem we observe that $\lambda_{1}^{G}=1$, except the {\bf exceptional case}  and 
the {\bf invariant case} when $G$ contains Klein's $4$-group.
Moreover, here $\alpha$ is the uniquely determined quadratic character of $G$, then $W(\alpha)=\lambda_{F_2/F}$, where $F_2/F$ is the 
quadratic extension corresponding to $\alpha$ of $G=\rm{Gal}(K/F)$. In fact, in the invariant case we need to compute 
$\lambda_{1}^{V}$, where $V$ is Klein's $4$-group. If $p\ne 2$ then $V$ corresponds to a tame extension and there we have  
the explicit computation of $\lambda_{1}^{V}$ in Lemma \ref{Lemma 4.6}. 

\begin{lem}\label{Lemma 4.6}
Let $F/\bbQ_p$ be a local field with $p\ne 2$. Let $K/F$ be the uniquely determined extension with $V=\rm{Gal}(K/F)$, Klein's $4$-group. 
Then \\
 $\lambda_{1}^{V}=\lambda_{K/F}=-1$ if $-1\in F^\times$ is a square, i.e., $q_F\equiv 1\pmod{4}$, and\\
 $\lambda_{1}^{V}=\lambda_{K/F}=1$ if $-1\in F^\times$ is not a square, i.e., if $q_F\equiv 3\pmod{4}$,\\
 where $q_F$ is the cardinality of the residue field of $F$.
\end{lem}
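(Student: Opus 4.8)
The plan is to decompose the regular representation of $V$ and reduce $\lambda_{K/F}$ to a product of abelian local constants. Since $V=\mathrm{Gal}(K/F)$ is abelian of order $4$, the representation $\mathrm{Ind}_{K/F}1=\mathrm{Ind}_1^V(1)$ is the regular representation, so $\mathrm{Ind}_1^V(1)=\bigoplus_{\chi\in\widehat V}\chi$ is the sum of the trivial character and the three nontrivial quadratic characters $\chi_a,\chi_b,\chi_c$. Viewing each $\chi$ as a character of $F^\times$ through local class field theory and using additivity of the local constant on direct sums, I would write
\begin{equation*}
\lambda_{K/F}=W(\mathrm{Ind}_{K/F}1,\psi_F)=W(1)\,W(\chi_a)\,W(\chi_b)\,W(\chi_c)=W(\chi_a)\,W(\chi_b)\,W(\chi_c),
\end{equation*}
since $W(1)=1$.

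First I would pin down the three characters. Because $p\ne 2$, the group $F^\times/(F^\times)^2$ has order $4$, generated by a uniformizer $\pi$ and a non-square unit; correspondingly $K$ is the compositum of all quadratic extensions of $F$ and is tamely ramified. Among $\chi_a,\chi_b,\chi_c$ exactly one, say $\chi_a$, is unramified (the one trivial on $U_F$ with $\chi_a(\pi)=-1$), and the other two are ramified of conductor $1$. Since $U_F/(U_F)^2\cong\bbZ/2$, there is a unique nontrivial quadratic character $\eta$ of $U_F$, namely the one pulled back from the quadratic character of the residue field $k_{q_F}^\times$; hence $\chi_b|_{U_F}=\chi_c|_{U_F}=\eta$ is forced, and $\chi_b,\chi_c$ differ precisely by the unramified twist, $\chi_c=\chi_a\chi_b$, so that $\chi_c(\pi)=-\chi_b(\pi)$.

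Next I would compute the three factors with respect to the canonical character $\psi_F$, whose conductor equals the different exponent $d=d(F/\bbQ_p)$. For the unramified character, in the Langlands normalization $W(\chi_a)=\chi_a(\pi)^{d}=(-1)^{d}$. For the tame ramified characters, Theorem \ref{Theorem 3.5} identifies each local constant, up to the value of the character on an element $c$ with $\nu_F(c)=1+d$, with the normalized classical Gauss sum $\tau$ of $\eta$ over $k_{q_F}$. Because $\chi_b$ and $\chi_c$ restrict to the same $\eta$ on $U_F$, their Gauss sum parts coincide, while $W(\chi_c)/W(\chi_b)=\chi_a(c)=(-1)^{1+d}$ since $\chi_a$ is unramified, and $W(\chi_b)^2=\tau^2$ because $\chi_b(c)^2=1$. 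Hence
\begin{equation*}
W(\chi_b)\,W(\chi_c)=(-1)^{1+d}\,\tau^2 .
\end{equation*}
The decisive input is $\tau^2=\eta(-1)=(-1)^{(q_F-1)/2}$, which follows from the quadratic Gauss sum identity $g(\eta)^2=\eta(-1)\,q_F$ of Theorem \ref{Theorem 3.5}.

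Multiplying the three factors, the dependence on the different cancels:
\begin{equation*}
\lambda_{K/F}=(-1)^{d}\cdot(-1)^{1+d}\cdot(-1)^{(q_F-1)/2}=(-1)^{1+(q_F-1)/2}.
\end{equation*}
This equals $-1$ when $q_F\equiv 1\pmod 4$ and $+1$ when $q_F\equiv 3\pmod 4$, which is exactly the claimed value, since $-1\in F^\times$ is a square precisely when $q_F\equiv 1\pmod 4$. The main obstacle I anticipate is the bookkeeping of normalizations: one must fix the conductor convention for $\psi_F$, verify that the unramified factor is exactly $(-1)^{d}$, and check that the two ramified factors carry matching Gauss sum parts with relative uniformizer contribution $(-1)^{1+d}$, so that the two $(-1)^{d}$ contributions cancel and only the residual sign survives. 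Correctly extracting $\tau^2=\eta(-1)$ from Theorem \ref{Theorem 3.5}, and confirming that the unit part of $c$ does not perturb the product, is the crux of the argument.
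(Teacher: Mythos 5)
Your proof is correct and follows essentially the same route as the paper: decompose $\mathrm{Ind}_{K/F}1$ into the four quadratic characters, evaluate the unramified factor, use the unramified twisting formula to relate $W(\chi_c)$ to $W(\chi_b)$, and reduce $W(\chi_b)^2$ to $\eta(-1)=(-1)^{(q_F-1)/2}$. The only cosmetic difference is that you obtain $W(\chi_b)^2=\chi_b(-1)$ from the Gauss sum identity $g(\eta)^2=\eta(-1)q_F$, whereas the paper invokes the functional equation $W(\chi)W(\chi^{-1})=\chi(-1)$ — the same fact in different clothing.
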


In \cite{D2} Deligne computes the Deligne's constant $c(\rho)$, where $\rho$ is a finite dimensional orthogonal representation of the 
absolute Galois group $G_F$ of a non-archimedean local field $F$ via second Stiefel-Whitney class of $\rho$. 
The computations of these Deligne's constants are important to give explicit formulas of lambda functions.
But the second Stiefel-Whitney classes $s_2(\rho)$ are not the same as Deligne's
constants, therefore full information of $s_2(\rho)$ will not give complete information about Deligne's constants.
Therefore to complete the 
explicit computations of
$\lambda_{1}^{G}$ we need to use the definition 
$$\lambda_{1}^{G}=\lambda_{K/F}(\psi)=W(\rm{Ind}_{K/F} 1, \psi),$$
where $\psi$ is a nontrivial additive character of $F$. 
When we take the canonical additive character $\psi_F$, we simply write $\lambda_{K/F}=W(\rm{Ind}_{K/F} 1,\psi_F)$,
instead of $\lambda_{K/F}(\psi_F)$.

When $p\ne 2$, in Theorem \ref{Theorem 4.3}, 
we notice that to complete the whole computation we need to compute $\lambda_{K/F}$, where $K/F$ is a quadratic tame extension.
In general, in Section 3.4 we study the explicit computation for even degree local Galois extension. 

 Moreover, by using the properties of $\lambda$-function and Lemma \ref{Lemma 3.4} we give general formula of $\lambda_{K/F}$, 
where $K/F$ is an even degree {\bf unramified extension} and the result is (cf. Theorem \ref{Theorem 3.6}):
\begin{center}
 $\lambda_{K/F}(\psi_F)=(-1)^{n(\psi_F)}$.
\end{center}
When $K/F$ is an even degree Galois extension with odd ramification index we have the following result
(cf. Theorem \ref{Theorem 3.8}).
\begin{thm}
 Let $K$ be an even degree Galois extension of $F$ with odd ramification index. Let $\psi$ be a nontrivial
 additive character of $F$. Then 
 \begin{equation*}
  \lambda_{K/F}(\psi)=(-1)^{n(\psi)}.
 \end{equation*}
\end{thm}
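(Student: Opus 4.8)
The plan is to reduce the statement to the two cases already settled in this section: the even-degree \emph{unramified} case (Theorem~\ref{Theorem 3.6}) and the \emph{odd-degree} case (Theorem~\ref{General Theorem for odd case}). Write $G=\mathrm{Gal}(K/F)$ and let $I\vartriangleleft G$ be the inertia subgroup, so that $|I|=e$ is the (odd) ramification index and $G/I$ is cyclic of order $f=[K:F]/e$. Since $[K:F]=ef$ is even while $e$ is odd, $f$ must be even. Let $M=K^{I}$ be the fixed field of $I$; this is the maximal unramified subextension, so $M/F$ is unramified and Galois (as $I\vartriangleleft G$) of even degree $f$, while $K/M$ is totally ramified Galois of odd degree $e$.

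First I would invoke the transitivity (tower) formula for $\lambda$-functions along the chain $F\subset M\subset K$. Starting from $\mathrm{Ind}_{K/F}1=\mathrm{Ind}_{M/F}(\mathrm{Ind}_{K/M}1)$ and applying the inductivity in degree $0$ of the local constants, one obtains
\begin{equation*}
 \lambda_{K/F}(\psi)=\lambda_{M/F}(\psi)^{[K:M]}\cdot\lambda_{K/M}(\psi_{M}),
\end{equation*}
where $\psi_{M}=\psi\circ\mathrm{Tr}_{M/F}$ is the induced nontrivial additive character of $M$ and $[K:M]=e$ is the dimension of $\mathrm{Ind}_{K/M}1$.

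Next I would handle the two factors separately. The extension $K/M$ is Galois of odd degree $e$, so Theorem~\ref{General Theorem for odd case} gives $\lambda_{K/M}=1$ for the canonical character. To transfer this to our particular $\psi_{M}$, I would use that the determinant $\Delta_{K/M}=\det(\mathrm{Ind}_{K/M}1)$ is trivial in odd degree (Lemma~\ref{Lemma 4.1}); since the dependence of $\lambda_{K/M}$ on the additive character is governed by $\Delta_{K/M}$, triviality of the determinant forces $\lambda_{K/M}(\psi_{M})=1$ as well. For the remaining factor, $M/F$ is unramified of even degree, so Theorem~\ref{Theorem 3.6} yields $\lambda_{M/F}(\psi)=(-1)^{n(\psi)}$. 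Substituting into the tower formula gives
\begin{equation*}
 \lambda_{K/F}(\psi)=\left((-1)^{n(\psi)}\right)^{e}=(-1)^{e\,n(\psi)}=(-1)^{n(\psi)},
\end{equation*}
the last equality holding because $e$ is odd.

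The group-theoretic skeleton (passing to the inertia field and counting parities) is routine; I expect the genuine work to lie in the two character-related points. The first is pinning down the exact transitivity formula for $\lambda$ with the correctly induced character $\psi_{M}=\psi\circ\mathrm{Tr}_{M/F}$, including checking that $\psi_{M}$ is nontrivial (which follows from surjectivity of the trace in a separable extension). The second, and the more delicate one, is verifying that the odd-degree vanishing $\lambda_{K/M}=1$ survives the replacement of the canonical character by $\psi_{M}$; this is precisely what the triviality of $\Delta_{K/M}$ from Lemma~\ref{Lemma 4.1} buys us, and once it is in place the whole computation collapses to the stated formula.
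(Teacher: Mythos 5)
Your proposal is correct and follows essentially the same route as the paper: pass to the maximal unramified subextension $M=F'$ (even degree over $F$, with $K/M$ odd-degree Galois and totally ramified), apply the tower formula $\lambda_{K/F}(\psi)=\lambda_{K/M}(\psi_M)\cdot\lambda_{M/F}(\psi)^{[K:M]}$, and invoke Theorem~\ref{General Theorem for odd case} and Theorem~\ref{Theorem 3.6}. Your extra care about the additive-character dependence of the odd-degree factor (via triviality of $\Delta_{K/M}$) and about using Corollary~\ref{Corollary 3.7} for a general $\psi$ is a welcome tightening of details the paper glosses over.
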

When $K/F$ is a tamely ramified quadratic extension, by using classical Gauss sum we have an explicit formula 
(cf. Theorem \ref{Theorem 3.21}) for $\lambda_{K/F}$.
\begin{thm}\label{Theorem 3.21}
 Let $K$ be a tamely ramified quadratic extension of $F/\bbQ_p$ with $q_F=p^s$. Let $\psi_F$ be the canonical additive character of $F$.
 Let $c\in F^\times$ with $-1=\nu_F(c)+d_{F/\bbQ_p}$, and $c'=\frac{c}{\rm{Tr}_{F/F_0}(pc)}$, where $F_0/\bbQ_p$ is the maximal unramified
 extension in $F/\bbQ_p$. Let $\psi_{-1}$ be an additive character with conductor $-1$, of the form $\psi_{-1}=c'\cdot\psi_F$.
 Then 
 \begin{equation*}
  \lambda_{K/F}(\psi_F)=\Delta_{K/F}(c')\cdot\lambda_{K/F}(\psi_{-1}),
 \end{equation*}
where 
 \begin{equation*}
 \lambda_{K/F}(\psi_{-1})=\begin{cases}
                                               (-1)^{s-1} & \text{if $p\equiv 1\pmod{4}$}\\
                                                  (-1)^{s-1}i^{s} & \text{if $p\equiv 3\pmod{4}$}.
                                            \end{cases}
\end{equation*}
If we take $c=\pi_{F}^{-1-d_{F/\bbQ_p}}$, where $\pi_F$ is a norm for $K/F$, then 
\begin{equation}
 \Delta_{K/F}(c')=\begin{cases}
                   1 & \text{if $\overline{\rm{Tr}_{F/F_0}(pc)}\in k_{F_0}^{\times}=k_{F}^{\times}$ is a square},\\
                   -1 & \text{if $\overline{\rm{Tr}_{F/F_0}(pc)}\in k_{F_0}^{\times}=k_{F}^{\times}$ is not a square}.
                  \end{cases}
\end{equation}
Here "overline" stands for modulo $P_{F_0}$.
\end{thm}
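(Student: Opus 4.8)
The plan is to peel $\lambda_{K/F}=W(\mathrm{Ind}_{K/F}1,\cdot)$ down to a single quadratic character and then to a classical residue-field Gauss sum. Since $K/F$ is quadratic, $\mathrm{Ind}_{K/F}1=\mathbf 1\oplus\chi_{K/F}$, where $\chi_{K/F}$ is the quadratic character of $F^\times$ attached to $K/F$ by local class field theory (with kernel $N_{K/F}(K^\times)$). Hence $\Delta_{K/F}=\det(\mathrm{Ind}_{K/F}1)=\chi_{K/F}$, and by additivity of local constants together with $W(\mathbf 1,\psi)=1$ one gets $\lambda_{K/F}(\psi)=W(\chi_{K/F},\psi)$ for every additive character $\psi$. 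Because $K/F$ is tamely ramified, $\chi_{K/F}$ has conductor $1$; in particular it is trivial on $1+P_F$ and induces the unique nontrivial quadratic character $\eta$ of $k_F^\times$.

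First I would establish the change-of-additive-character identity. Writing $\psi_{-1}(x)=\psi_F(c'x)$, the standard transformation law for root numbers in dimension one gives $W(\chi_{K/F},\psi_{-1})=\chi_{K/F}(c')\,W(\chi_{K/F},\psi_F)$; any ambiguity between $\chi_{K/F}(c')$ and $\chi_{K/F}(c')^{-1}$ is harmless since $\chi_{K/F}$ is quadratic. Rearranging and using $\Delta_{K/F}=\chi_{K/F}$ yields the first assertion $\lambda_{K/F}(\psi_F)=\Delta_{K/F}(c')\,\lambda_{K/F}(\psi_{-1})$. It remains to check that $\psi_{-1}$ really has conductor $-1$, i.e.\ that $\nu_F(c')=-1-d_{F/\bbQ_p}$; by the definition of $c'$ and the relation $-1=\nu_F(c)+d_{F/\bbQ_p}$ this amounts to proving that $\mathrm{Tr}_{F/F_0}(pc)$ is a unit, which I would verify from the trace--different estimate $\nu_{F_0}\bigl(\mathrm{Tr}_{F/F_0}(P_F^m)\bigr)=\lfloor(m+d_{F/\bbQ_p})/e\rfloor$ applied with $m=\nu_F(pc)=e-1-d_{F/\bbQ_p}$ (so that the floor is $0$), where $e=e_{F/\bbQ_p}$.

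Next I would compute $\lambda_{K/F}(\psi_{-1})=W(\chi_{K/F},\psi_{-1})$ directly. As $\chi_{K/F}$ has conductor $1$ and $\psi_{-1}$ has conductor $-1$, the character $\psi_{-1}$ is nontrivial on $O_F$ but trivial on $P_F$, so it descends to a nontrivial additive character $\overline{\psi_{-1}}$ of $k_F$, and the tame local-constant formula collapses to the normalized Gauss sum $q_F^{-1/2}\sum_{u\in k_F^\times}\eta(u)\,\overline{\psi_{-1}}(u)$. The crucial point, and the main obstacle, is to show that the normalization $c'=c/\mathrm{Tr}_{F/F_0}(pc)$ forces $\overline{\psi_{-1}}$ to be exactly the canonical additive character of $k_F=k_{F_0}$: here I would factor $\mathrm{Tr}_{F/\bbQ_p}=\mathrm{Tr}_{F_0/\bbQ_p}\circ\mathrm{Tr}_{F/F_0}$, use that $F_0/\bbQ_p$ is unramified (so $\mathrm{Tr}_{F_0/\bbQ_p}$ reduces to $\mathrm{Tr}_{k_{F_0}/\bbF_p}$ and the canonical character of $F_0$ has conductor $0$), and invoke Serre's Lemma~\ref{Lemma 2.21} to identify the induced character. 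Granting this, Theorem~\ref{Theorem 3.5} evaluates the classical quadratic Gauss sum over $\bbF_{p^s}$ and, after dividing by $q_F^{1/2}=p^{s/2}$, produces exactly the two stated values $(-1)^{s-1}$ and $(-1)^{s-1}i^s$ according as $p\equiv 1$ or $p\equiv 3\pmod 4$.

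Finally, for the explicit evaluation of $\Delta_{K/F}(c')$ when $c=\pi_F^{-1-d_{F/\bbQ_p}}$ with $\pi_F$ a norm for $K/F$, I would argue as follows. By local class field theory $\pi_F\in N_{K/F}(K^\times)=\ker\chi_{K/F}$, so $\chi_{K/F}(\pi_F)=1$ and hence $\chi_{K/F}(c)=1$. Using $\Delta_{K/F}=\chi_{K/F}$, the factorization $c'=c/\mathrm{Tr}_{F/F_0}(pc)$, and the fact that $\chi_{K/F}$ is quadratic, this gives $\Delta_{K/F}(c')=\chi_{K/F}\bigl(\mathrm{Tr}_{F/F_0}(pc)\bigr)$. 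Since $\mathrm{Tr}_{F/F_0}(pc)$ is a unit (established above) and $\chi_{K/F}$ restricted to $U_F$ is the quadratic residue symbol on $k_F^\times$ (because $K/F$ is ramified and tame), this value equals $+1$ when $\overline{\mathrm{Tr}_{F/F_0}(pc)}\in k_{F_0}^\times=k_F^\times$ is a square and $-1$ otherwise, which is the last displayed formula.
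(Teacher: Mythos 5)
Your proposal follows essentially the same route as the paper's proof: decompose $\mathrm{Ind}_{K/F}1=1\oplus\omega_{K/F}$ so that $\lambda_{K/F}(\psi)=W(\omega_{K/F},\psi)$, apply the twisting law $\lambda_{K/F}(b\psi)=\Delta_{K/F}(b)\lambda_{K/F}(\psi)$, check (as in the paper's Lemmas \ref{Lemma 3.20} and \ref{Lemma 3.21}) that the normalization $c'=c/\mathrm{Tr}_{F/F_0}(pc)$ forces $\psi_{-1}|_{O_F}$ to be the canonical character of $k_F$ so that Theorem \ref{Theorem 3.5} evaluates the resulting Gauss sum, and compute $\Delta_{K/F}(c')$ from $\omega_{K/F}(\pi_F)=1$ together with the identification of $\omega_{K/F}|_{U_F}$ with the quadratic residue symbol. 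The one spot where the paper is more careful is the claim that $\mathrm{Tr}_{F/F_0}(pc)$ is a unit: the ideal-level trace estimate you invoke only controls $\mathrm{Tr}_{F/F_0}(P_F^m)$ and not a single element, and the paper instead works under the hypothesis that $F/\bbQ_p$ is tamely ramified, where $pc$ is a unit and the trace of a unit down a totally tamely ramified extension is again a unit.
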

By the properties of $\lambda$-function and the above theorem, we can give complete computation of $\lambda_{K/F}$, where 
$K/F$ is a tamely ramified even degree Galois extension. But the computation of $\lambda_{K/F}$, where $K/F$ is a wildly
ramified quadratic extension, seems subtle. In Subsection 3.4.2 we give few computation in the wild case, and they are:

\begin{lem}
 Let $K$ be the finite abelian extension of $\bbQ_2$ for which $N_{K/\bbQ_2}(K^\times)={\bbQ_{2}^{\times}}^2$. 
 Then $\lambda_{K/\bbQ_2}=1$.
\end{lem}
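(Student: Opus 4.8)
The plan is to compute $\lambda_{K/\bbQ_2}$ directly from its definition by reducing to abelian local constants and then exploiting an unramified twist. First I would use local class field theory: the hypothesis $N_{K/\bbQ_2}(K^\times)=(\bbQ_2^\times)^2$ identifies $G:=\mathrm{Gal}(K/\bbQ_2)$ with $\bbQ_2^\times/(\bbQ_2^\times)^2$, and since $[\bbQ_2^\times:(\bbQ_2^\times)^2]=8$ this gives $G\cong(\bbZ/2\bbZ)^3$, so that $K=\bbQ_2(\sqrt{-1},\sqrt2,\sqrt5)$. As $G$ is abelian, $\mathrm{Ind}_{K/\bbQ_2}1$ is the regular representation and splits as $\bigoplus_{\chi\in\widehat G}\chi$, where $\widehat G$ is exactly the group of quadratic (including trivial) characters of $\bbQ_2^\times$. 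Using that the local constant turns direct sums into products I would then write
\begin{equation*}
\lambda_{K/\bbQ_2}=W(\mathrm{Ind}_{K/\bbQ_2}1,\psi_{\bbQ_2})=\prod_{\chi\in\widehat G}W(\chi,\psi_{\bbQ_2}).
\end{equation*}

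Next I would pair the eight characters by the unramified twist. The trivial character and the unramified quadratic character $\eta:=\chi_5$ have conductor exponent $0$, while the six ramified characters are wildly ramified with $a(\chi_{-1})=a(\chi_{-5})=2$ and $a(\chi_2)=a(\chi_{-2})=a(\chi_{10})=a(\chi_{-10})=3$. Since $\eta$ is a nontrivial element of $\widehat G$, multiplication by $\eta$ partitions $\widehat G$ into four pairs $\{\chi,\chi\eta\}$. For an unramified twist one has $W(\chi\eta,\psi_{\bbQ_2})=\eta(2)^{\,a(\chi)+n(\psi_{\bbQ_2})}W(\chi,\psi_{\bbQ_2})$, and because $n(\psi_{\bbQ_2})=0$ and $\eta(2)=-1$ this is $(-1)^{a(\chi)}W(\chi,\psi_{\bbQ_2})$. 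Combining this with the standard functional-equation identity $W(\chi,\psi_{\bbQ_2})^2=\chi(-1)$ for quadratic $\chi$, each pair contributes
\begin{equation*}
W(\chi,\psi_{\bbQ_2})\,W(\chi\eta,\psi_{\bbQ_2})=(-1)^{a(\chi)}\chi(-1).
\end{equation*}

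Taking the representatives $1,\chi_{-1},\chi_2,\chi_{-2}$ and evaluating $\chi(-1)$ through the Hilbert symbols $(-1,-1)_2=-1$, $(2,-1)_2=1$, $(-2,-1)_2=-1$, the product over the four pairs becomes
\begin{equation*}
\lambda_{K/\bbQ_2}=(-1)^{\,0+2+3+3}\cdot(1)(-1)(1)(-1)=(-1)^{8}=1,
\end{equation*}
as claimed. The main obstacle is that the individual factors $W(\chi,\psi_{\bbQ_2})$ for the wildly ramified quadratic characters of $\bbQ_2$ are genuine wild Gauss sums which, unlike the tame case, do not reduce to the classical Gauss sums of Theorem \ref{Theorem 3.5}; the whole point of the unramified-twist pairing is that it cancels these unknown values and leaves only conductor exponents and Hilbert symbols. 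As a consistency check I would note that this agrees with Theorem \ref{Theorem 4.3}(4): here $G=(\bbZ/2\bbZ)^3$ is its own Sylow $2$-subgroup and has rank $3$, hence is not metacyclic, which independently forces $\lambda_1^{G}=1$.
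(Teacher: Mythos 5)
Your proof is correct, but it takes a genuinely different route from the paper's. The paper argues abstractly: it writes $\lambda_{K/\bbQ_2}=c_1^G\cdot W(\Delta_1^G)$ with $G\cong(\bbZ/2\bbZ)^3$, kills the determinant term via Miller's theorem (since $\mathrm{rk}_2(G)=3\neq 1$ forces $\Delta_1^G\equiv 1$), and kills the Deligne constant via Bruno Kahn's theorem that $s_2(r_G)=0$ when the Sylow $2$-subgroup is not metacyclic, combined with Deligne's identification of $c(\rho)$ with the image of $s_2(\rho)$ under inflation --- that is, precisely the argument you relegate to a one-line consistency check at the end. Your unramified-twist pairing is instead a direct computation with abelian epsilon factors: grouping $\widehat G$ into the four cosets $\{\chi,\chi\eta\}$ of the unramified quadratic character $\eta=\chi_5$ and combining the twist formula with $W(\chi,\psi)^2=\chi(-1)$ cancels the unknown wild Gauss sums and reduces everything to conductor exponents and Hilbert symbols; this is more elementary and self-contained, and the numerics ($a=0,2,3,3$ and $\chi(-1)=1,-1,1,-1$) agree with the paper's later character-by-character evaluation in the Example on quadratic extensions of $\bbQ_2$, where each $W(\chi_i)$ is computed individually. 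What the paper's cohomological route buys is generality: the identical argument yields $\lambda_{K/F}=1$ for every dyadic field $F$ with $N_{K/F}(K^\times)=(F^\times)^2$ (its Theorem on the general wild case), whereas your pairing would need the conductors and the values $\chi(-1)$ re-examined for each $F$. One pedantic point: the unramified-twist identity $W(\chi\eta,\psi)=\eta(2)^{a(\chi)+n(\psi)}W(\chi,\psi)$ is stated in the paper only for ramified $\chi$; for the pair $\{1,\eta\}$ you should instead note directly that $W(\eta,\psi_{\bbQ_2})=\eta(2^{n(\psi_{\bbQ_2})})=1$, which gives the same contribution, so nothing breaks.
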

More generally, we show (cf. Theorem \ref{Theorem 3.26}) that 
when $F/\bbQ_2$, and $K$ is the abelian extension of $F$ for which $N_{K/F}(K^\times)={F^\times}^2$, then 
$\lambda_{K/F}=1$.
In Example \ref{Example wild} we compute
$\lambda_{F/\bbQ_2}$, where $F$ is a quadratic extension of $\bbQ_2$.

In Chapter 4 we give an invariant formula (group theoretical) of the determinant of a Heisenberg representation.
From Gallagher's theorem 
\ref{Theorem Gall} we know that to compute the determinant of an induced representation we need to compute 
the transfer map. Since our Heisenberg representations of degree $\ge 2$ are induced representation, we first need to 
give formula for transfer maps which we give in Lemmas \ref{Lemma 2.2} and \ref{Lemma 2.10}. By using them in 
Proposition \ref{Proposition 2.13} we give an invariant formula of the determinant of Heisenberg representation. 


Let $G$ be a finite group and $\rho$ be a Heisenberg representation of $G$. Let $Z$ be the scalar group for $\rho$ and 
$H$ be a maximal isotropic subgroup of $G$ for $\rho$. Let $\chi_H$ be a linear character of $H$ which is an extension of 
the central character $\chi_Z$ of $\rho$. Then  we know that 
 $\rho=\mathrm{Ind}_{H}^{G}\chi_H$. The main aim of Chapter 4 is to give an invariant formula for:
\begin{center}
 $\mathrm{det}(\rho)=\mathrm{det}(\mathrm{Ind}_{H}^{G}\chi_H)$.
\end{center}
In other words, we will show that this formula is independent of the choice of the maximal isotropic subgroup $H$ because 
the maximal isotropic subgroups are {\bf not} unique.
From Gallagher's result (cf. Theorem \ref{Theorem Gall}) we know that 
\begin{center}
 $\det(\rm{Ind}_{H}^{G}\chi_H)(g)=\Delta_{H}^{G}(g)\cdot\chi_H(T_{G/H}(g))$ for all $g\in G$.
\end{center}
Therefore for explicit computation of determinant of Heisenberg representation $\rho$, we first need to compute the transfer map 
$T_{G/H}$. In Lemma \ref{Lemma 2.2} we compute $T_{G/H}$, when $H$ is an abelian normal subgroup of a finite group $G$ 
(of odd index in $G$)
with $[G,[G,G]]=\{1\}$, and $G/H$ is an abelian quotient group.
\begin{lem}\label{Lemma 2.2}
 Assume that $G$ is a finite group and $H$ a normal subgroup of $G$ such that 
 \begin{enumerate}
  \item $H$ is abelian,
  \item $G/H$ is abelian of odd order $d$,
  \item $[G,[G,G]]=\{1\}$. 
 \end{enumerate}
Then we have $T_{G/H}(g)=g^{d}$ for all $g\in G$.\\
As a consequence one has $[G,G]^d=\{1\}$, in other words, $G^d$ is contained in the center of $G$.
\end{lem}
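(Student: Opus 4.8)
The plan is to compute the transfer (Verlagerung) $T_{G/H}$ directly from its standard cycle formula and then to use the two structural hypotheses — that $[G,G]$ is central and that $d=[G:H]$ is odd — to show that all the ``commutator corrections'' coming from conjugation cancel. Since $H$ is abelian, $[H,H]=\{1\}$, so $T_{G/H}$ takes values in $H$ itself and is an honest group homomorphism $G\to H$; I will exploit this last fact for the final consequence. Note also that $G/H$ abelian forces $[G,G]\subseteq H$, and hypothesis (3) says $[G,G]$ is central, i.e. $G$ has nilpotency class at most $2$.

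First I would choose right-coset representatives adapted to $g$. Write $A=G/H$, let $m$ be the order of $\bar g$ in $A$, and pick representatives $\bar s_1,\dots,\bar s_r$ of $\langle\bar g\rangle$ in $A$, where $r=d/m$. Then $\{\,s_j g^i : 1\le j\le r,\ 0\le i\le m-1\,\}$ is a transversal for $H$ in $G$, and right multiplication by $g$ permutes these cosets in exactly $r$ cycles of length $m$. The telescoping that evaluates the transfer along each cycle (together with $[H,H]=\{1\}$) yields
\begin{equation*}
 T_{G/H}(g)=\prod_{j=1}^{r} s_j\,g^{m}\,s_j^{-1}\in H,\qquad w:=g^{m}\in H .
\end{equation*}

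Next I would linearize using class $\le 2$. Each factor rewrites as $s_j w s_j^{-1}=[s_j,w]\,w$ with $[s_j,w]\in[G,G]$ central, so pulling the central terms out gives
\begin{equation*}
 T_{G/H}(g)=\Big(\prod_{j=1}^{r}[s_j,w]\Big)\,w^{\,r}=\Big(\prod_{j=1}^{r}[s_j,w]\Big)\,g^{\,d}.
\end{equation*}
It remains to prove the central product $\prod_j[s_j,w]$ is trivial, and this is the step I expect to be the crux. Because $[G,G]$ is central and $H$ is abelian, the assignment $x\mapsto[\tilde x,w]$ is a well-defined homomorphism $\phi_w\colon A\to[G,G]$ (independent of the lift $\tilde x$, since $[h,w]=1$ for $h\in H$); moreover $\phi_w(\bar g)=[g,g^{m}]=1$, so $\phi_w$ descends to $\bar A:=A/\langle\bar g\rangle$. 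Hence $\prod_j[s_j,w]=\prod_{y\in\bar A}\bar\phi_w(y)=\bar\phi_w\big(\prod_{y\in\bar A}y\big)$. The essential input — and the only place oddness is used — is that the product of all elements of a finite abelian group of odd order is the identity (pair each element with its inverse; only the identity is self-inverse). Since $|\bar A|=d/m$ divides the odd number $d$, we get $\prod_{y\in\bar A}y=1$, so $\prod_j[s_j,w]=1$ and therefore $T_{G/H}(g)=g^{d}$.

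Finally, for the consequence I would use that $T_{G/H}\colon G\to H$ is a homomorphism into an abelian group, hence factors through $G/[G,G]$ and kills $[G,G]$. As $[G,G]\subseteq H$, every $x\in[G,G]$ satisfies $x^{d}=T_{G/H}(x)=1$, so $[G,G]^{d}=\{1\}$. Then class $\le 2$ gives $[g^{d},h]=[g,h]^{d}=1$ for all $g,h\in G$, whence $g^{d}\in Z(G)$; that is, $G^{d}$ is central.
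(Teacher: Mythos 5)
Your proof is correct, and it reaches the result by a somewhat different route than the paper. The paper splits the computation: it first shows $T_{G/H}(h)=h^{d}$ for $h\in H$ using the full transversal, then computes $T_{G/H}(t)=t^{d}$ for $t$ a generator of a cyclic direct factor $C$ of $G/H$ using a transversal adapted to a decomposition $G/H=C\times U$, and finally assembles the general case via $T_{G/H}(th)=T_{G/H}(t)T_{G/H}(h)=t^{d}h^{d}=(th)^{d}$ (the last equality from the class-$\le 2$ identity). You instead treat an arbitrary $g$ in one pass: the cycle decomposition of the coset action collapses the transfer to $\prod_{j}s_j^{\pm1}g^{m}s_j^{\mp1}$ indexed by $A/\langle\bar g\rangle$, and the correction terms are absorbed into a single homomorphism $\phi_w$ evaluated at the product of all elements of that odd-order quotient. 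Both arguments ultimately rest on the same key input (Miller's theorem: the product of all elements of an abelian group of odd order is trivial), but yours avoids both the choice of a complement $U$ to the cyclic factor and the separate verification for $H$-elements. For the consequence, your argument is also the more direct one: you use that $T_{G/H}$ is a homomorphism into the abelian group $H$ and hence kills $[G,G]\subseteq H$, whereas the paper invokes Furtw\"angler's theorem about $T_{G/[G,G]}$, which is stronger than what is actually needed at that point. (The only cosmetic discrepancy is whether the cycle contribution is $s_j^{-1}g^{m}s_j$ or $s_jg^{m}s_j^{-1}$, which depends on the left/right transversal convention and does not affect the cancellation.)
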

More generally, combining this above Lemma  and the elementary divisor theorem, we obtain the following 
result (cf. Lemma \ref{Lemma 2.10}).

\begin{lem}\label{Lemma 2.10}
 Assume that $G$ is a finite group and $H$ a normal subgroup of $G$ such that 
 \begin{enumerate}
  \item $H$ is abelian
  \item $G/H$ is abelian of order $d$, such that (according to the elementary divisor theorem):
  \begin{center}
   $G/H\cong\mathbb{Z}/m_1\times\cdots\times\mathbb{Z}/m_s$
  \end{center}
where $m_1|\cdots|m_s$ and $\prod_{i}m_i=d$. Moreover, we fix elements $t_1,t_2,\cdots,t_s\in G$ such that $t_iH\in G/H$ generates the 
cyclic factor $\cong\mathbb{Z}/m_i$, hence $t_{i}^{m_i}\in H$.
\item $[G,[G,G]]=\{1\}$. In particular, $[G,G]$ is in the center $Z(G)$ of $G$.
 \end{enumerate}
Then each $g\in G$ has a unique decomposition 
\begin{enumerate}
 \item[(i)] 
 \begin{align*}
  g=t_{1}^{a_1}\cdots t_{s}^{a_s}\cdot h, \hspace{.5cm} T_{G/H}(g)=\prod_{i}^{s}T_{G/H}(t_i)^{a_i}\cdot T_{G/H}(h),
 \end{align*}
where $0\leq a_i\leq m_i-1$, $h\in H$ and
\item[(ii)] 
\begin{align*}
 T_{G/H}(t_i)=t_{i}^{d}\cdot[t_{i}^{m_i},\alpha_i], \quad\quad T_{G/H}(h)=h^{d}\cdot[h,\alpha],
\end{align*}
where $\alpha_i\in G/H$ is the product over all elements from $C_i\subset G/H$, the subgroup which is complementary to the cyclic subgroup
$<t_i>$ mod $H$, and where $\alpha\in G/H$ is product over all elements from $G/H$.\\
Here we mean $[t_{i}^{m_i},\alpha_i]:=[t_{i}^{m_i},\widehat{\alpha_i}]$, $[h,\alpha]:=[h,\widehat{\alpha}]$ for any representatives 
$\widehat{\alpha_i},\widehat{\alpha}\in G$. The commutators
are independent of the choice of the representatives and are always elements of order $\leq 2$ because 
$\widehat{\alpha_i}^{2},\widehat{\alpha}^{2}\in H$, and $H$ is abelian. \\
As a consequence of $(i)$ and $(ii)$ we can always obtain
\item[(iii)]
\begin{align*}
 T_{G/H}(g)=g^{d}\cdot\varphi_{G/H}(g),
\end{align*}
where $\varphi_{G/H}(g)\in Z(G)$ is an element of order $\leq 2$.
\end{enumerate}
As a consequence of the second equality in $(ii)$ combined with $[G,G]\subseteq H\cap\mathrm{Ker}(T_{G/H})$, one has $[G,G]^d=\{1\}$,
in other words, $G^d$ is contained in the center $Z(G)$ of $G$. 
\end{lem}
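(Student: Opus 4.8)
The plan is to treat the transfer $T_{G/H}$ as a homomorphism and to compute it by the cycle (orbit) formula, using throughout the hypothesis $[G,[G,G]]=\{1\}$, which makes $G$ nilpotent of class $\le 2$. Since $H$ is abelian, $[H,H]=\{1\}$ and $T_{G/H}$ is a genuine homomorphism $G\to H$; because its target is abelian it factors through $G/[G,G]$, so $[G,G]\subseteq \mathrm{Ker}(T_{G/H})$, and since $G/H$ is abelian we also have $[G,G]\subseteq H$. Part $(i)$ is then immediate: the decomposition $G/H\cong \mathbb{Z}/m_1\times\cdots\times\mathbb{Z}/m_s$ with $t_iH$ generating the $i$-th factor shows every coset is uniquely $t_1^{a_1}\cdots t_s^{a_s}H$ with $0\le a_i\le m_i-1$, hence $g=t_1^{a_1}\cdots t_s^{a_s}h$ for a unique $h\in H$; applying the homomorphism $T_{G/H}$ gives the stated product formula. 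The class-$\le 2$ hypothesis is what I lean on repeatedly: it makes the commutator $[x,y]=xyx^{-1}y^{-1}$ take central values and be bi-multiplicative, so all the rearrangements below are legitimate.

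Next I would establish the engine. For any $g\in G$ let $e$ be the order of $gH$ in $G/H$; left multiplication by $g$ acts on $G/H$ as translation by $gH$, so its orbits are the cosets of $\langle gH\rangle$, each of length $e$, with $d/e$ of them. Choosing a lift $x_j\in G$ of each orbit representative, the cycle formula for the transfer gives $T_{G/H}(g)=\prod_j x_j g^{e} x_j^{-1}$, and since each $x_j g^e x_j^{-1}=[x_j,g^e]\,g^e$ with $[x_j,g^e]$ central, collecting the $d/e$ copies of $g^e$ yields
\[
T_{G/H}(g)=g^{d}\,[\beta,g^{e}],\qquad \beta:=\textstyle\prod_j x_j .
\]
Specializing to $g=t_i$ (so $e=m_i$, $g^e=t_i^{m_i}$, the orbit representatives are the lifts of the complementary subgroup $C_i$, and $\beta=\widehat{\alpha_i}$) and to $g=h\in H$ (so $e=1$, the representatives run over all of $G/H$, and $\beta=\widehat{\alpha}$) produces exactly the two formulas in $(ii)$, up to replacing $[\beta,g^e]$ by its inverse, which is harmless once the order-$\le 2$ claim is in hand.

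That order-$\le 2$ claim I would prove uniformly. The representatives $\{x_jH\}$ form a full transversal of $\langle gH\rangle$ in $G/H$, so their images in $(G/H)/\langle gH\rangle$ exhaust that abelian group; hence the image of $\beta$ is the product of all elements of an abelian group and squares to the identity, giving $\beta^2\in\langle g\rangle\,H$, say $\beta^2=g^{k}h_0$ with $h_0\in H$. Then $[\beta,g^e]^2=[\beta^2,g^e]=[g^k,g^e]\,[h_0,g^e]=1$, since powers of $g$ commute and $h_0,g^e\in H$ with $H$ abelian. This gives the order-$\le 2$ assertions for the commutators in $(ii)$, and $(iii)$ follows at once, either by combining $(i)$ and $(ii)$ and pulling the central commutators out, or directly from the displayed formula: $\varphi_{G/H}(g)=g^{-d}T_{G/H}(g)=[\beta,g^{e}]\in[G,G]\subseteq Z(G)$ of order $\le 2$. (When $d$ is odd the relevant abelian quotients have no $2$-torsion, so $\beta$ itself lands in $\langle gH\rangle$, the correction vanishes, and one recovers $T_{G/H}(g)=g^{d}$ as in Lemma \ref{Lemma 2.2}.)

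Finally, the consequence $[G,G]^d=\{1\}$: by the two containments above, $[G,G]\subseteq H\cap\mathrm{Ker}(T_{G/H})$. For $c\in[G,G]$, which is central, the $h$-formula gives $T_{G/H}(c)=c^{d}[c,\alpha]=c^{d}$ because $[c,\cdot]=1$; but $c\in\mathrm{Ker}(T_{G/H})$ forces $T_{G/H}(c)=1$, so $c^{d}=1$. Bi-multiplicativity then gives $[g^{d},g']=[g,g']^{d}=1$ for all $g,g'\in G$, i.e. $G^{d}\subseteq Z(G)$. I expect the main obstacle to be bookkeeping rather than conceptual: fixing the left/right transfer conventions so that the cycle formula and the sign of $[\beta,g^{e}]$ come out consistently, and checking that well-definedness of the transfer makes $[\beta,g^{e}]$ independent of the chosen orbit representatives, which it is, since altering $x_j$ within its $\langle gH\rangle$-coset changes $\beta$ only by elements of $\langle g\rangle$ and of $H$, both commuting with $g^{e}$.
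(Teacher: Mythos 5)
Your proof is correct, and while it lands on the same formulas, it gets there by a genuinely cleaner route for parts (ii) and (iii). The paper computes $T_{G/H}(t_i)$ and $T_{G/H}(h)$ as two separate cases, each with its own adapted transversal (importing the computation from the proof of Lemma \ref{Lemma 2.2}), and then obtains (iii) by recombining (i) and (ii) through the identity $x^dy^d=(xy)^d[x,y]^{d(d-1)/2}$ of Lemma \ref{Lemma 22} — a step whose bookkeeping modulo $Z_2$ is somewhat delicate. You instead run the cycle decomposition of the transfer once, for an arbitrary $g$ with $e=\mathrm{ord}(gH)$, to get the uniform formula $T_{G/H}(g)=g^{d}[\beta,g^{e}]$ with $\beta$ a product of lifts of a transversal of $\langle gH\rangle$ in $G/H$; the two displays of (ii) are then just the specializations $g=t_i$ (where the transversal is $C_i$) and $g=h$ (where $e=1$), and (iii) is immediate with $\varphi_{G/H}(g)=[\beta,g^{e}]$, no recombination needed. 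Your order-$\le 2$ argument ($\beta^2\in\langle g\rangle H$ by Miller's theorem applied in $(G/H)/\langle gH\rangle$, then $[\beta,g^{e}]^2=[\beta^2,g^{e}]=1$ by bilinearity) is a mild generalization of the paper's ($\widehat{\alpha}^2,\widehat{\alpha_i}^2\in H$) and is sound; you also correctly note and neutralize the left/right conjugation convention, since the commutator equals its own inverse once its order is known to be $\le 2$, and that claim is proved independently of the sign. The closing argument for $[G,G]^d=\{1\}$ coincides with the paper's. What your approach buys is a single formula in place of two computations and the avoidance of the $\pmod{Z_2}$ juggling in the paper's derivation of (iii); what it costs is having to set up the orbit structure of left multiplication on $G/H$ carefully, which you do.
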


Now let $\rho=(Z,\chi_\rho)$ be a Heisenberg representation of $G$. Then from the definition of Heisenberg representation 
we have 
$$[[G,G], G]\subseteq \rm{Ker}(\rho).$$
Now let $\overline{G}:=G/\rm{Ker}(\rho)$. Then we obtain
$$[\overline{G},\overline{G}]=[G/\rm{Ker}(\rho),G/\rm{Ker}(\rho)]=[G,G]\cdot\rm{Ker}(\rho)/\rm{Ker}(\rho)=[G,G]/[G,G]\cap\rm{Ker}(\rho).$$
Since $[[G,G],G]\subseteq\rm{Ker}(\rho)$, then  $[x,g]\in\rm{Ker}(\rho)$ for all $x\in [G,G]$ and $g\in G$.
Hence we obtain
\begin{center}
 $[[\overline{G},\overline{G}],\overline{G}]=[[G,G]/[G,G]\cap \rm{Ker}(\rho), G/\rm{Ker}(\rho)]\subseteq\rm{Ker}(\rho)$,
\end{center}
This shows that $\overline{G}$ is a two-step nilpotent group.
Hence for computing determinant of a Heisenberg representation
of a finite group \textbf{modulo $\rm{Ker}(\rho)$} we can use the Lemmas \ref{Lemma 2.2} and \ref{Lemma 2.10} and 
 we obtain the following
result (cf. Proposition \ref{Proposition 2.13}).

\begin{prop}\label{Proposition 2.13}
  Let $\rho=(Z,\chi_\rho)$ be a Heisenberg representation of $G$, of dimension $d$, and put $X_\rho(g_1,g_2):=\chi_\rho\circ [g_1,g_2]$.
 Then we obtain
 \begin{equation}\label{eqn 2.16}
  (\mathrm{det}(\rho))(g)=\varepsilon(g)\cdot\chi_\rho(g^d),
 \end{equation}
where $\varepsilon$ is a function on $G$ with the following properties:
\begin{enumerate}
 \item $\varepsilon$ has values in $\{\pm 1\}$.
 \item $\varepsilon(gx)=\varepsilon(g)$ for all $x\in G^2\cdot Z$, hence $\varepsilon$ is a function on the factor group
 $G/G^2\cdot Z$, and in particular, $\varepsilon\equiv 1$ if $[G:Z]=d^2$ is odd.
 \item If $d$ is even, then the function $\varepsilon$ need not be a homomorphism but:
 \begin{center}
  $\frac{\varepsilon(g_1)\varepsilon(g_2)}{\varepsilon(g_1g_2)}=X_\rho(g_1,g_2)^{\frac{d(d-1)}{2}}=X_\rho(g_1,g_2)^{\frac{d}{2}}$.
 \end{center}
 Furthermore,
 \begin{enumerate}
  \item \textbf{When $\mathrm{rk}_2(G/Z)\ge 4$:} $\varepsilon$ is a homomorphism, and exactly $\varepsilon\equiv 1$.
  \item \textbf{When $\mathrm{rk}_2(G/Z)=2$:}  $\varepsilon$ is not a homomorphism and $\varepsilon$ is a function
  on $G/G^2Z$ such that
  \begin{center}
   $(\det\rho)(g)=\varepsilon(g)\cdot\chi_\rho(g^d)=\begin{cases}
                                                     \chi_\rho(g^d) & \text{for $g\in G^2Z$}\\
                                                     -\chi_\rho(g^d) & \text{for $g\notin G^2Z$.}
                                                    \end{cases}
$
  \end{center}

 \end{enumerate}

\end{enumerate}

\end{prop}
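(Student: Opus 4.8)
The plan is to realize $\rho$ as an induced representation and feed this into Gallagher's formula. Choose a maximal isotropic subgroup $H$ for $\rho$ (so $[G:H]=d$) with $\chi_H|_Z=\chi_\rho$, so that $\rho=\mathrm{Ind}_H^G\chi_H$, and apply Theorem \ref{Theorem Gall}:
\[
\det(\rho)(g)=\Delta_H^G(g)\cdot\chi_H(T_{G/H}(g)).
\]
Since $\overline{G}=G/\mathrm{Ker}(\rho)$ is two-step nilpotent, Lemma \ref{Lemma 2.10} applies to the pair $(\overline{G},\overline{H})$ and gives $T_{G/H}(g)=g^d\cdot\varphi_{G/H}(g)$ with $\varphi_{G/H}(g)$ central of order $\le 2$; its consequence $[G,G]^d=\{1\}$ also yields $G^d\subseteq Z$, so $g^d\in Z$ and $\chi_H(g^d)=\chi_\rho(g^d)$. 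Substituting, I obtain $\det(\rho)(g)=\varepsilon(g)\,\chi_\rho(g^d)$ with $\varepsilon(g)=\Delta_H^G(g)\cdot\chi_\rho(\varphi_{G/H}(g))$. As $\Delta_H^G$ is the sign character of the $G$-set $G/H$ and $\varphi_{G/H}(g)$ is central of order $\le 2$, both factors lie in $\{\pm1\}$, which is property (1); moreover $\varepsilon(g)=\det(\rho)(g)/\chi_\rho(g^d)$ is visibly independent of the choice of $H$.

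Next I would derive the cocycle identity. In the two-step nilpotent group $\overline{G}$ the power formula $(g_1g_2)^d=g_1^dg_2^d[g_2,g_1]^{\binom d2}$ combined with $\chi_\rho([g_2,g_1])=X_\rho(g_1,g_2)^{-1}$ gives
\[
\frac{\varepsilon(g_1)\varepsilon(g_2)}{\varepsilon(g_1g_2)}=X_\rho(g_1,g_2)^{\binom d2}.
\]
The relation $[G,G]^d=\{1\}$ forces $X_\rho^d=1$, whence for even $d$ one has $X_\rho^{d(d-1)/2}=X_\rho^{d/2}$, producing the exact exponents in property (3). Specializing the cocycle to $x\in Z$ (where $\varepsilon(x)=1$ and $X_\rho(\cdot,x)=1$) and to squares $x=y^2$ (where $\varepsilon(y^2)=\varepsilon(y)^2=1$ and $X_\rho(g,y^2)^{\binom d2}=X_\rho(g,y)^{d(d-1)}=1$) shows $\varepsilon(gx)=\varepsilon(g)$ for all $x\in G^2\cdot Z$, i.e.\ property (2); and when $d$ is odd, $G=G^2Z$ forces $\varepsilon\equiv\varepsilon(1)=1$.

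The substantive part is the dichotomy in property (3). Here I regard $\varepsilon$ as a $\{\pm1\}$-valued function on the $\bbF_2$-space $V=G/G^2Z\cong\bbF_2^{\,r}$, $r=\mathrm{rk}_2(G/Z)$, whose polar form is the alternating form $\beta(v_1,v_2)=X_\rho(v_1,v_2)^{d/2}$ (well defined on $V$ since $X_\rho^d=1$); thus $\varepsilon$ is a homomorphism exactly when $\beta\equiv1$. Decomposing $G/Z$ into mutually orthogonal hyperbolic pairs for the nondegenerate form $X_\rho$, I would compute $\beta$ on a pair $(e_j,f_j)$ spanning a factor $(\bbZ/2^{a_j})^2$ as $\zeta_{2^{a_j}}^{d/2}$, which equals $-1$ precisely when $v_2(d)=a_j$. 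Since $v_2(d)=\sum_j a_j$ over the even factors, this occurs for some pair iff there is a single even factor, i.e.\ iff $r=2$; for $r\ge4$ all these values are $+1$ and $\beta\equiv1$. This 2-adic bookkeeping is the crux: it yields that $\varepsilon$ is a homomorphism when $r\ge4$ and a genuine quadratic form with nondegenerate polar form when $r=2$.

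To pin $\varepsilon$ down exactly I would exploit that it is intrinsic to $\rho$. Any automorphism $\sigma$ of $G$ fixing $Z$ pointwise and inducing a symplectic transformation of $G/Z$ satisfies $\rho\circ\sigma\cong\rho$ (by uniqueness of the Heisenberg representation with given central datum), hence $\varepsilon\circ\sigma=\varepsilon$; since the symplectic group acts transitively on the nonzero vectors of $V$ for $r\ge2$, $\varepsilon$ is constant on $V\setminus\{0\}$. In case (a) a homomorphism constant on $V\setminus\{0\}$ with $r\ge4$ must be trivial, so $\varepsilon\equiv1$; in case (b) the common value cannot be $+1$ (else $\varepsilon$ would be a homomorphism, contradicting $\beta\not\equiv1$), so $\varepsilon\equiv-1$ off $G^2Z$, which is exactly the stated formula. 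The main obstacle I anticipate is supplying enough of this invariance --- namely realizing sufficiently many symplectic transformations by automorphisms fixing $Z$ --- and if that lifting is only partially available I would instead finish by directly evaluating $\Delta_H^G$ and $\chi_\rho(\varphi_{G/H})$ on a basis of $V$ using Lemma \ref{Lemma 2.10}.
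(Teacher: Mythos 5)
Your setup coincides with the paper's: induce from a maximal isotropic $H$, apply Gallagher's theorem, invoke Lemma \ref{Lemma 2.10} to write $T_{G/H}(g)=g^{d}\varphi_{G/H}(g)$, and set $\varepsilon(g)=\Delta_{H}^{G}(g)\cdot\chi_\rho(\varphi_{G/H}(g))$. Your derivation of properties (1) and (2), the cocycle identity in (3), and the computation showing that $X_\rho(t_j,t_j')^{d/2}=-1$ exactly when a single elementary divisor of $G/Z$ is even all match the paper's argument (the paper phrases the last point via the divisors $m_{s-1}\,|\,m_s$ rather than through $2$-adic valuations).

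The gap is in the decisive final step. To conclude $\varepsilon\equiv 1$ when $\mathrm{rk}_2(G/Z)\ge 4$ and to pin down the sign pattern $+\,-\,-\,-$ when $\mathrm{rk}_2(G/Z)=2$, you rely on $\varepsilon$ being constant on the nonzero cosets of $G/G^2Z$, which you deduce from invariance under automorphisms of $G$ fixing $Z$ pointwise and inducing arbitrary symplectic transformations of $G/Z$. Nothing in the proposal establishes that such automorphisms exist: the isometry group of the commutator pairing on $G/Z$ need not lift to $\mathrm{Aut}(G)$, since $G$ is not determined by the pairing alone (the $d$-th power map $G\to Z$ is extra data that any lift must respect), and even where partial lifts exist, transitivity of the induced action on the nonzero elements of $G/G^2Z$ is a further unproved claim. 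You anticipate this obstacle yourself and name the correct repair --- evaluating $\Delta_{H}^{G}$ and $\chi_\rho(\varphi_{G/H})$ directly --- but do not carry it out. The paper's proof does exactly that: for $\mathrm{rk}_2(G/Z)\ge 4$ it writes $G=H\cdot H'$ with two maximal isotropics whose quotients $G/H$, $G/H'$ each contain at least three involutions, so Miller's theorem gives $\alpha_{G/H}=\alpha_{G/H'}=1$, $T_{G/H}(h)=h^d$, and hence $\det(\rho)(g)=\chi_\rho(g^d)$ outright; for $\mathrm{rk}_2(G/Z)=2$ it produces two maximal isotropics with $\mathrm{rk}_2(H/Z)=1$, on whose nontrivial cosets $\varepsilon(h)=\chi_\rho([h,\alpha_{G/H}])=-1$, which together with the failure of multiplicativity rules out every sign distribution on the Klein four group $G/G^2Z$ except $+\,-\,-\,-$. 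Until you supply either the automorphism lifting or this direct evaluation, cases (3)(a) and (3)(b) are not proved.
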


In Chapter 5, our  
main aim is
to compute local constants for Heisenberg representations of a local Galois group. We know that Heisenberg representations
are {\bf monomial} (that is, induced from linear character of a finite-index subgroup),
and local constants are weakly extendible functions.
Therefore to compute local constants for Heisenberg representations we need to compute lambda-functions. In Chapter 3 we 
compute lambda-functions explicitly for finite Galois extensions (except wildly ramified quadratic extension), 
and in Chapter 5 we use them.

In Section 5.1, we study the arithmetic descriptions of Heisenberg representations. We define U-isotropic
Heisenberg representations and prove the following lemma (cf. \ref{Lemma U-isotropic}).

 \begin{lem}\label{Lemma U-isotropic}
 Fix a uniformizer $\pi_F$ and write $U:=U_F$. Then we obtain an isomorphism 
 $$\widehat{U}\cong \widehat{FF^\times/U\wedge U}, \quad \eta\mapsto X_\eta,\quad \eta_X\leftarrow X$$
 between characters of $U$ and $U$-isotropic alternating characters as follows:
 \begin{equation}\label{eqn 5.1.25}
  X_\eta(\pi_F^a\varepsilon_1,\pi_F^b\varepsilon_2):=\eta(\varepsilon_1)^b\cdot\eta(\varepsilon_2)^{-a},\quad
  \eta_X(\varepsilon):=X(\varepsilon,\pi_F),
 \end{equation}
 where $a,b\in\bbZ$, $\varepsilon,\varepsilon_1,\varepsilon_2\in U$, and $\eta:U\to\bbC^\times$.
 Then 
 $$\rm{Rad}(X_\eta)=<\pi_F^{\#\eta}>\times\rm{Ker}(\eta)=<(\pi_F\varepsilon)^{\#\eta}>\times\rm{Ker}(\eta),$$
 does not depend on the choice of $\pi_F$, where  $\#\eta$ is the order of the character $\eta$, hence 
 $$F^\times/\rm{Rad}(X_\eta)\cong <\pi_F>/<\pi_F^{\#\eta}>\times U/\rm{Ker}(\eta)\cong \bbZ_{\#\eta}\times\bbZ_{\#\eta}.$$
 Therefore all Heisenberg representations of type $\rho=\rho(X_\eta,\chi)$ have dimension $\rm{dim}(\rho)=\#\eta$.
\end{lem}

From the above lemma we can construct all Heisenberg representations of dimensions prime to $p$ 
(cf. Corollary \ref{Corollary U-isotropic}). And we also have the following explicit lemma (cf. Lemma \ref{Explicit Lemma}).

 \begin{lem}[{\bf Explicit Lemma}]\label{Explicit Lemma}
 Let $\rho=\rho(X_\eta,\chi_K)$ be a U-isotropic Heisenberg representation of the absolute Galois group $G_F$ of a local field 
 $F/\bbQ_p$. Let $K=K_\eta$ and let $E/F$ be the maximal unramified subextension in $K/F$. Then: 
 \begin{enumerate}
  \item The norm map induces an isomorphism:
  $$N_{K/E}:K_F^\times/I_FK^\times\stackrel{\sim}{\to}I_FE^\times/I_F\cN_{K/E}.$$
  \item Let $c_{K/F}:F^\times/\rm{Rad}(X_\eta)\wedge F^\times/\rm{Rad}(X_\eta)\cong K_F^\times/I_FK^\times$ be the isomorphism
  which is induced by the commutator in the relative Weil-group $W_{K/F}$. Then for units $\varepsilon\in U_F$ we 
  explicitly have:
  $$c_{K/F}(\varepsilon\wedge\pi_F)=N_{K/E}^{-1}(N_{E/F}^{-1}(\varepsilon)^{1-\varphi_{E/F}}),$$
  where $\varphi_{E/F}$ is the Frobenius automorphism for $E/F$ and where $N^{-1}$ means to take a preimage of the norm map.
  \item The restriction $\chi_K|_{K_F^\times}$ is characterized by:
  $$\chi_K\circ c_{K/F}(\varepsilon\wedge\pi_F)=X_\eta(\varepsilon,\pi_F)=\eta(\varepsilon),$$
  for all $\varepsilon\in U_F$, where $c_{K/F}(\varepsilon\wedge\pi_F)$ is explicitly given via (2).
 \end{enumerate}

\end{lem}

After these, in Subsection 5.1.2, we study the Artin conductors, Swan conductors of Heisenberg representations, and these 
results are important for giving explicit invariant formulas of local constants for Heisenberg representations of dimensions prime 
to $p$.

In the following theorem (cf. Theorem \ref{Theorem invariant odd}) we give an invariant formula of $W(\rho)$
for the Heisenberg representation $\rho$.

\begin{thm}\label{Theorem invariant odd}
 Let $\rho=\rho(X,\chi_K)$ be a Heisenberg representation of the absolute Galois group $G_F$ of a local field $F/\bbQ_p$
 of dimension $d$. Let $\psi_F$ be the canonical additive character of $F$ and $\psi_K:=\psi_F\circ\rm{Tr}_{K/F}$.
 Denote $\mu_{p^\infty}$ as the group of roots of unity of $p$-power order and $\mu_{n}$ as the group of 
 $n$-th roots of unity, where $n>1$ is an integer.
 \begin{enumerate}
  \item When the dimension $d$ is odd, we have 
   \begin{center}
  $W(\rho)\equiv W(\chi_\rho)'\mod{\mu_{d}}$,
 \end{center}
where $W(\chi_\rho)'$ is any $d$-th root of 
$W(\chi_K,\psi_K)$.
\item When the dimension $d$ is even, we have 
 \begin{center}
  $W(\rho)\equiv W(\chi_\rho)'\mod{\mu_{d'}}$,
 \end{center}
 where $d'=\rm{lcm}(4,d)$.
 \end{enumerate}
 
\end{thm}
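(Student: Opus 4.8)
The plan is to reduce the computation of $W(\rho)$ to an abelian local constant by exploiting the fact, recalled in the excerpt, that every Heisenberg representation is monomial. Fixing a maximal isotropic subgroup, which via local class field theory corresponds to an intermediate field $L$ with $[L:F]=d$, I would write $\rho=\mathrm{Ind}_{L/F}\,\chi_L$ for a linear character $\chi_L$ of $L^\times$ extending the central character. The inductivity of the local constant---the very property that defines the $\lambda$-function of Chapter 3---then gives
\[
 W(\rho,\psi_F)=\lambda_{L/F}(\psi_F)\cdot W(\chi_L,\psi_L),\qquad \psi_L=\psi_F\circ\mathrm{Tr}_{L/F}.
\]
Thus the whole statement is reduced to controlling, modulo $\mu_d$ (resp. $\mu_{d'}$), the two factors on the right: the $\lambda$-factor, which I would estimate using the explicit results of Chapter 3, and the abelian constant $W(\chi_L,\psi_L)$, which I would compare with the central constant $W(\chi_K,\psi_K)$.

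The second step is this comparison between the inducing character and the central character. By construction $\chi_L$ restricts on the centre to the central character, so on $K^\times$ one has $\chi_K=\chi_L\circ N_{K/L}$ with $[K:L]=d$ and, by Lemma \ref{Explicit Lemma}, $K/E$ unramified for the maximal unramified subextension $E/F$. Along the unramified part of $K/L$ I would invoke the Davenport--Hasse lifting relation, which expresses $W(\chi_K,\psi_K)$ as the $d$-th power $W(\chi_L,\psi_L)^{d}$ up to an explicit sign; the ramified steps are controlled by the Artin- and Swan-conductor computations of Subsection 5.1.2. Raising the inductivity formula to the $d$-th power and substituting then yields $W(\rho)^d=\lambda_{L/F}^{\,d}\cdot W(\chi_K,\psi_K)$ times a root of unity whose order I can track, so the theorem becomes the assertion that this accumulated root of unity lies in $\mu_d$ in the odd case and in $\mu_{d'}$ in the even case.

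For odd $d$ the argument closes cleanly. Then $L/F$ has odd degree, so Theorem \ref{General Theorem for odd case} forces $\lambda_{L/F}=1$, and the Davenport--Hasse sign is trivial; hence $W(\rho)^d=W(\chi_K,\psi_K)$, which is precisely $W(\rho)\equiv W(\chi_\rho)'\pmod{\mu_d}$. To see that this is genuinely intrinsic---independent of the chosen maximal isotropic $L$---I would cross-check against Deligne's factorization $W(\rho)=c(\rho)\cdot W(\det\rho)$ together with the determinant formula $\det(\rho)(g)=\varepsilon(g)\cdot\chi_\rho(g^d)$ of Proposition \ref{Proposition 2.13}: in the odd case $\varepsilon\equiv 1$ and $c(\rho)\in\mu_d$, so the congruence reappears without reference to $L$.

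The hard part is the even-dimensional case and the emergence of $d'=\mathrm{lcm}(4,d)$. Two independent sources feed the extra factor of $4$, and the difficulty is to reconcile them so that the final exponent is exactly $d'$ and no larger. First, the even-degree $\lambda$-factors are no longer trivial: by Theorem \ref{Theorem 3.21} they can be honest fourth roots of unity (the $i^{s}$ term), so $\lambda_{L/F}^{\,d}$ need only lie in $\mu_4$. Second, the determinant sign $\varepsilon$ of Proposition \ref{Proposition 2.13} ceases to be a homomorphism in the range $\mathrm{rk}_2(G/Z)=2$, where it varies by $X_\rho(g_1,g_2)^{d/2}$, again of order dividing $4$. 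I expect the main obstacle to be showing that these order-$4$ contributions---the even $\lambda$-factor, the Davenport--Hasse sign, and the non-homomorphic $\varepsilon$---combine to a root of unity of order dividing $\mathrm{lcm}(4,d)$ but no more, with $4$ genuinely forced exactly when $d\equiv 2\pmod{4}$.
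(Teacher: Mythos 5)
Your core idea is the right one, and your odd case does reach the paper's conclusion; but the route is longer than necessary and, more importantly, you do not actually prove the even case. The paper's proof is a two-line computation from the \emph{center}, not from a maximal isotropic subgroup: since $d\cdot\rho=\mathrm{Ind}_{K/F}(\chi_K)$, weak inductivity gives at once
\begin{equation*}
W(\rho,\psi)^{d}=\lambda_{K/F}(\psi)\cdot W(\chi_K,\psi_K),
\end{equation*}
and the only input needed is that $\lambda_{K/F}$ is a fourth root of unity, equal to $1$ when $[K:F]=d^{2}$ is odd (Theorem \ref{General Theorem for odd case}). Your detour through $L$ (the maximal isotropic field) forces you to compare $W(\chi_L,\psi_L)^{d}$ with $W(\chi_K,\psi_K)$, which is again just weak inductivity applied to $\mathrm{Ind}_{K/F}(\chi_K)=\mathrm{Ind}_{L/F}(\mathrm{Ind}_{K/L}(\chi_K))$ and needs no Davenport--Hasse lifting, no unramifiedness of $K/E$ (which anyway holds only in the U-isotropic case, not for a general Heisenberg $\rho$), and no Artin/Swan conductor input. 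Likewise your ``cross-check'' via $W(\rho)=c(\rho)W(\det\rho)$ rests on the unsupported claim that $c(\rho)\in\mu_d$; nothing in the paper gives that, and neither Proposition \ref{Proposition 2.13} nor the sign $\varepsilon$ enters the argument at all.

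The genuine gap is part (2). You end by saying you ``expect the main obstacle to be'' reconciling three order-$4$ contributions, i.e.\ you name a difficulty rather than resolve it, and two of the three contributions you list (the non-homomorphic $\varepsilon$ and the Deligne factorization) are irrelevant to the quantity $W(\rho)^{d}/W(\chi_K,\psi_K)$. In the paper the even case is disposed of immediately: $\lambda_{K/F}(\psi)\in\mu_4$ gives $\bigl(W(\rho,\psi)/W(\chi_\rho)'\bigr)^{d}\equiv 1\bmod{\mu_4}$, whence the congruence modulo $\mu_{d'}$ with $d'=\mathrm{lcm}(4,d)$. (If you want to be scrupulous, note that $x^{d}\in\mu_4$ literally yields $x\in\mu_{4d}$, and $4d=\mathrm{lcm}(4,d)$ only when $\gcd(4,d)=1$; so even the target congruence requires care in the even case. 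But your proposal does not get as far as confronting this: it stops at the point where the even case would have to be argued.)
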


For giving more explicit invariant formula of local constants for the Heisenberg representations, we need the following result
(cf. Proposition \ref{Proposition arithmetic form of determinant}).

\begin{prop}\label{Proposition arithmetic form of determinant}
 Let $\rho=\rho(Z,\chi_\rho)=\rho(G_K,\chi_K)$ be a Heisenberg representation of the absolute Galois group $G_F$.
 Let $E$ be a base field of a maximal isotropic for $\rho$. Then $F^\times\subseteq\cN_{K/E}$, and 
 \begin{equation}\label{eqn 5.1.12}
  \det(\rho)(x)=\Delta_{E/F}(x)\cdot\chi_K\circ N_{K/E}^{-1}(x)\quad \text{for all $x\in F^\times$},
 \end{equation}
where, for all $x\in F^\times$,
\begin{equation}\label{eqn 5.1.13}
 \Delta_{E/F}(x)=\begin{cases}
                  1 & \text{when $\rm{rk}_2(\rm{Gal}(E/F))\ne 1$}\\
                  \omega_{E'/F}(x) & \text{when $\rm{rk}_2(\rm{Gal}(E/F))= 1$},
                 \end{cases}
\end{equation}
where $E'/F$ is a uniquely determined quadratic subextension in $E/F$, and $\omega_{E'/F}$ is the character of $F^\times$ which 
corresponds to $E'/F$ by class field theory.
\end{prop}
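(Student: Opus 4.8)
The plan is to derive the asserted arithmetic identity from Gallagher's group-theoretic determinant formula (Theorem \ref{Theorem Gall}) and the functorial dictionary of local class field theory. Since $\rho$ is a Heisenberg representation we have $[G_F,G_F]\subseteq Z=G_K$, so $G_F/Z$ is abelian; writing $H=G_E$ for the maximal isotropic subgroup with base field $E$, both $H/Z$ and $G_F/H$ are subquotients of this abelian group, hence $H$ is normal in $G_F$ and the tower $F\subseteq E\subseteq K$ is abelian with $\rm{Gal}(K/F)\cong G_F/Z$, the symplectic space attached to $\rho$. Because $\rho=\mathrm{Ind}_{G_E}^{G_F}\chi_H$ with $\chi_H$ an extension of $\chi_K$, Theorem \ref{Theorem Gall} gives
\begin{equation*}
\det(\rho)(g)=\Delta_{E/F}(g)\cdot\chi_H\big(T_{G_F/G_E}(g)\big),\qquad \Delta_{E/F}:=\det(\mathrm{Ind}_{G_E}^{G_F}1).
\end{equation*}

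Next I would translate the two factors into class field theory. Under the Artin isomorphisms the transfer $T_{G_F/G_E}$ corresponds to the inclusion $F^\times\hookrightarrow E^\times$, so $\chi_H\big(T_{G_F/G_E}(x)\big)=\chi_H(x)$ for $x\in F^\times\subseteq E^\times$. On the other hand $\chi_K=\chi_\rho$ is the restriction $\chi_H|_{G_K}$, and restriction along $G_K\hookrightarrow G_E$ corresponds to the norm, so $\chi_K=\chi_H\circ N_{K/E}$ on $K^\times$. Granting the inclusion $F^\times\subseteq\cN_{K/E}=N_{K/E}(K^\times)$ proved below, for $x\in F^\times$ I choose $y\in K^\times$ with $N_{K/E}(y)=x$ and set $\chi_K\circ N_{K/E}^{-1}(x):=\chi_K(y)$; this is well defined since $\chi_K=\chi_H\circ N_{K/E}$ is trivial on $\rm{Ker}(N_{K/E})$, and it satisfies $\chi_H(x)=\chi_K(y)$. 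Combining the two computations gives exactly $\det(\rho)(x)=\Delta_{E/F}(x)\cdot\chi_K\circ N_{K/E}^{-1}(x)$ on $F^\times$.

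The crux, and the step I expect to be the main obstacle, is the inclusion $F^\times\subseteq\cN_{K/E}$, equivalently the triviality on $F^\times$ of the reciprocity map $\theta_{K/E}\colon E^\times\to\rm{Gal}(K/E)$. Since $K/F$ is abelian, the compatibility of reciprocity with the transfer gives $\theta_{K/E}(x)=\mathrm{Ver}\big(\theta_{K/F}(x)\big)$ for $x\in F^\times$, and for the abelian pair $\rm{Gal}(K/E)\subseteq\rm{Gal}(K/F)$ of index $[E:F]=d$ the transfer is the $d$-th power map, so $\theta_{K/E}(x)=\theta_{K/F}(x)^{d}$. Now $\theta_{K/F}(x)\in\rm{Gal}(K/F)=G_F/Z$, and applying Lemma \ref{Lemma 2.10} to $\overline G=G_F/\rm{Ker}(\rho)$ (which we already know is two-step nilpotent) with its abelian normal subgroup $\overline H=G_E/\rm{Ker}(\rho)$ of index $d$, the final consequence $\overline G^{d}\subseteq Z(\overline G)$ yields $(G_F/Z)^{d}=\{1\}$. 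Hence $\theta_{K/F}(x)^{d}=1$, so $\theta_{K/E}(x)=1$ and $x\in\cN_{K/E}$.

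Finally I would evaluate $\Delta_{E/F}$. As $E/F$ is abelian with group $A=\rm{Gal}(E/F)$, the induced representation $\mathrm{Ind}_{G_E}^{G_F}1$ is the inflation of the regular representation of $A$, so $\Delta_{E/F}$ is the sign character of $A$ acting on itself by translation. A short cycle-type computation shows this character is trivial precisely when the Sylow $2$-subgroup of $A$ is trivial or non-cyclic, that is when $\rm{rk}_2(A)\ne 1$, and that when $\rm{rk}_2(A)=1$ it equals the unique order-$2$ character of $A$, which under class field theory is $\omega_{E'/F}$ for the unique quadratic subextension $E'/F$. This gives the stated description of $\Delta_{E/F}$ and completes the proof. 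The class-field-theory functorialities and the determinant of the regular representation are standard, so the only genuine work lies in organizing the two reciprocity-law compatibilities and in the norm inclusion handled above.
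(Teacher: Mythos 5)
Your proposal is correct and follows essentially the same route as the paper: Gallagher's determinant formula read through the class-field-theory dictionary, the norm inclusion $F^\times\subseteq\cN_{K/E}$ obtained from the compatibility of reciprocity with the transfer $\rm{Gal}(K/F)\to\rm{Gal}(K/E)$ (which is the $d$-th power map, trivial because $G^d\subseteq Z$ by Lemma \ref{Lemma 2.10} — this is exactly the paper's Lemma \ref{Lemma transfer Heisenberg} and commutative diagram), and Miller's theorem for $\Delta_{E/F}$. The only cosmetic difference is that you evaluate $\Delta_{E/F}$ via the sign of the translation permutation rather than as $\prod_{\chi\in\widehat{A}}\chi$, which is equivalent.
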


By using the above proposition we have the following invariant 
formula (cf. Theorem \ref{invariant formula for minimal conductor representation})
of local constant for a minimal conductor U-isotopic Heisenberg representation of dimension prime to $p$.

 \begin{thm}\label{invariant formula for minimal conductor representation}
  Let $\rho=\rho(X,\chi_K)$ be a minimal conductor Heisenberg representation of the absolute Galois group $G_F$ of a non-archimedean
  local field $F/\bbQ_p$ of dimension $m$ with $gcd(m,p)=1$. Let $\psi$ be a nontrivial additive character of $F$. Then 
  \begin{equation}
   W(\rho,\psi)=R(\psi,c)\cdot L(\psi,c),
  \end{equation}
where 
$$R(\psi,c):=\lambda_{E/F}(\psi)\Delta_{E/F}(c),$$
is a fourth root of unity that depends on $c\in F^\times$ with $\nu_F(c)=1+n(\psi)$ but not on the totally ramified cyclic subextension
$E/F$ in $K/F$, and 
$$L(\psi,c):=\det(\rho)(c)q_F^{-\frac{1}{2}}\sum_{x\in k_F^\times}(\chi_K\circ N_{E_1/F}^{-1})^{-1}(x)\cdot (c^{-1}\psi)(mx),$$
where $E_1/F$ is the unramified extension of $F$ of degree $m$.
 \end{thm}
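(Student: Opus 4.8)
The plan is to reduce $W(\rho,\psi)$ to a one-dimensional (abelian) local constant by monomiality, evaluate that constant as a classical Gauss sum over the residue field, and then repackage the arithmetic data using the determinant formula of Proposition~\ref{Proposition arithmetic form of determinant}. First I would use that a Heisenberg representation is monomial: choosing a maximal isotropic whose base field is a totally ramified cyclic extension $E/F$ of degree $m$ (such a choice exists because $\gcd(m,p)=1$ and $\rho$ is $U$-isotropic, by Lemma~\ref{Lemma U-isotropic}), we may write $\rho=\mathrm{Ind}_{E/F}\chi_E$ with $\chi_E=\chi_K\circ N_{K/E}^{-1}$ a character of $E^\times$. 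The inductive behaviour of local constants in terms of the $\lambda$-function (the defining relation of Chapter~3) then gives, since $\dim\chi_E=1$,
\[
W(\rho,\psi)=\lambda_{E/F}(\psi)\cdot W(\chi_E,\psi_E),\qquad \psi_E:=\psi\circ\mathrm{Tr}_{E/F}.
\]
This isolates $\lambda_{E/F}(\psi)$, which will become part of $R(\psi,c)$.

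Next I would compute the abelian constant $W(\chi_E,\psi_E)$ by Tate's explicit formula (\cite{JT1}, together with Theorem~\ref{Theorem 3.5} for the Gauss sum). The minimal-conductor hypothesis forces $\chi_E$ to be tame of conductor $1$, so $W(\chi_E,\psi_E)$ is a classical Gauss sum, and since $E/F$ is totally ramified we have $k_E=k_F$, so this Gauss sum already lives over $k_F^\times$. Taking $c\in F^\times$ with $\nu_F(c)=1+n(\psi)$, a valuation count using the tame different exponent $m-1$ shows that $c$, viewed in $E^\times$, sits at exactly the critical level $\nu_E(c)=a(\chi_E)+n(\psi_E)$, so $c$ may serve as gauge element. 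For $x\in F^\times\subseteq E^\times$ one has $\mathrm{Tr}_{E/F}(x)=mx$, whence $\psi_E(x/c)=(c^{-1}\psi)(mx)$; this is precisely where the factor $m$ in $L(\psi,c)$ originates. The residual multiplicative character is then identified through the Explicit Lemma~\ref{Explicit Lemma}: the restriction of $\chi_E$ to units is governed by the single character $\eta$ of the $U$-isotropic structure, which coincides on $k_F^\times$ with $\chi_K\circ N_{E_1/F}^{-1}$ computed through the \emph{unramified} $E_1$, explaining why $E_1$ rather than $E$ appears in the Gauss sum. Assembling these pieces yields
\[
W(\chi_E,\psi_E)=\chi_E(c)\,q_F^{-1/2}\sum_{x\in k_F^\times}(\chi_K\circ N_{E_1/F}^{-1})^{-1}(x)\,(c^{-1}\psi)(mx),
\]
in Tate's normalization.

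I would then convert the leading factor via Proposition~\ref{Proposition arithmetic form of determinant}: since $c\in F^\times\subseteq\cN_{K/E}$, one has $\det(\rho)(c)=\Delta_{E/F}(c)\cdot\chi_K\circ N_{K/E}^{-1}(c)=\Delta_{E/F}(c)\,\chi_E(c)$, and as $\Delta_{E/F}(c)=\pm1$ this gives $\chi_E(c)=\Delta_{E/F}(c)\det(\rho)(c)$. Substituting, and regrouping $\lambda_{E/F}(\psi)\Delta_{E/F}(c)=R(\psi,c)$ together with the remaining Gauss-sum expression as $L(\psi,c)$, proves the factorisation $W(\rho,\psi)=R(\psi,c)L(\psi,c)$. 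It remains to check that $R(\psi,c)$ is a fourth root of unity independent of $E$. Here $\Delta_{E/F}(c)=\pm1$, and by Proposition~\ref{Proposition arithmetic form of determinant} it equals $1$ unless $\mathrm{rk}_2(\mathrm{Gal}(E/F))=1$, in which case it is $\omega_{E'/F}(c)$ for the unique quadratic subextension $E'/F$; thus it depends only on $E'$. For the tame cyclic $E/F$ of degree $m$, the odd part of $\lambda_{E/F}(\psi)$ is trivial by Theorem~\ref{General Theorem for odd case}, while the only possible $2$-power contribution comes from $E'/F$, which by Theorem~\ref{Theorem 3.21} is a fourth root of unity depending only on $E'$. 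Since all totally ramified cyclic $E/F$ of degree $m$ inside $K/F$ share the same quadratic subextension $E'$, both $\lambda_{E/F}(\psi)$ and $\Delta_{E/F}(c)$, hence $R(\psi,c)$, are independent of the chosen $E$.

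I expect the main obstacle to be the middle step: tracking the additive character through $\mathrm{Tr}_{E/F}$ at the critical level and correctly identifying the residual multiplicative character with $\chi_K\circ N_{E_1/F}^{-1}$ on $k_F^\times$ rather than with a character attached to $E$ itself. Making the factor $m$, the gauge element $c$, and the switch from the totally ramified $E$ to the unramified $E_1$ simultaneously consistent is the delicate part, and it is exactly the $U$-isotropic structure together with the Explicit Lemma that makes this identification possible.
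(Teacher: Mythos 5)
Your overall route --- monomial reduction through a totally ramified cyclic $E/F$, evaluation of $W(\chi_E,\psi_E)$ as a conductor-one Gauss sum over $k_F^\times$ via $\mathrm{Tr}_{E/F}(x)=mx$, identification of the residual character through $N_{E_1/F}^{-1}$, and conversion of $\chi_E(c)$ into $\Delta_{E/F}(c)\det(\rho)(c)$ via Proposition \ref{Proposition arithmetic form of determinant} --- is exactly the paper's argument, and that part is sound.

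The gap is in your final step, the independence of $R(\psi,c)$ from $E$. You assert that all totally ramified cyclic degree-$m$ subextensions of $K/F$ share the same quadratic subextension $E'$, and you let both $\lambda_{E/F}(\psi)$ and $\Delta_{E/F}(c)$ inherit their invariance from that. This is false already for $m=2$, $p\ne 2$: there $K/F$ is the extension with $\cN_{K/F}={F^\times}^2$, and the totally ramified cyclic degree-$2$ subextensions are the \emph{two distinct} ramified quadratic extensions $L_2/F$ and $L_3/F$, each equal to its own quadratic subextension. Moreover the two individual factors genuinely do vary with $E$: $\Delta_{E/F}=\omega_{E'/F}$ agrees for the two choices only on units (the two ramified quadratic characters differ by the unramified quadratic character), so $\Delta_{E/F}(c)$ changes sign between them whenever $\nu_F(c)=1+n(\psi)$ is odd, and correspondingly $\lambda_{L_2/F}(\psi)\ne\lambda_{L_3/F}(\psi)$ when $n(\psi)$ is even (see the table in Remark \ref{Remark 3.4.11}). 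What saves the statement is that only the \emph{product} needs to be invariant: $R(\psi,c)=\lambda_{E/F}(\psi)\Delta_{E/F}(c)=\lambda_{E/F}(c\psi)=\lambda_{E/F}(\psi')$ with $n(\psi')=\nu_F(c)+n(\psi)=2n(\psi)+1$ always \emph{odd}, and for additive characters of odd conductor the two ramified quadratic extensions give the same $\lambda$-value (case (a) of Remark \ref{Remark 3.4.11}). One then still has to reduce from degree $m$ to its $2$-primary part via Corollary \ref{Lemma 3.10} and, in the residual case $[E:F]=4$, check separately that the factor $\beta(-1)$ is independent of $E$ (which follows because $U_F\cap\cN_{E/F}$ is determined by its index in the cyclic group $U_F/U_F^1$). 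Without replacing your $E'$-argument by this combined-product argument, the proof of invariance does not go through.
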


And when $\rho=\rho(X,\chi_K)$ is not minimal conductor, we have the following theorem 
(cf. Theorem \ref{Theorem using Deligne-Henniart}). 

 \begin{thm}\label{Theorem using Deligne-Henniart}
  Let $\rho=\rho(X_\eta,\chi_K)=\rho_0\otimes\widetilde{\chi_F}$ be a Heisenberg representation of the absolute Galois group
  $G_F$ of a non-archimedean local field $F/\bbQ_p$ of dimension $m$ with $gcd(m,p)=1$, where 
  $\rho_0=\rho_0(X_\eta,\chi_0)$ is a minimal conductor Heisenberg representation of $G_F$
  and $\widetilde{\chi_F}:W_F\to\bbC^\times$
  corresponds to $\chi_F:F^\times\to \bbC^\times$ by class field theory.
  If $a(\chi_F)\ge 2$, then we have 
 \begin{equation}\label{eqn 5.4.9}
  W(\rho,\psi)=W(\rho_0\otimes\widetilde{\chi_F},\psi)=W(\chi_F,\psi)^m\cdot\det(\rho_0)(c),
 \end{equation}
where $\psi$ is a nontrivial additive character of $F$, and $c:=c(\chi_F,\psi)\in F^\times$, satisfies 
\begin{center}
 $\chi_F(1+x)=\psi(c^{-1}x)$ for all $x\in P_{F}^{a(\chi_F)-[\frac{a(\chi_F)}{2}]}$.
\end{center}
\end{thm}

In Section 5.3 we use the Tate's {\bf root-of-unity criterion} (cf. \cite{JT1}, p. 112, Corollary 4)
in our local constant computation.
Let $\rho$ be a Heisenberg representation of the absolute Galois group $G_F$.
In the following proposition (cf. Proposition \ref{Proposition 4.12}) we show 
that if $W(\rho)$ is not a root of unity, then $\rm{dim}(\rho)|(q_F-1).$
\begin{prop}\label{Proposition 4.12}
 Let $F/\bbQ_p$ be a local field and let $q_F=p^s$ be the order of its finite residue field. If
$\rho=(Z_\rho,\chi_\rho)=\rho(X_\rho,\chi_K)$ is a Heisenberg representation of the absolute 
Galois group $G_F$ such that $W(\rho)$ is not a root of unity, 
then $dim(\rho)|(q_F-1)$.
\end{prop}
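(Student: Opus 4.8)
The plan is to invoke Tate's root-of-unity criterion (\cite{JT1}, p.\ 112, Corollary 4), which characterizes when a local constant fails to be a root of unity. The criterion says essentially that $W(\rho)$ is a root of unity unless a certain Gauss-sum-type quantity attached to $\rho$ survives, and that survival forces an arithmetic divisibility condition on the conductor data of $\rho$. Since $\rho = \rho(X_\rho,\chi_K)$ is a Heisenberg representation, it is monomial: by Lemma~\ref{Lemma U-isotropic} and the surrounding structure theory we may write $\rho = \mathrm{Ind}_{K/F}\chi_K$ for a suitable intermediate field $K$ with $[K:F] = \mathrm{dim}(\rho) =: d$, and $\chi_K$ a linear character of $K^\times$ (via class field theory). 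The inductivity of $W$ in degree zero then expresses $W(\rho)$ in terms of $\lambda_{K/F}$ and $W(\chi_K,\psi_K)$; the $\lambda$-factor is a root of unity by the computations of Chapter~3, so $W(\rho)$ is a root of unity if and only if $W(\chi_K,\psi_K)$ is.

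First I would reduce the whole question to the abelian Gauss sum $W(\chi_K,\psi_K)$ attached to the linear character $\chi_K$ of $K^\times$, using the monomial description above together with the fact that $\lambda_{K/F}$ is always a root of unity. Next I would apply Tate's root-of-unity criterion to $W(\chi_K,\psi_K)$: that criterion says the abelian local constant is a root of unity unless $\chi_K$ is, up to an unramified twist, trivial on the units of high enough level so that $W(\chi_K,\psi_K)$ reduces to a genuine classical Gauss sum over the residue field $k_K$ of $K$. In that exceptional case $W(\chi_K,\psi_K)$ is (a normalized version of) a Gauss sum $\sum_{x\in k_K^\times}\chi_K(x)\psi_K(x)$, whose absolute value is $\sqrt{q_K}$ but which is generically transcendental, hence not a root of unity.

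The heart of the argument is then to translate the condition ``$W(\chi_K,\psi_K)$ is not a root of unity'' back into a divisibility statement for $\mathrm{dim}(\rho)$. The Gauss sum over $k_K$ lives on the multiplicative group $k_K^\times$, whose order is $q_K - 1$. The character $\chi_K$ restricted to $k_K^\times$ must be a nontrivial character whose values generate the relevant roots of unity, and the compatibility of $\chi_K$ with the Heisenberg commutator pairing $X_\rho$ (through Lemma~\ref{Explicit Lemma}, item (3), which pins down $\chi_K|_{K_F^\times}$ via $\eta$) forces the order of the unramified part of the construction to divide $q_K - 1$. Combining this with the relation between $q_K$ and $q_F$ coming from the degree of the unramified subextension $E/F$ inside $K/F$, and using that $\mathrm{dim}(\rho) = \#\eta$ governs both the ramified and unramified contributions, I would extract the conclusion $\mathrm{dim}(\rho)\mid (q_F - 1)$.

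The main obstacle I anticipate is the bookkeeping that separates the unramified from the totally ramified part of $K/F$ and correctly identifies which residue field the surviving Gauss sum lives over. Tate's criterion most directly constrains the residue field $k_K$ of $K$, giving a condition like $\#\eta\mid (q_K-1)$, and one must descend this to a statement purely in terms of $q_F$ and $\mathrm{dim}(\rho)$. This requires using that for a $U$-isotropic Heisenberg representation the field $K=K_\eta$ has its unramified part $E/F$ of degree exactly $\mathrm{dim}(\rho)=\#\eta$ (so that $q_E = q_F^{d}$), together with the precise form of $\mathrm{Rad}(X_\eta)$ from Lemma~\ref{Lemma U-isotropic}; keeping the ramified cyclic factor from spuriously enlarging the order of the surviving character is the delicate point, and I expect that is where the hypothesis $W(\rho)$ \emph{not} a root of unity does its real work, by excluding the wildly ramified contributions that would otherwise obstruct the clean divisibility $\mathrm{dim}(\rho)\mid(q_F-1)$.
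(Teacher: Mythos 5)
Your opening move (Tate's root-of-unity criterion) is the same as the paper's, but you apply it to the wrong object and the decisive step is missing. The paper applies the criterion directly to $\rho$: if $W(\rho)$ is not a root of unity then $\rho^{P}\ne 0$, hence $\rho^{P}=\rho$ by irreducibility, hence the wild inertia subgroup $P$ lies in $\mathrm{Ker}(\rho)\subset Z_\rho$. Consequently $G_F/Z_\rho$ is a quotient of $G_F/P\cdot[G_F,G_F]$, which by class field theory is (the completion of) $F^\times/U_F^1=\langle\pi_F\rangle\times U_F/U_F^1$, so the alternating character $X_\rho$ descends to $F^\times/U_F^1$. Since $U_F/U_F^1$ is cyclic of order $q_F-1$, one gets $X^{q_F-1}\equiv 1$, so $F^\times/\mathrm{Rad}(X)$ is a quotient of $F^\times/({F^\times}^{q_F-1}U_F^1)\cong\bbZ_{q_F-1}\times\bbZ_{q_F-1}$, and $\dim(\rho)=\sqrt{[F^\times:\mathrm{Rad}(X)]}$ divides $q_F-1$ (this is exactly the implication $(4)\Rightarrow(1)$ of Lemma \ref{Lemma dimension equivalent}). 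Nothing about Gauss sums over $k_K$ is needed.

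Your route — reduce to the abelian constant $W(\chi_E,\psi_E)$ via inductivity and the fact that $\lambda$-factors are roots of unity, then apply Tate's criterion to the linear character — is salvageable in principle, but as written it has two genuine gaps. First, the final divisibility is circular: you invoke the $U$-isotropic structure $X=X_\eta$ with the unramified subextension $E/F$ of degree $\#\eta$ and $a(\eta)=1$, but by Lemma \ref{Lemma dimension equivalent} and Corollary \ref{Corollary U-isotropic} that structure is \emph{equivalent} to the conclusion $\dim(\rho)\mid(q_F-1)$; it is not available as a hypothesis and is precisely what must be derived from ``$W(\rho)$ is not a root of unity.'' Second, the divisibility you actually extract, $\#\eta\mid(q_K-1)$ for the residue field of $K$ (or $E$), is strictly weaker than what is required, since $q_K$ is a power $q_F^{d}$ of $q_F$ and $d\mid(q_F^{d}-1)$ does not imply $d\mid(q_F-1)$; no descent argument is supplied. (A smaller slip: you write $\rho=\mathrm{Ind}_{K/F}\chi_K$ with $[K:F]=\dim\rho$, whereas $[K:F]=\dim(\rho)^2$ and $\dim(\rho)\cdot\rho=\mathrm{Ind}_{K/F}\chi_K$; the degree-$\dim(\rho)$ induction is from a maximal isotropic field $E$, and this confusion feeds into your residue-field bookkeeping.) To repair your approach you would need to convert ``$\chi_E$ is trivial on $U_E^1$'' into ``$P\subset\mathrm{Ker}(\rho)$'' and then run the paper's argument on $F^\times/U_F^1$ anyway, at which point the detour through the abelian constant buys nothing.
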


\chapter{\textbf{Preliminaries}}

In this chapter, we will recall some background of local constants, classical Gauss sums, Heisenberg representations that
will be relevant to the next chapters. We also state some known facts which we will use in our next chapters.
 In this chapter for local constants we follow 
  \cite{RL}, \cite{JT1}, \cite{JT2} and for extendible functions \cite{HK}.
  For classical Gauss sums and Heisenberg representations
  we refer \cite{LN}, \cite{BRK} and \cite{Z2}, \cite{Z3} respectively.

\section{\textbf{Local Fields and their finite extensions}}

Let $F$  be a non-archimedean local field, i.e., a finite extension of the field $\mathbb{Q}_p$ (field of $p$-adic numbers),
where $p$ is a prime.
Let $K/F$ be a finite extension of the field $F$. Let $e_{K/F}$ be the ramification index for the extension $K/F$ and $f_{K/F}$ be 
the residue degree of the extension $K/F$. The extension $K/F$ is called \textbf{unramified} 
if $e_{K/F}=1$; equivalently $f_{K/F}=[K:F]$. The extension $K/F$ is \textbf{totally ramified} if 
$e_{K/F}=[K:F]$; equivalently $f_{K/F}=1$. 
Let
$q_F$ be the cardinality of the residue field $k_F$ of $F$. If $\mathrm{gcd}(p,[K:F])=1$, then the extension 
$K/F$ is called \textbf{tamely ramified}, otherwise \textbf{wildly ramified}. The extension $K/F$ is \textbf{totally tamely ramified}
if it is both totally ramified and tamely ramified.

For a tower of {\bf local} fields $K/L/F$, we have (cf. \cite{FV}, p. 39, Lemma 2.1) 
\begin{equation}
 e_{K/F}(\nu_K)=e_{K/L}(\nu_K)\cdot e_{L/F}(\nu_L),
\end{equation}
where $\nu_K$ is a valuation on $K$ and $\nu_L$ is the induced 
valuation on $L$, i.e., $\nu_L=\nu_K|_{L}$. For 
the tower of fields $K/L/F$ we simply write $e_{K/F}=e_{K/L}\cdot e_{L/F}$.
Let $O_F$ be the 
ring of integers in the local field $F$ and $P_F=\pi_F O_F$ is the unique prime ideal in $O_F$ 
and $\pi_F$ is a uniformizer, i.e., an element in $P_F$ whose valuation is one, i.e.,
 $\nu_F(\pi_F)=1$.
Let $U_F=O_F-P_F$ be the group of units in $O_F$.
Let $P_{F}^{i}=\{x\in F:\nu_F(x)\geq i\}$ and for $i\geq 0$ define $U_{F}^i=1+P_{F}^{i}$
(with proviso $U_{F}^{0}=U_F=O_{F}^{\times}$).
We also consider that $a(\chi)$ is the conductor of 
 nontrivial character $\chi: F^\times\to \mathbb{C}^\times$, i.e., $a(\chi)$ is the smallest integer $m\geq 0$ such 
 that $\chi$ is trivial
 on $U_{F}^{m}$. We say $\chi$ is unramified if the conductor of $\chi$ is zero and otherwise ramified.
Throughout the thesis when $K/F$
is unramified we choose uniformizers $\pi_K=\pi_F$. And when $K/F$ is totally ramified (both tame and wild) we choose
uniformizers $\pi_F=N_{K/F}(\pi_K)$, where $N_{K/F}$ is the norm map from $K^\times$ to $F^\times$.

\begin{dfn}[\textbf{Different and Discriminant}] 
 Let $K/F$ be a finite separable extension of non-archimedean local field $F$. We define the \textbf{inverse different (or codifferent)}
 $\mathcal{D}_{K/F}^{-1}$ of $K$ over $F$ to be $\pi_{K}^{-d_{K/F}}O_K$, where $d_{K/F}$ is the largest integer (this is the 
 exponent of the different $\mathcal{D}_{K/F}$) such that 
 \begin{center}
  $\mathrm{Tr}_{K/F}(\pi_{K}^{-d_{K/F}}O_K)\subseteq O_F$,
 \end{center}
 where $\rm{Tr}_{K/F}$ is the trace map from $K$ to $F$.
Then the \textbf{different} is defined by:
\begin{center}
 $\mathcal{D}_{K/F}=\pi_{K}^{d_{K/F}}O_K$
\end{center}
and the \textbf{discriminant} $D_{K/F}$ is 
\begin{center}
 $D_{K/F}=N_{K/F}(\pi_{K}^{d_{K/F}})O_F$.
\end{center}
 Thus it is easy to see that $D_{K/F}$ is an \textbf{ideal of $O_F$}.

We know that if $K/F$ is unramified, then $D_{K/F}$ is \textbf{a unit in $O_F$}. If $K/F$ is 
tamely ramified, then 
\begin{equation}\label{eqn 2.2}
 \nu_K(\mathcal{D}_{K/F})=d_{K/F}=e_{K/F} - 1.
\end{equation}
\end{dfn}
(see \cite{JPS}, Chapter III, for details about different and discriminant of the extension $K/F$.)
We need to mention a very important result of J-P. Serre for our purposes.

\begin{lem}[\cite{JPS}, p. 50, Proposition 7]\label{Lemma 2.21}
Let $K/F$ be a finite separable extension of the field $F$. Let $I_F$ (resp. $I_K$) be a fractional ideal of $F$ (resp. $K$)
relative to $O_F$ (resp. $O_K$). Then the following two properties are equivalent:
\begin{enumerate}
 \item $\mathrm{Tr}_{K/F}(I_K)\subset I_F$.
 \item $I_K\subset I_F.\mathcal{D}_{K/F}^{-1}$.
\end{enumerate}

 \end{lem}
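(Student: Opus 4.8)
The plan is to exploit that $F$ is a local field, so both $O_F$ and $O_K$ are discrete valuation rings in which every fractional ideal is a power of a uniformizer; combined with the $F$-linearity of the trace, this collapses the equivalence into a single comparison of valuations. The whole argument is elementary once one reduces to the case $I_F = O_F$ and invokes the concrete description $\mathcal{D}_{K/F}^{-1} = \pi_K^{-d_{K/F}} O_K$ from the definition.

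First I would reduce to the case $I_F = O_F$. Since $O_F$ is a DVR, write $I_F = \pi_F^m O_F$ with $m \in \bbZ$. Because $\pi_F^m \in F$ and $\mathrm{Tr}_{K/F}$ is $F$-linear, we have $\mathrm{Tr}_{K/F}(\pi_F^{-m} x) = \pi_F^{-m}\mathrm{Tr}_{K/F}(x)$, so condition (1) is equivalent to $\mathrm{Tr}_{K/F}(\pi_F^{-m} I_K) \subseteq O_F$; likewise condition (2) reads $\pi_F^{-m} I_K \subseteq \mathcal{D}_{K/F}^{-1}$. Replacing $I_K$ by the fractional ideal $\pi_F^{-m} I_K$, I may therefore assume $I_F = O_F$ and must prove that $\mathrm{Tr}_{K/F}(I_K) \subseteq O_F$ is equivalent to $I_K \subseteq \mathcal{D}_{K/F}^{-1}$.

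Next, since $O_K$ is a DVR, write $I_K = \pi_K^n O_K$ with $n \in \bbZ$. Recalling $\mathcal{D}_{K/F}^{-1} = \pi_K^{-d_{K/F}} O_K$, condition (2) becomes simply $n \geq -d_{K/F}$, so the task is to show that $\mathrm{Tr}_{K/F}(\pi_K^n O_K) \subseteq O_F$ holds precisely when $n \geq -d_{K/F}$. The forward implication is monotonicity: if $n \geq -d_{K/F}$ then $\pi_K^n O_K \subseteq \pi_K^{-d_{K/F}} O_K$, and since by definition $\mathrm{Tr}_{K/F}(\pi_K^{-d_{K/F}} O_K) \subseteq O_F$, the trace of the smaller ideal is also contained in $O_F$.

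The step I expect to be the crux is the reverse implication, where the maximality built into the definition of $d_{K/F}$ is essential. Suppose $n \leq -d_{K/F} - 1$; then $\pi_K^n O_K \supseteq \pi_K^{-d_{K/F}-1} O_K$, whence $\mathrm{Tr}_{K/F}(\pi_K^n O_K) \supseteq \mathrm{Tr}_{K/F}(\pi_K^{-d_{K/F}-1} O_K)$. By the very definition of $d_{K/F}$ as the largest integer $d$ with $\mathrm{Tr}_{K/F}(\pi_K^{-d} O_K) \subseteq O_F$, the ideal $\pi_K^{-d_{K/F}-1} O_K$ fails the trace condition, so $\mathrm{Tr}_{K/F}(\pi_K^n O_K) \not\subseteq O_F$. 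This shows that condition (1) forces $n \geq -d_{K/F}$, completing the equivalence. It is worth remarking that separability enters only to guarantee that $d_{K/F}$ is finite: nondegeneracy of the trace form ensures the set of admissible $d$ is bounded above, while it is nonempty since $\mathrm{Tr}_{K/F}(O_K) \subseteq O_F$ gives $d = 0$, so the description of $\mathcal{D}_{K/F}^{-1}$ used throughout is well-posed.
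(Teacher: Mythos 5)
Your proof is correct. Note that the paper does not actually prove this lemma: it is quoted verbatim from Serre (\emph{Local Fields}, Ch.~III, Prop.~7) with no argument supplied, so there is no in-paper proof to compare against. Your argument is a legitimate self-contained proof in the setting the thesis actually works in, namely $F$ a non-archimedean local field, where $O_F$ and $O_K$ are discrete valuation rings: the reduction to $I_F=O_F$ via $F$-linearity of the trace is clean, the translation of (2) into the inequality $n\ge -d_{K/F}$ is immediate from $\mathcal{D}_{K/F}^{-1}=\pi_K^{-d_{K/F}}O_K$, and the two directions follow from monotonicity of the trace on nested ideals together with the maximality clause in the paper's definition of $d_{K/F}$. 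Your closing remark on why separability makes $d_{K/F}$ well-defined (nondegeneracy of the trace form bounds the admissible exponents above, and $\mathrm{Tr}_{K/F}(O_K)\subseteq O_F$ gives a lower bound) correctly isolates the one place the hypothesis is used. The only caveat is that your argument leans on every fractional ideal being a power of a uniformizer, so it does not literally establish the statement for a general finite separable extension of Dedekind domains as in Serre; there the standard route is to characterize the codifferent intrinsically as the trace-dual $\{x\in K:\mathrm{Tr}_{K/F}(xO_K)\subseteq O_F\}$ and use invertibility of fractional ideals to write $\mathrm{Tr}_{K/F}(I_K)\subseteq I_F \iff \mathrm{Tr}_{K/F}(I_F^{-1}I_K)\subseteq O_F \iff I_F^{-1}I_K\subseteq\mathcal{D}_{K/F}^{-1}$. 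Since every application of the lemma in this thesis (e.g.\ the conductor formula of Lemma \ref{Lemma 3.4}) is over local fields, your version suffices for the purposes at hand.
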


\begin{dfn}[\textbf{Canonical additive character}] 

We define the non trivial additive character of $F$, $\psi_F:F\to\mathbb{C}^\times$ as the composition of the following 
four maps:
\begin{center}
 $F\xrightarrow{\mathrm{Tr}_{F/\mathbb{Q}_p}}\mathbb{Q}_p\xrightarrow{\alpha}\mathbb{Q}_p/\mathbb{Z}_p
 \xrightarrow{\beta}\mathbb{Q}/\mathbb{Z}\xrightarrow{\gamma}\mathbb{C}^\times$,
\end{center}
where
\begin{enumerate}
 \item $\mathrm{Tr}_{F/\mathbb{Q}_p}$ is the trace from $F$ to $\mathbb{Q}_p$,
 \item $\alpha$ is the canonical surjection map,
 \item $\beta$ is the canonical injection which maps $\mathbb{Q}_p/\mathbb{Z}_p$ onto the $p$-component of the 
 divisible group $\mathbb{Q}/\mathbb{Z}$ and 
 \item $\gamma$ is the exponential map $x\mapsto e^{2\pi i x}$, where $i=\sqrt{-1}$.
\end{enumerate}
For every $x\in\mathbb{Q}_p$, there is a rational $r$, uniquely determined modulo $1$, such that $x-r\in\mathbb{Z}_p$.
Then $\psi_{\bbQ_p}(x)=\psi_{\bbQ_p}(r)=e^{2\pi i r}$.
The nontrivial additive character  $\psi_F=\psi_{\bbQ_p}\circ \rm{Tr}_{F/\bbQ_p}$ of $F$ 
is called the \textbf{canonical additive character} (cf. \cite{JT1}, p. 92).
\end{dfn}
The {\bf conductor} of any nontrivial additive character $\psi$ of the field $F$ is an integer $n(\psi)$ if $\psi$ is trivial
on $P_{F}^{-n(\psi)}$, but nontrivial on $P_{F}^{-n(\psi)-1}$. So, from Lemma \ref{Lemma 2.21} we can show (cf. Lemma \ref{Lemma 3.4})
that 
\begin{center}
 $n(\psi_F)=n(\psi_{\bbQ_p}\circ\rm{Tr}_{F/\bbQ_p})= \nu_F(\mathcal{D}_{F/\bbQ_p})$,
\end{center}
because $d_{\bbQ_p/\bbQ_p}=0$, and hence $n(\psi_{\bbQ_p})=0$.

\section{\textbf{Extendible functions}}

Let $G$ be any finite group. We denote $R(G)$ the set of all pairs $(H,\rho)$, where $H$ is a subgroup of $G$ and $\rho$ is a
virtual representation of $H$
. The group $G$ acts on $R(G)$ by means of
\begin{center}
$(H,\rho)^g=(H^g,\rho^g)$,     $g\in G$,\\
$\rho^g(x)=\rho(gxg^{-1})$,   $x\in H^g:=g^{-1}Hg$
\end{center}
Furthermore we denote by $\widehat{H}$ the set of all one dimensional representations of $H$ and 
by $R_1(G)$ the subset of $R(G)$ of pairs $(H,\chi)$ with 
$\chi\in \widehat{H}$. Here character $\chi$ of $H$ we mean always a \textbf{linear} 
character, i.e., $\chi:H\to \mathbb{C}^\times$. 

Now define a function $\mathcal{F}:R_1(G) \rightarrow \mathcal{A}$, where $\mathcal{A}$ is a multiplicative abelian  group with
\begin{equation}\label{eqn 2.2.1}
\mathcal{F}(H,1_H)=1
\end{equation}
 and 
\begin{equation}\label{eqn 2.2.2}
\mathcal{F}(H^g,\chi^g)=\mathcal{F}(H,\chi)
\end{equation}
for all $(H,\chi)$, where $1_H$ denotes the trivial representation of $H$.\\
Here a function $\mathcal{F}$ on $R_1(G)$ means a function which satisfies the equation (\ref{eqn 2.2.1}) 
and (\ref{eqn 2.2.2}).

A function $\mathcal{F}$ is said to be extendible if $\mathcal{F}$ can be extended to 
an $\mathcal{A}$-valued 
function
on $R(G)$ satisfying: 
\begin{equation}\label{eqn 2.2.3}
 \mathcal{F}(H,\rho_1+\rho_2)=\mathcal{F}(H,\rho_1)\mathcal{F}(H,\rho_2)
\end{equation}
for all $(H,\rho_i)\in R(G),i=1,2$, and if $(H,\rho)\in R(G)$ with $\mathrm{dim}\,\rho=0$, and $\Delta$ is a subgroup of
$G$ containing 
$H$, then
\begin{equation}\label{eqn 2.2.4}
 \mathcal{F}(\Delta,\mathrm{Ind}_{H}^{\Delta}\rho)=\mathcal{F}(H,\rho),
\end{equation}
where $\mathrm{Ind}_{H}^{\Delta}\rho$ is the virtual representation of $\Delta$ induced from $\rho$. In general, 
let $\rho$ be a 
representation of $H$ with $\mathrm{dim}\,\rho\neq0$.
We can define a zero dimensional representation of $H$ by $\rho$ and which is:
  $\rho_0:=\rho-\mathrm{dim}\,\rho\cdot 1_H$. So $\mathrm{dim}\,\rho_0$ is zero, then now we use the equation (\ref{eqn 2.2.4}) for
$\rho_0$ and we have,
\begin{equation}\label{eqn 2.2.5}
 \mathcal{F}(\Delta,\mathrm{Ind}_{H}^{\Delta}\rho_0)=\mathcal{F}(H,\rho_0).
 \end{equation}
 Now replace $\rho_0$ by $\rho-\mathrm{dim}\rho\cdot 1_H$ in the above equation (\ref{eqn 2.2.5}) and we have
 \begin{align*}
   \mathcal{F}(\Delta,\mathrm{Ind}_{H}^{\Delta}(\rho-\mathrm{dim}\rho \cdot 1_H))
   &=\mathcal{F}(H,\rho-\mathrm{dim}\rho\cdot1_H)\\\implies
   \frac{\mathcal{F}(\Delta,\mathrm{Ind}_{H}^{\Delta}\rho)}
   {\mathcal{F}(\Delta,\mathrm{Ind}_{H}^{\Delta}1_H)^{\mathrm{dim}\rho}}
   &=\frac{\mathcal{F}(H,\rho)}
   {\mathcal{F}(H,1_H)^{\mathrm{dim}\rho}}.
 \end{align*}
Therefore,
\begin{align}
 \mathcal{F}(\Delta,\mathrm{Ind}_{H}^{\Delta}\rho)\nonumber
 &=\left\{\frac{\mathcal{F}(\Delta,\mathrm{Ind}_{H}^{\Delta}1_H)}{\mathcal{F}(H,1_H)}\right\}^{\mathrm{dim}\rho}\cdot\mathcal{F}(H,\rho)\\
 &=\lambda_{H}^{\Delta}(\mathcal{F})^{\mathrm{dim}\rho}\mathcal{F}(H,\rho), \label{eqn 2.6}
\end{align}
where
\begin{equation}
 \lambda_{H}^{\Delta}(\mathcal{F}):=\frac{\mathcal{F}(\Delta,\mathrm{Ind}_{H}^{\Delta}1_H)}{\mathcal{F}(H,1_H)}.\label{eqn 2.7}
\end{equation}
But by the definition of $\mathcal{F}$, we have 
$\mathcal{F}(H,1_H)=1$, so we can write 
\begin{equation}
 \lambda_{H}^{\Delta}(\mathcal{F})={\mathcal{F}(\Delta,\mathrm{Ind}_{H}^{\Delta}1_H}).\label{eqn 2.8}
\end{equation}
This $\lambda_{H}^{\Delta}(\mathcal{F})$
is called \textbf{Langlands $\lambda$-function} (or simply $\lambda$-function) which is independent of $\rho$.
A extendible function $\mathcal{F}$ is called \textbf{strongly} extendible if it satisfies
equation (\ref{eqn 2.2.3}) and fulfills equation (\ref{eqn 2.2.4}) for all $(H,\rho)\in R(G)$, and if the equation (\ref{eqn 2.2.4})
is fulfilled
only when $\mathrm{dim}\,\rho=0$, then
$\mathcal{F}$ is called \textbf{weakly} extendible function. The extendible functions are \textbf{unique}, if they exist 
(cf. \cite{JT1}, p. 103).

\begin{exm}
 Langlands proves the local constants are weakly extendible functions (cf. \cite{JT1}, p. 105, Theorem 1). The Artin root numbers
(also known as global constants) 
are strongly 
extendible functions (for more examples and details about extendible function, see \cite{JT1} and \cite{HK}).
\end{exm}

 The next lemma is due to Langlands \cite{RL}. This is very important for this thesis. Group theoretically it is not hard to 
 see its proof. But its number theoretical proof is very very difficult and long which can be found in \cite{RL}. 
 
 \begin{lem}\label{Lemma 4.1.1}
  Let $H$ be a subgroup of a group $G$ and $\mathcal{F}$ an extendible function on $R_{1}(G)$. Then we have the following
properties of $\lambda$-factor.
\begin{enumerate}
 \item If $g\in G$, then $\lambda_{g^{-1}Hg}^{G}(\mathcal{F})=\lambda_{H}^{G}(\mathcal{F})$, where $H\subseteq G$. 
 \item If $H'$ is a subgroup of $H$ then 
 $\lambda_{H'}^{G}(\mathcal{F})=\lambda_{H'}^{H}(\mathcal{F})\lambda_{H}^{G}(\mathcal{F})^{[H:H']}$, 
 where $[H:H']$ is index
 of $H'$ in $H$.
 \item If $H'$ is a normal subgroup of $G$ contained in $H$, then 
 $\lambda_{H}^{G}(\mathcal{F})=\lambda_{H/{H'}}^{G/{H'}}(\mathcal{F})$. 
\end{enumerate}
 \end{lem}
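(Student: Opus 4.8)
The plan is to treat $\mathcal{F}$ as an abstract extendible function and to deduce all three identities from the master formula (\ref{eqn 2.6}) together with standard isomorphisms among induced-from-trivial representations, using throughout that $\mathcal{F}$ depends only on the isomorphism class of its argument. For part (1) I would use that conjugate subgroups induce isomorphic representations: for $g\in G$ the $G$-sets $G/(g^{-1}Hg)$ and $G/H$ are isomorphic, so $\mathrm{Ind}_{g^{-1}Hg}^{G}1\cong\mathrm{Ind}_{H}^{G}1$. Applying $\mathcal{F}(G,-)$ to both sides and recalling the definition (\ref{eqn 2.8}) of the $\lambda$-function gives $\lambda_{g^{-1}Hg}^{G}(\mathcal{F})=\lambda_{H}^{G}(\mathcal{F})$; this is nothing but the conjugation-invariance (\ref{eqn 2.2.2}) carried over to induced representations.

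For part (2) the essential input is induction in stages along the tower $H'\le H\le G$, namely $\mathrm{Ind}_{H'}^{G}1\cong\mathrm{Ind}_{H}^{G}\bigl(\mathrm{Ind}_{H'}^{H}1\bigr)$. Writing $\rho:=\mathrm{Ind}_{H'}^{H}1_{H'}$, a representation of $H$ of dimension $[H:H']$, I would apply the master formula (\ref{eqn 2.6}) with $\Delta=G$ to obtain
$$\lambda_{H'}^{G}(\mathcal{F})=\mathcal{F}\bigl(G,\mathrm{Ind}_{H}^{G}\rho\bigr)=\lambda_{H}^{G}(\mathcal{F})^{\dim\rho}\cdot\mathcal{F}(H,\rho)=\lambda_{H}^{G}(\mathcal{F})^{[H:H']}\cdot\lambda_{H'}^{H}(\mathcal{F}),$$
the last equality using $\mathcal{F}(H,\mathrm{Ind}_{H'}^{H}1_{H'})=\lambda_{H'}^{H}(\mathcal{F})$ from (\ref{eqn 2.8}). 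Thus part (2) needs no new idea beyond (\ref{eqn 2.6}) and the transitivity of induction.

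For part (3), with $H'\trianglelefteq G$ and $H'\subseteq H$, I would compare the permutation representation of $G/H'$ on $(G/H')/(H/H')$ with that of $G$ on $G/H$. The canonical bijection $(G/H')/(H/H')\cong G/H$ is equivariant for the action of $G$ through $G\to G/H'$, so inflating $\mathrm{Ind}_{H/H'}^{G/H'}1$ to $G$ yields exactly $\mathrm{Ind}_{H}^{G}1$. Granting this, the identity $\lambda_{H}^{G}(\mathcal{F})=\lambda_{H/H'}^{G/H'}(\mathcal{F})$ reduces to the invariance of $\mathcal{F}$ under inflation. I expect this inflation-compatibility to be the \emph{main obstacle}: it is not literally among the axioms (\ref{eqn 2.2.1})--(\ref{eqn 2.2.4}), which concern a single fixed group, so part (3) tacitly requires $\mathcal{F}$ to be a compatible family across the quotients of $G$ — a compatibility that does hold for the local constant $W$, since a representation factoring through $G/H'$ has the same value whether viewed on $G$ or on $G/H'$. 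Once this is granted, part (3) is again just the representation-theoretic identity above.
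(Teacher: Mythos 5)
The paper does not actually prove this lemma: it is quoted from Langlands (via Tate), with only the remark that ``group theoretically it is not hard to see its proof'' while the hard content lies elsewhere (namely in showing that the local constants \emph{are} extendible, which is Langlands' theorem). So there is no in-paper argument to compare against; what you have written is precisely the group-theoretic verification the paper waves at. Your parts (1) and (2) are complete and correct: (1) is just $\mathrm{Ind}_{g^{-1}Hg}^{G}1\cong\mathrm{Ind}_{H}^{G}1$ combined with the fact that $\mathcal{F}(G,-)$ is a homomorphism on $R(G)$, and (2) is exactly the master formula (\ref{eqn 2.6}) applied to $\rho=\mathrm{Ind}_{H'}^{H}1_{H'}$ together with transitivity of induction. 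Your diagnosis of part (3) is also the right one to make: in the abstract one-group axiomatics of Section 2.2, $\lambda_{H/H'}^{G/H'}(\mathcal{F})$ is not even defined until one specifies how $\mathcal{F}$ lives on the quotient, and the identity $\mathcal{F}(G,\mathrm{Inf}\,\sigma)=\mathcal{F}(G/H',\sigma)$ is not among the axioms (\ref{eqn 2.2.1})--(\ref{eqn 2.2.4}). In the intended application this is not a gap at all: $G=\mathrm{Gal}(L/F)$, $G/H'=\mathrm{Gal}(L'/F)$ for the fixed field $L'$ of $H'$, and the local constant of a representation of $\mathrm{Gal}(L'/F)$ is by definition that of its inflation to $\mathrm{Gal}(L/F)$ (both being restrictions of one function on representations of the absolute Galois, or Weil, group), so $\mathrm{Ind}_{H}^{G}1_H=\mathrm{Inf}\bigl(\mathrm{Ind}_{H/H'}^{G/H'}1\bigr)$ immediately gives (3). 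It would strengthen your write-up to state this resolution explicitly rather than leaving it as an expectation, but the argument itself is sound.
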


\section{\textbf{Local Constants}}

Let $F$ be a non-archimedean local field and $\chi$ be a character of $F^\times$.
 The $L(\chi)$-functions are defined as follows:
\begin{align*}
 L(\chi)
 &=\begin{cases}
            (1-\chi(\pi_F))^{-1} & \text{if $\chi$ is unramified},\\
            1 & \text{if $\chi$ is ramified}.
           \end{cases}
\end{align*}

 We denote by $dx$ a Haar measure on $F$, by $d^\times x$ a Haar measure on $F^\times$ and the relation between these two 
 Haar measure is: 
 \begin{center}
  $d^\times x=\frac{dx}{|x|}$,
 \end{center}
for arbitrary Haar measure $dx$ on $F$. For given additive character $\psi$ of $F$ and Haar measure $dx$ on $F$, we have a 
\textbf{Fourier transform} as:
\begin{equation}
 \hat{f}(y)=\int f(x)\psi(xy)dx.
\end{equation}
where $f\in L^{1}(F^{+})$ (that is, $|f|$ is integrable) and the Haar measure is normalized 
such that $\hat{\hat{f}}(y)=f(-y)$, i.e., $dx$ is self-dual with respect to $\psi$.
By Tate (cf. \cite{JT2}, p. 13), for any character $\chi$ of $F^\times$, there exists 
a number $W(\chi,\psi,dx)\in\mathbb{C}^\times$ such that it satisfies the following local functional equation:
\begin{equation}\label{eqn 2.3.2}
 \frac{\int\hat{f}(x)w_1\chi^{-1}(x)d^\times x}{L(w_1\chi^{-1})}=W(\chi,\psi,dx)
 \frac{\int f(x)\chi(x)d^\times x}{L(\chi)}.
\end{equation}
for any such function $f$ for which the both sides make sense. This number $W(\chi,\psi, dx)$ is called the
{\bf local epsilon factor or local constant} of $\chi$.


For a nontrivial multiplicative character $\chi$ of $F^\times$ and nontrivial additive character $\psi$ of $F$,
we have (cf. \cite{RL}, p. 4)
\begin{equation}
 W(\chi,\psi,c)=\chi(c)\frac{\int_{U_F}\chi^{-1}(x)\psi(x/c) dx}{|\int_{U_F}\chi^{-1}(x)\psi(x/c) dx|}\label{label1}
\end{equation}
where the Haar measure $dx$ is normalized such that the measure of $O_F$ is $1$ and 
where $c\in F^\times$ with valuation $n(\psi)+a(\chi)$.
The \textbf{modified} formula of epsilon factor (cf. \cite{JT1}, p. 94, for proof see \cite{SAB4}) is:
\begin{equation}\label{eqn 2.2}
 W(\chi,\psi,c)=\chi(c)q^{-a(\chi)/2}\sum_{x\in\frac{U_F}{U_{F}^{a(\chi)}}}\chi^{-1}(x)\psi(x/c).
\end{equation}
where $c=\pi_{F}^{a(\chi)+n(\psi)}$. Now if $u\in U_F$ is unit and replace $c=cu$, then we have 
\begin{equation}
 W(\chi,\psi,cu)=\chi(c)q^{-\frac{a(\chi)}{2}}\sum_{x\in\frac{U_F}{U_{F}^{a(\chi)}}}\chi^{-1}(x/u)\psi(x/cu)=W(\chi,\psi,c).
\end{equation}
Therefore $W(\chi,\psi,c)$ \textbf{depends} only on the exponent $\nu_{F}(c)=a(\chi)+n(\psi)$. Therefore we can 
simply write $W(\chi,\psi, c)=W(\chi,\psi)$, because $c$ is determined by 
$\nu_F(c)=a(\chi)+n(\psi)$ up to a unit $u$ which has \textbf{no influence on} $W(\chi,\psi,c)$.
If $\chi$ is unramified, i.e., $a(\chi)=0$, therefore $\nu_F(c)=n(\psi)$. Then from the formula of $W(\chi,\psi,c)$, we can write
\begin{equation}\label{eqn 2.3.5}
 W(\chi,\psi,c)=\chi(c),
\end{equation}
and therefore $W(1,\psi,c)=1$ if $\chi=1$ is the trivial character.

\subsection{\textbf{Some properties of $W(\chi,\psi)$}}

 \begin{enumerate}
 
  \item Let $b\in F^\times$ be the uniquely determined element such that $\psi'=b\psi$. Then 
  \begin{equation}
   W(\chi,\psi',c')=\chi(b)\cdot W(\chi,\psi,c).
  \end{equation}
\begin{proof}
 Here $\psi'(x)=(b\psi)(x):=\psi(bx)$ for all $x\in F$. It is an additive character of $F$. The existence and uniqueness of $b$
 is clear. From the definition of conductor of an 
 additive character we have 
 \begin{center}
  $n(\psi')=n(b\psi)=n(\psi)+\nu_F(b)$.
 \end{center}
Here $c'\in F^\times$ is of valuation 
$$\nu_F(c')=a(\chi)+n(\psi')=a(\chi)+\nu_F(b)+n(\psi)=\nu_F(b)+\nu_F(c)=\nu_F(bc).$$
Therefore $c'=bcu$ where $u\in U_F$ is some unit. Now 
\begin{align*}
 W(\chi,\psi',c')
 &=W(\chi,b\psi,bcu)\\
 &=W(\chi,b\psi,bc)\\
 &=\chi(bc)q_{F}^{-\frac{a(\chi)}{2}}\sum_{x\in\frac{U_F}{U_{F}^{a(\chi)}}}\chi^{-1}(x)((bc)^{-1}(b\psi))(x)\\
 &=\chi(b)\cdot\chi(c)q_{F}^{-\frac{a(\chi)}{2}}\sum_{x\in\frac{U_F}{U_{F}^{a(\chi)}}}\chi^{-1}(x)\psi(xc^{-1})\\
 &=\chi(b)\cdot W(\chi,\psi,c).
\end{align*}

\end{proof}

\item Let $F/\bbQ_p$ be a local field inside $\overline{\bbQ_p}$.
Let $\chi$ and $\psi$ be a character of $F^\times$ and $F^{+}$ respectively, and $c\in F^\times$ with 
valuation $\nu_F(c)=a(\chi)+n(\psi)$. If $\sigma\in\mathrm{Gal}(\overline{\mathbb{Q}_p}/\mathbb{Q}_p)$ is an automorphism, then:
\begin{center}
 $W_{F}(\chi,\psi,c)=W_{\sigma^{-1}(F)}(\chi^{\sigma},\psi^{\sigma},\sigma^{-1}(c))$,
\end{center}
where $\chi^{\sigma}(y):=\chi(\sigma(y))$, $\psi^{\sigma}(y):=\psi(\sigma(y))$, for all $y\in\sigma^{-1}(F)$.

\begin{proof}
Let $L:=\sigma^{-1}(F)$. Since $\sigma$ is an automorphism of $\overline{\bbQ_p}$, then we have $O_F/P_F\cong O_L/P_L$, hence 
$q_F=q_L$. We also can see that $a(\chi^{\sigma})=a(\chi)$ and $n(\psi^{\sigma})=n(\psi)$. Then from the formula of local constant
we have 
\begin{align*}
 W_{\sigma^{-1}(F)}(\chi^{\sigma},\psi^{\sigma},\sigma^{-1}(c))
 &=W_{L}(\chi^{\sigma},\psi^{\sigma},\sigma^{-1}(c))\\
 &=\chi^{\sigma}(\sigma^{-1}(c))q_{L}^{-\frac{a(\chi^{\sigma})}{2}}\sum_{y\in\frac{U_{L}}{U_{L}^{a(\chi^{\sigma})}}}
 (\chi^{\sigma})^{-1}(y)\cdot((\sigma^{-1}(c))^{-1}\psi^{\sigma}(y)\\
 &=\chi(c)q_{F}^{-\frac{a(\chi)}{2}}\sum_{x\in \frac{U_F}{U_{F}^{a(\chi)}}}\chi^{-1}(x)\psi(\frac{x}{c})\\
 &=W_{F}(\chi,\psi,c).
\end{align*}
Here we put 
$y=\sigma^{-1}(x)$ and use $(\sigma^{-1}(c))^{-1}\psi^{\sigma}=(c^{-1}\psi)^{\sigma}$.

\end{proof}
\begin{rem}
 We can simply write as before $W_{F}(\chi,\psi)=W_{\sigma^{-1}(F)}(\chi^{\sigma},\psi^{\sigma})$.
 Tate in his paper \cite{JT1} on local constants defines the local root number as:
\begin{center}
 $W_{F}(\chi):=W_F(\chi,\psi_F)=W_F(\chi,\psi_F,d)$,
\end{center}
where $\psi_F$ is the canonical character of $F^\times$ and $d\in F^\times$ with
$\nu_F(d)=a(\chi)+n(\psi_F)$.
Therefore after fixing canonical additive character $\psi=\psi_F$, we can rewrite 
\begin{center}
 $W_F(\chi)=\chi(d(\psi_F))$, if $\chi$ is unramified,\\
 $W_F(\chi)=W_{\sigma^{-1}(F)}(\chi^{\sigma})$.
\end{center}
The last equality follows because the canonical character $\psi_{\sigma^{-1}(F)}$ is related to the canonical character 
$\psi_F$ as: $\psi_{\sigma^{-1}(F)}=\psi_{F}^{\sigma}$.\\

So we see that 
\begin{equation*}
 (F,\chi)\to W_F(\chi)\in\mathbb{C}^\times
\end{equation*}
is a function with the properties (\ref{eqn 2.2.1}), (\ref{eqn 2.2.2}) of extendible functions.
\end{rem}

\item If $\chi\in\widehat{F^\times}$ and $\psi\in\widehat{F}$, then 
\begin{equation*}
 W(\chi,\psi)\cdot W(\chi^{-1},\psi)=\chi(-1).
\end{equation*}
Furthermore if the character $\chi:F^\times\to\mathbb{C}^{\times}$ is unitary (in particular, if $\chi$ is of finite order), then 
 \begin{center}
  $|W(\chi,\psi)|^{2}=1$.
  \end{center}

\begin{proof}
We prove this properties by using equation (\ref{eqn 2.2}). 
We know that the additive characters are always unitary, hence
\begin{center}
 $\psi(-x)=\psi(x)^{-1}=\overline{\psi}(x)$.
\end{center}
On the other hand we write $\psi(-x)=((-1)\psi)(x)$, where $-1\in F^\times$. Therefore $\overline{\psi}=(-1)\psi$.
We also have $a(\chi)=a(\chi^{-1})$. Therefore by using equation (\ref{eqn 2.2}) we have 
\begin{align*}
 W(\chi,\psi)\cdot W(\chi^{-1},\psi)
 &=\chi(-1)\cdot q_{F}^{-a(\chi)}\sum_{x,y\in \frac{U_F}{U_{F}^{a(\chi)}}}\chi^{-1}(x)\chi(y)\psi(\frac{x-y}{c})\\
 &=\chi(-1)\cdot q_{F}^{-a(\chi)}\sum_{x,y\in \frac{U_F}{U_{F}^{a(\chi)}}}\chi^{-1}(x)\psi(\frac{xy-y}{c}),
 \quad\text{replacing $x$ by $xy$}\\
 &=\chi(-1)\cdot q_{F}^{-a(\chi)}\sum_{x\in \frac{U_F}{U_{F}^{a(\chi)}}}\chi^{-1}(x)\varphi(x),
\end{align*}
where
\begin{equation}
\varphi(x)=\sum_{y\in\frac{U_F}{U_{F}^{a(\chi)}}}\psi(y\frac{x-1}{c}).
\end{equation}
Since $\frac{U_F}{U_{F}^{a(\chi)}}=(\frac{O_F}{P_{F}^{a(\chi)}})^\times=
\frac{O_F}{P_{F}^{a(\chi)}}\setminus\frac{P_F}{P_{F}^{a(\chi)}}$, therefore $\varphi(x)$
can be written as the difference 
\begin{align*}
\varphi(x)
&=\sum_{y\in\frac{U_F}{U_{F}^{a(\chi)}}}\psi(y\frac{x-1}{c})\\
&=\sum_{y\in\frac{O_F}{P_{F}^{a(\chi)}}}\psi(y\frac{x-1}{c})-
\sum_{y\in\frac{P_F}{P_{F}^{a(\chi)}}}\psi(y\frac{x-1}{c})\\
&=\sum_{y\in\frac{O_F}{P_{F}^{a(\chi)}}}\psi(y\frac{x-1}{c})-
\sum_{y\in\frac{O_F}{P_{F}^{a(\chi)-1}}}\psi(y\frac{(x-1)\pi_F}{c})\\
&=A-B,
\end{align*}
where $A=\sum_{y\in\frac{O_F}{P_{F}^{a(\chi)}}}\psi(y\frac{x-1}{c})$
and $B=\sum_{y\in\frac{O_F}{P_{F}^{a(\chi)-1}}}\psi(y\frac{(x-1)\pi_F}{c})$. It is easy to see that (cf. \cite{M}, p. 28, Lemma 2.1)
\begin{align*}
 \sum_{y\in\frac{O_F}{P_{F}^{a(\chi)}}}\psi(y\alpha)=\begin{cases}
                                                      q_{F}^{a(\chi)} & \text{when $\alpha\in P_{F}^{-n(\psi)}$}\\
                                                      0 & \text{otherwise}
                                                     \end{cases}
\end{align*}
Therefore $A=q_{F}^{a(\chi)}$ when $x\in U_{F}^{a(\chi)}$ and $A=0$ otherwise. Similarly
$B=q_{F}^{a(\chi)-1}$ when $x\in U_{F}^{a(\chi)-1}$
 and $B=0$ otherwise. Therefore we have
 \begin{align*}
  W(\chi,\psi)\cdot W(\chi^{-1},\psi)
  &=\chi(-1)\cdot q_{F}^{-a(\chi)}\cdot\{q_{F}^{a(\chi)}-q_{F}^{a(\chi)-1}\sum_{x\in\frac{U_{F}^{a(\chi)-1}}{U_{F}^{a(\chi)}}}\chi^{-1}(x)\}\\
  &=\chi(-1)-\chi(-1)\cdot q_{F}^{-1}\sum_{x\in\frac{U_{F}^{a(\chi)-1}}{U_{F}^{a(\chi)}}}\chi^{-1}(x).
 \end{align*}
Since the conductor of $\chi$ is $a(\chi)$, then it can be proved that
$\sum_{x\in\frac{U_{F}^{a(\chi)-1}}{U_{F}^{a(\chi)}}}\chi^{-1}(x)=0$.
Thus we obtain
\begin{equation}\label{eqn 2.3.9}
 W(\chi,\psi)\cdot W(\chi^{-1},\psi)=\chi(-1).
\end{equation}
\vspace{.3cm}

The right side of equation (\ref{eqn 2.3.9}) is a sign, hence we may rewrite (\ref{eqn 2.3.9}) as 
\begin{center}
 $W(\chi,\psi)\cdot\chi(-1)W(\chi^{-1},\psi)=1$.
\end{center}
But we also know from our earlier property that 
$$\chi(-1)W(\chi^{-1},\psi)=W(\chi^{-1},(-1)\psi)=W(\chi^{-1},\overline{\psi}).$$
So the identity (\ref{eqn 2.3.9}) rewrites as 
\begin{center}
 $W(\chi,\psi)\cdot W(\chi^{-1},\overline{\psi})=1$.
\end{center}
Now we assume that $\chi$ is unitary, hence 
\begin{center}
 $W(\chi^{-1},\overline{\psi})=W(\overline{\chi},\overline{\psi})=\overline{W(\chi,\psi)}$
\end{center}
where the last equality is obvious. Now we see that for unitary $\chi$ the identity (\ref{eqn 2.3.9}) rewrites as 
\begin{center}
 $|W(\chi,\psi)|^{2}=1$.
\end{center}
\end{proof}

\begin{rem}
From the functional equation (\ref{eqn 2.3.2}), we can directly see the first part of the above property of local constant. 
Denote 
\begin{equation}\label{eqn 66}
 \zeta(f,\chi)=\int f(x)\chi(x)d^\times x.
\end{equation}
Now replacing  $f$ by $\hat{\hat{f}}$ in equation (\ref{eqn 66}), and we get
\begin{equation}\label{eqn 77}
 \zeta(\hat{\hat{f}},\chi)=\int \hat{\hat{f}}(x)\chi(x)d^\times x=\chi(-1)\cdot\zeta(f,\chi),
\end{equation}
because $dx$ is self-dual with respect to $\psi$, hence $\hat{\hat{f}}(x)=f(-x)$ for all $x\in F^{+}$.

Again the functional equation (\ref{eqn 2.3.2}) can be written as follows:
\begin{equation}\label{eqn 88}
 \zeta(\hat{f},w_1\chi^{-1})=W(\chi,\psi,dx)\cdot\frac{L(w_1\chi^{-1})}{L(\chi)}\cdot\zeta(f,\chi).
\end{equation}

 Now we replace $f$ by $\hat{f}$, and $\chi$ by $w_1\chi^{-1}$ in equation (\ref{eqn 88}), and we obtain
 \begin{equation}\label{eqn 99}
  \zeta(\hat{\hat{f}},\chi)=W(w_1\chi^{-1},\psi,dx)\cdot\frac{L(\chi)}{L(w_1\chi^{-1})}\cdot\zeta(\hat{f},w_1\chi^{-1}).
 \end{equation}
Then by using equations (\ref{eqn 77}), (\ref{eqn 88}), from the above equation (\ref{eqn 99}) we obtain:
\begin{equation}\label{eqn 100}
 W(\chi,\psi,dx)\cdot W(w_1\chi^{-1},\psi,dx)=\chi(-1).
\end{equation}
Moreover, the convention $W(\chi,\psi)$ is actually as follows (cf. \cite{JT2}, p. 17, equation (3.6.4)):
$$W(\chi w_{s-\frac{1}{2}},\psi)=W(\chi w_s,\psi,dx).$$
By using this relation from equation (\ref{eqn 100}) we can conclude
$$W(\chi,\psi)\cdot W(\chi^{-1},\psi)=\chi(-1).$$

\end{rem}

\item \textbf{Twisting formula of abelian local constants:}

\begin{enumerate}
 \item If $\chi_1$ and $\chi_2$ are two unramified characters of $F^\times$ and $\psi$ is
 a nontrivial additive character of $F$, then from equation (\ref{eqn 2.3.5}) we have
 \begin{equation}
  W(\chi_1\chi_2,\psi)=W(\chi_1,\psi)W(\chi_2,\psi).
 \end{equation}
 \item  Let $\chi_1$ be ramified and $\chi_2$ unramified then (cf. \cite{JT2}, (3.2.6.3))
\begin{equation}
 W(\chi_1\chi_2,\psi)=\chi_2(\pi_F)^{a(\chi_1)+n(\psi)}\cdot W(\chi_1,\psi).
\end{equation}
\begin{proof}
  By the given condition
$a(\chi_1)>a(\chi_2)=0$. Therefore $a(\chi_1\chi_2)=a(\chi_1)$. Then we have
\begin{align*}
 W(\chi_1\chi_2,\psi)
 &=\chi_1\chi_2(c)q_{F}^{-a(\chi_1)/2}\sum_{x\in\frac{U_F}{U_{F}^{a(\chi)}}}(\chi_1\chi_2)^{-1}(x)\psi(x/c)\\
 &=\chi_1(c)\chi_2(c)q_{F}^{-a(\chi_1)/2}\sum_{x\in\frac{U_F}{U_{F}^{a(\chi)}}}\chi_{1}^{-1}(x)\chi_{2}^{-1}(x)\psi(x/c)\\
 &=\chi_2(c)\chi_1(c)q_{F}^{-a(\chi_1)/2}\sum_{x\in\frac{U_F}{U_{F}^{a(\chi)}}}\chi_{1}^{-1}(x)\psi(x/c),
 \quad\text{since $\chi_2$ unramified}\\
 &=\chi_2(c)W(\chi_1,\psi)\\
 &=\chi_2(\pi_F)^{a(\chi_1)+n(\psi)}\cdot W(\chi_1,\psi).
\end{align*}
\end{proof}
\item 
 We also have twisting formula of epsilon factor by Deligne (cf. \cite{D1}, Lemma 4.16)
 under some special condition and which is as follows (for proof, see Corollary \ref{Corollary 6.1.2}(3)):\\
Let $\alpha$ and $\beta$ be two multiplicative characters of a local field $F$ such that $a(\alpha)\geq 2\cdot a(\beta)$.
Let $\psi$ be an additive character of $F$.
Let $y_{\alpha,\psi}$ be an element of $F^\times$ such that 
$$\alpha(1+x)=\psi(y_{\alpha,\psi}x)$$
for all $x\in F$ with valuation $\nu_F(x)\geq\frac{a(\alpha)}{2}$ (if $a(\alpha)=0$, $y_{\alpha,\psi}=\pi_{F}^{-n(\psi)}$). Then 
\begin{equation}\label{eqn 2.3.17}
 W(\alpha\beta,\psi)=\beta^{-1}(y_{\alpha,\psi})\cdot W(\alpha,\psi).
\end{equation}

\end{enumerate}

\end{enumerate}

\subsection{\textbf{Connection of different conventions for local constants} }

Mainly there are two conventions for local constants. They are due to Langlands (\cite{RL}) and Deligne (\cite{D1}). 
Recently Bushnell and Henniart (\cite{BH}) also give a convention of local constants. In this subsection we shall show the connection
among all three conventions for local constants\footnote{The convention $W(\chi,\psi)$ is actually due to Langlands \cite{RL},
and it is:
\begin{center}
 $\epsilon_{L}(\chi,\psi,\frac{1}{2})=W(\chi,\psi).$
\end{center}
See equation (3.6.4) on p. 17 of \cite{JT2} for $V=\chi$.} 
We denote $\epsilon_{BH}$ as local constant of Bushnell-Henniart (introduced in
Bushnell-Henniart, \cite{BH}, Chapter 6).

On page 142 of \cite{BH}, the authors define a rational function 
$\epsilon_{BH}(\chi,\psi,s)\in\mathbb{C}(q_{F}^{-s})$. From Theorem 23.5 on p. 144 of \cite{BH} for ramified character 
$\chi\in\widehat{F^\times}$ and conductor\footnote{The definition of level of an additive character $\psi\in\widehat{F}$ 
in \cite{BH} on p. 11 is the negative sign with our conductor $n(\psi)$, i.e., level of $\psi=-n(\psi)$.} $n(\psi)=-1$ we have 
\begin{equation}\label{eqn 2.3.12}
 \epsilon_{BH}(\chi,s,\psi)=
 q_{F}^{n(\frac{1}{2}-s)}\sum_{x\in\frac{U_F}{U_{F}^{n+1}}}\chi(\alpha x)^{-1}\psi(\alpha x)/q_{F}^{\frac{n+1}{2}},
\end{equation}
where  $n=a(\chi)-1$, and $\alpha\in F^\times$ with $\nu_{F}(\alpha)=-n$.

Also from the Proposition 23.5 of \cite{BH} on p. 143 for unramified character $\chi\in\widehat{F^\times}$ and $n(\psi)=-1$ we have 
\begin{equation}\label{eqn 2.3.13}
\epsilon_{BH}(\chi,s,\psi)=q_{F}^{s-\frac{1}{2}}\chi(\pi_F)^{-1}. 
\end{equation}

\begin{enumerate}
 \item \textbf{Connection between $\epsilon_{BH}$ and $W(\chi,\psi)$.}
 \begin{center}
 $W(\chi,\psi)=\epsilon_{BH}(\chi,\frac{1}{2},\psi)$.
\end{center}
 \begin{proof}
 From \cite{BH}, p. 143, Lemma 1 we see:
 \begin{center}
  $\epsilon_{BH}(\chi,\frac{1}{2},b\psi)=\chi(b)\epsilon_{BH}(\chi,\frac{1}{2},\psi)$
 \end{center}
for any $b\in F^\times$. But we have seen already that $W(\chi,b\psi)=\chi(b)W(\chi,\psi)$ has the same transformation rule. If we fix
one nontrivial $\psi$ then all other nontrivial $\psi'$ are uniquely given as $\psi'=b\psi$ for some $b\in F^\times$. Because of the
parallel transformation rules it is now enough to verify our assertion for a single $\psi$. Now we take $\psi\in\widehat{F^{+}}$
with $n(\psi)=-1$, hence $\nu_F(c)=a(\chi)-1$. Then we obtain
\begin{equation*}
 W(\chi,\psi)=W(\chi,\psi,c)=\chi(c)q_{F}^{-\frac{a(\chi)}{2}}\sum_{x\in\frac{U_F}{U_{F}^{a(\chi)}}}\chi^{-1}(x)\psi(c^{-1}x).
\end{equation*}
We compare this to the equation (\ref{eqn 2.3.12}). There the notation is $n=a(\chi)-1$ and the assumption is $n\geq 0$. This means
we have $\nu_F(c)=n$, hence we may take $\alpha=c^{-1}$ and then comparing our formula with equation (\ref{eqn 2.3.12}), we see that
\begin{center}
 $W(\chi,\psi)=\epsilon_{BH}(\chi,\frac{1}{2},\psi)$
\end{center}
in the case when $n(\psi)=-1$.\\
We are still left to prove our assertion if $\chi$ is unramified, i.e., $a(\chi)=0$. Again we can reduce to the case where 
$n(\psi)=-1$. Then our assertion follows from equation \ref{eqn 2.3.13}.

\end{proof}
\begin{rem}
From Corollary $23.4.2$ of \cite{BH}, on p. 142, for $s\in\mathbb{C}$, we 
 can write 
 \begin{align}
  \epsilon_{BH}(\chi,s,\psi)=q_{F}^{(\frac{1}{2}-s)n(\chi,\psi)}\cdot\epsilon_{BH}(\chi,\frac{1}{2},\psi),
 \end{align}
 for some $n(\chi,\psi)\in\mathbb{Z}$. In fact here $n(\chi,\psi)=a(\chi)+n(\psi)$.
From above connection, we just see $W(\chi,\psi)=\epsilon_{BH}(\chi,\frac{1}{2},\psi)$. Thus for arbitrary $s\in\mathbb{C}$, we 
obtain
\begin{equation}\label{eqn 2.3.10}
\epsilon_{BH}(\chi,s,\psi)=q_{F}^{(\frac{1}{2}-s)(a(\chi)+n(\psi))}\cdot W(\chi,\psi).
\end{equation}
This equation (\ref{eqn 2.3.10}) is very important for us. We shall use this to connect with Deligne's convention.

In \cite{JT2} there is defined a number $\epsilon_{D}(\chi,\psi,dx)$ depending on $\chi$, $\psi$ and a Haar measure
$dx$ on $F$. This notion is due to Deligne \cite{D1}. We write 
 $\epsilon_{D}$ for Deligne's convention in order to distinguish it from the $\epsilon_{BH}(\chi,\frac{1}{2},\psi)$ introduced
in Bushnell-Henniart \cite{BH}.

In the next Lemma we give the connection between Bushnell-Henniart and Deligne conventions for local constants.
\end{rem}

\item {\bf The connection between $\epsilon_D$ and $\epsilon_{BH}$}:
\begin{lem}
We have the relation
 \begin{center}
 $\epsilon_{BH}(\chi,s,\psi)=\epsilon_{D}(\chi\cdot\omega_{s},\psi,dx_{\psi})$,
\end{center}
where $\omega_{s}(x)=|x|_{F}^{s}=q^{-s\nu_F(x)}$ is unramified character of $F^\times$ corresponding to complex number $s$, and where 
$dx_{\psi}$ is the self-dual Haar measure corresponding to the additive character $\psi$.
\end{lem}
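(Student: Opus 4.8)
The plan is to treat this as a matter of matching two parametrizations rather than as a genuinely analytic computation, and to anchor everything at the single value $s=\tfrac12$, where the identity follows from facts already in the excerpt, before propagating it to all $s\in\bbC$. The starting observation is that each convention is pinned down by a local functional equation of the Tate type exemplified by (\ref{eqn 2.3.2}): in Bushnell--Henniart the complex parameter sits explicitly inside the zeta integral, $Z(f,\chi,s)=\int f(x)\chi(x)|x|_F^{s}\,d^\times x=\int f(x)(\chi\omega_{s})(x)\,d^\times x$, whereas Deligne folds that same unramified twist into the character, replacing $\chi$ by $\chi\omega_{s}$; moreover the Fourier transform on both sides is taken with respect to the \emph{same} self-dual measure $dx_\psi$. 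Thus the two functional equations should literally coincide after the substitution, and by the uniqueness of the factor satisfying such an equation the two constants must agree. This heuristic is what the clean argument below makes precise.

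To avoid re-expanding integrals I would run the argument through relations already available. Applying the Langlands--Deligne relation $W(\,\cdot\,,\psi)=\epsilon_{D}(\,\cdot\,\omega_{1/2},\psi,dx_\psi)$ (the abelian specialization, to a general additive character, of the identity recalled in the introduction) to the character $\chi\omega_{s-1/2}$ gives
\begin{center}
$\epsilon_{D}(\chi\omega_{s},\psi,dx_\psi)=\epsilon_{D}\bigl((\chi\omega_{s-1/2})\,\omega_{1/2},\psi,dx_\psi\bigr)=W(\chi\omega_{s-1/2},\psi)$.
\end{center}
Now $\omega_{s-1/2}$ is unramified with $\omega_{s-1/2}(\pi_F)=q_F^{\,1/2-s}$ and does not change the conductor of $\chi$, so the twisting formula for $W$ under an unramified twist --- proved above for ramified $\chi$, and immediate from (\ref{eqn 2.3.5}) when $\chi$ is unramified --- yields
\begin{center}
$W(\chi\omega_{s-1/2},\psi)=q_F^{\,(1/2-s)(a(\chi)+n(\psi))}\cdot W(\chi,\psi)$.
\end{center}
Comparing with (\ref{eqn 2.3.10}), the right-hand side is exactly $\epsilon_{BH}(\chi,s,\psi)$, closing the chain $\epsilon_{BH}(\chi,s,\psi)=\epsilon_{D}(\chi\omega_{s},\psi,dx_\psi)$. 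As an independent check one can instead verify the identity directly on the explicit formulas, comparing Deligne's Gauss-sum expression for $\epsilon_{D}(\chi\omega_{s},\psi,dx_\psi)$ with (\ref{eqn 2.3.12}) in the ramified case and with (\ref{eqn 2.3.13}) in the unramified case, the self-dual measure $dx_\psi$ accounting for the power of $q_F$ out front.

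The main obstacle is bookkeeping of normalizations rather than mathematical depth. The delicate point will be to pin down precisely how Deligne's $\epsilon_{D}(\chi,\psi,dx)$ is defined in \cite{JT2}: one must confirm that its measure-dependence is exactly the self-dual normalization $dx_\psi$ and that the half-integer shift $\omega_{1/2}$ relating $\epsilon_{D}$ to $W$ is the one used in the introduction, since a stray unramified twist or a stray factor of $q_F^{1/2}$ would propagate through the whole identity. Once the $s=\tfrac12$ anchor and the two twisting rules are fixed consistently, the extension to arbitrary $s$ is forced; I would simply be careful to carry the ramified and unramified cases along in parallel, as the twisting formula takes a slightly different shape in each.
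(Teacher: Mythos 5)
Your proposal is correct and follows essentially the same route as the paper: both anchor the identity at $s=\tfrac12$ via the relation $W(\cdot,\psi)=\epsilon_{D}(\cdot\,\omega_{1/2},\psi,dx_\psi)$ applied to $\chi\omega_{s-1/2}$, then use the unramified twisting formula for $W$ (treating ramified and unramified $\chi$ separately) together with equation (\ref{eqn 2.3.10}) to identify the result with $\epsilon_{BH}(\chi,s,\psi)$. The paper likewise cites Tate's (3.6.4) for the Deligne side, so there is no substantive difference in the argument.
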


\begin{proof}

From equation equation (3.6.4) of \cite{JT2}, we know that
\begin{equation}\label{eqn 2.3.11}
 \epsilon_{L}(\chi,s,\psi):=\epsilon_{L}(\chi\omega_{s-\frac{1}{2}},\psi)=\epsilon_{D}(\chi\omega_{s},\psi,dx_{\psi}).
\end{equation}
 We prove this connection by using the relations (\ref{eqn 2.3.10}) and (\ref{eqn 2.3.11}). From equation (\ref{eqn 2.3.11}) we can write 
 our $W(\chi,\psi)=\epsilon_{D}(\chi\omega_{\frac{1}{2}},\psi,dx_{\psi})$. Therefore when $s=\frac{1}{2}$, we have the 
 relation:
 \begin{equation}
  \epsilon_{BH}(\chi,\frac{1}{2},\psi)=\epsilon_{D}(\chi\omega_{\frac{1}{2}},\psi,dx_{\psi}),
 \end{equation}
since $W(\chi,\psi)=\epsilon_{BH}(\chi,\frac{1}{2},\psi)$.

We know that $\omega_{s}(x)=q_{F}^{-s\nu_F(x)}$ is an unramified character of $F^\times$.
So when $\chi$ is also unramified, we can write 
\begin{equation}
 W(\chi\omega_{s-\frac{1}{2}},\psi)=
 \omega_{s-\frac{1}{2}}(c)\cdot\chi(c)=q_{F}^{(\frac{1}{2}-s)n(\psi)}\epsilon_{BH}(\chi,\frac{1}{2},\psi)
 =\epsilon_{BH}(\chi,s,\psi).
\end{equation}
And when $\chi$ is ramified character, i.e., conductor $a(\chi)>0$,
from Tate's theorem for unramified twist (see property 2.3.1(4b)) , we can write 
\begin{align*}
 W(\chi\omega_{s-\frac{1}{2}},\psi)
 &=\omega_{s-\frac{1}{2}}(\pi_{F}^{a(\chi)+n(\psi)})\cdot W(\chi,\psi)\\
 &=q_{F}^{-(s-\frac{1}{2})(a(\chi)+n(\psi))}\cdot W(\chi,\psi)\\
 &=q_{F}^{(\frac{1}{2}-s)(a(\chi)+n(\psi))}\cdot \epsilon_{BH}(\chi,\frac{1}{2},\psi)\\
 &=\epsilon_{BH}(\chi,s,\psi).
\end{align*}
Furthermore from equation (\ref{eqn 2.3.11}), we have 
\begin{equation}
 W(\chi\omega_{s-\frac{1}{2}},\psi)=\epsilon_{D}(\chi\omega_{s},\psi,dx_{\psi}).
\end{equation}
Therefore finally we can write 
\begin{equation}
 \epsilon_{BH}(\chi,s,\psi)=\epsilon_{D}(\chi\omega_{s},\psi,dx_{\psi}).
\end{equation}
\end{proof}

 \begin{cor}\label{Corollary 4.1}
  For our $W$ we have :
  \begin{center}
   $W(\chi,\psi)=\epsilon_{BH}(\chi,\frac{1}{2},\psi)=\epsilon_{D}(\chi\omega_{\frac{1}{2}},\psi,dx_{\psi})$\\
   $W(\chi\omega_{s-\frac{1}{2}},\psi)=\epsilon_{BH}(\chi,s,\psi)=\epsilon_{D}(\chi\omega_{s},\psi,dx_{\psi})$.
  \end{center}
\end{cor}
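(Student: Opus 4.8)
The plan is to assemble the corollary directly from the two results that immediately precede it, namely the identity $W(\chi,\psi)=\epsilon_{BH}(\chi,\frac{1}{2},\psi)$ and the Lemma $\epsilon_{BH}(\chi,s,\psi)=\epsilon_{D}(\chi\omega_{s},\psi,dx_{\psi})$. No new computation is needed, only a careful chaining of equalities at the relevant values of $s$.

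First I would read off the top line. The connection between $\epsilon_{BH}$ and $W$ gives the left-hand equality $W(\chi,\psi)=\epsilon_{BH}(\chi,\frac{1}{2},\psi)$ outright. Specializing the Lemma to $s=\frac{1}{2}$ gives $\epsilon_{BH}(\chi,\frac{1}{2},\psi)=\epsilon_{D}(\chi\omega_{\frac{1}{2}},\psi,dx_{\psi})$, which supplies the right-hand equality. Concatenating the two produces the first displayed line.

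Next I would treat the bottom line. Its right-hand equality is exactly the Lemma, so the only point requiring justification is the left-hand equality $W(\chi\omega_{s-\frac{1}{2}},\psi)=\epsilon_{BH}(\chi,s,\psi)$. I would prove this by splitting on whether $\chi$ is ramified. If $\chi$ is unramified I use $W(\chi,\psi,c)=\chi(c)$ from equation (\ref{eqn 2.3.5}) together with $\omega_{s-\frac{1}{2}}(c)=q_F^{(\frac{1}{2}-s)n(\psi)}$ and compare with equation (\ref{eqn 2.3.10}). If $\chi$ is ramified I apply the unramified-twist formula (property 2.3.1(4b)) to write $W(\chi\omega_{s-\frac{1}{2}},\psi)=\omega_{s-\frac{1}{2}}(\pi_F^{a(\chi)+n(\psi)})\,W(\chi,\psi)=q_F^{(\frac{1}{2}-s)(a(\chi)+n(\psi))}W(\chi,\psi)$, which again matches equation (\ref{eqn 2.3.10}). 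In fact this is precisely the case analysis already carried out inside the proof of the Lemma, so it can be quoted rather than redone; combining it with the Lemma yields the second displayed line.

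The only real obstacle is bookkeeping: one must keep straight the shift by $\omega_{-\frac{1}{2}}$ relating the Langlands normalization $W$ to the Deligne normalization $\epsilon_{D}$ (encoded in equation (\ref{eqn 2.3.11})), and confirm throughout that the Haar measure is the self-dual $dx_{\psi}$ attached to $\psi$. Since both of these are already fixed in the preceding Lemma, the corollary follows with no genuinely new input.
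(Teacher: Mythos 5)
Your proposal is correct and follows essentially the same route as the paper: the corollary is obtained by chaining the connection $W(\chi,\psi)=\epsilon_{BH}(\chi,\frac{1}{2},\psi)$ with the Lemma $\epsilon_{BH}(\chi,s,\psi)=\epsilon_{D}(\chi\omega_{s},\psi,dx_{\psi})$, invoking Tate's normalization (the paper cites (3.6.1) and (3.6.4) of \cite{JT2}, i.e.\ equation (\ref{eqn 2.3.11})). Your observation that the identity $W(\chi\omega_{s-\frac{1}{2}},\psi)=\epsilon_{BH}(\chi,s,\psi)$ is already established by the ramified/unramified case analysis inside the Lemma's proof, and so may simply be quoted, matches the paper's (terser) argument.
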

\begin{proof}
 From the equations (3.6.1) and (3.6.4) of \cite{JT2} for $\chi$ and above two connections the assertions follow.
\end{proof}
\end{enumerate}

\subsection{{\bf Local constants for virtual representations}}

\begin{enumerate}
 \item To extend the concept of local constant, we need to go from 1-dimensional to other virtual representations $\rho$ of 
 the Weil groups $W_F$ of non-archimedean local field $F$.
 According to Tate \cite{JT1}, the root 
number $W(\chi):=W(\chi,\psi_F)$ extensions to $W(\rho)$, where $\psi_F$ is the canonical additive character of $F$.
More generally, $W(\chi,\psi)$ extends to $W(\rho,\psi)$, and if
$E/F$ is a finite separable extension then one has to take 
$\psi_{E}=\psi_{F}\circ \mathrm{Tr}_{E/F}$ for the extension field $E$. 

According to Bushnell-Henniart \cite{BH}, Theorem on p. 189, the functions
$\epsilon_{BH}(\chi,s,\psi)$ extend to $\epsilon_{BH}(\rho,s,\psi_E)$, where $\psi_E=\psi\circ\rm{Tr}_{E/F}$
\footnote{ Note that they fix a base field $F$ and a nontrivial $\psi=\psi_F$
(which not to be the canonical character used in Tate \cite{JT1}) but then if $E/F$ is an extension they always use 
$\psi_E=\psi\circ\mathrm{Tr}_{E/F}$.}. According to Tate \cite{JT2}, Theorem (3.4.1) the functions $\epsilon_{D}(\chi,\psi,dx)$
extends to $\epsilon_{D}(\rho,\psi,dx)$. In order to get {\bf weak inductivity}  we have again to use $\psi_E=\psi\circ\mathrm{Tr}_{E/F}$
if we consider extensions. Then according to Tate \cite{JT2} (3.6.4) the Corollary \ref{Corollary 4.1} turns into 
\begin{cor}\label{Corollary 4.2}
 For the virtual representations of the Weil groups we have 
 \begin{center}
  $W(\rho\omega_{E,s-\frac{1}{2}},\psi_E)=\epsilon_{BH}(\rho,s,\psi_E)=\epsilon_{D}(\rho\omega_{E,s},\psi_E,dx_{\psi_E})$.\\
  $W(\rho,\psi_E)=\epsilon_{BH}(\rho,\frac{1}{2},\psi_E)=\epsilon_{D}(\rho\omega_{E,\frac{1}{2}},\psi_E,dx_{\psi_E})$.
 \end{center}
\end{cor}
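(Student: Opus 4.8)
The plan is to reduce the statement to the one--dimensional identities of Corollary~\ref{Corollary 4.1} and then to invoke the uniqueness of extendible functions. Fix a finite Galois quotient $G$ of $G_F$ through which the virtual representation $\rho$ factors, so that $\rho$ becomes (a pair in) $R(G)$ in the sense of Section 2.2. By the theorems cited in the statement (Langlands \cite{RL}, \cite{JT1} for $W$, Bushnell--Henniart \cite{BH} for $\epsilon_{BH}$, and Deligne \cite{D1} together with Tate \cite{JT2} for $\epsilon_{D}$), each of the three assignments
$$\rho\mapsto W(\rho,\psi_E),\qquad \rho\mapsto\epsilon_{BH}(\rho,\tfrac{1}{2},\psi_E),\qquad \rho\mapsto\epsilon_{D}(\rho\,\omega_{E,\frac{1}{2}},\psi_E,dx_{\psi_E})$$
is a weakly extendible $\mathbb{C}^{\times}$-valued function on $R(G)$, where for a subextension $E$ one uses the \emph{same} base-change convention $\psi_E=\psi\circ\mathrm{Tr}_{E/F}$ in all three theories. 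For the canonical character this compatibility is automatic since $\psi_{\mathbb{Q}_p}\circ\mathrm{Tr}_{E/\mathbb{Q}_p}=\psi_{\mathbb{Q}_p}\circ\mathrm{Tr}_{F/\mathbb{Q}_p}\circ\mathrm{Tr}_{E/F}$, and for the conventions of \cite{BH} and \cite{JT2} it is built into their definitions (cf.\ the footnote on $\psi_E$).

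First I would settle the value $s=\frac{1}{2}$, i.e.\ the second displayed line. On the subset $R_1(G)$ of pairs $(H,\chi)$ with $\chi$ a linear character, the three functions coincide: this is precisely Corollary~\ref{Corollary 4.1} read with the additive character $\psi_E$ in place of $\psi$, namely $W(\chi,\psi_E)=\epsilon_{BH}(\chi,\frac{1}{2},\psi_E)=\epsilon_{D}(\chi\,\omega_{E,\frac{1}{2}},\psi_E,dx_{\psi_E})$. Since all three are weakly extendible and agree on $R_1(G)$, and since extendible functions are unique once they exist (cf.\ \cite{JT1}, p.~103), they must agree on all of $R(G)$; in particular they agree on every virtual representation $\rho$. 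This yields
$$W(\rho,\psi_E)=\epsilon_{BH}(\rho,\tfrac{1}{2},\psi_E)=\epsilon_{D}(\rho\,\omega_{E,\frac{1}{2}},\psi_E,dx_{\psi_E}).$$

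For the first displayed line I would twist by the unramified character $\omega_{E,s-\frac{1}{2}}$. On linear characters, the relation $\epsilon_{BH}(\chi,s,\psi)=q_F^{(\frac{1}{2}-s)(a(\chi)+n(\psi))}\,W(\chi,\psi)$ of equation (\ref{eqn 2.3.10}), together with Tate's identity (3.6.4) of \cite{JT2} in the form $\epsilon_{L}(\chi,s,\psi)=W(\chi\,\omega_{s-\frac{1}{2}},\psi)=\epsilon_{D}(\chi\,\omega_{s},\psi,dx_{\psi})$, records exactly how each convention transforms under multiplication by $\omega_{s-\frac{1}{2}}$; the three transform by the same unramified factor. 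Because the unramified twist $\rho\mapsto\rho\,\omega_{E,s-\frac{1}{2}}$ commutes with induction and addition, each of the $s$-dependent assignments is again weakly extendible on $R(G)$ and they already agree on $R_1(G)$ by the previous step; the uniqueness argument then applies verbatim and gives $W(\rho\,\omega_{E,s-\frac{1}{2}},\psi_E)=\epsilon_{BH}(\rho,s,\psi_E)=\epsilon_{D}(\rho\,\omega_{E,s},\psi_E,dx_{\psi_E})$.

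The step I expect to be the main obstacle is not any computation but the bookkeeping that makes the uniqueness theorem applicable: one must check that all three conventions are extendible with respect to \emph{one and the same} structure on $R(G)$, which amounts to the compatibility of the base characters $\psi_E=\psi\circ\mathrm{Tr}_{E/F}$ across the theories. Once this is secured, the coincidence of the associated $\lambda$-factors $\lambda_{H}^{\Delta}=\mathcal{F}(\Delta,\mathrm{Ind}_{H}^{\Delta}1_H)$ is a consequence rather than a hypothesis, since the extended function is determined by its restriction to $R_1(G)$ via the degree-zero inductivity (\ref{eqn 2.2.4}) and Brauer induction; everything else is a formal appeal to Corollary~\ref{Corollary 4.1} and to uniqueness.
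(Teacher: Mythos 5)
Your proposal is correct and follows essentially the same route as the paper: the text surrounding Corollary \ref{Corollary 4.2} argues exactly by citing the extension theorems of Tate \cite{JT1}, Bushnell--Henniart \cite{BH} and Tate/Deligne \cite{JT2} for the three conventions with the common base-change rule $\psi_E=\psi\circ\mathrm{Tr}_{E/F}$, and then deduces the identities for virtual representations from the one-dimensional case of Corollary \ref{Corollary 4.1} together with Tate's relation (3.6.4), the uniqueness of extendible functions being the implicit mechanism you make explicit. Your additional remarks on the compatibility of the unramified twist across the three conventions only spell out what the paper leaves to the cited references.
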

Note that on the level of field extension $E/F$ we have to use $\omega_{E,s}$ which is defined as 
\begin{center}
 $\omega_{E,s}(x)=|x|_{E}^{s}=q_{E}^{-s\nu_E(x)}.$
\end{center}
We also know that $q_{E}=q_{F}^{f_{E/F}}$ and $\nu_E=\frac{1}{f_{E/F}}\cdot\nu_F(N_{E/F})$ (cf. \cite{FV}, p. 41, Theorem 2.5), therefore
we can easily see that 
$$\omega_{E,s}=\omega_{F,s}\circ N_{E/F}.$$

Since the norm map $N_{E/F}:E^\times\to F^\times$ corresponds via class field theory to the injection map $G_E\hookrightarrow G_F$,
Tate \cite{JT2} beginning from (1.4.6), simply writes $\omega_{s}=||^s$ and consider $\omega_{s}$ as an unramified character of the Galois
group (or of the Weil group) instead as a character on the field.
Then Corollary \ref{Corollary 4.2} turns into 
\begin{equation}\label{eqn 2.3.15}
 W(\rho\omega_{s-\frac{1}{2}},\psi_E)=\epsilon_{BH}(\rho,s,\psi_E)=\epsilon_{D}(\rho\omega_{s},\psi_E,dx_{\psi_E}),
\end{equation}
for all field extensions, where $\omega_{s}$ is to be considered as 1-dimensional representation of the Weil group $W_E\subset G_E$
if we are on the $E$-level. The left side equation (\ref{eqn 2.3.15}) is the $\epsilon$-factor of Langlands
(see \cite{JT2}, (3.6.4)). 

\item The functional equation (\ref{eqn 2.3.9}) extends to 
\begin{equation}\label{eqn 2.3.23}
 W(\rho,\psi)\cdot W(\rho^{V},\psi)=\mathrm{det}_{\rho}(-1),
\end{equation}
where $\rho$ is any virtual representation of the Weil group $W_F$, $\rho^{V}$ is the contragredient and $\psi$ is any nontrivial additive
character of $F$. This is formula (3) on p. 190 of \cite{BH} for $s=\frac{1}{2}$.

\item Moreover, the transformation law \cite{JT2} (3.4.5) can (on the $F$-level) be written as:\\
\textbf{unramified character twist}
\begin{equation}
 \epsilon_{D}(\rho\omega_{s},\psi,dx)=\epsilon_{D}(\rho,\psi,dx)\cdot \omega_{F,s}(c_{\rho,\psi})
\end{equation}
for any $c=c_{\rho,\psi}$ such that $\nu_F(c)=a(\rho)+n(\psi)\mathrm{dim}(\rho)$. 
It implies that also for the root number on the $F$-level
we have 
\begin{equation}
 W(\rho\omega_{s},\psi)=W(\rho,\psi)\cdot \omega_{F,s}(c_{\rho,\psi}).
\end{equation}
\end{enumerate}


\section{\textbf{Classical Gauss sums}}

Let $k_q$ be a finite field. Let $p$ be the characteristic of $k_q$; then the prime  field 
contained in $k_q$ is $k_p$. 
The structure of the {\bf canonical} additive character $\psi_q$ of $k_q$ is the same as the structure of the canonical  character
$\psi_F$, namely {\bf it comes by trace} from the canonical character of the base field, i.e., 
\begin{center}
 $\psi_q=\psi_p\circ \rm{Tr}_{k_q/k_p}$,
\end{center}
where 
\begin{center}
 $\psi_p(x):=e^{\frac{2\pi i x}{p}}$ \hspace{.3cm} for all $x\in k_p$.
\end{center}

\textbf{Gauss sums:} Let $\chi$ be a multiplicative and $\psi$ an additive character of $k_q$. Then the Gauss sum $G(\chi,\psi)$ is define
by 
\begin{equation}
 G(\chi,\psi)=\sum_{x\in k_{q}^{\times}}\chi(x)\psi(x).
\end{equation}
In general, computation of this Gauss is very difficult, but for certain characters, the associated Gauss sums can be evaluated explicitly.
In the following theorem for quadratic characters of $k_q$ we can give explicit formula of Gauss sums.
\begin{thm}[\cite{LN}, p. 199, Theorem 5.15]\label{Theorem 3.5}
Let $k_q$ be a finite field with $q=p^s$, where $p$ is an odd prime and $s\in\mathbb{N}$. Let $\chi$ be the quadratic character of 
$k_q$ and let $\psi$ be the canonical additive character of $k_q$. Then
\begin{equation}
 G(\chi,\psi)=\begin{cases}
               (-1)^{s-1}q^{\frac{1}{2}} & \text{if $p\equiv 1\pmod{4}$},\\
               (-1)^{s-1}i^sq^{\frac{1}{2}} & \text{if $p\equiv 3\pmod{4}$}.
              \end{cases}
\end{equation}
\end{thm}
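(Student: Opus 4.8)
The plan is to reduce to the prime field $k_p$ and then lift to $k_q=k_{p^s}$ by the Hasse--Davenport relation, so that the only serious input is the classical sign determination of the Gauss sum over $k_p$. First I would record that the quadratic character $\chi$ of $k_q$ factors as $\chi=\chi_0\circ N_{k_q/k_p}$, where $\chi_0$ is the quadratic character of $k_p$: the norm $N_{k_q/k_p}\colon k_q^\times\to k_p^\times$ is surjective, so $\chi_0\circ N_{k_q/k_p}$ is a nontrivial character of order $2$ of the cyclic group $k_q^\times$, and such a character is unique. By the very definition of the canonical additive character recalled above, we also have $\psi=\psi_p\circ\mathrm{Tr}_{k_q/k_p}$. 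Thus both data on $k_q$ arise from the corresponding data on $k_p$ by composing with norm and trace, which is exactly the situation in which Hasse--Davenport applies.

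Next I would treat the prime field case $s=1$. The magnitude is cheap: writing $G_0:=G(\chi_0,\psi_p)$ and collecting terms by the ratio $y/x$ gives $G_0^2=\chi_0(-1)\,p=(-1)^{(p-1)/2}p$, and since $\chi_0$ is real-valued one also gets $\overline{G_0}=\chi_0(-1)G_0$, hence $|G_0|^2=p$. Therefore $G_0=\pm\sqrt p$ when $p\equiv 1\pmod 4$ and $G_0=\pm i\sqrt p$ when $p\equiv 3\pmod 4$. The genuinely hard input, which the squaring identity cannot supply, is Gauss's theorem that the sign is always $+$:
\[
 G_0=\begin{cases} \sqrt p & p\equiv 1\pmod 4,\\ i\sqrt p & p\equiv 3\pmod 4.\end{cases}
\]
I would invoke this as the classical result (proved, for instance, via Dirichlet's finite Fourier / theta-function evaluation, or via Schur's determinant argument).

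Finally I would lift. The Hasse--Davenport relation for the extension $k_q/k_p$, applied to the pair $(\chi_0,\psi_p)$, reads $-G(\chi,\psi)=\bigl(-G_0\bigr)^s$, whence
\[
 G(\chi,\psi)=(-1)^{s-1}G_0^{s}.
\]
Substituting the two cases of $G_0$ yields $G(\chi,\psi)=(-1)^{s-1}p^{s/2}=(-1)^{s-1}q^{1/2}$ for $p\equiv 1\pmod 4$, and $G(\chi,\psi)=(-1)^{s-1}i^s p^{s/2}=(-1)^{s-1}i^s q^{1/2}$ for $p\equiv 3\pmod 4$, which is the asserted formula. The whole obstacle is concentrated in the prime-field sign determination; the reduction and the Hasse--Davenport lift are formal once the factorizations $\chi=\chi_0\circ N_{k_q/k_p}$ and $\psi=\psi_p\circ\mathrm{Tr}_{k_q/k_p}$ are in hand, the latter being precisely the reason the \emph{canonical} additive character is the right normalization here.
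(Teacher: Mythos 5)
Your proof is correct, and it is essentially the argument of the cited source: the paper itself gives no proof of Theorem \ref{Theorem 3.5} (it only cites \cite{LN}), and the proof there is exactly your reduction — the classical sign determination of the prime-field quadratic Gauss sum combined with the Davenport--Hasse lifting relation (recorded in the paper as Theorem \ref{Davenport-Hasse}), using $\chi=\chi_0\circ N_{k_q/k_p}$ and $\psi=\psi_p\circ\mathrm{Tr}_{k_q/k_p}$. Nothing to add beyond noting that you correctly isolate the one nontrivial input, Gauss's evaluation of the sign over $k_p$, which must be invoked from the classical literature.
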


Let $\psi$ be an additive and $\chi$ a multiplicative character of $k_q$. Let $E$ be a finite extension field of $k_q$. Then 
$\psi$ and $\chi$ can be \textbf{lifted} to $E$ by the setting 
\begin{center}
 $\psi'(x)=\psi(\mathrm{Tr}_{E/k_q}(x))$ for all $x\in E$ and $\chi'(x)=\chi(N_{E/k_q}(x))$ for all $x\in E^\times$.
\end{center}
From the additivity of the trace and multiplicativity of the norm it follows that $\chi'$ is an additive character and $\chi'$ is a
multiplicative character of $E$. The following theorem gives the relation between the Gauss sum $G(\chi,\psi)$ in $k_q$ and the 
Gauss sum $G(\chi',\psi')$ in $E$.

\begin{thm}[Davenport-Hasse, \cite{LN}, p. 197, Theorem 5.14]\label{Davenport-Hasse}
 Let $\chi$ be a multiplicative and $\psi$ an additive character of $k_q$, not both of them trivial. Suppose $\chi$ and 
 $\psi$ are lifted to character $\chi'$ and $\psi'$, respectively, of the finite extension field $E$ of $k_q$ with 
 $[E:k_q]=s$. Then 
 \begin{equation}
  G(\chi',\psi')=(-1)^{s-1}\cdot G(\chi,\psi)^{s}.
 \end{equation}
\end{thm}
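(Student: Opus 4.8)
The plan is to encode the pair $(\chi,\psi)$ in a single one-variable $L$-polynomial over $k_q$, to identify that polynomial with $1+G(\chi,\psi)T$, and then to recover the Gauss sum over every extension $k_{q^n}$ by computing a logarithmic derivative in two ways; the theorem will be the special case $n=s$. First I would attach to each monic polynomial $g(x)=x^n+c_1x^{n-1}+\cdots+c_n$ over $k_q$, with roots $x_1,\dots,x_n\in\overline{k_q}$, the quantity $\lambda(g):=\psi(x_1+\cdots+x_n)\,\chi(x_1\cdots x_n)=\psi(-c_1)\,\chi((-1)^nc_n)$, using throughout the convention $\chi(0)=0$ (so that $\lambda(g)=0$ as soon as $g(0)=0$). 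Because the sum of the roots is additive and the product of the roots is multiplicative when polynomials are multiplied, and because $\psi$ and $\chi$ are characters, $\lambda$ is multiplicative: $\lambda(g_1g_2)=\lambda(g_1)\lambda(g_2)$. Setting $L(T):=\sum_{g\text{ monic}}\lambda(g)T^{\deg g}$, unique factorization into monic irreducibles together with this multiplicativity yields the Euler product $L(T)=\prod_{f}\bigl(1-\lambda(f)T^{\deg f}\bigr)^{-1}$, the product running over monic irreducible $f$ over $k_q$.

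Next I would evaluate $L(T)$ coefficientwise. The coefficient of $T^n$ is $\sum_{\deg g=n}\lambda(g)=\sum_{c_1,\dots,c_n}\psi(-c_1)\chi((-1)^nc_n)$. For $n=1$ this is $\sum_{a\in k_q}\chi(a)\psi(a)=G(\chi,\psi)$. For $n\ge 2$ the middle coefficients $c_2,\dots,c_{n-1}$ are free, so the sum factors as $q^{n-2}\bigl(\sum_{c_1}\psi(-c_1)\bigr)\bigl(\sum_{c_n}\chi((-1)^nc_n)\bigr)$, which vanishes: since $\chi$ and $\psi$ are not both trivial, at least one of the two complete character sums is $0$. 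Hence $L(T)=1+G(\chi,\psi)T$, a polynomial of degree one whose unique inverse root is $-G(\chi,\psi)$.

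The main step is to extract the Gauss sums over the extensions from this single polynomial. Taking logarithms of the Euler product gives $\log L(T)=\sum_{n\ge 1}c_nT^n$ with $n\,c_n=\sum_{d\mid n}d\sum_{\deg f=d}\lambda(f)^{n/d}$. I would then show that this equals the lifted Gauss sum over $k_{q^n}$. Each $x\in k_{q^n}$ is a root of a unique monic irreducible $f$ over $k_q$, of degree $d=[k_q(x):k_q]$ dividing $n$, and in the tower $k_q\subseteq k_{q^d}\subseteq k_{q^n}$ one has $\mathrm{Tr}_{k_{q^n}/k_q}(x)=\tfrac{n}{d}(x_1+\cdots+x_d)$ and $N_{k_{q^n}/k_q}(x)=(x_1\cdots x_d)^{n/d}$ for the $d$ conjugate roots of $f$; therefore $\psi\bigl(\mathrm{Tr}_{k_{q^n}/k_q}(x)\bigr)\chi\bigl(N_{k_{q^n}/k_q}(x)\bigr)=\lambda(f)^{n/d}$. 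Summing over the $d$ roots of each $f$ and then over all $f$ of degree dividing $n$ recovers exactly $\sum_{d\mid n}d\sum_{\deg f=d}\lambda(f)^{n/d}$, so that $n\,c_n=\sum_{x\in k_{q^n}}\psi(\mathrm{Tr}_{k_{q^n}/k_q}(x))\chi(N_{k_{q^n}/k_q}(x))$, which is precisely the Gauss sum over $k_{q^n}$ of the characters lifted from $k_q$.

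On the other hand, $L(T)=1+G(\chi,\psi)T$ gives $\log L(T)=\sum_{n\ge 1}\tfrac{(-1)^{n-1}}{n}G(\chi,\psi)^nT^n$, hence $n\,c_n=(-1)^{n-1}G(\chi,\psi)^n$. Equating the two formulas for $n\,c_n$ yields $G(\chi^{(n)},\psi^{(n)})=(-1)^{n-1}G(\chi,\psi)^n$ for every $n$, where $\chi^{(n)},\psi^{(n)}$ denote the characters lifted to $k_{q^n}$; taking $n=s$, so that $k_{q^s}=E$, $\chi^{(s)}=\chi'$ and $\psi^{(s)}=\psi'$, gives the asserted identity $G(\chi',\psi')=(-1)^{s-1}G(\chi,\psi)^s$. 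I expect the only genuine obstacle to be the root-counting identity of the third paragraph: justifying $n\,c_n=G(\chi^{(n)},\psi^{(n)})$ rests on the compatibility of trace and norm along $k_q\subseteq k_{q^d}\subseteq k_{q^n}$ and on the bijection between $k_{q^n}$ and the disjoint sets of roots of the monic irreducibles of degree dividing $n$. Once this is in place, everything else is formal power-series bookkeeping.
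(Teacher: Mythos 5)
Your argument is correct: it is the classical Davenport--Hasse proof via the multiplicative function $\lambda$ on monic polynomials, the identity $L(T)=1+G(\chi,\psi)T$, and comparison of the logarithmic expansions of the Euler product and of $1+G(\chi,\psi)T$. The paper itself gives no proof here --- it quotes the result from Lidl--Niederreiter (Theorem 5.14) --- and your argument is essentially the one in that cited source, with the one step you flag (the identification $n\,c_n=\sum_{x\in k_{q^n}}\psi(\mathrm{Tr}_{k_{q^n}/k_q}(x))\chi(N_{k_{q^n}/k_q}(x))$ via the partition of $k_{q^n}$ into roots of monic irreducibles of degree dividing $n$) being exactly the standard justification; no gaps.
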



\section{\textbf{Witt ring and square class group of a local field}}

Let $F$ be a field. Let $M$ be any commutative cancellation monoid under addition.
We define a relation $\sim$ on $M\times M$ by 
\begin{center}
 $(x,y)\sim(x',y')\Longleftrightarrow x+y'=x'+y\in M$.
\end{center}
The cancellation law in $M$ implies that $\sim$ is an equivalence relation on $M\times M$. We define the {\bf Grothendieck group}
of $M$ to be $\rm{Groth}(M)=(M\times M)/\sim$ (the set of equivalence classes) with addition induced by 
\begin{center}
 $(x,y)+(x',y')=(x+x',y+y')$.
\end{center}
We can prove that $\rm{Groth}(M)$ is the additive group generated by $M$.

Now let $M(F)$ be the set of all isometry classes of (nonsingular) quadratic forms of $F$, and replace $M$ by $M(F)$ in the 
definition of Grothendieck group. Then we denote $\widehat{W}(F)=\rm{Groth}(M(F))$ which is called the {\bf Witt-Grothendieck}
ring of quadratic forms over the field $F$. Every element of $\widehat{W}(F)$ has the formal expression $q_1-q_2$, where $q_1,\,q_2$
are nonsingular quadratic forms,
or rather, isometry classes of such forms.

Now, consider the dimension map $\rm{dim}:M(F)\to\bbZ$, which is a semiring homomorphism on $M(F)$. This extends uniquely
(via the universal property) to a ring homomorphism $\rm{dim}:\widehat{W}(F)\to\bbZ$, by 
$$\rm{dim}(q_1-q_2)=\rm{dim}(q_1)-\rm{dim}(q_2).$$
The kernel of this ring homomorphism, denoted by $\widehat{I}F$, is called the \textbf{the fundamental ideal of $\widehat{W}(F)$}.
We have $\widehat{W}(F)/\widehat{I}F=\bbZ$. 

Let $\bbZ\cdot\mathbb{H}$ be the set which consists of all hyperabolic spaces and their additive inverses, and they form an ideal
of $\widehat{W}(F)$. The vector ring
\begin{center}
 $W(F)=\widehat{W}(F)/\bbZ\cdot\mathbb{H}$
\end{center}
is called the \textbf{Witt ring} of $F$. 
The image of the ideal $\widehat{I}F$ under the natural projection 
$\widehat{W}(F)\to W(F)$ is denoted by $IF$; this is called the \textbf{fundamental ideal} of $W(F)$. It can be shown that
$W(F)/IF\cong\bbZ/2\bbZ$ (cf. \cite{TYM}, p. 30, Corollary 1.6).

The group $F^\times/{F^\times}^2$ is called the {\bf square class group} of $F$, and $Q(F)=\bbZ_2\times F^\times/{F^\times}^2$ 
is called the \textbf{extended} square class group of $F$. We also have $W(F)/I^2F\cong Q(F)$ (cf. \cite{TYM}, p. 31, Proposition 2.1),
where $I^2F$ denotes the square of $IF$. By {\bf Pfister's result} (cf. \cite{TYM}, p. 32, Corollary 2.3), we have
the square class group $F^\times/{F^\times}^2\cong IF/I^2F$.

Now we come to our local field case. Let $F/\bbQ_p$ be a local field. When $F/\bbQ_p$ is a local field with $p\ne 2$, from 
Theorem 2.2(1) of \cite{TYM} we have $F^\times/{F^\times}^2\cong V$, where $V$ is Klein's $4$-group. More generally, we need the 
following results for computing $\lambda$-function in the wild case.

\begin{thm}[\cite{TYM}, p. 162, Theorem 2.22]\label{Theorem 2.9}
 Let $F/\bbQ_p$ be a local field with $q_F$ as the cardinality of the residue field of $F$. Let $s=\nu_F(2)$.
 Then $|F^\times/{F^\times}^2|=4\cdot q_{F}^{s}$.\\
 In particular, if $p\ne 2$, i.e., $s=\nu_{F}(2)=0$, we have $|F^\times/{F^\times}^2|=4$.
\end{thm}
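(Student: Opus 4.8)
The plan is to decompose $F^\times$ completely by the structure theory of local fields and then compute the index of squares factor by factor. First I would choose a uniformizer $\pi_F$ to split off the valuation, giving $F^\times\cong\bbZ\times U_F$, and then use the Teichm\"uller decomposition $U_F\cong\mu_{q_F-1}\times U_{F}^{1}$, where $\mu_{q_F-1}$ is the cyclic group of roots of unity of order prime to $p$ and $U_F^1=1+P_F$ is the group of principal units. The essential structural input is that the pro-$p$ group of principal units satisfies
\begin{equation*}
 U_F^1\cong\bbZ_p^{[F:\bbQ_p]}\times\mu_{p^a},
\end{equation*}
where $\mu_{p^a}$ is the finite group of $p$-power roots of unity contained in $F$ and $[F:\bbQ_p]=e_{F/\bbQ_p}\cdot f_{F/\bbQ_p}$. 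Putting these together yields $F^\times\cong\bbZ\times\mu_{q_F-1}\times\bbZ_p^{[F:\bbQ_p]}\times\mu_{p^a}$.

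Next I would use that squaring respects direct products, so that the index of squares is multiplicative over the factors:
\begin{equation*}
 |F^\times/{F^\times}^2|=|\bbZ/2\bbZ|\cdot|\mu_{q_F-1}/\mu_{q_F-1}^2|\cdot|\bbZ_p^{[F:\bbQ_p]}/2\bbZ_p^{[F:\bbQ_p]}|\cdot|\mu_{p^a}/\mu_{p^a}^2|.
\end{equation*}
Here the first factor always contributes $2$; the cyclic factor $\mu_{q_F-1}$ contributes $\gcd(2,q_F-1)$; the free $\bbZ_p$-part contributes $1$ when $p\ne 2$ (as $2$ is then a unit in $\bbZ_p$, so $2\bbZ_p=\bbZ_p$) and $2^{[F:\bbQ_p]}$ when $p=2$; and the torsion factor contributes $\gcd(2,p^a)$.

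Then I would finish with a case split on the residue characteristic. If $p\ne 2$ then $s=\nu_F(2)=0$, the residue order $q_F$ is odd so $\gcd(2,q_F-1)=2$, the $\bbZ_p$-part is trivial, and $\gcd(2,p^a)=1$; the product is $2\cdot 2\cdot 1\cdot 1=4=4\cdot q_F^s$. If $p=2$ then $q_F$ is even so $\gcd(2,q_F-1)=1$, while $-1\in\bbQ_p\subseteq F$ forces $\mu_2\subseteq\mu_{p^a}$, hence $a\ge 1$ and $\gcd(2,p^a)=2$; the product becomes $2\cdot 1\cdot 2^{[F:\bbQ_p]}\cdot 2=2^{[F:\bbQ_p]+2}$. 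Since $s=\nu_F(2)=\nu_F(p)=e_{F/\bbQ_p}$ and $q_F=p^{f_{F/\bbQ_p}}$ in this case, one has $q_F^s=(p^{f_{F/\bbQ_p}})^{e_{F/\bbQ_p}}=2^{[F:\bbQ_p]}$, so the product equals $4\cdot q_F^s$, as claimed.

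The main obstacle is the structural input of the first step, namely pinning down the $\bbZ_p$-rank of $U_F^1$ as exactly $[F:\bbQ_p]$; this is where the characteristic-zero hypothesis is essential and is the only genuinely nonelementary ingredient. A clean route that avoids invoking the full structure theorem is the $p$-adic logarithm: for a high enough level $m>\tfrac{e_{F/\bbQ_p}}{p-1}$ it gives a topological isomorphism $U_F^m\cong P_F^m\cong\bbZ_p^{[F:\bbQ_p]}$, and transporting the Herbrand quotient of multiplication by $2$ up the finite filtration $U_F^1/U_F^m$ recovers the counts above, the only extra bookkeeping being the $2$-torsion $U_F^1[2]$, which is nontrivial precisely when $-1\in U_F^1$, i.e.\ when $p=2$. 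Everything else is elementary index arithmetic.
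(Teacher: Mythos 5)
Your proof is correct. Note first that the paper does not actually prove this statement: it is quoted verbatim from Lam's book (\cite{TYM}, Theorem 2.22) as a black box, and the only computation the paper performs afterwards is the specialization $s=e_{F/\bbQ_2}$, $q_F=2^{f_{F/\bbQ_2}}$ to get $|F^\times/{F^\times}^2|=2^{n+2}$ for $[F:\bbQ_2]=n$ --- a step you also carry out at the end of your $p=2$ case. Your argument via the structure theorem $F^\times\cong\bbZ\times\mu_{q_F-1}\times\mu_{p^a}\times\bbZ_p^{[F:\bbQ_p]}$ is the standard route and every factor is accounted for correctly: the multiplicativity of $|G/G^2|$ over direct factors, the contributions $2$, $\gcd(2,q_F-1)$, $\gcd(2,p^a)$, and $|\bbZ_p/2\bbZ_p|^{[F:\bbQ_p]}$, and the observation that $-1\in F$ forces $a\ge 1$ when $p=2$. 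You are also right to flag the $\bbZ_p$-rank of $U_F^1$ as the one genuinely nonelementary input (and your fallback via the $p$-adic logarithm on a deep enough principal-unit subgroup, plus finite bookkeeping on $U_F^1/U_F^m$ and the $2$-torsion, is a legitimate way to avoid the full structure theorem). For comparison, Lam's own proof in the cited source instead uses the general index formula $|F^\times/{F^\times}^n|=n\cdot|n|_F^{-1}\cdot|\mu_n(F)|$, derived from the unit filtration together with the Local Square Theorem (an open subgroup of the form $1+4P_F$ consists of squares); that route works directly with the filtration $U_F\supset U_F^1\supset\cdots$ rather than with an abstract splitting of $F^\times$, but the two arguments are essentially equivalent in content and both specialize to $4$ when $p\ne 2$ and to $4q_F^{e_{F/\bbQ_2}}$ when $p=2$.
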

When $p=2$ and $F/\bbQ_2$ is a local field of degree $n$ over $\bbQ_2$, we have $s=\nu_{F}(2)=e_{F/\bbQ_2}$
because $2$ is a uniformizer in $\bbQ_2$. We also know that $q_F=q_{\bbQ_2}^{f_{F/\bbQ_2}}=2^{f_{F/\bbQ_2}}$.
We also know that $e_{F/\bbQ_2}\cdot f_{F/\bbQ_2}=n$. Then from the above Theorem \ref{Theorem 2.9} we obtain  
\begin{equation}
 |F^\times/{F^\times}^2|=4\cdot q_{F}^{e_{F/\bbQ_2}}=4\cdot(2^{f_{F/\bbQ_2}})^{e_{F/\bbQ_2}}=4\cdot 2^n=2^{2+n}.
\end{equation}


\begin{thm}[\cite{TYM}, p. 165, Theorem 2.29]\label{Theorem 2.11}
 Let $F/\bbQ_2$ be a dyadic local field, with $|F^\times/{F^\times}^2|=2^m$ ($m\ge 3$). Then:
 \begin{enumerate}
  \item Case 1: When $-1\in{F^\times}^2$, we have $W(F)\cong\bbZ_{2}^{m+2}$ (here $\bbZ_{n}^{k}$ denotes the direct product of $k$
  copies of $\bbZ_n=\bbZ/n\bbZ$).
  \item Case 2: When $-1\not\in{F^\times}^2$, but $-1$ is a sum of two squares, we have $W(F)\cong \bbZ_{4}^{2}\times\bbZ_{2}^{m-2}$.
  \item Case 3: When $-1$ is not a sum of two squares, we have $W(F)=\bbZ_8\times\bbZ_{2}^{m-1}$.
 \end{enumerate}

\end{thm}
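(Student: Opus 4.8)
The plan is to treat $W(F)$ as a finite abelian $2$-group and to pin down its isomorphism type from three invariants: its order, its exponent, and the number of cyclic summands of each order. The order will be the same in all three cases, and the cases will be separated only by the latter two pieces of data, which are controlled by the level $s(F)$ and by the ring structure.

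First I would compute the order uniformly from the fundamental-ideal filtration $W(F)\supseteq IF\supseteq I^2F\supseteq I^3F$. Over a non-archimedean local field $u(F)=4$, so every $3$-fold Pfister form (being $8$-dimensional) is isotropic, hence hyperbolic; this forces $I^3F=0$. The classification of anisotropic forms over a local field gives a unique anisotropic $2$-fold Pfister form $\tau$ (the norm form of the quaternion division algebra), so $I^2F=\{0,\tau\}\cong\bbZ_2$. Combining this with $W(F)/IF\cong\bbZ_2$ and Pfister's isomorphism $IF/I^2F\cong F^\times/{F^\times}^2$ (both quoted in the excerpt), the latter of order $2^m$, yields $|W(F)|=2\cdot 2^m\cdot 2=2^{m+2}$ in every case.

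Next I would fix the exponent. Since each $\langle a\rangle$ is a unit of square $1$, multiplication by $\langle a\rangle$ is an additive automorphism, so every $\langle a\rangle$ has the same additive order as $\langle 1\rangle$; as these generate $W(F)$ additively, the exponent equals the additive order of $\langle 1\rangle$, which is $2s(F)$. The three cases are exactly $s(F)=1,2,4$: if $-1$ is a square then $s=1$; if $-1$ is a sum of two squares but not a square then $s=2$; otherwise $s=4$ (a local field has level dividing $u(F)=4$). Thus the exponents are $2,4,8$. Case 1 is then immediate: exponent $2$ makes $W(F)$ an $\bbF_2$-vector space of order $2^{m+2}$, hence $W(F)\cong\bbZ_2^{m+2}$.

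For Cases 2 and 3 I would count summands by computing $4W(F)$ and $2W(F)$ through the ring structure, using $I^3F=0$. Writing $4\langle a\rangle=\langle a\rangle\cdot 4\langle 1\rangle$ and noting $\langle a\rangle-\langle 1\rangle\in IF$, the correction term $(\langle a\rangle-\langle 1\rangle)\cdot 4\langle 1\rangle$ lies in $IF\cdot I^2F=I^3F=0$, so $4\langle a\rangle=4\langle 1\rangle$ and $4W(F)=\langle 4\langle 1\rangle\rangle$; this is $0$ in Case 2 and equals $I^2F\cong\bbZ_2$ in Case 3, giving no $\bbZ_8$ summand in Case 2 and exactly one in Case 3. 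The delicate point, and the main obstacle, is the count of summands of order $\ge 4$, i.e. $\dim_{\bbF_2}2W(F)/4W(F)$; the danger is an isomorphism type like $\bbZ_4\times\bbZ_2^{m}$. To rule this out I would show $\tau\in 2W(F)$ by realizing it as a form divisible by $2$: since $-1\notin{F^\times}^2$ in both cases and the Hilbert symbol on $F^\times/{F^\times}^2$ is nondegenerate, there is $c\in F^\times$ with $(-1,-c)$ nontrivial, whence the anisotropic $2$-fold Pfister form satisfies $\tau\cong\langle\langle -1,-c\rangle\rangle=\langle 1,1\rangle\langle 1,c\rangle=2\langle 1,c\rangle\in 2W(F)$. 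Then $\langle a\rangle\cdot 2\langle 1\rangle\equiv 2\langle 1\rangle \pmod{I^2F}$ gives $2W(F)/(2W(F)\cap I^2F)\cong\bbZ_2$, and since $\tau\in 2W(F)\cap I^2F$ we get $|2W(F)|=4$. In Case 2 (exponent $4$) this makes $2W(F)\cong\bbZ_2^2$, i.e. two $\bbZ_4$ summands, forcing $W(F)\cong\bbZ_4^2\times\bbZ_2^{m-2}$; in Case 3 (exponent $8$) $2W(F)\cong\bbZ_4$ gives $\dim_{\bbF_2}2W(F)/4W(F)=1$, a single summand of order $\ge 4$ which is the $\bbZ_8$, forcing $W(F)\cong\bbZ_8\times\bbZ_2^{m-1}$.
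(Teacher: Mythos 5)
The paper does not prove this statement at all: Theorem \ref{Theorem 2.11} is imported verbatim from Lam [TYM, p.~165, Theorem 2.29], so there is no internal proof to compare yours against. Judged on its own, your argument is correct and is essentially the standard invariant-counting derivation: the order $|W(F)|=2^{m+2}$ from the filtration $W\supset IF\supset I^2F\supset I^3F=0$ (using $u(F)=4$, the uniqueness of the quaternion division algebra, and Pfister's isomorphism $IF/I^2F\cong F^\times/{F^\times}^2$ already quoted in the paper), the exponent $2s(F)$ from the additive order of $\langle 1\rangle$ together with the unit trick $\langle a\rangle^2=\langle 1\rangle$, and the summand counts from $|2W(F)|$ and $|4W(F)|$ computed via $I^3F=0$. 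The identification of the three cases with $s(F)=1,2,4$ and the realization $\tau\cong 2\langle 1,c\rangle$ via nondegeneracy of the Hilbert symbol are both right. The one step you should make explicit is that $2\langle 1\rangle\notin I^2F$ in Cases 2 and 3: your congruence $\langle a\rangle\cdot 2\langle 1\rangle\equiv 2\langle 1\rangle \pmod{I^2F}$ only shows the quotient $2W(F)/(2W(F)\cap I^2F)$ is cyclic of order at most $2$, and if it were trivial you would land exactly on the excluded type $\bbZ_4\times\bbZ_2^{m}$. The missing line is routine: the signed discriminant of $\langle 1,1\rangle$ is $-1$, which is a nontrivial square class in these cases, and the discriminant is injective on $IF/I^2F$, so $2\langle 1\rangle\ne 0$ in $IF/I^2F$. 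With that sentence added the proof is complete.
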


\section{\textbf{Heisenberg representations}}

Let $\rho$ be an irreducible representation of a (pro-)finite group $G$. Then $\rho$ is called a \textbf{Heisenberg 
representation} if it represents commutators by 
scalar matrices. Therefore higher commutators are represented by $1$.
We can see that the linear characters of $G$ are Heisenberg representations as the degenerate special case.
To classify Heisenberg representations we need to mention two invariants of an irreducible representation 
$\rho\in\rm{Irr}(G)$:
\begin{enumerate}
 \item Let $Z_\rho$ be the \textbf{scalar} group of $\rho$, i.e., $Z_\rho\subseteq G$ and $\rho(z)=\text{scalar matrix}$
for every $z\in Z_\rho$. If $V/\bbC$ is a representation space of $\rho$ we get $Z_\rho$ as the kernel of the composite map 
\begin{equation}\label{eqn 2.6.1}
 G\xrightarrow{\rho}GL_{\bbC}(V)\xrightarrow{\pi} PGL_{\bbC}(V)=GL_{\bbC}(V)/\bbC^\times E,
\end{equation}
where $E$ is the unit matrix and denote $\overline{\rho}:=\pi\circ\rho$.
Therefore $Z_\rho$ is a normal subgroup of $G$.
\item Let $\chi_\rho$ be the character of $Z_\rho$ which is given as $\rho(g)=\chi_\rho(g)\cdot E$ for all $g\in Z_\rho$. 
Apparently $\chi_\rho$ is a $G$-invariant character of $Z_\rho$ which we call the central 
character of $\rho$.
\end{enumerate}
Let $A$ be a profinite abelian group. Then we know that (cf. \cite{Z5}, p. 124, Theorem 1 and Theorem 2)
the set of isomorphism classes $\rm{PI}(A)$ of projective irreducible representations (for 
projective representation, see \cite{CR}, \S  51) of $A$ is in bijective correspondence with the 
set of continuous alternating characters $\rm{Alt}(A)$. If $\rho\in\rm{PI}(A)$ corresponds to $X\in\rm{Alt}(A)$ then 
\begin{center}
 $\rm{Ker}(\rho)=\rm{Rad}(X)$ \hspace{.4cm} and \hspace{.2cm}$[A:\rm{Rad}(X)]=\rm{dim}(\rho)^2$,
\end{center}
where $\rm{Rad}(X):=\{a\in A|\, X(a,b)=1,\,\text{for all}\, b\in A\}$, the {\bf radical of $X$}.

Let $A:=G/[G,G]$, so $A$ is abelian. 
We also know from the  composite map (\ref{eqn 2.6.1})
$\overline{\rho}$ is a projective irreducible representation of $G$ and $Z_\rho$ is the kernel of $\overline{\rho}$.
Therefore \textbf{modulo commutator group $[G,G]$}, we can consider that 
$\overline{\rho}$ is in $\rm{PI}(A)$ which corresponds an alternating 
character $X$ of $A$ with kernel of $\overline{\rho}$ is $Z_\rho/[G,G]=\rm{Rad}(X)$.
We also know that 
$$[A:\rm{Rad}(X)]=[G/[G,G]:Z_\rho/[G,G]]=[G:Z_\rho].$$
Then we observe that 
$$\rm{dim}(\overline{\rho})=\rm{dim}(\rho)=\sqrt{[G:Z_\rho]}.$$

Let $H$ be a subgroup of $A$, then we define the orthogonal complement of $H$ in $A$ with respect to $X$
$$H^\perp:=\{a\in A:\quad X(a, H)\equiv1\}.$$
An {\bf isotropic} subgroup $H\subset A$ is a subgroup such that $H\subseteq H^\perp$ (cf. \cite{EWZ}, p. 270, Lemma 1(v)).
And when isotropic subgroup $H$ is maximal,
we call $H$ is a \textbf{maximal isotropic} for $X$. Thus when $H$ is maximal isotropic we have 
$H=H^\perp$.

We also can show that the Heisenberg representations $\rho$ are fully characterized by the corresponding pair 
$(Z_{\rho},\chi_{\rho})$.

\begin{prop}[\textbf{\cite{Z3}, Proposition 4.2}]\label{Proposition 3.1}
The map $\rho\mapsto(Z_\rho,\chi_\rho)$ is a bijection between equivalence 
classes of Heisenberg representations of $G$ and the pairs $(Z_\rho,\chi_\rho)$ such that 
\begin{enumerate}
 \item[(a)] $Z_\rho\subseteq G$ is a coabelian normal subgroup,
 \item[(b)] $\chi_\rho$ is a $G$-invariant character of $Z_\rho$,
 \item[(c)] $X(\hat{g_1},\hat{g_2}):=\chi_\rho(g_1g_2g_1^{-1}g_2^{-1})$ is a nondegenerate 
 \textbf{alternating character} on $G/Z_\rho$ where $\hat{g_1},\hat{g_2}\in G/Z_{\rho}$ and their 
 corresponding lifts $g_1,g_2\in G$.
\end{enumerate}
\end{prop}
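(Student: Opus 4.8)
The plan is to establish the two directions separately: that the assignment $\rho\mapsto(Z_\rho,\chi_\rho)$ actually lands among the pairs satisfying (a)--(c), and that it is a bijection onto that set.

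First I would check that a Heisenberg representation $\rho$ yields an admissible pair. Normality of $Z_\rho$ is automatic, since it is the kernel of $\overline\rho=\pi\circ\rho$. Because $\rho$ sends every commutator $[g_1,g_2]$ to a scalar, we get $[G,G]\subseteq Z_\rho$, so $G/Z_\rho$ is abelian and (a) holds. For (b), if $z\in Z_\rho$ and $g\in G$ then $\rho(gzg^{-1})=\rho(g)\,\chi_\rho(z)E\,\rho(g)^{-1}=\chi_\rho(z)E$, so $\chi_\rho$ is $G$-invariant. For (c) the four points are: $X$ is well defined on $G/Z_\rho$ because replacing $g_1$ by $g_1z$ with $z\in Z_\rho$ leaves $\rho([g_1,g_2])$ unchanged (the scalar $\rho(z)$ commutes with everything); $X$ is a bicharacter by the identity $[g_1g_3,g_2]=g_1[g_3,g_2]g_1^{-1}\cdot[g_1,g_2]$ combined with the $G$-invariance of $\chi_\rho$; it is alternating since $[g,g]=1$; and it is nondegenerate because $X(\overline g,\cdot)\equiv 1$ forces $\rho(g)$ to commute with all $\rho(g')$, hence to be scalar by Schur's lemma, i.e. $g\in Z_\rho$.

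Next I would invert the map. Given $(Z,\chi)$ satisfying (a)--(c), put $A:=G/Z$, which is abelian, and pick a maximal isotropic subgroup $\overline H=H/Z\subseteq A$ for $X$, so that $\overline H^\perp=\overline H$. Since $A$ is abelian, $\overline H\vartriangleleft A$ and hence $H\vartriangleleft G$. Isotropy gives $\chi([h_1,h_2])=X(\overline{h_1},\overline{h_2})=1$ for $h_1,h_2\in H$, so $\chi$ is trivial on $[H,H]\subseteq Z$ and therefore extends to a linear character $\chi_H$ of $H$ (extend across the abelian quotient $H/[H,H]$). I then set $\rho:=\mathrm{Ind}_H^G\chi_H$ and verify its properties. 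Irreducibility follows from Clifford theory: as $H$ is normal, $\rho$ is irreducible iff the inertia group of $\chi_H$ equals $H$, and the computation $\chi_H^g(h)/\chi_H(h)=\chi([g,h])=X(\overline g,\overline h)$ shows $\chi_H^g=\chi_H$ precisely when $\overline g\in\overline H^\perp=\overline H$, i.e. $g\in H$. A direct calculation with coset representatives, using normality of $Z$ and the $G$-invariance of $\chi$, gives $\rho(z)=\chi(z)E$ for $z\in Z$; hence every commutator, lying in $[G,G]\subseteq Z$, is scalar, so $\rho$ is Heisenberg with $\chi_\rho|_Z=\chi$. Finally $\dim\rho=[G:H]=\sqrt{[G:Z]}$, because a Lagrangian in a nondegenerate alternating space of order $|A|$ has order $\sqrt{|A|}$; thus $(\dim\rho)^2=[G:Z]=[G:Z_\rho]$, which forces $Z_\rho=Z$ and $\chi_\rho=\chi$.

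The step I expect to be most delicate is injectivity, equivalently the independence of $\rho$ from the choices made --- the extension $\chi_H$, and more seriously the maximal isotropic subgroup $H$, which is genuinely non-unique. I would reduce this to showing that any two Heisenberg representations with the same invariants $(Z,\chi)$ are isomorphic, a Stone--von Neumann type uniqueness. The cleanest route is to pass to projective representations: modulo $[G,G]$ the representation $\overline\rho$ lies in $\mathrm{PI}(A)$ and corresponds to $X\in\mathrm{Alt}(A)$, and the bijection $\mathrm{PI}(A)\leftrightarrow\mathrm{Alt}(A)$ recalled above from \cite{Z5} pins $\overline\rho$ down up to equivalence; the central character $\chi$ then determines the genuine lift $\rho$ uniquely. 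Alternatively one can compute the characters of $\mathrm{Ind}_H^G\chi_H$ for two different Lagrangians and check directly that they coincide, but the projective-representation argument is more transparent and sidesteps the case analysis.
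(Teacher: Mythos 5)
The paper does not actually prove this proposition: it is imported from Zink \cite{Z3}, Proposition 4.2, and the only argument supplied in Section 2.6 is for the subsequent identity $\sqrt{[G:Z_\rho]}\cdot\rho=\mathrm{Ind}_{Z_\rho}^{G}\chi_\rho$, which itself presupposes the cited facts $\rho=\mathrm{Ind}_{H}^{G}\chi_H$ and the bijection $\mathrm{PI}(A)\leftrightarrow\mathrm{Alt}(A)$. So your proposal cannot be compared with an in-paper proof; it has to stand on its own, and in outline it does. The forward direction is complete: normality of $Z_\rho$ as $\ker(\overline\rho)$, $[G,G]\subseteq Z_\rho$, $G$-invariance of $\chi_\rho$, the bicharacter identity $[g_1g_3,g_2]=g_1[g_3,g_2]g_1^{-1}\cdot[g_1,g_2]$, and nondegeneracy via Schur's lemma are all the right steps. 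The inverse construction (extend $\chi$ across the abelian quotient $H/[H,H]$, induce, check irreducibility by computing the inertia group of $\chi_H$, identify $Z_\rho=Z$ by comparing $[G:H]^2=[G:Z]$ with $[G:Z_\rho]=(\dim\rho)^2$) is also correct, granted the existence of a Lagrangian of order $\sqrt{|A|}$, which is exactly the paper's Lemma 2.7.7 quoted from \cite{EWZ}.

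The one step where you should be more explicit is the uniqueness of the lift. Two ordinary representations lifting the same projective representation differ by a linear character $\omega$ of $G$, and equality of central characters only forces $\omega|_{Z}\equiv 1$, i.e.\ $\omega$ is a character of $G/Z$; it is not immediate that such a twist preserves the isomorphism class. It does, because the trace of a Heisenberg representation vanishes off $Z_\rho$ (if $g\notin Z_\rho$, nondegeneracy gives $g'$ with $\rho(g'gg'^{-1})=X(\overline{g'},\overline{g})\rho(g)$ and $X(\overline{g'},\overline{g})\ne 1$, so $\mathrm{tr}\,\rho(g)=0$), whence $\mathrm{tr}(\rho\otimes\omega)=\mathrm{tr}(\rho)$. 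Your fallback of computing the induced character directly closes the same gap, since one finds $\mathrm{tr}\,\mathrm{Ind}_{H}^{G}\chi_H=[G:H]\cdot\chi$ on $Z$ and $0$ elsewhere, which depends only on the pair $(Z,\chi)$ and not on the Lagrangian $H$ or the extension $\chi_H$. With that sentence added, the argument is complete.
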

For pairs $(Z_\rho,\chi_\rho)$ with the properties $(a)-(c)$, the corresponding Heisenberg representation $\rho$ is determined 
by the identity:
\begin{equation}\label{eqn 322}
 \sqrt{[G:Z_\rho]}\cdot\rho=\mathrm{Ind}_{Z_\rho}^{G}\chi_\rho.
\end{equation}
Moreover, the character $\mathrm{tr}_\rho$ of the Heisenberg representation $\rho$ is
\begin{align*}
 \mathrm{tr}_\rho(g)=\begin{cases}
                      0 & \text{if $g\in G - Z_\rho$}\\
                      \mathrm{dim}(\rho)\cdot\chi_\rho(g) & \text{if $g\in Z_\rho$} 
                     \end{cases}
\end{align*}

Let
$C^1G=G$, $C^{i+1}G=[C^iG,G]$ denote the 
descending central series of $G$. Now assume that every projective representation of $A$ lifts to an ordinary representation 
of $G$. Then by I. Schur's results (cf. \cite{CR}, p. 361, Theorem 53.7) we have (cf. \cite{Z5}, p. 124, Theorem 2):
\begin{enumerate}
 \item Let $A\wedge_\bbZ A$ denote the alternating square of the $\bbZ$-module $A$. The commutator map 
 \begin{equation}\label{eqn 2.6.3}
  A\wedge_\bbZ A\cong C^2G/C^3G, \hspace{.3cm} a\wedge b\mapsto [\hat{a},\hat{b}]
 \end{equation}
is an isomorphism.
\item The map $\rho\to X_\rho\in\rm{Alt}(A)$ from Heisenberg representations to alternating characters on $A$ is 
surjective. 
\end{enumerate}

\begin{proof}({\bf of the equation (\ref{eqn 322})})

 Let $H$ be a maximal \textbf{isotropic} subgroup of $G$ for the Heisenberg representation $\rho$
 and choose a character $\chi_H:H\rightarrow\mathbb{C}^\times$
such that $\chi_H\lvert_{Z_\rho}=\chi_\rho$. Then  we have (cf. \cite{Z3}, p. 193, Proposition 5.3):
\begin{equation}\label{eqn 2.3.3}
 \rho=\mathrm{Ind}_{H}^{G}\chi_H.
\end{equation}
This induced representation from $\chi_H$ does not depend on the choice of $H$ and the extension of $\chi_\rho$ to $H$.
 We also know from the transitivity of induction:
\begin{equation}
  \mathrm{Ind}_{Z_\rho}^{G}\chi_\rho=\mathrm{Ind}_{H}^{G}\mathrm{Ind}_{Z_\rho}^{H}\chi_\rho.\label{eqn 3.4}
\end{equation}
Furthermore, we can also write
\begin{align}
 \mathrm{Ind}_{Z_\rho}^{H}\chi_\rho
 &=\chi_{H}\otimes Ind_{Z_\rho}^{H}1_{Z_\rho},\quad \text{where $1_{Z_\rho}$ is the trivial representation of $Z_\rho$,}\\\nonumber
 &=\sum_{\text{all $\chi'_H$ which  are extension of $\chi_\rho$}}\chi'_{H}.
\end{align}
Here the total number of $\chi'_H$ is exactly equal to $[H:Z_\rho]$.
Putting this above result in the equation (\ref{eqn 3.4}), we have:
\begin{align}
 \mathrm{Ind}_{Z_\rho}^{G}\chi_\rho\nonumber
 &=\rm{Ind}_{H}^{G}(\rm{Ind}_{Z_\rho}^{H}\chi_\rho)\\\nonumber
 &=\mathrm{Ind}_{H}^{G}(\sum_{\text{all $\chi'_H$ which are extension of $\chi_\rho$}}\chi'_{H}),\\\nonumber
 &=\{\text{no. of $\chi_H$ which are extended from $\chi_\rho$}\}\times\mathrm{Ind}_{H}^{G}\chi'_H,\\\nonumber
 &=[H:Z_\rho]\cdot\mathrm{Ind}_{H}^{G}\chi'_H,\\
 &=[H:Z_\rho]\cdot\rho \quad \text{since $\mathrm{Ind}_{H}^{G}\chi'_H=\rho$}.\label{eqn 3.6}
\end{align}
We also know that
\begin{equation}
 [G:H]=[H:Z_\rho]=\sqrt{[G:Z_\rho]}=\mathrm{dim}\,\rho.\label{eqn 37}
\end{equation}
From the equations (\ref{eqn 3.6}) and (\ref{eqn 37}) we have our desired result which is:
\begin{align}
 \mathrm{Ind}_{Z_\rho}^{G}\chi_\rho
 &=\sqrt{[G:Z_\rho]}\cdot\rho\\  \nonumber
 &=\mathrm{dim}\,\rho\cdot\rho\\\nonumber  
\end{align}
Therefore, the equation (\ref{eqn 322}) is proved.
\end{proof}

\begin{rem}\label{Remark 3.2}
 Let $\chi_\rho$ be a character of $Z_\rho$. All extensions $\chi_H\supset\chi_\rho$ are conjugate with respect  
to $G/H$. This can be easily seen, since we know $\chi_H\supset\chi_\rho$ and $\chi_{H}^{g}(h)=\chi_{H}(ghg^{-1})$. If we take 
$z\in Z_\rho$,
then we obtain
\begin{center}
 $\chi_{H}^{g}(z)=\chi_{H}(gzg^{-1})=\chi_{\rho}(gzg^{-1})=\chi_{\rho}(gzg^{-1}z^{-1}z)$\\
 $=\chi_\rho([g,z]z)=X(g,z)\cdot\chi_\rho(z)=\chi_\rho(z)$,
\end{center}
since $Z_\rho$ is a normal subgroup of $G$ and the radical of $X$ 
(i.e., $X(g,z)=\chi_\rho([g,z])=1$ for all $z\in Z_\rho$ and 
$g\in G$).
Therefore, $\chi_{H}^{g}$ are extensions of $\chi_\rho$ for all $g\in G/H$. It can also be seen that the conjugates $\chi_{H}^{g}$ are 
all different, because $\chi_{H}^{g_1}=\chi_{H}^{g_2}$ is the same as $\chi_{H}^{g_1g_{2}^{-1}}=\chi_H$.
So it is enough to see that $\chi_{H}^{g-1}\not\equiv 1$ if $g\neq1\in G/H$. But
\begin{center}
 $\chi_{H}^{g-1}(h)=\chi_\rho(ghg^{-1}h^{-1})=X(g,h)$,
\end{center}
and therefore $\chi_{H}^{g-1}\equiv 1$ on $H$ implies $g\in H^{\bot}=H$, where $``\bot"$ denotes the 
orthogonal complement with respect to $X$. Then for a given one extension $\chi_H$ of $\chi_\rho$
all other extensions are of the form $\chi_{H}^{g}$ for $g\in G/H$.

\end{rem}

\begin{rem}
 Let $F$ be a non-archimedean local field and $G_F:=\rm{Gal}(\overline{F}/F)$ be the absolute Galois group.
 The Heisenberg representations of $G_F$ have arithmetic structure due to E.-W. Zink \cite{Z4}, \cite{Z5}.
 For Chapter 4, we just need its group theoretical structure, that is why here we discuss this group theoretical 
 definition. But in Chapter 5 we also need to see its arithmetic structure and we will study them in Chapter 5.
\end{rem}

\section{\textbf{Some useful results from finite Group Theory}}

Let $G$ be a finite abelian group and put $\alpha=\prod_{g\in G}g$. By the following theorem we can compute $\alpha$.
It is very much essential for our computation. In the Heisenberg setting for computing transfer map we have to deal with abelian
group $G/H$ and $\prod_{t\in G/H}t$, where $H$ is a normal subgroup of $G$.

\begin{thm}[\cite{PC}, Theorem 6 (Miller)]\label{Theorem Miller}
 Let $G$ be a finite abelian group and $\alpha=\prod_{g\in G}g$.
 \begin{enumerate}
  \item If $G$ has no element of order $2$, then $\alpha=e$.
  \item If $G$ has a unique element $t$ of order $2$, then $\alpha=t$.
  \item If $G$ has at least two elements of order $2$, then $\alpha=e$.
 \end{enumerate}

\end{thm}

Let $G$ be a two-step nilpotent group\footnote{Its derived subgroup, i.e., commutator subgroup
$[G,G]$ is contained in its center. 
In other worlds, $[G,[G,G]]=\{1\}$, i.e., any triple commutator gives identity. If $\rho$ is a Heisenberg representation
of a finite group $G$, then $G/\rm{Ker}(\rho)$ is a two-step nilpotent group (cf. p. 6).}.
For this  two-step nilpotent group, we have the 
following lemma.

\begin{lem}[\cite{AB}, p. 77, Lemma 9]\label{Lemma 22}
 Let $G$ be a two-step nilpotent group and let $x,y\in G$.  Then 
 \begin{enumerate}
  \item $[x^n, y]=[x,y]^n$, and 
  \item $x^ny^n=(xy)^{n}[x,y]^{\frac{n(n-1)}{2}}$,
 \end{enumerate}
 for any $n\in\mathbb{N}$.
\end{lem}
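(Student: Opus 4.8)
The plan is to prove both identities by induction on $n$, using repeatedly that $G$ is two-step nilpotent, i.e. $[G,G]\subseteq Z(G)$, so that every commutator $[x,y]$ is central and in particular commutes with everything in $G$. This centrality is the engine that makes the computation go through: it lets me move commutator factors freely past group elements without paying any correction terms.

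For part (1), I would argue by induction on $n$. The base case $n=1$ is trivial. For the inductive step, I write $[x^{n+1},y]=[x\cdot x^n, y]$ and apply the standard commutator identity $[ab,y]=[a,y]^b\,[b,y]=\,^b\!\!\text{-conjugate}$; concretely $[ab,y]=a[b,y]a^{-1}\,[a,y]$. Since $[b,y]=[x^n,y]\in[G,G]\subseteq Z(G)$, conjugation by $a=x$ fixes it, so $[x^{n+1},y]=[x^n,y]\cdot[x,y]$. By the induction hypothesis $[x^n,y]=[x,y]^n$, hence $[x^{n+1},y]=[x,y]^{n+1}$, completing the induction. The only thing being used is centrality of commutators, so this part is essentially immediate.

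For part (2), I again induct on $n$, the case $n=1$ being trivial. Assume $x^n y^n=(xy)^n[x,y]^{n(n-1)/2}$. I compute
\begin{align*}
 x^{n+1}y^{n+1} &= x\cdot(x^n y^n)\cdot y = x\cdot(xy)^n[x,y]^{\frac{n(n-1)}{2}}\cdot y.
\end{align*}
Since $[x,y]$ is central it pulls out to the end, so I must rewrite $x(xy)^n y$. The key manoeuvre is to commute a single $y$ leftward past $x^n$ inside $(xy)^n$; this is where part (1) feeds in, since moving $y$ past $x^n$ produces exactly the factor $[x^n,y]^{-1}=[x,y]^{-n}$ (up to sign conventions to be fixed). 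Reassembling gives $(xy)^{n+1}$ times the accumulated central commutator power, and the bookkeeping of exponents should yield $[x,y]^{\frac{n(n-1)}{2}+n}=[x,y]^{\frac{(n+1)n}{2}}$, which is the desired exponent for $n+1$.

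The main obstacle is purely the exponent bookkeeping in part (2): I must be careful that every time $y$ is commuted past a block of $x$'s, the correction is a central commutator of the correct sign, and that these contributions sum to the triangular number $\binom{n+1}{2}$. Because all correction terms are central, no nested or higher commutators arise (they would vanish in a two-step nilpotent group anyway, since $[G,[G,G]]=\{1\}$), so the computation stays linear and the only real work is verifying the arithmetic of the exponent. I would therefore organize part (2) so that part (1) is invoked as the single tool for each elementary transposition, and then collapse the telescoping sum of exponents at the end.
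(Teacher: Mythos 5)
The paper does not prove this lemma at all: it is quoted verbatim from Alperin--Bell (\cite{AB}, p.~77, Lemma~9), so there is no in-paper argument to compare against. Your induction is the standard proof and is essentially correct: part (1) follows exactly as you say from $[ab,y]={}^{a}[b,y]\cdot[a,y]$ together with centrality of $[G,G]$, and part (2) reduces, after applying the inductive hypothesis and pulling the central factor $[x,y]^{n(n-1)/2}$ out of the way, to showing $x\,(xy)^n\,y=(xy)^{n+1}[x,y]^{n}$, whose exponent combines with $\tfrac{n(n-1)}{2}$ to give $\tfrac{n(n+1)}{2}$.

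The one spot where your write-up is imprecise is the "key manoeuvre" in part (2): there is no block $x^n$ sitting inside $x\,(xy)^n\,y$ for a single $y$ to be commuted past, so as phrased the step does not quite parse. The clean version is to commute the \emph{leading} $x$ rightward past the whole block $(xy)^n$: with the convention $[a,b]=aba^{-1}b^{-1}$ one has $x\,(xy)^n=[x,(xy)^n]\,(xy)^n x$, and part (1) (in the form $[x,b^n]=[x,b]^n$, which follows from your (1) by inverting) together with the computation $[x,xy]=x[x,y]x^{-1}=[x,y]$ gives $[x,(xy)^n]=[x,y]^n$; hence $x(xy)^ny=[x,y]^n(xy)^n xy=(xy)^{n+1}[x,y]^n$ as required. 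With that substitution your argument closes completely; I would also make explicit that the base case and the sign of the correction term are checked against the paper's commutator convention $[g_1,g_2]=g_1g_2g_1^{-1}g_2^{-1}$, since the identity as stated is sensitive to it.
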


We also need the elementary divisor theorem for this article which we take from \cite{DF}. Let $G$ be a finite abelian group.
So $G$ is finitely generated.
\begin{thm}[\cite{DF}, p. 160, Theorem 3 (Invariant form)]\label{Theorem 22.4}
Let $G$ be a finite abelian group. Then
\begin{equation}\label{eqn 21}
 G\cong \mathbb{Z}/n_1\times\mathbb{Z}/n_2\times\cdots\times\mathbb{Z}/n_s.
\end{equation}
for some integers $n_1,n_2,\cdots,n_s$ satisfying the following conditions:
\begin{enumerate}
 \item[(a)] $n_j\geq 2$ for all $j\in\{1,2,\cdots,s\}$, and
 \item[(b)] $n_{i+1}|n_i$ for all $1\leq i\leq s-1$.
\end{enumerate}
And the expression in (\ref{eqn 21}) is unique: if $G\cong\mathbb{Z}/m_1\times\mathbb{Z}/m_2\times\cdots\times\mathbb{Z}/m_r$, where 
$m_1,m_2,\cdots,m_r$ satisfies conditions $(a)$ and $(b)$, i.e., $m_j\geq 2$ for all $j$ and $m_{i+1}|m_i$ for all $1\leq i\leq r-1$,
then $s=r$ and $m_i=n_i$ for all $i$.
\end{thm}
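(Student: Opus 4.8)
The plan is to prove the theorem in two independent halves: first \textbf{existence} of a decomposition of the stated form, then \textbf{uniqueness} of the invariant factors $n_1,\dots,n_s$. For existence I would pass through the primary (elementary divisor) decomposition and only at the very end repackage the prime-power cyclic factors into invariant factors satisfying $n_{i+1}\mid n_i$; for uniqueness I would avoid tracking any chosen decomposition and instead read the $n_i$ off from genuinely intrinsic numerical invariants of $G$.

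For existence, the first step is the \emph{primary decomposition}: if $|G|=\prod_\ell \ell^{a_\ell}$ and $G_\ell:=\{g\in G:\ \ell^{a_\ell}g=0\}$ denotes the $\ell$-primary component, then $G\cong\bigoplus_\ell G_\ell$. This is immediate from the coprimality of the numbers $|G|/\ell^{a_\ell}$ (a B\'ezout argument produces the projections), and it reduces everything to the case of a finite abelian $\ell$-group $P$. The second step is the core lemma: \emph{if $x\in P$ has maximal order, then $\langle x\rangle$ is a direct summand of $P$}, i.e.\ $P\cong\langle x\rangle\oplus C$ for some subgroup $C$. Granting this, an induction on $|P|$ shows $P\cong\prod_k(\bbZ/\ell^{k})^{m_k}$. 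Finally I recombine across primes: arranging, for each prime $\ell$, its exponents in decreasing order and multiplying together the largest exponent of each prime gives $n_1$, the next-largest gives $n_2$, and so on; using $\bbZ/m\times\bbZ/n\cong\bbZ/mn$ for $\gcd(m,n)=1$ this yields cyclic factors, and the divisibility $n_{i+1}\mid n_i$ holds precisely because within each prime the chosen exponents are non-increasing. Note $n_1$ is then the exponent of $G$.

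For uniqueness I would introduce, for each prime $\ell$ and each $j\ge 0$, the intrinsic quantity $r_j(\ell):=\dim_{\bbF_\ell}\bigl(\ell^j G/\ell^{j+1}G\bigr)$, which depends only on $G$ and not on any decomposition since $\ell^jG$ and $\ell^{j+1}G$ are canonical subgroups and the quotient is killed by $\ell$. If $G\cong\prod_i \bbZ/n_i$ with $n_{i+1}\mid n_i$, then decomposing compatibly one checks that the $i$-th factor contributes a one-dimensional summand to $\ell^jG/\ell^{j+1}G$ exactly when $\ell^{j+1}\mid n_i$, so $r_j(\ell)=\#\{i:\ \ell^{j+1}\mid n_i\}$. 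Because the valuations $v_\ell(n_i)$ are non-increasing in $i$, these counts invert to $v_\ell(n_i)=\#\{j\ge 0:\ r_j(\ell)\ge i\}$, and the number of factors is recovered as $s=\max_\ell r_0(\ell)$ (the smallest prime dividing the smallest factor $n_s$ divides every $n_i$, since $n_s\mid n_i$ for all $i\le s$). Hence $n_i=\prod_\ell \ell^{\,v_\ell(n_i)}$ is determined by $G$ alone, which is exactly the asserted uniqueness.

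The main obstacle is the splitting lemma in the second existence step. The delicate point is not that $P/\langle x\rangle$ decomposes (that is the inductive hypothesis), but \emph{lifting} a generator $\bar y$ of a cyclic summand of $P/\langle x\rangle$ back to an element $y\in P$ of the \emph{same} order as $\bar y$; a naive lift may have larger order and fail to generate a complementary summand. The fix uses the maximality of the order of $x$: if $\ell^{\,c}y\in\langle x\rangle$ with $\ell^c=\mathrm{ord}(\bar y)$, one writes $\ell^{\,c}y=t x$ and shows $\ell^{\,c}\mid t$ (otherwise $y$ would have order exceeding $\mathrm{ord}(x)$), so $y':=y-(t/\ell^{\,c})x$ is a corrected lift of the correct order with $\langle x\rangle\cap\langle y'\rangle=\{0\}$. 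Getting this divisibility bookkeeping exactly right, and then assembling the corrected lifts into a genuine internal direct complement $C$, is the technical heart of the argument; everything else is formal.
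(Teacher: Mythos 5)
Your proof is correct, but there is nothing in the paper to compare it with: the statement is quoted verbatim as Theorem~3 of Dummit--Foote (p.~160) and the paper supplies no proof of its own, treating it as standard background. Your route --- primary decomposition, the splitting lemma for an element of maximal order in an abelian $\ell$-group, recombination by the Chinese remainder theorem, and uniqueness read off from the intrinsic ranks $r_j(\ell)=\dim_{\mathbb{F}_\ell}\bigl(\ell^j G/\ell^{j+1}G\bigr)$ --- is a standard elementary argument and all the steps check out; in particular the key divisibility $\ell^{c}\mid t$ in the lifting step follows from $0=\ell^{a}y=\ell^{a-c}tx$ with $\ell^{a}=\mathrm{ord}(x)$ maximal, exactly as you indicate, and the inversion $v_\ell(n_i)=\#\{j\ge 0: r_j(\ell)\ge i\}$ is the correct conjugate-partition bookkeeping. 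The cited source instead deduces the invariant-factor form from the structure theorem for finitely generated modules over a PID (Smith normal form), which is heavier machinery but yields the finitely generated case at once; your argument is more self-contained for the finite case that the paper actually uses.
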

This theorem is known as the \textbf{elementary divisor theorem} of a finite abelian group.
Moreover, since $G$ is direct product of $\mathbb{Z}/n_i$, $1\leq i\leq s$, then we can write 
\begin{center}
 $|G|=n_1n_2\cdots n_s$.
\end{center}

We also need a structure theorem for finite abelian groups which come provided with an alternating character:

\begin{lem}[\cite{EWZ}, p. 270, Lemma 1(VI)]\label{Theorem 2.4}
 Let $G$ be a finite abelian group and assume the existence of an alternating bi-character 
 $X:G\times G\to\bbC^\times$  ( $X(g,g)=1$ for all $g\in G$, hence $1=X(g_1g_2,g_1g_2)=X(g_1,g_2)\cdot X(g_2,g_1)$) 
 which is nondegenerate. Then there will exist elements $t_1, t_1',\cdots,t_s,t_s'\in G$ 
such that 
 \begin{enumerate}
  \item 
   $G=<t_1>\times<t_1'>\times\cdots\times<t_s>\times<t_s'>$\\
  $ \cong\bbZ/m_1\times\bbZ/m_1\times\cdots\times\bbZ/m_s\times\bbZ/m_s$
  and $m_1|\cdots|m_s$.
 
\item For all $i=1,2,\cdots,s$ we have $X(t_i,t_i')=\zeta_{m_i}$ a primitive $m_i$-th root of unity.
  \item If we say $g_1\perp g_2$ if $X(g_1,g_2)=1$, then $(<t_i>\times<t_i'>)^\perp=\prod_{j\ne i}(<t_j>\times<t_j'>)$.
 \end{enumerate}

\end{lem}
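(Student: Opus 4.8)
The plan is to mimic the construction of a symplectic basis for a nondegenerate alternating form on a vector space, but carried out over $\bbZ$ and with the extra bookkeeping needed to control the invariant factors $m_1\mid\cdots\mid m_s$. Throughout, write $g_1\perp g_2$ for $X(g_1,g_2)=1$, and recall that nondegeneracy of $X$ means exactly that the homomorphism $G\to\widehat{G}$, $g\mapsto X(g,-)$, is injective, hence an isomorphism since $G$ is finite. I would argue by induction on $|G|$, at each step splitting off a ``hyperbolic plane'' $\langle t_s\rangle\times\langle t_s'\rangle$ together with its orthogonal complement.

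For the top block, set $m_s:=\exp(G)$ and choose $t_s\in G$ of order $m_s$. The character $\phi:=X(t_s,-)$ satisfies $\phi^k=X(t_s^k,-)$, which is trivial iff $t_s^k\in\mathrm{Rad}(X)=\{1\}$, i.e.\ iff $m_s\mid k$; thus $\phi$ has order exactly $m_s$ and its image is the full group of $m_s$-th roots of unity. Pick $t_s'\in G$ with $X(t_s,t_s')=\zeta_{m_s}$ a primitive $m_s$-th root of unity. Pairing a relation $t_s'^{\,b}=t_s^{a}$ against $t_s$ forces $m_s\mid b$, so $\langle t_s\rangle\cap\langle t_s'\rangle=\{1\}$ and $t_s'$ also has order $m_s$; hence $H:=\langle t_s\rangle\times\langle t_s'\rangle\cong\bbZ/m_s\times\bbZ/m_s$. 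Using antisymmetry $X(t_s',t_s)=X(t_s,t_s')^{-1}$, a direct computation on the two generators shows $X|_{H\times H}$ is nondegenerate, which gives property 2 for this block.

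The heart of the argument is the orthogonal decomposition $G=H\times H^\perp$. Nondegeneracy of $X|_H$ gives $H\cap H^\perp=\mathrm{Rad}(X|_H)=\{1\}$. For surjectivity, given $g\in G$ I would use that $H\to\widehat{H}$, $h\mapsto X(h,-)|_H$, is an isomorphism to find $h\in H$ with $X(h,-)|_H=X(g,-)|_H$; then $gh^{-1}\in H^\perp$, so $g\in H\cdot H^\perp$. Restricting $X$ to $H^\perp$ is again nondegenerate (an element of $\mathrm{Rad}(X|_{H^\perp})$ is orthogonal to both $H$ and $H^\perp$, hence to all of $G$), and $\exp(H^\perp)\mid m_s$. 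Since $|H^\perp|<|G|$, the inductive hypothesis produces $t_1,t_1',\dots,t_{s-1},t_{s-1}'\in H^\perp$ with the asserted structure and with $m_1\mid\cdots\mid m_{s-1}$; as $m_{s-1}=\exp(H^\perp)\mid m_s$ the full divisibility chain follows, and property 1 is just $G=H\times H^\perp$. Finally property 3 is a formal consequence of the mutually orthogonal direct decomposition $G=\prod_i H_i$ with each $X|_{H_i}$ nondegenerate: the inclusion $\prod_{j\neq i}H_j\subseteq H_i^\perp$ is clear, and conversely if $\prod_j h_j\perp H_i$ then $X(h_i,H_i)=1$ forces $h_i=1$, so $\prod_j h_j\in\prod_{j\neq i}H_j$.

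I expect the main obstacle to be the decomposition step $G=H\times H^\perp$ --- specifically verifying surjectivity $G=H\cdot H^\perp$ --- since this is where nondegeneracy is used in an essential, non-formal way, through the self-duality $H\cong\widehat{H}$; the choice of $t_s$ of maximal order is the other delicate point, as it is precisely what guarantees that the invariant factors come out ordered by divisibility.
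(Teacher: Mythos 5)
Your proof is correct. Note that the paper does not actually prove this lemma: it is quoted verbatim from Zink (\cite{EWZ}, p.~270, Lemma 1(VI)) and used as a black box, so there is no in-paper argument to compare against. Your argument is the standard one and all the delicate points are handled properly: the observation that the order of the character $X(t_s,-)$ in $\widehat{G}$ equals $\exp(G)$ (so that its image is the full group $\mu_{m_s}$ and a partner $t_s'$ with $X(t_s,t_s')$ primitive exists), the verification that $\langle t_s\rangle\cap\langle t_s'\rangle=\{1\}$ and that $X$ restricted to the hyperbolic block $H$ is nondegenerate, the splitting $G=H\times H^\perp$ via the self-duality $H\cong\widehat{H}$, the inheritance of nondegeneracy by $H^\perp$, and the divisibility $m_{s-1}=\exp(H^\perp)\mid\exp(G)=m_s$ which yields the invariant-factor chain. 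The formal derivation of property (3) from the mutually orthogonal block decomposition is also fine.
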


\section{\textbf{Transfer map}}

Let $H$ be a subgroup of a finite group $G$. Let $\{t_1,t_2,\cdots,t_n\}$ be a left transversal for $H$ in $G$. If $g\in G$ then 
for all $i=1,2,\cdots,n$ we obtain, 
\begin{equation}
 g t_i\in t_{g(i)} H,
\end{equation}
where the map $i\mapsto g(i)$ is a well-defined permutation of the set $\{1,2,\cdots,n\}$. Assume that $f:H\to A$ is a homomorphism from $H$ 
to an abelian
group $A$. Then \textbf{transfer} of $f$, written $T_{f}$, is a mapping 
\begin{center}
 $T_f:G\to A$ \hspace{.5cm}given by \\
 $T_{f}(g)=\prod_{i=1}^{n}f(t_{g(i)}^{-1}g t_{i})$ \hspace{.4cm}for all $g\in G$.
\end{center}
Since $A$ is abelian, the order of the factors in the product is irrelevant. Since $f$ is a homomorphism
from $H$ to $A$, from above we can see that $T_f$ is a homomorphism $G$ with abelian image, and therefore always:
$[G,G]\subseteq\rm{Ker}(T_f)$.
Now we take $f$ the canonical homomorphism, i.e.,
\begin{center}
 $f:H\to H/[H,H]$, where $[H,H]$ is the commutator subgroup of $H$.
\end{center}
And we denote $T_{f}=T_{G/H}$. Thus by definition of transfer map $T_{G/H}:G\to H/[H,H]$, given by 
\begin{equation}
 T_{G/H}(g)=\prod_{i=1}^{n}f(t_{g(i)}^{-1}g t_{i})=\prod_{i=1}^{n}t_{g(i)}^{-1}g t_{i}[H,H],
\end{equation}
for all $g\in G$.

Moreover, if $H$ is any subgroup
of finite index in $G$, then (cf. \cite{AJ}, Chapter 13, p. 183)  
\begin{equation}\label{eqn 1.3}
 T_{G/gHg^{-1}}(g')=g T_{G/H}(g') g^{-1}, 
\end{equation}
for all $g,g'\in G$.
Now let $H$ be an abelian normal subgroup of $G$. Let $H^{G/H}$ be the set 
consisting the elements which are invariant under conjugation. So it is clear that these elements
are central elements and $H^{G/H}\subseteq Z(G)$, the center of $G$. When $H$ is abelian normal subgroup of $G$,
from equation (\ref{eqn 1.3}) we can conclude that (cf. \cite{AJ}, Chapter 13, p. 183) that
\begin{equation}\label{relation 2.6}
 \mathrm{Im}(T_{G/H})\subseteq H^{G/H}\subseteq Z(G). 
\end{equation}

We also need to mention the generalized {\bf Furtw\"{a}ngler}'s theorem for this thesis.
\begin{thm}[\cite{MI}, p. 320, Theorem 10.25]\label{Furtwangler's Theorem}
 Let $G$ be a finite group, and let $T_{G/K}: G\to K/[K,K]$ be the transfer homomorphism, where 
 $[G,G]\subseteq K\subseteq G$. Then $T_{G/K}(g)^{[K:[G,G]]}=1$ for all elements $g\in G$.
\end{thm}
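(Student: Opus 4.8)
The plan is to read this as the generalized Furtw\"angler (Principal Ideal) theorem and to reduce the general case to its classical core. First I would record the structural facts that make the statement meaningful: since $[G,G]\subseteq K$, the subgroup $K$ is automatically normal in $G$, the quotient $K/[G,G]$ is abelian of order $n:=[K:[G,G]]$, and $[K,K]\subseteq[G,G]\subseteq K$. In the extreme case $K=[G,G]$ one has $n=1$, so the assertion reduces to $T_{G/[G,G]}(g)=1$ for all $g\in G$; this is exactly the classical Principal Ideal Theorem, that the transfer of a group into its own commutator subgroup is trivial. Thus the theorem contains this notoriously deep fact as a special case and cannot be proved by anything softer than it.

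For the reduction I would use transitivity of the transfer along the tower $G\supseteq K\supseteq[G,G]$. Viewing each transfer as a map between abelianizations (the target $K/[K,K]$ of $T_{G/K}$ is abelian, so $T_{G/K}$ factors through $G/[G,G]$), transitivity gives
\[
 T_{G/[G,G]}=T_{K/[G,G]}\circ T_{G/K}.
\]
By the Principal Ideal Theorem the left-hand map is trivial, so $\mathrm{Im}(T_{G/K})\subseteq\mathrm{Ker}(T_{K/[G,G]})$. It then remains to prove that the kernel of the transfer $T_{K/[G,G]}\colon K/[K,K]\to[G,G]/[[G,G],[G,G]]$ has exponent dividing $n$. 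Here $T_{K/[G,G]}$ is the transfer of $K$ into the \emph{normal} subgroup $[G,G]$, whose quotient $K/[G,G]$ is abelian of order $n$; for $x\in[G,G]$ the transfer formula collapses to $T_{K/[G,G]}(x)\equiv\prod_{t}x^{t}\pmod{[[G,G],[G,G]]}$, the product over a transversal $t$ of $[G,G]$ in $K$, so the transfer is governed by the norm over the quotient group of order $n$. Extracting from this the exact exponent $n$ is the one piece of genuine bookkeeping, but it is formal relative to the Principal Ideal Theorem; combined with the containment above it yields $T_{G/K}(g)^{n}=1$ for all $g$.

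The hard part is the Principal Ideal Theorem itself, and I would prove it as follows. Since the transfer $G\to[G,G]/[[G,G],[G,G]]$ factors through $G/[[G,G],[G,G]]$, one may assume $[[G,G],[G,G]]=\{1\}$, i.e.\ that $A:=[G,G]$ is abelian and $G$ is metabelian. Then $A$ is a module over the integral group ring $\bbZ[Q]$, $Q:=G/[G,G]$, via conjugation, and unwinding the transfer formula identifies $T_{G/A}$ with multiplication by a norm-type element of $\bbZ[Q]$. The theorem then becomes a purely combinatorial identity in $\bbZ[Q]$ forcing that element to annihilate $A$. This identity---Artin's reduction of the class-field-theoretic principal ideal theorem to group theory, together with the Iwasawa--Witt group-ring computation---is the only nonformal ingredient and the real obstacle. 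Since the statement is quoted here from Isaacs, in the thesis we simply invoke it; the sketch above records how it rests on the Principal Ideal Theorem.
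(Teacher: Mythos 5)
The paper gives no proof of this statement: it is quoted as Theorem 10.25 of Isaacs and used as a black box, with the Principal Ideal Theorem read off immediately afterwards as the special case $K=[G,G]$. Your proposal runs in the opposite direction: you take the Principal Ideal Theorem as the known deep input and bootstrap the general statement from it via transitivity of the transfer, $T_{G/[G,G]}=T_{K/[G,G]}\circ T_{G/K}$, whereas the cited source proves the general statement directly (Artin's reduction plus the group-ring computation for an arbitrary $K$ between $[G,G]$ and $G$) and obtains the Principal Ideal Theorem as the corollary $[K:[G,G]]=1$. Your reduction is legitimate and arguably makes it more transparent where the exponent $[K:[G,G]]$ comes from; the cost is that you still have to invoke the full strength of the Principal Ideal Theorem, so nothing is saved in depth.

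One step of your reduction is justified by the wrong computation. Writing $n=[K:[G,G]]$, you need that $\ker\bigl(T_{K/[G,G]}:K/[K,K]\to[G,G]/[[G,G],[G,G]]\bigr)$ has exponent dividing $n$, and you argue via the norm formula $T_{K/[G,G]}(x)\equiv\prod_t x^t$ valid for $x\in[G,G]$. But the element you must control, $y=T_{G/K}(g)$, is a general element of $K/[K,K]$ and need not lie in $[G,G][K,K]/[K,K]$ (take $K=G$, where the image of $T_{G/G}$ is all of $G/[G,G]$), so the norm formula on $[G,G]$ computes the transfer on the wrong elements. The correct tool is the standard evaluation lemma: for any subgroup $H\le \Gamma$ of index $n$, the composite $\Gamma/[\Gamma,\Gamma]\xrightarrow{T_{\Gamma/H}}H/[H,H]\xrightarrow{\iota_*}\Gamma/[\Gamma,\Gamma]$, with $\iota_*$ induced by inclusion, is the $n$-th power map. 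Applying this with $\Gamma=K$ and $H=[G,G]$ gives $y^{n}=\iota_*\bigl(T_{K/[G,G]}(y)\bigr)=\iota_*(1)=1$ for every $y$ in the kernel, which is exactly what you need; with that substitution your argument closes.
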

Now if $[K:[G,G]]=1$, i.e., $K=[G,G]$, we have $T_{G/[G,G]}(g)=1$ for all $g\in G$, i.e.,
the transfer homomorphism of a finite group to its commutator is {\bf trivial}. This is due to 
Furtw\"{a}ngler. This is also known as {\bf Principal Ideal Theorem} (cf. \cite{AJ}, p. 194).

 To compute the determinant of an induced representation of a finite group, we need the following theorem.
 \begin{thm}[Gallagher, \cite{GK}, Theorem $30.1.6$]\label{Theorem Gall}
 Let $G$ be a finite group and $H$ a subgroup of $G$. Let $\rho$ be a representation of $H$ and denote 
 $\Delta_{H}^{G}=\mathrm{det}(\mathrm{Ind}_{H}^{G}1_H)$. Then 
  \begin{equation}
   \mathrm{det}(\mathrm{Ind}_{H}^{G}\rho)(g)=(\Delta_{H}^{G})^{\mathrm{dim}(\rho)}(g)\cdot
   (\mathrm{det}(\rho)\circ T_{G/H})(g), \quad\text{for all $g\in G$}.
  \end{equation}

\end{thm}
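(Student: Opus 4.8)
The plan is to compute both sides directly from the standard matrix model of the induced representation, using a fixed left transversal. Let $n=[G:H]$ and $d=\mathrm{dim}(\rho)$, and let $\{t_1,\dots,t_n\}$ be the chosen left transversal with associated permutation $i\mapsto g(i)$ and elements $h_i(g):=t_{g(i)}^{-1}gt_i\in H$, defined exactly as in the definition of the transfer above. In the model $\mathrm{Ind}_{H}^{G}\rho=\bigoplus_i t_i\otimes V$, the relation $gt_i=t_{g(i)}h_i(g)$ shows that $g$ sends the $i$-th block into the $g(i)$-th block via $\rho(h_i(g))$. Thus, written in $d\times d$ blocks indexed by the cosets, the matrix of $\mathrm{Ind}_{H}^{G}\rho(g)$ is block-monomial: its only nonzero block in column $i$ is $\rho(h_i(g))$, sitting in block-row $g(i)$.

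First I would compute the determinant of such a block-monomial matrix $M$. Writing $\sigma=\sigma_g$ for the permutation $i\mapsto g(i)$, one factors
\begin{equation*}
 M=P_\sigma\cdot\mathrm{diag}\big(\rho(h_1(g)),\dots,\rho(h_n(g))\big),
\end{equation*}
where $P_\sigma=\sum_i E_{\sigma(i),i}\otimes I_d$ is the block permutation matrix attached to $\sigma$. Since permuting $n$ blocks of size $d$ realizes the sign $(\mathrm{sgn}\,\sigma)^d$ on the $nd$ basis vectors, we get $\det(P_\sigma)=(\mathrm{sgn}\,\sigma)^d$, and therefore
\begin{equation*}
 \det(\mathrm{Ind}_{H}^{G}\rho)(g)=(\mathrm{sgn}\,\sigma_g)^{d}\cdot\prod_{i=1}^{n}\det\rho\big(h_i(g)\big).
\end{equation*}

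It then remains to identify the two factors. For the product, note that $\det\rho:H\to\mathbb{C}^\times$ has abelian image and so factors through $H/[H,H]$; hence $\prod_i\det\rho(h_i(g))$ is independent of the order of the factors and equals the (factored) $\det\rho$ evaluated on $\prod_i h_i(g)\,[H,H]=T_{G/H}(g)$, giving $(\det\rho\circ T_{G/H})(g)$. For the sign, observe that $\mathrm{Ind}_{H}^{G}1_H$ is precisely the permutation representation $g\mapsto P_{\sigma_g}$ on the cosets $G/H$ (the case $d=1$), so $\Delta_{H}^{G}(g)=\det(P_{\sigma_g})=\mathrm{sgn}\,\sigma_g$, whence $(\mathrm{sgn}\,\sigma_g)^d=(\Delta_{H}^{G})^{\mathrm{dim}(\rho)}(g)$. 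Substituting both identifications yields the claimed formula.

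The steps requiring the most care — and the main potential obstacle — are the sign bookkeeping and the matter of well-definedness. One must verify that the block permutation matrix contributes $(\mathrm{sgn}\,\sigma)^d$ rather than $\mathrm{sgn}\,\sigma$ (it is here that $\mathrm{dim}(\rho)$ enters as the exponent of $\Delta_{H}^{G}$), and one must confirm that the final identity does not depend on the chosen transversal. The latter is automatic, since the left-hand side $\det(\mathrm{Ind}_{H}^{G}\rho)(g)$ is basis-free, $T_{G/H}$ is a well-defined homomorphism into $H/[H,H]$, and $\Delta_{H}^{G}$ is intrinsic; so it suffices to establish the identity for one transversal, exactly as carried out above.
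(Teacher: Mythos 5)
Your proof is correct and fills in, via the standard block-monomial factorization $M=P_{\sigma}\cdot\mathrm{diag}(\rho(h_1(g)),\dots,\rho(h_n(g)))$, exactly the argument the paper only gestures at: the paper cites Gallagher for the theorem and merely records the block-monomial structure of $\mathrm{Ind}_{H}^{G}\rho$ without carrying out the determinant computation. Your sign bookkeeping ($\det P_{\sigma}=(\mathrm{sgn}\,\sigma)^{d}$, identified with $(\Delta_{H}^{G})^{\dim\rho}(g)$) and the identification of $\prod_i\det\rho(h_i(g))$ with $\det\rho\circ T_{G/H}(g)$ are both sound.
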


Let $T$ be a left transversal for $H$ in $G$. Here $\mathrm{Ind}_{H}^{G}\rho$
is a block monomial representation (cf. \cite{GK}, p. 956) with block positions indexed by pairs $(t,s)\in T\times T$. For 
$g\in G$, the $(t,s)$-block of $\mathrm{Ind}_{H}^{G}\rho$ is zero unless $gt\in sH$, i.e., $s^{-1}gt\in H$ and in which case
the block equal to $\rho(s^{-1} g t)$. 
Then we can write for $g\in G$
\begin{equation}
 T_{G/H}(g)=\prod_{t\in T}s^{-1}gt[H,H].
\end{equation}
Thus we can write for all $g\in G$
\begin{align}
 \mathrm{det}(\mathrm{Ind}_{H}^{G}\rho)(g)\nonumber
&=(\Delta_{H}^{G})^{\mathrm{dim}(\rho)}(g)\cdot(\mathrm{det}(\rho)\circ T_{G/H}(g)\\\nonumber
&=(\Delta_{H}^{G})^{\mathrm{dim}(\rho)}(g)\cdot(\mathrm{det}(\rho)(\prod_{t\in T}s^{-1}gt[H,H])\\
&=(\Delta_{H}^{G})^{\mathrm{dim}(\rho)}(g)\cdot\prod_{t\in T}\mathrm{det}(\rho)(s^{-1}gt [H,H]),\label{eqn 1.7}
\end{align}
where in each factor on the right, $s=s(t)$ is uniquely determined by $gt\in sH$.

\chapter{\textbf{Computation of $\lambda$-functions}}

In this chapter we give explicit computation of $\lambda_{K/F}$, where $K/F$ is a finite local Galois extension.
Two different ways we can define $\lambda$-functions: One is directly from local constants, and another one is via 
Deligne's constants. We will use both of them according to our convenience. 
In Section 3.2 we first compute $\lambda$-function for odd degree Galois extension.
And in Section 3.4 we compute $\lambda$-functions for even degree tamely ramified Galois extensions. The whole 
computation is based on the article \cite{SAB1}.

\section{\textbf{Deligne's Constants}}

Let $K/F$ be a finite Galois extension of a local field $F$ of characteristic zero. Let $G=\mathrm{Gal}(K/F)$, and let 
$\rho: G\to\mathrm{Aut}_{\mathbb{C}}(V)$ be a representation. Then for this representation, Deligne (cf. \cite{JT1}, p. 119)
defines:
\begin{equation}
 c(\rho):=\frac{W(\rho,\psi)}{W(\mathrm{det}(\rho),\psi)},
\end{equation}
where $\psi$ is some additive character of $F$. If we change the additive character $\psi$ to $\psi'=b\psi$, where 
$b\in F^\times$, then from \cite{BH}, p. 190,
part (2) of the Proposition, we see:
\begin{equation}\label{eqn 3.1.1}
W(\rho,b\psi)=\epsilon(\rho,\frac{1}{2},b\psi)=\mathrm{det}_{\rho}(b)\cdot \epsilon(\rho,\frac{1}{2},\psi)
 =\mathrm{det}_{\rho}(b)W(\rho,\psi).
\end{equation}
Also, from the property of abelian local constants we have 
$W(\mathrm{det}(\rho), b\psi)=\mathrm{det}_{\rho}(b)\cdot W(\mathrm{det}_\rho, \psi)$, hence
\begin{center}
 $\frac{W(\rho,\ b\psi)}{W(\mathrm{det}_\rho,\ b\psi)}=\frac{W(\rho,\ \psi)}{W(\mathrm{det}_\rho,\ \psi)}=c(\rho)$.
\end{center}
This shows that the Deligne's constant $c(\rho)$ does not depend on the choice of the additive character $\psi$.
We also have the following properties of Deligne's constants:
\begin{prop}[\cite{JT1}, p. 119, Proposition 2]\label{Proposition 3.1.1.1}
 \begin{enumerate}
  \item[(i)] If $\rm{dim}(\rho)=1$, then $c(\rho)=1$.
  \item[(ii)] 
  \begin{equation}
   c(\rho_1+\rho_2)=c(\rho_1)c(\rho_2)W(\det(\rho_1))W(\det(\rho_2))\cdot W(\det(\rho_1)\cdot\det(\rho_2))^{-1}. 
  \end{equation}
\item[(iii)] $c(\rho+\overline{\rho})=\det(\rho)(-1)$.
\item[(iv)] $c(\overline{\rho})=\overline{c(\rho)}$, and $|c(\rho)|=1$.
\item[(v)] Suppose $\rho=\overline{\rho}$. Then $c(\rho)=\pm 1$.
 \end{enumerate}
\end{prop}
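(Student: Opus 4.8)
The plan is to derive all five statements directly from the definition $c(\rho) = W(\rho,\psi)/W(\det(\rho),\psi)$, using only facts already available in the excerpt: the additivity of $W$ under direct sums (extendibility, equation (\ref{eqn 2.2.3})), the multiplicativity $\det(\rho_1+\rho_2) = \det(\rho_1)\det(\rho_2)$, the character functional equation $W(\chi,\psi)W(\chi^{-1},\psi) = \chi(-1)$ from (\ref{eqn 2.3.9}), its representation-level upgrade $W(\rho,\psi)W(\rho^V,\psi) = \det(\rho)(-1)$ from (\ref{eqn 2.3.23}), and the twist formula (\ref{eqn 3.1.1}). Part (i) is immediate: when $\dim\rho = 1$ one has $\det(\rho) = \rho$, so numerator and denominator of $c(\rho)$ coincide. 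For part (ii) I would combine $W(\rho_1+\rho_2,\psi) = W(\rho_1,\psi)W(\rho_2,\psi)$ with $\det(\rho_1+\rho_2) = \det(\rho_1)\det(\rho_2)$, and then multiply and divide by $W(\det(\rho_1),\psi)W(\det(\rho_2),\psi)$ to split off $c(\rho_1)c(\rho_2)$ and recover the stated correction factor $W(\det(\rho_1))W(\det(\rho_2))W(\det(\rho_1)\det(\rho_2))^{-1}$.

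Next I would establish part (iv), on which (iii) and (v) rest. For the conjugation identity $c(\overline{\rho}) = \overline{c(\rho)}$, I would apply complex conjugation to the defining expression for $W$ to get $\overline{W(\rho,\psi)} = W(\overline{\rho},\overline{\psi})$, then use $\overline{\psi} = (-1)\psi$ together with the twist formula (\ref{eqn 3.1.1}); the same real sign $\det(\rho)(-1) = \pm 1$ is produced in both numerator and denominator and cancels, leaving $\overline{c(\rho)} = c(\overline{\rho})$. For the norm statement I use that the representations here have finite image and hence are unitary, so $\overline{\rho} \cong \rho^V$; then
\begin{equation*}
c(\rho)\,c(\overline{\rho}) = \frac{W(\rho,\psi)\,W(\rho^V,\psi)}{W(\det(\rho),\psi)\,W(\det(\overline{\rho}),\psi)} = \frac{\det(\rho)(-1)}{\det(\rho)(-1)} = 1,
\end{equation*}
using (\ref{eqn 2.3.23}) in the numerator and, since $\det(\overline{\rho}) = (\det(\rho))^{-1}$, the functional equation (\ref{eqn 2.3.9}) in the denominator. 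Combined with the first half this gives $|c(\rho)|^2 = c(\rho)\overline{c(\rho)} = c(\rho)c(\overline{\rho}) = 1$.

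Parts (iii) and (v) then follow formally. For (iii) I apply (ii) with $\rho_1 = \rho$ and $\rho_2 = \overline{\rho}$: the product $\det(\rho)\det(\overline{\rho})$ is the trivial character, so its $W$-value is $1$, while $W(\det(\rho))W(\det(\overline{\rho})) = \det(\rho)(-1)$ by (\ref{eqn 2.3.9}) and $c(\rho)c(\overline{\rho}) = 1$ by (iv); everything collapses to $c(\rho+\overline{\rho}) = \det(\rho)(-1)$. For (v), if $\rho \cong \overline{\rho}$ then $c(\rho) = c(\overline{\rho}) = \overline{c(\rho)}$ is real, and since it also has absolute value $1$ it must equal $\pm 1$. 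I expect the main obstacle to be the careful handling of (iv): one must justify the conjugation identity $\overline{W(\rho,\psi)} = W(\overline{\rho},\overline{\psi})$ and the identification $\overline{\rho} \cong \rho^V$ so that (\ref{eqn 2.3.23}) legitimately applies, and keep track of the several $\det(\rho)(-1) = \pm 1$ signs so that they cancel rather than accumulate.
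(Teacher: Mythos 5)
The paper does not actually prove this proposition; it is quoted verbatim from Tate (\cite{JT1}, p.~119, Proposition 2) and used as a black box, so there is no internal proof to compare against. Your derivation is correct and complete as a self-contained argument: (i) and (ii) are immediate from the definition together with additivity of $W$ and multiplicativity of $\det$ on direct sums; your reduction of (iii) and (v) to (iv) is clean; and the two halves of (iv) are handled properly, with the sign $\det(\rho)(-1)$ cancelling between numerator and denominator in the conjugation identity, and with $(\ref{eqn 2.3.23})$ applied to the numerator and $(\ref{eqn 2.3.9})$ to the denominator in the computation of $c(\rho)c(\overline{\rho})=1$. The only inputs not literally written down in the excerpt are the identity $\overline{W(\rho,\psi)}=W(\overline{\rho},\overline{\psi})$ for virtual representations of dimension greater than one (the paper verifies it only for characters, where it calls the step ``obvious''; the general case follows from uniqueness of the extendible function) and the identification $\overline{\rho}\cong\rho^{V}$, which is legitimate here because continuous finite-dimensional representations of the compact group $G_F$ are unitarizable. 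You correctly flag both points, so there is no gap.
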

\vspace{.3cm}
Now $G$ be a finite group. Let $\rho$ be an orthogonal representation of $G$, i.e., $\rho:G\to O(n)$. We denote the $i$-th 
{\bf Stiefel-Whitney class} of $\rho$ by 
$$s_i(\rho)\in H^{i}(G,\bbZ/2\bbZ).$$
 In low dimensions $i$, the Stiefel-Whitney class is given algebraically as follows (cf. \cite{JBC}, \cite{D2}):
Under the canonical isomorphism:
$$H^{1}(G,\bbZ/2\bbZ)\cong\rm{Hom}(G,\{\pm 1\}),$$
the image of $s_1(\rho)$ is $\det_\rho$. If $s_1(\rho)$ is trivial, i.e., $\det_\rho\equiv 1$, then $s_2(\rho)$ is the element
of $H^2(G,\{\pm 1\})=H^2(G,\bbZ/2\bbZ)$ which is inverse image under $\rho:G\to SO(n)$ of the class of the extension:
$$1\to\{\pm 1\}\to \rm{Spin}(n)\to SO(n)\to 1,$$
where $SO$ denotes the special orthogonal group and $\rm{Spin}$ the spinor group.

Now take $G=\rm{Gal}(K/F)$, for some finite Galois extension of local fields. In the following theorem due to Deligne 
for an orthogonal representation $\rho:G\to O(n)$,
we know a procedure how to obtain out of $s_2(\rho)$ the constant $c(\rho)$.

\begin{thm}[Deligne, \cite{JT1}, p. 129, Theorem 3]\label{Theorem 4.1}
 Let $\rho$ be an \textbf{orthogonal representation} of the finite group $G$ and let $s_2(\rho)\in H^{2}(G,\mathbb{Z}/2\mathbb{Z})$
 be the second Stiefel-Whitney class of $\rho$. The Galois group $G=\mathrm{Gal}(K/F)$ is a quotient group of the full Galois group 
 $G_{F}=\mathrm{Gal}(\overline{F}/F)$ which induces an inflation map
 \begin{equation}\label{eqn 32}
  \mathrm{Inf}: H^{2}(G,\mathbb{Z}/2\mathbb{Z})\rightarrow H^{2}(G_{F},\mathbb{Z}/2\mathbb{Z})\cong \{\pm 1\}.
 \end{equation}
Then 
\begin{equation}
 c(\rho)=cl(s_2(\rho))\in\{\pm 1\}
\end{equation}
is the image of the second Stiefel-Whitney class $s_2(\rho)$ under the inflation map (\ref{eqn 32}).\\
In particular, we have $c(\rho)=1$ if $s_2(\rho)=0\in H^{2}(G,\mathbb{Z}/2\mathbb{Z})$.
\end{thm}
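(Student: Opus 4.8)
The plan is to treat both sides of the claimed identity as $\{\pm 1\}$-valued functions on the monoid of orthogonal representations (under $\oplus$) and to show they agree by matching their additivity laws and then checking them on generators. Recall that in the present setup $s_2(\rho)$ is defined precisely because $\det_\rho\equiv 1$; hence $W(\det\rho)=W(1)=1$ and $c(\rho)=W(\rho)$, so the assertion is $W(\rho)=cl(s_2(\rho))$. Both quantities land in $\{\pm 1\}$: the left side by Proposition \ref{Proposition 3.1.1.1}(v), since an orthogonal (real) $\rho$ satisfies $\rho=\overline{\rho}$, and the right side because $s_2(\rho)\in H^2(G,\bbZ/2)$ maps into $\{\pm 1\}$ under the invariant map. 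First I would record the two parallel ``twisted additivity'' formulas. Local constants are multiplicative on direct sums (property (\ref{eqn 2.2.3})), $W(\rho_1\oplus\rho_2)=W(\rho_1)W(\rho_2)$, which feeds into the Deligne formula of Proposition \ref{Proposition 3.1.1.1}(ii) whose defect term is $W(\det\rho_1)W(\det\rho_2)W(\det\rho_1\det\rho_2)^{-1}$; on the cohomological side the Whitney sum formula gives $s_2(\rho_1\oplus\rho_2)=s_2(\rho_1)+s_2(\rho_2)+s_1(\rho_1)\cup s_1(\rho_2)$, whose defect is the cup product $s_1(\rho_1)\cup s_1(\rho_2)$.

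This reduces the whole theorem to two things: the agreement of the two defect terms, and the agreement of the two functions on a generating set. For the defect terms I would first identify $H^2(G_F,\bbZ/2)\cong\{\pm 1\}$ through the invariant map of local class field theory (it is the $2$-torsion of the Brauer group of $F$), under which, for quadratic characters $\chi_1,\chi_2$ corresponding to $a_1,a_2\in F^\times/{F^\times}^2$, the inflated cup product $cl(\chi_1\cup\chi_2)$ becomes the Hilbert norm-residue symbol $(a_1,a_2)_F$. The defect-term identity then reads
\begin{equation*}
 \frac{W(\chi_1)W(\chi_2)}{W(\chi_1\chi_2)}=(a_1,a_2)_F .
\end{equation*}
I would prove this by reduction to Gauss sums: fix the canonical additive character, use the explicit formula (\ref{eqn 2.2}) for $W(\chi,\psi)$, and in the tamely ramified case evaluate the resulting quadratic Gauss sum via Theorem \ref{Theorem 3.5}, matching it against the tame reciprocity description of $(a_1,a_2)_F$; the wild dyadic contributions and the residual units are then treated through the explicit norm-residue symbol. \textbf{This step is the main obstacle}: it is the genuine analytic heart of the theorem, tying Langlands--Deligne root numbers to local class field theory, and it is where the depth lies rather than in the formal bookkeeping around it.

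Once the defect identity holds, $c$ and $cl\circ s_2$ carry the same correction cocycle, so on the determinant-trivial part both become homomorphisms $\oplus\mapsto\cdot$ into $\{\pm 1\}$, and it remains only to compare them on generators. Here I would invoke Brauer induction together with the weak inductivity of local constants (the Langlands property recorded in (\ref{eqn 2.2.4}) and Lemma \ref{Lemma 4.1.1}) and the corresponding induction/transfer behaviour of Stiefel--Whitney classes, reducing any orthogonal $\rho$ to representations induced from characters of subgroups, and ultimately to sums of one-dimensional (quadratic) characters. The base computation is the building block $\rho=\chi\oplus\chi$ with $\chi$ quadratic: Proposition \ref{Proposition 3.1.1.1}(i),(ii) together with the functional equation $W(\chi)W(\chi^{-1})=\chi(-1)$ give $c(\chi\oplus\chi)=W(\chi)^2=\chi(-1)$, while $s_2(\chi\oplus\chi)=\chi\cup\chi$ maps to $(a,-1)_F=\chi(-1)$ via the relation $a\cup a=a\cup(-1)$ — a special case of the Hilbert-symbol identity above, confirming the match. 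Assembling the additivity laws, the defect identity, and the generator computation yields $c(\rho)=cl(s_2(\rho))$ for every orthogonal $\rho$ with $\det_\rho\equiv 1$, and in particular $c(\rho)=1$ whenever $s_2(\rho)=0$.
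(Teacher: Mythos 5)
First, a point of reference: the paper does not prove this statement at all. It is quoted as Deligne's theorem from Tate's article (\cite{JT1}, p.\ 129, Theorem 3) and used as a black box, so there is no ``paper's proof'' to compare against; your proposal has to stand on its own as a proof of Deligne's theorem.

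As such, the architecture you describe is the standard and correct one (match the defect terms of the two ``twisted additivity'' laws, then compare on generators), and your generator computation $c(\chi\oplus\chi)=W(\chi)^2=\chi(-1)=(a,-1)_F$ is right. But the proposal has a genuine gap, in fact two, and they are exactly where the content of the theorem lives. (i) The defect identity $W(\chi_1)W(\chi_2)W(\chi_1\chi_2)^{-1}=(a_1,a_2)_F$ for quadratic characters is not something you can get by ``evaluating a quadratic Gauss sum and matching it against tame reciprocity'' except when $p\ne 2$; for $p=2$ both sides are governed by wild ramification and the dyadic Hilbert symbol, and no Gauss-sum formula in the paper (Theorem \ref{Theorem 3.5} is for odd $q$) applies. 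This identity is essentially equivalent to Deligne's theorem for the orthogonal representation $(1+\chi_1)\otimes(1+\chi_2)$, so deferring it to ``explicit norm-residue symbols'' is circular in spirit: you are reducing the theorem to its hardest special case without proving that case. (ii) The Brauer-induction step is also not free. To reduce an arbitrary orthogonal $\rho$ to quadratic characters you must pass through induced representations, and then you need the induction constants on both sides to agree: $\lambda_{K/F}=W(\mathrm{Ind}_{K/F}1)$ on the analytic side versus $cl(s_2(\mathrm{Ind}_{K/F}1))$ (together with the $s_1\cup s_1$ corrections) on the topological side. That comparison is itself a nontrivial theorem — it is precisely what Deligne's second paper \cite{D2} and Kahn's computations of $s_2(r_G)$ are about — and your proposal invokes ``the corresponding induction/transfer behaviour of Stiefel--Whitney classes'' without establishing it. A smaller issue: you silently specialize to $\det_\rho\equiv 1$ when identifying $c(\rho)$ with $W(\rho)$ and when describing $s_2$ via the spin extension, whereas the theorem is stated for arbitrary orthogonal $\rho$; this is repairable through your own defect formula, but it should be said. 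In summary, the skeleton is right, but the two load-bearing steps are asserted rather than proved, and each of them is of the same depth as the theorem itself.
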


 \section{\textbf{When $K/F$ is an odd degree Galois extension}}

We know that our local constant satisfies the following functional equation (cf. equation (\ref{eqn 2.3.23}))
\begin{equation}\label{eqn 4.5}
 W(\rho)W(\widetilde{\rho})=\mathrm{det}_{\rho}(-1),
\end{equation}
where $\rho$ is a representation of $G$ and $\widetilde{\rho}$ is the contragredient representation of $\rho$. 
In this equation (\ref{eqn 4.5}), we plug the orthogonal representation
$\rho=\mathrm{Ind}_{H}^{G}1_H$ 
in the place of 
$\rho$ and we have
\begin{equation}
  W(\mathrm{Ind}_{H}^{G}1_H)W(\widetilde{\mathrm{Ind}_{H}^{G}1_H})
 =\mathrm{det}_{\mathrm{Ind}_{H}^{G}1_H}(-1)
\end{equation}
Now we have from the definition of $\lambda$-factor,
\begin{align*}
\lambda_{H}^{G}(W)\lambda_{H}^{G}(W)
&=\mathrm{det}_{\mathrm{Ind}_{H}^{G}1_H}(-1),\quad
 \text{since $\widetilde{\mathrm{Ind}_{H}^{G}1_H}=\mathrm{Ind}_{H}^{G}1_H$}\\
 (\lambda_{H}^{G}(W))^{2}
 &=\mathrm{det}_{\mathrm{Ind}_{H}^{G}1_H}(-1)\\
  (\lambda_{H}^{G}(W))^{4}
  &=(\mathrm{det}_{\mathrm{Ind}_{H}^{G}1_H}(-1))^{2}\\
 (\lambda_{H}^{G}(W))^{4}
 &=1,\quad\text{since $\mathrm{det}_{\mathrm{Ind}_{H}^{G}1_H}(-1)$ is a sign}
\end{align*}
Therefore, our $\lambda$-factor  $\lambda_{H}^{G}(W)$ is always a \textbf{fourth root of unity}. 

We also know that
\begin{center}
$\mathrm{det}(\mathrm{Ind}_{H}^{G}\chi_{H})(s)=\varepsilon_{G/H}(s)\cdot(\chi_H\circ T_{G/H}(s))$, 
\end{center}
where $T_{G/H}$ is the transfer map from $G/[G,G]$ to $H/[H,H]$ and 
$\varepsilon_{G/H}(s)$ is the sign of $s\in G$ understood as
permutation of finite set $G/H:\{gH\mapsto sgH\}$. 
If we take $\chi_H=1_H$ the trivial character of $H$ then in particular see that 
\begin{center}
 $\Delta_{H}^{G}(s):=\mathrm{det}(\mathrm{Ind}_{H}^{G}1_H)(s)$
\end{center}
is a character of $G$ of order $2$. If $H\subset G$ is a normal subgroup then it is a character of the factor group 
$G/H$ and therefore it is \textbf{trivial} if $G/H$ is of odd order.

More generally, from Gallagher's Theorem \ref{Theorem Gall}, we have:\\ 
If $\rho$ is a (virtual) representation of $H$, then 
\begin{equation}\label{eqn 2.3}
 \mathrm{det}(\mathrm{Ind}_{H}^{G}\rho)(s)=\Delta_{H}^{G}(s)^{\mathrm{dim}(\rho)}\cdot (\mathrm{det}\rho\circ T_{G/H}(s)),
\end{equation}
for $s\in G$.

We assume now that the Galois groups $H\subset G$ have the fields $K\supset F$ as their base fields. Then by class field theory
we may interpret $\mathrm{det}(\rho)$ of equation (\ref{eqn 2.3})
as a character of $K^\times$ and $\mathrm{det}(\mathrm{Ind}_{H}^{G}\rho)$ as a character of $F^\times$, and then the equation 
(\ref{eqn 2.3}) turns into an equality of two characters of $F^\times$:
\begin{equation}\label{eqn 2.12}
 \mathrm{det}(\mathrm{Ind}_{H}^{G}\rho)=\Delta_{K/F}^{\mathrm{dim}(\rho)}\cdot \mathrm{det}\rho|_{F^\times},\quad
 \text{where}\quad \Delta_{K/F}:F^\times\to\{\pm1\}
\end{equation}
is the discriminant character\footnote{
From \textbf{example (III) on p. 104 of \cite{JT1}}, if $H=\mathrm{Gal}(L/K)<G=\mathrm{Gal}(L/F)$ corresponds to an 
extension $F\subset K\subset L$ of local fields then
\begin{center}
 $\mathrm{det}\circ\mathrm{Ind}_{H}^{G}1_H=\Delta_{K/F}$
\end{center}
can be interpreted by class-field theory as a character of $F^\times$. It is then character of $F^\times$ corresponding
to the quadratic extension $F(\sqrt{d_{K/F}})/F$, which is obtained by adjoining the square root of the discriminant
$d_{K/F}$ of $K/F$.} with respect to the extension $K/F$. If we consider $Z\subset H\subset G$ corresponding to the base fields
$E\supset K\supset F$ then we have 
\begin{center}
 $\Delta_{E/F}=\mathrm{det}(\mathrm{Ind}_{H}^{G}(\mathrm{Ind}_{Z}^{H}1_Z)$,
\end{center}
and with $\rho=\mathrm{Ind}_{Z}^{H}1_Z$ we conclude from (\ref{eqn 2.12}) that 
\begin{equation}\label{eqn 2.13}
 \Delta_{E/F}=\Delta_{E/K}|_{F^\times}\cdot\Delta_{K/F}^{[E:K]}.
\end{equation}

Moreover, in terms of Deligne's constant, we can write:
\begin{equation}
 \lambda_{H}^{G}:=W(\mathrm{Ind}_{H}^{G}1_H)=
 c(\mathrm{Ind}_{H}^{G}1_H)\cdot W(\mathrm{det}\circ\mathrm{Ind}_{H}^{G}1_H).\label{eqn 4.3}
\end{equation}

Replacing Galois groups by the corresponding local fields we may write the lambda function of finite extension $K/F$ as 
\begin{equation}
 \lambda_{K/F}=c(\mathrm{Ind}_{K/F}1)\cdot W(\Delta_{K/F}),\label{eqn 4.4}
\end{equation}
where $c(\mathrm{Ind}_{K/F}1)$ is Deligne's sign, and $\Delta_{K/F}$ is a quadratic character of $F^\times$ related to 
the discriminant.

\begin{lem}\label{Lemma 4.1}
Let $L/F$ be a finite Galois extension of a non-archimedean local field $F$ and $G=\rm{Gal}(L/F)$, $H=\rm{Gal}(L/K)$. 
If $H\leq G$ is a normal subgroup and if $[G:H]$ is odd, then $\Delta_{K/F}\equiv 1$ and 
$\lambda_{K/F}^{2}=1.$
\end{lem}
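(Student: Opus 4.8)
The plan is to derive both assertions from material already assembled above: the interpretation of $\Delta_{K/F}$ as the discriminant character $\det(\mathrm{Ind}_{H}^{G}1_H)$, and the functional equation $W(\rho)W(\widetilde{\rho})=\det_\rho(-1)$ applied to $\rho=\mathrm{Ind}_{H}^{G}1_H$.

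First I would prove $\Delta_{K/F}\equiv 1$. As recorded just before the statement, $\Delta_{K/F}=\det(\mathrm{Ind}_{H}^{G}1_H)$ is a character of $G$ taking values in $\{\pm 1\}$, and when $H$ is a normal subgroup of $G$ it factors through the quotient $G/H$. Since $[G:H]$ is odd, any homomorphism $G/H\to\{\pm 1\}$ must be trivial: its image is a subgroup of $\{\pm 1\}$ whose order divides the odd number $|G/H|$, hence equals $1$. Therefore $\Delta_{K/F}\equiv 1$, which is the first claim.

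For the second claim I would exploit the self-duality of $\mathrm{Ind}_{H}^{G}1_H$. This permutation representation is orthogonal and satisfies $\widetilde{\mathrm{Ind}_{H}^{G}1_H}=\mathrm{Ind}_{H}^{G}1_H$, so substituting $\rho=\mathrm{Ind}_{H}^{G}1_H$ into the functional equation $W(\rho)W(\widetilde{\rho})=\det_\rho(-1)$ collapses the left-hand side to $(\lambda_{K/F})^{2}$ and identifies the right-hand side as $\Delta_{K/F}(-1)$, i.e. $(\lambda_{K/F})^{2}=\Delta_{K/F}(-1)$. Combining this with the first part gives $\Delta_{K/F}(-1)=1$, and therefore $\lambda_{K/F}^{2}=1$.

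Since every ingredient is already established in the preliminaries, I do not expect a genuine obstacle here; the argument is essentially an assembly of known facts. The only points requiring a little care are the bookkeeping that $\Delta_{K/F}$ genuinely descends to $G/H$ (so that the oddness of $[G:H]$ can be invoked) and that $-1\in F^\times$ is being read through class field theory, so that the value $\Delta_{K/F}(-1)$ is meaningful. Both are routine given the conventions fixed above.
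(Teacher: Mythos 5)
Your proposal is correct and follows essentially the same route as the paper: the paper likewise notes that for normal $H$ the representation $\mathrm{Ind}_{H}^{G}1_H$ is the regular representation of $G/H$, so $\Delta_{K/F}$ is a quadratic character of the odd-order group $G/H$ and hence trivial, and then deduces $\lambda_{K/F}^{2}=\Delta_{K/F}(-1)=1$ from the functional equation applied to the self-dual induced representation. Your direct observation that any homomorphism from an odd-order group to $\{\pm 1\}$ is trivial is just a slightly more explicit phrasing of the same step.
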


\begin{proof}
 If $H$ is a normal subgroup, then $\mathrm{Ind}_{H}^{G}1_H=\mathrm{Ind}_{\{1\}}^{G/H}1$ is the regular representation of 
 $G/H$, hence $\mathrm{det}\circ\mathrm{Ind}_{H}^{G}1_H=\Delta_{K/F}$ is the quadratic character of the group $G/H$.
 By the given condition order of $G/H$ is odd, then $\Delta_{K/F}\equiv 1$, hence
  $\lambda_{K/F}^{2}=\Delta_{K/F}(-1)$. Thus $\lambda_{K/F}^{2}=1$. 
\end{proof}
{\bf Note:} Since $\Delta_{K/F}\equiv 1$, then $W(\Delta_{K/F})=1$. We also know that $c(\rm{Ind}_{K/F}(1))\in\{\pm 1\}$.
Then from equation (\ref{eqn 4.4}) we can easily see that $\lambda_{K/F}^{2}=1$.

In the next lemma we state 
some important results for our next Theorem \ref{General Theorem for odd case}. These are the 
consequences of Deligne's formula for the local constant of orthogonal representations.

\begin{lem}\label{Lemma 4.2}
\begin{enumerate}
 \item If $H\leq G$ is a normal subgroup of odd index $[G:H]$, then $\lambda_{H}^{G}=1$.
 \item If there exists a normal subgroup $N$ of $G$ such that $N\leq H\leq G$ and $[G:N]$ odd, then $\lambda_{H}^{G}=1$.
\end{enumerate}

\end{lem}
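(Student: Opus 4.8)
The plan is to derive both parts from the factorisation $\lambda_{H}^{G}=c(\mathrm{Ind}_{H}^{G}1_H)\cdot W(\Delta_{K/F})$ of equation (\ref{eqn 4.4}), combined with Deligne's Theorem \ref{Theorem 4.1} and the functoriality of the $\lambda$-function recorded in Lemma \ref{Lemma 4.1.1}. For part (1), assume $H$ is normal in $G$ with $[G:H]$ odd. Lemma \ref{Lemma 4.1} already provides $\Delta_{K/F}\equiv 1$, so $W(\Delta_{K/F})=W(1)=1$ and hence $\lambda_{H}^{G}=c(\mathrm{Ind}_{H}^{G}1_H)$; in particular $\det\,\mathrm{Ind}_{H}^{G}1_H\equiv 1$, so the first Stiefel--Whitney class vanishes and the second class $s_2(\mathrm{Ind}_{H}^{G}1_H)$ is a well-defined element of $H^2(G,\mathbb{Z}/2\mathbb{Z})$.

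It then remains to show that this Deligne sign equals $+1$. The representation $\rho:=\mathrm{Ind}_{H}^{G}1_H$ is the regular representation of $G/H$ inflated along $G\to G/H$, so by naturality of Stiefel--Whitney classes $s_2(\rho)$ lies in the image of the inflation map from $H^2(G/H,\mathbb{Z}/2\mathbb{Z})$. Because $G/H$ has odd order, multiplication by $|G/H|$ simultaneously annihilates $H^2(G/H,\mathbb{Z}/2\mathbb{Z})$ and acts invertibly on $\mathbb{Z}/2\mathbb{Z}$, forcing $H^2(G/H,\mathbb{Z}/2\mathbb{Z})=0$; thus $s_2(\rho)=0$. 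Theorem \ref{Theorem 4.1} now gives $c(\rho)=cl(s_2(\rho))=1$, so $\lambda_{H}^{G}=1$. (Alternatively one may first reduce to $\lambda_{H}^{G}=\lambda_{\{1\}}^{G/H}$ via part (3) of Lemma \ref{Lemma 4.1.1} and run the same argument directly for the regular representation of $G/H$.)

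For part (2), I would propagate part (1) along the chain $N\le H\le G$. Since $[G:N]=[G:H]\cdot[H:N]$ is odd, both indices are odd, and $N$, being normal in $G$, is also normal in $H$. Part (1) therefore yields $\lambda_{N}^{G}=1$ and $\lambda_{N}^{H}=1$. Substituting these into the transitivity relation of Lemma \ref{Lemma 4.1.1}(2),
\[
\lambda_{N}^{G}=\lambda_{N}^{H}\cdot(\lambda_{H}^{G})^{[H:N]},
\]
gives $(\lambda_{H}^{G})^{[H:N]}=1$. Recalling from the start of Section 3.2 that $\lambda_{H}^{G}$ is always a fourth root of unity, and that $[H:N]$ is odd, the multiplicative order of $\lambda_{H}^{G}$ divides $\gcd(4,[H:N])=1$; hence $\lambda_{H}^{G}=1$.

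The step I expect to be the genuine obstacle is the sign determination in part (1): upgrading the relation $(\lambda_{H}^{G})^2=1$ coming from Lemma \ref{Lemma 4.1} to the sharp statement $\lambda_{H}^{G}=+1$. This is precisely where Deligne's identification of $c(\rho)$ with the class of the second Stiefel--Whitney class, together with the vanishing of mod-$2$ cohomology for odd-order quotients, is indispensable. Once part (1) is established, part (2) is a purely formal consequence of the $\lambda$-function functoriality and the fourth-root-of-unity constraint.
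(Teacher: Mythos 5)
Your proof is correct and follows essentially the same route as the paper: part (1) via the factorisation $\lambda_{H}^{G}=c(\mathrm{Ind}_{H}^{G}1_H)\cdot W(\Delta_{H}^{G})$, Deligne's identification $c(\rho)=cl(s_2(\rho))$, and the vanishing of $H^{2}(G/H,\mathbb{Z}/2\mathbb{Z})$ for odd-order $G/H$; part (2) via the transitivity relation $\lambda_{N}^{G}=\lambda_{N}^{H}\cdot(\lambda_{H}^{G})^{[H:N]}$ together with the fourth-root-of-unity constraint. The only difference is that you supply the (correct) justification for $H^{2}(G/H,\mathbb{Z}/2\mathbb{Z})=0$, which the paper merely asserts.
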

\begin{proof}
 \begin{enumerate}
 
\item To prove (1) we use the equation (\ref{eqn 4.3})
\begin{equation}
 \lambda_{H}^{G}=W(\mathrm{Ind}_{H}^{G}1_H)=
 c(\mathrm{Ind}_{H}^{G}1_H)\cdot W(\mathrm{det}\circ\mathrm{Ind}_{H}^{G}1_H).\label{eqn 4.7}
\end{equation}
Since $\rho=\mathrm{Ind}_{H}^{G}1_H$ is orthogonal we may compute $c(\rho)$ by using the second  Stiefel-Whitney class $s_2(\rho)$
\footnote{This Stiefel-Whitney class $s_2(\rho)$ is easy accessible only if 
$\mathrm{det}_\rho\equiv 1$, and this is in general wrong for $\rho=\mathrm{Ind}_{H}^{G}1_H$. But it is true for 
$\rho=\mathrm{Ind}_{H}^{G}1_H$ if $H\leq G$ is a normal subgroup and $[G:H]$ is odd (by using Lemma \ref{Lemma 4.1}).}. From Proposition
\ref{Proposition 3.1.1.1}(v)
we know that $c(\rho)=W(\rho)/W(det_\rho)$ is a sign. If $cl(s_2(\rho))$
is the image of $s_2(\rho)$ under inflation map (which is injective), then according to Deligne's theorem \ref{Theorem 4.1}, we have:
\begin{center}
 $c(\rho)=cl(s_2(\rho))$
\end{center}
if $\rho$ is orthogonal.  
Moreover, we have 
\begin{center}
 $s_2(\mathrm{Ind}_{H}^{G}1_H)\in H^2(G/H,\mathbb{Z}/2\mathbb{Z})=\{1\}$,
\end{center}
which implies that in equation (\ref{eqn 4.7}) both factors are $=1$, hence $\lambda_{H}^{G}=1$.
\item From $N\leq H\leq G$ we obtain 
\begin{equation}
 \lambda_{N}^{G}=\lambda_{N}^{H}\cdot(\lambda_{H}^{G})^{[H:N]}
\end{equation}
From (1) we obtain $\lambda_{N}^{G}=\lambda_{N}^{H}=1$ because $N$ is normal and the index $[G:N]$ is odd, hence 
$(\lambda_{H}^{G})^{[H:N]}=1$. Finally this implies $\lambda_{H}^{G}=1$ because $\lambda_{H}^{G}$ is 4th root of unity
and $[H:N]$ is odd.
 \end{enumerate}

\end{proof}
\textbf{Note:} In the other words we can state this above Lemma \ref{Lemma 4.2} as follows:\\ 
Let $H'=\cap_{x\in\Delta}xHx^{-1}\subset\Delta$ be 
the largest subgroup of $H$ which is normal in $\Delta\subseteq G$. Then $\lambda_{H}^{\Delta}(W)=1$ if the index $[\Delta:H']$ is odd, in 
particular if $H$ itself is a normal subgroup of $\Delta$ of odd index.

Now we are in a position to state the main theorem for odd degree Galois extension of a non-archimedean local field.

\begin{thm}\label{General Theorem for odd case} 
Let $F$ be a non-archimedean local field and $E/F$ be an odd degree Galois extension. If 
$L\supset K\supset F $ be any finite extension inside $E$, then $\lambda_{L/K}=1$. 
\end{thm}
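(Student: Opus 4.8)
The plan is to translate the statement about the $\lambda$-function of the (not necessarily Galois) extension $L/K$ into a purely group-theoretic statement about a subgroup pair inside the odd-order group $G=\mathrm{Gal}(E/F)$, and then to apply Lemma~\ref{Lemma 4.2} with the trivial subgroup playing the role of the normal subgroup $N$. The crucial simplification is that every subgroup of $G$ has odd order, so the hypotheses of Lemma~\ref{Lemma 4.2}(2) are automatic.

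First I would fix $G=\mathrm{Gal}(E/F)$, which has order $[E:F]$, odd by assumption. Since $E/F$ is Galois, so are $E/K$ and $E/L$ for the intermediate fields $F\subseteq K\subseteq L\subseteq E$; set $H_K=\mathrm{Gal}(E/K)$ and $H_L=\mathrm{Gal}(E/L)$. Then $H_L\leq H_K\leq G$, and by the definition of the lambda function one has $\mathrm{Ind}_{L/K}1=\mathrm{Ind}_{H_L}^{H_K}1$, so $\lambda_{L/K}=\lambda_{H_L}^{H_K}(W)$. Note that $L/K$ need not be Galois, i.e. $H_L$ need not be normal in $H_K$; this will not matter. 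Since $[E:F]=[E:K]\cdot[K:F]$, the integer $|H_K|=[E:K]$ divides the odd number $[E:F]$ and is therefore odd.

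Next I would apply Lemma~\ref{Lemma 4.2}(2) with the ambient group $H_K$, the subgroup $H_L$, and the normal subgroup $N=\{1\}$. Indeed $\{1\}\vartriangleleft H_K$, we have $\{1\}\leq H_L\leq H_K$, and $[H_K:\{1\}]=|H_K|$ is odd by the previous step. The lemma then yields $\lambda_{H_L}^{H_K}=1$, that is $\lambda_{L/K}=1$, as desired. Equivalently, one may quote the Note following Lemma~\ref{Lemma 4.2}: writing $H'=\bigcap_{x\in H_K}xH_Lx^{-1}$ for the largest subgroup of $H_L$ normal in $H_K$, the index $[H_K:H']$ divides $|H_K|$ and is hence odd, whence $\lambda_{H_L}^{H_K}(W)=1$.

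There is essentially no hard computation here; the whole force of the argument is carried by Lemma~\ref{Lemma 4.2}, which in turn rests on Deligne's formula $c(\rho)=cl(s_2(\rho))$ (Theorem~\ref{Theorem 4.1}) together with the vanishing of the relevant second Stiefel--Whitney class. The only point that deserves a line of care is the verification that the $\lambda$-function attached to the possibly non-Galois extension $L/K$ coincides with the group-theoretic factor $\lambda_{H_L}^{H_K}(W)$ to which Lemma~\ref{Lemma 4.2} applies; this is immediate from the definition $\lambda_{L/K}=W(\mathrm{Ind}_{H_L}^{H_K}1)$ and the fact that $E/K$ is Galois, so that $H_K=\mathrm{Gal}(E/K)$ is a genuine local Galois group over the base field $K$ and Deligne's theorem may be invoked there.
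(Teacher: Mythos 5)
Your proof is correct and follows essentially the same route as the paper: reduce to the group pair $\mathrm{Gal}(E/L)\leq\mathrm{Gal}(E/K)$ and invoke Lemma~\ref{Lemma 4.2}(2), whose force comes from the fact that every index in sight is odd and $\lambda$-factors are fourth roots of unity. The only (harmless) difference is that you take $N=\{1\}$ uniformly, whereas the paper first argues with the largest normal subgroup $N=\cap_{g}gHg^{-1}$ of $\mathrm{Gal}(E/K)$ contained in $\mathrm{Gal}(E/L)$ and treats $N=\{1\}$ as a separate remark; your choice slightly streamlines the write-up without changing the substance.
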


\begin{proof}
 By the given condition $|\mathrm{Gal}(E/F)|$ is odd. Then the degree of extension $[E:F]$ of $E$ over $F$ is odd. 
 Let $L$ be any 
 arbitrary intermediate field of $E/F$ which contains $K/F$. Therefore, here we have the 
 tower of fields $E\supset L\supset K\supset F$. Here the degree of extensions are all odd since $[E:F]$ is odd.
 By assumption $E/F$ is Galois, then also the extension $E/L$ and $E/K$
 are Galois and $H=\mathrm{Gal}(E/L)$ is subgroup of $G=\mathrm{Gal}(E/K)$. 
 
 By the definition 
 we have $\lambda_{L/K}=\lambda_{H}^{G}$.
 If $H$ is a normal subgroup of $G$ then $\lambda_{H}^{G}=1$ because $|G/H|$ is odd. But $H$ need not be normal subgroup
 of $G$ therefore $L/K$ need not be Galois extension.  
  Let $N$ be the \textbf{largest} normal subgroup of $G$ contained in $H$ and $N$ can be written as:
 \begin{equation*}
  N=\cap_{g\in G}gHg^{-1}
 \end{equation*}
 Therefore, the fixed field $E^{N}$ is the \textbf{smallest normal} extension of $K$ containing $L$. Now we have 
 from properties of $\lambda$-function(cf. 2.2(2)),
 \begin{equation}
  \lambda_{N}^{G}=\lambda_{N}^{H}\cdot(\lambda_{H}^{G})^{[H:Z]}.
 \end{equation}
This implies $(\lambda_{H}^{G})^{[H:N]}=1$ since $[H:N]$ and $[G:N]$ are odd and $N$ is normal subgroup of $G$ contained in 
$H$. Therefore $\lambda_{H}^{G}=1$ because $\lambda_{H}^{G}$ is 4th root of unity and $[H:N]$ is odd. Moreover, if $N=\{1\}$, it is then
clear that $N$ is a normal subgroup of $G$ which sits in $H$ and $[G:N]=|G|$ is odd. Therefore Lemma \ref{Lemma 4.2}(2) implies
that $\lambda_{H}^{G}=1$.

Then we may say $\lambda_{L/K}=1$ all possible cases if $[E^{N}:K]$ is odd. When the big extension $E/F$ is odd then all intermediate
extensions will be odd. Therefore, the theorem is proved for all possible cases.

\end{proof}

\begin{rem}\label{Remark 4.4} 
{\bf (1).} If the Galois extension $E/F$ is infinite then we say it is \textbf{odd} if 
 $[K:F]$ is odd for all sub-extensions of finite degree. This means the pro-finite group $\mathrm{Gal}(E/F)$ can 
 be realized as the projective limit of finite groups which are all odd order.
 If $E/F$ is Galois extension of odd order in this more general sense, then again we will have 
 $\lambda_{L/K}=1$ in all cases where $\lambda$-function is defined.\\
 {\bf (2).} When the order of a finite local Galois group is odd, all weak extensions are strong extensions, because
 from the above Theorem \ref{General Theorem for odd case} we have
 $\lambda_{1}^{G}=1$, where $G$ is the odd order local Galois group.
 Let $H$ be a any arbitrary subgroup of $G$, then from the properties of $\lambda$-functions we have
 $$\lambda_{1}^{G}=\lambda_{1}^{H}\cdot(\lambda_{H}^{G})^{|H|}.$$
 This implies $\lambda_{H}^{G}=1$, because $|H|$ is odd, hence $\lambda_{1}^{H}=1$ and $\lambda$-functions are fourth
roots of unity.\\
{\bf (3).}
But this above Theorem \ref{General Theorem for odd case} is not true if $K/F$ is not {\bf Galois}. 
Guy Henniart gives {\bf``An amusing formula"} 
 (cf. \cite{GH}, p. 124, Proposition 2) for 
 $\lambda_{K/F}$, when $K/F$ is arbitrary odd degree extension, and this formula is:
 \begin{equation}
  \lambda_{K/F}=W(\Delta_{K/F})^{n}\cdot\left(\frac{2}{q_F}\right)^{a(\Delta_{K/F})},
 \end{equation}
where $K/F$ is an extension in $\overline{F}$ with finite odd degree $n$, and $\left(\frac{2}{q_F}\right)$ is the Legendre symbol if 
$p$ is odd and is $1$ if $p=2$. Here $a$ denotes the exponent Artin-conductor. 

\end{rem}

\section{\textbf{Computation of $\lambda_{1}^{G}$, where $G$ is a finite local Galois group}}

Let $G$ be a finite local Galois group of a non-archimedean local field $F$. Now we consider \textbf{Langlands'} $\lambda$-function:
\begin{equation}\label{eqn 4.6}
 \lambda_{H}^{G}:=W(\rm{Ind}_{H}^{G}1_H)=c_{H}^{G}\cdot W(\Delta_{H}^{G}),
\end{equation}
where 
\begin{center}
 $c_{H}^{G}:=c(\rm{Ind}_{H}^{G}1_H)$.
\end{center}
From the equation (\ref{eqn 4.6}) we observe that to compute $\lambda_{H}^{G}$ we need to compute the Deligne's constant $c_{H}^{G}$
and $W(\Delta_{H}^{G})$. By the following theorem due to Bruno Kahn we will get our necessary information for our
further requirement.

\begin{thm}[\cite{BK}, S\'{e}rie 1-313, Theorem 1]\label{Theorem 4.2}
 Let $G$ be a finite group, $r_G$ its regular representation. Let $S$ be any $2$-Sylow subgroup of $G$. Then $s_2(r_G)=0$
 in the following cases:
 \begin{enumerate}
  \item $S$ is cyclic group of order $\geq 8$;
  \item $S$ is generalized quaternion group;
  \item $S$ is not metacyclic group.
 \end{enumerate}
\end{thm}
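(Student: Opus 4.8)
The plan is to reduce the vanishing of $s_2(r_G)$ to a computation on a Sylow $2$-subgroup $S$. Since $H^2(G,\mathbb{Z}/2\mathbb{Z})$ is entirely $2$-primary and the composite $\mathrm{cor}\circ\mathrm{res}$ is multiplication by the odd index $[G:S]$, the restriction map $\mathrm{res}\colon H^2(G,\mathbb{Z}/2\mathbb{Z})\to H^2(S,\mathbb{Z}/2\mathbb{Z})$ is injective. Because Stiefel-Whitney classes are natural, $s_2(r_G)=0$ if and only if $\mathrm{res}_S\,s_2(r_G)=s_2(r_G|_S)=0$. First I would record that, viewed as an $S$-representation, $\mathbb{R}[G]$ decomposes over the right cosets of $S$ as $r_G|_S\cong [G:S]\cdot r_S$, i.e. an odd number $m=[G:S]$ of copies of the regular representation of $S$.

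Next I would strip off the multiplicity. Writing $m=2j+1$ and expanding via the Whitney sum formula $s(\rho_1\oplus\rho_2)=s(\rho_1)s(\rho_2)$, the degree-two term of $s(r_S)^m$ gives $s_2(m\cdot r_S)=s_2(r_S)+j\,s_1(r_S)^2$. The point is that $s_1(r_S)^2=0$ in every case of the theorem: the sign character $s_1(r_S)=\det(r_S)$ of left translation is nontrivial exactly when $S$ is cyclic with a generator of order $|S|$, so $s_1(r_S)=0$ whenever $S$ is non-cyclic (cases (2) and (3)); and when $S\cong\mathbb{Z}/2^n$ is cyclic of order $\geq 8$ (case (1)), $s_1(r_S)$ is the degree-one generator $x$ of $H^*(\mathbb{Z}/2^n,\mathbb{Z}/2\mathbb{Z})=\Lambda(x)\otimes\mathbb{Z}/2\mathbb{Z}[y]$, which satisfies $x^2=0$ for $n\geq 2$. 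Hence in all three cases $\mathrm{res}_S\,s_2(r_G)=s_2(r_S)$, and the theorem reduces to proving $s_2(r_S)=0$.

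The remaining step is to compute $s_2(r_S)$ from the real Wedderburn decomposition of $\mathbb{R}[S]$, using the Whitney formula, the vanishing of $s_2$ on one-dimensional summands, the identity $s_2=c_1\bmod 2$ for two-dimensional rotation summands, and the vanishing of $s_2$ on quaternionic summands (which factor through $\mathrm{Sp}(1)=\mathrm{Spin}(3)$ and so lift to $\mathrm{Spin}$). For $S\cong\mathbb{Z}/2^n$ with $n\geq 3$, one has $r_S=1\oplus\mathrm{sgn}\oplus\bigoplus_{k=1}^{2^{n-1}-1}\rho_k$ with $\rho_k$ the rotation by $2\pi k/2^n$; since the rotations have trivial determinant, $s_2(r_S)$ equals $y$ times the number of odd $k$ in $[1,2^{n-1}-1]$, namely $2^{n-2}\equiv 0\bmod 2$, so $s_2(r_S)=0$ (and the same count gives $s_2\neq 0$ at order $4$, explaining that exclusion). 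For the generalized quaternion group the four linear characters contribute the degree-two class $a^2+ab+b^2$, which vanishes by the defining relation of $H^*(Q_{2^n},\mathbb{Z}/2\mathbb{Z})$, while the higher-dimensional constituents are quaternionic and contribute nothing.

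The main obstacle I expect is the non-metacyclic case (3), where the hypothesis is a negative structural condition rather than a specific group, so a uniform Wedderburn-plus-Whitney computation is unavailable. Here I would instead seek a cohomological criterion detecting $s_2(r_S)$ in terms of the structure of $S$ — roughly, showing that $s_2(r_S)\neq 0$ forces the degree-two part of $H^*(S,\mathbb{Z}/2\mathbb{Z})$ to be generated in a manner characteristic of metacyclic $2$-groups, so that non-metacyclicity forces vanishing. Formulating and applying such a criterion is where the genuine difficulty lies, the cyclic and quaternion cases being essentially routine arithmetic by comparison.
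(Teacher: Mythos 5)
The paper offers no proof of this statement to compare against: Theorem \ref{Theorem 4.2} is imported verbatim from Kahn's note \cite{BK} and used as a black box. Judged on its own, your reduction to the Sylow $2$-subgroup is correct and cleanly executed: restriction is injective on $H^{2}(G,\mathbb{Z}/2\mathbb{Z})$, $r_G|_S\cong [G:S]\cdot r_S$, and the Whitney-sum bookkeeping that strips off the odd multiplicity (via $\binom{m}{2}\equiv j \bmod 2$ and $s_1(r_S)^2=0$ in each of the three cases) is right. Case (1) is then a complete and correct computation: the rotation constituents $\rho_k$ with $k$ odd contribute $2^{n-2}\,y\equiv 0$ for $n\ge 3$, and your count correctly explains why order $4$ must be excluded.

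The other two cases have genuine gaps. In case (2), the claim that all non-linear real constituents of $\mathbb{R}[Q_{2^n}]$ are quaternionic is false once $n\ge 4$: for example $Q_{16}$ surjects onto $D_8$, and the standard two-dimensional representation of $D_8$ pulls back to a real-type (Frobenius--Schur indicator $+1$) constituent of the regular representation, which does contribute to $s_2$. Moreover the identity $a^2+ab+b^2=0$ that kills the linear-character contribution is the defining degree-two relation of $H^{*}(Q_8,\mathbb{Z}/2\mathbb{Z})$ specifically and does not hold in $H^{*}(Q_{2^n},\mathbb{Z}/2\mathbb{Z})$ for $n\ge 4$, so for larger quaternion groups the linear and dihedral-type pieces must cancel against each other rather than each vanishing separately; your argument as written is complete only for $|S|=8$. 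Case (3) --- which is the bulk of the theorem, since ``not metacyclic'' covers most $2$-groups, starting already with $(\mathbb{Z}/2\mathbb{Z})^3$ --- is not proved at all: you say a cohomological criterion ``would be sought,'' but none is formulated, let alone established, and the Wedderburn-plus-Whitney computation you rely on elsewhere has no uniform version over this class of groups. As it stands the proposal proves case (1), proves case (2) only for $Q_8$, and leaves case (3) entirely open.
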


We also need Gallagher's result. 

\begin{thm}[Gallagher, \cite{GK}, Theorem $30.1.8$]\label{Theorem 1.3}
 Assume that $H$ is a normal subgroup of $G$, hence $\Delta_{H}^{G}=\Delta_{1}^{G/H}$, then 
 \begin{enumerate}
  \item $\Delta_{H}^{G}=1_G$, where $1_G$ is the trivial representation of $G$, unless the Sylow $2$-subgroups of $G/H$ are cyclic and 
  nontrivial.
  \item If the Sylow $2$-subgroups of $G/H$ are cyclic and nontrivial, then $\Delta_{H}^{G}$ is the only linear character of $G$
  of order $2$.
 \end{enumerate}
\end{thm}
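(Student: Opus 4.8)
The plan is to reduce the statement to a single determinant computation for the regular representation. Since $H \vartriangleleft G$, the proof of Lemma \ref{Lemma 4.1} already records that $\mathrm{Ind}_H^G 1_H$ is the inflation to $G$ of the regular representation $r_Q$ of the quotient $Q := G/H$; hence $\Delta_H^G = \det(\mathrm{Ind}_H^G 1_H)$ is the inflation of the linear character $\det(r_Q)$ of $Q$. So everything comes down to identifying the character $\det(r_Q)\colon Q \to \{\pm 1\}$, and then reading off when it is trivial and, when nontrivial, why it is unique of its kind.

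First I would use that $r_Q$ is realized on the group algebra by the permutation matrices of left translation: $r_Q(q)$ sends the basis vector $e_x$ to $e_{qx}$. The determinant of a permutation matrix is the sign of the underlying permutation, so $\det(r_Q)(q) = \mathrm{sgn}(\lambda_q)$, where $\lambda_q$ is the permutation $x \mapsto qx$ of $Q$. For $q \neq 1$ this permutation is fixed-point-free and all its cycles share the common length $\mathrm{ord}(q)$; thus it is a product of $|Q|/\mathrm{ord}(q)$ disjoint $\mathrm{ord}(q)$-cycles, and therefore
\[
\det(r_Q)(q) = \mathrm{sgn}(\lambda_q) = (-1)^{(\mathrm{ord}(q)-1)\cdot |Q|/\mathrm{ord}(q)}.
\]
This equals $-1$ precisely when $\mathrm{ord}(q)$ is even and the cofactor $|Q|/\mathrm{ord}(q)$ is odd, that is, exactly when the full $2$-part $2^a$ of $|Q|$ divides $\mathrm{ord}(q)$ (write $|Q| = 2^a b$ with $b$ odd).

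For part (1) I would translate this criterion into a statement about the Sylow $2$-subgroup $S$ of $Q$. If some $q$ satisfies $2^a \mid \mathrm{ord}(q)$, then its $2$-part $q_2$ has $\mathrm{ord}(q_2) = 2^a = |S|$, so $\langle q_2\rangle$ is a cyclic Sylow $2$-subgroup; by Sylow's theorem all Sylow $2$-subgroups are conjugate, hence cyclic. Conversely, if $S$ is cyclic and nontrivial, a generator has order $2^a$ and meets the criterion. Thus $\det(r_Q)$ is nontrivial iff $S$ is cyclic and nontrivial, and is $\equiv 1$ otherwise; inflating back to $G$ gives statement (1).

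For part (2), assuming $S$ cyclic and nontrivial, I must show $\det(r_Q)$ is the only order-$2$ linear character, i.e. that $\mathrm{Hom}(Q, \{\pm 1\})$ has order $2$. The key step is that restriction to $S$ is injective on $\mathrm{Hom}(Q,\{\pm 1\})$: if $\phi|_S$ is trivial then $S \subseteq \ker\phi$, so $[Q:\ker\phi]$ divides the odd number $[Q:S]$ while lying in $\{1,2\}$, forcing $\phi$ trivial. Since $S$ is cyclic, $\mathrm{Hom}(S,\{\pm 1\})$ has order $2$, so $\mathrm{Hom}(Q,\{\pm 1\})$ has order at most $2$; by part (1) it is exactly $2$, and its unique nontrivial element is $\det(r_Q) = \Delta_H^G$. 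I expect the main subtlety to sit in this uniqueness step, where the odd-index restriction argument to the Sylow subgroup is what makes the cyclicity of $S$ bite; the sign computation itself is routine once the permutation-matrix description of $r_Q$ is in hand.
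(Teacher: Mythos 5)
Your proof is correct. The paper does not actually prove this statement---it is quoted from Karpilovsky \cite{GK}---but the adjacent material (Remark \ref{Remark 3.4} and Lemma \ref{Lemma 3.41}) sketches a different route to the same facts: there $\Delta_{1}^{G}$ is first pushed down to the abelianization via $\Delta_{1}^{G}=(\Delta_{1}^{G/G'})^{|G'|}$ (Gallagher's determinant formula combined with Furtw\"angler's theorem that $T_{G/G'}$ is trivial), and then for an abelian group the regular representation is the sum of all linear characters, so its determinant is $\prod_{\chi\in\widehat{G}}\chi$, which Miller's theorem identifies as the unique quadratic character when $\mathrm{rk}_2=1$ and as trivial otherwise. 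Your argument bypasses the transfer map and Miller's theorem entirely: you compute $\det(r_Q)(q)$ directly as the sign of the left-translation permutation, $(-1)^{(\mathrm{ord}(q)-1)\,|Q|/\mathrm{ord}(q)}$, and your uniqueness step (restriction of $\mathrm{Hom}(Q,\{\pm 1\})$ to a Sylow $2$-subgroup is injective because the index of the Sylow subgroup is odd) is exactly where the cyclicity hypothesis enters. This is more elementary and self-contained; the paper's route buys the extra structural identity $\Delta_{1}^{G}=\prod_{\chi}\chi$ on the abelianization, which it reuses elsewhere. One small caveat: what you actually establish in part (2) is that $\Delta_{H}^{G}$ is the unique order-$2$ character of $G/H$ (equivalently, of $G$ trivial on $H$); the literal phrase ``the only linear character of $G$ of order $2$'' fails for general normal $H$ (take $G=\bbZ/2\times\bbZ/2$ and $H$ one of the factors), and is accurate only in the case $H=\{1\}$ in which the paper applies it, so your reading is the intended one.
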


\begin{dfn}[\textbf{$2$-rank of a finite abelian group}]
Let $G$ be a finite abelian group. Then from elementary divisor Theorem \ref{Theorem 22.4} we can write
\begin{equation}
 G\cong\bbZ_{m_1}\times\bbZ_{m_2}\times\cdots\times\bbZ_{m_s}
\end{equation}
where $m_1|m_2|\cdots|m_s$ and $\prod_{i=1}^{s}m_i=|G|$. We define 
\begin{center}
the $2$-rank of $G:=$the number of $m_i$-s which are even
\end{center}
 and we set 
\begin{center}
 $\mathrm{rk}_{2}(G)=$ $2$-rank of $G$.
\end{center}
When the order of an abelian group $G$ is odd, from the structure of $G$ we have $\mathrm{rk}_2(G)=0$, i.e., there is no 
even $m_i$-s for $G$.
We also denote 
\begin{center}
 $G[2]:=\{x\in G|\quad 2x=0\}$, i.e., set of all elements of order at most $2$.
\end{center}
If $G=G_1\times G_2\times \cdots\times G_r$, $r\in\bbN$, is an abelian group, then we can show that 
\begin{equation}\label{eqn 444}
 |G[2]|=\prod_{i=1}^{r}|G_i[2]|.
\end{equation}

\end{dfn}

\begin{rem}[\textbf{Remark on Theorem \ref{Theorem 1.3}}]\label{Remark 3.4}
If $G$ is a finite group with subgroups $H'\subset H\subset G$ then for $\Delta_{H}^{G}=\det(\rm{Ind}_{H}^{G}1_H)$ we know from  
Gallagher's Theorem \ref{Theorem Gall}
\begin{align}
 \Delta_{H'}^{G}
 &=\det(\rm{Ind}_{H'}^{G}1_{H'})=\det(\rm{Ind}_{H}^{G}(\rm{Ind}_{H'}^{H}1_{H'}))\nonumber\\
 &=(\Delta_{H}^{G})^{[H:H']}\cdot\det(\rm{Ind}_{H'}^{H}1_{H'})\circ T_{G/H}\nonumber\\
 &=(\Delta_{H}^{G})^{[H:H']}\cdot(\Delta_{H'}^{H}\circ T_{G/H}).\label{eqn 3.28}
\end{align}
Now we use equation (\ref{eqn 3.28}) for $H'=\{1\}$ and $H=[G,G]=G'$. Then we have 
\begin{equation}\label{eqn 3.29}
 \Delta_{1}^{G}=(\Delta_{G'}^{G})^{|G'|}\cdot\Delta_{1}^{G'}\circ T_{G/G'}=(\Delta_{G'}^{G})^{|G'|},
\end{equation}
because by Theorem \ref{Furtwangler's Theorem},  $T_{G/G'}$ is the trivial map.



We also know that $G'$ is a normal subgroup of $G$, then we can write $\rm{Ind}_{G'}^{G}1_{G'}\cong\rm{Ind}_{1}^{G/G'}1$, hence 
$\Delta_{G'}^{G}=\Delta_{1}^{G/G'}$. So we have 
\begin{equation}\label{eqn 3.30}
 \Delta_{1}^{G}=(\Delta_{G'}^{G})^{|G'|}=(\Delta_{1}^{G/G'})^{|G'|}.
\end{equation}
From the above equation (\ref{eqn 3.30}) we observe that $\Delta_{1}^{G}$ always reduces to the \textbf{abelian case} because 
$G/G'$ is abelian. Moreover, we know that:\\
\emph{If $G$ is abelian then $\rm{Ind}_{1}^{G}1=r_G$ is the sum of all characters of $G$, hence from Miller's result 
(cf. \cite{PC}, Theorem 6) for the abelian
group $\widehat{G}$ we obtain:
\begin{align}
 \Delta_{1}^{G}
 &=\det(\rm{Ind}_{1}^{G}1)=\det(\sum_{\chi\in\widehat{G}}\chi)\nonumber\\
 &=\prod_{\chi\in\widehat{G}}\det(\chi)=\prod_{\chi\in\widehat{G}}\chi\nonumber\\
 &=\begin{cases}
    \alpha & \text{if $\rm{rk}_2(G)=1$}\\
    1 & \text{if $\rm{rk}_2(G)\ne 1$},
   \end{cases}\label{eqn 3.31}
\end{align}
where $\alpha$ is the uniquely determined quadratic character of $G$.}
\end{rem}

\begin{lem}\label{Corollary 3.3.7}
The lambda function for a finite unramified extension of a non-archimedean local field is always a sign. 
\end{lem}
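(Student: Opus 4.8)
The plan is to start from the factorization of the lambda function through Deligne's constant, namely equation (\ref{eqn 4.4}),
\begin{equation*}
 \lambda_{K/F}=c(\mathrm{Ind}_{K/F}1)\cdot W(\Delta_{K/F}),
\end{equation*}
and to argue that each of the two factors on the right is a sign. For the first factor, since $\mathrm{Ind}_{K/F}1$ is an orthogonal (in particular self-dual) representation, Proposition \ref{Proposition 3.1.1.1}(v) gives immediately $c(\mathrm{Ind}_{K/F}1)\in\{\pm1\}$. Thus the whole problem reduces to showing that $W(\Delta_{K/F})$ is a sign, where $\Delta_{K/F}$ is the quadratic discriminant character of $F^\times$.

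Next I would analyze $\Delta_{K/F}$ using that $G=\mathrm{Gal}(K/F)$ is \emph{cyclic} for an unramified extension. If $[K:F]$ is odd, then by Lemma \ref{Lemma 4.1} (equivalently $\mathrm{rk}_2(G)=0$ in equation (\ref{eqn 3.31})) one has $\Delta_{K/F}\equiv 1$, so $W(\Delta_{K/F})=1$ and there is nothing more to do. If $[K:F]$ is even, then the Sylow $2$-subgroup of the cyclic group $G$ is cyclic and nontrivial, so by Gallagher's Theorem \ref{Theorem 1.3} the character $\Delta_{K/F}$ is the unique linear character of $G$ of order $2$.

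The key arithmetic step is then to observe that, for an unramified extension, $\Delta_{K/F}$ is an \textbf{unramified} character of $F^\times$. Indeed, every subextension of the unramified extension $K/F$ is itself unramified, so the quadratic subextension of $F$ cut out by the order-$2$ character $\Delta_{K/F}$ is the unramified quadratic extension; by class field theory the corresponding character of $F^\times$ is unramified. (Alternatively, the discriminant $d_{K/F}$ is a unit for unramified $K/F$, so $F(\sqrt{d_{K/F}})/F$ is unramified.) Once $\Delta_{K/F}$ is known to be unramified, equation (\ref{eqn 2.3.5}) gives $W(\Delta_{K/F})=W(\Delta_{K/F},\psi_F)=\Delta_{K/F}(c)$ with $\nu_F(c)=n(\psi_F)$, and since $\Delta_{K/F}$ has order dividing $2$ this value lies in $\{\pm1\}$.

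Combining the two signs, $\lambda_{K/F}=c(\mathrm{Ind}_{K/F}1)\cdot W(\Delta_{K/F})\in\{\pm1\}$, which is the assertion. I expect the only genuinely substantive point to be the unramifiedness of $\Delta_{K/F}$; everything else is a direct application of the quoted results (Proposition \ref{Proposition 3.1.1.1}, Theorem \ref{Theorem 1.3}, and the unramified-character formula (\ref{eqn 2.3.5})). The rest of the argument is a bookkeeping of the two cases according to the parity of $[K:F]$, both of which feed into the same conclusion.
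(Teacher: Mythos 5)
Your argument is correct, but it takes a different route from the one in the paper. The paper's proof handles the odd-degree case by citing Theorem \ref{General Theorem for odd case} (so $\lambda_{K/F}=1$ outright), and in the even-degree case it never touches Deligne's constant: it notes that $\mathrm{rk}_2(G)=1$ for the cyclic group $G=\mathrm{Gal}(K/F)$, so by equation (\ref{eqn 3.31}) $\Delta_{K/F}=\alpha$ is the character of the unramified quadratic subextension, evaluates $\Delta_{K/F}(-1)=1$ because $-1\in U_F$ is a norm for that unramified extension, and concludes $(\lambda_{K/F})^2=\Delta_{K/F}(-1)=1$ directly from the functional equation (\ref{eqn 2.3.23}). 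You instead factor $\lambda_{K/F}=c(\mathrm{Ind}_{K/F}1)\cdot W(\Delta_{K/F})$ via (\ref{eqn 4.4}) and show each factor separately lies in $\{\pm1\}$: the Deligne constant by Proposition \ref{Proposition 3.1.1.1}(v), and $W(\Delta_{K/F})$ by the observation that $\Delta_{K/F}$ is an unramified quadratic character, so (\ref{eqn 2.3.5}) gives $W(\Delta_{K/F})=\Delta_{K/F}(c)\in\{\pm1\}$. Both arguments ultimately rest on the same arithmetic fact — that the quadratic subextension cut out by $\Delta_{K/F}$ is unramified — but they exploit it differently: the paper uses it only to see that $-1$ is a norm, while you use it to evaluate the abelian root number. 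Your route is slightly longer but yields more, since it essentially computes $W(\Delta_{K/F})$ explicitly (this is the content the paper defers to Lemma \ref{Lemma 3.13} and Theorem \ref{Theorem 3.6}), whereas the paper's functional-equation argument establishes only the squared identity $(\lambda_{K/F})^2=1$, which is all the lemma asserts.
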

 \begin{proof}
  Let $K$ be a finite unramified extension of a non-archimedean local field $F$. 
  We know that the unramified extensions are Galois, and their corresponding
  Galois groups are cyclic. 
 Let $G=\rm{Gal}(K/F)$, hence $G$ is cyclic. 
 
 When the degree of $K/F$ is odd, from Theorem \ref{General Theorem for odd case} 
 we have $\lambda_1^G=\lambda_{K/F}=1$ because $K/F$ is Galois.
 
 When the degree of $K/F$ is even, we have $\rm{rk}_2(G)=1$ because $G$ is cyclic. 
  So from equation (\ref{eqn 3.31}) we can write 
  $\Delta_{1}^{G}=\alpha$, where $\alpha$
  corresponds to the quadratic unramified extension. Then $\Delta_{1}^{G}(-1)=\alpha(-1)=1$, because 
  $-1$ is a norm, hence from the functional equation (\ref{eqn 2.3.23}) we have
  $$(\lambda_{1}^{G})^2=1.$$
 \end{proof}


Moreover, since $G/G'$ is abelian, by using equation (\ref{eqn 3.31}) for $G/G'$, from equation (\ref{eqn 3.30}) we obtain:

\begin{lem}\label{Lemma 3.41}
 Let $G$ be a finite group and $S$ be a Sylow 2-subgroup of $G$. Then the following are equivalent:
 \begin{enumerate}
  \item $S<G$ is nontrivial cyclic;
  \item $\Delta_{1}^{G}\ne 1$, is the unique quadratic character of $G$;
  \item $\rm{rk}_2(G/G')=1$ and $|G'|$ is odd.
 \end{enumerate}

\end{lem}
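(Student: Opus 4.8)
The plan is to establish the two equivalences $(1)\iff(2)$ and $(2)\iff(3)$; the asserted equivalence of all three then follows by transitivity, and the purely group-theoretic statement $(1)\iff(3)$ --- that a cyclic Sylow $2$-subgroup is the same condition as $\rm{rk}_2(G/G')=1$ together with $|G'|$ odd --- drops out as a by-product, with no appeal to Burnside-type normal-complement theorems.

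For $(1)\iff(2)$ I would simply apply Gallagher's Theorem \ref{Theorem 1.3} to the normal subgroup $H=\{1\}$. Then $\Delta_{\{1\}}^{G}=\det(\rm{Ind}_{1}^{G}1)=\Delta_{1}^{G}$ and $G/H=G$, so the Sylow $2$-subgroups of $G/H$ are exactly $S$. Part (1) of that theorem says $\Delta_{1}^{G}=1_G$ unless $S$ is nontrivial cyclic, while part (2) says that when $S$ is nontrivial cyclic, $\Delta_{1}^{G}$ is the unique linear character of $G$ of order $2$. Reading the two parts together yields $(1)\Rightarrow(2)$, and the contrapositive of part (1) yields $(2)\Rightarrow(1)$.

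For $(2)\iff(3)$ I would start from the reduction $\Delta_{1}^{G}=(\Delta_{1}^{G/G'})^{|G'|}$ of equation (\ref{eqn 3.30}) and feed in the abelian formula (\ref{eqn 3.31}) applied to $A:=G/G'$, writing $\alpha$ for the (inflated) unique quadratic character of $A$. If $\rm{rk}_2(A)\ne1$ then (\ref{eqn 3.31}) gives $\Delta_{1}^{A}=1$, so $\Delta_{1}^{G}=1$ and both (2) and (3) fail. If $\rm{rk}_2(A)=1$ then $\Delta_{1}^{A}=\alpha$ has order $2$ and $\Delta_{1}^{G}=\alpha^{|G'|}$, which equals $1$ when $|G'|$ is even and equals $\alpha$ when $|G'|$ is odd. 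Hence $\Delta_{1}^{G}\ne1$ exactly when $\rm{rk}_2(A)=1$ and $|G'|$ is odd, which is condition (3). To match the uniqueness clause of (2) I would note that every homomorphism $G\to\{\pm1\}$ factors through $G/G'=A$, so the quadratic characters of $G$ and of $A$ coincide, and that $\rm{rk}_2(A)=1$ forces the Sylow $2$-subgroup of $A$ to be cyclic, giving $A$ a single subgroup of order $2$ and hence a single quadratic character $\alpha$. The only points demanding care are this uniqueness bookkeeping and the parity identity $\alpha^{|G'|}=\alpha$ for odd $|G'|$; the substantive force is already packaged inside Theorem \ref{Theorem 1.3} and equation (\ref{eqn 3.31}), so I anticipate no serious obstacle.
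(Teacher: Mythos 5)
Your proof is correct and rests on the same two ingredients as the paper's: Gallagher's Theorem~\ref{Theorem 1.3} applied with $H=\{1\}$ for $(1)\Leftrightarrow(2)$, and the reduction $\Delta_{1}^{G}=(\Delta_{1}^{G/G'})^{|G'|}$ of equation~(\ref{eqn 3.30}) combined with the abelian formula~(\ref{eqn 3.31}). The only structural difference is how the cycle of implications is closed. The paper proves $(2)\Rightarrow(3)$ from~(\ref{eqn 3.30}) and then gives a separate, purely group-theoretic argument for $(3)\Rightarrow(1)$: since $|G'|$ is odd, the quotient map $G\to G/G'$ restricts to an isomorphism from a Sylow $2$-subgroup $S$ of $G$ onto a Sylow $2$-subgroup of $G/G'$, which is cyclic because $\mathrm{rk}_2(G/G')=1$. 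You instead run the computation $\Delta_{1}^{G}=\alpha^{|G'|}$ in both directions to obtain the full equivalence $(2)\Leftrightarrow(3)$, which renders that Sylow argument unnecessary; your bookkeeping on the uniqueness clause (quadratic characters of $G$ factor through $G/G'$, and $\mathrm{rk}_2=1$ forces exactly one of them) is the right thing to add and is implicit in the paper. Both routes are sound and avoid Burnside's normal-complement theorem; yours is marginally more economical, while the paper's makes the elementary group-theoretic content of $(3)\Rightarrow(1)$ explicit.
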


\begin{proof}
 Take $H=\{1\}$ in Gallagher's Theorem \ref{Theorem 1.3} and we can see that $(1)$ and $(2)$ are equivalent. From  
 equation (\ref{eqn 3.30}) we can see $(2)$ implies the condition $(3)$.
 
Now we are left to show that $(3)$ implies $(1)$. Let $S'$ be a Sylow 2-subgroup of $G/G'$. Since $\rm{rk}_2(G/G')=1$, hence 
$\rm{rk}_2(S')=1$, and therefore $S'$ is cyclic. Moreover, 
 $|G'|$ is odd, hence $|S|=|S'|$. Let $f:G\to G/G'$ be the canonical group homomorphism. Since $|G'|$ is odd, 
 and $\rm{rk}_2(G/G')=1$,
 $f|_{S}$ is an isomorphism from $S$ to $S'$. Hence $S$ is a nontrivial cyclic Sylow 2-subgroup of $G$. 

This completes the proof. 

\end{proof}

\begin{thm}[\textbf{Schur-Zassenhaus}]\label{Theorem 3.61}
  If $H\subset G$ is a normal subgroup such that 
 $|H|$ and $[G:H]$ are relatively prime, then $H$ will have a complement $S$ that is a subgroup of $G$ such that 
 \begin{center}
  $G=H\rtimes S$
 \end{center}
is a semidirect product.
\end{thm}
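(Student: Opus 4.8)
The plan is to establish the existence of a complement by induction on $|G|$, reducing the general situation to the case in which $H$ is abelian; that abelian case is the genuine core of the argument, and it is the only place where the coprimality hypothesis is used in an essential way. Write $n=|H|$ and $m=[G:H]$, so that $\gcd(n,m)=1$ and $|G|=nm$. The base of the induction, $|H|=1$, is trivial with $S=G$, and throughout one notes that any subgroup $S\le G$ with $|S|=m$ and $S\cap H=\{1\}$ is automatically a complement since $|HS|=|H||S|=|G|$.

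For the inductive step I would first choose a prime $p\mid n$ and a Sylow $p$-subgroup $P$ of $H$. Because $H\vartriangleleft G$ and $\gcd(n,m)=1$, the $p$-part of $|G|$ equals the $p$-part of $|H|$, so $P$ is in fact a Sylow $p$-subgroup of $G$; the Frattini argument then gives $G=H\cdot N$ with $N:=N_G(P)$. If $N\ne G$, set $H_0:=H\cap N=N_H(P)$. Then $H_0\vartriangleleft N$, and the second isomorphism theorem gives $[N:H_0]=[HN:H]=[G:H]=m$ with $|H_0|\mid n$, so the pair $(N,H_0)$ again satisfies the hypotheses. Induction on the proper subgroup $N$ yields a complement $S$ of $H_0$ in $N$ with $|S|=m$, and since $S\subseteq N$ we get $S\cap H=S\cap H_0=\{1\}$, so $S$ complements $H$ in $G$.

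If instead $N=G$, then $P\vartriangleleft G$ and its center $Z:=Z(P)$ is a nontrivial characteristic subgroup of $P$, hence $Z\vartriangleleft G$ with $Z\subseteq H$. Passing to $G/Z$ and applying induction to $H/Z\vartriangleleft G/Z$ (of index $m$ and order $n/|Z|$ coprime to $m$) produces a subgroup $K\supseteq Z$ with $K\cap H=Z$ and $[K:Z]=m$. If $K\ne G$, then $Z$ is an abelian normal subgroup of the proper subgroup $K$ of coprime index $m$, so induction gives a complement $S$ of $Z$ in $K$, and $S\cap H=S\cap K\cap H=S\cap Z=\{1\}$ shows $S$ complements $H$ in $G$. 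The only remaining possibility is $K=G$, where comparing orders forces $|Z|=n$ and hence $Z=H$; thus $H=Z(P)$ is abelian, landing us in the base case.

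The main obstacle is precisely this abelian case, and here is where I expect to spend the real effort. Fixing a transversal $\{t_x\}_{x\in G/H}$ of $H$ in $G$, I would record its failure to be a homomorphism by the factor set $f(x,y)\in H$ defined by $t_xt_y=f(x,y)\,t_{xy}$; associativity forces the $2$-cocycle identity $f(x,y)f(xy,z)={}^{x}f(y,z)\cdot f(x,yz)$, where ${}^{x}h:=t_xht_x^{-1}$ denotes the induced action of $G/H$ on $H$. Setting $d(x):=\prod_{z\in G/H}f(x,z)$ and multiplying the cocycle identity over all $z$ (reindexing $\prod_z f(x,yz)=d(x)$) gives the relation $f(x,y)^m={}^{x}d(y)\cdot d(x)\cdot d(xy)^{-1}$. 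Since $\gcd(n,m)=1$, the $m$-th power map is an automorphism of the abelian group $H$, so each $d(x)$ has a unique $m$-th root $e(x)$; extracting $m$-th roots exhibits $f$ as the coboundary $f(x,y)={}^{x}e(y)\cdot e(x)\cdot e(xy)^{-1}$. Replacing $t_x$ by $t_x':=e(x)^{-1}t_x$ then straightens the transversal into an honest homomorphic section $G/H\to G$, whose image $S$ satisfies $G=H\rtimes S$. All the delicacy lies in the root extraction and in checking that the corrected representatives multiply correctly; the surrounding reductions are routine.
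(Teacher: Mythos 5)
The paper does not prove this statement at all: it quotes Schur--Zassenhaus as a known classical result and immediately applies it to local Galois groups (via the existence of Hall subgroups in solvable groups), so there is no in-paper argument to compare yours against. Your proof is the standard textbook proof of the existence half of Schur--Zassenhaus and it is correct: the Frattini-argument reduction (splitting into the cases $N_G(P)\ne G$, $K\ne G$, and $Z(P)=H$) is carried out properly, and the abelian case is the usual vanishing of the second cohomology, with the cocycle identity, the relation $f(x,y)^m={}^{x}d(y)\,d(x)\,d(xy)^{-1}$, and the extraction of $m$-th roots via the bijectivity of $h\mapsto h^m$ on an abelian group of order prime to $m$ all checked correctly. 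The only cosmetic point is that the case $H=Z(P)$ abelian is not literally the base case $|H|=1$ of your induction but rather the terminal case handled by the cohomological argument; this does not affect the validity of the proof.
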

Let $G$ be a local Galois group. Then it is known that $G$ has Hall-subgroups (because $G$ is solvable), $H\subset G$ of all types 
such that $[G:H]$ and $|H|$ are relatively prime. In particular, $G$ will have an odd Hall subgroup $H\subset G$ such that $|H|$ 
is odd and $[G:H]$ is power of $2$. 
From this we conclude
\begin{prop}\label{Proposition 3.5}
Let $G$ be a finite local Galois group.
 Let $H\subset G$ be an odd order Hall subgroup of $G$ (which is unique up to conjugation). Then we have 
 \begin{equation}\label{eqn 3.11}
  \lambda_{1}^{G}=(\lambda_{H}^{G})^{|H|}.
 \end{equation}
Hence $\lambda_{1}^{G}=\lambda_{H}^{G}$ if $|H|\equiv 1\pmod{4}$ and $\lambda_{1}^{G}=(\lambda_{H}^{G})^{-1}$ if 
$|H|\equiv 3\pmod{4}$.\\
If the local base field $F$ has residue characteristic $p\ne 2$, then the odd order Hall subgroup $H\subset G$ is a normal subgroup
and therefore $\lambda_{H}^{G}=\lambda_{1}^{G/H}$, where $G/H\cong S$ is isomorphic to a Sylow $2$-subgroup of $G$. For $G=\rm{Gal}(E/F)$
this means that we have a unique normal extension $K/F$ in $E$ such that $\rm{Gal}(K/F)$ is isomorphic to a Sylow $2$-subgroup
of $G$, and we will have 
\begin{center}
 $\lambda_{E/F}=\lambda_{K/F}^{[E:K]}$.
\end{center}
\end{prop}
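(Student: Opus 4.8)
The plan is to split the statement into three pieces: the identity $\lambda_1^G=(\lambda_H^G)^{|H|}$, the resulting dichotomy modulo $4$, and finally the normality of $H$ when $p\neq 2$. For the identity I would apply the functoriality of the $\lambda$-function (Lemma \ref{Lemma 4.1.1}(2)) with $H'=\{1\}$ and the intermediate group $H$, giving $\lambda_1^G=\lambda_1^H\cdot(\lambda_H^G)^{[H:1]}=\lambda_1^H\cdot(\lambda_H^G)^{|H|}$. Now $H=\mathrm{Gal}(E/L)$ with $L=E^H$ is itself a local Galois group of odd order, so Theorem \ref{General Theorem for odd case} (equivalently Remark \ref{Remark 4.4}(2)) forces $\lambda_1^H=1$, whence $\lambda_1^G=(\lambda_H^G)^{|H|}$. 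Since $\lambda_H^G$ is a fourth root of unity (as established just after \eqref{eqn 4.5}) and $|H|$ is odd, raising to the $|H|$-th power returns $\lambda_H^G$ when $|H|\equiv 1\pmod 4$ and $(\lambda_H^G)^3=(\lambda_H^G)^{-1}$ when $|H|\equiv 3\pmod 4$.

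For the case $p\neq 2$ the heart of the matter is to prove that the odd-order Hall subgroup $H$ is \emph{normal}; once this is known the rest is formal. Here I would exploit the ramification filtration of $G=\mathrm{Gal}(E/F)$: the wild inertia $P$ is a normal $p$-subgroup, hence of odd order, and the tame quotient $\bar G:=G/P$ is metacyclic, sitting in an extension $1\to\bar I\to\bar G\to\bar G/\bar I\to 1$ with $\bar I=I/P$ cyclic (tame inertia, order prime to $p$) and $\bar G/\bar I$ cyclic (generated by Frobenius). The plan is to produce a normal $2$-complement of $\bar G$ and then lift it through the odd group $P$.

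To build the normal $2$-complement of $\bar G$ I would first split off the odd part $\bar I_{\mathrm{odd}}$ of $\bar I$, which is characteristic in $\bar I$ and hence normal in $\bar G$. Passing to $K:=\bar G/\bar I_{\mathrm{odd}}$ yields an extension of the cyclic $2$-group $\bar I/\bar I_{\mathrm{odd}}$ by the cyclic group $\bar G/\bar I$. The decisive point is that $\mathrm{Aut}$ of a cyclic $2$-group is itself a $2$-group, so the odd part of $\bar G/\bar I$ acts trivially on $\bar I/\bar I_{\mathrm{odd}}$; its preimage in $K$ is then a central extension of a cyclic group, hence abelian, and the characteristic odd Sylow subgroup of that abelian group is normal in $K$ of the expected order. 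Pulling this back to $\bar G$ and then to $G$ through $P$ produces a normal subgroup whose order equals the odd part of $|G|$, which by conjugacy of Hall subgroups must be $H$ itself. I expect this normal-$2$-complement construction to be the main obstacle: it is exactly where $p\neq 2$ genuinely enters (so that $P$ is odd and the full tame structure is available), and it needs the arithmetic-of-automorphisms argument above rather than mere solvability.

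Finally, with $H\vartriangleleft G$ in hand, Schur--Zassenhaus (Theorem \ref{Theorem 3.61}) gives $G=H\rtimes S$ with $S$ a Sylow $2$-subgroup, so $G/H\cong S$; Lemma \ref{Lemma 4.1.1}(3) then yields $\lambda_H^G=\lambda_{H/H}^{G/H}=\lambda_1^{G/H}$. Translating back to fields, $H=\mathrm{Gal}(E/K)$ for the fixed field $K=E^H$, which is normal over $F$ with $\mathrm{Gal}(K/F)=G/H\cong S$ and $[E:K]=|H|$; combining with the first part gives $\lambda_{E/F}=\lambda_1^G=(\lambda_H^G)^{|H|}=\lambda_{K/F}^{[E:K]}$, as asserted.
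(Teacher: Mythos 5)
Your proposal is correct, and the first part (the identity $\lambda_1^G=\lambda_1^H\cdot(\lambda_H^G)^{|H|}$ with $\lambda_1^H=1$ from the odd-order theorem, followed by the fourth-root-of-unity dichotomy, and the final assembly via Schur--Zassenhaus and Lemma \ref{Lemma 4.1.1}(3)) matches the paper exactly. Where you diverge is the key step for $p\neq 2$, the normality of the odd-order Hall subgroup $H$. The paper argues more briefly by citation: the wild ramification group $G_1$ is a normal $p$-group contained in $H$, the tame quotient $G/G_1$ is \emph{supersolvable}, and it invokes the well-known fact that supersolvable groups admit a Sylow tower with respect to the decreasing ordering of primes, so the odd-order Hall subgroup of $G/G_1$ is normal; normality of $H$ then follows by pulling back. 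You instead unwind that citation into an explicit normal-$2$-complement construction using the metacyclic structure of the tame quotient (cyclic tame inertia $\bar I$, cyclic Frobenius quotient): splitting off the characteristic odd part of $\bar I$, observing that $\mathrm{Aut}$ of a cyclic $2$-group is a $2$-group so the relevant preimage is a central extension with cyclic quotient and hence abelian, and extracting its characteristic odd Hall subgroup. Your argument is sound (central extensions with cyclic quotient are abelian, characteristic-in-normal is normal, and the order count identifies the result with $H$ by conjugacy of Hall subgroups), and it has the virtue of being self-contained and of making visible exactly where $p\neq 2$ and the arithmetic structure of local Galois groups enter; the paper's route is shorter but rests on an unproved group-theoretic fact about supersolvable groups.
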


\begin{proof}
We know that our local Galois group $G$ is solvable, then $G$ has an odd order Hall subgroup $H\subset G$.
 Then the formula (\ref{eqn 3.11}) follows because $\lambda_{1}^{H}=1$ (here $|H|$ is 
 odd and $H$ is a subgroup of the local Galois group $G$).
 
 Let now $p\ne 2$ and let $H$ be an odd order Hall subgroup of $G$. The 
 ramification subgroup $G_1\subset G$ is a normal subgroup of order a power of $p$, hence $G_1\subset H$, and $H/G_1\subset G/G_1$
 will be an odd order Hall subgroup of $G/G_1$. But the group $G/G_1$ is \textbf{supersolvable}. It is also well known 
 that the odd order Hall subgroup of a supersolvable group is normal. Therefore $H/G_1$ is normal in $G/G_1$, and this implies that 
 $H$ is normal in $G$. Now we can use Theorem \ref{Theorem 3.61} and we obtain $G/H\cong S$, where
 $S$ must be a Sylow $2$-subgroup. Therefore when $p\ne 2$ we have 
 \begin{center}
  $\lambda_{1}^{G}=\lambda_{E/F}=(\lambda_{1}^{G/H})^{|H|}=\lambda_{K/F}^{[E:K]}$,
 \end{center}
where $G=\rm{Gal}(E/F)$, $H=\rm{Gal}(E/K)$ and $G/H=\rm{Gal}(K/F)\cong S$.
 
 \end{proof}

Let $F/\bbQ_p$ be a local field with $p\ne 2$. Let $K/F$ be the extension such that $\rm{Gal}(K/F)=V$ Klein's $4$-group.
In the following lemma we give explicit formula for $\lambda_{1}^{V}=\lambda_{K/F}$.

\begin{lem}\label{Lemma 4.6}
Let $F/\bbQ_p$ be a local field with $p\ne 2$. Let $K/F$ be the uniquely determined extension with $V=\rm{Gal}(K/F)$, Klein's $4$-group. 
Then \\
 $\lambda_{1}^{V}=\lambda_{K/F}=-1$ if $-1\in F^\times$ is a square, i.e., $q_F\equiv 1\pmod{4}$, and\\
 $\lambda_{1}^{V}=\lambda_{K/F}=1$ if $-1\in F^\times$ is not a square , i.e., if $q_F\equiv 3\pmod{4}$,\\
 where $q_F$ is the cardinality of the residue field of $F$.
\end{lem}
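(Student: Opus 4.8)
The plan is to compute $\lambda_1^V=W(\mathrm{Ind}_1^V 1,\psi_F)$ by decomposing the regular representation of $V$ and exploiting the multiplicativity (weak extendibility) of $W$, so that the whole problem reduces to the abelian local constants of the three quadratic characters of $F^\times$. Since $p\neq 2$, Theorem \ref{Theorem 2.9} gives $F^\times/{F^\times}^2\cong V$, so $K$ is the compositum of the three quadratic extensions of $F$: exactly one is unramified and two are (tamely) ramified. Via class field theory the nontrivial characters of $V=\mathrm{Gal}(K/F)$ become three quadratic characters of $F^\times$, which I will call $\chi_{\mathrm{ur}}$ (unramified, so $\chi_{\mathrm{ur}}(\pi_F)=-1$ and $\chi_{\mathrm{ur}}|_{U_F}\equiv 1$) and $\chi_a,\chi_b$ (ramified, of conductor $1$ since $U_F^1\subseteq {F^\times}^2$ for odd $p$), with $\chi_b=\chi_{\mathrm{ur}}\chi_a$. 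As $V$ is abelian, $\mathrm{Ind}_1^V 1=1\oplus\chi_{\mathrm{ur}}\oplus\chi_a\oplus\chi_b$, so additivity (\ref{eqn 2.2.3}) gives
\[ \lambda_1^V = W(1)\cdot W(\chi_{\mathrm{ur}})\cdot W(\chi_a)\cdot W(\chi_b). \]

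First I would evaluate the two easy factors: $W(1)=1$, and since $\chi_{\mathrm{ur}}$ is unramified, by (\ref{eqn 2.3.5}) $W(\chi_{\mathrm{ur}})=\chi_{\mathrm{ur}}(c)=(-1)^{n(\psi_F)}$ where $\nu_F(c)=n(\psi_F)$. For the two ramified factors I would avoid any explicit Gauss sum by pairing them. Writing $\chi_b=\chi_{\mathrm{ur}}\chi_a$ and applying the unramified-twist formula (property 2.3.1(4b)) with $a(\chi_a)=1$,
\[ W(\chi_b)=W(\chi_{\mathrm{ur}}\chi_a)=\chi_{\mathrm{ur}}(\pi_F)^{1+n(\psi_F)}\,W(\chi_a)=(-1)^{1+n(\psi_F)}W(\chi_a). \]
Then the functional equation (\ref{eqn 2.3.9}), using that $\chi_a$ is quadratic so $\chi_a^{-1}=\chi_a$, gives $W(\chi_a)W(\chi_b)=(-1)^{1+n(\psi_F)}W(\chi_a)^2=(-1)^{1+n(\psi_F)}\chi_a(-1)$.

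Combining the factors, the dependence on $n(\psi_F)$ cancels and I obtain $\lambda_1^V=(-1)^{n(\psi_F)}\cdot(-1)^{1+n(\psi_F)}\chi_a(-1)=-\chi_a(-1)$. It then remains to compute $\chi_a(-1)$: being ramified of conductor $1$, $\chi_a$ restricts on $U_F$ through $U_F/U_F^1\cong k_F^\times$ as the unique quadratic (Legendre) character, and since $-1\in U_F$ this gives $\chi_a(-1)=\left(\frac{-1}{k_F}\right)$, which is $+1$ when $q_F\equiv 1\pmod 4$ and $-1$ when $q_F\equiv 3\pmod 4$. Hence $\lambda_{K/F}=-1$ in the first case and $\lambda_{K/F}=1$ in the second. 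Finally, for $p$ odd, $-1\in{F^\times}^2$ iff $-1$ is a square in $k_F^\times$ (Hensel's lemma) iff $q_F\equiv 1\pmod 4$, matching the two cases as stated.

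The points needing care are the \emph{identification of the three quadratic characters} with their conductors and restrictions to $U_F$ (in particular that both ramified characters have conductor $1$ and restrict to the Legendre symbol, which forces $\chi_a(-1)=\chi_b(-1)$), and the bookkeeping that makes the $(-1)^{n(\psi_F)}$ factors cancel so that $\lambda_{K/F}$ is genuinely independent of the additive character. The pleasant feature is that pairing the two ramified characters and invoking the functional equation replaces any explicit quadratic Gauss sum computation, so Theorem \ref{Theorem 3.5} is not needed here; only $\chi_a(-1)$ enters, and it is exactly this value that produces the $q_F\bmod 4$ dichotomy.
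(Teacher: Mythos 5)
Your proposal is correct and follows essentially the same route as the paper's proof: decompose $\rm{Ind}_1^V 1$ into the four characters of $V$, evaluate the unramified factor directly, relate the two ramified factors by the unramified-twist formula, and close with the functional equation $W(\chi)W(\chi^{-1})=\chi(-1)$ to arrive at $\lambda_{K/F}=-\chi_a(-1)$. The only cosmetic difference is that the paper first observes $\Delta_{1}^{V}\equiv 1$ (so $\lambda_{K/F}$ equals the Deligne constant $c(r_V)$) before performing the identical character-by-character computation, and it likewise avoids any explicit Gauss sum.
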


\begin{proof}
If $p\ne 2$ then from Theorem \ref{Theorem 2.9} the square class group $F^\times/{F^\times}^2$ is Klein's 4-group, and $K/F$
is the unique abelian extension such that $N_{K/F}(K^\times)={F^\times}^2$, hence 
$$\rm{Gal}(K/F)\cong F^\times/{F^\times}^2=V.$$
Since $V$ is abelian, we can write $\widehat{V}\cong V$. This implies that
there are exactly three nontrivial characters of $V$ and they are quadratic. 
By class field theory we can consider them as quadratic characters of $F^\times$.
This each quadratic character determines a quadratic extension of $F$. 
Thus there are three quadratic subextensions $L_i/F$ in $K/F$, where $i=1,2,3$. We denote $L_1/F$ the 
unramified extension whereas $L_2/F$ and $L_3/F$ are tamely ramified. Then we can write 
\begin{equation}
 \lambda_{K/F}=\lambda_{K/L_i}\cdot\lambda_{L_i/F}^{2} 
\end{equation}
for all $i\in\{1,2,3\}$.
 The group $V$ has four characters $\chi_i$, $i=0,\cdots, 3$, where $\chi_0\equiv 1$ and 
 $\chi_i(i=1,2,3)$ are three characters of $V$ such that $\mathrm{Gal}(K/L_i)$ is the kernel of $\chi_i$, in other
words, $\chi_i$ is the quadratic character of $F^\times/N_{L_i/F}(L_{i}^{\times})$.

Let $r_V=\mathrm{Ind}_{\{1\}}^{V} 1$,
 then 
 \begin{center}
  $\Delta_{1}^{V}=\mathrm{det}(r_V)=\prod_{i=0}^{3}\chi_i\equiv 1$,
 \end{center}
because $\chi_3=\chi_1\cdot\chi_2$. Therefore $W(\Delta_{1}^{V})=1$ and 
\begin{center}
 $\lambda_{K/F}=c(r_V)$
\end{center}
is Deligne's constant. More precisely we have 
\begin{equation}\label{eqn 4.26}
 \lambda_{K/F}=W(\chi_1)\cdot W(\chi_2)\cdot W(\chi_1\chi_2).
\end{equation}
But here $\chi_1$ is unramified and therefore $W(\chi_1)=\chi_1(c_1)$ (see equation (\ref{eqn 2.3.5})) 
and by using unramified character twisting formula, $W(\chi_1\chi_2)=\chi_1(c_2)\cdot W(\chi_2)$, where 
$c_2=\pi_F c_1$ because $a(\chi_2)=1+a(\chi_1)=1$. Therefore the equation (\ref{eqn 4.26}) implies:
\begin{equation}
 \lambda_{K/F}=\chi_1(c_1)^{2}\cdot\chi_1(\pi_F)\cdot W(\chi_2)^{2}=-\chi_2(-1),
\end{equation}
since $\chi_1(\pi_F)=-1$.
Similarly putting $\chi_2=\chi_{1}^{-1}\chi_3=\chi_1\chi_3$ and $\chi_1\chi_2=\chi_3$ in the equation (\ref{eqn 4.26}) we have 
\begin{equation}
 \lambda_{K/F}=-\chi_3(-1).
\end{equation}
Therefore we have $\lambda_{K/F}=-\chi_i(-1)$ for $i=2,3$.

Moreover, we know that 
\begin{equation*}
 \chi_i(-1)=\begin{cases}
             1 & \text{if $-1\in F^\times$ is a square, i.e., $q_F\equiv 1\pmod{4}$}\\
             -1 & \text{if $-1\in F^\times$ is not a square , i.e., $q_F\equiv 3 \pmod{4}$}\\
            \end{cases}
\end{equation*}
Thus finally we conclude that 
\begin{equation*}
 \lambda_{K/F}=-\chi_i(-1)=\begin{cases}
             -1 & \text{if $-1\in F^\times$ is a square, i.e., $q_F\equiv 1\pmod{4}$}\\
             1 & \text{if $-1\in F^\times$ is not a square, i.e., $q_F\equiv 3 \pmod{4}$}\\
            \end{cases}
\end{equation*}

\end{proof}

In the following theorem we give a general formula of $\lambda_{1}^{G}$, where $G$ is a finite local Galois group. 

\begin{thm}\label{Theorem 4.3}
 Let $G$ be a finite local Galois group of a non-archimedean local field $F$. Let $S$ be a Sylow 2-subgroup of $G$.
 \begin{enumerate}
 \item If $S=\{1\}$, then we have $\lambda_{1}^{G}=1$. 
  \item If the Sylow 2-subgroup $S\subset G$ is nontrivial cyclic (\textbf{exceptional case}), then
  \begin{equation}
   \lambda_{1}^{G}=\begin{cases}
                    W(\alpha) & \text{if $|S|=2^n\ge 8$}\\
                    c_{1}^{G}\cdot W(\alpha) & \text{if $|S|\le 4$},
                   \end{cases}
\end{equation}
where $\alpha$ is a uniquely determined quadratic 
  character of $G$.
  \item If $S$  is metacyclic but not cyclic ({\bf invariant case}), then 
  \begin{equation}
   \lambda_{1}^{G}=\begin{cases}
                    \lambda_{1}^{V} & \text{if $G$ contains Klein's $4$ group $V$}\\
                    1 &  \text{if $G$ does not contain Klein's $4$ group $V$}.
                   \end{cases}
\end{equation}
  \item If $S$ is nontrivial and not metacyclic, then $\lambda_{1}^{G}=1$.
 \end{enumerate}
\end{thm}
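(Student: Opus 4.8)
The plan is to start from the master formula (\ref{eqn 4.6}), namely $\lambda_1^G = c_1^G\cdot W(\Delta_1^G)$ with $c_1^G = c(\rm{Ind}_1^G 1_G)$, and to pin down the two factors $\Delta_1^G$ and $c_1^G$ separately in each case. The discriminant factor is controlled by Lemma \ref{Lemma 3.41}: the character $\Delta_1^G$ is nontrivial, and then equals the unique quadratic character $\alpha$, exactly when $S$ is nontrivial cyclic; otherwise $\Delta_1^G\equiv 1$ and $W(\Delta_1^G)=1$. Deligne's constant is controlled by combining Bruno Kahn's Theorem \ref{Theorem 4.2} with Deligne's Theorem \ref{Theorem 4.1}: since $\rm{Ind}_1^G 1_G = r_G$ is the (orthogonal) regular representation, whenever $s_2(r_G)=0$ Deligne's theorem forces $c_1^G = 1$.

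First I would treat the cases with trivial discriminant. In case (1), $|G|$ is odd, so $H^2(G,\bbZ/2\bbZ)$ has no $2$-torsion and $s_2(r_G)=0$, giving $c_1^G = 1$ and $\lambda_1^G = 1$ (one may instead quote Theorem \ref{General Theorem for odd case} directly). In case (4), $S$ is non-metacyclic, so Kahn's Theorem \ref{Theorem 4.2}(3) yields $s_2(r_G)=0$, hence $c_1^G=1$; as $S$ is non-cyclic, $\Delta_1^G\equiv1$ by Lemma \ref{Lemma 3.41}, so $\lambda_1^G=1$. In the exceptional case (2), $\Delta_1^G=\alpha$, so $\lambda_1^G=c_1^G\cdot W(\alpha)$; when $|S|=2^n\ge 8$, Kahn's Theorem \ref{Theorem 4.2}(1) gives $s_2(r_G)=0$ and hence $\lambda_1^G=W(\alpha)$, whereas for $|S|\le 4$ Kahn gives no information and we can only record $\lambda_1^G=c_1^G\cdot W(\alpha)$.

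The substantive work is the invariant case (3), where $S$ is metacyclic but not cyclic, so $\Delta_1^G\equiv1$ and $\lambda_1^G=c_1^G$. I would split according to whether $G$ contains Klein's $4$-group $V$. The group-theoretic fact I would first establish is that a metacyclic non-cyclic $2$-group contains no copy of $V$ precisely when it is generalized quaternion: a finite $2$-group with a unique involution is cyclic or generalized quaternion, and conversely any two distinct involutions in a $2$-group generate a subgroup containing a Klein $4$-group. In the generalized quaternion sub-case, Kahn's Theorem \ref{Theorem 4.2}(2) gives $s_2(r_G)=0$, so $c_1^G=1$ and $\lambda_1^G=1$, as claimed.

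The genuinely different step, and the main obstacle, is the sub-case where $G$ contains $V$ (so $S$ is dihedral, semidihedral, modular, or $\bbZ/2\times\bbZ/2^k$), because there Kahn's criteria do not apply and $c_1^G$ cannot be read off from a vanishing Stiefel--Whitney class. Here I would invoke the functoriality of the $\lambda$-factor instead. Applying Lemma \ref{Lemma 4.1.1}(2) to the chain $\{1\}\subseteq V\subseteq G$ gives $\lambda_1^G=\lambda_1^V\cdot(\lambda_V^G)^{[V:\{1\}]}=\lambda_1^V\cdot(\lambda_V^G)^4$, and since every $\lambda$-factor is a fourth root of unity we have $(\lambda_V^G)^4=1$, whence $\lambda_1^G=\lambda_1^V$. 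Because $\lambda$-factors are conjugation-invariant (Lemma \ref{Lemma 4.1.1}(1)) and any $2$-subgroup lies in a conjugate of $S$, the choice of Klein $4$-subgroup is immaterial: the identity $\lambda_1^G=\lambda_1^V$ holds for every such $V$ at once, which is precisely the asserted invariance. The explicit value of $\lambda_1^V$ is then furnished, for $p\ne2$, by Lemma \ref{Lemma 4.6}. I expect the only real care to be needed in the group-theoretic bookkeeping that makes Kahn's three vanishing criteria and this functoriality reduction exhaust the classification of $S$ without gaps or overlaps, since all the analytic input is already packaged in the cited results.
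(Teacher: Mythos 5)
Your proposal is correct and follows essentially the same route as the paper's proof: the factorization $\lambda_1^G=c_1^G\cdot W(\Delta_1^G)$, Lemma \ref{Lemma 3.41} to decide when $\Delta_1^G$ is the quadratic character $\alpha$, Kahn's Theorem \ref{Theorem 4.2} together with Deligne's Theorem \ref{Theorem 4.1} to force $c_1^G=1$ in the cyclic-of-order-$\ge 8$, generalized-quaternion, and non-metacyclic cases, and the reduction $\lambda_1^G=\lambda_1^V\cdot(\lambda_V^G)^4=\lambda_1^V$ when $G\supset V$. The only cosmetic differences are your alternative justification of case (1) via $H^2(G,\bbZ/2\bbZ)$ and a slightly different (but equally valid) argument that a metacyclic non-cyclic $2$-group without a Klein $4$-subgroup is generalized quaternion.
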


\begin{proof}
 {\bf (1).} When $S=\{1\}$, i.e., $|G|$ is odd, we know from Theorem \ref{General Theorem for odd case} that $\lambda_{1}^{G}=1$.\\
 {\bf (2).} When $S=<g>$ is a nontrivial cyclic subgroup of $G$, $\Delta_{1}^{G}$ is nontrivial
 (because $\Delta_{1}^{G}(g)=(-1)^{|G|-\frac{|G|}{|S|}}=-1$) and 
 by Lemma \ref{Lemma 3.41}, $\Delta_{1}^{G}=\alpha$,
 where $\alpha$ is a uniquely determined quadratic character of $G$. Then we obtain
 \begin{center}
  $\lambda_{1}^{G}=c_{1}^{G}\cdot W(\Delta_{1}^{G})=c_{1}^{G}\cdot W(\alpha)$.
 \end{center}
 If $S$ is cyclic of order $2^n\ge 8$, then by Theorem \ref{Theorem 4.2}(case 1) and Theorem \ref{Theorem 4.1} 
 we have $c_{1}^{G}=1$, hence $\lambda_{1}^{G}=W(\alpha)$. \\
{\bf (3).} The Sylow 2-subgroup $S\subset G$ is metacyclic but not cyclic (invariant case):\\
If $G$ contains Klein's $4$-group $V$, then $V\subset S$ because all Sylow $2$-subgroups are conjugate to each other. Then we have
$V<S<G$.
So from the properties of $\lambda$-function we have 
\begin{center}
 $\lambda_{1}^{G}=\lambda_{1}^{V}\cdot(\lambda_{V}^{G})^{4}=\lambda_{1}^{V}$.
\end{center}

Now assume that $G$ does not contain Klein's $4$-group. Then by assumption $S$ is metacyclic, 
not cyclic and does not contain Klein's $4$-group. We are going to see that this
implies: $S$ is generalized quaternion, and therefore by Theorem \ref{Theorem 4.2}, $s_2(\rm{Ind}_{1}^{G}(1))=0$, hence
$c_{1}^{G}=1$.

We use the following criterion for generalized quaternion groups: A finite
$p$-group in which there is a unique subgroup of order $p$ is either cyclic or generalized
quaternion (cf. \cite{MH}, p. 189, Theorem 12.5.2).\\
So it is enough to show: If $S$ does not contain Klein's $4$-group then $S$ has precisely one
subgroup of order $2$. Indeed, we consider the center $Z(S)$ which is a nontrivial abelian
$2$-group. If it would be non-cyclic then $Z(S)$, hence $S$ would contain Klein's $4$-group. So
$Z(S)$ must be cyclic, hence we have precisely one subgroup $Z_2$ of order $2$ which sits in
the center of $S$. Now assume that $S$ has any other subgroup $U\subset S$ which is of order $2$.
Then $Z_2$ and $U$ would generate a Klein-$4$-group in $S$ which by our assumption cannot
exist. Therefore $Z_2\subset S$ is the only subgroup of order $2$ in $S$. But $S$ is not cyclic, so it is
generalized quaternion.\\
Thus we can write $\lambda_{1}^{G}=c_{1}^{G}\cdot W(\Delta_{1}^{G})=W(\Delta_{1}^{G})$. Now to complete the proof we need to show 
that $W(\Delta_{1}^{G})=1$. This follows from Lemma \ref{Lemma 3.41}.\\
{\bf (4).} The Sylow 2-subgroup $S$ is nontrivial and not metacyclic.\\
 We know that every cyclic group is also a metacyclic group. Therefore when $S$ is nontrivial and not metacyclic, we are \textbf{not}
 in the position: $\rm{rk}_2(G/G')=1$ and $|G'|$ is odd. This gives $\Delta_{1}^{G}=1$, hence $W(\Delta_{1}^{G})=1$. Furthermore
by using the Theorem \ref{Theorem 4.2} and Theorem \ref{Theorem 4.1} we obtain the
 second Stiefel-Whitney class $s_2(\rm{Ind}_{1}^{G}(1))=0$, hence $\lambda_{1}^{G}=c_{1}^{G}\cdot W(\Delta_{1}^{G})=1$.

This completes the proof.
 
\end{proof}

In the above Theorem \ref{Theorem 4.3} we observe that if we are in the \textbf{Case 3}, then by using Lemma \ref{Lemma 4.6} we can 
give complete formula of $\lambda_{1}^{G}$ for $p\ne 2$. Moreover, by using Proposition \ref{Proposition 3.5} in \textbf{case 2}, we can
come down the computation of $\lambda_{K/F}$, where $K/F$ is quadratic.

\begin{cor}\label{Lemma 3.10}
 Let $G=\rm{Gal}(E/F)$ be a finite local Galois group of a non-archimedean local field $F/\bbQ_p$ with $p\ne 2$. 
 Let $S\cong G/H$ be a nontrivial Sylow 2-subgroup of $G$, where $H$ is a uniquely determined Hall subgroup of odd order. Suppose that 
 we have a tower $E/K/F$
 of fields such that $S\cong \rm{Gal}(K/F)$, $H=\rm{Gal}(E/K)$ and $G=\rm{Gal}(E/F)$.
 \begin{enumerate}
  \item If $S\subset G$ is cyclic, then 
  \begin{enumerate}
 \item
 \begin{equation*}
 \lambda_{1}^{G}=\lambda_{K/F}^{\pm 1}=\begin{cases}
                  \lambda_{K/F}=W(\alpha) & \text{if $[E:K]\equiv 1\pmod{4}$}\\
                  \lambda_{K/F}^{-1}=W(\alpha)^{-1} & \text{if $[E:K]\equiv -1\pmod{4}$},
                 \end{cases}
 \end{equation*}
 (here $\alpha=\Delta_{K/F}$ corresponds to the unique quadratic subextension in $K/F$) 
if $[K:F]=2$, hence $\alpha=\Delta_{K/F}$.
 \item 
 \begin{equation*}
\lambda_{1}^{G}=\beta(-1)W(\alpha)^{\pm 1}=\beta(-1)\times\begin{cases}
                  W(\alpha) & \text{if $[E:K]\equiv 1\pmod{4}$}\\
                  W(\alpha)^{-1} & \text{if $[E:K]\equiv -1\pmod{4}$}
                 \end{cases}
 \end{equation*}
if $K/F$ is cyclic of order $4$ with generating character $\beta$ such that 
 $\beta^2=\alpha=\Delta_{K/F}$.
 \item 
 \begin{equation*}
  \lambda_{1}^{G}=\lambda_{K/F}^{\pm 1}=\begin{cases}
                  \lambda_{K/F}=W(\alpha) & \text{if $[E:K]\equiv 1\pmod{4}$}\\
                  \lambda_{K/F}^{-1}=W(\alpha)^{-1} & \text{if $[E:K]\equiv -1\pmod{4}$}
                 \end{cases}
 \end{equation*}
 if $K/F$ is cyclic of order $2^n\ge 8$.
\end{enumerate}
  And if the $4$th roots of unity are in the $F$, we have the same formulas as above but with $1$ instead of $\pm 1$.
Moreover, when $p\ne 2$, a precise formula for $W(\alpha)$ will be obtained in Theorem \ref{Theorem 3.21}.
\item If $S$ is metacyclic but not cyclic and the $4$th roots of unity are in $F$, then 
\begin{enumerate}
 \item $\lambda_{1}^{G}=-1$ if $V\subset G$,
 \item $\lambda_{1}^{G}=1$ if $V\not\in G$.
\end{enumerate}
\item The {\bf Case 4} of Theorem \ref{Theorem 4.3}  will not occur in this case.

\end{enumerate}

\end{cor}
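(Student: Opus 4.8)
The plan is to reduce everything to the Sylow $2$-subgroup via Proposition \ref{Proposition 3.5} and then read off $\lambda_{1}^{S}=\lambda_{K/F}$ from the structural Theorem \ref{Theorem 4.3}, so that the corollary becomes bookkeeping of the odd exponent $[E:K]$. First I would invoke Proposition \ref{Proposition 3.5}: since $p\ne 2$, the odd-order Hall subgroup $H$ is normal, $G/H\cong S\cong\mathrm{Gal}(K/F)$ is a Sylow $2$-subgroup, and
$$\lambda_{1}^{G}=\lambda_{E/F}=\lambda_{K/F}^{[E:K]},\quad\text{with }[E:K]=|H|\text{ odd}.$$
As every $\lambda$-function is a fourth root of unity (the computation right after equation (\ref{eqn 4.5})), the odd power $\lambda_{K/F}^{[E:K]}$ equals $\lambda_{K/F}$ when $[E:K]\equiv 1\pmod 4$ and $\lambda_{K/F}^{-1}$ when $[E:K]\equiv -1\pmod 4$. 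This accounts for every $\pm 1$ exponent in the statement and reduces the problem to evaluating $\lambda_{K/F}=\lambda_{1}^{S}$.

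Next I would compute $\lambda_{1}^{S}$ from Theorem \ref{Theorem 4.3}(2) according to $|S|$. If $|S|=2$ then $\mathrm{Ind}_{K/F}1=1\oplus\alpha$ with $\alpha=\Delta_{K/F}$, so $\lambda_{K/F}=W(1)W(\alpha)=W(\alpha)$, which is case $(1a)$. If $|S|=2^n\ge 8$, Theorem \ref{Theorem 4.2}(case 1) with Deligne's Theorem \ref{Theorem 4.1} forces $c_{1}^{S}=1$, hence $\lambda_{K/F}=W(\alpha)$, which is case $(1c)$. The only genuinely computational subcase is $(1b)$, $S$ cyclic of order $4$: here $r_{S}=1\oplus\beta\oplus\alpha\oplus\beta^{-1}$ with $\alpha=\beta^{2}=\Delta_{1}^{S}$, and combining $W(r_{S})=W(\beta)W(\beta^{-1})W(\alpha)$ with the functional equation (\ref{eqn 2.3.9}), namely $W(\beta)W(\beta^{-1})=\beta(-1)$, gives $c_{1}^{S}=W(r_{S})/W(\alpha)=\beta(-1)$ and therefore $\lambda_{K/F}=\beta(-1)W(\alpha)$. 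Since $\beta(-1)=\pm 1$ and $[E:K]$ is odd, raising to the power $[E:K]$ fixes the sign $\beta(-1)$ and only turns $W(\alpha)$ into $W(\alpha)^{\pm 1}$, yielding the stated formula.

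For the refinement with $1$ in place of $\pm 1$, I would note that the fourth roots of unity lie in $F$ exactly when $-1\in{F^\times}^2$, equivalently $q_F\equiv 1\pmod 4$; then every quadratic character $\alpha$ has $\alpha(-1)=1$, so $W(\alpha)^{2}=\alpha(-1)=1$ and $W(\alpha)^{-1}=W(\alpha)$, collapsing the exponent. For part $(2)$, $S$ metacyclic but not cyclic, Theorem \ref{Theorem 4.3}(3) gives $\lambda_{1}^{G}=\lambda_{1}^{V}$ when $V\subset G$ and $\lambda_{1}^{G}=1$ otherwise; under $i\in F$ we have $q_F\equiv 1\pmod 4$, so Lemma \ref{Lemma 4.6} gives $\lambda_{1}^{V}=-1$, establishing $(2a)$ and $(2b)$.

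Finally, for part $(3)$ I would observe that Case $4$ of Theorem \ref{Theorem 4.3} is vacuous when $p\ne 2$: the extension $K/F$ realizing $S$ is tamely ramified (as $2\ne p$), and a tame Galois group is metacyclic, being generated by a Frobenius lift $\sigma$ and a tame inertia generator $\tau$ with $\sigma\tau\sigma^{-1}=\tau^{q_F}$; equivalently $S=G/H$ is a quotient of the metacyclic tame quotient, since the wild inertia is an odd $p$-group contained in $H$. Thus $S$ is always metacyclic and the non-metacyclic case does not arise. The main obstacle is subcase $(1b)$: pinning down the Deligne constant $c_{1}^{S}=\beta(-1)$ for the cyclic group of order $4$ via the functional equation, and then verifying that the odd exponent $[E:K]$ preserves this sign while converting $W(\alpha)$ to $W(\alpha)^{\pm 1}$.
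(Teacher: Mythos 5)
Your proposal is correct and follows essentially the same route as the paper: reduction to the Sylow $2$-subgroup via Proposition \ref{Proposition 3.5}, direct evaluation of $\lambda_{K/F}=W(\mathrm{Ind}_{K/F}1)$ in the three cyclic subcases (with the functional equation $W(\beta)W(\beta^{-1})=\beta(-1)$ handling order $4$, and Theorems \ref{Theorem 4.2} and \ref{Theorem 4.1} handling order $\ge 8$), the observation that $i\in F$ forces $\lambda$ to be a sign, and tameness of $S$ ruling out the non-metacyclic case. No gaps.
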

 
\begin{proof}
{\bf (1).}
In the case when $p\ne 2$ we know from Proposition \ref{Proposition 3.5} that the odd Hall-subgroup $H<G$ is actually a normal subgroup
with quotient $G/H\cong S$. So if $G=\rm{Gal}(E/F)$ and $K/F$ is the maximal $2$-extension inside $E$ then $\rm{Gal}(K/F)=G/H\cong S$.
And we obtain:
\begin{equation}\label{eqn 3.8}
 \lambda_{1}^{G}=(\lambda_{1}^{G/H})^{|H|}=\begin{cases}
                                            \lambda_{K/F} & \text{if $[E:K]=|H|\equiv 1\pmod{4}$}\\
                                            \lambda_{K/F}^{-1} & \text{if $[E:K]=|H|\equiv -1\pmod{4}$}.
                                           \end{cases}
\end{equation}
So it is enough to compute $\lambda_{K/F}$ for $\rm{Gal}(K/F)\cong S$, i.e., we can reduce the computation to the case where $G=S$.

We know that $\lambda_{K/F}=W(\rm{Ind}_{K/F}(1))=\prod_{\chi}W(\chi)$, where $\chi$ runs over all characters of the cyclic group 
$\rm{Gal}(K/F)$. 
If $[K:F]=2$ then $\rm{Ind}_{K/F}(1)=1+\alpha$, where $\alpha$ 
is a quadratic character of $F$ associated to $K$ by class field theory, hence $\alpha=\Delta_{K/F}$.
Thus $\lambda_{K/F}=W(\alpha)$.

If $[K:F]=4$ then $\rm{Ind}_{K/F}(1)=1+\beta+\beta^2+\beta^3$, where $\beta^2=\alpha=\Delta_{K/F}$ and 
$\beta^3=\beta^{-1}$, hence by the functional equation of local constant we have:
\begin{center}
 $W(\beta)W(\beta^{-1})=\beta(-1)$.
\end{center}
We then obtain:
\begin{center}
 $\lambda_{K/F}=W(\rm{Ind}_{K/F}(1))=W(\beta)W(\beta^2)W(\beta^3)=\beta(-1)\times W(\alpha)$.
\end{center}
If $S$ is cyclic of order $2^n\ge 8$, then by Theorem \ref{Theorem 4.1}, $c_{1}^{S}=1$. Again from equation (\ref{eqn 3.31}) 
we have $W(\Delta_{1}^{S})=W(\alpha)$ because $\rm{rk}_2(S)=1$,
where $\alpha$ is the uniquely determined quadratic character of $F$. Thus we obtain
$$\lambda_{K/F}=c_{1}^{S}\cdot W(\Delta_{1}^{S})=W(\alpha).$$

Finally by using the equation (\ref{eqn 3.8}) we obtain our desired results.

Now we denote $i=\sqrt{-1}$ and consider it in the algebraic closure of $F$. If $i\not\in F$ then $p\ne 2$ implies that 
$F(i)/F$ is unramified extension of degree $2$. Then we reach the case $i\in F$ which we have assumed.\\
Then first of all we know that $\lambda_{H}^{G}$ is always is a {\bf sign} because 
\begin{center}
 $(\lambda_{H}^{G})^2=\Delta_{H}^{G}(-1)=\Delta_{H}^{G}(i^2)=1$.
\end{center}
Then the formula (\ref{eqn 3.8}) turns into 
\begin{center}
 $\lambda_{1}^{G}=(\lambda_{1}^{G/H})^{|H|}=\lambda_{1}^{G/H}$,
\end{center}
where $G/H=\rm{Gal}(K/F)\cong S$. Therefore in {\bf Case 2} of Theorem \ref{Theorem 4.3} we have now same formulas as 
above but with $1$ instead of $\pm 1$.

{\bf (2).}
Moreover, when $p\ne 2$
we know that always $\lambda_{1}^{V}=-1$ if $i\in F$ (cf. Lemma \ref{Lemma 4.6}). Again if $V\subseteq S$, hence $V\subseteq G$, and
we have 
$$\lambda_{1}^{G}=\lambda_{1}^{V}\cdot(\lambda_{V}^{G})^4=\lambda_{1}^{V}.$$
Therefore, when $S$ is metacyclic but not cyclic we can simply say:\\
$\lambda_{1}^{G}=\lambda_{1}^{V}=-1$, if $V\subset G$,\\
$\lambda_{1}^{G}=1$, if $V\not\subset G$.

{\bf (3).}
If the base field $F$ is $p$-adic for $p\ne 2$ then as a Galois group $S$ corresponds to
a tamely ramified extension (because the degree $2^n$ is prime to $p$), and therefore $S$ must
be metacyclic. Therefore the \textbf{Case 4} of Theorem \ref{Theorem 4.3} can never occur if $p\ne 2$.

\end{proof}


\begin{rem}
If $S$ is cyclic of order $2^n\ge 8$, then we have two formulas:\\
$\lambda_{1}^{G}=W(\alpha)$ as obtained in Theorem \ref{Theorem 4.3}(2), and $\lambda_{1}^{G}=W(\alpha)^{\pm 1}$ in 
Corollary \ref{Lemma 3.10}. So we observe that for 
$|S|=2^n\ge 8$ and $|H|\equiv -1\pmod{4}$ the value of $W(\alpha)$ must be a sign for $p\ne 2$.

In {\bf Case 3} of Theorem \ref{Theorem 4.3} we notice that $\Delta_{1}^{G}\equiv 1$, hence $\lambda_{1}^{G}=c_{1}^{G}$.
We know also that this Deligne's constant $c_{1}^{G}$ takes values $\pm 1$ (cf. Proposition \ref{Proposition 3.1.1.1}(v)). 
Moreover, we also notice that the Deligne's constant of a representation is independent of the choice of the additive character.
Therefore in Case 3 of Theorem \ref{Theorem 4.3}, 
$\lambda_{1}^{G}=c_{1}^{G}\in\{\pm 1\}$ will {\bf not} depend on the choice of the additive character. Since in Case 3 the 
computation of $\lambda_{1}^{G}$ does not depend on the choice of the additive characters, hence we call this case as 
{\bf invariant case}.

Furthermore, Bruno Kahn in his second paper (cf. \cite{BK}) deals with $s_2(r_G)$, where $r_G$ is a regular representation of $G$ in the
invariant case. For metacyclic $S$ of order $\ge 4$,
we have the presentation
\begin{center}
 $S\cong G(n,m,r,l)=<a,b: a^{2^n}=1, b^{2^m}=a^{2^r}, bab^{-1}=a^l>$\\
 with $n, m\ge 1$, $0\le r\le n$, $l$ an integer $\equiv 1\pmod{2^{n-r}}$, $l^{2^m}\equiv l\pmod{2^n}$.
\end{center}
When $S$ is \textbf{metacyclic but not cyclic} with $n\ge 2$, then $s_2(r_G)=0$ if and only if $m=1$ and 
$l\equiv-1\pmod{4}$ (cf. \cite{BK}, on p. 575 of the second paper). In this case our $\lambda_{1}^{G}=c_{1}^{G}=1$.

\end{rem}

\begin{cor}\label{Corollary 3.13}
 Let $G$ be a finite abelian local Galois group of $F/\bbQ_p$, where $p\ne 2$. Let $S$ be a Sylow 2-subgroup of $G$.
 \begin{enumerate}
  \item If $\rm{rk}_2(S)=0$, we have $\lambda_{1}^{G}=1$.
  \item If $\rm{rk}_2(S)=1$, then 
   \begin{equation}
   \lambda_{1}^{G}=\begin{cases}
                    W(\alpha) & \text{if $|S|=2^n\ge 8$}\\
                    c_{1}^{G}\cdot W(\alpha) & \text{if $|S|\le 4$},
                   \end{cases}
\end{equation}
where $\alpha$ is a uniquely determined quadratic 
  character of $G$.
  \item If $\rm{rk}_2(S)=2$, we have 
  \begin{equation}
   \lambda_{1}^{G}=\begin{cases}
                    -1 & \text{if $-1\in F^\times$ is a square element}\\
                    1 & \text{if $-1\in F^\times$ is not a square element}.
                   \end{cases}
  \end{equation}
 \end{enumerate}
\end{cor}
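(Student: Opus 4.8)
The plan is to deduce this corollary directly from Theorem \ref{Theorem 4.3} by translating the four cases of that theorem into conditions on $\rm{rk}_2(S)$, exploiting that $G$ is abelian and $p\ne 2$. First I would observe that since $G$ is abelian, its Sylow $2$-subgroup $S$ is the direct factor of $G$ consisting of all elements of $2$-power order, and by the elementary divisor Theorem \ref{Theorem 22.4} we may write $S\cong\bbZ_{2^{a_1}}\times\cdots\times\bbZ_{2^{a_r}}$ with $a_1\le\cdots\le a_r$ and each $a_i\ge 1$. Because every invariant factor of a nontrivial $2$-group is even, the integer $\rm{rk}_2(S)$ is exactly the number $r$ of cyclic factors. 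Thus $\rm{rk}_2(S)=0$ is equivalent to $S=\{1\}$, $\rm{rk}_2(S)=1$ to $S$ being nontrivial cyclic, and $\rm{rk}_2(S)=2$ to $S\cong\bbZ_{2^{a}}\times\bbZ_{2^{b}}$ with $a,b\ge 1$.

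Next I would invoke the hypothesis $p\ne 2$. As recorded in the proof of Corollary \ref{Lemma 3.10}(3), when the residue characteristic is odd the group $S$ corresponds to a tamely ramified extension (its order $2^n$ being prime to $p$), so $S$ must be metacyclic. For an abelian group this forces $r\le 2$, whence $\rm{rk}_2(S)\in\{0,1,2\}$ and the three listed cases are exhaustive; in particular the non-metacyclic Case $4$ of Theorem \ref{Theorem 4.3} cannot occur here. With this in hand the first two cases are immediate: $\rm{rk}_2(S)=0$ gives $S=\{1\}$, so Theorem \ref{Theorem 4.3}(1) yields $\lambda_{1}^{G}=1$; and $\rm{rk}_2(S)=1$ gives $S$ nontrivial cyclic, so Theorem \ref{Theorem 4.3}(2) yields exactly the stated formula, with its split according to whether $|S|=2^n\ge 8$ or $|S|\le 4$.

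The only case requiring a further step is $\rm{rk}_2(S)=2$, which falls under the invariant Case $3$ of Theorem \ref{Theorem 4.3} (the case $S$ metacyclic but not cyclic). Here I would note that $S\cong\bbZ_{2^{a}}\times\bbZ_{2^{b}}$ with $a,b\ge 1$ always contains the subgroup of elements of order dividing $2$, which is Klein's $4$-group $V$; since $S\subseteq G$, we get $V\subset G$. Theorem \ref{Theorem 4.3}(3) then gives $\lambda_{1}^{G}=\lambda_{1}^{V}$, and Lemma \ref{Lemma 4.6} evaluates this as $-1$ when $-1\in F^\times$ is a square (equivalently $q_F\equiv 1\pmod 4$) and as $1$ otherwise, which is precisely the claimed dichotomy.

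The bulk of the argument is thus bookkeeping: the genuine content already resides in Theorem \ref{Theorem 4.3} and Lemma \ref{Lemma 4.6}. The single point that deserves care — and the closest thing to an obstacle — is the structural claim that, for $p\ne 2$, the abelian Sylow $2$-subgroup $S$ satisfies $\rm{rk}_2(S)\le 2$; this rests on the tame-ramification argument that $S$ is metacyclic, and it is exactly what guarantees both that the non-metacyclic branch of Theorem \ref{Theorem 4.3} is vacuous and that $V$ is automatically contained in $G$ whenever $\rm{rk}_2(S)=2$.
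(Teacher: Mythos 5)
Your proposal is correct and follows essentially the same route as the paper: both arguments reduce the three cases to Theorem \ref{Theorem 4.3} via the identification of $\rm{rk}_2(S)$ with the cyclic structure of the abelian Sylow $2$-subgroup, both rule out $\rm{rk}_2(S)\ge 3$ by the tameness/metacyclic argument for $p\ne 2$, and both handle the $\rm{rk}_2(S)=2$ case by locating Klein's $4$-group $V\subseteq S\subseteq G$ and applying Lemma \ref{Lemma 4.6} after reducing $\lambda_1^G$ to $\lambda_1^V$. The only cosmetic difference is that the paper writes the last reduction explicitly as $\lambda_{1}^{G}=\lambda_{1}^{V}\cdot(\lambda_{V}^{G})^{4}=\lambda_{1}^{V}$, whereas you cite Theorem \ref{Theorem 4.3}(3), which encapsulates the same computation.
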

\begin{proof}
 This proof is straightforward from Theorem \ref{Theorem 4.3} and Corollary \ref{Lemma 3.10}.
 Here $S$ is abelian and normal because $G$ is abelian. When $\rm{rk}_2(S)=0$, $G$ is of odd order, hence 
 $\lambda_{1}^{G}=1$. When $\rm{rk}_2(S)=1$, $S$ is a cyclic group because $S\cong\bbZ_{2^n}$ for some $n\ge 1$.
 Then we are in the Case 2 of Theorem \ref{Theorem 4.3}. From the Case 4 of Corollary \ref{Lemma 3.10}, 
 we can say that the case $\rm{rk}_2(S)\ge 3$
 will not occur here because $p\ne 2$ and $G$ tame Galois group\footnote{Here tame Galois group means a Galois group of a
 tamely ramified extension.}.
 
 So we are left to check the case $\rm{rk}_2(S)=2$. In this case $S$ is metacyclic and contains Klein's 4-group, i.e., 
 $V\subseteq S\subseteq G$.
 Then we have from the properties of $\lambda$-functions and Lemma \ref{Lemma 4.6} we obtain
 \begin{equation}
  \lambda_{1}^{G}=\lambda_{1}^{V}\cdot(\lambda_{V}^{G})^4=\lambda_{1}^{V}=
  \begin{cases}
                    -1 & \text{if $-1\in F^\times$ is a square element}\\
                    1 & \text{if $-1\in F^\times$ is not a square element}.
                   \end{cases}
 \end{equation}

\end{proof}

\begin{rem}
 In Corollary \ref{Corollary 3.13} we observe that except the case $\rm{rk}_2(S)=1$, the computation of $\lambda_{1}^{G}$
 is explicit. Now let $G=\rm{Gal}(L/F)$, where $L/F$ is a finite abelian Galois extension, and $K/F$ be a subextension in $L/F$
 for which $\rm{Gal}(L/K)=S$. Since $S$ is normal, $K/F$ is Galois extension of odd degree. Then when $\rm{rk}_2(S)=1$ we have 
 \begin{equation}
  \lambda_{1}^{G}=\lambda_{1}^{S}\cdot(\lambda_{S}^{G})^{|S|}.
 \end{equation}
Again $S$ is normal subgroup of $G$, hence $G/S\cong \rm{Gal}(K/F)$. Hence 
$$\lambda_{S}^{G}=\lambda_{1}^{G/S}=\lambda_{1}^{\rm{Gal}(K/F)}=\lambda_{K/F}=1.$$
Moreover, $\lambda_{1}^{S}=\lambda_{L/K}$. Then when $\rm{rk}_2(S)=1$ we have 
$$\lambda_{1}^{G}=\lambda_{1}^{S}=\lambda_{L/K},$$
where $[L:K]=2^n$ ($n\ge 1$).
\end{rem}

From Theorem \ref{Theorem 4.3} and Corollaries \ref{Lemma 3.10}, \ref{Corollary 3.13} 
for the \textbf{case $p\ne 2$} we realize that the explicit computation
of $W(\alpha)$ gives a complete computation of $\lambda_{1}^{G}$ in the case $p\ne 2$. 
In the following section we give explicit computation of $\lambda_{K/F}$, when $K/F$ is an even degree Galois extension.

\section{\textbf{Explicit computation of $\lambda_{K/F}$, where $K/F$ is an even degree Galois extension }}

Let $K/F$ be a quadratic extension of the field $F/\bbQ_p$. Let $G=\mathrm{Gal}(K/F)$
be the Galois group of the extension $K/F$. Let $t$ be the \textbf{ramification break or jump} (cf. \cite{JPS}) of the Galois group
$G$ (or of the extension $K/F$). Then it can be
proved that the conductor of 
$\omega_{K/F}$ (a quadratic character of $F^\times$ associated to $K$ by class field theory) is $t+1$ (cf. \cite{SAB}, Lemma 3.1). 
When $K/F$ is unramified we have $t=-1$, therefore the conductor of a quadratic character $\omega_{K/F}$ of $F^\times$
is zero, i.e., 
$\omega_{K/F}$ is unramified.
And when $K/F$ is tamely ramified we have $t=0$, then $a(\omega_{K/F})=1$.
In the wildly ramified case (which occurs if $p=2$) it can be proved that $a(\omega_{K/F})=t+1$ is,
{\bf up to the exceptional case $t=2\cdot e_{F/\bbQ_2}$}, always an \textbf{even number} which 
can be seen by the following filtration of $F^\times$. 

Let $K/F$ be a quadratic wild ramified extension, hence $p=2$.
In $F$, we have the sequence 
 $F^\times\supset U_F\supset U_{F}^{1}\supset U_{F}^{2}\supset \cdots$ of higher principal unit subgroups. Since $\omega_{K/F}$ is a 
 quadratic character, it will be trivial on $F^{\times^2}$ therefore we need to consider the induced sequence
 \begin{equation*}
  F^\times\supset U_{F}F^{\times^2}\supset U_{F}^{1}F^{\times^2}\supset U_{F}^{2}F^{\times^2}\supset\cdots\supset F^{\times^2}.
  \tag{\bf{S2}}
 \end{equation*}
In general, for any prime $p$, for $F/\mathbb{Q}_p$ we have the following series 
 \begin{equation*}
  F^\times\supset U_{F}F^{\times^p}\supset U_{F}^{1}F^{\times^p}\supset U_{F}^{2}F^{\times^p}\supset\cdots\supset F^{\times^p}.
  \tag{\bf{Sp}}
 \end{equation*}
 Now we use Corollary 5.8 of \cite{FV} on p. 16,
 for the following: Let $e=\nu_F(p)=e_{F/\mathbb{Q}_p}$ be the absolute ramification exponent of $F$.
 Then
 \begin{enumerate}
  \item[(i)] If $i>\frac{p e}{p-1}$ then $U_{F}^{i}\subset F^{\times^p}$, hence $U_{F}^{i}F^{\times^p}=F^{\times^p}$.
  \item[(ii)] If $i<\frac{pe}{p-1}$ and $i$ is prime to $p$, then in
  \begin{equation}
   1\to U_{F}^{i}\cap F^{\times^p}/U_{F}^{i+1}\cap F^{\times^p}\xrightarrow{(1)} U_{F}^{i}/U_{F}^{i+1}\xrightarrow{(2)}
   U_{F}^{i}\cdot F^{\times^p}/U_{F}^{i+1}\cdot F^{\times^p}\to 1,
  \end{equation}
the arrow (1) is trivial and (2) is an isomorphism.
\item[(iii)] If $i<\frac{pe}{p-1}$ and $p$ divides $i$, then arrow (1) is an isomorphism and (2) is trivial
 \end{enumerate}
Therefore the jumps in \textbf{(Sp)} occur at $U_{F}^{i}F^{\times^p}$, where $i$ is prime to $p$ and $i<\frac{pe}{p-1}$.\\
(If $\frac{pe}{p-1}$ is an integer, then $i=\frac{pe}{p-1}$ is an {\bf exceptional case.})

We now take $p=2$, hence $\frac{pe}{p-1}=2e$. Then the sequence \textbf{(Sp)} turns into \textbf{(S2)}
 for all odd numbers $t<2 e$ or for $t=2e$ (the exceptional case).
This means that in the wild ramified case 
the conductor $a(\omega_{K/F})=t+1$ will always be an \textbf{even number} (except when $t=2e$). 
\vspace{.3cm}

From the following lemma we can see that $\lambda$-function can change by a sign if we change the additive character. 
\begin{lem}\label{Lemma 3.2}
The $\lambda$-function can change by sign if we change the additive character.
\end{lem}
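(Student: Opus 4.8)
The plan is to read off the dependence of $\lambda_{K/F}$ on the additive character directly from the scaling rule for root numbers established earlier. Any two nontrivial additive characters of $F$ differ by multiplication by some $b\in F^\times$, say $\psi'=b\psi$, so it suffices to compare $\lambda_{K/F}(\psi)$ with $\lambda_{K/F}(b\psi)$. First I would invoke equation (\ref{eqn 3.1.1}), which for a virtual representation $\rho$ of the Weil group gives $W(\rho,b\psi)=\det_\rho(b)\cdot W(\rho,\psi)$. Taking $\rho=\mathrm{Ind}_{K/F}1$, whose determinant is by definition the discriminant character $\Delta_{K/F}=\det(\mathrm{Ind}_{K/F}1)$ (cf. equation (\ref{eqn 2.12})), this specializes to
\begin{equation*}
 \lambda_{K/F}(b\psi)=\Delta_{K/F}(b)\cdot\lambda_{K/F}(\psi).
\end{equation*}

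Since $\Delta_{K/F}\colon F^\times\to\{\pm1\}$ is a quadratic character, the scaling factor $\Delta_{K/F}(b)$ is always $\pm1$; thus changing the additive character can only multiply $\lambda_{K/F}$ by a sign, and the content of the lemma is that this sign can genuinely be $-1$. To produce such a case I would take $K/F$ to be an extension for which $\Delta_{K/F}\not\equiv1$ — for instance a tamely ramified quadratic extension, where $\Delta_{K/F}=\omega_{K/F}$ has conductor one and is therefore nontrivial — and then choose $b\in F^\times$ with $\Delta_{K/F}(b)=-1$, i.e. any $b$ outside the norm group $N_{K/F}(K^\times)$. For this $b$ one gets $\lambda_{K/F}(b\psi)=-\lambda_{K/F}(\psi)$, which is exactly the asserted sign change.

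The only point requiring care — and hence the main (rather mild) obstacle — is to guarantee that the discriminant character is nontrivial for at least one extension, so that the predicted sign is not forced to be $+1$. This is immediate from the description of $\Delta_{K/F}$ via Gallagher's Theorem \ref{Theorem 1.3}: $\Delta_{K/F}$ is nontrivial precisely when the Sylow $2$-subgroup of the Galois group is nontrivial cyclic, and in that situation it is the unique quadratic character of the group, which certainly takes the value $-1$ on some argument. Everything else is a direct application of the character-scaling formula (\ref{eqn 3.1.1}) and requires no further computation.
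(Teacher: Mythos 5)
Your proposal is correct and follows essentially the same route as the paper: both apply the scaling rule $W(\rho,b\psi)=\det_\rho(b)\,W(\rho,\psi)$ from equation (\ref{eqn 3.1.1}) to $\rho=\mathrm{Ind}_{K/F}1$ to get $\lambda_{K/F}(b\psi)=\Delta_{K/F}(b)\,\lambda_{K/F}(\psi)$ with $\Delta_{K/F}(b)\in\{\pm1\}$. Your additional step of exhibiting an extension with $\Delta_{K/F}\not\equiv 1$ so that the sign is genuinely $-1$ for some $b$ is a small refinement the paper omits, but it does not change the argument.
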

\begin{proof}
Let $K/F$ be a finite separable extension of the field $F$ and $\psi$ be a nontrivial additive character of $F$. 
We know that the local constant $W(\rho,\psi)$ is well defined for all pairs consisting of a virtual representation $\rho$ 
of the Weil group
 $W_F$ and a nontrivial additive character $\psi$ of $F$. If we change the additive character $\psi$ to $b\psi$, where $b\in F^\times$
 is a unique element for which $b\psi(x):=\psi(bx)$ for all $x\in F$, then from
 equation (\ref{eqn 3.1.1}), we have 
 \begin{equation}\label{eqn 3.13}
  W(\rho,b\psi)=\mathrm{det}_\rho(b)\cdot W(\rho,\psi).
 \end{equation}
In the definition of $\lambda$-function $\rho=\mathrm{Ind}_{K/F} 1$, therefore by using equation (\ref{eqn 3.13}), we have
\begin{equation}\label{eqn 3.121}
 \lambda_{K/F}(b\psi)=W(\mathrm{Ind}_{K/F} 1, b\psi)=\Delta_{K/F}(b)W(\mathrm{Ind}_{K/F} 1,\psi)=\Delta_{K/F}(b)\lambda_{K/F}(\psi),
\end{equation}
where $\Delta_{K/F}=\mathrm{det}(\mathrm{Ind}_{K/F} (1))$ is a quadratic character (a sign function),
i.e., $\Delta_{K/F}(b)\in\{\pm 1\}$. 
 
\end{proof}

In the following lemma we compute an explicit formula for $\lambda_{K/F}(\psi_F)$, where $K/F$ is a quadratic unramified extension of 
$F$. In general for {\bf any quadratic extension} $K/F$,
we can write $\mathrm{Ind}_{K/F}1=1_F\oplus\omega_{K/F}$, where $\omega_{K/F}$ is a quadratic character of 
$F^\times$ associated to $K$ by class field theory and $1_F$ is the trivial character of $F^\times$.
Now by the definition of $\lambda$-function we have:
\begin{equation}\label{eqn 3.333}
 \lambda_{K/F}=W(\mathrm{Ind}_{K/F}1)=W(\omega_{K/F}).
\end{equation}
So, $\lambda_{K/F}$ is the local constant of the quadratic character $\omega_{K/F}$ corresponding to $K/F$. 
\begin{lem}\label{Lemma 3.13}
 Let $K$ be the quadratic unramified extension of $F/\bbQ_p$ and let $\psi_F$ be the canonical additive character of $F$ with 
 conductor $n(\psi_F)$. Then 
 \begin{equation}
  \lambda_{K/F}(\psi_F)=(-1)^{n(\psi_F)}.
 \end{equation}
\end{lem}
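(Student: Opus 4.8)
The plan is to reduce the computation of $\lambda_{K/F}(\psi_F)$ to the abelian local constant of the quadratic character $\omega_{K/F}$ and then exploit the fact that this character is unramified. As recorded in equation (\ref{eqn 3.333}), for the quadratic extension $K/F$ we have $\mathrm{Ind}_{K/F}1 = 1_F \oplus \omega_{K/F}$, so that
\begin{equation*}
\lambda_{K/F}(\psi_F) = W(\mathrm{Ind}_{K/F}1,\psi_F) = W(\omega_{K/F},\psi_F),
\end{equation*}
using $W(1_F,\psi_F)=1$ together with the additivity (\ref{eqn 2.2.3}) of the weakly extendible function $W$.

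First I would observe that, since $K/F$ is unramified, the ramification break is $t=-1$, so the conductor of the associated character is $a(\omega_{K/F})=t+1=0$; that is, $\omega_{K/F}$ is an unramified character of $F^\times$. I can therefore apply the formula (\ref{eqn 2.3.5}) for the local constant of an unramified character, namely $W(\chi,\psi_F,c)=\chi(c)$ with $\nu_F(c)=a(\chi)+n(\psi_F)$. Here $a(\omega_{K/F})=0$, so any $c\in F^\times$ with $\nu_F(c)=n(\psi_F)$ may be used, and
\begin{equation*}
\lambda_{K/F}(\psi_F)=\omega_{K/F}(c).
\end{equation*}

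The final step is to evaluate $\omega_{K/F}(c)$ explicitly. Being unramified, $\omega_{K/F}$ is trivial on $U_F$, hence depends only on $\nu_F(c)$; writing $c=\pi_F^{\,n(\psi_F)}u$ with $u\in U_F$ gives $\omega_{K/F}(c)=\omega_{K/F}(\pi_F)^{n(\psi_F)}$. The point to spell out is that the unramified quadratic character takes the value $\omega_{K/F}(\pi_F)=-1$ on a uniformizer: under the reciprocity map $\pi_F$ maps to a generator (the Frobenius) of $\mathrm{Gal}(K/F)\cong\bbZ/2\bbZ$, on which the nontrivial character $\omega_{K/F}$ takes the value $-1$. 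Combining these yields $\lambda_{K/F}(\psi_F)=(-1)^{n(\psi_F)}$, as claimed.

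I do not expect a serious obstacle here; the only thing requiring care is the identification $\omega_{K/F}(\pi_F)=-1$ for the unramified quadratic character, which is where class field theory for the unramified extension enters, together with the verification that the answer is independent of the choice of $c$ (equivalently, of the unit $u$), which is immediate since $\omega_{K/F}$ is unramified.
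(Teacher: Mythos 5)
Your proof is correct and follows essentially the same route as the paper: reduce to $W(\omega_{K/F},\psi_F)$ via $\mathrm{Ind}_{K/F}1=1_F\oplus\omega_{K/F}$, note that $\omega_{K/F}$ is unramified because the ramification break is $t=-1$, apply the formula $W(\chi,\psi_F,c)=\chi(c)$, and conclude with $\omega_{K/F}(\pi_F)=-1$ (the paper phrases this as $\pi_F\notin N_{K/F}(K^\times)$, which is the same class field theory fact you invoke via Frobenius). No gaps.
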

\begin{proof}
When $K/F$ is quadratic unramified extension, 
 it is easy to see that in equation (\ref{eqn 3.333})
 $\omega_{K/F}$ is an unramified character because here the ramification break $t$ is $-1$.
 Then from equation (\ref{eqn 2.3.5}) have:
 \begin{equation*}
  W(\omega_{K/F})=\omega_{K/F}(c).
 \end{equation*}
Here $\nu_F(c)=n(\psi_F)$. Therefore from equation (\ref{eqn 3.333}) we obtain:
\begin{equation}\label{eqn 3.14}
 \lambda_{K/F}=\omega_{K/F}(\pi_F)^{n(\psi_F)}.
\end{equation}
We also know that  $\pi_F\notin N_{K/F}(K^\times)$, and hence 
$\omega_{K/F}(\pi_F)=-1$. Therefore from equation (\ref{eqn 3.14}),
we have
\begin{equation}\label{eqn 0.4}
 \lambda_{K/F}=(-1)^{n(\psi_F)}.
\end{equation}

\end{proof}

For giving the generalized formula for $\lambda_{K/F}$, where $K/F$ is an even degree unramified extension, we need the following
lemma, and here we give an alternative proof by using Lemma \ref{Lemma 2.21}.

 \begin{lem}[\cite{AW}, p. 142, Corollary 3]\label{Lemma 3.4}
 Let $K/F$ be a finite extension and let $\mathcal{D}_{K/F}$ be the different of this extension $K/F$. Let $\psi$ be an additive
 character of $F$. Then
 \begin{equation}
  n(\psi\circ\mathrm{Tr}_{K/F})=e_{K/F}\cdot n(\psi)+ \nu_K(\mathcal{D}_{K/F}).
 \end{equation}
\end{lem}
\begin{proof}
Let the conductor of the character $\psi\circ\mathrm{Tr}_{K/F}$ be $m$. This means from the definition of conductor of additive character
 we have 
 \begin{center}
   $\psi\circ\mathrm{Tr}_{K/F}|_{P_{K}^{-m}}=1$ but $\psi\circ\mathrm{Tr}_{K/F}|_{P_{K}^{-m-1}}\neq 1$,\\
   i.e.,  $\psi(\mathrm{Tr}_{K/F}(P_{K}^{-m}))=1$ but $\psi(\mathrm{Tr}_{K/F}(P_{K}^{-m-1}))\neq 1$.
 \end{center}
This implies 
 \begin{center}
  $\mathrm{Tr}_{K/F}(P_{K}^{-m})\subseteq P_{F}^{-n(\psi)}$,
 \end{center}
 since $n(\psi)$ is the conductor of $\psi$.
Then by Lemma \ref{Lemma 2.21} we have 
\begin{equation}\label{eqn 3.20}
 P_{K}^{-m}\cdot\mathcal{D}_{K/F}\subseteq P_{F}^{-n(\psi)}\Leftrightarrow
 P_{K}^{-m+d_{K/F}}O_K\subseteq P_{F}^{-n(\psi)},
\end{equation}
since $\mathcal{D}_{K/F}=\pi_{K}^{d_{K/F}}O_K$. From the definition of ramification index we know that 
\begin{equation*}
 \pi_{K}^{e_{K/F}}O_K=\pi_{F}O_K.
\end{equation*}
 Therefore from the equation (\ref{eqn 3.20}) we obtain:
 \begin{equation*}
  P_{K}^{-m+d_{K/F}}O_K\subseteq P_{K}^{-n(\psi)e_{K/F}}O_K.
 \end{equation*}
This implies
\begin{equation}
 m\leq n(\psi)\cdot e_{K/F}+d_{K/F}=n(\psi)\cdot e_{K/F}+\nu_K(\mathcal{D}_{K/F}),
\end{equation}
since $d_{K/F}=\nu_{K}(\mathcal{D}_{K/F})$.

Now we have to prove that the equality $m=n(\psi)\cdot e_{K/F}+\nu_K(\mathcal{D}_{K/F})$. \\
Let $m=n(\psi)\cdot e_{K/F}+\nu_{K}(\mathcal{D}_{K/F})-r$, where $r\geq 0$. Then we have 
\begin{center}
 $\mathrm{Tr}_{K/F}(P_{K}^{-n(\psi)\cdot e_{K/F}-d_{K/F}+d_{K/F}+r})\subseteq P_{K}^{-n(\psi)\cdot e_{K/F}}$.
\end{center}
This implies $r\leq 0$. Therefore $r$ must be $r=0$, because by assumption $r\geq 0$. This proves that 
\begin{center}
 $m=n(\psi\circ\mathrm{Tr}_{K/F})=e_{K/F}\cdot n(\psi)+ \nu_K(\mathcal{D}_{K/F})$.
\end{center}

\end{proof}

\begin{rem}
 If $K/F$ is unramified, then $e_{K/F}=1$ and $\mathcal{D}_{K/F}=O_K$, hence $\nu_K(\mathcal{D}_{K/F})=\nu_K(O_K)=0$, therefore from the
 above Lemma \ref{Lemma 3.4} we have $n(\psi\circ\mathrm{Tr}_{K/F})=n(\psi)$. 
 Moreover, if $\psi_F=\psi_{\mathbb{Q}_p}\circ\mathrm{Tr}_{F/\mathbb{Q}_p}$, is the canonical
 additive character of $F$,  then $n(\psi_{\mathbb{Q}_p})=0$ 
 and therefore 
 $$n(\psi_F)=\nu_F(\mathcal{D}_{F/\mathbb{Q}_p})$$ 
 is the exponent of the absolute different.
\end{rem}

\begin{thm}\label{Theorem 3.6}
 Let $K/F$ be a finite unramified extension with even degree and let $\psi_F$ be the canonical additive character of $F$ 
 with conductor $n(\psi_F)$. Then
 \begin{equation}
  \lambda_{K/F}=(-1)^{n(\psi_F)}.
\end{equation}
\end{thm}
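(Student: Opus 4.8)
The plan is to expand $\mathrm{Ind}_{K/F}1$ as a sum of linear characters and thereby reduce $\lambda_{K/F}=W(\mathrm{Ind}_{K/F}1,\psi_F)$ to a product of abelian local constants. Since $K/F$ is unramified it is Galois with cyclic Galois group $G=\mathrm{Gal}(K/F)$ of even order $n:=[K:F]$, so $\mathrm{Ind}_{K/F}1=\mathrm{Ind}_{1}^{G}1=r_G$ is the regular representation and splits as $r_G=\bigoplus_{\chi\in\widehat{G}}\chi$. Because $W$ is multiplicative on direct sums, I would first write $\lambda_{K/F}=\prod_{\chi\in\widehat{G}}W(\chi,\psi_F)$.

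Next I would identify each $\chi\in\widehat{G}$ with an unramified character of $F^\times$. By class field theory every $\chi$ factors through $G\cong F^\times/N_{K/F}(K^\times)$, and since $K/F$ is unramified this quotient is generated by the image of $\pi_F$ with all such $\chi$ trivial on $U_F$, i.e.\ unramified. Hence by equation (\ref{eqn 2.3.5}), taking $\nu_F(c)=a(\chi)+n(\psi_F)=n(\psi_F)$, one has $W(\chi,\psi_F)=\chi(c)=\chi(\pi_F)^{n(\psi_F)}$. Pulling the exponent out of the product gives
\[
\lambda_{K/F}=\prod_{\chi\in\widehat{G}}\chi(\pi_F)^{n(\psi_F)}=\Big(\prod_{\chi\in\widehat{G}}\chi(\pi_F)\Big)^{n(\psi_F)}.
\]

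It then remains to show $\prod_{\chi\in\widehat{G}}\chi(\pi_F)=-1$. Here $\prod_{\chi}\chi=\det(r_G)=\Delta_{1}^{G}$, and since $G$ is cyclic of even order we have $\mathrm{rk}_2(G)=1$, so by equation (\ref{eqn 3.31}) (Miller's theorem \ref{Theorem Miller}) this product equals the unique quadratic character $\alpha$ of $G$. Under class field theory $\alpha$ cuts out the unique quadratic subextension $F_2/F$ inside $K/F$, which is unramified; as $\pi_F\notin N_{F_2/F}(F_2^\times)$ one gets $\alpha(\pi_F)=-1$, exactly as in the quadratic case of Lemma \ref{Lemma 3.13}. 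Equivalently, writing $\widehat{G}=\langle\chi_0\rangle$ with $\chi_0(\pi_F)=\zeta_n$ a primitive $n$-th root of unity, a direct computation gives $\prod_{\chi}\chi(\pi_F)=\zeta_n^{n(n-1)/2}=(-1)^{n-1}=-1$ for $n$ even. Substituting back yields $\lambda_{K/F}=(-1)^{n(\psi_F)}$.

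This argument is mostly a bookkeeping assembly of facts already available, so I do not expect a genuine obstacle; the one point that needs care is the sign computation $\prod_{\chi}\chi(\pi_F)=-1$, where the parity of $n$ enters essentially (for odd $n$ the product is $+1$, consistent with Theorem \ref{General Theorem for odd case}). An alternative route is induction along the tower $K\supset F_2\supset F$ using property (2) of $\lambda$-functions together with $\lambda_{F_2/F}=(-1)^{n(\psi_F)}$ from Lemma \ref{Lemma 3.13}, but this forces one to keep track of $n(\psi_{F_2})=n(\psi_F)$ (valid since $F_2/F$ is unramified, by the remark following Lemma \ref{Lemma 3.4}) and is less direct than the product formula above.
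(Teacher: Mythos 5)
Your primary argument is correct, but it is not the route the paper takes. The paper proves this by induction along a tower: since $K/F$ is unramified of even degree there is a unique intermediate field $K'$ with $[K:K']=2$, and then $\lambda_{K/F}=\lambda_{K/K'}\cdot(\lambda_{K'/F})^{2}=\lambda_{K/K'}$ because $\lambda_{K'/F}$ is a sign for unramified extensions (Lemma \ref{Corollary 3.3.7}); the quadratic case Lemma \ref{Lemma 3.13} together with $n(\psi_{K'})=n(\psi_F)$ (Lemma \ref{Lemma 3.4}) then finishes. That is exactly the "alternative route" you sketch in your last paragraph. Your main argument instead decomposes $r_G=\bigoplus_{\chi\in\widehat{G}}\chi$, identifies each $\chi$ as an unramified character of $F^\times$ trivial on $U_F$, gets $W(\chi,\psi_F)=\chi(\pi_F)^{n(\psi_F)}$ from equation (\ref{eqn 2.3.5}), and reduces to the sign $\prod_{\chi}\chi(\pi_F)=\Delta_{1}^{G}(\pi_F)=\alpha(\pi_F)=-1$ (or directly $\zeta_n^{n(n-1)/2}=(-1)^{n-1}=-1$ for $n$ even). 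This buys you an explicit value for every summand and makes the dependence on the parity of $n$ transparent — it recovers $\lambda_{K/F}=1$ for odd unramified extensions in the same breath — at the cost of invoking additivity of $W$ on direct sums and the class-field-theoretic identification $\widehat{G}\cong\widehat{F^\times/N_{K/F}(K^\times)}$, whereas the paper's tower argument needs only the inductive property of $\lambda$-functions and the already-established quadratic case. Both are sound; your sign computation $\zeta_n^{n(n-1)/2}=(-1)^{n-1}$ is the one step that genuinely needs the evenness of $n$, and you handle it correctly.
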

\begin{proof}
When $K/F$ is a quadratic unramified extension, by Lemma \ref{Lemma 3.13}, we have $\lambda_{K/F}=(-1)^{n(\psi_F)}$.
We also know that if $K/F$ is unramified of even degree then 
 we have precisely one subextension $K'/F$ in $K/F$ such that $[K:K']=2$. Then
 \begin{center}
  $\lambda_{K/F}=\lambda_{K/K'}\cdot(\lambda_{K'/F})^2=\lambda_{K/K'}=(-1)^{n(\psi_{K'})}=(-1)^{n(\psi_F)}$,
 \end{center}
because in the unramified case $\lambda$-function is always a sign (cf. Lemma \ref{Corollary 3.3.7}), and 
from Lemma \ref{Lemma 3.4}, $n(\psi_{K'})=n(\psi_F)$.
 
This completes the proof.

\end{proof}

In the following corollary, we show that the above Theorem \ref{Theorem 3.6} is true for any nontrivial arbitrary additive character. 
\begin{cor}\label{Corollary 3.7}
 Let $K/F$ be a finite unramified extension of even degree and let $\psi$ be any nontrivial additive character of $F$ 
 with conductor $n(\psi)$. Then
 \begin{equation}
  \lambda_{K/F}(\psi)=(-1)^{n(\psi)}.
\end{equation}
\end{cor}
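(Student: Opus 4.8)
The plan is to reduce the general additive character to the canonical one via the twisting rule of Lemma \ref{Lemma 3.2}. Fix the canonical additive character $\psi_F$. Since $\psi_F$ is nontrivial, every nontrivial additive character $\psi$ of $F$ can be written uniquely as $\psi = b\psi_F$ for a suitable $b \in F^\times$, where $(b\psi_F)(x) := \psi_F(bx)$. Applying Lemma \ref{Lemma 3.2} (equation (\ref{eqn 3.121})) with $\rho = \mathrm{Ind}_{K/F} 1$, I would obtain
$$\lambda_{K/F}(\psi) = \lambda_{K/F}(b\psi_F) = \Delta_{K/F}(b)\cdot \lambda_{K/F}(\psi_F),$$
and Theorem \ref{Theorem 3.6} already evaluates the second factor as $\lambda_{K/F}(\psi_F) = (-1)^{n(\psi_F)}$.

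The next step is to evaluate $\Delta_{K/F}(b)$ explicitly. Because $K/F$ is unramified of even degree, $G = \mathrm{Gal}(K/F)$ is cyclic of even order, so $\mathrm{rk}_2(G) = 1$ and by equation (\ref{eqn 3.31}) the character $\Delta_{K/F} = \det(\mathrm{Ind}_{K/F} 1)$ is the unique quadratic character of $G$; via class field theory this is exactly the unramified quadratic character $\omega$ of $F^\times$ cutting out the quadratic unramified subextension of $K/F$. Hence $\omega$ is trivial on $U_F$ and $\omega(\pi_F) = -1$ (as $\pi_F$ is not a norm from the unramified quadratic extension), which gives $\Delta_{K/F}(b) = (-1)^{\nu_F(b)}$.

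Finally I would combine the two computations with the conductor bookkeeping. The transformation rule for conductors recorded earlier gives $n(b\psi_F) = n(\psi_F) + \nu_F(b)$, hence $n(\psi) = n(\psi_F) + \nu_F(b)$, so
$$\lambda_{K/F}(\psi) = (-1)^{\nu_F(b)}\cdot (-1)^{n(\psi_F)} = (-1)^{n(\psi_F) + \nu_F(b)} = (-1)^{n(\psi)},$$
which is the desired formula.

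The only genuinely non-routine point is the identification of $\Delta_{K/F}$ as the unramified quadratic character, equivalently $\Delta_{K/F}(b) = (-1)^{\nu_F(b)}$: this is precisely what makes the twisting sign $\Delta_{K/F}(b)$ match the shift $n(\psi) - n(\psi_F) = \nu_F(b)$ in the conductor, so that the two contributions recombine into $(-1)^{n(\psi)}$. For a ramified extension $\Delta_{K/F}$ would itself be ramified and this clean cancellation would break down, which is exactly why the corollary is special to the unramified situation.
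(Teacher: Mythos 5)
Your proof is correct and follows essentially the same route as the paper: writing $\psi=b\psi_F$, applying the twisting rule $\lambda_{K/F}(b\psi_F)=\Delta_{K/F}(b)\lambda_{K/F}(\psi_F)$, identifying $\Delta_{K/F}$ as the unramified quadratic character (so $\Delta_{K/F}(b)=(-1)^{\nu_F(b)}$), and combining with $n(\psi)=n(\psi_F)+\nu_F(b)$ and Theorem \ref{Theorem 3.6}. The only cosmetic difference is that the paper cites Lemma \ref{Lemma 3.41} to see $\Delta_{K/F}\not\equiv 1$ where you invoke equation (\ref{eqn 3.31}) directly; the substance is identical.
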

\begin{proof}
 We know that any nontrivial additive character
 $\psi$ is of the form, for some unique $b\in F^\times$, $\psi(x):=b\psi_F(x)$, for all $x\in F$. 
 By the definition of conductor of additive character of $F$, we obtain:
 \begin{center}
  $n(\psi)=n(b\psi_F)=\nu_F(b)+n(\psi_F)$.
 \end{center}
 Now let $G=\rm{Gal}(K/F)$ be the Galois group of the extension $K/F$. Since $K/F$ is unramified, then $G$ is {\bf cyclic}.
 Let $S$ be a Sylow $2$-subgroup of $G$. Here $S$
is nontrivial cyclic because the degree of $K/F$ is even and $G$ is cyclic. 
Then from Lemma \ref{Lemma 3.41} we have $\Delta_{1}^{G}=\Delta_{K/F}\not\equiv 1$.
Therefore $\Delta_{K/F}(b)=(-1)^{\nu_F(b)}$ is the uniquely determined unramified quadratic character of $F^\times$.
Now from equation (\ref{eqn 3.121}) we have:
\begin{align*}
 \lambda_{K/F}(\psi)
 &=\lambda_{K/F}(b\cdot\psi_F)\\
 &=\Delta_{K/F}(b)\lambda_{K/F}(\psi_F)\\
 &=(-1)^{\nu_F(b)}\times(-1)^{n(\psi_F)},\quad \text{from Theorem $\ref{Theorem 3.6}$}\\
 &=(-1)^{\nu_F(b)+n(\psi_F)}\\
 &=(-1)^{n(\psi)}.
\end{align*}
Therefore
when $K/F$ is an unramified extension of even degree, we have
\begin{equation}
 \lambda_{K/F}(\psi)=(-1)^{n(\psi)},
\end{equation}
where $\psi$ is any nontrivial additive character of $F$.
 
\end{proof}

In the following theorem we give an
explicit formula of $\lambda_{K/F}$, when $K/F$ is an even degree Galois extension
with odd ramification index.

\begin{thm}\label{Theorem 3.8}
 Let $K$ be an even degree Galois extension of a non-archimedean local field $F$ of odd ramification index. 
 Let $\psi$ be a nontrivial additive character of $F$. Then 
 \begin{equation}
  \lambda_{K/F}(\psi)=(-1)^{n(\psi)}.
 \end{equation}

\end{thm}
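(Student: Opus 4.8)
The plan is to reduce the problem to the two cases already settled in this section --- odd-degree Galois extensions and even-degree unramified extensions --- by factoring $K/F$ through its maximal unramified subextension. Write $e=e_{K/F}$ and $f=f_{K/F}$, so that $[K:F]=ef$. Since $[K:F]$ is even while $e$ is odd by hypothesis, the residue degree $f$ must be even. Let $F_0/F$ be the maximal unramified subextension inside $K/F$; then $[F_0:F]=f$ is even and $K/F_0$ is totally ramified of degree $e$. Because $K/F$ is Galois, $F_0$ is the fixed field of the inertia subgroup $I=\mathrm{Gal}(K/F_0)\trianglelefteq G=\mathrm{Gal}(K/F)$, so both $K/F_0$ and $F_0/F$ are Galois; here $K/F_0$ is Galois of odd degree $e$, and $F_0/F$ is unramified of even degree $f$.

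First I would record the two ingredient values. Since $K/F_0$ is an odd-degree Galois extension, Theorem \ref{General Theorem for odd case} gives $\lambda_{K/F_0}=1$. Moreover, by Lemma \ref{Lemma 4.1} we have $\Delta_{K/F_0}\equiv 1$, so by the character-change formula of Lemma \ref{Lemma 3.2} the value $\lambda_{K/F_0}(\psi')$ is independent of the additive character $\psi'$; hence $\lambda_{K/F_0}(\psi_{F_0})=1$ for the character $\psi_{F_0}:=\psi\circ\mathrm{Tr}_{F_0/F}$ as well. On the other hand, $F_0/F$ is an even-degree unramified extension, so Corollary \ref{Corollary 3.7} yields
\begin{equation*}
 \lambda_{F_0/F}(\psi)=(-1)^{n(\psi)}.
\end{equation*}

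Next I would combine these through the tower formula for the $\lambda$-function. Applying part (2) of Lemma \ref{Lemma 4.1.1} to the chain of subgroups $\{1\}\subseteq I\subseteq G$ and translating into field language (so that inducing the trivial representation from $I$ up to $G$ produces $\mathrm{Ind}_{F_0/F}1$), together with the weak inductivity convention $\psi_{F_0}=\psi\circ\mathrm{Tr}_{F_0/F}$, gives
\begin{equation*}
 \lambda_{K/F}(\psi)=\lambda_{K/F_0}(\psi_{F_0})\cdot\left(\lambda_{F_0/F}(\psi)\right)^{[K:F_0]}
 =1\cdot\left((-1)^{n(\psi)}\right)^{e}=(-1)^{n(\psi)\,e}.
\end{equation*}
Since $e=e_{K/F}$ is odd, the parity of $n(\psi)\,e$ equals that of $n(\psi)$, so $(-1)^{n(\psi)\,e}=(-1)^{n(\psi)}$, which is the claimed identity.

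The only delicate point, and the step I would write most carefully, is the bookkeeping of additive characters in the tower formula: the group-theoretic identity of Lemma \ref{Lemma 4.1.1} is stated for a $\lambda$-function attached to a fixed character on the base, and one must verify that the factor $\lambda_{K/F_0}$ is genuinely computed with respect to $\psi_{F_0}=\psi\circ\mathrm{Tr}_{F_0/F}$. This is exactly the weak inductivity of local constants, and it is harmless here precisely because $K/F_0$ has odd degree, forcing $\Delta_{K/F_0}\equiv 1$ and hence $\lambda_{K/F_0}$ to be independent of the chosen character; the rest is routine. (Note also that I only used that $K/F_0$ is odd-degree Galois, not that $K/F$ itself is abelian, so no abelianness assumption is needed.)
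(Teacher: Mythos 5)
Your proof is correct and follows essentially the same route as the paper: factor $K/F$ through the maximal unramified subextension $F_0$, apply Theorem \ref{General Theorem for odd case} to the odd-degree Galois extension $K/F_0$ and Theorem \ref{Theorem 3.6} (resp.\ Corollary \ref{Corollary 3.7}) to the even-degree unramified extension $F_0/F$, and conclude via the tower formula $\lambda_{K/F}=\lambda_{K/F_0}\cdot(\lambda_{F_0/F})^{e_{K/F}}$ with $e_{K/F}$ odd. Your extra care about the additive-character bookkeeping in the inductive step is a welcome refinement but does not change the argument.
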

\begin{proof}
In general,  
any extension $K/F$ of local fields has a uniquely determined
maximal subextension  $F'/F$ in $K/F$ which is unramified. If the degree of $K/F$ is even, then certainly we have 
$e_{K/F}=[K:F']$ because $e_{K/F}=e_{F'/F}\cdot e_{K/F'}=e_{K/F'}$ and $K/F'$ is a totally ramified extension. 
By the given condition, here $K/F$ is an even degree Galois extension with odd ramification index $e_{K/F}$, 
hence $K/F'$ is an odd degree Galois extension.
Now from the properties of $\lambda$-function and Theorem \ref{General Theorem for odd case} we have 
\begin{center}
 $\lambda_{K/F}=\lambda_{K/F'}\cdot(\lambda_{F'/F})^{e_{K/F}}=(-1)^{e_{K/F}\cdot n(\psi_F)}=(-1)^{n(\psi_F)}$,
\end{center}
because $K/F'$ is an odd degree Galois extension and $F'/F$ is an unramified extension.

\end{proof}

 \subsection{\textbf{Computation of $\lambda_{K/F}$, where $K/F$ is a tamely ramified quadratic extension}}

The existence of a tamely ramified quadratic character (which is not unramified)
 of a local field $F$ implies $p\ne2$ for the residue characteristic. But then by Theorem \ref{Theorem 2.9}
 \begin{center}
  $F^\times/{F^\times}^2\cong V$
 \end{center}
is isomorphic to Klein's $4$-group. So we have only $3$ nontrivial quadratic characters in that case, corresponding to $3$
quadratic extensions $K/F$. One is unramified and other two are ramified. The unramified case 
is already settled (cf. Lemma \ref{Lemma 3.13}). Now we have to address the quadratic ramified characters. 
These two ramified quadratic characters determine two different 
quadratic ramified extensions of $F$.\\
In the ramified case we have $a(\chi)=1$ because it is tame,
and we take $\psi$ of conductor $-1$. Then we have $a(\chi)+n(\psi)=0$ and therefore in the definition of $W(\chi,\psi)$ 
(cf. equation (\ref{eqn 2.2})) we can take $c=1$. So we obtain:
\begin{equation}\label{eqn 3.33}
 W(\chi,\psi)=q_{F}^{-\frac{1}{2}}\sum_{x\in U_F/U_{F}^{1}}\chi^{-1}(x)\psi(x)=
 q_{F}^{-\frac{1}{2}}\sum_{x\in k_{F}^{\times}}\overline{\chi}^{-1}(x)\overline{\psi}(x),
\end{equation}
where $\overline{\chi}$ is the quadratic character of the residue field $k_{F}^{\times}$, and 
$\overline{{\psi}}$ is an additive character of $k_F$.
When $n(\psi)=-1$, we observe that both {\bf the ramified characters $\chi$ give the same $\overline{\chi}$, hence the same 
$W(\chi,\psi)$}, 
because one is different from other by a quadratic unramified character twist.
To compute an explicit formula for $\lambda_{K/F}(\psi_{-1})$, where $K/F$ is a tamely ramified quadratic extension and 
$\psi_{-1}$ is an additive character of $F$ with conductor $-1$, we need to use classical quadratic Gauss sums.


Now let $F$ be a non-archimedean local field. Let $\psi_{-1}$ be an additive character of $F$ of conductor $-1$, i.e.,
 $\psi_{-1}:F/P_F\to\bbC^\times$. Now restrict 
$\psi_{-1}$ to $O_F$, it will be one of the characters $a\cdot\psi_{q_F}$, for some $a\in k_{q_F}^{\times}$ and usually it will not be 
$\psi_{q_F}$ itself.
Therefore choosing $\psi_{-1}$ is very important and we have to choose $\psi_{-1}$ such a way that its restriction to $O_F$
is exactly $\psi_{q_F}$. Then we will be able to use the quadratic classical Gauss sum in our $\lambda$-function computation.
We also know that there exists an element $c\in F^\times$ such that 
\begin{equation}\label{eqn 3.34}
 \psi_{-1}=c\cdot\psi_F
\end{equation}
induces the canonical character $\psi_{q_F}$ on the residue field $k_F$.

Now question is: {\bf Finding proper $c\in F^\times$ for which $\psi_{-1}|_{O_F}=c\cdot\psi_F|_{O_F}=\psi_{q_F}$}, i.e., 
the canonical character of the residue field $k_F$.

From the definition of conductor of the additive character $\psi_{-1}$ of $F$, we obtain from the construction (\ref{eqn 3.34})
\begin{equation}\label{eqn 3.35}
 -1=\nu_F(c)+n(\psi_F)=\nu_F(c)+d_{F/\bbQ_p}.
\end{equation}
In the next two lemmas we choose the proper $c$ for our requirement.

\begin{lem}\label{Lemma 3.20}
 Let $F/\bbQ_p$ be a local field and let $\psi_{-1}$ be an additive character of $F$ of conductor $-1$. 
 Let $\psi_F$ be the canonical
 character of $F$. Let $c\in F^\times$ be any element such that $-1=\nu_F(c)+d_{F/\bbQ_p}$, and 
 \begin{equation}\label{eqn 3.36}
  \rm{Tr}_{F/F_0}(c)=\frac{1}{p},
 \end{equation}
where $F_0/\bbQ_p$ is the maximal unramified subextension in $F/\bbQ_p$. Then 
the restriction of $\psi_{-1}=c\cdot\psi_F$ to $O_F$ is the canonical character $\psi_{q_F}$ of the residue field $k_F$ of $F$.
\end{lem}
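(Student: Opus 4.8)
The plan is to exploit transitivity of the trace through the maximal unramified subextension $F_0$ and to reduce the computation of $\psi_{-1}|_{O_F}$ to the canonical character of $F_0$, where everything is explicit because $F_0/\bbQ_p$ is unramified. First I would record the two structural facts I need. Since $F_0/\bbQ_p$ is unramified, the tower formula for the different gives $\nu_F(\mathcal{D}_{F/\bbQ_p})=\nu_F(\mathcal{D}_{F/F_0})$, i.e. $d_{F/\bbQ_p}=d_{F/F_0}$; hence the hypothesis $-1=\nu_F(c)+d_{F/\bbQ_p}$ reads $\nu_F(c)=-1-d_{F/F_0}$. Moreover, by transitivity of the trace, $\psi_F=\psi_{F_0}\circ\mathrm{Tr}_{F/F_0}$, where $\psi_{F_0}:=\psi_{\bbQ_p}\circ\mathrm{Tr}_{F_0/\bbQ_p}$ is the canonical character of $F_0$, which has conductor $n(\psi_{F_0})=d_{F_0/\bbQ_p}=0$ and is therefore trivial on $O_{F_0}$.

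Next I would check that $\psi_{-1}=c\cdot\psi_F$ indeed has conductor $-1$: from Lemma \ref{Lemma 3.4} (or directly), $\psi_{-1}$ is trivial on $P_F^{a}$ precisely when $\nu_F(c)+a\geq -d_{F/F_0}$, i.e. when $a\geq 1$, so $\psi_{-1}$ is trivial on $P_F$ but not on $O_F$. In particular $\psi_{-1}|_{O_F}$ factors through $k_F=O_F/P_F$ and defines a character of $k_F$, which is exactly what we must identify.

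The core computation is then as follows. Fix $x\in O_F$ and, using that $F$ and $F_0$ share the residue field $k_F$, choose a lift $x_0\in O_{F_0}$ of the residue $\overline{x}$, so that $x=x_0+\pi$ with $\pi\in P_F$. By $F_0$-linearity of $\mathrm{Tr}_{F/F_0}$ together with the hypothesis $\mathrm{Tr}_{F/F_0}(c)=\tfrac1p$,
\[
\mathrm{Tr}_{F/F_0}(cx)=x_0\,\mathrm{Tr}_{F/F_0}(c)+\mathrm{Tr}_{F/F_0}(c\pi)=\tfrac{x_0}{p}+\mathrm{Tr}_{F/F_0}(c\pi).
\]
The key step — and the one where I expect the real content to lie — is to show that the error term $\mathrm{Tr}_{F/F_0}(c\pi)$ lies in $O_{F_0}$. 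For this I use that $\nu_F(c\pi)\geq\nu_F(c)+1=-d_{F/F_0}$, so $c\pi\in P_F^{-d_{F/F_0}}=\mathcal{D}_{F/F_0}^{-1}$; by the very definition of the (co)different (equivalently, by Lemma \ref{Lemma 2.21}), $\mathrm{Tr}_{F/F_0}(\mathcal{D}_{F/F_0}^{-1})\subseteq O_{F_0}$, whence $\mathrm{Tr}_{F/F_0}(c\pi)\in O_{F_0}$. Since $\psi_{F_0}$ is trivial on $O_{F_0}$, this term contributes nothing, leaving $\psi_{-1}(x)=\psi_{F_0}(x_0/p)$.

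Finally I would evaluate $\psi_{F_0}(x_0/p)$ explicitly. Because $p$ is a uniformizer of $F_0$ and $F_0/\bbQ_p$ is unramified, reduction modulo $p$ identifies $\mathrm{Tr}_{F_0/\bbQ_p}$ on $O_{F_0}$ with $\mathrm{Tr}_{k_F/k_p}$ on $k_F$, and $\psi_{\bbQ_p}(z/p)=\psi_p(\overline{z})$ for $z\in\bbZ_p$; hence $\psi_{F_0}(x_0/p)=\psi_p(\mathrm{Tr}_{k_F/k_p}(\overline{x_0}))=\psi_{q_F}(\overline{x})$, using $\overline{x_0}=\overline{x}$. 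This shows $\psi_{-1}|_{O_F}=\psi_{q_F}$, as claimed. The whole argument is essentially valuation bookkeeping; its one genuinely arithmetic input is the different-theoretic bound that pushes the correction term $\mathrm{Tr}_{F/F_0}(c\pi)$ into $O_{F_0}$, which is precisely where Serre's lemma enters.
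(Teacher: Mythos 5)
Your proof is correct and follows essentially the same route as the paper: reduce $x\in O_F$ to an $F_0$-rational lift, pull it out of $\mathrm{Tr}_{F/F_0}$ by linearity, use $\mathrm{Tr}_{F/F_0}(c)=\tfrac1p$, and then invoke the compatibility of trace with reduction for the unramified extension $F_0/\bbQ_p$. The only difference is cosmetic: the paper disposes of the dependence on the choice of lift by appealing to $n(\psi_{-1})=-1$, whereas you bound the correction term $\mathrm{Tr}_{F/F_0}(c\pi)$ directly via the inverse different — a slightly more explicit but equivalent justification.
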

\begin{proof}
 Since $F_0/\bbQ_p$ is the maximal unramified subextension in $F/\bbQ_p$, we have $\pi_{F_0}=p$, and the residue fields of 
 $F$ and $F_0$ are 
 isomorphic, i.e., $k_{F_0}\cong k_{F}$, because $F/F_0$ is totally ramified extension.
 Then every element of $O_F/P_F$ can be considered as an element of $O_{F_0}/P_{F_0}$.
 Moreover, since $F_0/\bbQ_p$ is the maximal unramified extension, then from Proposition 2 of \cite{AW} on p. 140,
 for $x\in O_{F_0}$ we have 
 $$\rho_{p}(\rm{Tr}_{F_0/\bbQ_p}(x))=\rm{Tr}_{k_{F_0}/k_{\bbQ_p}}(\rho_0(x)),$$
 where $\rho_0,\,\rho_p$ are the canonical homomorphisms of $O_{F_0}$ onto $k_{F_0}$, and of $O_{\bbQ_p}$ onto $k_{\bbQ_p}$,
 respectively. Then 
 for $x\in k_{F_0}$ we can write
 \begin{equation}\label{eqn 3.50}
  \rm{Tr}_{F_0/\bbQ_p}(x)=\rm{Tr}_{k_{F_0}/k_{\bbQ_p}}(x).
 \end{equation}
Furthermore, since $F/F_0$ is totally ramified, we have $k_{F}=k_{F_0}$, then the trace map 
for the tower of the residue fields $k_{F}/k_{F_0}/k_{\bbQ_p}$ is:
\begin{equation}\label{eqn 3.51}
 \rm{Tr}_{k_F/k_{\bbQ_p}}(x)=\rm{Tr}_{k_{F_0}/k_{\bbQ_p}}\circ\rm{Tr}_{k_F/k_{F_0}}(x)=\rm{Tr}_{k_{F_0}/k_{\bbQ_p}}(x),
\end{equation}
for all $x\in k_F$. Then from the equations (\ref{eqn 3.50}) and (\ref{eqn 3.51}) we obtain 
\begin{equation}\label{eqn 3.52}
 \rm{Tr}_{F_0/\bbQ_p}(x)=\rm{Tr}_{k_F/k_{\bbQ_p}}(x)
\end{equation}
 for all $x\in k_F$.

Since the conductor of $\psi_{-1}$ is $-1$, for $x\in O_F/P_F(=O_{F_0}/P_{F_0}$ because $F/F_0$ is totally ramified)  we have  
\begin{align*}
  \psi_{-1}(x)
  &=c\cdot\psi_F(x)\\
  &=\psi_{F}(cx)\\
  &=\psi_{\bbQ_p}(\rm{Tr}_{F/\bbQ_p}(cx))\\
  &=\psi_{\bbQ_p}(\rm{Tr}_{F_0/\bbQ_p}\circ\rm{Tr}_{F/F_0}(cx))\\
  &=\psi_{\bbQ_p}(\rm{Tr}_{F_0/\bbQ_p}(x\cdot\rm{Tr}_{F/F_0}(c)))\\
  &=\psi_{\bbQ_p}(\rm{Tr}_{F_0/\bbQ_p}(\frac{1}{p}x)), \quad\text{since $x\in O_F/P_F=O_{F_0}/P_{F_0}$ and $\rm{Tr}_{F/F_0}(c)=\frac{1}{p}$}\\
  &=\psi_{\bbQ_p}(\frac{1}{p}\rm{Tr}_{F_0/\bbQ_p}(x)), \quad\text{because $\frac{1}{p}\in\bbQ_p$}\\
 &=e^{\frac{2\pi i \rm{Tr}_{F_0/\bbQ_p}(x)}{p}},\quad\text{because $\psi_{\bbQ_p}(x)=e^{2\pi i x}$}\\
&=e^{\frac{2\pi i\rm{Tr}_{k_F/k_{\bbQ_p}}(x)}{p}},\quad \text{using equation $(\ref{eqn 3.52})$}\\
&=\psi_{q_F}(x).
 \end{align*}
This competes the lemma.

\end{proof}

The next step is to produce good elements $c$ more explicitly. By using Lemma \ref{Lemma 3.20}, 
in the next lemma we see more general choices of $c$.

\begin{lem}\label{Lemma 3.21}
 Let $F/\bbQ_p$ be a tamely ramified local field and let $\psi_{-1}$ be an additive character of $F$ 
 of conductor $-1$. Let $\psi_F$ be the canonical
 character of $F$. Let $F_0/\bbQ_p$ be the maximal unramified subextension in $F/\bbQ_p$.
 Let $c\in F^\times$ be any element such that $-1=\nu_F(c)+d_{F/\bbQ_p}$, then 
 \begin{center}
  $c'=\frac{c}{\rm{Tr}_{F/F_0}(pc)}$, 
 \end{center}
 fulfills conditions (\ref{eqn 3.35}), (\ref{eqn 3.36}), and hence $\psi_{-1}|_{O_F}=c'\cdot\psi_F|_{O_F}=\psi_{q_F}$.
\end{lem}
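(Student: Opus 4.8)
The plan is to verify that the explicit element $c'=c/\mathrm{Tr}_{F/F_0}(pc)$ satisfies the two hypotheses~(\ref{eqn 3.35}) and~(\ref{eqn 3.36}) of Lemma~\ref{Lemma 3.20} (with $c'$ in the role of $c$), and then to quote that lemma to conclude $\psi_{-1}|_{O_F}=c'\cdot\psi_F|_{O_F}=\psi_{q_F}$. Write $e:=e_{F/\bbQ_p}$ and let $F_0/\bbQ_p$ be the maximal unramified subextension, so that $F/F_0$ is totally tamely ramified of degree $e$ with $p\nmid e$. First I would pin down the relevant valuations. Since $F/\bbQ_p$ is tame and $F_0/\bbQ_p$ is unramified, the different of $F/\bbQ_p$ coincides with that of the totally tamely ramified extension $F/F_0$, whence $d_{F/\bbQ_p}=e-1$; together with the hypothesis $-1=\nu_F(c)+d_{F/\bbQ_p}$ this forces $\nu_F(c)=-e$. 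As $\nu_F(p)=e_{F/\bbQ_p}=e$, we obtain $\nu_F(pc)=0$, that is $pc\in U_F$.

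Condition~(\ref{eqn 3.36}) is then essentially a one-line computation. The denominator $\mathrm{Tr}_{F/F_0}(pc)$ lies in $F_0$, and $p\in\bbQ_p\subseteq F_0$, so by $F_0$-linearity of the trace one has $\mathrm{Tr}_{F/F_0}(c')=\mathrm{Tr}_{F/F_0}(c)/\mathrm{Tr}_{F/F_0}(pc)=\mathrm{Tr}_{F/F_0}(c)/\bigl(p\,\mathrm{Tr}_{F/F_0}(c)\bigr)=1/p$. The divisions are legitimate because, as established below, $\mathrm{Tr}_{F/F_0}(pc)$ is a unit and in particular nonzero.

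The substance of the argument, and the step I expect to be the main obstacle, is verifying condition~(\ref{eqn 3.35}), which amounts to showing $\nu_F(c')=\nu_F(c)=-e$, i.e. that $\mathrm{Tr}_{F/F_0}(pc)$ is a \emph{unit}. Here I would exploit the totally tamely ramified structure of $F/F_0$: one may write $F=F_0(\pi_F)$ with $\pi_F$ a root of an Eisenstein polynomial $X^e-\pi_{F_0}w$, $w\in U_{F_0}$, so that $1,\pi_F,\dots,\pi_F^{e-1}$ is an $O_{F_0}$-basis of $O_F$ and, by Newton's identities for $X^e-\pi_{F_0}w$, $\mathrm{Tr}_{F/F_0}(\pi_F^i)=0$ for $1\le i\le e-1$. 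Writing the unit $pc=\sum_{i=0}^{e-1}a_i\pi_F^i$ with $a_i\in O_{F_0}$, the trace collapses to $\mathrm{Tr}_{F/F_0}(pc)=e\,a_0$. Reducing modulo $P_F$ shows $a_0\in U_{F_0}$, since its image is the nonzero residue of the unit $pc$ in $k_F=k_{F_0}$, while $e\in U_{F_0}$ precisely because the extension is tame ($p\nmid e$); hence $\mathrm{Tr}_{F/F_0}(pc)$ is a unit. Consequently $\nu_F(c')=\nu_F(c)-\nu_F(\mathrm{Tr}_{F/F_0}(pc))=-e$, so $\nu_F(c')+d_{F/\bbQ_p}=-1$, which is~(\ref{eqn 3.35}).

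With both~(\ref{eqn 3.35}) and~(\ref{eqn 3.36}) checked for $c'$, Lemma~\ref{Lemma 3.20} applies verbatim and gives $\psi_{-1}|_{O_F}=c'\cdot\psi_F|_{O_F}=\psi_{q_F}$, completing the proof. The only place where tameness is genuinely needed is the unit computation of the previous paragraph: without $p\nmid e$, the factor $e$ could lower the valuation of the trace and thereby destroy~(\ref{eqn 3.35}).
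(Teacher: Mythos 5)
Your proof is correct and follows essentially the same route as the paper: reduce $\nu_F(c)=-e_{F/\bbQ_p}$ so that $pc\in U_F$, check condition (\ref{eqn 3.36}) by $F_0$-linearity of the trace, check condition (\ref{eqn 3.35}) by showing $\mathrm{Tr}_{F/F_0}(pc)$ is a unit, and then invoke Lemma \ref{Lemma 3.20}. The one place you go beyond the paper is the unit claim: the paper simply asserts $\mathrm{Tr}_{F/F_0}(u(c))\in U_{F_0}$, whereas you justify it via the Eisenstein basis $1,\pi_F,\dots,\pi_F^{e-1}$ and $\mathrm{Tr}_{F/F_0}(\pi_F^i)=0$ for $1\le i\le e-1$, which correctly isolates tameness ($p\nmid e$) as the essential hypothesis.
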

\begin{proof}
 By the given condition 
 we have $\nu_{F}(c)=-1-d_{F/\bbQ_p}=-1-(e_{F/\bbQ_p}-1)=-e_{F/\bbQ_p}$. Then we can write 
 $c=\pi_{F}^{-e_{F/\bbQ_p}}u(c)=p^{-1}u(c)$ for some $u(c)\in U_F$ because $F/\bbQ_p$ is tamely ramified, hence $p=\pi_{F}^{e_{F/\bbQ_p}}$. 
Then we can write 
 $$\rm{Tr}_{F/F_0}(pc)=p\cdot\rm{Tr}_{F/F_0}(c)=p\cdot p^{-1}u_0(c)=u_0(c)\in U_{F_0}\subset U_{F},$$ 
 where $u_0(c)=\rm{Tr}_{F/F_0}(u(c))$,
 hence $\nu_{F}(\rm{Tr}_{F/F_0}(pc))=0$. Then the valuation of $c'$ is:
 \begin{center}
  $\nu_F(c')=\nu_F(\frac{c}{\rm{Tr}_{F/F_0}(pc)})=\nu_F(c)-\nu_F(\rm{Tr}_{F/F_{0}}(pc))$\\
  $=\nu_F(c)-0=\nu_{F}(c)=-1-d_{F/\bbQ_p}$.
 \end{center}
 Since $\rm{Tr}_{F/F_0}(pc)=u_0(c)\in U_{F_0}$, we have 
 \begin{center}
  $\rm{Tr}_{F/F_0}(c')=\rm{Tr}_{F/F_0}(\frac{c}{\rm{Tr}_{F/F_0}(pc)})
  =\frac{1}{\rm{Tr}_{F/F_0}(pc)}\cdot\rm{Tr}_{F/F_0}(c)=\frac{1}{p\cdot\rm{Tr}_{F/F_0}(c)}\cdot\rm{Tr}_{F/F_0}(c)=\frac{1}{p}$.
 \end{center}

Thus we observe that here $c'\in F^\times$ satisfies equations (\ref{eqn 3.35}) and (\ref{eqn 3.36}). 
Therefore from Lemma \ref{Lemma 3.20} we can see that $\psi_{-1}|_{O_F}=c'\cdot\psi_{F}|_{O_F}$ is the canonical additive 
character of $k_F$.
\end{proof}

By Lemmas \ref{Lemma 3.20} and \ref{Lemma 3.21} we get many good (in the sense that $\psi_{-1}|_{O_F}=c\cdot\psi_{F}|_{O_F}=\psi_{q_F}$)
elements $c$ which we will use in our next theorem to calculate $\lambda_{K/F}$, where $K/F$ is a tamely ramified quadratic extension.

\begin{thm}\label{Theorem 3.21}
 Let $K$ be a tamely ramified quadratic extension of $F/\bbQ_p$ with $q_F=p^s$. Let $\psi_F$ be the canonical additive character of $F$.
 Let $c\in F^\times$ with $-1=\nu_F(c)+d_{F/\bbQ_p}$, and $c'=\frac{c}{\rm{Tr}_{F/F_0}(pc)}$, where $F_0/\bbQ_p$ is the maximal unramified
 extension in $F/\bbQ_p$. Let $\psi_{-1}$ be an additive character of $F$ with conductor $-1$, of the form $\psi_{-1}=c'\cdot\psi_F$.
 Then 
 \begin{equation*}
  \lambda_{K/F}(\psi_F)=\Delta_{K/F}(c')\cdot\lambda_{K/F}(\psi_{-1}),
 \end{equation*}
where 
 \begin{equation*}
 \lambda_{K/F}(\psi_{-1})=\begin{cases}
                                               (-1)^{s-1} & \text{if $p\equiv 1\pmod{4}$}\\
                                                  (-1)^{s-1}i^{s} & \text{if $p\equiv 3\pmod{4}$}.
                                            \end{cases}
\end{equation*}
If we take $c=\pi_{F}^{-1-d_{F/\bbQ_p}}$, where $\pi_F$ is a norm for $K/F$, then 
\begin{equation}
 \Delta_{K/F}(c')=\begin{cases}
                   1 & \text{if $\overline{\rm{Tr}_{F/F_0}(pc)}\in k_{F_0}^{\times}=k_{F}^{\times}$ is a square},\\
                   -1 & \text{if $\overline{\rm{Tr}_{F/F_0}(pc)}\in k_{F_0}^{\times}=k_{F}^{\times}$ is not a square}.
                  \end{cases}
\end{equation}
Here "overline" stands for modulo $P_{F_0}$.
\end{thm}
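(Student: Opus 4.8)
The plan is to assemble the statement from three independent pieces, each furnished by a result stated earlier in the excerpt.

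First I would establish the relation $\lambda_{K/F}(\psi_F)=\Delta_{K/F}(c')\cdot\lambda_{K/F}(\psi_{-1})$. By construction $\psi_{-1}=c'\cdot\psi_F$, equivalently $\psi_F=(c')^{-1}\cdot\psi_{-1}$. Applying Lemma \ref{Lemma 3.2} (the change-of-additive-character formula, equation (\ref{eqn 3.121})) with $b=(c')^{-1}$ gives
\[
\lambda_{K/F}(\psi_F)=\Delta_{K/F}((c')^{-1})\cdot\lambda_{K/F}(\psi_{-1}).
\]
Since $\Delta_{K/F}=\det(\mathrm{Ind}_{K/F}1)=\omega_{K/F}$ takes values in $\{\pm1\}$, we have $\Delta_{K/F}((c')^{-1})=\Delta_{K/F}(c')^{-1}=\Delta_{K/F}(c')$, and the claimed identity follows with no further work.

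Next I would compute $\lambda_{K/F}(\psi_{-1})$ by reducing it to a classical quadratic Gauss sum. By (\ref{eqn 3.333}) we have $\lambda_{K/F}=W(\omega_{K/F})$, and since $K/F$ is tamely ramified quadratic the conductor is $a(\omega_{K/F})=1$; together with $n(\psi_{-1})=-1$ this gives $a(\omega_{K/F})+n(\psi_{-1})=0$, so the modified formula (\ref{eqn 3.33}) applies with $c=1$:
\[
\lambda_{K/F}(\psi_{-1})=q_{F}^{-1/2}\sum_{x\in k_F^\times}\overline{\omega}^{-1}(x)\,\overline{\psi}(x).
\]
The decisive input is Lemma \ref{Lemma 3.21}: the choice $c'=c/\mathrm{Tr}_{F/F_0}(pc)$ forces $\psi_{-1}|_{O_F}$ to be exactly the canonical character $\psi_{q_F}$ of the residue field, so $\overline{\psi}=\psi_{q_F}$. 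As $\overline{\omega}$ is the quadratic character of the cyclic group $k_F^\times$ we have $\overline{\omega}^{-1}=\overline{\omega}$, so the sum is precisely $G(\overline{\omega},\psi_{q_F})$. Inserting the explicit evaluation of Theorem \ref{Theorem 3.5} with $q=q_F=p^s$ and canceling the factor $q_F^{1/2}$ against $q_F^{-1/2}$ yields the two stated values of $\lambda_{K/F}(\psi_{-1})$ according to $p\bmod 4$.

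Finally I would evaluate $\Delta_{K/F}(c')=\omega_{K/F}(c')$ for the explicit choice $c=\pi_{F}^{-1-d_{F/\bbQ_p}}$ with $\pi_F$ a norm. Because $\pi_F\in N_{K/F}(K^\times)$ we have $\omega_{K/F}(\pi_F)=1$, hence $\omega_{K/F}(c)=1$, and therefore $\omega_{K/F}(c')=\omega_{K/F}(\mathrm{Tr}_{F/F_0}(pc))^{-1}=\omega_{K/F}(\mathrm{Tr}_{F/F_0}(pc))$. From the proof of Lemma \ref{Lemma 3.21}, $\mathrm{Tr}_{F/F_0}(pc)$ is a unit lying in $U_{F_0}\subset U_F$; and since $\omega_{K/F}$ is tamely ramified of conductor $1$, its restriction to $U_F$ factors through $U_F/U_F^1=k_F^\times$ and there equals the quadratic residue character. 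Evaluating on $\mathrm{Tr}_{F/F_0}(pc)$, and using $k_{F_0}=k_F$ (as $F/F_0$ is totally ramified) to match reduction modulo $P_{F_0}$, gives the stated sign: $+1$ or $-1$ according as $\overline{\mathrm{Tr}_{F/F_0}(pc)}$ is a square in $k_F^\times$ or not. I expect the only genuine subtlety, and hence the main obstacle, to be the bookkeeping in the second step, namely verifying that the residual additive character in (\ref{eqn 3.33}) is the canonical $\psi_{q_F}$ of $k_F$ rather than some twist $a\cdot\psi_{q_F}$, since Theorem \ref{Theorem 3.5} applies verbatim only for $\psi_{q_F}$; this is exactly what Lemma \ref{Lemma 3.21} secures, so the hard analytic work is already isolated there and the remaining steps are formal manipulations of quadratic characters and the twisting formula.
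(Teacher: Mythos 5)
Your proposal is correct and follows essentially the same route as the paper's own proof: the change-of-character formula (\ref{eqn 3.121}) for the first identity, reduction of $\lambda_{K/F}(\psi_{-1})$ to the classical quadratic Gauss sum via Lemma \ref{Lemma 3.21} and Theorem \ref{Theorem 3.5}, and the evaluation of $\Delta_{K/F}(c')=\omega_{K/F}(\mathrm{Tr}_{F/F_0}(pc))$ using that $\pi_F$ is a norm and that $\omega_{K/F}|_{U_F}$ is the quadratic residue character of $k_F^\times$. You also correctly isolated the one genuine subtlety (matching the residual additive character with the canonical $\psi_{q_F}$), which is exactly what the paper delegates to Lemma \ref{Lemma 3.21}.
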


\begin{proof}
From equation (\ref{eqn 3.121}) we have 
$$\lambda_{K/F}(\psi_{-1})=\lambda_{K/F}(c'\psi_{F})=\Delta_{K/F}(c')\cdot \lambda_{K/F}(\psi_F).$$
Since $\Delta_{K/F}$ is quadratic, we can write $\Delta_{K/F}=\Delta_{K/F}^{-1}$. So we obtain
\begin{equation*}
 \lambda_{K/F}(\psi_F)=\Delta_{K/F}(c')\cdot\lambda_{K/F}(\psi_{-1}).
\end{equation*}
Now we have to compute $\lambda_{K/F}(\psi_{-1})$, and which we do in the following:\\
  Since $[K:F]=2$, we have $\rm{Ind}_{K/F}(1)=1_F\oplus\omega_{K/F}$.
 The conductor of $\omega_{K/F}$ is $1$ because $K/F$ is a tamely ramified quadratic extension, and hence $t=0$, so 
 $a(\omega_{K/F})=t+1=1$. Therefore we can consider 
$\omega_{K/F}$ as a character of $F^\times/U_{F}^{1}$. So the restriction of $\omega_{K/F}$ to $U_{F}$, 
$\rm{res}(\omega_{K/F}):=\omega_{K/F}|_{U_F}$, we may consider as the uniquely determined
character of $k_{F}^{\times}$ of order $2$.
Since $c'$ satisfies equations (\ref{eqn 3.35}), (\ref{eqn 3.36}), then from Lemma \ref{Lemma 3.21} we have 
$\psi_{-1}|_{O_F}=c'\cdot\psi_F|_{O_F}=\psi_{q_F}$, and this is the canonical
character of $k_F$. Then from equation (\ref{eqn 3.33}) we can write
\begin{align*}
 \lambda_{K/F}(\psi_{-1})
 &=q_{F}^{-\frac{1}{2}}\sum_{x\in k_{F}^{\times}}\rm{res}(\omega_{K/F})(x)\psi_{q_F}(x)\\
 &=q_{F}^{-\frac{1}{2}}\cdot G(\rm{res}(\omega_{K/F}),\psi_{q_F}).
\end{align*}
Moreover, by Theorem \ref{Theorem 3.5} we have 
\begin{equation}
 G(\rm{res}(\omega_{K/F}),\psi_{q_F})=\begin{cases}
                    (-1)^{s-1}q_{F}^{\frac{1}{2}} & \text{if $p\equiv 1\pmod{4}$}\\
                    (-1)^{s-1}i^{s}q_{F}^{\frac{1}{2}} & \text{if $p\equiv 3\pmod{4}$}.
                   \end{cases}
\end{equation}
By using the classical quadratic Gauss sum we obtain
\begin{equation}
 \lambda_{K/F}(\psi_{-1})=\begin{cases}
                    (-1)^{s-1} & \text{if $p\equiv 1\pmod{4}$}\\
                    (-1)^{s-1}i^{s} & \text{if $p\equiv 3\pmod{4}$}.
                   \end{cases}
\end{equation}

 We also can write 
$\Delta_{K/F}=\det(\rm{Ind}_{K/F}(1))=\det(1_F\oplus \omega_{K/F})=\omega_{K/F}.$
So we have 
 $$\Delta_{K/F}(\pi_F)=\omega_{K/F}(\pi_F)=1,$$
because $\pi_F\in N_{K/F}(K^\times)$.

Under the assumption of the Theorem \ref{Theorem 3.21} we have $\pi_F\in N_{K/F}(K^\times)$, $\Delta_{K/F}=\omega_{K/F}$ and 
$c'=\frac{c}{\rm{Tr}_{F/F_0}(pc)}$, where $c\in F^\times$ with $\nu_F(c)=-1-d_{F/\bbQ_p}$. Then we can write 
\begin{align*}
 \Delta_{K/F}(c')
 &=\omega_{K/F}(c')\\
 &=\omega_{K/F}\left(\frac{c}{\rm{Tr}_{F/F_0}(pc)}\right)\\
 &=\omega_{K/F}\left(\frac{\pi_{F}^{-e_{F/\bbQ_p}}u(c)}{u_0(c)}\right),
 \quad\text{where $c=\pi_{F}^{-e_{F/\bbQ_p}}u(c)$, $\rm{Tr}_{F/F_0}(pc)=u_0(c)\in U_{F_0}$}\\
 &=\omega_{K/F}(\pi_{F}^{-e_{F/\bbQ_p}})\omega_{K/F}(v),\quad\text{where $v=\frac{u(c)}{u_0(c)}\in U_F$}\\
 &=\omega_{K/F}(x)\\
 &=\begin{cases}
                  1 & \text{when $x$ is a square element in $k_{F}^{\times}$}\\
                  -1 & \text{when $x$ is not a square element in $k_{F}^{\times}$},
                 \end{cases}
\end{align*}
where $v=xy$, with $x=x(\omega_{K/F},c)\in U_{F}/U_{F}^{1}$, and $y\in U_{F}^{1}$.

In particular, if we choose $c$ such a way that $u(c)=1$, i.e., $c=\pi_{F}^{-1-d_{F/\bbQ_p}}$, then 
we have 
$\Delta_{K/F}(c')=\Delta_{K/F}(\rm{Tr}_{F/F_0}(pc)).$
Since $\rm{Tr}_{F/F_0}(pc)\in O_{F_0}$ is a unit and $\Delta_{K/F}=\omega_{K/F}$ induces the quadratic character of 
$k_{F}^{\times}=k_{F_0}^{\times}$, then for this particular choice of $c$ we obtain
\begin{equation*}
 \Delta_{K/F}(c')=\begin{cases}
                   1 & \text{if $\overline{\rm{Tr}_{F/F_0}(pc)}$ is a square in $k_{F_0}^{\times}$}\\
                   -1 & \text{if $\overline{\rm{Tr}_{F/F_0}(pc)}$ is not a square in $k_{F_0}^{\times}$}.\\
                  \end{cases}
\end{equation*}

\end{proof}

\textbf{Note:} When we are in the Case 1 of Corollary \ref{Lemma 3.10}, by using this above Theorem \ref{Theorem 3.21}
we can give explicit formula for $\lambda_{K/F}$, because here the quadratic extensions are tamely ramified (since $p\ne 2$).

\begin{rem}\label{Remark 3.4.11}
When $p\ne 2$, for any non-archimedean local field $F/\bbQ_p$ we know that 
the square class group $F^\times/{F^\times}^2\cong V$ Klein's $4$-group.
If $K$ is the abelian extension of $F$ with $N_{K/F}(K^\times)={F^\times}^2$, then  
from Lemma \ref{Lemma 4.6} we can see that $\lambda_{K/F}$ depends on the choice of the 
base field $F$.
Now we put $\lambda_{i}=\lambda_{L_i/F}$, where $L_i/F$ are quadratic extension of $F$, $i=1,2,3$.
Let $\psi$ be an additive character of $F$.
From a direct computation we can write
\begin{equation}
 \lambda_{K/F}(\psi)=W(\mathrm{Ind}_{K/F} 1,\psi)=W(\chi_1,\psi)W(\chi_2,\psi) W(\chi_3,\psi)=
 \lambda_{1}(\psi)\lambda_{2}(\psi)\lambda_{3}(\psi).
\end{equation}
On the other hand from Lemma \ref{Lemma 4.6} we see:
\begin{equation}
 \lambda_{K/F}(\psi)=-\lambda_{2}(\psi)^{2}=-\lambda_{3}(\psi)^{2}.
\end{equation}
Comparing these two expressions we obtain:
\begin{center}
 $\lambda_1(\psi)\lambda_3(\psi)=-\lambda_2(\psi)$,  and $\lambda_1(\psi)\lambda_2(\psi)=-\lambda_3(\psi)$.
\end{center}
Moreover, since $L_1/F$ is unramified, therefore from Lemma \ref{Lemma 3.13} we have $\lambda_1(\psi)=(-1)^{n(\psi)}$.
So we observe:
\begin{enumerate}
 \item[(a)] The three conditions: $n(\psi)$ odd, $\lambda_1(\psi)=-1$, and $\lambda_2(\psi)=\lambda_3(\psi)$ are equivalent.
 \item[(b)] In the same way, the conditions: $n(\psi)$ even, $\lambda_1(\psi)=1$ and $\lambda_2(\psi)=-\lambda_3(\psi)$ are equivalent.
\end{enumerate}
Let $\mu_4$ denote the group generated by a fourth root of unity. Then we have more equivalences:
\begin{enumerate}
 \item[(c)] $q_F\equiv 1\pmod{4}$, $\mu_4\subset F^\times$ and $\lambda_{2}(\psi)^{2}=\lambda_{3}(\psi)^{2}=1$ are three equivalent conditions.
 \item[(d)] $q_F\equiv 3\pmod{4}$, $\mu_4\not\subset F^\times$ and $\lambda_{2}(\psi)^{2}=\lambda_{3}(\psi)^{2}=-1$ are also three equivalent conditions.
\end{enumerate}
\textbf{This gives us four disjoint cases:} We can have $(a)$ and $(c)$ or $(a)$ and $(d)$ or $(b)$ and $(c)$ or 
$(b)$ and $(d)$.\\

We can put all these four disjoint cases into the following table:
\begin{center}
$\begin{array}{|l|l|l|l|l|l|}
 \underline{q_F} & \underline{n(\psi)} & \underline{\lambda_{K/F}} &\underline{\lambda_1} &\underline{\lambda_2} &\underline{\lambda_3}\\
q_F\equiv 1\pmod{4} & odd & -1 & -1 & \pm 1 & \pm 1\\
q_F\equiv 1\pmod{4} & even & -1 & 1 & 1 & -1\\
q_F\equiv 3\pmod{4} & odd & 1 & -1 & \pm i & \pm i\\
q_F\equiv 3\pmod{4} & even & 1 & 1 & +i & -i
\end{array}$
\end{center}
For the two cases where $\lambda_2,\;\lambda_3$ have different sign, and  we take (up to permutation) $\lambda_2$ with positive sign.

\end{rem}

\subsection{\textbf{Computation of $\lambda_{K/F}$, where $K/F$ is a wildly ramified quadratic extension}}

In the case $p=2$, the square class group of $F$, i.e., $F^\times/{F^\times}^2$ can be large, 
so we can have many quadratic characters but they are 
wildly ramified, not tame. 
Let $F=\bbQ_2$, then we have $\bbQ_{2}^{\times}/{\bbQ_{2}^{\times}}^2\cong\bbZ_2\times\bbZ_2\times\bbZ_2$. If $K/\bbQ_2$ is the 
abelian extension for which $N_{K/\bbQ_2}(K^\times)={\bbQ_{2}^{\times}}^2$, then we have the following lemma.
\begin{lem}\label{Lemma 3.25}
 Let $K$ be the finite abelian extension of $\bbQ_2$ for which $N_{K/\bbQ_2}(K^\times)={\bbQ_{2}^{\times}}^2$. 
 Then $\lambda_{K/\bbQ_2}=1$.
\end{lem}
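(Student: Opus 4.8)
The plan is to identify the Galois group $G=\mathrm{Gal}(K/\bbQ_2)$ explicitly and then recognize the situation as Case 4 of Theorem \ref{Theorem 4.3}. First I would invoke local class field theory: the condition $N_{K/\bbQ_2}(K^\times)={\bbQ_{2}^{\times}}^2$ says precisely that $K$ is the class field attached to the subgroup ${\bbQ_{2}^{\times}}^2\subset\bbQ_{2}^\times$, so that
$$G=\mathrm{Gal}(K/\bbQ_2)\cong\bbQ_{2}^\times/N_{K/\bbQ_2}(K^\times)=\bbQ_{2}^\times/{\bbQ_{2}^{\times}}^2.$$
By Theorem \ref{Theorem 2.9} with $s=\nu_{\bbQ_2}(2)=1$ and $q_{\bbQ_2}=2$ we get $|\bbQ_{2}^\times/{\bbQ_{2}^{\times}}^2|=4\cdot q_{\bbQ_2}^{\,1}=8$; since every square class has order dividing $2$, this group is elementary abelian, i.e. $G\cong(\bbZ/2)^3$.

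Next I would split the lambda function using equation (\ref{eqn 4.6}): $\lambda_{K/\bbQ_2}=\lambda_{1}^{G}=c_{1}^{G}\cdot W(\Delta_{1}^{G})$. Because $G$ is abelian, $\rm{Ind}_{1}^{G}1=r_G$ is the sum of all eight characters of $G$, so by the Miller-type computation in (\ref{eqn 3.31}) we have $\Delta_{1}^{G}=\det(r_G)=\prod_{\chi\in\widehat{G}}\chi=1$, since $\rm{rk}_2(G)=3\neq 1$. Consequently $W(\Delta_{1}^{G})=1$, and the problem reduces to showing that the Deligne constant $c_{1}^{G}=c(r_G)$ of the orthogonal representation $r_G$ equals $1$.

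The core of the argument is the vanishing of $c_{1}^{G}$. The (unique) Sylow $2$-subgroup of $G$ is $G$ itself, namely $(\bbZ/2)^3$, and this group is \emph{not} metacyclic: a metacyclic group admits a two-generator presentation, whereas the minimal number of generators of $(\bbZ/2)^3$ is $3$. Therefore Bruno Kahn's Theorem \ref{Theorem 4.2}(3) gives $s_2(r_G)=0\in H^2(G,\bbZ/2\bbZ)$, and Deligne's Theorem \ref{Theorem 4.1} then yields $c_{1}^{G}=cl(s_2(r_G))=1$. Combining the two factors, $\lambda_{K/\bbQ_2}=c_{1}^{G}\cdot W(\Delta_{1}^{G})=1$. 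Equivalently, this is exactly the configuration ``$S$ nontrivial and not metacyclic'' covered by Case 4 of Theorem \ref{Theorem 4.3}, applied to $G\cong(\bbZ/2)^3$.

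I do not anticipate a genuine obstacle: the only point needing care is the verification that $(\bbZ/2)^3$ is not metacyclic (its minimal generator count $3>2$), after which everything follows from the Kahn--Deligne machinery already assembled. It is worth noting that the choice of additive character is irrelevant here, since $c_{1}^{G}$ is independent of $\psi$ and $W(\Delta_{1}^{G})=1$ whatever $\psi$ is chosen, so no normalization of $\psi$ enters the computation.
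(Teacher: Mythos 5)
Your argument is correct and follows essentially the same route as the paper's own proof: identify $G\cong\bbQ_2^\times/{\bbQ_2^\times}^2\cong(\bbZ/2)^3$ by class field theory, deduce $\Delta_1^G=1$ from $\rm{rk}_2(G)=3\neq 1$, and get $c_1^G=1$ from the non-metacyclicity of $G$ via Kahn's Theorem \ref{Theorem 4.2} and Deligne's Theorem \ref{Theorem 4.1}. Your added justifications (the order count from Theorem \ref{Theorem 2.9} and the minimal-generator argument for non-metacyclicity) are slightly more explicit than the paper's but do not change the proof.
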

\begin{proof}
 Let $G=\rm{Gal}(K/\bbQ_2)$. We know that $\bbQ_2/{\bbQ_{2}^{\times}}^2\cong\bbZ_2\times\bbZ_2\times\bbZ_2$. Therefore from class 
 field theory $G\cong\bbZ_2\times\bbZ_2\times\bbZ_2$. So the $2$-rank of $G$ is $3$, i.e., $\rm{rk}_2(G)=3$, and hence from the equation
 \ref{eqn 3.31} we have $\Delta_{1}^{G}=1$. Moreover, it is easy to see that $G$ is not metacyclic, because $\bbZ_2\times\bbZ_2$
 is not cyclic. So from Theorems \ref{Theorem 4.2}, and \ref{Theorem 4.1} we have $c_{1}^{G}=1$. Then finally we obtain
 \begin{center}
  $\lambda_{K/\bbQ_2}=\lambda_{1}^{G}=c_{1}^{G}\cdot W(\Delta_{1}^{G})=1$.
 \end{center}
\end{proof}
 
 Moreover, from Theorem \ref{Theorem 2.11}, if $F/\bbQ_2$, we have $|F^\times/{F^\times}^2|=2^m,\,(m\ge 3)$, therefore more generally 
 we obtain the following result.
 
\begin{thm}\label{Theorem 3.26}
 Let $F$ be an extension of $\bbQ_2$. Let $K$ be the abelian extension for which $N_{K/F}(K^\times)={F^\times}^2$.
 Then $\lambda_{K/F}=1$.
\end{thm}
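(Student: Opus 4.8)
The plan is to mirror the proof of Lemma \ref{Lemma 3.25}, which handled the case $F=\bbQ_2$, but now in the general dyadic setting; the only new ingredient needed is the rank of the square-class group. First I would set $G=\rm{Gal}(K/F)$ and use class field theory to identify $G\cong F^\times/{F^\times}^2$ via the hypothesis $N_{K/F}(K^\times)={F^\times}^2$. Since every element of $F^\times/{F^\times}^2$ is annihilated by squaring, $G$ is an elementary abelian $2$-group, say $G\cong(\bbZ/2)^m$. The crucial point is to pin down $m$: by Theorem \ref{Theorem 2.9} and the computation immediately following it, for $F/\bbQ_2$ of degree $n=[F:\bbQ_2]\ge 1$ one has $|F^\times/{F^\times}^2|=2^{2+n}$, so $m=2+n\ge 3$.

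Once $m\ge 3$ is secured, the remaining two steps proceed exactly as in the base case. I would first compute $W(\Delta_1^G)$: because $G$ is elementary abelian of rank $m$, one has $\rm{rk}_2(G)=m\ge 3\ne 1$, so equation (\ref{eqn 3.31}) gives $\Delta_1^G=1$ and hence $W(\Delta_1^G)=1$. Next I would treat Deligne's constant $c_1^G$: the regular representation $\rm{Ind}_1^G 1=r_G$ is orthogonal, and $(\bbZ/2)^m$ with $m\ge 3$ is \emph{not} metacyclic (any cyclic subgroup has order at most $2$, which would force a quotient $(\bbZ/2)^{m-1}$ that is non-cyclic as soon as $m-1\ge 2$), so the $2$-Sylow subgroup $S=G$ is non-metacyclic. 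Theorem \ref{Theorem 4.2}(3) then yields $s_2(r_G)=0$, and Theorem \ref{Theorem 4.1} gives $c_1^G=1$.

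Combining these through equation (\ref{eqn 4.6}) I conclude $\lambda_{K/F}=\lambda_1^G=c_1^G\cdot W(\Delta_1^G)=1$. There is no serious obstacle: the single place requiring attention is the verification that $m\ge 3$ (equivalently, that the square-class group has rank at least $3$), which is precisely where the hypothesis $F/\bbQ_2$ enters through Theorem \ref{Theorem 2.9}. For residue characteristic $p\ne 2$ the square-class group has rank only $2$, the group $G$ would be Klein's $4$-group, and the present argument would collapse—consistent with the genuinely different tame computation in Lemma \ref{Lemma 4.6}. As a sanity check I would note that the lower bound $m=2+n\ge 3$ already holds at $n=1$, so the argument specializes correctly to recover Lemma \ref{Lemma 3.25}.
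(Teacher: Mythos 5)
Your proposal is correct and follows essentially the same route as the paper: the paper's proof also reduces to observing that $G\cong F^\times/{F^\times}^2$ has $\rm{rk}_2(G)\ne 1$ and is not metacyclic (the paper cites the Witt-ring statement, Theorem \ref{Theorem 2.11}, for the size $2^m$ with $m\ge 3$, while you extract the same bound directly from Theorem \ref{Theorem 2.9}), and then applies equation (\ref{eqn 3.31}) together with Theorems \ref{Theorem 4.2} and \ref{Theorem 4.1} to get $W(\Delta_1^G)=c_1^G=1$. Your write-up merely makes explicit the verifications (elementary abelian of rank $2+n\ge 3$, hence non-metacyclic) that the paper leaves implicit.
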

\begin{proof}
This proof is same as the above Lemma \ref{Lemma 3.25}.
Let $G=\rm{Gal}(K/F)$. From Theorem \ref{Theorem 2.11} we have $\rm{rk}_2(G)\ne 1$
 and $G$ is not metacyclic. Therefore by using Theorems \ref{Theorem 4.2}, and \ref{Theorem 4.1} we can conclude that 
 $\lambda_{K/F}=\lambda_{1}^{G}=1$.
\end{proof}

\begin{exm}[{\bf Computation of $\lambda_{L/\bbQ_2}$, where $L/\bbQ_2$ is a quadratic extension}]\label{Example wild}

 Let $F=\bbQ_2$. For the principal unit filtration we write $U^i:=U_{\bbQ_2}^{i}$. Then we have
\begin{center}
 $\bbQ_{2}^{\times}\supset U^0=U^1\supset U^2\supset U^3$,
 \end{center}
 and $U^3\subset {\bbQ_{2}^{\times}}^2$, 
 therefore we can write 
 $$\bbQ_2^{\times}\supset U^0{\bbQ_2^\times}^2=U^1{\bbQ_2^\times}^2\supset U^2{\bbQ_2^\times}^2\supset 
 U^3{\bbQ_2^\times}^2={\bbQ_2^\times}^2.$$
 Now take 
  modulo ${\bbQ_{2}^{\times}}^2$ we have \\
 $$\bbQ_{2}^{\times}/{\bbQ_{2}^{\times}}^2>U^1{\bbQ_2^\times}^2/{\bbQ_{2}^{\times}}^2>U^2{\bbQ_2^\times}^2/{\bbQ_{2}^{\times}}^2>\{1\},$$
 and the index is always $2$.
So we have 
\begin{center}
 $2-1=1$ character $\chi_1$ with $a(\chi_1)=0$, $\chi_1\ne\chi_0$, the trivial character,\\
 $2^2-2=2$ characters $\chi_2,\,\chi_3$ with $a(\chi_i)=2$, $i=2,3$,\\
 $2^3-2^2=4$ characters $\chi_4,\cdots,\chi_7$ with $a(\chi_i)=3$, $i=4,\cdots,7$.
\end{center}
The last case is the \textbf{exceptional case} (cf. p. 50) because $p=2,\,e=1$ gives $i=\frac{pe}{p-1}=2$. Here we will have odd 
conductor. We can simplify as follows:
\begin{center}
 $\chi_1,\,\chi_2,\,\chi_3=\chi_1\chi_2$,
 $\chi_4,\,\chi_5=\chi_1\chi_4,\,\chi_6=\chi_2\chi_4$, and $\chi_7=\chi_1\chi_2\chi_4$.
\end{center}
We denote
$G=\rm{Gal}(K/F)\cong \bbQ_{2}^{\times}/{\bbQ_{2}^{\times}}^2$. Since $G$ is abelian, then $G\cong\widehat{G}$, namely 
$\widehat{G}=\{1=\chi_0,\chi_1,\chi_2,\cdots,\chi_7\}$, where $\chi_{i}^{2}=1$, $i=1,2,\cdots,7$.
So we can write 
\begin{equation*}
 \rm{Ind}_{K/F}(1)=1\oplus\sum_{i=1}^{7}\chi_i.
\end{equation*}
Again from Lemma \ref{Lemma 3.25} we have $\lambda_{K/F}=1$.
Thus we can write 
\begin{align*}
 \lambda_{1}^{G}
 &=\lambda_{K/F}
 =W(\rm{Ind}_{K/F}(1))
 =\prod_{i=1}^{7}W(\chi_i)\\
 &=\prod_{i=1}^{7}\lambda_i=1,
\end{align*}
where $\lambda_{i}=\lambda_{K_i/F}$ and $K_i/F$ is the corresponding quadratic extension of character $\chi_i$ for  
$i=1,2,\cdots,7$. Moreover, there is an unramified quadratic extension of $\bbQ_2$, namely $K_1/\bbQ_2$ which corresponds $\chi_1$.
Then $\lambda_{1}=\lambda_{K_1/\bbQ_2}=(-1)^{n(\psi_{\bbQ_2})}=1$, because the conductor $n(\psi_{\bbQ_2})=0$. 
We also have 
\begin{center}
 $\lambda_{2}=W(\chi_2),\,\lambda_3=W(\chi_3), \cdots,\lambda_7=W(\chi_7)$.
\end{center}
Then we obtain
\begin{align*}
 \lambda_{K/\bbQ_2}(\psi_{\bbQ_2})
 &=\prod_{i=1}^{7}\lambda_i\\
 &=W(\chi_1)W(\chi_2)W(\chi_3)W(\chi_4)W(\chi_5)W(\chi_6)W(\chi_7)\\ 
 &=W(\chi_1)W(\chi_2)W(\chi_1\chi_2)W(\chi_4)W(\chi_1\chi_4)W(\chi_2\chi_4)W(\chi_1\chi_2\chi_4)\\
 &=(-1)^{n(\psi_{\bbQ_2})}\cdot W(\chi_2)\cdot\chi_1(p)^{2}W(\chi_2)\cdot W(\chi_4)\cdot\chi_1(p)^3W(\chi_4)\cdot W(\chi_2\chi_4)\cdot
 \chi_1(p)^3 W(\chi_2\chi_4)\\
 &=W(\chi_2)^2\cdot W(\chi_4)^2\cdot W(\chi_2\chi_4)^2\\
 &=1,
\end{align*}
since $n(\psi_{\bbQ_2})=0$ and $\lambda_{K/\bbQ_2}=1$.

Now we have to give explicit computation of $\lambda_i$, where $i=1,\cdots, 7$. For this particular example directly 
 we can give explicit
computation of $\lambda_i$ by using the modified formula (\ref{eqn 2.2}) of abelian local constant. 
Before going to our explicit computation we need to recall 
few facts. Suppose that $\chi$ is a multiplicative character of a non-archimedean local field $F/\bbQ_p$ 
of conductor $n$. Then we can write 
\begin{equation}
 W(\chi,\psi_F)=\chi(\pi_F^{n+n(\psi_F)})\cdot q_F^{-\frac{n}{2}}\cdot
 \sum_{x\in U_F/U_F^n}\chi^{-1}(x)\psi(\frac{x}{\pi_F^{n+n(\psi_F)}}),
\end{equation}
where $\pi_F$ is a uniformizer of $F$. By definition we have $\psi_F(x)=e^{2\pi i\rm{Tr}_{F/\bbQ_p}(x)}$, 
and any element $x\in U_F/U_F^n$
can be written as 
$$x=a_0+a_1\pi_F+a_2\pi_F^2+\cdots+a_{n-1}\pi_F^{n-1}, \quad\text{where $a_i\in k_F$ and $a_0\ne 0$}.$$
Then we can consider the following set
$$\{ a_0+a_1\pi_F+a_2\pi_F^2+\cdots+a_{n-1}\pi_F^{n-1}\;|\; \quad\text{where $a_i\in k_F$ and $a_0\ne 0$}\}$$
is a representative of $U_F/U_F^n$.

When $F=\bbQ_2$, we have $a(\chi_2)=a(\chi_3)=2$ and $a(\chi_i)=3, \;(i=4,\cdots, 7)$. Therefore we can write 
$$U_{\bbQ_2}/U_{\bbQ_2}^2=\{1, 1+2\}=\{1, 3\}, \qquad U_{\bbQ_2}/U_{\bbQ_2}^3=\{1, 1+2, 1+2^2, 1+2+2^2\}=\{1,3,5, 7\}.$$
We also know that any square element $x$ in $\bbQ_2^\times$ is of the form $x=4^m(1+8 n)$, where $m\in\bbZ$ and $n$ is a $2$-adic 
integer. This tells us $\pm2, -1$ and $\pm 5$ are not square in $\bbQ_2^\times$. Again, if we fix $\pi_F=2$ as a uniformizer, then 
we can write 
$$\bbQ_2^\times=<2>\times U_{\bbQ_2}^1=<2>\times <\eta>\times U_{\bbQ_2}^2=<2>\times<-1>\times U_{\bbQ_2}^2,$$
where $\eta^2=1$. Then we have the following list of seven quadratic extensions of $\bbQ_2$ (cf. \cite{JP3}, p. 18, Corollary
of Theorem 4 and \cite{GG}, pp. 83-84):
\begin{center}
 $\bbQ_2(\sqrt{5}), \bbQ_2(\sqrt{-1}),\bbQ_2(\sqrt{-5}), \bbQ_2(\sqrt{2}), \bbQ_2(\sqrt{-2}), \bbQ_2(\sqrt{10}), \bbQ_2(\sqrt{-10}).$
\end{center}
Now our next job is to see the norm groups of the above quadratic extensions of $\bbQ_2$. For any finite extension $K/F$, we denote 
$\cN_{K/F}:=N_{K/F}(K^\times)$, the norm group of the extension $K/F$. So we can write:
$$\cN_{\bbQ_2(\sqrt{5})/\bbQ_2}=<2^2>\times U_{\bbQ_2}=<2^2>\times<-1>\times U_{\bbQ_2}^2,$$
$$\cN_{\bbQ_2(\sqrt{-1})/\bbQ_2}=<2>\times U_{\bbQ_2}^2,$$
$$\cN_{\bbQ_2(\sqrt{-5})/\bbQ_2}=<-2>\times U_{\bbQ_2}^2,$$
$$\cN_{\bbQ_2(\sqrt{2})/\bbQ_2}=<2>\times<-1>\times U_{\bbQ_2}^3,$$
$$\cN_{\bbQ_2(\sqrt{-2})/\bbQ_2}=<2>\times U_{\bbQ_2}^3,$$
$$\cN_{\bbQ_2(\sqrt{10})/\bbQ_2}=<2\times 5>\times<-1>\times U_{\bbQ_2}^3,$$
$$\cN_{\bbQ_2(\sqrt{-10})/\bbQ_2}=<-2>\times U_{\bbQ_2}^3.$$
From the above norm groups, we can conclude that:
\begin{enumerate}
 \item the extension $\bbQ_2(\sqrt{5})$ is unramified, hence it corresponds the character $\chi_1$.
 \item the extensions $\bbQ_2(\sqrt{-1}), \bbQ_2(\sqrt{-5})$ are two wild quadratic extensions which
 correspond the characters $\chi_2, \chi_3$ respectively.
 \item and the extensions $\bbQ_2(\sqrt{2}), \bbQ_2(\sqrt{10}),\bbQ_2(\sqrt{-2})$, and $\bbQ_2(\sqrt{-10})$
 correspond the characters $\chi_4,\chi_5,\chi_6$ and $\chi_7$ respectively. 
\end{enumerate}
Now we have all necessary informations for giving explicit formula of $\lambda$-functions, and they are:
\begin{enumerate}
 \item $\lambda_{\bbQ_2(\sqrt{5})/\bbQ_2}=W(\chi_1,\psi_{\bbQ_2})=(-1)^{n(\psi_{\bbQ_2})}=1$.
 \item 
 \begin{align*}
  \lambda_{\bbQ_2(\sqrt{-1})/\bbQ_2}
&=W(\chi_2,\psi_{\bbQ_2})=\chi_2(2^2)\cdot\frac{1}{2}\cdot\sum_{x\in U_{\bbQ_2}/U_{\bbQ_2}^2}\chi_2(x)\psi_{\bbQ_2}(\frac{x}{4})\\
&=\frac{1}{2}\cdot \left(\psi_{\bbQ_2}(\frac{1}{4})+\chi_2(3)\cdot\psi_{\bbQ_2}(\frac{3}{4})\right)\\
&=\frac{1}{2}\cdot \left(e^{\frac{\pi i}{2}}-e^{\frac{3\pi i}{2}}\right),\quad\text{since $3\not\in\cN_{\bbQ_2(\sqrt{-1})/\bbQ_2}$
and $\psi_{\bbQ_2}(x)=e^{2\pi i x}$}\\
&=\frac{1}{2}\cdot (i+i)=i.
 \end{align*}
\item $\lambda_{\bbQ_2(\sqrt{-5})/\bbQ_2}=W(\chi_3,\psi_{\bbQ_2})=W(\chi_1\chi_2,\psi_{\bbQ_2})
=\chi_1(2^2)\cdot W(\chi_2,\psi_{\bbQ_2})=i$.
\item 
\begin{align*}
 \lambda_{\bbQ_2(\sqrt{2})/\bbQ_2}
 &=W(\chi_4,\psi_{\bbQ_2})=\chi_4(2^3)\cdot\frac{1}{2\sqrt{2}}\cdot\sum_{U_{\bbQ_2}/U_{\bbQ_2}^3}\chi_4(x)\psi_{\bbQ_2}(\frac{x}{8})\\
 &=\frac{1}{2\sqrt{2}}\cdot \left(\psi_{\bbQ_2}(\frac{1}{8})+\chi_4(3)\cdot\psi_{\bbQ_2}(\frac{3}{8})+
 \chi_4(5)\cdot \psi_{\bbQ_2}(\frac{5}{8})+\chi_4(7)\cdot\psi_{\bbQ_2}(\frac{7}{8})\right)\\
 &=\frac{1}{2\sqrt{2}}\left(e^{\frac{\pi i}{4}}-e^{\frac{3\pi i}{4}}-e^{\frac{5\pi i}{4}}+e^{\frac{7\pi i}{4}}\right)\\
 &=\frac{1}{2\sqrt{2}}\cdot ( 2\sqrt{2} + 0\cdot i)\\
 &=1,
\end{align*}
since $3,5\not\in\cN_{\bbQ_2(\sqrt{2})/\bbQ_2}$ but $7\in\cN_{\bbQ_2(\sqrt{2})/\bbQ_2}$.
\item $\lambda_{\bbQ_2(\sqrt{10})/\bbQ_2}=W(\chi_5,\psi_{\bbQ_2})=W(\chi_1\chi_4,\psi_{\bbQ_2})
=\chi_1(2^3)\cdot W(\chi_4,\psi_{\bbQ_2})=(-1)\cdot 1=-1$.
\item Again $2, 3\in \cN_{\bbQ_2(\sqrt{-2})/\bbQ_2}$ but $5,7\not\in\cN_{\bbQ_2(\sqrt{-2})/\bbQ_2}$, so similarly we can write
\begin{align*}
 \lambda_{\bbQ_2(\sqrt{-2})/\bbQ_2}
 &=W(\chi_6,\psi_{\bbQ_2})\\
 &=\chi_6(2^3)\cdot\frac{1}{2\sqrt{2}}\left(e^{\frac{\pi i}{4}}+e^{\frac{3\pi i}{4}}-e^{\frac{5\pi i}{4}}-e^{\frac{7\pi i}{4}}\right)\\
 &=\frac{1}{2\sqrt{2}}\cdot 2\sqrt{2}=i.
\end{align*}

\item $\lambda_{\bbQ_2(\sqrt{-10})/\bbQ_2}=W(\chi_7,\psi_{\bbQ_2})=W(\chi_1\chi_6,\psi_{\bbQ_2})=(-1)\cdot W(\chi_6,\psi_{\bbQ_2})=-i$.

\end{enumerate}

\end{exm}

\begin{rem}
Finally we observe that Theorem \ref{Theorem 4.3} and Corollary \ref{Lemma 3.10} are the general results on
$\lambda_{1}^{G}=\lambda_{E/F}$, where $E/F$ is a Galois extension with Galois group $G=\rm{Gal}(E/F)$. And {\bf the general
results leave open} the computation of $W(\alpha)$, where $\alpha$ is a quadratic character of $G$. For such a quadratic character we
can have three cases:
\begin{enumerate}
 \item unramified, this is the Theorem \ref{Theorem 3.6}, 
 \item tamely ramified, this is the Theorem \ref{Theorem 3.21}, 
 \item wildly ramified, its explicit computation is still open.
\end{enumerate}
We also observe from the above example \ref{Example wild} that giving explicit formula for $\lambda_{K/F}$,
where $K/F$ is a wildly ramified quadratic extension, is very subtle. In particular, when $F=\bbQ_2$, in the above 
Example (\ref{Example wild})  
we have the explicit computation of $\lambda_{K/\bbQ_2}$.

\end{rem}

\chapter{\textbf{Determinant of Heisenberg representations}}

In this chapter we give an invariant formula of determinant of a Heisenberg representation $\rho$ of a finite group 
$G$ modulo $\rm{Ker}(\rho)$. The group $G$ need not be a two-step nilpotent group, but under modulo $\rm{Ker}(\rho)$, $G$ is a two-step nilpotent group.
In this chapter firstly we compute transfer map for two-step nilpotent group. Then we compute $\det(\rho)$ modulo
$\rm{Ker}(\rho)$, because $G$ is always a two-step nilpotent group under modulo $\rm{Ker}(\rho)$.
This chapter is based on the article \cite{SAB3}.

\section{{\bf Explicit computation of the transfer map for two-step nilpotent group}}

Let $G$ be a finite group with $[G,[G,G]]=\{1\}$. Let $H$ be a normal subgroup of $G$,
with abelian quotient group $G/H$ of order $d$. If $d$ is odd,
then in the following lemma we compute $T_{G/H}(g)$ for all $g\in G$.

\begin{lem}\label{Lemma 2.2}
 Assume that $G$ is a finite group and $H$ a normal subgroup such that 
 \begin{enumerate}
  \item $H$ is abelian,
  \item $G/H$ is abelian of odd order $d$,
  \item $[G,[G,G]]=\{1\}$. 
 \end{enumerate}
Then we have $T_{G/H}(g)=g^{d}$ for all $g\in G$.\\
As a consequence one has $[G,G]^d=\{1\}$, in other words, $G^d$ is contained in the center of $G$.
\end{lem}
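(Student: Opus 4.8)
The plan is to compute $T_{G/H}(g)$ directly from the definition of the transfer map, using a carefully chosen transversal. Since $G/H$ is abelian of order $d$, I would pick a left transversal $\{t_1,\ldots,t_d\}$ for $H$ in $G$ consisting of coset representatives. Recall that for $g\in G$, the transfer is $T_{G/H}(g)=\prod_{i}t_{g(i)}^{-1}g t_i \,[H,H]$, where the permutation $i\mapsto g(i)$ is determined by $g t_i\in t_{g(i)}H$. Because $G/H$ is abelian, left multiplication by $\bar g=gH$ acts on $G/H$ by translation, and this translation decomposes the index set into orbits all of the same length $\ell=\operatorname{ord}(\bar g)$ in $G/H$, giving $d/\ell$ orbits. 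The standard transfer evaluation lemma then says that, choosing one orbit representative per cycle, the product collapses to $T_{G/H}(g)=\prod_{\text{orbit reps } r} t_r^{-1} g^{\ell} t_r \,[H,H]$, where each factor $t_r^{-1}g^{\ell}t_r$ lies in $H$ (since $\bar g^{\ell}=1$).

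**Exploiting two-step nilpotency to simplify conjugates.**

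The key simplification comes from hypothesis (3), $[G,[G,G]]=\{1\}$, i.e.\ $G$ is two-step nilpotent. First I would observe that $g^{\ell}\in H$ for each orbit, and I need to understand $t_r^{-1}g^{\ell}t_r$ modulo $[H,H]$. Writing $t_r^{-1}g^{\ell}t_r = g^{\ell}\cdot[g^{\ell},t_r]$, where $[g^{\ell},t_r]=g^{-\ell}t_r^{-1}g^{\ell}t_r$, I can use Lemma~\ref{Lemma 22}(1) to get $[g^{\ell},t_r]=[g,t_r]^{\ell}$. Since $[g,t_r]\in[G,G]$ and $[G,G]$ is central (two-step nilpotency), these commutators are central and in particular lie in $H$. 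Multiplying the factors over the $d/\ell$ orbit representatives and again collecting powers of $g$ via the central-commutator identities of Lemma~\ref{Lemma 22}, the product of the $g^{\ell}$-terms assembles to $g^{\ell\cdot(d/\ell)}=g^{d}$ up to a product of central commutators raised to various powers. The remaining commutator contributions should be shown to vanish modulo $[H,H]$; since $H$ is abelian, $[H,H]=\{1\}$, so I must show the accumulated central commutator factor is actually trivial, not merely trivial mod $[H,H]$.

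**The main obstacle: killing the commutator correction term.**

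The hard part will be verifying that the leftover commutator factor is exactly $1$, so that $T_{G/H}(g)=g^d$ on the nose. This is where oddness of $d$ is essential. The correction term, after applying Lemma~\ref{Lemma 22}(2) to gather $g^{\ell}$ across orbit representatives, takes the shape of a product of terms like $[g,t_r]^{\ell}$ and triangular-number powers $[g^{\ell},-]^{m(m-1)/2}$; all of these are central of some 2-power-free order dividing $d$, and I expect the total exponent to be divisible by $d$ times something, or else to be an even power of an order-$2$ element. Concretely, I would invoke Furtwängler-type control: by Theorem~\ref{Furtwangler's Theorem} the image $T_{G/H}(g)$ satisfies a torsion bound, and since $d$ is odd the only possible $\pm1$ ambiguity (order $\le 2$) is forced to be trivial. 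So the strategy is to reduce the correction to an element of order dividing $2$ and then use oddness of $d=[G:H]$ to conclude it is $1$. Finally, for the consequence, plugging $g\in[G,G]$ into $T_{G/H}(g)=g^d$ and combining with $[G,G]\subseteq H\cap\operatorname{Ker}(T_{G/H})$ (the transfer kills the commutator subgroup of $G$ into the abelian target) gives $[G,G]^d=\{1\}$, hence $G^d$ is central since $[G^d,G]\subseteq[G,G]^d=\{1\}$ by Lemma~\ref{Lemma 22}(1).
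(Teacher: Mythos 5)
Your overall architecture — orbit decomposition of the transfer, conjugates rewritten as $g^{\ell}$ times a central commutator via Lemma~\ref{Lemma 22}(1), and the final assembly $g^{\ell\cdot(d/\ell)}=g^{d}$ times a correction — is sound and runs parallel to the paper's proof (which splits $g=th$ and treats $t$ and $h$ separately, but computes the same correction term $[g^{\ell},\prod_r t_r]$). The genuine gap is at the step you yourself flag as the main obstacle: your proposed mechanism for killing the correction does not work as stated. Theorem~\ref{Furtwangler's Theorem} only bounds the order of $T_{G/H}(g)$ by $[H:[G,G]]$, which is of unknown parity and says nothing about the factor $T_{G/H}(g)g^{-d}$. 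And the argument ``the correction has order $\le 2$, and oddness of $d$ forces it to be trivial'' is circular at this point: to know the correction has \emph{odd} order you would need $[G,G]^{d}=\{1\}$, which is the consequence you only derive afterwards.

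The missing ingredient is Theorem~\ref{Theorem Miller} (Miller): in a finite abelian group of odd order the product of all elements is the identity. Your correction term is $\prod_r[g^{\ell},t_r]=[g^{\ell},\prod_r t_r]$ by bilinearity, and since $g^{\ell}\in H$ with $H$ abelian, this commutator depends on $\prod_r t_r$ only through its image in $(G/H)/\langle gH\rangle$ — an abelian group of odd order $d/\ell$ whose full element-product is trivial by Miller. Hence the correction is $[g^{\ell},g^{k}]=1$, exactly. (Alternatively you could first note that for $h\in[G,G]$ the correction $[h,\alpha]$ vanishes outright because $[G,G]$ is central, deduce $[G,G]^{d}=\{1\}$ from $[G,G]\subseteq\operatorname{Ker}(T_{G/H})$, and only then combine ``order dividing $2$'' with ``order dividing $d$'' for general $g$ — but that bootstrapping must be made explicit, and you have not done so.) With Miller inserted, your proof closes and is essentially the paper's argument in a slightly different packaging.
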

\begin{proof}
In general, we know that transfer map is independent of the choice of the left transversal for $H$ in $G$. 
So we take $T$ as a  
transversal\footnote{Since $H$ is normal, left cosets and right cosets are the same, so we can 
simply call transversal instead of specifying 
left or right transversal.} for $H$ in $G$. 
By the given condition $H$ is normal, we have $T\cong G/H$ and 
$G=TH$, where $TH=\{th\;|\; t\in T, h\in H\}$. This shows that every element $g\in G$ can uniquely be written as $g=th$, where 
$t\in T$ and $h\in H$.

 First assume $g=h\in H$. Then we have 
 \begin{center}
  $ht=t\cdot t^{-1}ht\in tH$,
 \end{center}
because $H$ is a normal subgroup of 
 $G$. Hence $s=t$, where $s=s(t)$ is a function of $t$ which is uniquely determined by $gt\in sH$, for some 
 $g\in G$. Therefore:
 \begin{align}
  T_{G/H}(h)\nonumber
  &=\prod_{t\in T}s^{-1}ht=\prod_{t\in T}t^{-1}ht=\prod_{t\in T}hh^{-1}t^{-1}ht=\prod_{t\in T}(h\cdot[h^{-1},t^{-1}])\\
  &=h^{d}\prod_{t\in T}[h^{-1},t^{-1}]=h^{d}[h^{-1},\prod_{t\in T}t^{-1}].\label{eqn 2.1}
 \end{align}
We have used the condition (3) in the last two equalities which means that commutators are in the center. Now we use that $G/H$ is of odd 
order, hence $x=1$ if $x\in G/H$ is an element such that $x=x^{-1}$, i.e., $G/H$ has no self-inverse element. Therefore from 
Theorem \ref{Theorem Miller}, we have 
$\prod_{t\in T}t^{-1}=\prod_{t\in T}t=1\in G/H$, hence $T_{G/H}(h)=h^d$. Proceeding with the proof of the Lemma we have now
\begin{equation}
 T_{G/H}(th)=T_{G/H}(t)\cdot T_{G/H}(h)=T_{G/H}(t)\cdot h^d.
\end{equation}
Moreover, from Lemma \ref{Lemma 22} we can write
\begin{center}
 $t^dh^d=(th)^{d}[t^{\frac{d(d-1)}{2}},h]=(th)^{d}[e,h]=(th)^d$,
\end{center}
since $[G,G]\subseteq Z(G)$ and $d$ is odd\footnote{Here $d$ divides $\frac{d(d-1)}{2}$ and the order of group $G/H$ is $d$.
So for any $t\in G/H$, $t^{\frac{d(d-1)}{2}}=e$, the identity in $G/H$.}.
So we are left to show that $T_{G/H}(t)=t^d$ for all $t\in T$.

Since $G/H$ is an abelian group of odd order, hence we may write
\begin{center}
 $G/H=C\times U$,
\end{center}
where $C$ is cyclic group of odd order $m|d$, and we assume $t\in T$ such that $tH$ is a generator of $C$. Then our transversal system
can be chosen as
\begin{center}
 $T=\{t^{i}u|i=0,1,\cdots,m-1, uH\in U\}$.
\end{center}
Now if $i\leq m-2$ we have $t\cdot t^{i}u=t^{i+1}\cdot u=s$, hence $s^{-1}\cdot t\cdot t^{i}u=1$. But $i=m-1$ we obtain
\begin{center}
 $t(t^{m-1}u)=t^{m}u\in uH$,\hspace{.5cm} $u^{-1}t(t^{m-1}u)=u^{-1}t^{m}u$,
\end{center}
hence 
\begin{align}
 T_{G/H}(t)\nonumber
 &=\prod_{u\in U}u^{-1}t^{m}u=\prod_{u\in U}t^{m}[t^{-m},u^{-1}]=t^{d}\prod_{u\in U}[t^{-m},u^{-1}]\\
 &=t^{d}[t^{-m},\prod_{u\in U}u^{-1}]=t^{d}[t^{-m},e]=t^{d}\label{eqn 2.21},
\end{align}
since $d$ is odd, then the order of $U$ is also odd and by Theorem \ref{Theorem Miller} we have 
$\prod_{u\in U}u^{-1}=\prod_{u\in U}u=e\in U$.

We also know that any $g\in G$ can uniquely be written as $g=th$, where $t\in T$ and $h\in H$. Then finally we obtain:
\begin{equation}
 T_{G/H}(g)=T_{G/H}(th)=T_{G/H}(t)\cdot T_{G/H}(h)= t^{d}\cdot h^{d}=(th)^{d}[t^{\frac{d(d-1)}{2}},h]=g^{d}.
\end{equation}

 Moreover, by our assumption (2), we have $G/H$ is an abelian group, therefore $[G,G]\subseteq H$, in particular, $T_{G/H}(h)=h^d$ for 
 $h\in [G,G]\subseteq H$. On the other hand, from Theorem \ref{Furtwangler's Theorem} 
 $T_{G/[G,G]}$ is trivial.
 So under the above Lemma's conditions we conclude $[G,G]^d=1$, in other words, $G^d$ is in the center
 because due to condition (3) the commutator is bilinear.
 
 \end{proof}
\begin{rem}
 From Lemma \ref{Lemma 2.2} we have $T_{G/H}(g)=g^d$ for all $g\in G$. This implies for $g_1,g_2\in G$
 \begin{center}
  $T_{G/H}(g_1g_2)=(g_1g_2)^d$, on the other hand, \\
   $T_{G/H}(g_1g_2)=T_{G/H}(g_1)\cdot T_{G/H}(g_2)=g_1^d\cdot g_2^d$,
 \end{center}
 because $T_{G/H}$ is a homomorphism.
Hence for all $g_1,\, g_2\in G$ we have
 $$(g_1 g_2)^d = g_1^d g_2^d.$$
This implies $G^d$ is actually a subgroup of $G$ not only a subset.
\end{rem}

By combining Lemma \ref{Lemma 2.2} and the elementary divisor theorem, we have the following result.

\begin{lem}\label{Lemma 2.10}
 Assume that $G$ is a finite group and $H$ a normal subgroup such that 
 \begin{enumerate}
  \item $H$ is abelian
  \item $G/H$ is abelian of order $d$, such that (according to the elementary divisor theorem):
  \begin{center}
   $G/H\cong\mathbb{Z}/m_1\times\cdots\times\mathbb{Z}/m_s$
  \end{center}
where $m_1|\cdots|m_s$ and $\prod_{i}m_i=d$. Moreover, we fix elements $t_1,t_2,\cdots,t_s\in G$ such that $t_iH\in G/H$ generates the 
cyclic factor $\cong\mathbb{Z}/m_i$, hence $t_{i}^{m_i}\in H$.
\item $[G,[G,G]]=\{1\}$. In particular, $[G,G]$ is in the center $Z(G)$ of $G$.
 \end{enumerate}
Then each $g\in G$ has a unique decomposition 
\begin{enumerate}
 \item[(i)] 
 \begin{align*}
  g=t_{1}^{a_1}\cdots t_{s}^{a_s}\cdot h, \hspace{.5cm} T_{G/H}(g)=\prod_{i}^{s}T_{G/H}(t_i)^{a_i}\cdot T_{G/H}(h),
 \end{align*}
where $0\leq a_i\leq m_i-1$, $h\in H$, and
\item[(ii)] 
\begin{align*}
 T_{G/H}(t_i)=t_{i}^{d}\cdot[t_{i}^{m_i},\alpha_i], \quad\quad T_{G/H}(h)=h^{d}\cdot[h,\alpha],
\end{align*}
where $\alpha_i\in G/H$ is the product over all elements from $C_i\subset G/H$, the subgroup which is complementary to the cyclic subgroup
$<t_i>$ mod $H$, and where $\alpha\in G/H$ is product over all elements from $G/H$.\\
Here we mean $[t_{i}^{m_i},\alpha_i]:=[t_{i}^{m_i},\widehat{\alpha_i}]$, $[h,\alpha]:=[h,\widehat{\alpha}]$ for any representatives 
$\widehat{\alpha_i},\widehat{\alpha}\in G$. The commutators
are independent of the choice of the representatives and are always elements of order $\leq 2$ because 
$\widehat{\alpha_i}^{2},\widehat{\alpha}^{2}\in H$, and $H$ is abelian. As a consequence of $(i)$ and $(ii)$ we always obtain
\item[(iii)]
\begin{align*}
 T_{G/H}(g)=g^{d}\cdot\varphi_{G/H}(g),
\end{align*}
where $\varphi_{G/H}(g)\in Z(G)$ is an element of order $\leq 2$.
\end{enumerate}
As a consequence of the second equality in $(ii)$ combined with $[G,G]\subseteq H\cap\mathrm{Ker}(T_{G/H})$, one has $[G,G]^d=\{1\}$,
in other words, $G^d$ is contained in the center $Z(G)$ of $G$. 
\end{lem}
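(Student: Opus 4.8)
The plan is to reduce the general abelian quotient case to the cyclic case handled in Lemma \ref{Lemma 2.2}, using the elementary divisor decomposition $G/H\cong\bbZ/m_1\times\cdots\times\bbZ/m_s$. First I would establish the uniqueness of the decomposition in $(i)$: since $\{t_1^{a_1}\cdots t_s^{a_s}\}$ (with $0\le a_i\le m_i-1$) is a transversal for $H$ in $G$, every $g\in G$ writes uniquely as $g=t_1^{a_1}\cdots t_s^{a_s}\cdot h$ with $h\in H$. Because $T_{G/H}$ is a homomorphism and the $t_iH$ commute in $G/H$, the product formula $T_{G/H}(g)=\prod_i T_{G/H}(t_i)^{a_i}\cdot T_{G/H}(h)$ follows immediately — but one must be slightly careful that $g=t_1^{a_1}\cdots t_s^{a_s}h$ really factors as a product on which $T_{G/H}$ distributes, which is automatic since $T_{G/H}$ is a group homomorphism into the abelian group $H/[H,H]$.

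The heart of the argument is part $(ii)$, computing $T_{G/H}(t_i)$ and $T_{G/H}(h)$. For $T_{G/H}(t_i)$, I would choose the transversal adapted to the splitting $G/H=\langle t_i\rangle\times C_i$, where $C_i$ is the complementary subgroup; then I repeat the computation of Lemma \ref{Lemma 2.2}, equation \eqref{eqn 2.21}, but now \emph{without} assuming $d$ odd. The cyclic factor $\langle t_iH\rangle$ of order $m_i$ contributes $t_i^{m_i}$ once per coset of $C_i$, giving $t_i^{m_i\cdot|C_i|}=t_i^d$, while the commutator collects a factor $[t_i^{-m_i},\prod_{u\in C_i}\widehat u]$. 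By Miller's Theorem \ref{Theorem Miller} the product $\prod_{u\in C_i}u$ over the abelian group $C_i$ is either the identity or the unique involution of $C_i$; lifting to $G$ and using $[G,[G,G]]=\{1\}$, this yields $T_{G/H}(t_i)=t_i^d\cdot[t_i^{m_i},\alpha_i]$. The computation of $T_{G/H}(h)=h^d\cdot[h,\alpha]$ for $h\in H$ proceeds exactly as in equation \eqref{eqn 2.1}, with $\alpha=\prod_{t\in G/H}t$ now possibly an involution rather than the identity. The key observation making the commutators harmless is that $\widehat{\alpha_i}^2,\widehat\alpha^2\in H$ (since $\alpha_i,\alpha$ have order $\le 2$ in $G/H$), so by bilinearity of the commutator and abelianness of $H$ each commutator $[t_i^{m_i},\alpha_i]$, $[h,\alpha]$ squares to $1$ and lies in $Z(G)$.

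For part $(iii)$ I would assemble $(i)$ and $(ii)$: writing $g=t_1^{a_1}\cdots t_s^{a_s}h$ and applying Lemma \ref{Lemma 22}$(2)$ repeatedly in the two-step nilpotent group $G$, the product $(t_1^{a_1})^d\cdots(t_s^{a_s})^d h^d$ equals $g^d$ up to a central commutator factor of order $\le 2$; collecting all the $[t_i^{m_i},\alpha_i]^{a_i}$ and $[h,\alpha]$ terms into a single central element $\varphi_{G/H}(g)$ of order $\le 2$ gives the stated formula $T_{G/H}(g)=g^d\cdot\varphi_{G/H}(g)$. Finally, the consequence $[G,G]^d=\{1\}$ follows by specializing to $h\in[G,G]\subseteq H$: by Furtw\"angler's Theorem \ref{Furtwangler's Theorem} the transfer $T_{G/[G,G]}$ is trivial, while $T_{G/H}(h)=h^d\cdot[h,\alpha]=h^d$ (the commutator vanishes since $[G,G]\subseteq Z(G)$), forcing $h^d=1$; bilinearity then shows $G^d\subseteq Z(G)$. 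I expect the main obstacle to be the careful bookkeeping of the involution contributions from Miller's theorem — verifying that every stray commutator genuinely has order $\le 2$ and is central, so that it can be absorbed into $\varphi_{G/H}$ without affecting the leading term $g^d$.
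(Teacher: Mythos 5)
Your proposal follows essentially the same route as the paper: the transversal adapted to the splitting $G/H=\langle t_iH\rangle\times C_i$ reusing the computations of Lemma \ref{Lemma 2.2}, Miller's theorem for $\alpha_i$ and $\alpha$, the observation that $\widehat{\alpha_i}^{2},\widehat{\alpha}^{2}\in H$ forces the correction commutators to be central of order $\leq 2$, and Lemma \ref{Lemma 22}(2) to assemble part $(iii)$. The only quibble is the final step: to get $h^{d}=1$ for $h\in[G,G]$ you need $T_{G/H}$ (not $T_{G/[G,G]}$) to vanish on $[G,G]$, which is not Furtw\"angler's theorem but the elementary fact that $T_{G/H}$ is a homomorphism into the abelian group $H/[H,H]$ — this is exactly the argument the paper uses.
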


\begin{proof}

By the given conditions, we have the abelian group $G/H$ of order $d$ with 
 \begin{center}
   $G/H\cong\mathbb{Z}/m_1\times\cdots\times\mathbb{Z}/m_s$
  \end{center}
where $m_1|\cdots|m_s$ and $\prod_{i}m_i=d$. 
Moreover, we fix elements $t_1,t_2,\cdots,t_s\in G$ such that $t_iH\in G/H$ generates the 
cyclic factor $\cong\mathbb{Z}/m_i$, hence $t_{i}^{m_i}\in H$. Therefore for a fixed $i\in\{1,2,\cdots,s\}$
 we can define a subgroup $C_i\subset G/H$ such that $C_i$ is complementary to the 
 cyclic subgroup $<t_i>$ of order $m_i$ mod $H$, i.e., $G/H=<t_iH>\times C_i$.
 
Then for a fixed $i\in\{1,2,\cdots,s\}$, we can choose a transversal system for $H$ in $G$ and which is:
 \begin{center}
  $T=\left\{t_{i}^{j}c|\quad 0\leq j\leq m_{i}-1, cH\in C_i\right\}$.
 \end{center}
Therefore from equation (\ref{eqn 2.21}) we can write
\begin{align}\label{eqn 281}
 T_{G/H}(t_i)=t_{i}^{d}\cdot[t_{i}^{-m_i},\prod_{c\in C_i}c]=t_{i}^{d}\cdot[t_{i}^{-m_i},\alpha_i],
\end{align}
where $\alpha_i=\prod_{c\in C_i}c$.

For $h\in H$, from equation (\ref{eqn 2.1})
we have 
\begin{equation}\label{eqn 291}
 T_{G/H}(h)=h^{d}\cdot[h^{-1},\alpha],
\end{equation}
where $\alpha=\prod_{t\in T}t$.

We also have $\widehat{\alpha}^2, \widehat{\alpha_i}^2\in H$,
and the commutator $[.,.]$ is bilinear by assumption (3),
  hence $1 = [h,\widehat{\alpha}^2] = [h, \widehat\alpha]^2$
and therefore
\begin{center}
 $[h,\widehat{\alpha}] = [h,\widehat\alpha] ^{-1} = [h^{-1}, \widehat\alpha]$.
\end{center}
Similarly, we have
\begin{center}
 $[t_i^{m_i},\widehat{\alpha_i}]= [t_{i}^{m_i},\widehat{\alpha_i}] ^{-1} = [t_{i}^{-m_i}, \widehat{\alpha_i}]$.
\end{center}
Thus we can rewrite the equations (\ref{eqn 281}) and (\ref{eqn 291}) as:
\begin{align}\label{eqn 28}
 T_{G/H}(t_i)=t_{i}^{d}\cdot[t_{i}^{-m_i},\prod_{c\in C_i}c]=t_{i}^{d}\cdot[t_{i}^{m_i},\alpha_i],
\end{align}
and 
\begin{equation}\label{eqn 29}
 T_{G/H}(h)=h^{d}\cdot[h,\alpha].
\end{equation}
Here $[t_{i}^{m_i},\alpha_i]:=[t_{i}^{m_i},\widehat{\alpha_i}]$ and $[h,\alpha]:=[h,\widehat{\alpha}]$ for any representatives
$\widehat{\alpha_i},\widehat{\alpha}\in G$.

We also know that every $g\in G$ can be uniquely written as $th$, where $t\in G/H$ and $h\in H$. Again, since 
$G/H$ is abelian, therefore by using elementary divisor decomposition of $G/H$, we can also uniquely express 
$t$ as $t=t_{1}^{a_1}t_{2}^{a_2}\cdots t_{s}^{a_s}$, where $0\leq a_i\leq m_i-1$. Thus each $g$ has a unique decomposition
\begin{center}
 $g=th=t_{1}^{a_1}t_{2}^{a_2}\cdots t_{s}^{a_s}\cdot h$.
\end{center}

Then we have
\begin{align*}
 T_{G/H}(g)
 &=T_{G/H}(th)=T_{G/H}(t)\cdot T_{G/H}(h)\\
 &=T_{G/H}(t_{1}^{a_1} t_{2}^{a_2}\cdots t_{s}^{a_s})\cdot T_{G/H}(h)\\
 &=\prod_{i=1}^{s} T_{G/H}(t_i)^{a_i}\cdot T_{G/H}(h).
\end{align*}
By the assumption (2), $G/H$ is an abelian group, hence $[G,G]\subseteq H$. And from equation 
(\ref{eqn 29}) we have $T_{G/H}(h)=h^d[h,\alpha]$. This implies  for 
 $[G,G]\subseteq \mathrm{Ker}(T_{G/H})$, hence $[G,G]\subseteq H\cap\mathrm{Ker}(T_{G/H})$.
 On the other hand in general $T_{G/H}:G\to H/[H,H]$ is a homomorphism with values in an abelian group,
 hence it is trivial on commutators. So under the assumptions we can say $[G,G]^d=\{1\}$, in other words, $G^d$ is in the center
 $Z(G)$ because due to assumption (3) the commutator is bilinear.
Let $Z_2$ be the set of all elements of $Z(G)$ of order $\le 2$. Since $G^d\subseteq Z(G)$, then by using Lemma \ref{Lemma 22}(2)
with $n=d$, we obtain
\begin{equation}\label{eqn 222}
 \prod_{i=1}^{s}t_{i}^{a_i\cdot d}\equiv (\prod_{i=1}^{s}t_{i}^{a_i})^d \pmod{Z_2}\equiv t^d \pmod{Z_2}
\end{equation}
because combining $G^d\subseteq Z(G)$ and Lemma \ref{Lemma 22}(2) we can write 
\begin{center}
 $x^dy^d=(xy)^d\cdot[x,y]^{\frac{d(d-1)}{2}}\equiv (xy)^d\pmod{Z_2}$ for all $x, y\in G$.
\end{center}

Moreover, since $\alpha_{i}^2=1$ and $\alpha^{2}=1$, therefore  we have $[t_{i}^{m_i},\alpha_i]^{a_i}\in Z_2$
for all $i\in\{1,2,\cdots,s\}$ and $[h,\alpha]\in Z_2$. Again by using Lemma \ref{Lemma 22}(2) we can write 
\begin{equation}\label{eqn 2.133}
 \prod_{i=1}^{s}t_{i}^{a_i\cdot d}\cdot [t_{i}^{m_i},\alpha_i]^{a_i}\cdot h^{d}[h,\alpha]
 \equiv g^d \pmod{Z_2}.
\end{equation}

Now by using equations (\ref{eqn 28}) and (\ref{eqn 29}), we obtain:
\begin{align*}
 T_{G/H}(g)
&=\prod_{i=1}^{s} T_{G/H}(t_i)^{a_i}\cdot T_{G/H}(h)\\
&=\prod_{i=1}^{s}t_{i}^{a_i\cdot d}\cdot [t_{i}^{m_i},\alpha_i]^{a_i}\cdot h^{d}[h,\alpha]\\
&\equiv g^d \pmod{Z_2}\quad \text{by equation $(\ref{eqn 2.133})$}\\
&=g^{d}\cdot\varphi_{G/H}(g),
\end{align*}
where $\varphi_{G/H}$ is a correcting function with values in $Z_2$.


\end{proof}

\begin{rem}[\textbf{Properties of the correcting function $\varphi_{G/H}$}]
{\bf (i)} The correcting function $\varphi_{G/H}$ is a function on $G/G^2[G,G]$ with values in $Z_2$.
\begin{proof}
 From Lemma \ref{Lemma 2.10} we have 
 \begin{equation}
  T_{G/H}(g)=g^d\varphi_{G/H}(g),
 \end{equation}
where $\varphi_{G/H}(g)$ is the correcting function. 

We have here $[G,[G,G]]=\{1\}$. This implies $[g,z]=1$ for all $g\in G$ and $z\in [G,G]$.
Since $[G,G]\subseteq\mathrm{Ker}(T_{G/H})$, then for all $x\in [G,G]$ we have 
\begin{center}
 $T_{G/H}(gx)=T_{G/H}(g)T_{G/H}(x)=T_{G/H}(g)$ \hspace{.4cm}for all $g\in G$.
\end{center}
Also here we have $[G,G]^d=\{1\}$, then by using Lemma \ref{Lemma 22}(2) for $x\in [G,G]$ we can write  
\begin{center}
 $(gx)^d=g^dx^d[g,x]^{-\frac{d(d-1)}{2}}=g^d$ \hspace{.4cm}for all $g\in G$.
\end{center}
From Lemma \ref{Lemma 2.10} we also have $T_{G/H}((gx)^d)=(gx)^d\varphi_{G/H}(gx)$. By comparing these above equations
for $x\in[G,G]$ we obtain
\begin{center}
 $\varphi_{G/H}(gx)=\varphi_{G/H}(g)$ \hspace{.4cm}for all $g\in G$.
\end{center}
Moreover, if $x\in G^2$, from Lemma \ref{Lemma 2.10} we have 
\begin{center}
 $T_{G/H}(x)=x^d\varphi_{G/H}(x)=x^d$ since $\varphi_{G/H}(x)=1\in Z_2$\\
 So $T_{G/H}(gx)=T_{G/H}(g)\cdot T_{G/H}(x)=T_{G/H}(g)\cdot x^d$ \hspace{.3cm}for all $g\in G$.
\end{center}
Again from Lemma \ref{Lemma 22}(2) we have for $x\in G^2$
\begin{center}
 $(gx)^d=g^dx^d[g,x]^{-\frac{d(d-1)}{2}}=g^dx^d$ \hspace{.3cm}for all $g\in G$.
\end{center}
By comparing we can see that $\varphi_{G/H}(gx)=\varphi_{G/H}(g)$ for all $g\in G$ and $x\in G^2$.

Thus we can conclude that the correcting function $\varphi_{G/H}$ is a function on $G/G^2[G,G]$ with values in $Z_2$.
\end{proof}
{\bf (ii)} $G^d\subset Z(G)$ if and only if $\rm{Im}(\varphi_{G/H})\subset Z(G)$.
 \begin{proof}
   From Lemma \ref{Lemma 2.10}(iii) we have $T_{G/H}(g)=g^d\cdot\varphi_{G/H}(g)$. From relation (\ref{relation 2.6}) we also know that
 $\rm{Im}(T_{G/H})\subseteq H^{G/H}\subseteq Z(G)$, hence $g^d\cdot\varphi_{G/H}(g)\in Z(G)$.
 Now if $G^d\subset Z(G)$, then $\varphi_{G/H}(g)\in Z(G)$ for all $g\in G$. Hence $\rm{Im}(\varphi_{G/H})\subset Z(G)$.\\
 Conversely, if $\rm{Im}(\varphi_{G/H})\subset Z(G)$, then from $g^d\varphi_{G/H}(g)\in Z(G)$ we can conclude that
 $G^d\subset Z(G)$.
 \end{proof}
 
 {\bf (iii)} When $d$ is odd (resp. even), $\varphi_{G/H}$ is a homomorphism (resp. not a homomorphism).
 \begin{proof}  
Since $T_{G/H}$ is a homomorphism we obtain the identity:
  \begin{equation}\label{eqn 213}
   (g_1g_2)^{d}\varphi_{G/H}(g_1g_2)=g_{1}^{d}g_{2}^{d}\varphi_{G/H}(g_1)\varphi_{G/H}(g_2).
  \end{equation}
This implies
\begin{equation}\label{eqn 214}
 \frac{\varphi_{G/H}(g_1g_2)}{\varphi_{G/H}(g_1)\varphi_{G/H}(g_2)}=\frac{g_{1}^{d}g_{2}^{d}}{(g_1g_2)^{d}}=
 \frac{(g_1g_2)^{d}[g_1,g_2]^{\frac{d(d-1)}{2}}}{(g_1g_2)^{d}}=[g_1,g_2]^{\frac{d(d-1)}{2}}.
\end{equation}
We also have here $[G,G]^d=1$.
When $d$ is odd, then $d$ divides $\frac{d(d-1)}{2}$, hence the right side of equation (\ref{eqn 214}) is equal to $1$.
Thus when $d$ is odd, $\varphi$ is a homomorphism, and exactly $\varphi\equiv 1$. This follows from Lemma \ref{Lemma 2.2}.\\
But when $d$ is even $d$ does not divide $\frac{d(d-1)}{2}$, hence the right side of equation (\ref{eqn 214}) is not equal to 
$1$.
This shows that $\varphi_{G/H}$ is {\bf not} a homomorphism when $d$ is even.
 \end{proof}
 {\bf (iv)} If $H'\subset G$ is another normal subgroup such that $H'$ is abelian and $G/H'$ is abelian of order $d$, then $\varphi_{G/H'}$
is again a function on $G/G^2[G,G]$ with values in $Z_2$ which satisfies the same identity (\ref{eqn 214}), hence we will have 
\begin{center}
 $\varphi_{G/H'}=\varphi_{G/H}\cdot f_{H,H'}$
\end{center}
for some homomorphism $f_{H,H'}\in\mathrm{Hom}(G/G^2[G,G], Z_2)$.

\end{rem}

\section{\textbf{Invariant formula of determinant for Heisenberg representations}}

In general, for the Heisenberg setting $G$ need not be two-step nilpotent group. But $\overline{G}=G/\rm{Ker}(\rho)$
is always  a two-step nilpotent group, where $\rho$ is a Heisenberg representation of $G$.
The Lemmas \ref{Lemma 2.2} and \ref{Lemma 2.10} hold for two-step nilpotent groups. Therefore to use them in our 
Heisenberg setting, we have to do our computation under {\bf modulo $\rm{Ker}(\rho)$}. 
{\bf Our determinant computation is under modulo $\rm{Ker}(\rho)$}. And we drop {\bf modulo $\rm{Ker}(\rho)$}
from our remaining part of this chapter.

Before going to our next proposition we need this following result.
\begin{prop}\label{Proposition 212}
 Let $G$ be an abelian group of $\mathrm{rk}_2(G)=n$. Then $G$ has $2^n-1$ nontrivial elements of order $2$. 
\end{prop}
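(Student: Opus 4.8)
The plan is to reduce everything to the structure of $G$ given by the elementary divisor theorem and then count elements of order dividing $2$ factor by factor. First I would invoke Theorem \ref{Theorem 22.4} to write
\begin{equation*}
 G\cong\bbZ_{m_1}\times\bbZ_{m_2}\times\cdots\times\bbZ_{m_s},
\end{equation*}
where $m_1\mid m_2\mid\cdots\mid m_s$, so that by definition $\mathrm{rk}_2(G)=n$ is precisely the number of indices $i$ for which $m_i$ is even.

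Next I would analyze the set $G[2]=\{x\in G:\ 2x=0\}$ of elements of order at most $2$ on each cyclic factor. The key local computation is that a cyclic group $\bbZ_m$ contains an element of order $2$ if and only if $2\mid m$, and in that case exactly one such element (namely the class of $m/2$), since a cyclic group has a unique subgroup of each order dividing its order. Hence
\begin{equation*}
 |\bbZ_m[2]|=\begin{cases} 2 & \text{if $m$ is even},\\ 1 & \text{if $m$ is odd}.\end{cases}
\end{equation*}
Applying the product formula (\ref{eqn 444}) to the decomposition above then gives $|G[2]|=\prod_{i=1}^{s}|\bbZ_{m_i}[2]|=2^{n}$, because exactly $n$ of the factors $m_i$ are even and contribute a factor $2$, while the odd ones contribute $1$.

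Finally, since $G[2]$ consists of the identity together with all elements of order exactly $2$, the number of nontrivial elements of order $2$ is $|G[2]|-1=2^{n}-1$, which is the claim. This argument is essentially mechanical; there is no serious obstacle. The only point requiring the tiniest bit of care is the distinction between ``order dividing $2$'' and ``order exactly $2$'', which is handled by subtracting the identity at the end, and the verification that the unique order-$2$ element of $\bbZ_m$ exists exactly when $m$ is even, which matches the definition of $\mathrm{rk}_2$ used throughout.
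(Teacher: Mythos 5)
Your proof is correct and follows essentially the same route as the paper's: decompose $G$ via the elementary divisor theorem, observe that each cyclic factor $\bbZ_{m_i}$ contributes a factor of $2$ to $|G[2]|$ exactly when $m_i$ is even, apply the product formula (\ref{eqn 444}) to get $|G[2]|=2^{n}$, and subtract the identity. No gaps.
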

\begin{proof}
 We know that $(\bbZ_n,+)$ is a cyclic group, where $n\in \bbN$. If $n$ is odd, $\bbZ_n$ does not have any nontrivial 
 element of order $2$.
 But when $n$ is even, it is clear that $\frac{n}{2}\in \bbZ_n$ is the only one nontrivial element of order $2$. So this tells us when 
 $n$ is even, $\bbZ_n$ has a unique element of order $2$. 
 
 Any given abelian group $G$ of $\mathrm{rk}_2(G)=n$ can be written as 
 \begin{equation*}
 G\cong\bbZ_{m_1}\times\bbZ_{m_2}\times\cdots\times\bbZ_{m_s}
\end{equation*}
 where $m_1|m_2|\cdots|m_s$ and $m_{s-n+1},m_{s-n+2},\cdots,m_s$ are $n$ even, and rest of the $m_i$-s are odd.
Therefore from equation (\ref{eqn 444}) we conclude that  
\begin{center}
 $|G[2]|=|\bbZ_{m_1}\times\cdots\times\bbZ_{m_s}[2]|=\prod_{i=1}^{n}|\bbZ_{m_{s-n+i}}[2]|=
 \substack{2\times\cdots\times 2\\\text{$n$-times}}=2^n$.
\end{center}
Hence we can conclude that when $G$ is abelian with $\mathrm{rk}_{2}(G)=n$, it has $2^n-1$ nontrivial elements of order $2$.
 
\end{proof}

\begin{prop}\label{Proposition 2.13}
 Let $\rho=(Z,\chi_\rho)$ be a Heisenberg representation of $G$, of dimension $d$, and put $X_\rho(g_1,g_2):=\chi_\rho\circ [g_1,g_2]$.
 Then we obtain
 \begin{equation}\label{eqn 2.16}
  (\mathrm{det}(\rho))(g)=\varepsilon(g)\cdot\chi_\rho(g^d),
 \end{equation}
where $\varepsilon$ is a function on $G$ with the following properties:
\begin{enumerate}
 \item $\varepsilon$ has values in $\{\pm 1\}$.
 \item $\varepsilon(gx)=\varepsilon(g)$ for all $x\in G^2\cdot Z$, hence $\varepsilon$ is a function on the factor group
 $G/G^2\cdot Z$, and in particular, $\varepsilon\equiv 1$ if $[G:Z]=d^2$ is odd.
 \item If $d$ is even, then the function $\varepsilon$ need not be a homomorphism but:
 \begin{center}
  $\frac{\varepsilon(g_1)\varepsilon(g_2)}{\varepsilon(g_1g_2)}=X_\rho(g_1,g_2)^{\frac{d(d-1)}{2}}=X_\rho(g_1,g_2)^{\frac{d}{2}}$.
 \end{center}
 Furthermore,
 \begin{enumerate}
  \item \textbf{When $\mathrm{rk}_2(G/Z)\ge 4$:} $\varepsilon$ is a homomorphism, and exactly $\varepsilon\equiv 1$.
  \item \textbf{When $\mathrm{rk}_2(G/Z)=2$:}  $\varepsilon$ is not a homomorphism and $\varepsilon$ is a function
  on $G/G^2Z$ such that
  \begin{center}
   $(\det\rho)(g)=\varepsilon(g)\cdot\chi_\rho(g^d)=\begin{cases}
                                                     \chi_\rho(g^d) & \text{for $g\in G^2Z$}\\
                                                     -\chi_\rho(g^d) & \text{for $g\notin G^2Z$.}
                                                    \end{cases}
$
  \end{center}

 \end{enumerate}

\end{enumerate}

\end{prop}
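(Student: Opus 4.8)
The strategy is to reduce the determinant computation to the transfer-map formula of Lemma~\ref{Lemma 2.10}, using the fact that $\overline{G}=G/\mathrm{Ker}(\rho)$ is two-step nilpotent and that $\rho=\mathrm{Ind}_{H}^{G}\chi_H$ for a maximal isotropic $H$. By Gallagher's Theorem~\ref{Theorem Gall} we have
\begin{equation*}
 (\det\rho)(g)=\Delta_{H}^{G}(g)\cdot\chi_H(T_{G/H}(g)),
\end{equation*}
where $\Delta_{H}^{G}$ is the sign character of the permutation action of $g$ on $G/H$. First I would apply Lemma~\ref{Lemma 2.10}(iii) to the pair $(G,H)$ (working modulo $\mathrm{Ker}(\rho)$, so that conditions (1)--(3) hold with $H$ abelian normal and $G/H$ abelian of order $d=\dim\rho$), obtaining $T_{G/H}(g)=g^{d}\cdot\varphi_{G/H}(g)$ with $\varphi_{G/H}(g)\in Z_2$ of order $\le 2$. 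Since $\chi_H$ restricted to $Z$ is $\chi_\rho$ and the correcting values lie in $[G,G]\subseteq Z$, the term $\chi_H(g^d)=\chi_\rho(g^d)$ separates out, and everything else---namely $\Delta_{H}^{G}(g)$ together with $\chi_H(\varphi_{G/H}(g))$---collapses into a single sign. Setting $\varepsilon(g):=\Delta_{H}^{G}(g)\cdot\chi_\rho(\varphi_{G/H}(g))$ gives the desired shape \eqref{eqn 2.16} and property~(1).

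For property~(2), I would argue that both $\Delta_{H}^{G}$ and the correcting factor are insensitive to multiplication by $G^2\cdot Z$: the sign character $\Delta_{H}^{G}$ factors through $G/H$ (and squares and central elements act trivially on the relevant $\mathbb{Z}/2$-quotient), while $\varphi_{G/H}$ is a function on $G/G^2[G,G]$ by Remark (i) following Lemma~\ref{Lemma 2.10}, and $[G,G]\subseteq Z$. Hence $\varepsilon$ descends to $G/G^2Z$. The parenthetical claim ``$\varepsilon\equiv 1$ when $[G:Z]=d^2$ is odd'' then follows because odd $d$ forces $\varphi_{G/H}\equiv 1$ (Lemma~\ref{Lemma 2.2}) and forces $\Delta_{H}^{G}\equiv 1$ (the sign character of an odd-order quotient is trivial).

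For property~(3), the cocycle identity is the heart of the matter. I would start from Remark~(iii), equation~\eqref{eqn 214}, which gives $\varphi_{G/H}(g_1g_2)/\bigl(\varphi_{G/H}(g_1)\varphi_{G/H}(g_2)\bigr)=[g_1,g_2]^{d(d-1)/2}$, and combine it with the behavior of the sign character $\Delta_{H}^{G}$ under products. Applying $\chi_\rho$ and recalling $X_\rho(g_1,g_2)=\chi_\rho([g_1,g_2])$ yields the stated coboundary $X_\rho(g_1,g_2)^{d(d-1)/2}=X_\rho(g_1,g_2)^{d/2}$ (the last equality because $X_\rho$ takes values in $d$-th roots of unity). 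The sub-cases~(a) and~(b) then become a rank computation on the nondegenerate symplectic module $G/Z$ with its alternating form $X_\rho$: when $\mathrm{rk}_2(G/Z)\ge 4$ the value $X_\rho^{d/2}$ is trivial on a generating set (so $\varepsilon$ is a homomorphism into $\{\pm1\}$, and one checks it is forced to be trivial, invoking Proposition~\ref{Proposition 212} and the structure Lemma~\ref{Theorem 2.4}), whereas when $\mathrm{rk}_2(G/Z)=2$ there is exactly one nontrivial element of order $2$ in $G/Z$, and $\varepsilon$ takes the value $-1$ precisely off $G^2Z$.

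\textbf{The main obstacle.} The delicate point will be case~(a): proving not merely that $\varepsilon$ is a homomorphism but that it is \emph{identically} trivial when $\mathrm{rk}_2(G/Z)\ge 4$. This requires pinning down that the homomorphism $\varepsilon:G/G^2Z\to\{\pm1\}$ is forced to annihilate every generator, which I expect to hinge on the symplectic structure of $(G/Z,X_\rho)$ from Lemma~\ref{Theorem 2.4}: one must show that on the symplectic generators $t_i,t_i'$ the sign $\Delta_{H}^{G}$ and the central correction $\chi_\rho(\varphi_{G/H})$ exactly cancel. Carefully matching the transfer contributions $[t_i^{m_i},\alpha_i]$ from Lemma~\ref{Lemma 2.10}(ii) against the parity of $\Delta_{H}^{G}$ on each symplectic pair is where the real bookkeeping lies, and where the rank-$2$ versus rank-$\ge 4$ dichotomy is genuinely decided.
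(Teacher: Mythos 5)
Your proposal follows essentially the same route as the paper's proof: Gallagher's theorem combined with the transfer formula of Lemma \ref{Lemma 2.10} to define $\varepsilon(g)=\Delta_{H}^{G}(g)\cdot\chi_\rho(\varphi_{G/H}(g))$, descent to $G/G^2Z$, the coboundary identity obtained from the multiplicativity of $\det\rho$ (equivalently from the behavior of $\varphi_{G/H}$, since $\Delta_H^G$ is a homomorphism), and the rank dichotomy settled by the symplectic decomposition of $G/Z$ together with Miller's theorem applied to complementary maximal isotropics. One small correction to your sketch of case (b): when $\mathrm{rk}_2(G/Z)=2$ the group $G/Z$ has three, not one, nontrivial elements of order $2$; it is the quotients $G/H\cong H/Z$ for a maximal isotropic $H$ that have exactly one such element, which is what makes $\alpha_{G/H}\neq 1$ and forces $\varepsilon(h)=-1$ on the nontrivial cosets, exactly as the paper argues.
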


\begin{proof}
  By the given condition, $\rho=(Z,\chi_\rho)$ is a Heisenberg representation of $G$. Let $H$ be a maximal isotopic subgroup for $X_\rho$,
 then we have $\rho=\mathrm{Ind}_{H}^{G}(\chi_H)$, where $\chi_H$ is a linear character of $H$ which extends $\chi_\rho$.
 Then modulo 
$\mathrm{Ker}(\chi_\rho)=\mathrm{Ker}(\rho)\subset Z$ the assumptions of the Lemma \ref{Lemma 2.10} are fulfilled, and therefore:
\begin{align*}
 (\det\rho)(g)
 &=\Delta_{H}^{G}(g)\cdot\chi_H(T_{G/H}(g))\\
 &=\Delta_{H}^{G}(g)\cdot\chi_\rho(T_{G/H}(g))\quad\text{because the values of $T_{G/H}$ are in $Z$}\\
 &=\Delta_{H}^{G}(g)\cdot\chi_\rho(g^d)\chi_{\rho}(\varphi_{G/H}(g))\quad\text{from Lemma $\ref{Lemma 2.10}$}\\
 &=\varepsilon(g)\cdot\chi_\rho(g^d),
\end{align*}
where 
\begin{equation}
 \varepsilon(g):=\Delta_{H}^{G}(g)\cdot \chi_\rho(\varphi_{G/H}(g)).
\end{equation}
Since $\Delta_{H}^{G}$ is the quadratic determinant character of $G$, hence for every $g\in G$, we have $\Delta_{H}^{G}(g)\in\{\pm 1\}$.
And $\varphi_{G/H}(g)\in Z_2$, then $\chi_\rho(\varphi_{G/H}(g))\in\{\pm 1\}$.
Therefore for every $g\in G$,
\begin{equation*}
 \varepsilon(g)=\Delta_{H}^{G}(g)\cdot \chi_\rho(\varphi_{G/H}(g))\in\{\pm 1\},
\end{equation*}
 which does not depend on $H$ because $\Delta_{H}^{G}=\Delta_{1}^{G/H}$, and $\chi_\rho$ does not depend on $H$.
 
Here $Z$ is the scalar group of the irreducible representation $\rho$ of dimension $d$, then by definition of scalar 
group, elements $z\in Z$ are 
represented by scalar matrices, i.e., 
\begin{equation*}
 \rho(z)=\chi_\rho(z)\cdot I_d, \quad \text{where $I_d$ is the $d\times d$ identity matrix}.
\end{equation*}
This implies 
\begin{center}
 $(\det\rho)(z)=\chi_\rho(z)^{d}=\chi_\rho(z^d)$.
\end{center}
We also know that $Z$ is the radical of $X_\rho$, therefore 
\begin{center}
 $X_\rho(z,g)=\chi_\rho([z,g])=1$ for all $z\in Z$ and $g\in G$.
\end{center}
Moreover, we can consider $\det\rho$ as a linear character of $G$, therefore 
\begin{equation}\label{eqn 2.17}
 (\det\rho)(gz)=(\det\rho)(g)\cdot(\det\rho)(z)=\varepsilon(g)\chi_\rho(g^d)\chi_\rho(z^d).
\end{equation}
On the other hand 
\begin{equation}\label{eqn 2.18}
 (\det \rho)(gz)=\varepsilon(gz)\chi_\rho((gz)^d)=\varepsilon(gz)\chi_\rho(g^dz^d[g,z]^{-\frac{d(d-1)}{2}})=\varepsilon(gz)\chi_\rho(g^d)
 \chi_\rho(z^d).
\end{equation}
On comparing equations (\ref{eqn 2.17}) and (\ref{eqn 2.18}) we get 
\begin{center}
 $\varepsilon(gz)=\varepsilon(g)$ for all $g\in G$ and $z\in Z$.
\end{center}
Moreover, since $\varepsilon(g)$ is a sign, we have
\begin{equation*}
 (\det\rho)(g^2)=(\det\rho)(g)^2=\varepsilon(g)^2\chi_\rho(g^d)^2=\chi_\rho(g^{2d}).
\end{equation*}
Therefore
\begin{equation}\label{eqn 2.19}
 (\det\rho)(gx^2)=(\det\rho)(g)\cdot(\det\rho)(x^2)=\varepsilon(g)\chi_\rho(g^d)\chi_\rho(x^{2d}).
\end{equation}
On the other hand
\begin{equation}\label{eqn 2.201}
 (\det\rho)(gx^2)=\varepsilon(gx^2)\chi_\rho((gx^2)^d)=\varepsilon(gx^2)\chi_\rho(g^d)\chi_\rho(x^{2d}).
\end{equation}
So we see from equations (\ref{eqn 2.19}) and (\ref{eqn 2.201}) $\varepsilon(gx^2)=\varepsilon(g)$, hence 
$\varepsilon$ is a function on $G/G^2Z$.
 
 In particular, when $[G:Z]=d^2$ is odd, i.e.,
$|G/H|=d$ is odd, we have $\varphi_{G/H}(g)=1$ as well $\Delta_{H}^{G}(g)=1$ because $H$ is normal subgroup of odd index in $G$.
This shows that $\varepsilon\equiv 1$ when $[G:Z]=d^2$ is odd.

For checking property (iii), we use equation (\ref{eqn 2.16}) and $[G,G]^d=\{1\}$.
Since $[G,G]^d=\{1\}$, we have for $g_1,g_2\in G$
\begin{center}
 $([g_1,g_2]^{\frac{d(d-1)}{2}})^{2}=1$, i.e.,
$[g_1,g_2]^{\frac{d(d-1)}{2}}=\frac{1}{[g_1,g_2]^{\frac{d(d-1)}{2}}}$. Also,\\
$[g_1,g_2]^{d-1}=[g_1,g_2]^{-1}$ and $[g_1,g_2]^{\frac{d}{2}}=[g_1,g_2]^{-\frac{d}{2}}$.
\end{center}
From equation (\ref{eqn 2.16}) we obtain
\begin{equation*}
 \frac{(\det\rho)(g_1)\cdot(\det\rho)(g_2)}{(\det\rho)(g_1g_2)}=
 \frac{\varepsilon(g_1)\chi_\rho(g_{1}^{d})\cdot\varepsilon(g_2)\chi_\rho(g_{2}^{d})}{\varepsilon(g_1g_2)\chi_\rho((g_1g_2)^{d})}.
\end{equation*}
This implies
\begin{align}
 \frac{\varepsilon(g_1)\varepsilon(g_2)}{\varepsilon(g_1g_2)}\nonumber
 &=\frac{\chi_\rho((g_1g_2)^d)}{\chi_\rho(g_{1}^{d}g_{2}^{d})}=\chi_\rho([g_1,g_2])^{\frac{d(d-1)}{2}}\\
 &=X_\rho(g_1,g_2)^{\frac{d(d-1)}{2}}=X_\rho(g_1,g_2)^{\frac{d}{2}}.\label{eqn 2.22}
\end{align}
This shows that $\varepsilon$ need not be a homomorphism when $d$ is even.

But when $|G/Z|=d^2$ and $d$ is even we can write
\begin{align*}
 G/Z
 &\cong(\mathbb{Z}/m_1\times\mathbb{Z}/m_1)\times\cdots\times(\mathbb{Z}/m_s\times\mathbb{Z}/m_s)\\
 &\cong(<t_1>\times<t'_1>)\perp\cdots\perp(<t_s>\times<t'_s>),
\end{align*}
such that $m_1|\cdots|m_s$ and $\prod_{i=1}^{s}m_{i}^{2}=d^2$, $X_\rho(t_i,t'_i)=\chi_\rho([t_i,t'_i])=\zeta_{m_i}$, a 
primitive $m_i$-th root of unity because $[t_i,t'_i]^{m_i}=1$.
If $m_{s-1}$, $m_s$ are both even\footnote{Here $d=m_1\cdots m_{s-1}m_s$, if both $m_{s-1}, m_s$ are even, then 
$\frac{d}{2}=m_1\cdots(\frac{m_{s-1}}{2})\cdot m_s$. This shows that $m_i|\frac{d}{2}$ for all $i\in \{1,\cdots,s\}$.
Therefore, $X_\rho(t_i,t'_i)^{\frac{d}{2}}=(\zeta_{m_i})^{\frac{d}{2}}=1$ for all $i\in\{1,\cdots,s\}$.}, 
which means $2$-rank of $G/Z$ is $\geq 4$ then $\frac{d}{m_s}$ is even and therefore $X_\rho(x,y)^{\frac{d}{2}}\equiv 1$, hence
from equation (\ref{eqn 2.22}) we see that $\varepsilon$ is a homomorphism.

Moreover, from the above we see that 
\begin{center}
 $H/Z=<t_1>\times\cdots\times <t_s>\cong H'/Z=<t_1'>\times\cdots<t_s'>$
\end{center}
are two maximal isotropic which are isomorphic. We have $H\cap H'=Z$, hence $G$ is not the direct product of $H$ and $H$ but 
nevertheless $G=H\cdot H'$. So for any $g\in G$ there must exist a decomposition $g=h\cdot h'$, where $h\in H$ and $h'\in H'$.

Now we assume $\mathrm{rk}_{2}(G/Z)\ne 2$, hence $\mathrm{rk}_2(H/Z)=\mathrm{rk}_2(H'/Z)\ne 1$. And since $G/H\cong H/Z$
and $G/H'\cong H'/Z$, then $\mathrm{rk}_2(G/H)=\mathrm{rk}_2(G/H')\ne 1$. Then from Proposition \ref{Proposition 212} we can 
say both $G/H$ and $G/H'$ have at least $3$ elements of order $2$. Then from Theorem \ref{Theorem Miller} we have 
 $\alpha_{G/H}=1$ and  $\alpha_{G/H'}=1$.
Furthermore from formula (\ref{eqn 29}) we obtain
\begin{equation*}
 T_{G/H}(h)=h^d\cdot[h,\alpha_{G/H}]=h^d,\quad \text{and}\quad T_{G/H}(h')=h'^d\cdot[h',\alpha_{G/H'}]=h'^d.
\end{equation*}

So we can write 
\begin{align*}
 (\det\rho)(g)
 &=(\det\rho)(h)\cdot(\det\rho)(h'),\quad\text{here $g=h\cdot h'$ is a decomposition of $g$ with $h\in H,$ $h'\in H'$},\\
 &=\chi_\rho(h^d)\cdot\chi_\rho(h'^d),\quad\text{because $\mathrm{rk}_2(G/H')=\mathrm{rk}_2(G/H')\ne 1$},\\
 &=\chi_\rho(h^d\cdot h'^d)\\
 &=\chi_\rho((h\cdot h')^d[h,h']^{\frac{d(d-1)}{2}})\quad\text{using Lemma $\ref{Lemma 22}(2)$}\\
 &=\chi_\rho(g^d)\cdot X_\rho(h,h')^{\frac{d(d-1)}{2}}\\
 &=\chi_\rho(g^d),
\end{align*}
because all $m_i|\frac{d}{2}$, $i\in\{1,2,\cdots,s\}$, and then 
\begin{center}
 $X_\rho(h,h')^{\frac{d(d-1)}{2}}=\chi_\rho([h,h'])^{\frac{d(d-1)}{2}}=\zeta_m^{\frac{d(d-1)}{2}}=1$,
\end{center}
where $\zeta_m$ is a primitive $m$-th root of unity and $m$ is some positive integer (which is the order of $[h,h']$) 
 which divides $\frac{d}{2}$.
This shows that when $\mathrm{rk}_2(G/Z)\ne 2$ we 
have $\varepsilon\equiv 1$.

If on the other hand only $m_s$ is even, i.e., $G/Z$ has $2$ rank$=2$, then $\frac{d}{m_s}$ is odd. Therefore 
$X_\rho(t_s,t'_s)^{\frac{d}{2}}=(\zeta_{m_s})^{\frac{d}{2}}=-1$, since $m_s$ does not divide $\frac{d}{2}$ and 
$(\zeta_{m_s}^{\frac{d}{2}})^2=1$. Therefore $\varepsilon$ cannot be a homomorphism when 
$\mathrm{rk}_{2}(G/Z)=2$.

But since $\varepsilon$ is a function on $G/G^2Z$, hence 
$\varepsilon|_{G^2Z}\equiv 1$. Therefore when $g\in G^2Z$ we have $(\det\rho)(g)=\chi_\rho(g^d)$. So now we are left to show
that for $g\notin G^2Z$, $\varepsilon(g)=-1$, i.e., $(\det\rho)(g)=-\chi_\rho(g^d)$. 
Also, for $\mathrm{rk}_2(G/Z)=2$,
$G/G^2Z$ is Klein's $4$-group\footnote{Since $G/Z$ is an abelian group, we have $G/Z\cong\widehat{G/Z}$. When $\rm{rk}_2(G/Z)=2$,
by Proposition \ref{Proposition 212}, there are exactly three elements of order $2$ in $G/Z$, and this each element (i.e., 
self-inverse element) corresponds a 
quadratic character of $G/Z$. Hence the group $G/G^2Z$ has exactly three quadratic characters. Furthermore,
$G/G^2Z$ is a quotient group of the abelian group $G/Z$, hence $G/G^2Z$ is abelian. Therefore $G/G^2Z$ is isomorphic to 
the Klein's 4-group.} 
and $\varepsilon$ is a sign function on that group. So up to permutation the possibilities are 
\begin{enumerate}
 \item $+ + + +$
 \item $+ + + -$
 \item $+ + - -$
 \item $+ - - -$
\end{enumerate}
The cases $(1)$, $(3)$ can be excluded because we know that $\varepsilon$ is not a homomorphism. So we have to exclude the case $(2)$
and for this it is enough to see that we must have "$-$'' more than once. 

If we restrict $\varepsilon$ to a maximal isotropic subgroup $H$, then from equation (\ref{eqn 2.22}) we can say $\varepsilon$ is a
homomorphism on $H$, because $X_\rho|_{H\times H}=1$. 
We also have from equation (\ref{eqn 29}) $T_{G/H}(h)=h^d\cdot[h,\alpha_{G/H}]=h^d\varphi_{G/H}(h)$.
This implies $\varphi_{G/H}(h)=[h,\alpha_{G/H}]$ for all $h\in H$. 
Moreover, since $\Delta_{H}^{G}|_{H}\equiv 1$, then for $h\in H$ we obtain:
\begin{center}
 $\varepsilon(h)=\Delta_{H}^{G}(h)\cdot\chi_\rho(\varphi_{G/H}(h))=\chi_\rho([h,\alpha_{G/H}])$.
\end{center}
If there exists a maximal isotropic subgroup $H$ of $\rm{rk}_2(H/Z)=1$, then from the Proposition \ref{Proposition 212} can say that 
$H/Z$ has a unique element of order $2$. We also know $G/H\cong H/Z$ because $G/H$ and $H/Z$ are both  
finite abelian groups of same order
$d$, hence $\rm{rk}_2(H/Z)=\rm{rk}_2(G/H)=1$. Then from the Proposition \ref{Proposition 212}, $G/H$ has a unique element of order $2$,
and therefore by Miller's theorem we have $\alpha_{G/H}\ne 1$. Thus for the case $\rm{rk}_2(H/Z)=1$
we have 
\begin{equation}\label{eqn 3.25}
 \varepsilon(h)=\Delta_{H}^{G}(h)\cdot\chi_\rho([h,\alpha_{G/H}])=\chi_\rho([h,\alpha_{G/H}])=-1
\end{equation}
 for all nontrivial $h\in H$.

Moreover,
if $\rm{rk}_2(G/Z)=2$, then from the Lemma \ref{Theorem 2.4}
there exists subgroups $H$, $H'$ with the following properties
\begin{enumerate}
 \item $\rm{rk}_2(H/Z)=\rm{rk}_2(H'/Z)=1$
 \item $G=H\cdot H'$
 \item $Z=H\cap H'$
\end{enumerate}
Then $H/G^2Z$ and $H'/G^2Z$ are two different subgroups of order $2$ in Klein's $4$-group. Now take the nontrivial elements of these 
subgroups are $h$ and $h'$ respectively. Then by using equation (\ref{eqn 3.25}) we have 
\begin{center}
 $\varepsilon(h)=\varepsilon(h')=-1$,\\
 i.e., the nontrivial elements of $H/G^2Z$ and $H'/G^2Z$ give the two "$-$'' signs for $\varepsilon$.
\end{center}
Therefore the only possibility is $+ - - -$, i.e., $\varepsilon$ takes $1$ on the trivial coset and $-1$ on the three other cosets.

 This completes the proof.
\end{proof}

\begin{cor}
\begin{enumerate}
 \item Let $\rho=(Z,\chi_\rho)\in\mathrm{Irr}(G)$ be a Heisenberg representation of odd dimension $d$. Then 
 $G^d\subseteq Z$ and 
 \begin{center}
$\mathrm{det}(\rho)(g)=\chi_\rho(g^d)$, \hspace{.4cm} for all $g\in G$.  
 \end{center}
In particular, $\mathrm{det}(\rho)\equiv 1$ if and only if $\chi_\rho$ is a character of $Z/G^d$. This is only possible if 
$[G,G]\not\subseteq G^d$ and if $\chi_\rho$ is a nontrivial character on $G^d[G,G]/G^d\subseteq Z/G^d$.
\item Let $\omega$ be a linear character of $G$, then $\rho\otimes\omega=(Z,\chi_{\rho\otimes\omega})$, where:
\begin{center}
 $\chi_{\rho\otimes\omega}=\chi_\rho\cdot\omega_Z$, \hspace{.4cm} $\mathrm{det}(\rho\otimes\omega)=\mathrm{det}(\rho)\cdot\omega^d$,
\end{center}
where $\omega_Z=\omega|_{Z}$.
Therefore it is possible to find $\omega$ such that $\mathrm{det}(\rho\otimes\omega)\equiv 1$, equivalently 
$\chi_\rho=\omega_{Z}^{-1}$ on $G^d$, if and only if $\chi_\rho$ is trivial on $G^d\cap[G,G]$.
\end{enumerate}
\end{cor}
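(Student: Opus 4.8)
The plan is to read both statements off Proposition \ref{Proposition 2.13}, exploiting that $d$ is odd to kill the sign $\varepsilon$, together with the structural Lemma \ref{Lemma 2.2}. The first task is to establish $G^d\subseteq Z$. Since $G$ itself need not satisfy $[G,[G,G]]=\{1\}$, I would pass to $\overline{G}:=G/\mathrm{Ker}(\rho)$, which is two-step nilpotent, and to $\overline{H}:=H/\mathrm{Ker}(\rho)$ for a maximal isotropic subgroup $H$; here $\overline{H}$ is abelian normal and $[\overline{G}:\overline{H}]=d$ is odd, so Lemma \ref{Lemma 2.2} gives $\overline{G}^{\,d}\subseteq Z(\overline{G})$. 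The key point is that $Z(\overline{G})$ is exactly $Z/\mathrm{Ker}(\rho)$: a coset $\hat g$ is central iff $\chi_\rho([g,g'])=X(\hat g,\hat{g'})=1$ for all $g'$, i.e. iff $\hat g$ lies in the radical of $X$, which is trivial by nondegeneracy (Proposition \ref{Proposition 3.1}(c)). Pulling back and using $\mathrm{Ker}(\rho)\subseteq Z$ yields $G^d\subseteq Z$. As $[G:Z]=d^2$ is odd, Proposition \ref{Proposition 2.13}(2) forces $\varepsilon\equiv 1$, whence $\det(\rho)(g)=\chi_\rho(g^d)$ for all $g$; note $g^d\in G^d\subseteq Z$, so the right-hand side is defined.

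For the ``in particular'' clause, $G^d$ is a subgroup (the remark following Lemma \ref{Lemma 2.2}), so $\det(\rho)\equiv 1$ is equivalent to $\chi_\rho|_{G^d}\equiv 1$, i.e. to $\chi_\rho$ factoring through $Z/G^d$. To obtain the extra constraints I would use nondegeneracy of $X$ once more: for a genuine Heisenberg representation ($d>1$) there exist cosets mod $Z$ that do not commute, so $\chi_\rho$ is nontrivial on $[G,G]$. If one had $[G,G]\subseteq G^d$, then $\chi_\rho|_{G^d}\equiv 1$ would force $\chi_\rho|_{[G,G]}\equiv 1$, a contradiction; hence $\det(\rho)\equiv 1$ requires $[G,G]\not\subseteq G^d$. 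Consequently $G^d[G,G]/G^d\cong [G,G]/([G,G]\cap G^d)$ is nontrivial, and $\chi_\rho$, being trivial on $G^d$ but nontrivial on $[G,G]$, induces a nontrivial character on it.

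For part (2) I would first record the elementary identities. As $\omega$ is linear, it is scalar everywhere and trivial on commutators, so $(\rho\otimes\omega)(z)$ is the scalar $\chi_\rho(z)\omega(z)$ for $z\in Z$ while the commutator form is unchanged; thus $Z_{\rho\otimes\omega}=Z$ and $\chi_{\rho\otimes\omega}=\chi_\rho\cdot\omega_Z$, and $\det(\rho\otimes\omega)=\det(\rho)\cdot\omega^d$ follows from $\det(A\otimes B)=\det(A)^{\dim B}\det(B)^{\dim A}$. Using $\det(\rho)(g)=\chi_\rho(g^d)$ and $\omega(g)^d=\omega(g^d)=\omega_Z(g^d)$ gives $\det(\rho\otimes\omega)(g)=(\chi_\rho\omega_Z)(g^d)$, so $\det(\rho\otimes\omega)\equiv 1$ iff $\omega|_{G^d}=\chi_\rho^{-1}|_{G^d}$. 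The equivalence with ``$\chi_\rho$ trivial on $G^d\cap[G,G]$'' is then a character-extension argument. Necessity is immediate, since any linear $\omega$ is trivial on $[G,G]\supseteq G^d\cap[G,G]$, forcing $\chi_\rho^{-1}=\omega=1$ there. For sufficiency I would view linear characters of $G$ as characters of $G^{\mathrm{ab}}=G/[G,G]$; the prescribed values $\chi_\rho^{-1}$ on $G^d$ descend to a character of $G^d[G,G]/[G,G]\cong G^d/(G^d\cap[G,G])$ exactly because $\chi_\rho$ is trivial on $G^d\cap[G,G]$, and this character extends to all of $G^{\mathrm{ab}}$ by divisibility of $\mathbb{C}^\times$, yielding the desired $\omega$.

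The step I expect to be the main obstacle is the bookkeeping modulo $\mathrm{Ker}(\rho)$ in part (1): one must verify that applying Lemma \ref{Lemma 2.2} to $\overline{G}$ genuinely delivers $G^d\subseteq Z$ rather than merely $G^d\subseteq Z(G)$, which rests on identifying $Z(\overline{G})$ with $Z/\mathrm{Ker}(\rho)$ through nondegeneracy of $X$. Once the descent condition in part (2) is isolated, the extension of characters is routine.
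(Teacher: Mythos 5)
Your proposal is correct and follows essentially the same route as the paper: reduce modulo $\mathrm{Ker}(\rho)$ so that Lemma \ref{Lemma 2.2} applies, compute $\det(\rho)$ via Gallagher's formula with $\Delta_H^G\equiv 1$ and $T_{G/H}(g)=g^d$ (your detour through Proposition \ref{Proposition 2.13} with $\varepsilon\equiv 1$ is the same computation), and in part (2) use the tensor identities plus a character-extension argument equivalent to the paper's splitting of $\chi_\rho|_{Z_1}$ over $Z_1/Z_0=G^d/Z_0\times[G,G]/Z_0$. Your explicit identification of $Z(\overline{G})$ with $Z/\mathrm{Ker}(\rho)$ via nondegeneracy of $X$ is a point the paper leaves implicit, but it is the same argument.
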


\begin{proof}
{\bf (1).}
 We consider $H$ such that $Z\subset H\subset G$ and $H$ is maximal isotropic with respect to 
 \begin{center}
  $X(g_1,g_2):=\chi_\rho\circ[g_1,g_2]$.
 \end{center}
By definition, $Z/[G,G]$ is radical of $X$, hence $\mathrm{Ker}(\rho)=\mathrm{Ker}(\chi_\rho)\subset Z$, and factorizing by 
$\mathrm{Ker}(\chi_\rho)$ we obtain a group $\overline{G}=G/\mathrm{Ker}(\chi_\rho)$ 
which satisfies the assumptions of the Lemma \ref{Lemma 2.2}. Moreover, 
$\rho=\mathrm{Ind}_{H}^{G}\chi_H$ for any extension $\chi_H$ of $\chi_\rho$, hence 
\begin{center}
 $\mathrm{det}(\rho)=\Delta_{H}^{G}\cdot(\chi_H\circ T_{G/H})=\chi_H\circ T_{G/H}$,
\end{center}
because $H\subset G$ is a normal subgroup of odd index $d$. Applying the Lemma \ref{Lemma 2.2} we obtain for all $g\in G$:
\begin{equation}\label{eqn 2.33}
 \mathrm{det}(g)=\chi_H\circ T_{G/H}(g)=\chi_H(g^d)=\chi_\rho(g^d),\quad\text{since $g^d\in Z$},
\end{equation}
here we have used $g^d=T_{G/H}(g)\in\mathrm{Im}(T_{G/H})\subseteq H^{G/H}\subseteq Z$ because our 
computation is modulo $\mathrm{Ker}(\chi_\rho)$.

If $\mathrm{det}(\rho)\equiv 1$, then from equation (\ref{eqn 2.33}), we have $\chi_\rho(g^d)=1$, i.e.,
$g^d\in\mathrm{Ker}(\chi_\rho)$ for all $g\in G$. This shows $G^d\subseteq \mathrm{Ker}(\chi_\rho)$.
Again if $G^d\subseteq\mathrm{Ker}(\chi_\rho)$, then it is easy to see $\det\rho\equiv 1$.

Now if $\chi_\rho:Z/G^d\to\mathbb{C}^\times$ is a character, then we see that $G^d\subseteq\mathrm{Ker}(\chi_\rho)$, i.e., 
$g^d\in \mathrm{Ker}(\chi_\rho)$. Thus from equation (\ref{eqn 2.33}), we conclude $\mathrm{det}(\rho)\equiv 1$.

If $[G,G]\subseteq G^d$ this would imply 
$[G,G]\subset\mathrm{Ker}(\chi_\rho)$ which means $Z=G$, hence $\rho$ is of dimension $1$. Also, if $\chi_\rho$ is trivial on 
$G^d[G,G]/G^d$, i.e., $[G,G]\subseteq \mathrm{Ker}(\rho)$, hence dimension of $\rho$ is $1$. Therefore when $\det\rho\equiv 1$
and $[G,G]\not\subseteq G^d$ and $\chi_\rho$ is a nontrivial character on 
$G^d[G,G]/G^d\subseteq Z/G^d$, then we can extend $\chi_\rho$ to $Z/G^d$ because $\chi_\rho$ is $G$-invariant and $G^d[G,G]/G^d$ is a
normal subgroup of $Z/G^d$.\\
{\bf (2).}
Let $\omega$ be a linear character of $G$ and $\omega_Z=\omega|_{Z}$. Then we can write (cf. \cite{JP}, p. 57, Remark (3))
\begin{center}
 $\omega\otimes\mathrm{Ind}_{Z}^{G}\chi_\rho=\mathrm{Ind}_{Z}^{G}(\chi_\rho\otimes\omega_Z)=
 \mathrm{Ind}_{Z}^{G}\chi_{\rho\otimes\omega}=d\cdot\rho\otimes\omega$,
\end{center}
where $\chi_{\rho\otimes\omega=\chi_\rho\cdot\omega_Z}$ and $d=\mathrm{dim}(\rho)$. Moreover, it is easy to see 
$\chi_\rho\otimes\omega_Z$ is a $G$-invariant. Therefore we can write $\rho\otimes\omega=(Z,\chi_{\rho\otimes\omega})$.
Now we are left to compute determinant of $\rho\otimes\omega$, which follows from the properties of determinant function
(cf. \cite{GK}, p. 955, Lemma 30.1.3). Since $\mathrm{dim}(\rho)=d$ and $\omega$ is linear, we have 
\begin{equation*}
 \mathrm{det}(\rho\otimes\omega)=\mathrm{det}(\rho)^{\omega(1)}\cdot\mathrm{det}(\omega)^{\rho(1)}=\mathrm{det}(\rho)\cdot \omega^{d}.
\end{equation*}
Here $d$ is odd and we know $g^d\in Z$,
then for every $g\in G$, we have 
\begin{equation}\label{eqn 2.44}
 \mathrm{det}(\rho\otimes\omega)(g)=\chi_\rho(g^d)\cdot\omega^d(g)=\chi_\rho\cdot\omega_Z(g^d).
\end{equation}
Now if $\mathrm{det}(\rho\otimes\omega)\equiv 1$, then we have $\chi_\rho=\omega_{Z}^{-1}$ on $G^d$. This implies,
it is possible to find a linear character $\omega$ such that $\mathrm{det}(\rho\otimes\omega)\equiv 1$.

Now let $\chi_\rho=\omega_{Z}^{-1}$ on $G^d$. Since $G^d\cap [G,G]\subseteq G^d$, $\chi_\rho\cdot\omega_Z(g)=1$ for $g\in G^d\cap[G,G]$.
Then $\chi_\rho$ is trivial on $G^d\cap[G,G]$.

Conversely, if $\chi_\rho$ is trivial on $G^d\cap [G,G]$, then we are left to show that we can find an $\omega$ such that  
$G^d\subseteq \mathrm{Ker}(\chi_\rho\cdot\omega_Z)$.

Put  $Z_1= G^d\cdot [G,G]$, and  $Z_0= G^d\cap [G,G]$.  Then we have
$Z\supset Z_1\supset Z_0$,  and
$Z_1/Z_0  =  G^d/Z_0  \times [G,G]/Z_0$        is a direct product.
Now assume that  $\chi_\rho$  is a character of  $Z/Z_0$.
Then the restriction   $\chi_{Z_1} = \chi_1\cdot \chi_2$
comes as the product of two characters of  $Z_1$, where  $\chi_1$ is trivial on $[G,G]$
and $\chi_2$  is trivial on $G^d$. But then we can find  $\omega$ of  $G/[G,G]$
such that  $\omega_{Z_1}= \chi_1$,  hence
  $\omega^{-1}\chi_\rho$   restricted to $Z_1$  is equal $\chi_2$.
In particular,   $\omega^{-1}\chi_\rho$   is trivial on  $G^d$
and therefore   $\omega^{-1}\otimes \rho$  has  $\mathrm{det}(\omega^{-1}\otimes \rho) \equiv 1$.
\end{proof}

\begin{cor}\label{Corollary 2.6}
 If $\rho=(Z,\chi_\rho)$ is a Heisenberg representation (of dimension $>1$) 
 for a nonabelian group of order $p^3, (p\neq 2,)$ then $Z=[G,G]$
 is cyclic group of order $p$, and $G^p=Z$ or $G^p=\{1\}$ depending on the isomorphism type of $G$. So we have 
 $\mathrm{det}(\rho)\not\equiv 1 $ and $\mathrm{det}(\rho)\equiv 1$ depending on the isomorphism type of $G$.
\end{cor}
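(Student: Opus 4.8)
The plan is to reduce everything to the odd-dimensional determinant formula already established in part (1) of the preceding corollary, after settling the group theory of the two isomorphism types. First I would record the facts common to both nonabelian groups $G$ of order $p^3$ with $p\neq 2$. Since $G$ is a nonabelian $p$-group, $G/Z(G)$ cannot be cyclic, so $[G:Z(G)]\neq p$ and hence $|Z(G)|=p$; as $[G,G]$ is a nontrivial subgroup of $Z(G)$, we conclude $[G,G]=Z(G)$ is cyclic of order $p$. In particular $[G,[G,G]]=\{1\}$, so $G$ is two-step nilpotent and the earlier lemmas apply to $G$ directly.

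Next I would pin down $\rho$. Every irreducible representation of such a $G$ has dimension a power of $p$ and at most $\sqrt{|G|}=p^{3/2}$, hence dimension $1$ or $p$; the dimension count $p^2\cdot 1+(p-1)\cdot p^2=p^3$ (with $p^2$ linear characters inflated from $G/[G,G]\cong\bbZ/p\times\bbZ/p$) confirms that the noncommutative ones all have dimension $p$. Thus a Heisenberg representation of dimension $>1$ has $\dim(\rho)=p$, so $[G:Z_\rho]=\dim(\rho)^2=p^2$ and $|Z_\rho|=p$. Because central elements act by scalars, $Z(G)\subseteq Z_\rho$, and comparing orders gives $Z:=Z_\rho=Z(G)=[G,G]$. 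Since $Z$ has prime order and the alternating character $X(\hat g_1,\hat g_2)=\chi_\rho([g_1,g_2])$ is nondegenerate on $G/Z$ (as $\dim(\rho)>1$), the central character $\chi_\rho$ is nontrivial on $Z=[G,G]$, hence faithful.

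Now $d=\dim(\rho)=p$ is odd, so I would invoke part (1) of the preceding corollary: $G^p\subseteq Z$ and $\mathrm{det}(\rho)(g)=\chi_\rho(g^p)$ for all $g\in G$. It remains to compute $G^p$ in each type. For the group of exponent $p$ (the mod-$p$ Heisenberg group) every $g$ satisfies $g^p=1$, so $G^p=\{1\}$ and $\mathrm{det}(\rho)(g)=\chi_\rho(1)=1$, i.e.\ $\mathrm{det}(\rho)\equiv 1$. For the group of exponent $p^2$, generated by $a$ of order $p^2$ and $b$ of order $p$ with $[a,b]$ generating $Z$, I would apply Lemma \ref{Lemma 22}(2): for $g=a^ib^j$,
\begin{equation*}
 g^p=(a^ib^j)^p=a^{ip}b^{jp}\cdot[a^i,b^j]^{-\frac{p(p-1)}{2}}=a^{ip},
\end{equation*}
since $b^{jp}=1$ and the commutator power is trivial because $p\mid\frac{p(p-1)}{2}$ (here $p$ is odd) while $[G,G]$ has exponent $p$. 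Hence $G^p=\langle a^p\rangle=Z$, and as $a^p$ generates $Z$ with $\chi_\rho$ faithful on $Z$ we get $\mathrm{det}(\rho)(a)=\chi_\rho(a^p)\neq 1$, so $\mathrm{det}(\rho)\not\equiv 1$.

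The only delicate points are the determination of $G^p$ for the exponent-$p^2$ type, which is precisely where the hypothesis $p\neq 2$ is used through the divisibility $p\mid\frac{p(p-1)}{2}$ that annihilates the commutator correction, and the verification that $\chi_\rho$ is faithful on the order-$p$ group $Z$; the remainder is bookkeeping with the standard classification of groups of order $p^3$ and the already-proved odd-dimensional determinant formula.
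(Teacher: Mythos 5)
Your proof is correct and follows essentially the same route as the paper: both reduce to the odd-dimensional determinant formula $\det(\rho)(g)=\chi_\rho(g^{p})$ and then determine $G^{p}$, the paper doing so by noting that $g\mapsto g^{p}$ is a homomorphism into $Z$ and counting orders, you by computing $g^{p}$ directly in each of the two isomorphism types. Your version is in fact slightly more complete, since you verify that $\chi_\rho$ is faithful on $Z$ and identify explicitly which isomorphism type yields $G^{p}=\{1\}$ and which yields $G^{p}=Z$, details the paper defers to the remark following the corollary.
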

\begin{proof}
 In this particular case for Heisenberg setting we have $|Z|=p$ and $G/Z$ is abelian. This implies $[G,G]\subseteq Z$.
 Here $G$ is nonabelian and $p$ is prime, therefore $[G,G]=Z$ is a cyclic group of order $p$.
 Let $\Psi: G\to G$ be a $p$-power map, i.e., $g\mapsto g^p$.
 It can be proved that this map $\Psi$ is a surjective group homomorphism (by using Lemma \ref{Lemma 22}) and the image is in 
 $Z$ (because from Lemma \ref{Lemma 2.2} and relation (\ref{relation 2.6}) we have $g^p=T_{G/H}(g)\in Z$),
 hence from the first isomorphism theorem
 we have 
 \begin{center}
  $G/\mathrm{Ker}(\Psi)\cong\mathrm{Im}(\Psi)=G^{p}$.
 \end{center}
 Thus we can write
 \begin{center}
  $p^3=|\mathrm{Ker}(\Psi)|\cdot|G^p|$.
 \end{center}
So we have the possibility 
\begin{center}
 $|\mathrm{Ker}(\Psi)|=p^3$ or $p^2$ corresponding to $G^p=1$ or $G^p=Z$.
\end{center}
Both the cases are possible depending on the isomorphism type of $G$.
So we can conclude that 
 $\mathrm{det}(\rho)\not\equiv 1 $ and $\mathrm{det}(\rho)\equiv 1$ depending on the isomorphism type of $G$.

\end{proof}

\begin{rem}
 We know that there are two non-abelian group of order $p^3$, up to isomorphism (for details see \cite{KC1}). Now put 
\begin{equation*}
 G_{p}=\left\{\begin{pmatrix}
  1 & a & b  \\
  0 & 1 & c  \\
  0 & 0 & 1
        \end{pmatrix}: a,b,c\in \mathbb{Z}/p\mathbb{Z}\right\}
\end{equation*}
We observe that this $G_p$ is a non-abelian group under matrix multiplication with order $p^3$. We also see that 
$G_{p}^{p}=\{I_3\}$, the identity
in $G_{p}$. Now if $\rho$ is a Heisenberg representation of group $G_p$, then we will have $\mathrm{det}(\rho)\equiv 1$.

And when $G$ is extraspecial group of order $p^3$, where $p\neq 2$ with $G^p=Z$, we will have 
 $\mathrm{Ker}(\Psi)\cong C_p\times C_p$, where $C_p$ is the cyclic group of order $p$.
 Therefore $\mathrm{det}(\rho)(g)=\chi_{Z}(g^p)$. This shows that 
$\mathrm{det}(\rho)\not\equiv 1$.

 From  Corollary \ref{Corollary 2.6}, we observe that 
for non-abelian group of order $p^3$, where $p$ is prime, the determinant of Heisenberg representation of  $G$ gives the information
about the isomorphism type of $G$.

\end{rem}

\begin{rem}
 When the dimension $d$ of $\rho$ is odd, the $2$-rank of $G/Z$ is $0$, and we notice that in this case $\varepsilon\equiv 1$. Therefore
 we could rephrase our above Proposition \ref{Proposition 2.13} as follows:
 \begin{enumerate}
  \item If the $2$-rank of $G/Z$ is different from $2$, we have 
  \begin{center}
    $(\det\rho)(g)=\chi_\rho(g^d)$.
  \end{center}
\item If the $2$-rank of $G/Z$ is equal to $2$, then $G/G^2Z$ is Klein's $4$-group, and we
  have a sign function $\varepsilon$ on $G/G^2Z$ such that 
  \begin{center}
   $(\det\rho)(g)=\varepsilon(g)\cdot\chi_\rho(g^d)$.
  \end{center}
Moreover, the function $\varepsilon$ is not a homomorphism and it takes $1$ on the trivial coset and $-1$ on the three other cosets.
Thus when $\rm{rm}_2(G/Z)=2$, we can write
\begin{center}
 $(\det\rho)(g)=\begin{cases}
                 \chi_\rho(g^d) & \text{for $g\in G^2Z$}\\
                 -\chi_\rho(g^d) & \text{for $g\notin G^2Z$}.
                \end{cases}
$
\end{center}

 \end{enumerate}

\end{rem}

\begin{exm}

Let $G$ be a dihedral group of order $8$ and we write 
\begin{center}
 $G=\{e,b,a,a^2,a^3,ab,a^2b,a^3b\;|\quad a^4=b^2=e, bab^{-1}=a^{-1}\}$. 
\end{center}
It is easy to see that $[G,G]=\{e,a^2\}=Z=Z(G)=G^2$.
Thus $G^2Z=\{e,a^2\}$. We also have
\begin{center}
 $G/G^2Z\cong\{G^2Z, aG^2Z, abG^2Z, bG^2Z\}$, and the subgroups of order $4$ are:\\
 $H_1=\{e,b,a^2,a^2b\}$, $H_2=\{e,ab,a^2,a^3b\}$ and $H_3=\{e,a,a^2,a^3\}$.\\ And the factor groups are:
 $G/H_1=\{H_1,aH_1\}$, $G/H_2=\{H_2,aH_2\}$ and $G/H_3=\{H_3,bH_3\}$.
\end{center}
Let $\rho$ be a Heisenberg representation of $G$. The dimension
of $\rho$ is $2$. In this case the $2$-rank of $G/Z$ is $2$, then $G/Z=G/G^2Z$ is Klein's $4$-group.

For this group we can see that $\varepsilon$ is a function on $G/Z\cong\{Z,aZ,bZ,ab Z\}$.
When $g=a\in G$ , we have $H_1$ for which $g=a\in H_1$, then we have
\begin{center}
 $\varepsilon(a)=\Delta_{H_1}^{G}(a)\cdot\chi_\rho([a,\alpha_{G/H_1}])=\chi_\rho([a,\alpha_{G/H_1}])=-1$.
\end{center}
Similarly, we can see when $g\in bZ$ and $g\in abZ$ we have $\varepsilon(g)=-1$. Thus we can conclude for dihedral group
of order $8$, that
\begin{center}
 $(\det\rho)(g)=\begin{cases}
                 \chi_\rho(g^2) & \text{for $g\in Z$}\\
                 -\chi_\rho(g^2) & \text{for $g\notin Z$}.
                \end{cases}
$
\end{center}

\end{exm}


For our next remark we need the following lemma.

\begin{lem}\label{Lemma 2.13}
 Let $\rho=(Z,\chi_\rho)$ be a Heisenberg representation of $G$ and put $X_\rho(g_1,g_2):=\chi_\rho([g_1,g_2])$.
 Then for every element $g\in G$, there exists a maximal isotropic subgroup $H$ for $X_\rho$ such that 
 $g\in H$.
\end{lem}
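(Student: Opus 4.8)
The plan is to pass to the quotient $A := G/Z$, which by Proposition \ref{Proposition 3.1} carries the nondegenerate alternating character $X := X_\rho$, and to work entirely inside $A$. A maximal isotropic subgroup $H$ of $G$ containing $g$ is exactly the preimage in $G$, under the projection $G\to G/Z=A$, of a maximal isotropic subgroup of $A$ containing the image $\bar g := gZ$; since $Z$ is normal and $X_\rho$ lives on $G/Z$, it therefore suffices to prove that every element $\bar g\in A$ lies in a maximal isotropic subgroup of $A$.

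First I would observe that the cyclic subgroup $\langle\bar g\rangle\subseteq A$ is isotropic. Indeed, because $X$ is alternating we have $X(\bar g,\bar g)=1$, and hence $X(\bar g^{\,i},\bar g^{\,j})=X(\bar g,\bar g)^{ij}=1$ for all $i,j\in\bbZ$; thus $X$ vanishes identically on $\langle\bar g\rangle\times\langle\bar g\rangle$, that is, $\langle\bar g\rangle\subseteq\langle\bar g\rangle^{\perp}$. (If $\bar g=1$ this is the trivial subgroup, which is contained in every maximal isotropic subgroup, so that case is immediate.)

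Next, since $A$ is finite, among all isotropic subgroups of $A$ that contain $\langle\bar g\rangle$ I would pick one, say $\overline H$, of maximal order; no appeal to Zorn's lemma is needed. The key step is to verify that such an $\overline H$ is in fact maximal isotropic, i.e. $\overline H=\overline H^{\perp}$. Suppose not; then there is some $x\in\overline H^{\perp}\setminus\overline H$, and I claim $\langle\overline H,x\rangle$ is again isotropic, contradicting the maximal order of $\overline H$. Every element of $\langle\overline H,x\rangle$ has the form $hx^{k}$ with $h\in\overline H$, and by bilinearity
\[
X(h_1x^{k_1},h_2x^{k_2})=X(h_1,h_2)\,X(h_1,x)^{k_2}\,X(x,h_2)^{k_1}\,X(x,x)^{k_1k_2}.
\]
Here $X(h_1,h_2)=1$ since $\overline H$ is isotropic, $X(x,x)=1$ since $X$ is alternating, and $X(x,h_2)=1=X(h_1,x)$ because $x\in\overline H^{\perp}$ (using $X(h_1,x)=X(x,h_1)^{-1}$, again from the alternating property). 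Hence the product is $1$, so $\langle\overline H,x\rangle$ is isotropic and strictly larger than $\overline H$, the desired contradiction. Therefore $\overline H=\overline H^{\perp}$, so $\overline H$ is maximal isotropic and contains $\bar g$.

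Finally, I would let $H\subseteq G$ be the preimage of $\overline H$. Then $Z\subseteq H$, the quotient $H/Z=\overline H$ is maximal isotropic for $X_\rho$, hence $H$ is a maximal isotropic subgroup of $G$ for $\rho$, and $g\in H$ because $\bar g\in\overline H$. This is precisely the assertion. The only place requiring care is the enlargement step, where the alternating property of $X$ is used twice; I expect no serious obstacle, as the finiteness of $A$ makes both the existence of a maximal-order isotropic subgroup and the contradiction argument entirely elementary.
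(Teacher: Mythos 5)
Your argument is correct and is essentially the paper's own proof: both start from the cyclic subgroup generated by (the image of) $g$, which is isotropic because $X_\rho$ is alternating, and enlarge it step by step by adjoining elements of the orthogonal complement until a maximal isotropic subgroup is reached, finiteness guaranteeing termination. Your version merely makes explicit the reduction to $G/Z$ and the bilinearity computation in the enlargement step, which the paper leaves implicit.
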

\begin{proof}

Let $g$ be a nontrivial element in $G$. Now we take a cyclic subgroup $H_0$ generated by $g$, i.e., $H_0=<g>$. Then 
$X_\rho(g,g)=1$ implies $H_0\subseteq H_{0}^{\perp}$. If $H_0$ is not maximal isotropic, then the inclusion is proper and $H_0$ together
with some $h\in H_{0}^{\perp}\setminus H_0$ generates some larger isotropic subgroup $H_1\supset H_0$. Again we have 
$H_1\subseteq H_{1}^{\perp}$, and if $H_1$ is not maximal then the inclusion is proper, then again we proceed same method and will 
have another isotopic subgroup and we continue this process step by step  come to maximal isotropic subgroup $H$.

Therefore for every element $g\in G$, we would have a maximal subgroup $H$ such that $g\in H$.    
\end{proof}

\begin{rem}
  
From equation (\ref{eqn 2.22}) we can say that $\varepsilon$ is a homomorphism when it restricts to $H$
because $X_\rho|_{H\times H}=1$.
Also from Lemma
\ref{Lemma 2.13}, if $g\in G$, then there always exists a maximal isotropic subgroup
$H$ such that $g=h\in H$. Since $H$ is normal, then 
 $\Delta_{H}^{G}=\Delta_{1}^{G/H}$ is trivial on $H$ and we see in 
particular for $h\in H$
\begin{equation}\label{eqn 2.24}
 (\mathrm{det}\rho)(h)=\chi_H(h)^{d}\chi_H(\varphi_{G/H}(h))=\chi_\rho(h^d)\cdot\chi_\rho([h,\alpha_{G/H}]),
\end{equation}
since $[h,\alpha_{G/H}]\in Z$.

The formula (\ref{eqn 2.24}) reformulates as:
\begin{equation}\label{eqn 2.25}
 (\det\rho)(h)=\chi_\rho(h^d)\cdot\chi_\rho([h,\alpha_{G/H}])=\chi_\rho(h^d)\cdot X_\rho(h,\alpha_{G/H}),
\end{equation}
if $g=h$ sits in some maximal isotropic $H$, and $\alpha_{G/H}\in G/H$ is as above (the product over all elements from $G/H$).
Of course some $g\in G$ can sit in several different maximal isotropic $H$. So it is a little mysterious that the result 
(=left side of the formula (\ref{eqn 2.25})) is independent from that $H$. Moreover, if $H$ is maximal isotropic then 
\begin{center}
 $G/H\cong\widehat{H/Z}$, \hspace{.5cm} $g\mapsto\{h\mapsto X(h,g)\}$.
\end{center}
Now $\alpha_{G/H}\ne 1$ means that $G/H$ has precisely one element of order $2$, equivalently $H/Z$ has precisely one 
character of order $2$, equivalently $H/H^2Z$ is of order $2$. Therefore equation (\ref{eqn 2.25}) can be reformulated as to
say that in the critical case:
\begin{center}
 $(\det\rho)(h)=\chi_\rho(h^d)$ or $=-\chi_\rho(h^d)$ depending on $h\in H^2Z$ or $h\not\in H^2Z$.
\end{center}

 \end{rem}

\chapter{\textbf{The local constants for Heisenberg representations}}

In this chapter we give an invariant formula of local constant for a Heisenberg representation $\rho$ of the absolute Galois group 
$G_F$ of a non-archimedean local field $F/\bbQ_p$ (cf. Theorem \ref{Theorem invariant odd}).
But for giving more explicit invariant formula for $W(\rho)$, we need to know the full information about the 
dimension of a Heisenberg representation. In Theorem \ref{Dimension Theorem}, we compute the dimension of a Heisenberg representation
$\rho$ of $G_F$.

In Section 5.1, we define U-isotopic Heisenberg representations and study their properties (e.g., dimensions, Artin conductors,
Swan conductors). In Theorem \ref{invariant formula for minimal conductor representation}, we give an invariant formula
of local constant of a minimal conductor Heisenberg representation $\rho$ of dimension prime to $p$. And when 
$\rho$ is not minimal conductor but dimension is prime to $p$, we have 
Theorem 
\ref{Theorem using Deligne-Henniart}.

In Section 5.3, we also discuss Tate's root-of-unity criterion, and by applying this Tate's criterion we give the information
when $W(\rho)$ will be a root of unity or not. This chapter is based on the article \cite{SAB2}.

\section{\textbf{Arithmetic description of Heisenberg representations}}

In Section 2.6 of Chapter 2, we became familiar with the notion of Heisenberg representations of a (pro-)finite group.
These Heisenberg representations have arithmetic structure due to E.-W. Zink (cf. \cite{Z2}, \cite{Z4}, \cite{Z5}).
For this chapter we need to describe the arithmetic structure of Heisenberg representations.

Let $F/\bbQ_p$ be a local field, and $\overline{F}$ be an algebraic closure of $F$. Denote $G_F=\rm{Gal}(\overline{F}/F)$ the 
absolute Galois group for $\overline{F}/F$. We know that (cf. \cite{HK2}, p. 197) each representation $\rho:G_F\to GL(n,\bbC)$ corresponds 
to a projective 
representation $\overline{\rho}:G_F\to GL(n,\bbC)\to PGL(n,\bbC)$. On the other hand, each projective representation 
$\overline{\rho}:G_F\to PGL(n,\bbC)$ can be lifted to a representation $\rho:G_F\to GL(n,\bbC)$.
Let $A_F=G_{F}^{ab}$ be the factor commutator group of $G_F$. Define 
\begin{center}
 $FF^\times:=\varprojlim(F^\times/N\wedge F^\times/N)$
\end{center}
where $N$ runs over all open subgroups of finite index in $F^\times$. Denote by $\rm{Alt}(F^\times)$ as the set of 
all alternating characters $X:F^\times\times F^\times\to\bbC^\times$ such that $[F^\times:\rm{Rad}(X)]<\infty$. Then the local 
reciprocity map gives an isomorphism between $A_F$ and the profinite completion of $F^\times$, and induces a natural bijection 
\begin{equation}
 \rm{PI}(A_F)\xrightarrow{\sim}\rm{Alt}(F^\times),
\end{equation}
where $\rm{PI}(A_F)$ is the set of isomorphism classes of projective irreducible representations of $A_F$.
By using class field theory from the commutator map (\ref{eqn 2.6.3}) (cf. p. 125 of \cite{Z5}) we obtain 
\begin{equation}\label{eqn 5.1.2}
 c:FF^\times\cong [G_F,G_F]/[[G_F,G_F], G_F].
\end{equation}
 
Let $K/F$ be an abelian extension corresponding to the norm subgroup $N\subset F^\times$ and if $W_{K/F}$ denotes the relative Weil 
group, the commutator map for $W_{K/F}$ induces an isomorphism (cf. p. 128 of \cite{Z5}):
\begin{equation}\label{eqn 5.1.3}
 c: F^\times/N\wedge F^\times/N\to K_{F}^{\times}/I_{F}K^\times,
\end{equation}
where 
\begin{center}
 $K_{F}^{\times}:=\{x\in K^\times|\quad N_{K/F}(x)=1\}$, i.e., the norm-1-subgroup of $K^\times$,\\
 $I_FK^\times:=\{x^{1-\sigma}|\quad x\in K^{\times}, \sigma\in \rm{Gal}(K/F)\}<K_{F}^{\times}$, the augmentation with respect to $K/F$. 
\end{center}
Taking the projective limit over all abelian extensions $K/F$ the isomorphisms (\ref{eqn 5.1.3}) induce:
\begin{equation}\label{eqn 5.1.4}
 c:FF^\times\cong \varprojlim K_{F}^{\times}/I_FK^\times,
\end{equation}
where the limit on the right side refers to norm maps. This gives an arithmetic description of Heisenberg representations of the 
group $G_F$.

\begin{thm}[Zink, \cite{Z2}, p. 301, Corollary 1.2]\label{Theorem 5.1.1}
 The set of Heisenberg representations $\rho$ of $G_F$ is in bijective correspondence with the set of all pairs $(X_\rho,\chi_\rho)$
 such that:
 \begin{enumerate}
  \item $X_\rho$ is a character of $FF^\times$,
  \item $\chi_\rho$ is a character of $K^{\times}/I_FK^\times$, where the abelian extension $K/F$ corresponds to the radical 
  $N\subset F^\times$ of $X_\rho$, and 
  \item via (\ref{eqn 5.1.3}) the alternating character $X_\rho$ corresponds to the restriction of $\chi_\rho$ to $K_{F}^{\times}$.
 \end{enumerate}

\end{thm}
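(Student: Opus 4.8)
$\textbf{Proof proposal.}$ The statement to prove is Zink's classification theorem establishing a bijection between Heisenberg representations $\rho$ of $G_F$ and pairs $(X_\rho,\chi_\rho)$ satisfying conditions (1)--(3). The plan is to combine the abstract group-theoretic classification of Heisenberg representations (Proposition \ref{Proposition 3.1}), which says that $\rho \mapsto (Z_\rho, \chi_\rho)$ is a bijection onto pairs consisting of a coabelian normal subgroup together with a $G$-invariant character whose associated commutator pairing is nondegenerate alternating, with the arithmetic identifications supplied by local class field theory in the isomorphisms (\ref{eqn 5.1.2}), (\ref{eqn 5.1.3}), and (\ref{eqn 5.1.4}). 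The key idea is that the abstract data $(Z_\rho, \chi_\rho)$ must be translated through the reciprocity map into arithmetic data living on $F^\times$ and on the norm-$1$ subgroups $K_F^\times$.

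First I would set up the dictionary. Given a Heisenberg representation $\rho$ of $G_F$, its scalar subgroup $Z_\rho$ is a coabelian normal subgroup, so $A_F := G_F^{\rm ab} = G_F/[G_F,G_F]$ receives the projective representation $\overline\rho$, and by the discussion preceding the theorem $\overline\rho \in \rm{PI}(A_F)$ determines an alternating character via the bijection $\rm{PI}(A_F) \xrightarrow{\sim} \rm{Alt}(F^\times)$. Transporting through the reciprocity isomorphism between $A_F$ and the profinite completion of $F^\times$, this alternating character becomes a character $X_\rho$ of $FF^\times = \varprojlim (F^\times/N \wedge F^\times/N)$, giving condition (1). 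The radical $\rm{Rad}(X_\rho)$ corresponds under reciprocity to a norm subgroup $N \subset F^\times$, hence to an abelian extension $K/F$ with $\rm{Gal}(K/F) \cong F^\times/N$; this $K$ is exactly the fixed field attached to $Z_\rho$, so that $Z_\rho = G_K$. The central character $\chi_\rho$ of $Z_\rho = G_K$ then transports to a character of $K^\times$, and the relevant point is that it factors through $K^\times/I_F K^\times$, giving condition (2).

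The heart of the argument --- and the step I expect to be the main obstacle --- is verifying the compatibility condition (3), namely that under the commutator isomorphism (\ref{eqn 5.1.3}), $c : F^\times/N \wedge F^\times/N \xrightarrow{\sim} K_F^\times/I_F K^\times$, the alternating character $X_\rho$ corresponds precisely to the restriction $\chi_\rho|_{K_F^\times}$. This requires unwinding the definition of the commutator map in the relative Weil group $W_{K/F}$ and checking that the nondegenerate alternating pairing $X(\hat g_1,\hat g_2) = \chi_\rho([g_1,g_2])$ from Proposition \ref{Proposition 3.1}(c) matches, under reciprocity, the arithmetic pairing induced by $c$. Concretely, for $g_1, g_2 \in G_F$ one has $[g_1,g_2] \in [G_F,G_F]$ mapping into $Z_\rho = G_K$, and one must trace that the value $\chi_\rho([g_1,g_2])$ equals $\chi_\rho$ applied to the image under $c$ of the corresponding wedge in $F^\times/N \wedge F^\times/N$. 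Here the identification (\ref{eqn 5.1.2}), $c : FF^\times \cong [G_F,G_F]/[[G_F,G_F],G_F]$, is the crucial bridge, since Heisenberg representations are trivial on triple commutators $[[G_F,G_F],G_F]$, so that $\chi_\rho$ is well-defined on the quotient $[G_F,G_F]/[[G_F,G_F],G_F] \cong FF^\times$.

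To finish, I would check that the correspondence is a genuine bijection by constructing the inverse: starting from a pair $(X_\rho,\chi_\rho)$ with properties (1)--(3), one defines $K/F$ as the abelian extension attached to $\rm{Rad}(X_\rho)$, sets $Z_\rho = G_K$, reads $\chi_\rho$ as a $G_F$-invariant character of $Z_\rho$ (invariance following from $\chi_\rho$ being defined on the augmentation quotient $K^\times/I_F K^\times$, since $I_F K^\times$ is generated by the $x^{1-\sigma}$), and invokes the nondegeneracy built into condition (3) together with (\ref{eqn 5.1.3}) to see that the associated commutator pairing on $G_F/Z_\rho$ is nondegenerate alternating. Proposition \ref{Proposition 3.1} then produces a unique Heisenberg representation $\rho$ from $(Z_\rho,\chi_\rho)$, and the two constructions are mutually inverse by the functoriality of the reciprocity map. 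The only genuinely delicate point throughout is ensuring all the class-field-theoretic identifications are compatible with the projective limits in (\ref{eqn 5.1.4}), so that passing between finite levels $K/F$ and the full group $G_F$ does not disturb the correspondence.
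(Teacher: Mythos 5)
The paper itself offers no proof of this statement: Theorem \ref{Theorem 5.1.1} is quoted verbatim as Corollary 1.2 of Zink's paper \cite{Z2}, and the thesis only assembles the surrounding dictionary (the commutator isomorphisms (\ref{eqn 5.1.2})--(\ref{eqn 5.1.4}) before the theorem, and the inductive construction $\sqrt{[F^\times:N]}\cdot\rho=\rm{Ind}_{K/F}(\chi)$ after it) without verifying the bijection. So there is no in-paper argument to compare yours against; the relevant comparison is with Zink's original proof.

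That said, your outline is the correct and essentially canonical route, and it is exactly the one the paper's setup is designed to support: start from the group-theoretic classification $\rho\leftrightarrow(Z_\rho,\chi_\rho)$ of Proposition \ref{Proposition 3.1}, identify the coabelian subgroup $Z_\rho=G_K$ with an abelian extension $K/F$ and hence with a norm subgroup $N=\rm{Rad}(X_\rho)\subset F^\times$, translate $G_F$-invariance of $\chi_\rho$ into triviality on $I_FK^\times$, and match the commutator pairing $\chi_\rho([g_1,g_2])$ with the arithmetic pairing through the Weil-group commutator map (\ref{eqn 5.1.3}). Two caveats. First, the step you rightly single out as the heart of the matter --- that $X_\rho(\hat g_1,\hat g_2)=\chi_\rho([g_1,g_2])$ corresponds under $c$ to $\chi_\rho|_{K_{F}^{\times}}$ --- is only described, not carried out; establishing that $c$ in (\ref{eqn 5.1.3}) is a well-defined isomorphism and is literally induced by the commutator in $W_{K/F}$ is the substantive content of Zink's result, so as written your argument is a plan rather than a proof. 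Second, you should make explicit that condition (1) carries the finiteness constraint $[F^\times:\rm{Rad}(X_\rho)]<\infty$ built into $\rm{Alt}(F^\times)$ (equivalently, continuity and finite-dimensionality of $\rho$), and that nondegeneracy of the induced pairing on $F^\times/N$ is what forces $N$ to equal the radical; without these the correspondence is not well posed in either direction.
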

 Given a pair $(X,\chi)$, we can construct the Heisenberg representation $\rho$ by induction from $G_K:=\rm{Gal}(\overline{F}/K)$ to 
 $G_F$:
 \begin{equation}\label{eqn 5.1.5}
 \sqrt{[F^\times:N]}\cdot\rho=\rm{Ind}_{K/F}(\chi),
 \end{equation}
where $N$ and $K$ are as in (2) of the above Theorem \ref{Theorem 5.1.1}
and where the induction of $\chi$ (to be considered as a character of $G_K$ by class
field theory) produces a multiple of $\rho$. From 
$[F^\times:N]=[K:F]$ we obtain the {\bf dimension formula:}
\begin{equation}\label{eqn dimension formula}
 \rm{dim}(\rho)=\sqrt{[F^\times:N]},
\end{equation}
where $N$ is the radical of $X$.

Let $K/E$ be an extension of $E$, and $\chi_K:K^\times\to\bbC^\times$ be a character of $K^\times$. In the following lemma, we 
give the conditions of the existence of characters $\chi_E\in\widehat{E^\times}$ such that $\chi_E\circ N_{K/E}=\chi_K$, 
and the solutions set 
of this $\chi_E$. 

\begin{lem}\label{Lemma 5.1.4}
Let $K/E$ be a finite extension of a field $E$, and $\chi_K: K^\times\to\bbC^\times$.  
 \begin{enumerate}
  \item[(i)] The existence of characters $\chi_E: E^\times\to\bbC^\times$ such that $\chi_E\circ N_{K/E}=\chi_K$
  is equivalent to $K_{E}^{\times}\subset\rm{Ker}(\chi_K)$.
  \item[(ii)] In case (i) is fulfilled, we have a well defined character 
  \begin{equation}
   \chi_{K/E}:=\chi_K\circ N_{K/E}^{-1}:\mathcal{N}_{K/E}\to \bbC^\times,
  \end{equation}
on the subgroup of norms $\mathcal{N}_{K/E}:=N_{K/E}(K^\times)\subset E^\times$, and the solutions $\chi_E$ such that 
$\chi_E\circ N_{K/E}=\chi_K$ are precisely the extensions of $\chi_{K/E}$ from $\mathcal{N}_{K/E}$ to a character of 
$E^\times$.
 \end{enumerate}
\end{lem}
\begin{proof}
{\bf (i)}
Suppose that an equation $\chi_K=\chi_E\circ N_{K/E}$ holds.
Let $x\in K_{E}^{\times}$, hence $N_{K/E}(x)=1$. Then 
$$\chi_K(x)=\chi_E\circ N_{K/E}(x)=\chi_E(1)=1.$$
So $x\in\rm{Ker}(\chi_K)$, and hence $K_{E}^{\times}\subset \rm{Ker}(\chi_K)$.

Conversely assume that $K_{E}^{\times}\subset\rm{Ker}(\chi_K)$. 
 Then $\chi_K$ is actually a character of $K^\times/K_{E}^{\times}$. Again we have
 $K^\times/K_{E}^{\times}\cong \mathcal{N}_{K/E}\subset E^\times$, 
 hence $\widehat{K^\times/K_{E}^{\times}}\cong \widehat{\mathcal{N}_{K/E}}$.
 Now suppose that $\chi_K$ corresponds to the character $\chi_{K/E}$ of $\mathcal{N}_{K/E}$. Hence 
 we can write $\chi_K\circ N_{K/F}^{-1}=\chi_{K/E}$. Thus 
 the character $\chi_{K/E}:\mathcal{N}_{K/E}\to\bbC^\times$
 is well defined. Since $E^\times$ is an abelian group and $\mathcal{N}_{K/E}\subset E^\times$ is a subgroup of finite index
 (by class field theory) $[K:E]$,
 we can extend $\chi_{K/E}$ to $E^\times$, and $\chi_K$ is of the form $\chi_K=\chi_E\circ N_{K/E}$ 
 with $\chi_E|_{\cN_{K/E}}=\chi_{K/E}$.\\
 {\bf (ii)}
 If condition (i) is satisfied, then this part is obvious. 
 If $\chi_E$ is a solution of $\chi_K=\chi_E\circ N_{K/E}$, 
 with $\chi_{K/E}:=\chi_K\circ N_{K/E}^{-1}:\mathcal{N}_{K/E}\to\bbC^\times$, then certainly $\chi_E$ is an extension of 
 the character $\chi_{K/E}$. 
 
 Conversely, if $\chi_E$ extends $\chi_{K/E}$, then it is a solution of $\chi_K=\chi_E\circ N_{K/E}$ with 
 $\chi_K\circ N_{K/E}^{-1}=\chi_{K/E}:\mathcal{N}_{K/E}\to\bbC^\times$.
\end{proof}

\begin{rem}
Now take Heisenberg representation $\rho=\rho(X,\chi_K)$ of $G_F$. Let $E/F$ be any extension corresponding to a maximal 
isotropic for $X$. In this Heisenberg setting, from Theorem \ref{Theorem 5.1.1}(2), we know $\chi_K$ is a character of 
$K^\times/I_FK^\times$, and from the first commutative diagram on p. 302 of \cite{Z2} we have 
$N_{K/E}:K_F^\times/I_FK^\times\to E_F^\times/I_F\cN_{K/E}$. Thus in the Heisenberg setting,
 we have more information than Lemma \ref{Lemma 5.1.4}(i), that $\chi_K$ is a character of 
 \begin{equation}
  K^\times/K_{E}^{\times}I_FK^\times\xrightarrow{N_{K/E}}\mathcal{N}_{K/E}/I_F\mathcal{N}_{K/E}\subset E^\times/I_F\mathcal{N}_{K/E},
 \end{equation}
and therefore $\chi_{K/F}$ is actually a character of $\mathcal{N}_{K/E}/I_F\mathcal{N}_{K/E}$, or in other words, it is a 
$\rm{Gal}(E/F)$-invariant character of the $\rm{Gal}(E/F)$-module $\mathcal{N}_{K/E}\subset E^\times$. And if $\chi_E$ is one of 
the solution of Lemma \ref{Lemma 5.1.4}(ii), then the complete solutions is the set $\{\chi_E^\sigma\,|\,\sigma\in \rm{Gal}(E/F)\}$.

{\bf 
We know that $W(\chi_E,\psi\circ\rm{Tr}_{K/E})$ has the same value for all solutions $\chi_E$ of $\chi_E\circ N_{K/E}=\chi_K$,
which means for all $\chi_E$ which extend the character $\chi_{K/E}$}.

Moreover, from the above Lemma \ref{Lemma 5.1.4}, we also can see that $\chi_E|_{\mathcal{N}_{K/E}}=\chi_{K}\circ N_{K/E}^{-1}$.

\end{rem}

Let $\rho=\rho(X,\chi_K)$ be a Heisenberg representation of $G_F$. Let $E/F$ be any extension corresponding to a maximal 
isotropic for $X$. Then by using the above Lemma \ref{Lemma 5.1.4}, we have the following lemma. 

\begin{lem}\label{Lemma 5.1.44}
  Let $\rho=\rho(Z,\chi_\rho)=\rho(\rm{Gal}(L/K),\chi_K)$ be a Heisenberg representation of a finite local Galois group 
  $G=\rm{Gal}(L/F)$, where $F$ is a non-archimedean local field. Let $H=\rm{Gal}(L/E)$ be a maximal isotropic for 
  $\rho$. Then we obtain
  \begin{equation}
   \rho=\rm{Ind}_{E/F}(\chi_{E}^{\sigma})\quad\text{for all $\sigma\in\rm{Gal}(E/F)$},
  \end{equation}
 where $\chi_E:E^\times/I_F\cN_{K/E}\to\bbC^\times$ with $\chi_K=\chi_E\circ N_{K/E}$.\\
 Moreover, for a fixed base field $E$ of a maximal isotropic for $\rho$, this construction of 
 $\rho$ is independent of the choice of this character $\chi_E$.
 \end{lem}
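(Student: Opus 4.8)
The plan is to transport the purely group-theoretic description of a Heisenberg representation to the arithmetic side via class field theory, and then to read off both assertions from the conjugacy properties of the extending characters. First I would record the structural fact that makes everything run: since $\rho$ is Heisenberg we have $[G,G]\subseteq Z$, so $G/Z$ is abelian, and therefore every subgroup lying between $Z$ and $G$ — in particular the maximal isotropic $H=\mathrm{Gal}(L/E)$ — is normal in $G$. Consequently $E/F$ is Galois with $\mathrm{Gal}(E/F)\cong G/H$, which is exactly what gives the conjugates $\chi_E^{\sigma}$ a meaning.

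Next I would invoke the fundamental Heisenberg identity (\ref{eqn 2.3.3}): $\rho=\mathrm{Ind}_H^G\chi_H$ for any linear character $\chi_H$ of $H$ extending $\chi_\rho=\chi_K$. Interpreting $H=\mathrm{Gal}(L/E)$ by class field theory, $\chi_H$ becomes a character $\chi_E$ of $E^\times$. The inclusion $G_K\hookrightarrow G_E$ corresponds under the reciprocity map to the norm $N_{K/E}\colon K^\times\to E^\times$, so the condition $\chi_H|_Z=\chi_\rho$ translates precisely into $\chi_K=\chi_E\circ N_{K/E}$; the existence of such a $\chi_E$ is furnished by Lemma \ref{Lemma 5.1.4}, whose hypothesis $K_{E}^{\times}\subseteq\mathrm{Ker}(\chi_K)$ is nothing but the isotropy of $H$ (as recorded in the remark following that lemma, $\chi_K$ factors through $K^\times/K_{E}^{\times}I_FK^\times$). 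This already gives $\rho=\mathrm{Ind}_{E/F}(\chi_E)$. To obtain the whole orbit, I would use the invariance of induction under conjugation of the inducing character, $\mathrm{Ind}_H^G(\chi_H^{g})\cong\mathrm{Ind}_H^G(\chi_H)$ for all $g\in G$. By Remark \ref{Remark 3.2} the extensions of $\chi_\rho$ to $H$ are exactly the conjugates $\{\chi_H^{g}:g\in G/H\}$, and under the class field theory dictionary these match the Galois conjugates $\{\chi_E^{\sigma}:\sigma\in\mathrm{Gal}(E/F)\}$; hence $\rho=\mathrm{Ind}_{E/F}(\chi_E^{\sigma})$ for every $\sigma$, which is the first assertion.

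For the second assertion I would appeal to Lemma \ref{Lemma 5.1.4}(ii): the characters $\chi_E$ solving $\chi_E\circ N_{K/E}=\chi_K$ are precisely the extensions of $\chi_{K/E}=\chi_K\circ N_{K/E}^{-1}$ from $\mathcal{N}_{K/E}$ to $E^\times$. By the remark following that lemma these solutions, for a fixed base field $E$, constitute the single Galois orbit $\{\chi_E^{\sigma}:\sigma\in\mathrm{Gal}(E/F)\}$, the counting being consistent since $[H:Z]=[G:H]=\dim\rho$ forces $[K:E]=[E:F]$. Since Part 1 already shows that every member of this orbit induces the same $\rho$, independence of the choice of $\chi_E$ is immediate.

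The step I expect to be the main obstacle is the careful bookkeeping of the class field theory dictionary: verifying that restriction of Galois characters corresponds to precomposition with the norm map (so that $\chi_H|_Z=\chi_\rho$ becomes $\chi_K=\chi_E\circ N_{K/E}$), and that $G/H$-conjugation of $\chi_H$ matches $\mathrm{Gal}(E/F)$-conjugation of $\chi_E$ compatibly with the $\mathrm{Gal}(E/F)$-module structure on $\mathcal{N}_{K/E}$ described in the remark. Once these functoriality identifications are pinned down, both parts follow formally from Remark \ref{Remark 3.2} and the conjugation-invariance of induced representations.
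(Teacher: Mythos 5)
Your proposal is correct and follows essentially the same route as the paper's proof: the group-theoretic identity $\rho=\mathrm{Ind}_{H}^{G}(\chi_H^g)$ together with Remark \ref{Remark 3.2}, the translation of the isotropy of $H$ into the condition $K_E^\times\subseteq\mathrm{Ker}(\chi_K)$ (which the paper verifies via the commutator-map diagram underlying the remark after Lemma \ref{Lemma 5.1.4}), and then Lemma \ref{Lemma 5.1.4} plus the observation that $\chi_E^{\sigma-1}\circ N_{K/E}\equiv 1$ to see that the whole Galois orbit $\{\chi_E^\sigma\}$ induces the same $\rho$. No gaps.
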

\begin{proof}

From the group theoretical construction of Heisenberg representation (cf. see Section 2.6), we can write 
\begin{equation}
 \rho=\rm{Ind}_{H}^{G}(\chi_{H}^{g}), \quad\text{for all $g\in G/H$},
\end{equation}
where $\chi_H:H\to\bbC^\times$ is an extension of $\chi_\rho$. From Remark \ref{Remark 3.2} we know that all extensions of 
character $\chi_\rho$ are conjugate with respect to $G/H$, and they are different. If we fix $H$, then $\rho$ is independent
of the choice of character $\chi_H$. For every extension of $\chi_\rho$ we will have same $\rho$. The assertion of the lemma
is the arithmetic expression of this group theoretical facts, and which we will prove in the following.

By the given conditions,
 $L/F$ is a finite Galois extension of the local field $F$ and $G=\rm{Gal}(L/F)$, and 
 $H=\mathrm{Gal}(L/E)$, $Z=\mathrm{Gal}(L/K)$ and $\{1\}=\mathrm{Gal}(L/L)$.
Then by class
field theory, equation (\ref{eqn 5.1.3}), and the condition $X:=\chi_K\circ [-,-]$, 
$\chi_\rho$ identifies with a character 
\begin{center}
 $\chi_K: K^\times/I_FK^\times\to\mathbb{C}^\times$.
\end{center}
Moreover, for the Heisenberg representations we also have the following commutative diagram 

\begin{equation}
\begin{CD}
K^\times_E/I_EK^\times                    @>inclusion>>                         K^\times_F/I_FK^\times\\
@AAcA                                                                  @AAcA \\
E^\times/\cN_{K/E} \wedge E^\times/\cN_{K/E}  @>{N_{E/F}\wedge N_{E/F}}>>  F^\times/\cN_{K/F}\wedge  F^\times/\cN_{K/F}
\end{CD}
\end{equation}
where $N_{E/F}\wedge N_{E/F}(a\wedge b)=N_{E/F}(a)\wedge N_{E/F}(b)$ for all $a,b\in E^\times$, and
the vertical isomorphisms in upward direction are given as the
commutator maps (cf. equation (\ref{eqn 5.1.3})) in the Weil groups $W_{K/E}/I_EK^\times$ and
$W_{K/F}/I_FK^\times$ respectively. 
Under the right vertical $\chi_K$ corresponds (cf. Theorem \ref{Theorem 5.1.1}(3)) to the alternating
character $X$ which is trivial on $N_{E/F}\wedge N_{E/F},$ because $H$
corresponding to $E^\times$ is isotropic.
The commutative diagram  now shows that $\chi_K$ must be trivial on the
image of the upper horizontal, i.e., $\chi_K$ is trivial on the subgroups $K_{E}^{\times}$ for all maximal isotropic $E$. 
Hence $\chi_K$ is actually a character of $K^\times/K_{E}^{\times}$. 

Then from Lemma \ref{Lemma 5.1.4} we can say that there exists a character $\chi_E:E^\times/I_F\cN_{K/E}\to \bbC^\times$ such that 
$\chi_K=\chi_E\circ N_{K/E}$. And this $\chi_E$ is determined by the character $\chi_H$. 
Since $\chi_E$ is trivial on $I_F\mathcal{N}_{K/E}$,
for $\sigma\in G/H=\mathrm{Gal}(E/F)$ we have $\chi_{E}^{\sigma}\circ N_{K/E}=\chi_E\circ N_{K/E}=\chi_K$ because
$\chi_{E}^{\sigma-1}\circ N_{K/E}\equiv 1$.

Therefore instead of $\rho=\mathrm{Ind}_{H}^{G}(\chi_{H}^{g})$ for all $g\in G/H$, we obtain
\begin{center}
 $\rho=\rm{Ind}_{E/F}(\chi_{E}^{\sigma})$, for all $\sigma\in\rm{Gal}(E/F)$,
\end{center}
independently of the choice of $\chi_E$.

\end{proof}

\begin{rem}\label{Remark 5.1.5}
Moreover, we have the exact sequence
\begin{align}\label{sequence 5.1.2}
 K^\times/I_FK^\times\xrightarrow{N_{K/E}} E^\times/I_F\mathcal{N}_{K/E}\xrightarrow{N_{E/F}} F^\times/\mathcal{N}_{K/F},
\end{align}
which is only exact in the middle term. For the dual groups this gives
\begin{align}\label{sequence 5.1.3}
 \widehat{K^\times/I_FK^\times}\xleftarrow{N_{K/E}^{*}} \widehat{E^\times/I_F\mathcal{N}_{K/E}}
 \xleftarrow{N_{E/F}^{*}} \widehat{F^\times/\mathcal{N}_{K/F}}.
\end{align}
But $N_{K/E}^{*}(\chi_{E}^{\sigma-1})=\chi_{E}^{\sigma-1}\circ N_{K/E}\equiv 1$, and therefore the exactness of sequence 
(\ref{sequence 5.1.3}) yields
\begin{equation}
\chi_{E}^{\sigma-1}=\chi_F\circ N_{E/F}, \quad\text{ for some $\chi_F\in\widehat{F^\times/\mathcal{N}_{K/F}}$},
\end{equation}
\end{rem}

For our (arithmetic) determinant computation of Heisenberg representation $\rho$ of $G_F$, we need the following lemma regarding 
transfer map.

\begin{lem}\label{Lemma transfer Heisenberg}
 Let $\rho=\rho(Z,\chi_\rho)$ be a Heisenberg representation of a group $G$ and assume that $H/Z\subset G/Z$ is a maximal
 isotropic for $\rho$. Then transfer map $T_{(G/Z)/(H/Z)}\equiv1$ is the trivial map.
\end{lem}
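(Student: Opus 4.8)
The plan is to reduce the statement to the fact that $G/Z$ is abelian and that on an abelian group the transfer is simply a power map. Write $\bar G := G/Z$ and $\bar H := H/Z$. By Proposition \ref{Proposition 3.1} the scalar group $Z=Z_\rho$ is coabelian, so $\bar G$ is abelian; consequently $\bar H$ is abelian, $[\bar H,\bar H]=\{1\}$, and the transfer is an honest homomorphism $T_{\bar G/\bar H}\colon \bar G\to \bar H$ (no passage to an abelianization of $\bar H$ is needed).

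The key computation I would carry out is that for an abelian group the transfer to a subgroup of finite index is the power map by the index. Fix a transversal $\{t_1,\dots,t_m\}$ for $\bar H$ in $\bar G$, where $m=[\bar G:\bar H]$, and for $\bar g\in\bar G$ let $i\mapsto g(i)$ be the induced permutation, so that $t_{g(i)}^{-1}\bar g\, t_i\in\bar H$. Using commutativity of $\bar G$ I would rewrite each factor as $t_{g(i)}^{-1}\bar g\, t_i=\bar g\cdot t_{g(i)}^{-1}\cdot t_i$ and obtain
\begin{equation*}
 T_{\bar G/\bar H}(\bar g)=\prod_{i=1}^{m}t_{g(i)}^{-1}\bar g\, t_i
 =\bar g^{\,m}\cdot\Big(\prod_{i=1}^{m}t_{g(i)}^{-1}\Big)\Big(\prod_{i=1}^{m}t_i\Big)=\bar g^{\,m},
\end{equation*}
since $i\mapsto g(i)$ is a permutation, whence $\prod_i t_{g(i)}^{-1}=\big(\prod_i t_i\big)^{-1}$ in the abelian group $\bar G$.

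It then remains to verify that $m$ is a multiple of the exponent of $\bar G$. Because $H$ is maximal isotropic, equation (\ref{eqn 37}) gives $m=[\bar G:\bar H]=[G:H]=\dim(\rho)$. On the other hand, $X_\rho$ is a nondegenerate alternating character on $\bar G$, so Lemma \ref{Theorem 2.4} yields
\begin{equation*}
 \bar G\cong \big(\bbZ/m_1\times\bbZ/m_1\big)\times\cdots\times\big(\bbZ/m_s\times\bbZ/m_s\big),\qquad m_1\mid\cdots\mid m_s,
\end{equation*}
so that $\dim(\rho)=\sqrt{[G:Z]}=m_1\cdots m_s$ while the exponent of $\bar G$ equals $m_s$. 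Since $m_s$ is one of the factors of $m=m_1\cdots m_s$, it divides $m$, and therefore $\bar g^{\,m}=\big(\bar g^{\,m_s}\big)^{m/m_s}=1$ for every $\bar g\in\bar G$. Combining this with the previous paragraph gives $T_{\bar G/\bar H}(\bar g)=\bar g^{\,m}=1$, i.e.\ $T_{(G/Z)/(H/Z)}\equiv 1$.

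The argument is self-contained, so there is no serious obstacle; the only genuine input is the structure theorem (Lemma \ref{Theorem 2.4}), which forces the exponent $m_s$ of $G/Z$ to divide the dimension $m=m_1\cdots m_s$. The one point demanding a little care is the bookkeeping identifying the transfer index with $\dim(\rho)$ and checking the divisibility $m_s\mid m$; I also would note the degenerate case in which $\rho$ is one-dimensional, where $\bar G$ is trivial and the transfer is trivially the identity.
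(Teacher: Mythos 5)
Your proof is correct, and its skeleton matches the paper's: on the abelian quotient $\bar G=G/Z$ the transfer to a finite-index subgroup is the power map by the index, the index is $\dim(\rho)$, and the $\dim(\rho)$-th power of every element of $\bar G$ is trivial. The one place where you genuinely diverge is the justification of that last divisibility. The paper cites the general fact (Isaacs) that the transfer to a central subgroup of index $n$ is $g\mapsto g^n$, and then invokes the consequence $G^d\subseteq Z$ of Lemma \ref{Lemma 2.10}, i.e.\ it leans on the two-step-nilpotent transfer machinery developed in Chapter 4. You instead prove the power-map formula directly from the transversal definition (a clean, self-contained computation) and obtain $\bar g^{\,m}=1$ from the symplectic structure theorem (Lemma \ref{Theorem 2.4}): $\bar G\cong\prod_i(\bbZ/m_i)^2$ with $m_1\mid\cdots\mid m_s$, so its exponent $m_s$ divides $m=m_1\cdots m_s=\dim(\rho)$. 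Your route is more elementary in that it avoids Lemma \ref{Lemma 2.10} entirely and uses only the nondegeneracy of $X_\rho$ on $G/Z$; the paper's route is shorter on the page but imports a heavier prior result. Both are complete, and your attention to the degenerate one-dimensional case is a harmless but correct extra check.
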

\begin{proof}
 In general, if $H$ is a central subgroup\footnote{A subgroup of a group which lies inside the center of the group, i.e., 
 a subgroup $H$ of $G$ is central if $H\subseteq Z(G)$.} of finite index $n=[G:H]$ of a group $G$, then by Theorem 5.6 on p. 154 of \cite{MI} we have 
 $T_{G/H}(g)=g^n$. If $G$ is abelian, then center $Z(G)=G$. Hence every subgroup of $G$ is central subgroup. Now if we take $G$ as 
 an abelian group and $H$ is a subgroup of finite index, then we can write $T_{G/H}(g)=g^{[G:H]}$.
 
 Now we come to the Heisenberg setting. We know that $G/Z$ is abelian, hence $H/Z\subset G/Z$ is a central subgroup.
 Then we have $T_{(G/Z)/(H/Z)}(g)=g^{[G/Z:H/Z]}=g^d$, where $d$ is the dimension of $\rho$.
 For the Heisenberg setting, we also know (cf. Lemma \ref{Lemma 2.10}) that $G^d\subseteq Z$, hence $g^d\in Z$. This implies 
 $$T_{(G/Z)/(H/Z)}(g)=g^d=1,\quad\text{the identity in $H/Z$},$$
 for all $g\in G$, hence $T_{(G/Z)/(H/Z)}\equiv1$ is a trivial map.
\end{proof}

By using the above Lemma \ref{Lemma 5.1.4} and 
Lemma \ref{Lemma transfer Heisenberg}, in the following, we give the 
arithmetic description of Proposition \ref{Proposition 2.13}.

\begin{prop}\label{Proposition arithmetic form of determinant}
 Let $\rho=\rho(Z,\chi_\rho)=\rho(G_K,\chi_K)$ be a Heisenberg representation of the absolute Galois group $G_F$.
 Let $E$ be a base field of a maximal isotropic for $\rho$. Then $F^\times\subseteq \cN_{K/E}$, and
 \begin{equation}\label{eqn 5.1.12}
  \det(\rho)(x)=\Delta_{E/F}(x)\cdot\chi_K\circ N_{K/E}^{-1}(x)\quad \text{for all $x\in F^\times$},
 \end{equation}
where, for all $x\in F^\times$,
\begin{equation}\label{eqn 5.1.13}
 \Delta_{E/F}(x)=\begin{cases}
                  1 & \text{when $\rm{rk}_2(\rm{Gal}(E/F))\ne 1$}\\
                  \omega_{E'/F}(x) & \text{when $\rm{rk}_2(\rm{Gal}(E/F))= 1$},
                 \end{cases}
\end{equation}
where $E'/F$ is a uniquely determined quadratic subextension in $E/F$, and $\omega_{E'/F}$ is the character of $F^\times$ which 
corresponds to $E'/F$ by class field theory.
\end{prop}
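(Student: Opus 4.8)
The plan is to reduce the determinant of $\rho$ to Gallagher's formula for an induced character and then translate every ingredient into class field theory. First I set up the field tower. Writing $G=G_F$, $Z=G_K$ and $H=G_E$, the scalar group $Z$ is normal in $G$, and since $H/Z$ is a subgroup of the \emph{abelian} group $G/Z=\mathrm{Gal}(K/F)$ it is automatically normal; hence $H\vartriangleleft G$ and $E/F$ is an abelian Galois extension with $\mathrm{Gal}(E/F)=G/H$, itself a quotient of $G/Z$. Thus $K/F$, $E/F$ and $K/E$ are all abelian, with $[E:F]=[K:E]=d=\mathrm{dim}(\rho)$ and $[K:F]=d^2$. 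By Lemma \ref{Lemma 5.1.44} I may write $\rho=\mathrm{Ind}_{E/F}(\chi_E)$ for a character $\chi_E$ of $E^\times/I_F\cN_{K/E}$ with $\chi_K=\chi_E\circ N_{K/E}$, and by Lemma \ref{Lemma 5.1.4} this forces $\chi_E|_{\cN_{K/E}}=\chi_K\circ N_{K/E}^{-1}$.

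Next I prove the inclusion $F^\times\subseteq\cN_{K/E}$, which is both part of the assertion and what makes $\chi_K\circ N_{K/E}^{-1}(x)$ meaningful for $x\in F^\times$. Here I use the compatibility of the local reciprocity maps with the transfer: for $x\in F^\times$ one has $\mathrm{rec}_E(x)=\mathrm{Ver}(\mathrm{rec}_F(x))$ in $\mathrm{Gal}(K/E)$, where $\mathrm{Ver}\colon\mathrm{Gal}(K/F)\to\mathrm{Gal}(K/E)$ is the Verlagerung. Since $\mathrm{Gal}(K/F)$ is abelian and $\mathrm{Gal}(K/E)$ has index $d$ in it, the transfer is the $d$-th power map, so $\mathrm{rec}_E(x)=\mathrm{rec}_F(x)^d=\mathrm{rec}_F(x^d)$. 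In the Heisenberg setting $g^d\in Z$ for every $g\in G$ (this is exactly the content of Lemma \ref{Lemma transfer Heisenberg}), so $\mathrm{Gal}(K/F)=F^\times/\cN_{K/F}$ has exponent dividing $d$ and $x^d\in\cN_{K/F}$. Hence $\mathrm{rec}_F(x^d)=1$, giving $\mathrm{rec}_E(x)=1$, i.e. $x\in\cN_{K/E}$.

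Finally I combine Gallagher's theorem with these inputs. Applying Theorem \ref{Theorem Gall} to $\rho=\mathrm{Ind}_H^G\chi_E$ with $\mathrm{dim}\,\chi_E=1$ gives $\det(\rho)(g)=\Delta_H^G(g)\cdot\chi_E(T_{G/H}(g))$. Under class field theory $T_{G/H}$ is the transfer $G_F^{ab}\to G_E^{ab}$, which corresponds to the inclusion $F^\times\hookrightarrow E^\times$; so for $x\in F^\times$ the factor $\chi_E(T_{G/H}(x))$ is just $\chi_E(x)$, and by the previous paragraph together with $\chi_E|_{\cN_{K/E}}=\chi_K\circ N_{K/E}^{-1}$ this equals $\chi_K\circ N_{K/E}^{-1}(x)$. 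It remains to evaluate $\Delta_{E/F}=\Delta_H^G=\det(\mathrm{Ind}_H^G 1_H)$. Because $H\vartriangleleft G$, this is the determinant of the regular representation of the abelian group $\mathrm{Gal}(E/F)$, so by the Miller-type computation (\ref{eqn 3.31}) (equivalently Theorem \ref{Theorem 1.3}) it is trivial unless $\mathrm{rk}_2(\mathrm{Gal}(E/F))=1$, in which case it is the unique quadratic character of $\mathrm{Gal}(E/F)$, corresponding by class field theory to the unique quadratic subextension $E'/F$, namely $\omega_{E'/F}$. This gives (\ref{eqn 5.1.12}) and (\ref{eqn 5.1.13}).

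The main obstacle is the second paragraph: establishing $F^\times\subseteq\cN_{K/E}$ rigorously rests on the precise functoriality between the reciprocity maps at $F$ and $E$ and the transfer homomorphism, combined with the Heisenberg-specific identity $g^d\in Z$. Once that functoriality is pinned down, the determinant formula follows formally from Gallagher's theorem and the abelian determinant computation, and the verification that the resulting sign $\Delta_{E/F}$ matches the $\mathrm{rk}_2$ dichotomy of Proposition \ref{Proposition 2.13} (via $\mathrm{rk}_2(G/Z)=2\,\mathrm{rk}_2(\mathrm{Gal}(E/F))$) serves only as a consistency check.
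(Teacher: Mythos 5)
Your proof is correct and follows essentially the same route as the paper's: the inclusion $F^\times\subseteq\cN_{K/E}$ is obtained from the triviality of the transfer $\rm{Gal}(K/F)\to\rm{Gal}(K/E)$ (Lemma \ref{Lemma transfer Heisenberg}) together with its compatibility with the reciprocity maps, and the determinant formula then drops out of Gallagher's theorem combined with the Miller-type evaluation (\ref{eqn 3.31}) of $\Delta_{E/F}$. (Your closing aside that $\rm{rk}_2(G/Z)=2\,\rm{rk}_2(\rm{Gal}(E/F))$ is not needed and in fact can fail for some choices of maximal isotropic, but nothing in the argument depends on it.)
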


\begin{proof}
 From the given condition, we can write $G/Z=\rm{Gal}(K/F)\supset H/Z=\rm{Gal}(K/E)$. Here both $G/Z$ and $H/Z$ are abelian, then from 
 class field theory we have the following commutative diagram
 \begin{equation}\label{diagram 5.1.13}
  \begin{CD}
  F^\times/\cN_{K/F}   @>inclusion>>  E^\times/\cN_{K/E}\\
  @VV\theta_{K/F}V                      @VV\theta_{K/E}V\\
  \rm{Gal}(K/F) @>T_{(G/Z)/(H/Z)}>> \rm{Gal}(K/E)
 \end{CD}
 \end{equation}
Here $\theta_{K/F}$, $\theta_{K/F}$ are the isomorphism (Artin reciprocity) maps and $T_{(G/Z)/(H/Z)}$ 
is transfer map. From Lemma \ref{Lemma transfer Heisenberg}, we have $T_{(G/Z)/(H/Z)}\equiv1$. Therefore from the above 
diagram (\ref{diagram 5.1.13}) we can say $F^\times\subseteq\cN_{K/E}$, i.e., all elements
\footnote{This condition $F^\times\subseteq\cN_{K/E}$ implies that for every $x\in F^\times$ must have a preimage under the 
$N_{K/E}$, but the preimage is not unique.} from the base field $F$ are norms with 
respect to the extension $K/E$.

Now identify $\chi_\rho=\chi_K:K^\times/I_FK^\times\to\bbC^\times$. Then the map 
$$x\in F^\times\mapsto\chi_K\circ N_{K/E}^{-1}(x)$$
is well-defined character of $F^\times$ because $\chi_K$ is trivial on $K_{E}^{\times}$.

Now by Gallagher's Theorem \ref{Theorem Gall} (arithmetic side) (cf. equation (\ref{eqn 2.12})) we can write for all 
$x\in F^\times$,
\begin{equation}
 \det(\rho)(x)=\Delta_{E/F}(x)\cdot\chi_E(x)=\Delta_{E/F}(x)\cdot\chi_K(N_{K/E}^{-1}(x)),
\end{equation}
since $F^\times\subseteq\cN_{K/E}$, and $\chi_E|_{\cN_{K/E}}=\chi_K\circ N_{K/E}^{-1}$.

Furthermore,
since $E/F$ is an abelian extension, $\rm{Gal}(E/F)\cong\widehat{\rm{Gal}(E/F)}$, and 
from Miller's Theorem \ref{Theorem Miller}, we can write (cf. equation (\ref{eqn 3.31}))
\begin{align*}
 \Delta_{E/F}
 &=\begin{cases}
                  1 & \text{when $\rm{rk}_2(\rm{Gal}(E/F))\ne 1$}\\
                  \omega_{E'/F}(x) & \text{when $\rm{rk}_2(\rm{Gal}(E/F))= 1$},
                 \end{cases}
\end{align*}
where $E'/F$ is a uniquely determined quadratic subextension in $E/F$, and $\omega_{E'/F}$ is the character of $F^\times$ which 
corresponds to $E'/F$ by class field theory.

\end{proof}




\subsection{{\bf Heisenberg representations of $G_F$ of dimensions prime to $p$}}

Let $F/\bbQ_p$ be a non-archimedean local field, and $G_F$ be the absolute Galois group of $F$. In this subsection we construct all 
Heisenberg representations of $G_F$ of dimensions prime to $p$. Studying the construction of this type (i.e., dimension prime to $p$)
Heisenberg representations are important for our next section.

\begin{dfn}[{\bf U-isotropic}]\label{Definition U-isotropic}
Let $F$ be a non-archimedean local field. 
Let $X:FF^\times\to \bbC^\times$ be an alternating character with the property 
 $$X(\varepsilon_1,\varepsilon_2)=1,\qquad \text{for all $\varepsilon_1,\varepsilon_2\in U_F$}.$$
 In other words, $X$ is a character of $FF^\times/U_F\wedge U_F$. Then $X$ is said to be the U-isotropic.  
 These $X$ are easy to classify:
\end{dfn}

\begin{lem}\label{Lemma U-isotropic}
 Fix a uniformizer $\pi_F$ and write $U:=U_F$. Then we obtain an isomorphism 
 $$\widehat{U}\cong \widehat{FF^\times/U\wedge U}, \quad \eta\mapsto X_\eta,\quad \eta_X\leftarrow X$$
 between characters of $U$ and $U$-isotropic alternating characters as follows:
 \begin{equation}\label{eqn 5.1.25}
  X_\eta(\pi_F^a\varepsilon_1,\pi_F^b\varepsilon_2):=\eta(\varepsilon_1)^b\cdot\eta(\varepsilon_2)^{-a},\quad
  \eta_X(\varepsilon):=X(\varepsilon,\pi_F),
 \end{equation}
 where $a,b\in\bbZ$, $\varepsilon,\varepsilon_1,\varepsilon_2\in U$, and $\eta:U\to\bbC^\times$.
 Then 
 $$\rm{Rad}(X_\eta)=<\pi_F^{\#\eta}>\times\rm{Ker}(\eta)=<(\pi_F\varepsilon)^{\#\eta}>\times\rm{Ker}(\eta),$$
 does not depend on the choice of $\pi_F$, where  $\#\eta$ is the order of the character $\eta$, hence 
 $$F^\times/\rm{Rad}(X_\eta)\cong <\pi_F>/<\pi_F^{\#\eta}>\times U/\rm{Ker}(\eta)\cong \bbZ_{\#\eta}\times\bbZ_{\#\eta}.$$
 Therefore all Heisenberg representations of type $\rho=\rho(X_\eta,\chi)$ have dimension $\rm{dim}(\rho)=\#\eta$.
\end{lem}

\begin{proof}
To prove $\widehat{U}\cong \widehat{FF^\times/U\wedge U}$, we have to show that $\eta_{X_\eta}=\eta$ and $X_{\eta_X}=X_\eta$, and that 
the inverse map $X\mapsto \eta_X$ does not depend on the choice of $\pi_F$.

From the above definition of $\eta_X$, we can write:
\begin{align*}
 \eta_{X_\eta}(\varepsilon)
 &=X_\eta(\epsilon,\pi_F)=\eta(\varepsilon)^{1}\cdot \eta(1)^0=\eta(\varepsilon), 
\end{align*}
for all $\varepsilon\in U$, hence $\eta_{X_\eta}=\eta$.

Similarly, from the above definition of $X$, we have:
\begin{align*}
 X_{\eta_X}(\pi_F^a\varepsilon_1,\pi_F^b\varepsilon_2)
 &=\eta_X(\varepsilon_1)^b\cdot\eta_X(\varepsilon_2)^{-a}=X(\varepsilon_1,\pi_F)^b\cdot X(\varepsilon_2,\pi_F)^{-a}\\
 &=X(\varepsilon_1,\pi_F)^b\cdot X(\pi_F,\varepsilon_2)^{a}=X(\varepsilon_1,\pi_F^b)\cdot X(\pi_F^a,\varepsilon_2)\\
 &=X(\pi_F^a\varepsilon_1,\pi_F^b\varepsilon_2).
\end{align*}
This shows that $X_{\eta_X}=X$.

Now we choose a uniformizer $\pi_F\varepsilon$, where $\varepsilon\in U$, instead of choosing $\pi_F$.
Then we can write 
\begin{align*}
 X_\eta((\pi_F\varepsilon)^a\varepsilon_1,(\pi_F\varepsilon)^b\varepsilon_2)
 &=X_\eta(\pi_F^a(\varepsilon^a\varepsilon_1),\pi_F^b(\varepsilon^b\varepsilon_2))\\
 &=\eta(\varepsilon^a\varepsilon_1)^b\cdot \eta(\varepsilon^b\varepsilon_2)^{-a}\\
 &=\eta(\varepsilon_1)^b\cdot \eta(\varepsilon_2)^{-a}\cdot \eta(\varepsilon^{ab-ab})\\
 &=\eta(\varepsilon_1)^b\cdot\eta(\varepsilon_2)^{-a}=X(\pi_F^a\varepsilon_1,\pi_F^b\varepsilon_2).
\end{align*}
This shows that $X_\eta$ does not depend on the choice of the uniformizer $\pi_F$. Similarly since 
$\eta_X(\varepsilon):=X(\varepsilon,\pi_F)$, it is clear that $\eta_X$ is also does not depend on the choice of the 
uniformizer $\pi_F$.

By the definition of the radical of $X_\eta$, we have:
 $$\rm{Rad}(X_\eta)=
 \{\pi_F^a\varepsilon\in F^\times\,|\; X_\eta(\pi_F^{a}\varepsilon,\pi_F^{b}\varepsilon')= 
 \eta(\varepsilon)^{b}\cdot \eta(\varepsilon')^{-a}=1\},$$
 for all $b\in \bbZ$, and $\varepsilon'\in U$. 
 
 Now if we fix a uniformizer $\pi_F\varepsilon'',$ where $\varepsilon''\in U$ instead of $\pi_F$, we can write:
 $$\rm{Rad}(X_\eta)=
 \{(\pi_F\varepsilon'')^a\varepsilon\in F^\times\,|\; X_\eta((\pi_F\varepsilon'')^{a}\varepsilon,(\pi_F\varepsilon'')^{b}\varepsilon')= 
 \eta(\varepsilon''^a\varepsilon)^{b}\cdot \eta(\varepsilon''^b\varepsilon')^{-a}=\eta(\varepsilon)^b\cdot\eta(\varepsilon')^{-a}=1\},$$

 This gives $\rm{Rad}(X_\eta)=<\pi_F^{\#\eta}>\times\rm{Ker}(\eta)=<(\pi_F\varepsilon)^{\#\eta}>\times\rm{Ker}(\eta)$, hence 
 $$F^\times/\rm{Rad}(X_\eta)\cong <\pi_F>/<\pi_F^{\#\eta}>\times U/\rm{Ker}(\eta)\cong \bbZ_{\#\eta}\times\bbZ_{\#\eta}.$$
 
 Then all Heisenberg representations of type $\rho=\rho(X_\eta,\chi)$ have dimension
 $$\rm{dim}(\rho)=\sqrt{[F^\times:\rm{Rad}(X_\eta)]}=\#\eta.$$
 
\end{proof}

\begin{rem}
 
From Proposition 5.2(i) on p. 50 of \cite{Z4}, we know that 
$FF^\times/U_F\wedge U_F\cong U_F$, hence we have $\widehat{U_F}\cong \widehat{FF^\times/U_F\wedge U_F}$.
From the above Lemma \ref{Lemma U-isotropic} we have:
$$\rm{Rad}(X_\eta)=<\pi_F^{\#\eta}>\times\rm{Ker}(\eta).$$
Therefore we can conclude that
a U-isotropic character $X=X_\eta$ has $U_F^i$ contained in its radical if and 
only if $\eta$ is a character of $U_F/U_{F}^{i}$.
\end{rem}

From the above Lemma \ref{Lemma U-isotropic} we know that the dimension of a U-isotropic Heisenberg representation 
$\rho=\rho(X_\eta,\chi)$ of $G_F$ is $\rm{dim}(\rho)=\#\eta$, and $F^\times/\rm{Rad}(X_\eta)\cong \bbZ_{\#\eta}\times\bbZ_{\#\eta}$,
a direct product of two cyclic (bicyclic) groups of the same order $\#\eta$. In general, if $A=\bbZ_m\times\bbZ_m$ is a bicyclic
group of order $m^2$, then by the following lemma we can compute total number of elements of order $m$ in $A$, and 
number of cyclic complementary subgroup of a fixed cyclic subgroup of order $m$. 

\begin{lem}\label{Lemma on bicyclic abelian groups}
 Let $A\cong \bbZ_m\times\bbZ_m$ be a bicyclic abelian group of order $m^2$. Then:
 \begin{enumerate}
  \item Then number $\psi(m)$ of cyclic subgroups $B\subset A$ of order $m$ is a multiplicative arithmetic function 
  (i.e., $\psi(mn)=\psi(m)\psi(n)$ if $gcd(m,n)=1$).
  \item Explicitly we have 
  \begin{equation}
   \psi(m)=m\cdot\prod_{p|m}(1+\frac{1}{p}).
  \end{equation}
And the number of elements of order $m$ in $A$ is:
\begin{equation}
 \varphi(m)\cdot\psi(m)=m^2\cdot\prod_{p|m}(1-\frac{1}{p^2}).
\end{equation}
Here $p$ is a prime divisor of $m$ and $\varphi(n)$ is the Euler's totient function of $n$.
\item Let $B\subset A$ be cyclic of order $m$. Then $B$ has always a complementary subgroup $B'\subset A$ such that $A=B\times B'$,
and $B'$ is again cyclic of order $m$. And for $B$ fixed, the number of all different complementary subgroups 
$B'$ is $=m$.
 \end{enumerate}
\end{lem}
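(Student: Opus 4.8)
The plan is to prove the three assertions of Lemma~\ref{Lemma on bicyclic abelian groups} in order, starting from an explicit count of cyclic subgroups of order $m$ in $A\cong\bbZ_m\times\bbZ_m$. First I would establish part (2) by a direct enumeration, and then deduce parts (1) and (3) from it; this reordering is harmless since the formula in (2) immediately exhibits $\psi$ as multiplicative.

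The first step is to count the elements of order exactly $m$ in $A$. An element $(x,y)\in\bbZ_m\times\bbZ_m$ has order equal to the least common multiple of the orders of $x$ and $y$, so its order is $m$ precisely when $\gcd(\gcd(x,m),\gcd(y,m))$-type conditions force the lcm up to $m$. Rather than manipulate lcm's directly, I would argue prime-by-prime using the Chinese Remainder decomposition $\bbZ_m\cong\prod_{p^{a}\|m}\bbZ_{p^a}$, which gives $A\cong\prod_{p^a\|m}(\bbZ_{p^a}\times\bbZ_{p^a})$ and reduces everything to the case $m=p^a$ a prime power. For $A_p=\bbZ_{p^a}\times\bbZ_{p^a}$, an element has order $p^a$ iff at least one of its two coordinates is a unit mod $p^a$ (equivalently, not both coordinates lie in $p\bbZ_{p^a}$). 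The number of such elements is $p^{2a}-p^{2a-2}=p^{2a}(1-1/p^2)$. Multiplying across the prime powers dividing $m$ yields the count of order-$m$ elements as $m^2\prod_{p\mid m}(1-1/p^2)$, which is the second displayed formula in (2).

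Next, to pass from elements of order $m$ to cyclic subgroups $B$ of order $m$, I would use that each such cyclic subgroup contains exactly $\varphi(m)$ generators, i.e.\ exactly $\varphi(m)$ elements of order $m$, and that distinct cyclic subgroups of order $m$ share no element of order $m$. Hence the number $\psi(m)$ of cyclic subgroups of order $m$ satisfies $\varphi(m)\cdot\psi(m)=m^2\prod_{p\mid m}(1-1/p^2)$. Dividing by $\varphi(m)=m\prod_{p\mid m}(1-1/p)$ and simplifying the product $\prod_{p\mid m}\frac{1-1/p^2}{1-1/p}=\prod_{p\mid m}(1+1/p)$ gives $\psi(m)=m\prod_{p\mid m}(1+1/p)$, the first displayed formula in (2). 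The multiplicativity asserted in (1) is then immediate, since each of $m$, $\varphi(m)$, and $\prod_{p\mid m}(1+1/p)$ is multiplicative in $m$, so their combination $\psi$ is multiplicative as well; alternatively it is visible directly from the product formula.

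Finally, for part (3), fix a cyclic subgroup $B\subset A$ of order $m$. Since $A$ is a finite abelian group of exponent $m$ and $|A|=m^2=|B|\cdot(m)$, any complement $B'$ must have order $m$, and I claim such complements exist and number exactly $m$. I would again reduce to the prime-power case $A_p=\bbZ_{p^a}\times\bbZ_{p^a}$ via the CRT decomposition, counting complements prime-by-prime and multiplying; the count $m=\prod_{p^a\|m}p^a$ then assembles from the local counts. Locally, choosing a generator $b$ of $B$, a complement is generated by any element $b'$ of order $p^a$ whose image together with $b$ spans $A_p$; the number of such $b'$ modulo the $\varphi(p^a)$ generators of each fixed complement works out to $p^a$. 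The main obstacle I anticipate is precisely this last count of complements: one must verify that a complement always exists (which is where exponent-$m$ and the bicyclic structure are essential, since a complement fails to exist for a diagonal $B$ in a group of larger exponent) and then count them carefully without overcounting, keeping track of the $\varphi(p^a)$-to-$1$ correspondence between admissible generators $b'$ and distinct complementary subgroups $B'$. Once the prime-power count gives $p^a$ complements, multiplicativity across the primes dividing $m$ delivers the stated total of $m$.
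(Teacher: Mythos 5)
Your proof is correct and follows essentially the same route as the paper: reduce to prime powers $m=p^a$ via the CRT decomposition, count the elements of order $p^a$ in $\bbZ_{p^a}\times\bbZ_{p^a}$ (you by complementary counting, the paper by inclusion--exclusion, giving the same $p^{2a}-p^{2a-2}$), divide by $\varphi(m)$ to get $\psi(m)$, and count complements of a fixed $B$ as (admissible generators)$/\varphi(p^a)=p^a$. The only difference is that you derive the element count first and deduce multiplicativity from the closed formula, whereas the paper establishes multiplicativity of $\psi$ before assembling the product; this is immaterial.
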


\begin{proof}
To prove these assertions we need to recall the fact: If $G$ is a finite cyclic group of order $m$, then number of generators of 
$G$ is $\varphi(m)=m\prod_{p|m}(1-\frac{1}{p})$.\\  
 {\bf (1).} By the given condition $A\cong \bbZ_m\times\bbZ_m$ and $\psi(m)$ is the number of cyclic subgroup of $A$ of order $m$.
 Then it is clear that $\psi$ is an arithmetic function with $\psi(1)=1\ne 0$, hence $\psi$ is not {\bf additive}.
 Now take $m\ge 2$, and the prime factorization of $m$ is: $m=\prod_{i=1}^{k}p_{i}^{a_i}$.
 To prove this, first we should start with $m=p^n$, hence $A\cong \bbZ_{p^n}\times\bbZ_{p^n}$. Then number of subgroup of $A$ of order 
 $p^n$ is:
 $$\psi(p^n)=\frac{2\varphi(p^n)p^n-\varphi(p^n)^2}{\varphi(p^n)}=2p^n-\varphi(p^n)=p^n(2-1+\frac{1}{p})=p^n(1+\frac{1}{p}).$$
 
Now take $m=p^nq^r$, where $p,q$ are both prime with $gcd(p,q)=1$. 
 We also know that $\bbZ_{p^nq^r}\times\bbZ_{p^nq^r}\cong\bbZ_{p^n}\times\bbZ_{p^n}\times\bbZ_{q^r}\times\bbZ_{q^r}$.
 This gives $\psi(p^nq^r)=\psi(p^n)\cdot\psi(q^r)$. By the similar method we can show that 
 $\psi(m)=\prod_{i=i}^{k}\psi(p_{i}^{a_i})$, where  $m=\prod_{i=1}^{k}p_{i}^{a_i}$.
 This condition implies that $\psi$ is a multiplicative arithmetic function.\\

 {\bf (2).} Since $\psi$ is multiplicative arithmetic function, we have
 \begin{align*}
  \psi(m)
  &=\prod_{i=1}^{k}\psi(p_{i}^{a_i})=\prod_{i=1}^{k}p_{i}^{a_i}(1+\frac{1}{p_i})\quad\text{since $\psi(p^n)=p^n(1+\frac{1}{p})$},\\
  &=p_{1}^{a_1}\cdots p_{k}^{a_k}\prod_{i=1}^{k}(1+\frac{1}{p_i})=m\cdot\prod_{p|m}(1+\frac{1}{p}).
 \end{align*}
We also know that number of generator of a finite cyclic group of order $m$ is $\varphi(m)$, hence number of elements of order 
$m$ is $\varphi(m)$. Then the number of elements of order $m$ in $A$ is:
\begin{equation*}
 \varphi(m)\cdot\psi(m)=m\cdot\prod_{p|m}(1-\frac{1}{p})\cdot m\prod_{p|m}(1+\frac{1}{p})=m^2\cdot\prod_{p|m}(1-\frac{1}{p^2}).
\end{equation*}
{\bf (3).} Let $B\subset A$ be a cyclic subgroup of order $m$. Since $A$ is abelian and bicyclic of order $m^2$, $B$ has always 
a complementary subgroup $B'\subset A$ such that $A=B\times B'$, and $B'$ is again cyclic (because $A$ is cyclic, hence 
$A/B$ and $|A/B|=m$) of order $m$.

To prove the last part of (3), we start with $m=p^n$. Here $B$ is a cyclic subgroup of $A$ of order $p^n$, hence 
$B=<(a,e)>$, where $\# a=p^n$, and $e$ is the identity of $B'$. 
Since $B$ has complementary cyclic subgroup, namely $B'$, of order $p^n$. we can choose 
$B'=<(b,c)>$, where $B\cap B'=(e,e)$. This gives that $c$ is a generator of $B'$,  and $b$ could be any element in $\bbZ_{p^n}$.
Thus total number $\psi_{B'}(p^n)$ of all different complementary subgroups $B'$ is:
$$\psi_{B'}(p^n)=\frac{p^n\varphi(p^n)}{\varphi(p^n)}=p^n=m.$$
Now if we take $m=p^nq^r$, where $q$ is a different prime from $p$. Then by same method we can see that 
$\psi_{B'}(p^nq^r)=\psi_{B'}(p^n)\cdot\psi_{B'}(q^r)=p^nq^r=m$. Thus for arbitrary $m$ we can conclude that 
$\psi_{B'}(m)=m$.

\end{proof}

In the following lemma, we give an equivalent condition for U-isotropic Heisenberg representation.

\begin{lem}\label{Lemma U-equivalent}
Let $G_F$ be the absolute Galois group of a non-archimedean local field $F$.
 For a Heisenberg representation $\rho=\rho(Z,\chi_\rho)=\rho(X,\chi_K)$ the following are equivalent:
 \begin{enumerate}
  \item The alternating character $X$ is U-isotropic.
  \item Let $E/F$ be the maximal unramified subextension in $K/F$. Then $\rm{Gal}(K/E)$ is maximal isotropic for $X$.
  \item $\rho=\rm{Ind}_{E/F}(\chi_E)$ can be induced from a character $\chi_E$ of $E^\times$ (where $E$ is as in (2)).
 \end{enumerate}
\end{lem}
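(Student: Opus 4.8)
The plan is to run a cyclic chain of implications (1) $\Rightarrow$ (2) $\Rightarrow$ (3) $\Rightarrow$ (1), translating everything into the arithmetic of $A:=F^\times/\rm{Rad}(X)\cong\rm{Gal}(K/F)$ via local class field theory. The arithmetic input I would fix at the outset is the standard fact that, for the maximal unramified subextension $E/F$ inside $K/F$, the Artin reciprocity map carries $U_F$ onto the inertia subgroup $\rm{Gal}(K/E)$; equivalently $\cN_{E/F}=U_F\cdot\rm{Rad}(X)$, so that $\rm{Gal}(K/E)$ is exactly the image of $U_F$ in $A$. With this identification the U-isotropy condition $X(\varepsilon_1,\varepsilon_2)=1$ for $\varepsilon_1,\varepsilon_2\in U_F$ becomes literally the statement that $\rm{Gal}(K/E)$ is an isotropic subgroup of $A$, since $X$ is trivial as soon as one argument lies in $\rm{Rad}(X)$. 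In particular (2) $\Rightarrow$ (1) is then automatic: if $\rm{Gal}(K/E)=U_F\rm{Rad}(X)/\rm{Rad}(X)$ is (maximal) isotropic, then $X$ restricted to $U_F\times U_F$ is trivial.

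For (1) $\Rightarrow$ (2) I would invoke Lemma \ref{Lemma U-isotropic}: a U-isotropic $X$ equals $X_\eta$ for a character $\eta$ of $U_F$, the image of $U_F$ in $A$ is $U_F/\rm{Ker}(\eta)\cong\bbZ_{\#\eta}$, and $\dim(\rho)=\#\eta$. Because the alternating form induced on $A$ is nondegenerate, a maximal isotropic $H=H^{\perp}$ satisfies $|H|^2=|A|=\dim(\rho)^2$, i.e. $|H|=\dim(\rho)$; hence the isotropic subgroup $\rm{Gal}(K/E)$ of order $\#\eta=\dim(\rho)$ is automatically maximal, which is (2). The implication (2) $\Rightarrow$ (3) is then immediate from Lemma \ref{Lemma 5.1.44}: once $\rm{Gal}(K/E)$ is maximal isotropic, $E$ is a base field of a maximal isotropic and $\rho=\rm{Ind}_{E/F}(\chi_E)$ for the character $\chi_E$ determined by $\chi_K=\chi_E\circ N_{K/E}$.

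The substantive step, and the one I expect to be the main obstacle, is (3) $\Rightarrow$ (1), where I must extract an isotropy property of $X$ from the mere fact of being induced from $E$. Here I would pass to $Z=G_K\subseteq H=G_E\subseteq G=G_F$ and use that $Z$ is normal with $G/Z$ abelian, so $H$ is automatically normal in $G$. Writing $\rho=\rm{Ind}_H^G(\psi)$ with $\psi|_Z=\chi_\rho$ and applying Mackey's irreducibility criterion for a normal subgroup, irreducibility of the Heisenberg representation forces the stabilizer of $\psi$ under $G$-conjugation to be exactly $H$. The key computation is $\psi^{g}\psi^{-1}(h)=\chi_\rho([g^{-1},h])=X(\bar g,\bar h)^{-1}$, valid because $[g^{-1},h]\in[G,G]\subseteq Z$ and $X$ is defined by $X(\bar g_1,\bar g_2)=\chi_\rho([g_1,g_2])$ (Proposition \ref{Proposition 3.1}). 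Thus $\psi^{g}=\psi$ precisely when $\bar g\in(H/Z)^{\perp}$, so the stabilizer equals the preimage of $(H/Z)^{\perp}$; setting it equal to $H$ yields $(H/Z)^{\perp}=H/Z$, i.e. $H/Z=\rm{Gal}(K/E)$ is maximal isotropic. Since $\rm{Gal}(K/E)$ is the image of $U_F$, its isotropy gives $X(\varepsilon_1,\varepsilon_2)=1$ on $U_F$, which is (1), closing the cycle.
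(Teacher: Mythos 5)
Your proof is correct, and its skeleton --- translate everything into $A=F^\times/\rm{Rad}(X)\cong\rm{Gal}(K/F)$ via class field theory, identify $\rm{Gal}(K/E)$ with the image of $U_F$ (the inertia subgroup), and use Lemma \ref{Lemma U-isotropic} to see that this image has order $\#\eta=\dim(\rho)=\sqrt{|A|}$ --- is the same as the paper's. Where you genuinely diverge is the implication out of (3). The paper's proof runs $(1)\Rightarrow(2)\Rightarrow(3)$ and then closes the cycle with the observation that if $E$ corresponds to a maximal isotropic and $E/F$ is unramified then $U_F\subset\cN_{E/F}$ forces $X|_{U\times U}\equiv 1$; but that closing step takes (2), not (3), as its hypothesis, so the paper never really explains how the bare inducibility statement (3) implies anything. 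Your Mackey argument supplies exactly that missing step: since $G/Z$ is abelian, $H=G_E$ is normal, irreducibility of $\rho=\rm{Ind}_H^G(\psi)$ forces the stabilizer of $\psi$ to equal $H$, and the computation $\psi^g\psi^{-1}(h)=\chi_\rho([g^{-1},h])=X(\bar g,\bar h)^{-1}$ identifies that stabilizer with the preimage of $(H/Z)^\perp$, so $H/Z=(H/Z)^\perp$ is maximal isotropic and the paper's closing observation then applies. Your version of $(1)\Rightarrow(2)$ is also tidier than the paper's: rather than asserting that \emph{a} maximal isotropic of the right index must be $\rm{Gal}(K/E)$ (which is not literally true, as maximal isotropics are far from unique), you check directly that the isotropic subgroup $U_F\rm{Rad}(X)/\rm{Rad}(X)$ has order $\sqrt{|A|}$ and is therefore maximal. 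In short: same route, but your write-up repairs the one implication the paper leaves dangling.
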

\begin{proof}
 This proof follows from the above Lemma \ref{Lemma U-isotropic}. \\
 First, assume that $X$ is U-isotropic, i.e., $X\in\widehat{FF^\times/U\wedge U}$. We also know that 
 $\widehat{U}\cong\widehat{FF^\times/U\wedge U}$. Then $X$ corresponds a character of $U$, namely $X\mapsto \eta_X$.
 Then from Lemma \ref{Lemma U-isotropic} we have $F^\times /\rm{Rad}(X)\cong \bbZ_{\#\eta_X}\times\bbZ_{\#\eta_X}$, i.e.,
 product of two cyclic groups of same order.
 
 Since $K/F$ is the abelian bicyclic extension which corresponds to $\rm{Rad}(X)$, we can write:
 $$\cN_{K/F}=\rm{Rad}(X),\qquad\rm{Gal}(K/F)\cong F^\times/\rm{Rad}(X).$$
Let $E/F$ be the maximal unramified subextension in $K/F$. Then $[E:F]=\#\eta_K$ because the order of 
maximal cyclic subgroup of $\rm{Gal}(K/F)$ is $\#\eta_X$. Then $f_{E/F}=\#\eta_X$, hence 
$f_{K/F}=e_{K/F}=\#\eta_X$ because $f_{K/F}\cdot e_{K/F}=[K:F]=\#\eta_X^2$ and $\rm{Gal}(K/F)$ is not cyclic group.

Now we have to prove that the extension $E/F$ corresponds to a maximal isotropic for $X$. Let $H/Z$ be a maximal isotropic for 
$X$, hence $[G_F/Z:H/Z]=\#\eta_X$, hence $H/Z=\rm{Gal}(K/E)$, i.e., the maximal unramified subextension $E/F$ in $K/F$ corresponds
to a maximal isotropic subgroup, hence 
\begin{center}
 $\rho(X,\chi_K)=\rm{Ind}_{E/F}(\chi_E)$, for $\chi_E\circ N_{K/E}=\chi_K$.
\end{center}
Finally, since $E/F$ is unramified and the extension $E$ corresponds a maximal isotropic subgroup for $X$, we have 
$U_F\subset\cN_{E/F}$, hence $U_F\subset\cN_{K/F}$ and $X|_{U\times U}=1$ because $U_F\subset F^\times\subset\cN_{K/E}$. 
This shows that $X$ is U-isotropic.
\end{proof}

\begin{cor}\label{Corollary U-isotropic}
 The U-isotropic Heisenberg representation $\rho=\rho(X_\eta,\chi)$ can never be wild because it is induced from 
 an unramified extension $E/F$, 
 but the dimension $\rm{dim}(\rho(X_\eta,\chi))=\#\eta$ can be a power of $p.$\\
The representations $\rho$ of dimension prime to p are precisely given as 
$\rho=\rho(X_\eta,\chi)$ for characters $\eta$ of $U/U^1.$
\end{cor}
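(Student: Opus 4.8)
The plan is to prove the three assertions in order, reducing each to Lemma~\ref{Lemma U-isotropic} and Lemma~\ref{Lemma U-equivalent}. The first assertion is exactly the content of the equivalence in Lemma~\ref{Lemma U-equivalent}: since $X_\eta$ is U-isotropic, part (3) of that lemma gives $\rho=\mathrm{Ind}_{E/F}(\chi_E)$, where $E/F$ is the maximal unramified subextension of $K/F$; being induced from an unramified extension, $\rho$ is never totally wildly ramified, which is what ``wild'' means here. For the second assertion, Lemma~\ref{Lemma U-isotropic} gives $\mathrm{dim}(\rho)=\#\eta$ for $\eta\in\widehat{U}$, so it suffices to exhibit $\eta$ of $p$-power order. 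Writing $U=U_F\cong\mu_{q_F-1}\times U_F^1$ with $U_F^1$ a pro-$p$ group, the quotient $U_F^1/U_F^2\cong k_F^{+}$ is an elementary abelian $p$-group; any nontrivial $\eta$ factoring through $U_F^1/U_F^2$ has $\#\eta=p$, and characters of $U_F^1/U_F^i$ realize higher powers of $p$. Hence $\mathrm{dim}(\rho)=\#\eta$ may be a power of $p$.

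For the third assertion one direction is routine: if $\eta$ is a character of $U/U^1\cong k_F^{\times}$, then $\#\eta$ divides $q_F-1$, so $\mathrm{dim}(\rho)=\#\eta$ is prime to $p$. The essential content is the converse. Let $\rho=\rho(X,\chi)$ be any Heisenberg representation of $G_F$ of dimension $m$ prime to $p$. By the dimension formula (\ref{eqn dimension formula}) we have $[F^\times:\mathrm{Rad}(X)]=m^2$, so $A:=F^\times/\mathrm{Rad}(X)$ is a finite abelian group of order prime to $p$. Since $U_F^1$ is pro-$p$, its image in $A$ is both a finite $p$-group and a subgroup of a group of order prime to $p$, hence trivial; that is, $U_F^1\subseteq\mathrm{Rad}(X)$, and therefore $X$ factors through $F^\times/U_F^1$, whose unit part $U/U^1\cong k_F^{\times}$ is cyclic.

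Now an alternating character is trivial on any cyclic group, because for $C=\langle g\rangle$ one has $X(g^a,g^b)=X(g,g)^{ab}=1$; hence $X(\varepsilon_1,\varepsilon_2)=1$ for all $\varepsilon_1,\varepsilon_2\in U_F$, that is, $X$ is U-isotropic. By Lemma~\ref{Lemma U-isotropic} we may then write $X=X_\eta$ with $\eta=\eta_X$, and the inclusion $U_F^1\subseteq\mathrm{Rad}(X)$ forces $\eta_X(\varepsilon)=X(\varepsilon,\pi_F)=1$ for all $\varepsilon\in U_F^1$, so that $\eta$ descends to a character of $U/U^1$. This gives both inclusions, and hence the stated equality. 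The main obstacle is precisely this converse: the two key steps are the pro-$p$ argument forcing $U_F^1\subseteq\mathrm{Rad}(X)$ once the dimension is prime to $p$, and the observation that cyclicity of the tame unit quotient $k_F^{\times}$ annihilates the alternating character on all units; the remaining verifications (the easy directions and the descent of $\eta$ to $U/U^1$ via the radical computation of Lemma~\ref{Lemma U-isotropic}) are routine.
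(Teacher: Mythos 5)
Your proof is correct and follows essentially the same route as the paper: the paper's own (very terse) argument is exactly that $\dim(\rho)=\sqrt{[F^\times:\mathrm{Rad}(X)]}$ prime to $p$ forces $U_F^1\subseteq\mathrm{Rad}(X)$, and then cyclicity of $U/U^1$ kills the alternating character on units, so $X$ is U-isotropic with $\eta$ a character of $U/U^1$. You have merely filled in the details (the pro-$p$ image argument and the triviality of an alternating character on a cyclic group) that the paper leaves implicit.
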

\begin{proof}
 This is clear from the above lemma \ref{Lemma U-isotropic} and the fact: $|U/U^1|=q_F-1$ is prime to $p$.
 We know that the dimension $\rm{dim}(\rho)=\sqrt{[K:F]}=\sqrt{[F^\times:\rm{Rad}(X)]}$. If this is prime to $p$ then 
 $K/F$ is tame and $U_F^1\subseteq \rm{Rad}(X)$. But $U/U^1$ is cyclic, hence $X$ is then $U$-isotopic.
\end{proof}


\begin{lem}\label{Lemma dimension equivalent}
 Let $\rho=\rho(X,\chi_K)$ be a Heisenberg representation of 
 the absolute Galois group $G_F$ of a non-archimedean local field 
 $F/\bbQ_p$. Then following are equivalent:
 \begin{enumerate}
  \item $\rm{dim}(\rho)$ is prime to $p$.
  \item $\rm{dim}(\rho)$ is a divisor of $q_F-1$.
  \item The alternating character $X$ is $U$-isotropic and $X=X_\eta$ for a character $\eta$ of 
  $U_F/U_F^1$, i.e., $a(\eta)=1$.
  \item The abelian extension $K/F$ which corresponds to $\rm{Rad}(X)$ is tamely ramified.
 \end{enumerate}
\end{lem}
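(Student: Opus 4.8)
The plan is to prove the four statements equivalent by establishing a cycle of implications, drawing almost entirely on the structural results already assembled: the dimension formula \eqref{eqn dimension formula}, the classification of $U$-isotropic characters in Lemma \ref{Lemma U-isotropic}, and Corollary \ref{Corollary U-isotropic}. The key arithmetic fact underlying everything is that $|U_F/U_F^1| = q_F - 1$ is prime to $p$, while each quotient $U_F^i/U_F^{i+1}$ for $i \geq 1$ is a $p$-group; this is what links divisibility of $\dim(\rho)$ by $p$ to the ramification of $K/F$.

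First I would show $(1) \Leftrightarrow (4)$ directly from the dimension formula. Since $\dim(\rho) = \sqrt{[F^\times : \rm{Rad}(X)]} = \sqrt{[K:F]}$, where $K/F$ corresponds to $\rm{Rad}(X)$, the dimension is prime to $p$ if and only if $[K:F]$ is prime to $p$, which is exactly the statement that $K/F$ is tamely ramified. Next, for $(1) \Rightarrow (3)$, I would invoke the argument already given in Corollary \ref{Corollary U-isotropic}: if $\dim(\rho)$ is prime to $p$ then $K/F$ is tame, so $U_F^1 \subseteq \rm{Rad}(X)$, and since $U_F/U_F^1$ is cyclic the restriction of $X$ to $U_F \times U_F$ is forced to be trivial, making $X$ $U$-isotropic. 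By the Remark following Lemma \ref{Lemma U-isotropic}, $U_F^1 \subseteq \rm{Rad}(X_\eta)$ is equivalent to $\eta$ being a character of $U_F/U_F^1$, i.e. $a(\eta) = 1$. The converse $(3) \Rightarrow (1)$ is immediate from Lemma \ref{Lemma U-isotropic}, since a character $\eta$ of the cyclic group $U_F/U_F^1$ has order $\#\eta$ dividing $q_F - 1$, and $\dim(\rho) = \#\eta$ is then prime to $p$.

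It then remains to weave in $(2)$. For $(3) \Rightarrow (2)$ I would again use $\dim(\rho) = \#\eta$ together with $\#\eta \mid |U_F/U_F^1| = q_F - 1$, giving directly that $\dim(\rho)$ divides $q_F - 1$. The implication $(2) \Rightarrow (1)$ is trivial, since any divisor of $q_F - 1$ is prime to $p$ (as $p \nmid q_F - 1$). This closes the logical loop $(1) \Leftrightarrow (4)$, $(1) \Rightarrow (3) \Rightarrow (2) \Rightarrow (1)$, and $(3) \Rightarrow (1)$, so all four conditions are equivalent.

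I do not expect a serious obstacle here, as the lemma is essentially a repackaging of Lemma \ref{Lemma U-isotropic} and Corollary \ref{Corollary U-isotropic}. The one step requiring a little care is the passage from ``$\dim(\rho)$ prime to $p$'' to ``$X$ is $U$-isotropic'' in $(1) \Rightarrow (3)$: one must argue that tameness forces $U_F^1 \subseteq \rm{Rad}(X)$ and that the cyclicity of $U_F/U_F^1$ then rules out any nontrivial alternating pairing on $U_F$ modulo $\rm{Rad}(X)$. Since an alternating character is trivial on the diagonal and $U_F/U_F^1$ is cyclic (hence its alternating square is trivial), this forces $X|_{U_F \times U_F} = 1$, which is the definition of $U$-isotropic. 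This is the only point where the reasoning is more than bookkeeping.
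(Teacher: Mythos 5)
Your proof is correct and uses the same ingredients as the paper's (the dimension formula, the cyclicity of $U_F/U_F^1$, and the content of Corollary \ref{Corollary U-isotropic}), but you arrange the implications differently: the paper runs the single cycle $(1)\Rightarrow(2)\Rightarrow(3)\Rightarrow(4)\Rightarrow(1)$, whereas you prove $(1)\Leftrightarrow(4)$ outright and then close the loop $(1)\Rightarrow(3)\Rightarrow(2)\Rightarrow(1)$. The one point to be careful about is your claim that $(1)\Leftrightarrow(4)$ is ``immediate'' because $[K:F]$ prime to $p$ \emph{is} the definition of tame ramification: that is true for the definition this thesis adopts ($\gcd(p,[K:F])=1$), but under the standard convention (tame means $p\nmid e_{K/F}$) the implication $(4)\Rightarrow(1)$ is not a tautology. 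The paper's proof of $(4)\Rightarrow(1)$ is robust to either convention: it uses only that tameness forces $U_F^1\subset\cN_{K/F}=\rm{Rad}(X)$, then observes that any alternating character on $F^\times/U_F^1=\langle\pi_F\rangle\times U_F/U_F^1$ is killed by $q_F-1$, so $F^\times/\rm{Rad}(X)$ is a quotient of $F^\times/({F^\times}^{q_F-1}U_F^1)\cong\bbZ_{q_F-1}\times\bbZ_{q_F-1}$ and hence $\dim(\rho)\mid q_F-1$. Since you already carry out the essential part of this argument (the $U_F^1\subseteq\rm{Rad}(X)$ step and the cyclicity of $U_F/U_F^1$) inside your proof of $(1)\Rightarrow(3)$, your proof is easily made convention-independent by routing $(4)\Rightarrow(2)$ through that same computation instead of through the definition; as written it buys brevity at the cost of depending on the thesis's nonstandard definition of tameness.
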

\begin{proof}
(1) implies (2):
From Corollary \ref{Corollary U-isotropic} we know that all Heisenberg representations of dimensions prime to $p$, are 
 U-isotropic representations of the form $\rho=\rho(X_\eta,\chi)$, where $\eta:U_F/U_F^1\to\bbC^\times$, and the dimensions 
 $\rm{dim}(\rho)=\#\eta$.
 Thus if $\rm{dim}(\rho)$ is prime to $p$, then $\rm{dim}(\rho)=\#\eta$ is a divisor of $q_F-1$. \\
 (2) implies (3):
 If $\rm{dim}(\rho)$
 is a divisor of $q_F-1$, then $gcd(p,\rm{dim}(\rho))=1$. Then from Corollary \ref{Corollary U-isotropic}, the alternating 
 character $X$ is U-isotropic and $X=X_\eta$ for a character $\eta\in\widehat{U_F/U_F^1}$.\\
 (3) implies (4): 
 We know that 
 $$\rm{dim}(\rho)=\sqrt{[F^\times:\rm{Rad}(X_\eta)]}=\sqrt{[F^\times:\cN_{K/F}]}=\#\eta.$$
 Here since $K/F$ is abelian, we have $\rm{dim}(\rho)^2=[K:F]$. Again since $\#\eta=\rm{dim}(\rho)$ is a divisor
 of $q_F-1$, hence $K/F$ is tamely ramified.\\
 (4) implies (1): If $K/F$ is tamely ramified, then we can write $U_F^1\subset \cN_{K/F}\subset F^\times$, and hence 
 $F^\times/\cN_{K/F}$ is a quotient group of $F^\times/U_F^1$. Therefore  
if $K/F$ is the abelian tamely ramified extension and $\cN_{K/F}=\rm{Rad}(X)$, then $X$ must be an alternating character
of $F^\times/U_F^1$.  
We also know that  
$F^\times=<\pi_F>\times<\zeta>\times U_{F}^{1}$, where $\zeta$ is a root of unity of order $q_F-1$. This implies 
$F^\times/U_{F}^{1}=<\pi_F>\times<\zeta>$.
So each element $x\in
F^\times/U_F^1$ can be written as $x= \pi_{F}^a\cdot \zeta^b$, where $a,b\in\bbZ$. 
We now take $x_1=\pi_{F}^{a_1}\zeta^{b_1}, x_2=\pi_{F}^{a_2}\zeta^{b_2}\in F^\times/U_{F}^{1}$, where $a_i,b_i\in\bbZ(i=1,2)$, then 
\begin{align*}
 X(x_1,x_2)
 &= X(\pi_{F}^{a_1}\zeta^{b_1},\; \pi_{F}^{a_2}\zeta^{b_2})\\
 &= X(\pi_{F}^{a_1},\zeta^{b_2})\cdot X(\zeta^{b_1},\pi_{F}^{a_2})\\
 &=\chi_\rho([\pi_{F}^{a_1},\zeta^{b_2}])\cdot\chi_\rho([\zeta^{b_1},\pi_{F}^{a_2}]).
\end{align*}
But this implies  $X^{q_F-1}\equiv 1$ because $\zeta^{q_F-1}=1$,
which means that $X$ is actually an alternating character on  $F^\times/({F^\times}^{(q_F-1)} U_F^1),$ and therefore
$G_F/G_K$ is actually a quotient of $F^\times/({F^\times}^{(q_F-1)} U_F^1).$ 
We also know that $U_F^1$ is a pro-p-group and therefore 
$$U_F^1=(U_F^1)^{q_F-1}\subset F^\times.$$
Thus the cardinality of 
$F^\times/({F^\times}^{(q_F-1)} U_F^1)$ is $(q_F-1)^2$ because 
$$F^\times/({F^\times}^{(q_F-1)} U_F^1)\cong \bbZ/(q_F-1)\bbZ\times<\zeta>\cong \bbZ_{q_F-1}\times\bbZ_{q_F-1}.$$
Therefore $\rm{dim}(\rho)$ divides $q_F-1.$ Hence $\rm{dim}(\rho)$ is prime to $p$.

\end{proof}

\begin{rem}\label{Remark 5.1.14} 
We let $K_\eta|F$ be the abelian bicyclic 
extension which corresponds to $\rm{Rad}(X_\eta):$

$$ \cN_{K_\eta/F}= \rm{Rad}(X_\eta),\qquad \rm{Gal}(K_\eta/F)\cong F^\times/\rm{Rad}(X_\eta).$$
Then we have $f_{K_\eta|F}= e_{K_\eta|F}=\#\eta$ and the maximal unramified subextension 
$E/F\subset K_\eta/F$ corresponds to a maximal isotropic subgroup, hence
$$ \rho(X_\eta,\chi) = \rm{Ind}_{E/F}(\chi_E),\quad\textrm{for}\; \chi_E\circ N_{K_\eta/E} =\chi.$$
We recall here that $\chi:K_\eta^\times/I_FK_\eta^\times\rightarrow\bbC^\times$ is a character such that
(cf. Theorem \ref{Theorem 5.1.1}(3))
$$ \chi|_{(K_\eta^\times)_F} \leftrightarrow X_\eta,\quad\textrm{with respect to}\; 
(K_\eta^\times)_F/I_FK_\eta^\times\cong F^\times/\rm{Rad}(X_\eta)\wedge F^\times/\rm{Rad}(X_\eta).$$
In particular, we see that $(K_\eta^\times)_F/I_FK_\eta^\times$ is cyclic of 
order $\#\eta$ and $\chi|_{(K_\eta^\times)_F}$ must be a faithful character of that cyclic group.
\end{rem}
In the following lemma we see the explicit description of the representation $\rho=\rho(X_\eta,\chi)$.

\begin{lem}[{\bf Explicit Lemma}]\label{Explicit Lemma}
 Let $\rho=\rho(X_\eta,\chi_K)$ be a U-isotropic Heisenberg representation of the absolute Galois group $G_F$ of a local field 
 $F/\bbQ_p$. Let $K=K_\eta$ and let $E/F$ be the maximal unramified subextension in $K/F$. Then: 
 \begin{enumerate}
  \item The norm map induces an isomorphism:
  $$N_{K/E}:K_F^\times/I_FK^\times\stackrel{\sim}{\to}I_FE^\times/I_F\cN_{K/E}.$$
  \item Let $c_{K/F}:F^\times/\rm{Rad}(X_\eta)\wedge F^\times/\rm{Rad}(X_\eta)\cong K_F^\times/I_FK^\times$ be the isomorphism
  which is induced by the commutator in the relative Weil-group $W_{K/F}$. Then for units $\varepsilon\in U_F$ we 
  explicitly have:
  $$c_{K/F}(\varepsilon\wedge\pi_F)=N_{K/E}^{-1}(N_{E/F}^{-1}(\varepsilon)^{1-\varphi_{E/F}}),$$
  where $\varphi_{E/F}$ is the Frobenius automorphism for $E/F$ and where $N^{-1}$ means to take a preimage of the norm map.
  \item The restriction $\chi_K|_{K_F^\times}$ is characterized by:
  $$\chi_K\circ c_{K/F}(\varepsilon\wedge\pi_F)=X_\eta(\varepsilon,\pi_F)=\eta(\varepsilon),$$
  for all $\varepsilon\in U_F$, where $c_{K/F}(\varepsilon\wedge\pi_F)$ is explicitly given via (2).
 \end{enumerate}

\end{lem}

\begin{proof}
 {\bf (1).} By the given conditions we have: $K=K_\eta,$ and $K/F$ is the bicyclic extension with $\rm{Rad}(X_\eta)=\cN_{K/F}$, and 
 $E/F$ is the maximal unramified subextension in $K/F$. So $K/E$ and $E/F$ both are cyclic, hence 
 $$E_F^\times=I_FE^\times,\qquad K_E^\times=I_EK^\times.$$
 From the diagram (3.6.1) on p. 41 of \cite{Z4}, we have 
 $$N_{K/E}: K_F^\times/I_FK^\times\stackrel{\sim}{\to} E_F^\times/I_F\cN_{K/E}.$$
 We also know that $E_F^\times=I_FE^\times$. Thus the norm map $N_{K/E}$ induces an isomorphism:
 $$N_{K/E}:K_F^\times/I_FK^\times\cong I_FE^\times/I_F\cN_{K/E}.$$
 {\bf (2).} By the given conditions, $c_{K/F}$ is the isomorphism which is induced by the commutator in 
 the relative Weil-group  $W_{K/F}$
 (cf. the map (\ref{eqn 5.1.3}). Here $\rm{Rad}(X_\eta)=\cN_{K/F}=:N$. Then from Proposition 1(iii) of \cite{Z5} on p. 128, we have 
 $$c_{K/F}: N\wedge F^\times/N\wedge N\stackrel{\sim}{\to} I_FK^\times/I_FK_F^\times$$
 as an isomorphism by the map:
 $$c_{K/F}(x\wedge y)=N_{K/F}^{-1}(x)^{1-\phi_F(y)},$$
 where $\phi_F(y)\in \rm{Gal}(K/F)$ for $y\in F^\times$ by class field theory.
 If $y=\pi_F$, then by class field theory (cf. \cite{JM}, p. 20, Theorem 1.1(a)), we can write 
 $\phi_F(\pi_F)|_{E}=\varphi_{E/F}$, where $\varphi_{E/F}$ is the Frobenius automorphism for $E/F$.
 
 Now we come to our special case.
Since $E/F$ is unramified, we have $U_F\subset\cN_{E/F}$, and we obtain (cf. \cite{Z4}, pp. 46-47 of Section 4.4 and 
the diagram on p. 302 of \cite{Z2}):
\begin{equation}\label{eqn explicit lemma}
 N_{K/E}\circ c_{K/F}(\varepsilon\wedge\pi_F)=N_{E/F}^{-1}(\varepsilon)^{1-\varphi_{E/F}}.
\end{equation}
We also know (see the first two lines under the upper diagram on p. 302 of \cite{Z2}) that
$E_F^\times\subseteq \cN_{K/E}$. Here 
$$N_{E/F}^{-1}(\varepsilon)^{1-\varphi_{E/F}}\in I_FE^\times/I_F\cN_{K/E}=E_F^\times/I_F\cN_{K/E},$$
because $E/F$ is cyclic, hence $E_F^\times=I_FE^\times$. Therefore from equation (\ref{eqn explicit lemma}) we can conclude:
$$c_{K/F}(\varepsilon\wedge\pi_F)=N_{K/E}^{-1}(N_{E/F}^{-1}(\varepsilon)^{1-\varphi_{E/F}}).$$
{\bf (3.)} We know that the $c_{K/F}(\varepsilon\wedge\pi_F)\in K_F^\times$ and $\chi_K:K^\times/I_FK^\times\to\bbC^\times$. 
Then we can write 
\begin{align*}
 \chi_K\circ c_{K/F}(\varepsilon\wedge\pi_F)
 &=\chi_K(N_{K/E}^{-1}(N_{E/F}^{-1}(\varepsilon)^{1-\varphi_{E/F}})\\
 &=\chi_E\circ N_{K/E}(N_{K/E}^{-1}(N_{E/F}^{-1}(\varepsilon)^{1-\varphi_{E/F}}), \quad\text{since $\chi_K=\chi_E\circ N_{K/E}$}\\
 &=\chi_E(N_{E/F}^{-1}(\varepsilon)^{1-\varphi_{E/F}})=X_\eta(\varepsilon,\pi_F)\\
 &=\eta(\varepsilon).
\end{align*}
This is true for all $\varepsilon\in U_F$. Therefore we can conclude that 
$\chi_K|_{K_F^\times}=\eta$.
\end{proof}

\begin{exm}[{\bf Explicit description of Heisenberg representations of dimension prime to $p$}]\label{Example for Heisenberg reps}

Let $F/\bbQ_p$ be a local field, and $G_F$ be the absolute Galois group of $F$.
Let $\rho=\rho(X,\chi_K)$ be a Heisenberg representation of $G_F$ of dimension $m$ prime to $p$. Then from 
Corollary \ref{Corollary U-isotropic} the alternating character $X=X_\eta$ is $U$-isotropic for a character
$\eta:U_F/U_F^1\to\bbC^\times$. Here from Lemma \ref{Lemma U-isotropic} 
we can say $m=\sqrt{[F^\times:\rm{Rad}(X_\eta)]}=\#\eta$ divides $q_F-1$.

Since $U_F^1$ is a pro-p-group and $gcd(m,p)=1$, we have $(U_F^1)^m=U_F^1\subset {F^\times}^m$, and therefore  
$$F^\times/{F^\times}^m\cong\bbZ_m\times\bbZ_m,$$
is a bicyclic group of order $m^2$. So by class field theory there is precisely one extension $K/F$ such that 
$\rm{Gal}(K/F)\cong\bbZ_m\times\bbZ_m$ and the norm group $\cN_{K/F}:=N_{K/F}(K^\times)={F^\times}^m$.

We know that $U_F/U_F^1$ is a cyclic group of order $q_F-1$, hence $\widehat{U_F/U_F^1}\cong U_F/U_F^1$. By the given condition 
$m|(q_F-1)$, hence $U_F/U_F^1$ has exactly one subgroup of order $m$. Then number of elements of order $m$ in $U_F/U_F^1$ is 
$\varphi(m)$, the Euler's $\varphi$-function of $m$.
In this setting, we have $\eta\in \widehat{U_F/U_F^1}\cong \widehat{FF^\times/U_F^1\wedge U_F^1}$ with 
$\#\eta=m$. This implies that up to $1$-dimensional character twist there are $\varphi(m)$ representations 
corresponding to $X_\eta$ where $\eta:U_F/U_F^1\to\bbC^\times$ is of order $m$.
According to Corollary 1.2 of \cite{Z2}, all dimension-m-Heisenberg 
representations of $G_F=\rm{Gal}(\overline{F}/F)$ are given as 
\begin{equation}
 \rho=\rho(X_\eta,\chi_K),\tag{1H}
\end{equation}
where $\chi_K: K^\times/ I_{F}K^\times\to\mathbb{C}^{\times}$ is a character 
such that the restriction of $\chi_K$
to the subgroup $K_{F}^{\times}$ corresponds to $X_\eta$ under the map (\ref{eqn 5.1.3}), and
\begin{equation}
 F^\times/{F^\times}^m\wedge F^\times/{F^\times}^m\cong K_{F}^{\times}/I_{F}K^\times,\tag{2H}
\end{equation}
which is given via the commutator in the relative Weil-group $W_{K/F}$ (for details arithmetic description of Heisenberg
representations of a Galois group, see \cite{Z2}, pp. 301-304).
The condition (2H) corresponds to (\ref{eqn 5.1.3}). Here the above Explicit Lemma \ref{Explicit Lemma} comes in.

Here due to our assumption both sides of (2H) are groups of order $m$.
And if one choice $\chi_K=\chi_0$ has been fixed, then all other $\chi_K$
are given as
\begin{equation}\label{eqn 4.20}
 \chi_K=(\chi_F\circ N_{K/F})\cdot\chi_0,
\end{equation}
for arbitrary characters of $F^\times$. For an optimal choice $\chi_K=\chi_0$, and order of $\chi_0$ we need the following lemma.

\begin{lem}\label{Lemma 5.3.3}
Let $K/F$ be the extension of $F/\bbQ_p$ for which $\rm{Gal}(K/F)=\bbZ_m\times\bbZ_m$. 
The $K_{F}^{\times}$ and $I_{F}K^\times$ are
as above. Then 
 the sequence 
 \begin{equation}\label{eqn 4.21}
  1\to U_{K}^{1}K_{F}^{\times}/U_{K}^{1}I_{F}K^\times\to U_K/U_{K}^{1}I_{F}K^\times\xrightarrow{N_{K/F}} U_F/U_{F}^{1}\to
  U_F/U_F\cap {F^\times}^m\to 1
 \end{equation}
is exact, and the outer terms are both of order $m$, hence inner terms are both cyclic of order $q_F-1$.
\end{lem}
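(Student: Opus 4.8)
The plan is to establish that the four displayed homomorphisms are well-defined, verify exactness at each interior node, and compute the orders of the two outer terms; the orders and cyclicity of the inner terms then follow formally. First I would record the basic inclusions that make everything well-defined. Since $m$ is prime to $p$ (Lemma \ref{Lemma dimension equivalent}) and $U_F^1$ is a pro-$p$-group, one has $(U_F^1)^m=U_F^1\subseteq{F^\times}^m$, so the projection $U_F/U_F^1\to U_F/(U_F\cap{F^\times}^m)$ makes sense. Using the valuation identity $\nu_F\circ N_{K/F}=f_{K/F}\cdot\nu_K$ (recalled in Chapter 2), a norm-$1$ element has $K$-valuation $0$, whence $I_FK^\times\subseteq K_{F}^{\times}\subseteq U_K$; since $N_{K/F}$ kills $I_FK^\times$ and sends $U_K^1$ into $U_F^1$, the middle map $N_{K/F}\colon U_K/U_K^1I_FK^\times\to U_F/U_F^1$ is well-defined, and the first arrow is the inclusion of subgroups of $U_K/U_K^1I_FK^\times$.

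For the outer term $D=U_F/(U_F\cap{F^\times}^m)$ I would use $F^\times=\langle\pi_F\rangle\times\langle\zeta\rangle\times U_F^1$ with $\zeta$ of order $q_F-1$. Comparing valuations shows $U_F\cap{F^\times}^m=\langle\zeta^m\rangle\times U_F^1=U_F^m$, so $D\cong\langle\zeta\rangle/\langle\zeta^m\rangle\cong\bbZ/\gcd(m,q_F-1)=\bbZ/m$ because $m\mid q_F-1$; in particular $\ker(C\to D)=U_F^m/U_F^1$ has order $(q_F-1)/m$. For the outer term $A=U_K^1K_{F}^{\times}/U_K^1I_FK^\times$, the natural map $K_{F}^{\times}/I_FK^\times\to A$ is onto, since every class of a principal-unit multiple of a norm-$1$ element is the class of its norm-$1$ factor. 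Its kernel is the image of $K_{F}^{\times}\cap U_K^1I_FK^\times$, and I claim this equals $I_FK^\times$: if $y=uz$ with $u\in U_K^1$ and $z\in I_FK^\times$, then $u=yz^{-1}\in K_{F}^{\times}\cap U_K^1$, and the image of this pro-$p$ group in $K_{F}^{\times}/I_FK^\times$ — which is cyclic of order $m$ prime to $p$ by Remark \ref{Remark 5.1.14} — must be trivial, so $u\in I_FK^\times$. Hence $A\cong K_{F}^{\times}/I_FK^\times$ has order $m$.

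Exactness rests on two facts about norms in the tamely ramified extension $K/F$. For exactness at $C$ I would show $N_{K/F}(U_K)=U_F\cap\cN_{K/F}=U_F\cap{F^\times}^m=U_F^m$: the inclusion $\subseteq$ is clear, and conversely a unit of the form $N_{K/F}(x)$ forces $\nu_K(x)=0$ by the valuation identity, so $x\in U_K$; thus $\mathrm{Im}(B\to C)=U_F^mU_F^1/U_F^1=U_F^m/U_F^1=\ker(C\to D)$. For exactness at $B$ I would use the surjectivity of the tame norm $N_{K/F}\colon U_K^1\twoheadrightarrow U_F^1$: given $x\in U_K$ with $N_{K/F}(x)\in U_F^1$, choose $w\in U_K^1$ with $N_{K/F}(w)=N_{K/F}(x)$; then $xw^{-1}\in K_{F}^{\times}$ and $x=(xw^{-1})w\in U_K^1K_{F}^{\times}$, giving $\ker(B\to C)\subseteq A$, while the reverse inclusion is immediate since norms of principal units are principal units. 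Surjectivity of $C\to D$ is clear, and injectivity of $A\to B$ is automatic.

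Finally, exactness together with $|A|=|D|=m$ and $|C|=|U_F/U_F^1|=q_F-1$ forces $|B|=q_F-1$; and $C\cong k_F^\times$ is cyclic, while $B$, being a quotient of the cyclic group $U_K/U_K^1\cong k_K^\times$, is cyclic as well, which yields the last assertion. I expect the main obstacle to be the two tame-norm surjectivity statements, $N_{K/F}(U_K)=U_F^m$ and $N_{K/F}(U_K^1)=U_F^1$; these are standard for tamely ramified extensions via the induced isomorphisms on the prime-to-$p$ quotients $U^i/U^{i+1}$, but they must be invoked carefully, and one should double-check the identification in Remark \ref{Remark 5.1.14} that $K_{F}^{\times}/I_FK^\times$ is cyclic of order exactly $m=\#\eta$, on which the computation of $|A|$ depends.
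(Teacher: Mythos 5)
Your proof is correct and follows essentially the same route as the paper: the order of the right-hand term comes from the structure of $F^\times/{F^\times}^m$, and the order of the left-hand term comes from the isomorphism $K_F^\times/I_FK^\times\cong U_K^1K_F^\times/U_K^1I_FK^\times$ forced by $U_K^1$ being pro-$p$ while $m$ is prime to $p$. The only difference is that you spell out the exactness at the two inner terms (via $N_{K/F}(U_K)=U_F\cap{F^\times}^m$ and the surjectivity of the tame norm $U_K^1\twoheadrightarrow U_F^1$), which the paper simply asserts by appealing to ${F^\times}^m=N_{K/F}(K^\times)$ being the norm group.
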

\begin{proof}
 The sequence is exact because ${F^\times}^m=N_{K/F}(K^\times)$ is the group of norms, and 
 $F^\times/{F^\times}^m\cong \bbZ_m\times\bbZ_m$ implies
 that the right hand term\footnote{Since $gcd(m,p)=1$, we have 
 \begin{center}
  $U_F\cdot{F^\times}^m=(<\zeta>\times U_F^1)(<\pi_F^m>\times<\zeta^m>\times U_F^1)=<\pi_F^m>\times<\zeta>\times U_F^1$,
 \end{center}
where $\zeta$ is a $(q_F-1)$-st root of unity. 
Then 
\begin{center}
 $U_F/U_F\cap {F^\times}^m=U_F\cdot {F^\times}^m/{F^\times}^m=
 <\pi_F^m>\times<\zeta>\times U_F^1/<\pi_F^m>\times<\zeta^m>\times U_F^1\cong\bbZ_m$.
\end{center}
Hence $|U_F/U_F\cap{F^\times}^m|=m$.} is of order $m$. By our assumption the order of $K_{F}^{\times}/I_{F}K^\times$ is $m$. Now 
 we consider the exact sequence
 \begin{equation}\label{sequence 5.1.25}
  1\to U_{K}^{1}\cap K_{F}^{\times}/U_{K}^{1}\cap I_{F}K^\times\to K_{F}^{\times}/I_{F}K^\times\to 
  U_{K}^{1}K_{F}^{\times}/U_{K}^{1}I_{F}K^\times\to 1.
 \end{equation}
Since the middle term has order $m$, the left term must have order $1$, because $U_{K}^{1}$ is a pro-p-group and $gcd(m,p)=1$.
Hence the right term is also of order $m$. So the outer terms of the sequence (\ref{eqn 4.21}) have both order $m$, hence the inner 
terms must have the same order $q_F-1=[U_F:U_{F}^{1}]$, and they are cyclic, because the groups $U_F/U_{F}^{1}$ and $U_K/U_{K}^{1}$
are both cyclic.
\end{proof}

{\bf\large{We now are in a position to choose $\chi_K=\chi_0$ as follows}}: 
\begin{enumerate}
 \item we take $\chi_0$ as a character of $K^\times/U_{K}^{1}I_{F}K^\times$,
 \item we  take it on $U_{K}^{1}K_{F}^{\times}/U_{K}^{1}I_{F}K^\times$ as it is prescribed by the above 
 Explicit Lemma \ref{Explicit Lemma},
 in particular, $\chi_0$ restricted to that subgroup (which is cyclic of order $m$) will be faithful.
 \item we take it trivial on all primary components of the cyclic group $U_{K}/U_{K}^{1}I_{F}K^\times$ which are not $p_i$-primary,
 where $m=\prod_{i=1}^{n}p_i^{a_i}$.
 \item we take it trivial for a fixed prime element $\pi_K$.
\end{enumerate}

Under the above optimal choice of $\chi_0$, we have

\begin{lem}\label{Lemma 5.1.17}
Denote $\nu_p(n):=$ as the highest power of $p$ for which $p^{\nu_p(n)}|n$.
 The character $\chi_0$ must be a character of order 
 $$m_{q_F-1}:=\prod_{l|m}l^{\nu_l(q_F-1)},$$
 which we will call the $m$-primary part of $q_F-1$, so it determines a cyclic
extension $L/K$ of degree $m_{q_F-1}$ which is totally tamely ramified, and we can consider 
the Heisenberg representation $\rho=(X,\chi_0)$ of 
$G_F=\rm{Gal}(\overline{F}/F)$ is a representation of $\rm{Gal}(L/F)$, which is of order $m^2\cdot m_{q_F-1}$.
\end{lem}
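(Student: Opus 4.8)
The plan is to reduce everything to computing the order of the character $\chi_0$ and then to read off the field $L$ and the factorization of $\rho$ from class field theory. First I would observe that the optimal choice makes $\chi_0$ trivial on the fixed prime element $\pi_K$ and on $U_K^1 I_F K^\times$; since $K^\times = \langle \pi_K\rangle \times U_K$ and $I_F K^\times \subseteq K_F^\times \subseteq U_K$ (norm-$1$ elements have $K$-valuation zero), this means $\chi_0$ is really a character of the cyclic group $C := U_K/U_K^1 I_F K^\times$, which by Lemma \ref{Lemma 5.3.3} has order $q_F - 1$. Inside $C$ sits the subgroup $B := U_K^1 K_F^\times / U_K^1 I_F K^\times$, which by the same lemma is the subgroup of order $m$ (unique, since $C$ is cyclic and $m \mid q_F-1$), and on which $\chi_0$ is faithful by the Explicit Lemma \ref{Explicit Lemma}.

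Next I would compute the order via the primary decomposition $C = \bigoplus_l C_l$ into $l$-primary components $C_l$, cyclic of order $l^{\nu_l(q_F-1)}$. The optimal choice makes $\chi_0$ trivial on every $C_l$ with $l \nmid m$, so $\chi_0 = \prod_{p_i \mid m} \chi_0^{(p_i)}$ with $\chi_0^{(p_i)}$ a character of $C_{p_i}$. The key technical step is the elementary fact about cyclic $p_i$-groups: if $C_{p_i}$ is cyclic of order $p_i^{\,b}$ with $b = \nu_{p_i}(q_F-1)$ and $B_{p_i} \subseteq C_{p_i}$ is the subgroup of order $p_i^{a_i}$ (where $m = \prod_i p_i^{a_i}$), then a character $\psi$ of $C_{p_i}$ restricts to a faithful character of $B_{p_i}$ if and only if $\psi$ itself has full order $p_i^{\,b}$. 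Applying this to each factor forces $\chi_0^{(p_i)}$ to have order $p_i^{\nu_{p_i}(q_F-1)}$, so that $\chi_0$ has order $\prod_{l \mid m} l^{\nu_l(q_F-1)} = m_{q_F-1}$, proving the first assertion.

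Then I would pass to the Galois side. Viewing $\chi_0$ as a character of $G_K$ by class field theory, its kernel is an open subgroup of index $m_{q_F-1}$ whose fixed field $L$ is cyclic over $K$ of degree $m_{q_F-1}$, with $\cN_{L/K} = \mathrm{Ker}(\chi_0)$. Since $q_F = p^s$, the order $m_{q_F-1}$ is prime to $p$, so $L/K$ is tamely ramified. Because $\chi_0(\pi_K) = 1$ we have $\pi_K \in \cN_{L/K}$, hence $K^\times = \langle \pi_K\rangle U_K = U_K \cdot \cN_{L/K}$; by class field theory the inertia subgroup is then all of $\mathrm{Gal}(L/K)$, so $L/K$ is totally ramified, and being of degree prime to $p$ it is totally tamely ramified, as claimed.

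Finally I would show $\rho$ is a faithful representation of $\mathrm{Gal}(L/F)$. Here $\rho = \rho(Z_\rho, \chi_\rho)$ has scalar group $Z_\rho = G_K$ and central character $\chi_\rho = \chi_0$; since a Heisenberg central character is $G$-invariant — equivalently $\chi_0$ is trivial on $I_F K^\times$, which holds by construction — the subgroup $\mathrm{Ker}(\chi_0) = G_L$ is normal in $G_F$. From $\mathrm{Ind}_{K/F}(\chi_0) = m\cdot\rho$ and $\mathrm{Ker}(\mathrm{Ind}_{K/F}\chi_0) = \bigcap_{\sigma}\,{}^\sigma\mathrm{Ker}(\chi_0)$ one gets $\mathrm{Ker}(\rho) = \mathrm{Ker}(\chi_0) = G_L$, so $L/F$ is Galois and $\rho$ descends to a faithful representation of $\mathrm{Gal}(L/F)$, whose order is $[L:F] = [L:K]\,[K:F] = m_{q_F-1}\cdot m^2$. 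The main obstacle I anticipate is the order computation of $\chi_0$ — in particular justifying that faithfulness on the order-$m$ subgroup forces full order on each primary component — together with the bookkeeping identifying $\mathrm{Ker}(\rho)$ with $G_L$ via the $G_F$-invariance of the central character.
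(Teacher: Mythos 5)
Your proof is correct and follows essentially the same route as the paper: read the order of $\chi_0$ off its restriction to the cyclic group $U_K/U_K^1I_FK^\times$ of order $q_F-1$ using the optimal choice, then let class field theory produce $L/K$, check tame and totally ramified via $\gcd(m_{q_F-1},p)=1$ and $\pi_K\in\cN_{L/K}$, and identify $\rm{Ker}(\rho)=\rm{Ker}(\chi_0)=G_L$. The only difference is one of completeness: you spell out the primary-decomposition argument (faithfulness on the order-$p_i^{a_i}$ subgroup forcing full order $p_i^{\nu_{p_i}(q_F-1)}$ on each component) and the residue-degree computation for total ramification, both of which the paper merely asserts.
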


\begin{proof}
By the given conditions, $m|q_F-1$. Therefore we can write
$$q_F-1=\prod_{l|m}l^{\nu_l(q_F-1)}\cdot \prod_{p|q_F-1,\; p\nmid m}p^{\nu_p(q_F-1)}=
m_{q_F-1}\cdot \prod_{p|q_F-1,\;p\nmid m}p^{\nu_p(q_F-1)},$$
where $l, p$ are prime, and $m_{q_F-1}=\prod_{l|m}l^{\nu_l(q_F-1)}$.

From the construction of $\chi_0$, $\pi_K\in\rm{Ker}(\chi_0)$, hence the order of $\chi_0$ comes from the restriction to 
$U_K$. Then the order of $\chi_0$ is $m_{q_F-1}$, because from Lemma \ref{Lemma 5.3.3}, the order of $U_K/U_{K}^{1}I_FK$ is
$q_F-1$. Since order of $\chi_0$ is $m_{q_F-1}$, by class field theory $\chi_0$ determines a cyclic 
extension $L/K$ of degree $m_{q_F-1}$, hence 
$$N_{L/K}(L^\times)=\rm{Ker}(\chi_0)=\rm{Ker}(\rho).$$
This means $G_L$ is the kernel of $\rho(X,\chi_0)$, hence $\rho(X,\chi_0)$ is actually a representation of 
$G_F/G_L\cong\rm{Gal}(L/F)$.

Since $G_L$ is normal subgroup of $G_F$, hence $L/F$ is a normal extension of degree $[L:F]=[L:K]\cdot[K:F]=m_{q_F-1}\cdot m^2$.
Thus $\rm{Gal}(L/F)$ is of order $m^2\cdot m_{q_F-1}$.

Moreover, since $[L:K]=m_{q_F-1}$ and $gcd(m,p)=1$, $L/K$ is tame. By construction we have a prime 
$\pi_K\in\rm{Ker}(\chi_0)=N_{L/K}(L^\times)$, hence $L/K$ is totally ramified extension. 

\end{proof}

\begin{lem}(Here $L$, $K$, and $F$ are the same as in Lemma \ref{Lemma 5.1.17})
 Let $F^{ab}/F$ be the maximal abelian extension. Then we have 
$$L\supset L\cap F^{ab}\supset K\supset F, \quad\{1\}\subset G'\subset Z(G)\subset G=\rm{Gal}(L/F),$$
where $[L:L\cap F^{ab}]=|G'|=m$ and $[L:K]=|Z(G)|=m_{q_F-1}$.
\end{lem}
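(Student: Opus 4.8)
The plan is to translate the asserted field tower into the corresponding chain of subgroups of $G=\rm{Gal}(L/F)$ via the Galois correspondence, and then read off the two index statements from the group theory of the \emph{faithful} Heisenberg representation $\rho=\rho(X_\eta,\chi_0)$. First I would record that, by Lemma \ref{Lemma 5.1.17}, $G_L=\rm{Ker}(\rho)$, so that $\rho$ is a faithful Heisenberg representation of $G=G_F/G_L=\rm{Gal}(L/F)$ of dimension $m$. Faithfulness has two consequences I would extract at once. On one hand, Schur's lemma gives $Z(G)\subseteq Z_\rho$, and conversely if $z\in Z_\rho$ then $\rho([z,g])=I$ for all $g$, whence $[z,g]=1$ by faithfulness; thus $Z_\rho=Z(G)$. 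On the other hand, $z\in Z_\rho$ satisfies $\rho(z)=\chi_\rho(z)\cdot I$, so $\rho(z)=I\iff\chi_\rho(z)=1$; since $\rm{Ker}(\rho)=\{1\}$ this forces $\chi_\rho$ to be a faithful character of $Z_\rho=Z(G)$, so in particular $Z(G)$ is cyclic. Since $\rho=\rho(G_K,\chi_K)$ is induced from $K$, its scalar group is $Z_\rho=G_K/G_L=\rm{Gal}(L/K)$; hence $Z(G)=\rm{Gal}(L/K)$ corresponds to $K$, and $[L:K]=|Z(G)|=m_{q_F-1}$ by Lemma \ref{Lemma 5.1.17}.

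The heart of the argument is the identification $|G'|=m$. Here I would use that $A:=G/Z_\rho\cong F^\times/\rm{Rad}(X_\eta)\cong\bbZ_m\times\bbZ_m$ carries the nondegenerate alternating character $X_\rho=X_\eta$. By the structure Lemma \ref{Theorem 2.4} there are generators $t,t'$ with $X_\rho(t,t')=\zeta_m$ a primitive $m$-th root of unity; alternating bilinearity then gives $X_\rho(t^it'^j,t^kt'^l)=\zeta_m^{il-jk}$, so the image of $X_\rho$ is exactly the group $\mu_m$ of $m$-th roots of unity, of order $m$. Because $G$ is two-step nilpotent (as $\overline{G}$ is two-step nilpotent and here $\rm{Ker}(\rho)=\{1\}$), the commutator descends to a homomorphism $c\colon A\wedge_\bbZ A\to [G,G]=G'$ whose image is all of $G'$ (commutators generate the central abelian group $G'$), and by construction $X_\rho=\chi_\rho\circ c$. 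Since $\chi_\rho$ is injective on $Z(G)\supseteq G'$, I conclude $|G'|=|\chi_\rho(G')|=|\rm{Im}(X_\rho)|=m$.

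Finally I would assemble the tower. From two-step nilpotence we have $G'\subseteq Z(G)$, so the subgroup chain $\{1\}\subseteq G'\subseteq Z(G)\subseteq G$ is sent by the Galois correspondence to $L\supseteq L^{G'}\supseteq L^{Z(G)}=K\supseteq F$. The fixed field $L^{G'}$ is precisely the maximal abelian subextension of $L/F$, i.e.\ $L^{G'}=L\cap F^{ab}$; conversely $K\subseteq L\cap F^{ab}$ because $K/F$ is abelian. This yields $L\supset L\cap F^{ab}\supset K\supset F$ together with $[L:L\cap F^{ab}]=|G'|=m$ and $[L:K]=|Z(G)|=m_{q_F-1}$. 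The main obstacle I anticipate is the bookkeeping in the middle paragraph, namely establishing $Z_\rho=Z(G)$ and the faithfulness of $\chi_\rho$ cleanly, and then passing correctly from the image of the alternating character $X_\rho$ to the order of the commutator subgroup $G'$, rather than any deep computation.
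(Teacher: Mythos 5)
Your proof is correct, and its core step is genuinely different from the paper's. The paper computes $|G'|=m$ arithmetically: it identifies $G'=[G_F,G_F]/\mathrm{Ker}(\rho)\cap[G_F,G_F]$ with $K_F^\times/I_FK^\times$ via the class-field-theoretic commutator isomorphism (\ref{eqn 5.1.3}), and then reads off $|K_F^\times/I_FK^\times|=m$ from the exact sequence in Lemma \ref{Lemma 5.3.3} (equivalently from $F^\times/\mathcal{N}\wedge F^\times/\mathcal{N}\cong\bbZ_m$). You instead stay entirely on the group-theoretic side: since $\rho$ is faithful on $G=\mathrm{Gal}(L/F)$, the commutator map descends to a surjection from $A\wedge A$ onto $G'$ with $A\cong\bbZ_m\times\bbZ_m$, so $G'=\langle[t,t']\rangle$ is cyclic, and the faithfulness of $\chi_\rho$ on $Z(G)$ converts the fact that $X_\rho(t,t')=\zeta_m$ is primitive into $|G'|=m$. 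Your route buys two things the paper leaves implicit: it makes the identification $Z_\rho=Z(G)$ (and the faithfulness of $\chi_\rho$ on it) explicit rather than silently conflating the scalar group with the center, and it avoids the step where the paper asserts $\mathrm{Ker}(\rho)\cap[G_F,G_F]=[[G_F,G_F],G_F]$ without comment (which in the paper's approach really needs the observation from Remark \ref{Remark 5.1.14} that $\chi|_{(K_\eta^\times)_F}$ is faithful). The paper's route, in exchange, ties $G'$ directly to the arithmetic object $K_F^\times/I_FK^\times$, which is the identification it wants to reuse elsewhere. The remaining parts — the Galois-correspondence tower, $Z(G)=\mathrm{Gal}(L/K)$, and $|Z(G)|=m_{q_F-1}$ from $[G:Z]=m^2$ together with $|G|=m^2\cdot m_{q_F-1}$ — agree with the paper in substance.
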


\begin{proof}
 Let $F^{ab}/F$ be the maximal abelian extension. Then we have 
$$L\supset L\cap F^{ab}\supset K\supset F.$$
Here $L\cap F^{ab}/F$ is the maximal abelian in $L/F$. Then from Galois theory we can conclude 
$$\rm{Gal}(L/L\cap F^{ab})=[\rm{Gal}(L/F), \rm{Gal}(L/F)]=: G'.$$
Since $\rm{Gal}(L/F)=G_F/\rm{Ker}(\rho)$, and $[[G_F,G_F],G_F]\subseteq\rm{Ker}(\rho)$, from relation (\ref{eqn 5.1.3}) we have 
$$G'=[G_F,G_F]/\rm{Ker}(\rho)\cap [G_F,G_F]=[G_F,G_F]/[[G_F,G_F],G_F]\cong K_F^\times/I_FK^\times.$$
Again from sequence \ref{sequence 5.1.25} we have $|U_K^1K_F^\times/U_K^1 I_FK^\times|=|K_F^\times/I_FK^\times|=m$.
Hence $|G'|=m$.


From the Heisenberg property of $\rho$, we have 
$[[G_F,G_F],G_F]\subseteq\rm{Ker}(\rho)$, hence $\rm{Gal}(L/F)=G_F/\rm{Ker}(\rho)$ is a two-step nilpotent group.
This gives $[G',G]=1$, hence $G'\subseteq Z:=Z(G)$. Thus $G/Z$ is abelian. 

Moreover, here $Z$ is the scalar group of $\rho$, hence the dimension of $\rho$ is:
$$\rm{dim}(\rho)=\sqrt{[G:Z]}=m$$
Therefore the order of $Z$ is $m_{q_F-1}$ and $Z=\rm{Gal}(L/K)$.

\end{proof}

\begin{rem}[{\bf Special case: $m=2$, hence $p\ne 2$}]

Now if we take $m=2$, hence $p\ne 2$, and choose $\chi_0$ as the above optimal choice, then we will have 
$m_{q_F-1}=2_{q_F-1}=2$-primary factor of the number $q_F-1$, and $\rm{Gal}(L/F)$ is a $2$-group of order 
$4\cdot 2_{q_F-1}$.

 When $q_F\equiv -1\pmod{4}$, $q_F$ is of the form $q_F=4l-1$, where $l\ge 1$. So we can write $q_F-1=2(2l-1)$.
Since $2l-1$ is always odd, therefore when $q_F\equiv-1\pmod{4}$, the order of $\chi_0$ is $2_{q_F-1}=2$. 
Then $\rm{Gal}(L/F)$ will be of order 8 if and only if $q_F\equiv -1\pmod{4}$, i.e., if and only
if $i\not\in F$. And if $q_F\equiv 1\pmod{4}$, then similarly,  we can write $q_F-1=4m$ for some integer $m\ge1$, hence 
$2_{q_F-1}\ge 4$. Therefore when $q_F\equiv 1\pmod{4}$, the order of $\rm{Gal(L/F)}$ will be at least $16$.

\end{rem}

\end{exm}

\subsection{{\bf Artin conductors, Swan conductors, and the dimensions of Heisenberg representations}}

\begin{dfn}[{\bf Artin and Swan conductor}]
 Let $G$ be a finite group and $R(G)$ be the complex representation ring of $G$. For any two representations 
 $\rho_1,\rho_2\in R(G)$ with characters $\chi_1,\chi_2$ respectively, we have the Schur's inner product:
 $$<\rho_1,\rho_2>_G=<\chi_1,\chi_2>_G:=\frac{1}{|G|}\sum_{g\in G}\chi_1(g)\cdot\overline{\chi_2(g)}.$$
 Let $K/F$ be a finite Galois extension with Galois
 group $G:=\rm{Gal}(K/F)$. For an element $g\in G$ different from identity $1$, we define the non-negative integer 
 (cf. \cite{JPS}, Chapter IV, p. 62)
 $$i_G(g):=\rm{inf}\{\nu_K(x-g(x))|\; x\in O_K\}.$$
 By using this non-negative (when $g\ne 1$) integer $i_G(g)$ we define a function $a_G:G\to\bbZ$ as follows:
 \begin{center}
  $a_G(g)=-f_{K/F}\cdot i_G(g)$ when $g\ne 1$, and $a_G(1)=f_{K/F}\sum_{g\ne 1}i_G(g)$.
 \end{center}
Thus from this definition we can see that $\sum_{g\in G}a_G(g)=0$, hence $<a_G, 1_G>=0$. 
It can be proved (cf. \cite{JPS}, p. 99, Theorem 1) that the function $a_G$ is the character of a linear representation of $G$,
and that corresponding linear representation is called the {\bf Artin representation} $A_G$ of $G$.

Similarly, for a nontrivial $g\ne 1\in G$, we define (cf. \cite{VS}, p. 247)
$$s_G(g)=\rm{inf}\{\nu_K(1-g(x)x^{-1})|\;x\in K^\times\},\qquad s_G(1)=-\sum_{g\ne 1}s_G(g).$$
And we can define a function $\rm{sw}_G:G\to\bbZ$ as follows:
$$\rm{sw}_G(g)=-f_{K/F}\cdot s_G(g)$$
It can also be shown that $\rm{sw}_G$ is a character of a linear representation of $G$, and that corresponding representation
is called the {\bf Swan representation} $SW_G$ of $G$.

From \cite{JP}, p. 160 , we have the relation between the Artin and Swan representations (cf. \cite{VS}, p. 248, equation (6.1.9))
\begin{equation}\label{eqn 5.1.22}
 SW_G=A_G+\rm{Ind}_{G_0}^{G}(1)-\rm{Ind}_{\{1\}}^{G}(1),
\end{equation}
$G_0$ is the $0$-th ramification group (i.e., inertia group) of $G$.

Now we are in a position to define the Artin and Swan conductor of a representation $\rho\in R(G)$. The Artin conductor of a 
representation $\rho\in R(G)$ is defined by 
$$a_F(\rho):=<A_G,\rho>_G=<a_G,\chi>_G,$$
where $\chi$ is the character of 
the representation $\rho$. Similarly, for the representation $\rho$, the Swan conductor is:
$$\rm{sw}_F(\rho):=<SW_G,\rho>_G=<\rm{sw}_G,\chi>_G.$$
For more details about Artin and Swan conductor, see Chapter 6 of \cite{VS} and Chapter VI of \cite{JPS}.
\end{dfn}
From equation (\ref{eqn 5.1.22}) we obtain
\begin{equation}\label{eqn 5.1.23}
 a_F(\rho)=\rm{sw}_F(\rho)+\rm{dim}(\rho)-<1,\rho>_{G_0}.
\end{equation}
Moreover, from Corollary of Proposition 4 on p. 101 of \cite{JPS}, for an induced representation 
$\rho:=\rm{Ind}_{\rm{Gal}(K/E)}^{\rm{Gal}(K/F)}(\rho_E)=\rm{Ind}_{E/F}(\rho_E)$, we have
\begin{equation}\label{eqn 5.1.24}
 a_F(\rho)=f_{E/F}\cdot \left( d_{E/F}\cdot \rm{dim}(\rho_E)+\textrm{a}_E(\rho_E)\right).
\end{equation}
We apply this formula (\ref{eqn 5.1.24}) for $\rho_E=\chi_E$ of dimension $1$ and then conversely 
$$a(\chi_E)=\frac{a_F(\rho)}{f_{E/F}}-d_{E/F}.$$
So if we know $a_F(\rho)$ then we can compute $a(\chi_E)$. 


Let $\{G^i\}$, where $i\ge 0,\in\bbQ$ be the ramification subgroups (in the upper numbering) of a local Galois group $G$.
Now let $\rho$ be an irreducible representation of $G$. For this irreducible $\rho$ we define 
$$j(\rho):=\rm{max}\{ i\;|\; \rho|_{G^i}\not\equiv 1\}.$$
Now if $\rho$ is an irreducible representation of $G$ which is not an unramified character, 
then $\rho|_{I}\not\equiv 1$, where $I=G^0=G_0$ is the inertia subgroup
of $G$. Thus from the definition of $j(\rho)$ we can say, if $\rho$ is irreducible, then we always have 
$j(\rho)\ge 0$, i.e., $\rho$ is nontrivial on the inertia group $G_0$. Then from the definitions of Swan and Artin 
conductors, and equation (\ref{eqn 5.1.23}), when $\rho$ is irreducible, we have the following relations
\begin{equation}\label{eqn 5.1.281}
 \rm{sw}_F(\rho)=\rm{dim}(\rho)\cdot j(\rho),\qquad a_F(\rho)=\rm{dim}(\rho)\cdot (j(\rho)+1).
\end{equation}
From the Theorem of Hasse-Arf (cf. \cite{JPS}, p. 76), if $\rm{dim}(\rho)=1$, i.e., $\rho$ is a character of $G/[G,G]$, 
we can say that $j(\rho)$ must be an integer, then $\rm{sw}_F(\rho)=j(\rho), a_F(\rho)=j(\rho)+1$.
Moreover, by class field theory, $\rho$ corresponds to a linear character $\chi_F$, hence for linear character $\chi_F$, we can write 
$$j(\chi_F):=\rm{max}\{i\;|\;\chi_F|_{U_F^i}\not\equiv1\},$$
because under class field theory (under Artin isomorphism) 
the upper numbering in the filtration of $\rm{Gal}(F_{ab}/F)$ is compatible with the filtration (descending chain) of the group of units 
$U_F$.

From equation (\ref{eqn 5.1.281}),
it is easy to see that for higher dimensional $\rho$, we have $\rm{sw}_F(\rho), a_F(\rho)$ multiples of $\rm{dim}(\rho)$ if and only 
if $j(\rho)$ is an integer.

Now we come to our Heisenberg representations. For each $X\in\widehat{FF^\times}$ we define
\begin{equation}
 j(X):=\begin{cases}
        0 & \text{when $X$ is trivial}\\
        \rm{max}\{i\;|\; X|_{UU^i}\not\equiv 1\} & \text{when $X$ is nontrivial},
       \end{cases}
\end{equation}
where $UU^i\subseteq FF^\times$ is a subgroup which under (\ref{eqn 5.1.2}) corresponds 
$$G_F^i\cap[G_F,G_F]/G_F^i\cap[[G_F,G_F],G_F]\subseteq[G_F,G_F]/[[G_F,G_F],G_F].$$
Let $\rho=\rho(X_\rho,\chi_K)$ be a {\bf minimal conductor} (i.e., a representation with the smallest Artin conductor) 
Heisenberg representation for $X_\rho$ of the absolute Galois group $G_F$. 
From Theorem 3 on p. 125 of \cite{Z5}, we 
have 
\begin{equation}\label{eqn 5.1.26}
 \rm{sw}_F(\rho)=\rm{dim}(\rho)\cdot j(X_\rho)=\sqrt{[F^\times:\rm{Rad}(X_\rho)]}\cdot j(X_\rho).
\end{equation}
Moreover if $\rho_0=\rho_0(X,\chi_0)$ is a minimal representation
corresponding $X$, then all other Heisenberg
representations of dimension $\rm{dim}(\rho)$ are of the form $\rho=\chi_F\otimes \rho_0=(X, (\chi_F\circ N_{K/F})\chi_0)$,
where $\chi_F:F^\times\to \bbC^\times$. Then 
we have (cf. \cite{Z2}, p. 305, equation (5))
\begin{equation}\label{eqn 5.1.27}
 \rm{sw}_F(\rho)=\rm{sw}_F(\chi_F\otimes\rho_0)=\sqrt{[F^\times:\rm{Rad}(X)]}\cdot\rm{max}\{j(\chi_F), j(X)\}.
\end{equation}


For minimal conductor U-isotopic Heisenberg representation we have the following proposition.

\begin{prop}\label{Proposition conductor}
 Let $\rho=\rho(X_\eta,\chi_K)$ be a U-isotropic Heisenberg representation of $G_F$ of minimal conductor. 
 Then we have the following conductor relation
 \begin{center}
  $j(X_\eta)=j(\eta)$, $\rm{sw}_F(\rho)=\rm{dim}(\rho)\cdot j(X_\eta)=\#\eta\cdot j(\eta)$,
  $a_F(\rho)=\rm{sw}_F(\rho)+\rm{dim}(\rho)=\#\eta(j(\eta)+1)=\#\eta\cdot a_F(\eta)$.
 \end{center}
 
\end{prop}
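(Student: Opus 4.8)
The plan is to reduce the entire statement to the single identity $j(X_\eta)=j(\eta)$ and then feed this into the conductor formulas already recorded in this section. By Lemma \ref{Lemma U-isotropic} one has $\rm{dim}(\rho)=\#\eta$, and since $\rho$ is of minimal conductor for the alternating character $X_\eta$, equation (\ref{eqn 5.1.26}) gives $\rm{sw}_F(\rho)=\rm{dim}(\rho)\cdot j(X_\eta)=\#\eta\cdot j(X_\eta)$. Thus, once $j(X_\eta)=j(\eta)$ is in hand, the first two displayed relations are immediate. For the Artin conductor I would invoke equation (\ref{eqn 5.1.281}): because $\rho$ is irreducible and, for nontrivial $\eta$, ramified (hence nontrivial on the inertia group), one has $a_F(\rho)=\rm{dim}(\rho)(j(\rho)+1)$ together with $\rm{sw}_F(\rho)=\rm{dim}(\rho)\,j(\rho)$; comparing the latter with equation (\ref{eqn 5.1.26}) forces $j(\rho)=j(X_\eta)$, so that $a_F(\rho)=\rm{sw}_F(\rho)+\rm{dim}(\rho)=\#\eta\,(j(\eta)+1)$. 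Equivalently one may quote equation (\ref{eqn 5.1.23}) and note $\langle 1,\rho\rangle_{G_0}=0$. The final equality $\#\eta\,(j(\eta)+1)=\#\eta\cdot a_F(\eta)$ is then just the definition $a_F(\eta)=j(\eta)+1$ for the unit character $\eta$.

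The heart of the argument is therefore $j(X_\eta)=j(\eta)$. Both quantities are defined as the largest index at which a restriction stays nontrivial: $j(\eta)=\rm{max}\{i\mid \eta|_{U_F^i}\not\equiv 1\}$, while $j(X_\eta)=\rm{max}\{i\mid X_\eta|_{UU^i}\not\equiv 1\}$, where $UU^i\subseteq FF^\times$ is the subgroup corresponding, under the commutator isomorphism (\ref{eqn 5.1.2}), to $G_F^i\cap[G_F,G_F]$ modulo $G_F^i\cap[[G_F,G_F],G_F]$. The key point is that, since $X_\eta$ is $U$-isotropic, it factors through $FF^\times/U\wedge U$, on which the isomorphism $\widehat{U}\cong\widehat{FF^\times/U\wedge U}$ of Lemma \ref{Lemma U-isotropic} (combined with $FF^\times/U\wedge U\cong U_F$ from \cite{Z4}) identifies $X_\eta$ with $\eta$ via $\pi_F\wedge\varepsilon\mapsto\varepsilon$. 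Indeed, from equation (\ref{eqn 5.1.25}) one reads off $X_\eta(\pi_F,\varepsilon)=\eta(\varepsilon)^{-1}$ for $\varepsilon\in U_F$, so $X_\eta$ is nontrivial on the image of $\pi_F\wedge U_F^i$ exactly when $\eta$ is nontrivial on $U_F^i$. It then remains to check that, modulo $U\wedge U$, the filtration subgroup $UU^i$ corresponds to $U_F^i$ under $FF^\times/U\wedge U\cong U_F$; granting this, $X_\eta|_{UU^i}\not\equiv 1\iff \eta|_{U_F^i}\not\equiv 1$ for every $i$, and taking the largest such $i$ yields $j(X_\eta)=j(\eta)$.

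The main obstacle is precisely this last matching step: verifying that the ramification-theoretic filtration $\{UU^i\}$ on $FF^\times$, defined through the upper-numbering filtration $\{G_F^i\}$ of $G_F$, coincides with the unit filtration $\{U_F^i\}$ under $FF^\times/U\wedge U\cong U_F$. This rests on the compatibility of the local reciprocity map with the upper numbering (so that $G_F^i$ transports to $U_F^i$, the jumps being integral by Hasse–Arf, which is what makes $j(X_\eta)$ an integer equal to $j(\eta)$) and on Zink's description of the commutator filtration; I would cite Theorem 3 of \cite{Z5} together with the remark following Lemma \ref{Lemma U-isotropic}, which already records that $U_F^i\subseteq\rm{Rad}(X_\eta)$ if and only if $\eta$ is a character of $U_F/U_F^i$, to pin down $UU^i\leftrightarrow U_F^i$. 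Everything else is bookkeeping with equations (\ref{eqn 5.1.23}), (\ref{eqn 5.1.26}) and (\ref{eqn 5.1.281}) and the definition of $a_F(\eta)$.
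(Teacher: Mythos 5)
Your proposal is correct and follows essentially the same route as the paper: the whole statement is reduced to $j(X_\eta)=j(\eta)$, which is obtained from the fact that a $U$-isotropic $X_\eta$ factors through $FF^\times/(U\wedge U)$ together with Zink's identification of $UU^i$ modulo $U\wedge U$ with $U^i\wedge\langle\pi_F\rangle$ (the paper cites Propositions 4(i) and 5(ii) of \cite{Z5}, p.~126, for precisely the matching step you flag as the main obstacle), after which the Swan and Artin conductor formulas (\ref{eqn 5.1.26}) and (\ref{eqn 5.1.23}) give the rest. The remaining bookkeeping, including $\langle 1,\rho\rangle_{G_0}=0$ and $a_F(\eta)=j(\eta)+1$, matches the paper's argument.
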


\begin{proof}
 From \cite{Z5}, on p. 126, Proposition 4(i) and Proposition 5(ii), and $U\wedge U=U^1\wedge U^1$, we see the injection 
$U^i\wedge F^\times\subseteq UU^i$ induces a natural isomorphism 
$$U^i\wedge<\pi_F>\cong UU^{i}/UU^i\cap (U\wedge U)$$
for all $i\ge 0$. 

Now let $j(X_\eta)=n-1$, hence $X_\eta|_{UU^n}=1$ but $X_\eta|_{UU^{n-1}}\ne 1$.
This gives $X_\eta|_{U^n\wedge<\pi_F>}=1$ but $X_\eta|_{U^{n-1}\wedge<\pi_F>}\ne 1$. Now from equation (\ref{eqn 5.1.25})
we can conclude that $\eta(x)=1$ for all $x\in U^n$ but $\eta(x)\ne 1$ for $x\in U^{n-1}$. Hence 
$$j(\eta)=n-1=j(X_\eta).$$
Again from the definition of $j(\chi)$, where $\chi$ is a character of $F^\times$, we can see that 
$j(\chi)=a(\chi)-1$, i.e., $a(\chi)=j(\chi)+1$.

From equation (\ref{eqn 5.1.26}) we obtain:
$$\rm{sw}_F(\rho)=\rm{dim}(\rho)\cdot j(X_\eta)=\#\eta\cdot j(\eta),$$
since $\rm{dim}(\rho)=\#\eta$ and $j(X_\eta)=j(\eta)$. Finally, from  equation (\ref{eqn 5.1.23}) for $\rho$ (here $<1,\rho>_{G_0}=0$),
we have 
\begin{equation}\label{eqn 5.1.28}
 a_F(\rho)=\rm{sw}_F(\rho)+\rm{dim}(\rho)=\#\eta\cdot j(\eta)+\#\eta=\#\eta\cdot (j(\eta)+1)=\#\eta\cdot a_F(\eta).
\end{equation}
\end{proof}

By using the equation (\ref{eqn 5.1.24}) in our Heisenberg setting, we have the following proposition.

\begin{prop}\label{Proposition 5.1.20}
 Let $\rho=\rho(Z,\chi_\rho)=\rho(X,\chi_K)$ be a Heisenberg representation of the absolute Galois group $G_F$ of a field 
 $F/\bbQ_p$ of dimension $m$. Let $E/F$ be any subextension in $K/F$ corresponding to a maximal isotropic subgroup for $X$. Then 
 $$a_F(\rho)=a_F(\rm{Ind}_{E/F}(\chi_E)),\qquad m\cdot a_F(\rho)=a_F(\rm{Ind}_{K/F}(\chi_K)).$$
 As a consequence we have 
 $$a(\chi_K)=e_{K/E}\cdot a(\chi_E)-d_{K/E}.$$
 In particular $a(\chi_K)=a(\chi_E)$ if $K/E$ is unramified.
\end{prop}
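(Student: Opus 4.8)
The plan is to read off the two displayed conductor identities directly from the two realisations of $\rho$ as an induced representation, and then to feed these into the induction formula (\ref{eqn 5.1.24}) to extract the relation between $a(\chi_K)$ and $a(\chi_E)$. Throughout I would use that the Artin conductor $a_F(-)=\langle a_G,-\rangle$ is additive, since the Schur inner product is bilinear.

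First I would establish the first equality. By Lemma \ref{Lemma 5.1.44} we have $\rho=\rm{Ind}_{E/F}(\chi_E)$ as representations of $G_F$ (for any of the conjugate solutions $\chi_E$ of $\chi_E\circ N_{K/E}=\chi_K$), so the two sides of $a_F(\rho)=a_F(\rm{Ind}_{E/F}(\chi_E))$ are the Artin conductors of one and the same representation, hence equal. For the second equality I would invoke the defining relation (\ref{eqn 5.1.5}); since $\rm{dim}(\rho)=m=\sqrt{[F^\times:N]}$ with $N=\rm{Rad}(X)$ (the dimension formula (\ref{eqn dimension formula})), this reads $m\cdot\rho=\rm{Ind}_{K/F}(\chi_K)$. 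Applying additivity gives $a_F(m\cdot\rho)=m\cdot a_F(\rho)$, and therefore $m\cdot a_F(\rho)=a_F(\rm{Ind}_{K/F}(\chi_K))$.

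For the consequence I would apply (\ref{eqn 5.1.24}) to the two induced characters. As $\chi_E$ and $\chi_K$ are one-dimensional, this yields $a_F(\rm{Ind}_{E/F}(\chi_E))=f_{E/F}\bigl(d_{E/F}+a(\chi_E)\bigr)$ and $a_F(\rm{Ind}_{K/F}(\chi_K))=f_{K/F}\bigl(d_{K/F}+a(\chi_K)\bigr)$. Substituting both into the identity $m\cdot a_F(\rm{Ind}_{E/F}(\chi_E))=a_F(\rm{Ind}_{K/F}(\chi_K))$ from the first two parts, and dividing by $f_{E/F}$ (using $f_{K/F}=f_{K/E}f_{E/F}$), I obtain $m\bigl(d_{E/F}+a(\chi_E)\bigr)=f_{K/E}\bigl(d_{K/F}+a(\chi_K)\bigr)$. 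Here I would insert the tower relations $[K:E]=m=e_{K/E}f_{K/E}$ together with the transitivity of the different exponent $d_{K/F}=d_{K/E}+e_{K/E}\,d_{E/F}$. Since $f_{K/E}e_{K/E}d_{E/F}=m\,d_{E/F}$, the two occurrences of $m\,d_{E/F}$ cancel, leaving $m\,a(\chi_E)=f_{K/E}\bigl(d_{K/E}+a(\chi_K)\bigr)$; dividing once more by $f_{K/E}$ and using $m/f_{K/E}=e_{K/E}$ gives $e_{K/E}\,a(\chi_E)=d_{K/E}+a(\chi_K)$, that is, $a(\chi_K)=e_{K/E}\,a(\chi_E)-d_{K/E}$. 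The last clause is then immediate: when $K/E$ is unramified one has $e_{K/E}=1$ and $d_{K/E}=\nu_K(\mathcal{D}_{K/E})=0$, so $a(\chi_K)=a(\chi_E)$.

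I expect the only genuinely delicate point to be the bookkeeping in the final paragraph: one must verify that $[E:F]=m$ (so the induction from $E$ produces the correct dimension $m$) and that $[K:E]=m$, $[K:F]=m^2$, and one must use the correct transitivity law $d_{K/F}=d_{K/E}+e_{K/E}\,d_{E/F}$ for the different. Everything else—the coincidence of the two Artin conductors, the additivity, and the two applications of (\ref{eqn 5.1.24})—is formal once Lemma \ref{Lemma 5.1.44} and (\ref{eqn 5.1.5}) are in hand.
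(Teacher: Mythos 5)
Your proposal is correct and follows essentially the same route as the paper: both identities come from $\rho=\rm{Ind}_{E/F}(\chi_E)$, $m\cdot\rho=\rm{Ind}_{K/F}(\chi_K)$ and additivity of $a_F$, and the conductor relation is then extracted from the conductor-discriminant formula (\ref{eqn 5.1.24}) combined with the transitivity $d_{K/F}=d_{K/E}+e_{K/E}d_{E/F}$ of the different. Your bookkeeping with $m=e_{K/E}f_{K/E}$ and $f_{K/F}=f_{K/E}f_{E/F}$ checks out and matches the paper's computation.
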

\begin{proof}
 We know that $\rho=\rm{Ind}_{E/F}(\chi_E)$ and $m \cdot \rho=\rm{Ind}_{K/F}(\chi_K)$.
 By the definition of Artin conductor we can write 
 $$a_F(\rm{dim}(\rho)\cdot \rho)=\rm{dim}(\rho)\cdot a_F(\rho)=m\cdot a_F(\rm{Ind}_{E/F}(\chi_E)).$$
 Since $K/E/F$ is a tower of Galois extensions with $[K:E]=m=e_{K/E}f_{K/E}$, we have the transitivity relation of 
 different (cf. \cite{JPS}, p. 51,
 Proposition 8)
 $$\mathcal{D}_{K/F}=\mathcal{D}_{K/E}\cdot \mathcal{D}_{E/F}.$$
 Now from the definition of different of a Galois extension, and taking $K$-valuation we obtain:
 \begin{equation}\label{eqn discriminant relation}
  d_{K/F}=d_{K/E}+e_{K/E}\cdot d_{E/F}.
 \end{equation}
 Now by using equation (\ref{eqn 5.1.24}) we have:
 \begin{equation}\label{eqn 44}
  m\cdot a_F(\rm{Ind}_{E/F}(\chi_E))=m\cdot f_{E/F}\left(d_{E/F}+a(\chi_E)\right)=m\cdot f_{E/F}\cdot d_{E/F}+e_{K/E}\cdot f_{K/F}
  \cdot a(\chi_E),
 \end{equation}
and 
\begin{equation}\label{eqn 45}
 a_F(\rm{Ind}_{K/F}(\chi_K))=f_{K/F}\cdot\left(d_{K/F}+a(\chi_K)\right)=f_{K/F}\cdot d_{K/F}+f_{K/F}\cdot a(\chi_K).
\end{equation}
By using equation (\ref{eqn discriminant relation}), from equations (\ref{eqn 44}), (\ref{eqn 45}), we have 
$$a(\chi_K)=e_{K/E}\cdot a(\chi_E)-d_{K/E}.$$
And when $K/E$ is unramified, i.e., $e_{K/E}=1$ and $d_{K/E}=0$, hence $a(\chi_K)=a(\chi_E)$.

\end{proof}

Now by combining Proposition \ref{Proposition 5.1.20} with Proposition \ref{Proposition conductor}, 
we get the following result.

\begin{lem}\label{Lemma general conductor}
 Let $\rho=\rho(X_\eta,\chi_K)$ be a U-isotopic Heisenberg representation 
 of minimal conductor of the absolute Galois group $G_F$ of a non-archimedean
 local field $F$.
 Let $K=K_\eta$ correspond to the radical of $X_\eta$, and let $E_1/F$ be the maximal unramified subextension, and $E/F$
 be any maximal cyclic and totally ramified subextension in $K/F$. Let $m$ denote the order of $\eta$.
 Then $\rho$ is induced by $\chi_{E_1}$ or by 
 $\chi_E$ respectively, and we have 
 \begin{enumerate}
  \item $a_E(\chi_E)=m\cdot a(\eta)-d_{E/F}$,
  \item $a_{E_1}(\chi_{E_1})=a(\eta)$,
  \item and for the character $\chi_K\in\widehat{K^\times}$,
  $$a_K(\chi_K)=m\cdot a(\eta)-d_{K/F}.$$
 \end{enumerate}
Moreover, $a_E(\chi_E)=a_K(\chi_K)$. 
\end{lem}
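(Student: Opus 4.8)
The plan is to assemble the statement from two already-established conductor identities: the value $a_F(\rho)=m\cdot a(\eta)$ coming from Proposition \ref{Proposition conductor}, and the conductor--discriminant formula for induced representations, equation (\ref{eqn 5.1.24}). First I would record the ramification data. Since $\rho$ is U-isotropic, Lemma \ref{Lemma U-isotropic} gives $\rm{Gal}(K/F)\cong F^\times/\rm{Rad}(X_\eta)\cong\bbZ_m\times\bbZ_m$ with $f_{K/F}=e_{K/F}=m=\#\eta$. Both distinguished subextensions correspond to maximal isotropic subgroups: the maximal unramified $E_1/F$ gives the cyclic inertia subgroup $\rm{Gal}(K/E_1)$ of order $m$, which is maximal isotropic by Lemma \ref{Lemma U-equivalent}, while $\rm{Gal}(K/E)$ is a complementary cyclic subgroup of order $m$ (Lemma \ref{Lemma on bicyclic abelian groups}(3)), hence automatically isotropic of maximal size. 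Therefore $\rho=\rm{Ind}_{E_1/F}(\chi_{E_1})=\rm{Ind}_{E/F}(\chi_E)$ by Lemma \ref{Lemma 5.1.44}, and $a_F(\rho)=m\cdot a(\eta)$ by Proposition \ref{Proposition conductor}.

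Next I would prove (1) and (2) by inverting (\ref{eqn 5.1.24}), that is, using $a(\chi_E)=a_F(\rho)/f_{E/F}-d_{E/F}$. For the totally ramified $E/F$ we have $f_{E/F}=1$, which immediately yields $a_E(\chi_E)=m\cdot a(\eta)-d_{E/F}$, establishing (1). For the unramified $E_1/F$ we have $f_{E_1/F}=m$ and $d_{E_1/F}=0$, so $a_{E_1}(\chi_{E_1})=\frac{m\cdot a(\eta)}{m}=a(\eta)$, establishing (2).

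For (3) I would use the second identity of Proposition \ref{Proposition 5.1.20}, namely $m\cdot a_F(\rho)=a_F(\rm{Ind}_{K/F}(\chi_K))$, together with (\ref{eqn 5.1.24}) applied to $\rm{Ind}_{K/F}(\chi_K)$. Since $f_{K/F}=m$, this reads $m^2\cdot a(\eta)=m\bigl(d_{K/F}+a_K(\chi_K)\bigr)$, and dividing by $m$ gives $a_K(\chi_K)=m\cdot a(\eta)-d_{K/F}$. Finally, the ``moreover'' clause $a_E(\chi_E)=a_K(\chi_K)$ follows by comparing (1) and (3): it suffices to see $d_{E/F}=d_{K/F}$. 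Because $E/F$ is totally ramified of degree $m=e_{K/F}$, the extension $K/E$ has $e_{K/E}=e_{K/F}/e_{E/F}=1$, i.e. $K/E$ is unramified, whence $d_{K/E}=0$ and the transitivity relation (\ref{eqn discriminant relation}) gives $d_{K/F}=d_{K/E}+e_{K/E}\,d_{E/F}=d_{E/F}$; equivalently one may invoke the last clause of Proposition \ref{Proposition 5.1.20} directly. The only point requiring genuine care is the ramification bookkeeping --- verifying that a maximal cyclic totally ramified $E/F$ really is a maximal isotropic base field and that the complementary relation forces $K/E$ to be unramified --- but this is controlled entirely by the bicyclic structure of $\rm{Gal}(K/F)$, and the remaining manipulations are purely formal.
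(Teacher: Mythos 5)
Your proposal is correct and follows essentially the same route as the paper: compute $a_F(\rho)=m\cdot a(\eta)$ from Proposition \ref{Proposition conductor}, invert the induction formula (\ref{eqn 5.1.24}) with $f_{E/F}=1$ resp.\ $f_{E_1/F}=m$, $d_{E_1/F}=0$ for parts (1) and (2), obtain (3) via Proposition \ref{Proposition 5.1.20}, and conclude $d_{E/F}=d_{K/F}$ from $K/E$ being unramified. The only (harmless) difference is that you spell out why $E$ and $E_1$ are maximal isotropic base fields, which the paper leaves implicit, and you unwind the consequence formula of Proposition \ref{Proposition 5.1.20} rather than quoting it directly.
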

\begin{proof}
Proof of these assertions follows from equation (\ref{eqn 5.1.24}) and Proposition \ref{Proposition conductor}. When 
$\rho=\rm{Ind}_{E/F}(\chi_E)$, where $E/F$ is a maximal cyclic and totally ramified subextension in $K/F$, from equation 
(\ref{eqn 5.1.24}) we have
\begin{align*}
 a_F(\rho)
 &=m\cdot a(\eta)\quad\text{using Proposition $\ref{Proposition conductor}$},\\
 &=f_{E/F}\cdot\left(d_{E/F}\cdot 1+a_E(\chi_E)\right),\quad\text{since $\rho=\rm{Ind}_{E/F}(\chi_E)$}\\
 &=1\cdot\left(d_{E/F}+a_E(\chi_E)\right).
\end{align*}
because $E/F$ is totally ramified, hence $f_{E/F}=1$.  This implies $a_E(\chi_E)=m\cdot a(\eta)-d_{E/F}$.

Similarly, when $\rho=\rm{Ind}_{E_1/F}(\chi_{E_1})$, where $E_1/F$ is the maximal unramified subextension in $K/F$, hence 
$f_{E_1/F}=m$ and $d_{E_1/F}=0$, by using equation (\ref{eqn 5.1.24}) we obtain $a_{E_1}(\chi_{E_1})= a(\eta)$.

Again from Proposition \ref{Proposition 5.1.20} we have 
$$a_K(\chi_K)=m\cdot a(\chi_{E_1})-d_{K/E_1}=m\cdot a(\eta)-d_{K/F}.$$

Finally, since $E/F$ is a maximal cyclic totally ramified implies $K/E$ is unramified and therefore 
$$d_{E/F}=d_{K/F},\quad\text{and hence}\; a_E(\chi_E)=a_K(\chi_K).$$
\end{proof}

\begin{rem}\label{Remark 5.1.22}
 Assume that we are in the dimension $m=\#\eta$ prime to $p$ case. Then from Corollary \ref{Corollary U-isotropic}, $\eta$
must be a character of $U/U^1$ (for $U=U_F$), hence
$$ a(\eta)=1\qquad  a_F(\rho_0) =m.$$
Therefore in this case the minimal conductor of $\rho$ is $m$, hence it is equal to the dimension of $\rho$. 

From the above Lemma \ref{Lemma general conductor}, in this case we have 
$$a_{E_1}(\chi_{E_1})=a(\eta)=1.$$
And $K/F, E/F$ are tamely ramified of ramification exponent $e_{K/F}=m$, hence
$$ a_E(\chi_E) = a_K(\chi_K) = m\cdot a(\eta)-d_{K/F}=m -(e_{K/F}-1)=m-(m-1)=1.$$
Thus we can conclude that in this case all three characters (i.e., $\chi_{E_1},\chi_E$, and $\chi_K$) are of conductor $1$.

In the general case $a_{E_1}(\chi_{E_1}) = a(\eta)$ and
$$a_E(\chi_E)= a_K(\chi_K) = m\cdot a(\eta)-d,$$
where $d=d_{E/F}=d_{K/F}$, conductors will be different.
\end{rem}

In general, if $\rho=\rho_0\otimes\chi_F$, where $\rho_0$ is a finite dimensional minimal conductor representation of $G_F$, and 
$\chi_F\in\widehat{F^\times}$, then we have the following result.

\begin{lem}\label{Lemma 5.1.23}
 Let $\rho_0$ be a finite dimensional representation of $G_F$ of minimal conductor.
 Then we have 
 \begin{equation}
  a_F(\rho)=\rm{dim}(\rho_0)\cdot a_F(\chi_F),
 \end{equation}
where $\rho=\rho_0\otimes\chi_F=\rho(X_\eta,(\chi_F\circ N_{K/F})\chi_0)$ and $\chi_F\in\widehat{F^\times}$ with 
$a(\chi_F)>\frac{a(\rho_0)}{\rm{dim}(\rho)}$.
\end{lem}
\begin{proof}
From equation (\ref{eqn 5.1.281}) we have $a_F(\rho_0)=\rm{dim}(\rho_0)\cdot (1+j(\rho_0))$.
By the given condition $\rho_0$ is of minimal conductor. So for representation $\rho=\rho_0\otimes\chi_F$, we have 
\begin{align*}
 a_F(\rho)
 &=a_F(\rho_0\otimes\chi_F)=\rm{dim}(\rho_0)\cdot\left(1+\rm{max}\{j(\rho_0),j(\chi_F)\}\right)\\
 &=\rm{dim}(\rho_0)\cdot\rm{max}\{1+j(\chi_F), 1+j(\rho_0)\}\\
 &=\rm{dim}(\rho_0)\cdot\rm{max}\{a(\chi_F), 1+j(\rho_0)\}\\
 &=\rm{dim}(\rho_0)\cdot a_F(\chi_F),
\end{align*}
because by the given condition 
$$a(\chi_F)>\frac{a(\rho_0)}{\rm{dim}(\rho_0)}=\frac{\rm{dim}(\rho_0)\cdot(1+j(\rho_0))}{\rm{dim}(\rho_0)}=1+j(\rho_0).$$

\end{proof}

\begin{prop}\label{Proposition 5.1.23}
 Let $\rho=\rho(X,\chi_K)$ be a Heisenberg representation of dimension $m$ of the absolute Galois group $G_F$ of a 
 non-archimedean local field $F$.
 Then $m| a_F(\rho)$ if and only if:\\
$X$ is $U$-isotropic, or (if $X$ is not $U$-isotropic) $a_F(\rho)$ is with respect to $X$ not the minimal conductor.
\end{prop}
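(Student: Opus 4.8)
The plan is to compute $a_F(\rho)$ explicitly in terms of the alternating character $X$ and the dimension $m$, and to read off the divisibility $m \mid a_F(\rho)$ from the conductor formulas already established. The key tool is equation (\ref{eqn 5.1.27}), which states that for $\rho = \rho(X,\chi_K) = \chi_F \otimes \rho_0$ with $\rho_0$ a minimal conductor representation for $X$, one has
$$
\rm{sw}_F(\rho) = \sqrt{[F^\times:\rm{Rad}(X)]} \cdot \max\{ j(\chi_F), j(X)\} = m \cdot \max\{j(\chi_F), j(X)\},
$$
together with the relation $a_F(\rho) = \rm{sw}_F(\rho) + \rm{dim}(\rho) = \rm{sw}_F(\rho) + m$ valid for irreducible $\rho$ (equation (\ref{eqn 5.1.281})). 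From these two facts,
$$
a_F(\rho) = m \cdot \big( \max\{j(\chi_F), j(X)\} + 1 \big),
$$
so $a_F(\rho)$ is automatically a multiple of $m$ \emph{precisely when} $\max\{j(\chi_F),j(X)\}$ is an integer. Thus the entire proposition reduces to deciding when this maximum is an integer, and this is where the arithmetic of $j(X)$ enters.

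**The role of $U$-isotropy via Hasse--Arf.**

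First I would recall that $j(\chi_F)$ is always an integer: since $\chi_F \in \widehat{F^\times}$ is one-dimensional, the Hasse--Arf theorem (cf. \cite{JPS}, p.~76) forces the jump $j(\chi_F) = a(\chi_F)-1 \in \bbZ$, as noted just after equation (\ref{eqn 5.1.281}). So the only possible source of non-integrality in the maximum is $j(X)$. The heart of the argument is therefore to show: $j(X)$ is an integer if and only if $X$ is $U$-isotropic. When $X = X_\eta$ is $U$-isotropic, Proposition \ref{Proposition conductor} gives $j(X_\eta) = j(\eta)$, and $\eta$ is a character of the units $U_F$, so again by Hasse--Arf $j(\eta) = a(\eta)-1$ is an integer; hence $j(X)$ is an integer, the maximum is an integer, and $m \mid a_F(\rho)$ holds for \emph{every} choice of $\chi_F$, i.e., for every Heisenberg representation with this $X$. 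Conversely, when $X$ is \emph{not} $U$-isotropic, I would show that the minimal conductor value $j(X)$ is genuinely non-integral (a proper fraction in $\tfrac{1}{m}\bbZ$), so that for $\rho_0$ itself (the minimal conductor representative, where $\chi_F$ is unramified and $j(\chi_F) = 0 \le j(X)$) the maximum equals $j(X) \notin \bbZ$, and therefore $m \nmid a_F(\rho_0)$. This is exactly the clause ``$a_F(\rho)$ is with respect to $X$ not the minimal conductor'': if we instead twist by a $\chi_F$ with $j(\chi_F) > j(X)$ large and integral, the maximum becomes the integer $j(\chi_F)$ and divisibility is restored, but this $\rho$ no longer has minimal conductor.

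**Main obstacle and closing.**

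The step I expect to be the main obstacle is proving the converse direction carefully, namely that a non-$U$-isotropic $X$ forces $j(X)$ to be a non-integer. For this I would invoke the fundamental conductor formula from \cite{Z5} (Theorem 3, p.~125) expressed in equation (\ref{eqn 5.1.26}), $\rm{sw}_F(\rho) = m \cdot j(X)$, and argue that if $j(X)$ were an integer then $\rm{sw}_F(\rho)$, and hence $a_F(\rho)$, would be a multiple of $m$; I then need to link integrality of $j(X)$ back to the structure of $\rm{Rad}(X)$ and to the containment of the higher unit groups $U^i$ in the radical. The cleanest route is to use the isomorphism $U^i \wedge \langle \pi_F\rangle \cong UU^i/(UU^i \cap (U\wedge U))$ from \cite{Z5} (Proposition 4(i) and 5(ii)), which shows that the jumps of $X$ that are detected on $U^i \wedge \langle\pi_F\rangle$ correspond to jumps of a unit-character and hence sit at integers, whereas the jumps detected only on the $U\wedge U$ part (the part that vanishes exactly when $X$ is $U$-isotropic) are the ones that can fall at non-integral points in the upper numbering. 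Assembling these: $X$ is $U$-isotropic $\iff$ $X|_{U\wedge U}$ is trivial $\iff$ all jumps of $X$ come from $U^i\wedge\langle\pi_F\rangle$ $\iff$ $j(X) \in \bbZ$. Combined with the displayed formula $a_F(\rho) = m(\max\{j(\chi_F),j(X)\}+1)$ and the minimal/non-minimal dichotomy for $\chi_F$, this yields the stated equivalence.
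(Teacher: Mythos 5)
Your proposal is correct and follows essentially the same route as the paper: reduce to the minimal-conductor case (the non-minimal case being handled by the twisting formula $a_F(\rho)=m\cdot a_F(\chi_F)$, which is just your $\max$-formula with $j(\chi_F)>j(X)$), use Proposition \ref{Proposition conductor} for the $U$-isotropic case, and for non-$U$-isotropic $X$ invoke Zink's decomposition $UU^i=(UU^i\cap (U^1\wedge U^1))\times(U^i\wedge\langle\pi_F\rangle)$ together with the non-integrality of the jumps on the $U^1\wedge U^1$ part to conclude $j(X)\notin\bbZ$. Your packaging of all three cases through the single identity $a_F(\rho)=m\bigl(\max\{j(\chi_F),j(X)\}+1\bigr)$ is a slight streamlining, but the ingredients and the one genuinely delicate step (non-$U$-isotropic $\Rightarrow$ $j(X)$ non-integral) are exactly those of the paper's proof.
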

\begin{proof}
From the above Lemma \ref{Lemma 5.1.23} we know that if $\rho$ is not minimal, then $a_F(\rho)$ is always a multiple of the 
dimension $m$. So now we just have to check for minimal conductors. In the U-isotropic case the minimal conductor is multiple
of the dimension (cf. Proposition \ref{Proposition conductor}). 

Finally, suppose that $X$ is not U-isotropic, i.e., $X|_{U\wedge U}=X|_{U^1\wedge U^1}\not\equiv1$, because 
$U\wedge U=U^1\wedge U^1$ (see the Remark on p. 126 of \cite{Z5}). We also know that 
$UU^i=(UU^i\cap U^1\wedge U^1)\times(U^i\wedge<\pi_F>)$ (cf. \cite{Z5}, p. 126, Proposition 5(ii)). 
In Proposition 5 of \cite{Z5}, we observe that all the jumps $v$ in the filtration $\{UU^i\cap (U^1\wedge U^1)\}, i\in\bbR_{+}$
are not {\bf integers with $v>1$}. This shows that $j(X)$ is also not an integer, hence $a_F(\rho_0)$ is not 
multiple of the dimension. This implies the conductor $a_F(\rho)$ is not minimal.

\end{proof}

For giving invariant formula of $W(\rho)$, we need to know the explicit dimension formula of $\rho$.
In the following theorem we give the general dimension formula of a Heisenberg representation.
\begin{thm}[{\bf Dimension}]\label{Dimension Theorem}
Let $F/\bbQ_p$ be a local field and $G_F$ be the absolute Galois group of $F$. If $\rho$ is a Heisenberg representation of 
$G_F$, then $\rm{dim}(\rho)=p^n\cdot d'$, where $n\ge 0$ is an integer and where the prime to $p$ factor $d'$ must divide $q_F-1$.
\end{thm}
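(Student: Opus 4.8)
The plan is to reduce the statement to a purely group-theoretic fact about the finite abelian group $G := F^\times/\rm{Rad}(X)$ equipped with its nondegenerate alternating character. By Theorem \ref{Theorem 5.1.1} a Heisenberg representation $\rho$ of $G_F$ is given by a pair $(X,\chi_K)$ where $X$ has finite-index radical $N := \rm{Rad}(X)$, and by the dimension formula (\ref{eqn dimension formula}) one has $\rm{dim}(\rho) = \sqrt{[F^\times : N]}$. Since $N$ is precisely the radical, $X$ descends to a nondegenerate alternating character on $G = F^\times/N$, so I would apply the structure theorem (Lemma \ref{Theorem 2.4}) to obtain $G \cong \bbZ/m_1 \times \bbZ/m_1 \times \cdots \times \bbZ/m_s \times \bbZ/m_s$ with $m_1 \mid \cdots \mid m_s$. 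Then $[F^\times:N] = (m_1\cdots m_s)^2$, hence $\rm{dim}(\rho) = m_1\cdots m_s$.

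First I would isolate the prime-to-$p$ part. Write $G_{p'}$ for the maximal prime-to-$p$ quotient of $G$; if $m_i'$ denotes the prime-to-$p$ part of $m_i$, then $G_{p'} \cong \prod_{i=1}^s (\bbZ/m_i')^2$ and the prime-to-$p$ part $d'$ of $\rm{dim}(\rho)$ equals $\prod_i m_i'$. The key input is that $U_F^1$ is a pro-$p$ group: the surjection $F^\times \to G \to G_{p'}$ restricted to $U_F^1$ has image a finite prime-to-$p$ quotient of a pro-$p$ group, hence trivial, so it factors through $F^\times/U_F^1 \cong \langle \pi_F \rangle \times \langle \zeta \rangle \cong \bbZ \times \bbZ/(q_F-1)$, where $\zeta$ has order $q_F-1$. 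In particular $G_{p'}$ is generated by the two images $\bar\pi$ of $\pi_F$ and $\bar\zeta$ of $\zeta$, with $\rm{ord}(\bar\zeta) \mid q_F-1$.

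Next I would exploit this $2$-generation together with $m_1' \mid \cdots \mid m_s'$. If some $m_i' > 1$ with $i < s$, then any prime $\ell \mid m_i'$ divides $m_j'$ for all $j \ge i$, so at least two of the factors $(\bbZ/m_i')^2,\dots,(\bbZ/m_s')^2$ would be $\ell$-divisible, forcing the $\ell$-rank of $G_{p'}$ to be $\ge 4$ and contradicting $2$-generation. Hence $m_i' = 1$ for $i<s$, and $G_{p'} \cong (\bbZ/m')^2$ with $m' = m_s' = d'$. It remains to show $m' \mid q_F-1$. This is the delicate step: for each prime power $\ell^c \,\|\, m'$, the group $(\bbZ/\ell^c)^2$ is generated by the $\ell$-parts of $\bar\pi$ and $\bar\zeta$, each cyclic; reducing modulo $\ell$ shows that both generators must have $\ell$-part of order exactly $\ell^c$, since an image of smaller $\ell$-order lands in $\ell\,(\bbZ/\ell^c)^2$ and then the reductions cannot span $(\bbZ/\ell)^2$. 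Applying this to $\bar\zeta$ gives $\ell^c \mid \rm{ord}(\bar\zeta) \mid q_F-1$, and ranging over all $\ell$ yields $m' \mid q_F-1$.

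Putting the pieces together, $\rm{dim}(\rho) = m_1\cdots m_s = p^n \cdot m'$ where $n = \sum_{i} \nu_p(m_i) \ge 0$ and $d' = m' \mid q_F-1$ is prime to $p$, which is the claim. I expect the main obstacle to be the final prime-power divisibility $m' \mid q_F-1$ rather than the weaker statement that every prime dividing $m'$ divides $q_F-1$: it is exactly here that one must use that a full symplectic factor $(\bbZ/m')^2$ of the prime-to-$p$ (tame) part can only be fed by the single cyclic tame contribution $\langle \zeta \rangle$, forcing its order to absorb the entire $\ell$-power at each prime.
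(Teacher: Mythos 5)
Your proof is correct, but it follows a genuinely different route from the paper's. The paper passes to the two-step nilpotent quotient $G_F/[[G_F,G_F],G_F]$, decomposes it into its Sylow parts $G_p\times G_{p'}$, factors the irreducible $\rho$ as $\rho_p\otimes\rho_{p'}$, observes that $\dim(\rho_p)$ is a $p$-power because irreducible representations of $p$-groups have $p$-power dimension, and then invokes Lemma \ref{Lemma dimension equivalent} to conclude that the prime-to-$p$ factor $\dim(\rho_{p'})$ divides $q_F-1$. You instead stay entirely on the abelian side: you apply the symplectic structure theorem (Lemma \ref{Theorem 2.4}) to $F^\times/\mathrm{Rad}(X)$, cut out the maximal prime-to-$p$ quotient, and use the two arithmetic inputs that $U_F^1$ is pro-$p$ and $F^\times/U_F^1\cong\bbZ\times\bbZ/(q_F-1)$ to force that quotient to be $2$-generated, hence of the form $(\bbZ/d')^2$, and then the prime-by-prime generator analysis pins down $d'\mid q_F-1$. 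What the paper's route buys is brevity given the earlier lemmas (the divisibility is delegated to Lemma \ref{Lemma dimension equivalent}, whose proof of the implication (4)$\Rightarrow$(1) contains essentially the computation $X^{q_F-1}\equiv 1$ on $F^\times/U_F^1$, giving the divisibility in one stroke); what your route buys is self-containedness and a sharper structural conclusion — you show directly that the prime-to-$p$ part of $F^\times/\mathrm{Rad}(X)$ is bicyclic of the form $(\bbZ/d')^2$ with $d'\mid q_F-1$, without needing the tensor factorization $\rho=\rho_p\otimes\rho_{p'}$ or the check that $\rho_{p'}$ is again a Heisenberg representation to which the earlier lemma applies. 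Your final divisibility argument could be shortened by noting that the alternating character on the prime-to-$p$ quotient satisfies $X(x,y)^{q_F-1}=X(x,y^{q_F-1})=1$ since the quotient is generated by $\bar\pi$ and the order-dividing-$(q_F-1)$ element $\bar\zeta$, but your prime-power analysis is valid as written.
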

\begin{proof}
 By the definition of Heisenberg representation $\rho$ we have the relation 
 $$[[G_F,G_F],G_F]\subseteq\rm{Ker}(\rho).$$
 Then we can consider $\rho$ as a representation of $G:=G_F/[[G_F,G_F],G_F]$. Since 
 $[x,g]\in [[G_F,G_F],G_F]$ for all $x\in [G_F,G_F]$ and $g\in G_F$, we have $[G,G]=[G_F,G_F]/[[G_F,G_F],G_F]\subseteq Z(G)$,
 hence $G$ is a two-step nilpotent group.
 
 We know that each nilpotent group is isomorphic to the direct product of its Sylow subgroups. Therefore we can write 
 $$G=G_p\times G_{p'},$$
 where $G_p$ is the Sylow $p$-subgroup, and $G_{p'}$ is the direct product of all other Sylow subgroups. Therefore each irreducible
 representation $\rho$ has the form $\rho=\rho_{p}\otimes\rho_{p'}$, where $\rho_{p}$ and $\rho_{p'}$ are irreducible representations of 
 $G_p$ and $G_{p'}$ respectively. 
 
 We also know that finite $p$-groups are nilpotent groups, and direct product of a finite number of 
 nilpotent groups is again a nilpotent group.
 So $G_p$ and $G_{p'}$ are both two-step nilpotent group because $G$ is a two-step nilpotent group. Therefore the representations
 $\rho_p$ and $\rho_{p'}$ are both Heisenberg representations of $G_p$ and $G_{p'}$ respectively.
 
 Now to prove our assertion, we have to show that $\rm{dim}(\rho_p)$ can be an arbitrary power of $p$, whereas 
 $\rm{dim}(\rho_{p'})$ must divide $q_F-1$. Since
 $\rho_p$ is an {\bf irreducible} representation of $p$-group $G_p$, so the dimension of $\rho_p$ is some $p$-power.
 
 Again from the construction of $\rho_{p'}$ we can say that $\rm{dim}(\rho_{p'})$ is {\bf prime} to $p$. 
 Then from Lemma \ref{Lemma dimension equivalent} $\rm{dim}(\rho_{p'})$ is a divisor of $q_F-1$.

This completes the proof.

\end{proof}

\begin{rem}\label{Remark 5.1.3}
{\bf (1).}
Let $V_F$ be the wild ramification subgroup of $G_F$.
 We can show that $\rho|_{V_F}$ is irreducible if and only if $Z_\rho=G_K\subset G_F$
 corresponds to an abelian extension $K/F$ which is totally ramified and wildly 
 ramified\footnote{Group theoretically, if $\rho|_{V_F}=\rm{Ind}_{H}^{G_F}(\chi_H)|_{V_F}$ is irreducible, then from 
 Section 7.4 of \cite{JP},
we can say $G_F=H\cdot V_F$. Here $H=G_L$, where $L$ is a certain extension of $F$, and $V_F=G_{F_{mt}}$ where $F_{mt}/F$ is the maximal 
tame extension of $F$. Therefore $G_F=H\cdot V_F$ is equivalent to $F=L\cap F_{mt}$ that means the extension $L/F$ must be totally 
ramified and wildly ramified, and $[G_F:H]=[L:F]=|V_F|$.
We know that the wild ramification subgroup $V_F$ is a pro-p-group (cf. \cite{FV}, p. 106). Then 
 $\rm{dim}(\rho)$ is a power of $p$.} (cf. \cite{Z2}, p. 305). If $N:=N_{K/F}(K^\times)$ is the subgroup
 of norms, then this means that $N\cdot U_{F}^{1}=F^\times$, in other words,
 $$F^\times/N=N\cdot U_{F}^{1}/N=U_{F}^{1}/N\cap U_{F}^{1},$$
 where $N$ can be also considered as the radical of $X_\rho$. So we can consider the alternating character $X_\rho$ on the principal
 units $U_{F}^{1}\subset F^\times$. Then 
 $$\rm{dim}(\rho)=\sqrt{[F^\times:N]}=\sqrt{[U_F^1: N\cap U_F^1]},$$
 is a power of $p$, because $U_F^1$ is a pro-p-group.

 Here we observe: If $\rho=\rho(X,\chi_K)$ with $\rho|_{V_F}$ stays irreducible, then
 $\rm{dim}(\rho)=p^n$, $n\ge 1$ and  
 $K/F$ is a totally and {\bf wildly} ramified. But there is 
 a {\bf big} class of Heisenberg representations $\rho$ such that $\rm{dim}(\rho)=p^n$ is a $p$-power, but which are not 
 wild representations (see the Definition \ref{Definition U-isotropic} of U-isotropic).\\
 {\bf (2).}
Let $\rho=\rho(X,\chi_K)$ be a Heisenberg representation of the absolute Galois group $G_F$ of dimension $d>1$ which is prime 
to $p$. Then from above Lemma \ref{Lemma dimension equivalent}, we have  $d|q_F-1$. For this representation $\rho$, here 
$K/F$ must be tame if $\rm{Rad}(X)=\cN_{K/F}$ (cf. \cite{FV}, p. 115).
\end{rem}

\section{\textbf{Invariant formula for $W(\rho)$}}

\begin{lem}\label{Lemma 5.2.1}
Let $\rho=\rho(Z,\chi_Z)$ be a Heisenberg representation of the local Galois group $G=\mathrm{Gal}(L/F)$ of odd dimension.
Let $H$ be a maximal isotropic subgroup for $\rho$ and $\chi_H\in\widehat{H}$ with $\chi_H|_{Z}=\chi_Z$
then:
 \begin{equation}\label{eqn 4.9}
  W(\rho)=W(\chi_H),\hspace{.5cm} W(\rho)^{\mathrm{dim}(\rho)}=W(\chi_Z),
 \end{equation}
 and 
 \begin{equation}
  W(\chi_H)^{[H:Z]}=W(\chi_Z).
 \end{equation}
 \end{lem}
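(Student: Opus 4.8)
The plan is to deduce all three identities from the weak extendibility of the local constant $W$ (Langlands' theorem, recalled in the excerpt) together with the two structural facts about Heisenberg representations: $\rho=\mathrm{Ind}_{H}^{G}\chi_H$ and $\mathrm{Ind}_{Z}^{G}\chi_Z=\mathrm{dim}(\rho)\cdot\rho$ from equation (\ref{eqn 322}), and the index relations $\mathrm{dim}(\rho)=[G:H]=[H:Z]$ from equation (\ref{eqn 37}). The decisive hypothesis is that $\mathrm{dim}(\rho)$ is odd, which I will use to kill the relevant $\lambda$-factors.

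First I would record the two $\lambda$-computations. Since $Z=Z_\rho$ is a coabelian normal subgroup of $G$ (Proposition \ref{Proposition 3.1}) and $[G:Z]=\mathrm{dim}(\rho)^{2}$ is odd, Lemma \ref{Lemma 4.2}(1) gives $\lambda_{Z}^{G}=1$. Moreover $Z\subseteq H\subseteq G$ with $Z$ normal in $G$ of odd index, so Lemma \ref{Lemma 4.2}(2) applied with $N=Z$ yields $\lambda_{H}^{G}=1$ as well. These are the only places the odd-dimension hypothesis enters.

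For the first identity I would apply the $\lambda$-function formula (\ref{eqn 2.6}) to the one-dimensional representation $\chi_H$ of $H$, obtaining
\begin{equation*}
 W(\mathrm{Ind}_{H}^{G}\chi_H)=\lambda_{H}^{G}(W)^{\mathrm{dim}(\chi_H)}\cdot W(\chi_H)=\lambda_{H}^{G}\cdot W(\chi_H);
\end{equation*}
since $\rho=\mathrm{Ind}_{H}^{G}\chi_H$ and $\lambda_{H}^{G}=1$, this reads $W(\rho)=W(\chi_H)$. For the second, I would apply $W$ to $\mathrm{Ind}_{Z}^{G}\chi_Z=\mathrm{dim}(\rho)\cdot\rho$. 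Multiplicativity (\ref{eqn 2.2.3}) gives $W(\mathrm{Ind}_{Z}^{G}\chi_Z)=W(\rho)^{\mathrm{dim}(\rho)}$, while (\ref{eqn 2.6}) applied to $\chi_Z$ gives $W(\mathrm{Ind}_{Z}^{G}\chi_Z)=\lambda_{Z}^{G}\cdot W(\chi_Z)=W(\chi_Z)$, using $\lambda_{Z}^{G}=1$. Comparing yields $W(\rho)^{\mathrm{dim}(\rho)}=W(\chi_Z)$. The third identity is then immediate: substituting $W(\rho)=W(\chi_H)$ and $\mathrm{dim}(\rho)=[H:Z]$ into the second gives $W(\chi_H)^{[H:Z]}=W(\chi_Z)$.

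The only genuinely delicate point — which I would flag explicitly — is the right to invoke the formula (\ref{eqn 2.6}) even though $W$ is merely \emph{weakly} extendible. This is legitimate because the derivation of (\ref{eqn 2.6}) only uses the dimension-zero instance of (\ref{eqn 2.2.4}) (after passing to $\rho_0=\rho-\mathrm{dim}(\rho)\cdot 1_H$) together with multiplicativity, both of which hold for weakly extendible functions. Beyond this verification everything reduces to bookkeeping of indices, where the oddness of $\mathrm{dim}(\rho)$ is exactly what forces $\lambda_{H}^{G}=\lambda_{Z}^{G}=1$.
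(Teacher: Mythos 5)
Your proposal is correct and follows essentially the same route as the paper: both use $\rho=\mathrm{Ind}_{H}^{G}\chi_H$ and $\dim(\rho)\cdot\rho=\mathrm{Ind}_{Z}^{G}\chi_Z$, kill $\lambda_{H}^{G}$ and $\lambda_{Z}^{G}$ via Lemma \ref{Lemma 4.2} using oddness, and compare. The only cosmetic difference is that the paper derives the third identity through $W(\mathrm{Ind}_{Z}^{H}\chi_Z)=W(\chi_H)^{[H:Z]}$ and $\lambda_{Z}^{H}=1$, whereas you obtain it by substituting the first identity into the second, which is equally valid.
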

\begin{proof}
From the construction of Heisenberg representation $\rho$ of $G$ we have 
\begin{center}
 $\rho=\rm{Ind}_{H}^{G}(\chi_H)$, \hspace{.4cm} $\rm{dim}(\rho)\cdot\rho=\rm{Ind}_{Z}^{G}(\chi_Z)$.\\
 This implies that $W(\rho)=\lambda_{H}^{G}\cdot W(\chi_H)$ and $W(\rho)^{\rm{dim}(\rho)}=\lambda_{Z}^{G}\cdot W(\chi_Z).$
\end{center}
Since $\rm{dim}(\rho)$ is odd we may apply now Lemma \ref{Lemma 4.2}, and we obtain 
 \begin{center}
  $\lambda_{H}^{G}=\lambda_{Z}^{G}=1$.
 \end{center}
 So, we have $W(\rho)=\lambda_{H}^{G}(W)\cdot W(\chi_H)=W(\chi_H)$. Similarly, we have 
 $W(\rho)^{\mathrm{dim}(\rho)}=W(\chi_Z)$.
 
Moreover, it is easy to see\footnote{We have 
 \begin{center}
  $d\cdot\rho=\mathrm{Ind}_{Z}^{G}\chi_Z=\mathrm{Ind}_{H}^{G}(\mathrm{Ind}_{Z}^{H}\chi_Z)$,
 \end{center}
 and $\mathrm{Ind}_{Z}^{H}\chi_Z$ of dimension $d=[H:Z]$. Therefore:
 \begin{center}
  $W(\rho)^d=(\lambda_{H}^{G})^d\cdot W(\mathrm{Ind}_{Z}^{H}\chi_Z)$.
 \end{center}
On the other hand $W(\rho)=\lambda_{H}^{G}\cdot W(\chi_H)$ implies
\begin{center}
 $W(\rho)^d=(\lambda_{H}^{G})^d\cdot W(\chi_H)^d$.
\end{center}
Now comparing these two expressions for $W(\rho)^d$ we see that 
\begin{center}
 $W(\chi_H)^d=W(\mathrm{Ind}_{Z}^{H}\chi_Z)$.
\end{center}} that $W(\rm{Ind}_{Z}^{H}(\chi_Z))=W(\chi_H)^{[H:Z]}$.
By the given condition, $[H:Z]=\rm{dim}(\rho)$ is odd, hence $\lambda_{Z}^{H}=1$, 
then we have 
\begin{equation}\label{eqn 5.2.4}
 W(\chi_H)^{[H:Z]}=W(\rm{Ind}_{Z}^{H}(\chi_Z))=W(\chi_Z).
\end{equation}

 \end{proof}
 
\begin{rem}
 Related to $G\supset H\supset Z$ we have the base fields $F\subset E\subset K$, and $\chi_Z$ is the restriction of 
 $\chi_H$. In arithmetic terms this means:
 $$\chi_K=\chi_E\circ N_{K/E}.$$
 So in arithmetic terms of $W(\rm{Ind}_{Z}^{G}(\chi_Z))=W(\rm{Ind}_{H}^{G}(\chi_H))^{[G:H]}$ is as follows:
$$W(\rm{Ind}_{K/F}(\chi_K),\psi)=W(\rm{Ind}_{E/F}(\chi_E),\psi)^{[K:E]}.$$
Then from $\lambda_{K/F}=\lambda_{K/E}\cdot\lambda_{E/F}^{[K:E]}$ we can conclude that 
$$\lambda_{K/E}\cdot W(\chi_K,\psi_K)=W(\chi_E,\psi_E)^{[K:F]}.$$
If the dimension $\rm{dim}(\rho)=[K:E]$ is odd, we have $\lambda_{K/E}=1$, because $K/E$ is Galois.
Then we obtain
\begin{equation}\label{eqn 5.2.5}
  W(\chi_E,\psi_E)^{[K:E]}=W(\chi_E\circ N_{K/E},\psi_E\circ\rm{Tr}_{K/E}).
 \end{equation}
The formula (\ref{eqn 5.2.5}) is known as a {\bf Davenport-Hasse} relation.

\end{rem}

\begin{cor}
 Let $\rho=\rho(Z,\chi_Z)$ be a Heisenberg representation of a local Galois group $G$. Let $\rm{dim}(\rho)=d$ be odd.
 Let the order of $W(\chi_Z)$ be $n$ (i.e., $W(\chi_Z)^n=1$). If $d$ is prime to $n$, then $d^{\varphi(n)}\equiv 1\mod{n}$, and 
 $$W(\rho)=W(\chi_Z)^{\frac{1}{d}}=W(\chi_Z)^{d^{\varphi(n)-1}},$$
where $\varphi(n)$ is the Euler's totient function of $n$.
\end{cor}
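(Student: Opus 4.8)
The plan is to reduce everything to the single multiplicative relation supplied by the preceding lemma and then to solve a congruence. By Lemma \ref{Lemma 5.2.1}, since $\dim(\rho)=d$, we have $W(\rho)^{d}=W(\chi_Z)$. In particular $W(\rho)^{dn}=W(\chi_Z)^{n}=1$, so $W(\rho)$ is a root of unity; this is what makes the statement meaningful, because it exhibits $W(\rho)$ as a $d$-th root of the order-$n$ element $W(\chi_Z)$. Because $\gcd(d,n)=1$, Euler's theorem gives $d^{\varphi(n)}\equiv 1\pmod{n}$, so we may write $d^{\varphi(n)}=1+kn$ for some integer $k\ge 0$; this is the arithmetic input behind the exponent $d^{\varphi(n)-1}$.

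Next I would check that $W(\chi_Z)^{d^{\varphi(n)-1}}$ is genuinely a $d$-th root of $W(\chi_Z)$, which justifies writing it as $W(\chi_Z)^{1/d}$. Indeed
\begin{equation*}
\left(W(\chi_Z)^{d^{\varphi(n)-1}}\right)^{d}=W(\chi_Z)^{d^{\varphi(n)}}=W(\chi_Z)^{1+kn}=W(\chi_Z)\cdot\left(W(\chi_Z)^{n}\right)^{k}=W(\chi_Z),
\end{equation*}
using $W(\chi_Z)^{n}=1$. More precisely, since $\gcd(d,n)=1$ the map $x\mapsto x^{d}$ is an automorphism of the cyclic group $\langle W(\chi_Z)\rangle$ of order $n$, and its inverse is $x\mapsto x^{d^{\varphi(n)-1}}$; hence $W(\chi_Z)^{d^{\varphi(n)-1}}$ is the \emph{unique} $d$-th root of $W(\chi_Z)$ lying inside $\langle W(\chi_Z)\rangle$.

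It remains to identify $W(\rho)$ with exactly this root, and this is the step I expect to be the main obstacle. The relation $W(\rho)^{d}=W(\chi_Z)$ alone determines $W(\rho)$ only up to an element of $\mu_{d}$ (compare Theorem \ref{Theorem invariant odd}): writing $m$ for the order of $W(\rho)$, the equation forces $m=a\,n$ with $a\mid d$ and $\gcd(a,n)=1$, and the desired equality $W(\rho)=W(\chi_Z)^{d^{\varphi(n)-1}}$ holds precisely when $a=1$, i.e. when $W(\rho)\in\langle W(\chi_Z)\rangle$, equivalently $W(\rho)^{n}=1$. Thus the real content of the corollary is to show that $W(\rho)$ has order dividing $n$. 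To close this I would use the extra information in Lemma \ref{Lemma 5.2.1} that $W(\rho)=W(\chi_H)$ is the abelian local constant of a single character $\chi_H$ (with $\chi_Z=\chi_H|_{Z}$, i.e. $\chi_K=\chi_E\circ N_{K/E}$), together with the Galois-equivariance of abelian local constants recorded in Section 2.3; a $\mathrm{Gal}(\bbQ(\mu_\infty)/\bbQ)$-conjugacy comparison of $W(\rho)$ with $W(\chi_Z)=W(\rho)^{d}$ should force $W(\rho)$ into the cyclic group generated by $W(\chi_Z)$. Once $W(\rho)^{n}=1$ is in hand, raising $W(\rho)^{d}=W(\chi_Z)$ to the power $d^{\varphi(n)-1}$ gives $W(\rho)^{d^{\varphi(n)}}=W(\chi_Z)^{d^{\varphi(n)-1}}$, and $W(\rho)^{d^{\varphi(n)}}=W(\rho)^{1+kn}=W(\rho)$ then finishes the proof.
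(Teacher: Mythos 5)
Your computation is the same as the paper's own proof: both start from $W(\rho)^d=W(\chi_Z)$ supplied by Lemma \ref{Lemma 5.2.1}, invoke Euler's theorem to get $d^{\varphi(n)}\equiv 1\pmod{n}$, and then manipulate exponents. The step you single out as ``the main obstacle'' is precisely the step the paper passes over in silence: its proof ends with ``since $d^{\varphi(n)}-1$ is a multiple of $n$ and $W(\chi_Z)^n=1$,'' which amounts to writing $W(\rho)=W(\rho)^{d^{\varphi(n)}}=W(\chi_Z)^{d^{\varphi(n)-1}}$ and hence tacitly assumes that the order of $W(\rho)$ divides $n$. (Given $W(\rho)^{dn}=1$, which does follow, one has $\gcd(dn,\,d^{\varphi(n)}-1)=n$, so asserting $W(\rho)^{d^{\varphi(n)}-1}=1$ is exactly equivalent to asserting $W(\rho)^n=1$, i.e.\ $W(\rho)\in\langle W(\chi_Z)\rangle$.) Your analysis of the ambiguity --- that $W(\rho)^d=W(\chi_Z)$ determines $W(\rho)$ only up to $\mu_d$, and that $W(\chi_Z)^{d^{\varphi(n)-1}}$ is the unique $d$-th root lying in $\langle W(\chi_Z)\rangle$ --- is accurate and in fact sharper than the printed argument.

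However, you do not close the gap either: your proposed repair via the Galois equivariance of abelian local constants is left at the level of ``should force'' and is not carried out, and it has no counterpart in the paper. As written, neither your proposal nor the paper's proof establishes $W(\rho)^n=1$ (equivalently, that $W(\chi_H)$ lies in the cyclic group generated by $W(\chi_Z)=W(\chi_H)^d$), and the relation $W(\chi_H)^d=W(\chi_Z)$ alone permits $W(\chi_H)$ to have order $an$ for any divisor $a$ of $d$. So either supply the missing argument that $W(\rho)$ has order dividing $n$, or weaken the conclusion to a congruence modulo $\mu_d$, which is exactly what Theorem \ref{Theorem invariant odd}(1) asserts.
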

 \begin{proof}
 By our assumption, here $d$ and $n$ are coprime. Therefore from {\bf Euler's theorem} we can write 
 $$d^{\varphi(n)}\equiv 1\mod{n}.$$
 This implies $d^{\varphi(n)}-1$ is a multiple of $n$.
 
 Here $d$ is odd, then from the above Lemma \ref{Lemma 5.2.1} we have $W(\rho)^d=W(\chi_Z)$.
 So we obtain
 $$W(\rho)=W(\chi_Z)^{\frac{1}{d}}=W(\chi_Z)^{d^{\varphi(n)-1}},$$
 since $d^{\varphi(n)}-1$ is a multiple of $n$, and by assumption $W(\chi_Z)^n=1$.
\end{proof}

We observe that when $\rm{dim}(\rho)=d$ is odd, if we take second part of the 
equation (\ref{eqn 4.9}), we have $W(\rho)=W(\chi_Z)^{\frac{1}{d}}$, but it is not well-defined in general. 
Here we have to make precise 
which root $W(\chi_Z)^{\frac{1}{d}}$ really occurs. That is why, giving invariant formula of
$W(\rho)$ using $\lambda$-functions computation is difficult. In the following theorem we give an invariant formula of local 
constant for Heisenberg representation modulo certain roots of unity.

\begin{thm}\label{Theorem invariant odd}
 Let $\rho=\rho(X,\chi_K)$ be a Heisenberg representation of the absolute Galois group $G_F$ of a local field $F/\bbQ_p$
 of dimension $d$. Let $\psi$ be a nontrivial canonical additive character of $F$ and $\psi_K:=\psi\circ\rm{Tr}_{K/F}$.
 Denote $\mu_{p^\infty}$ as the group of roots of unity of $p$-power order and $\mu_{n}$ as the group of 
 $n$-th roots of unity, where $n>1$ is an integer.
 \begin{enumerate}
  \item When the dimension $d$ is odd, we have 
   \begin{center}
  $W(\rho,\psi)\equiv W(\chi_\rho)'\mod{\mu_{d}}$,
 \end{center}
where $W(\chi_\rho)'$ is any $d$-th root of 
$W(\chi_K,\psi_K)$.
\item When the dimension $d$ is even, we have 
 \begin{center}
  $W(\rho,\psi)\equiv W(\chi_\rho)'\mod{\mu_{d'}}$,
 \end{center}
 where $d'=\rm{lcm}(4,d)$.
 \end{enumerate}
 
\end{thm}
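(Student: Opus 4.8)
The plan is to reduce the invariant formula for $W(\rho,\psi)$ to the relationship between $W(\rho)$ and $W(\chi_K,\psi_K)$ established by the induction structure of the Heisenberg representation. Recall from Lemma \ref{Lemma 5.1.44} and the construction \eqref{eqn 5.1.5} that $\rho = \rm{Ind}_{E/F}(\chi_E)$ for a base field $E$ of a maximal isotropic, and $d\cdot\rho = \rm{Ind}_{K/F}(\chi_K)$, where $\chi_K = \chi_E\circ N_{K/E}$ and $d = \rm{dim}(\rho)$. Applying the weak inductivity of local constants (through the $\lambda$-function formalism of Section 2.2) to these two inductions gives the two identities $W(\rho,\psi) = \lambda_{E/F}(\psi)\cdot W(\chi_E,\psi_E)$ and $W(\rho,\psi)^d = \lambda_{K/F}(\psi)\cdot W(\chi_K,\psi_K)$, exactly as in the odd-dimensional Lemma \ref{Lemma 5.2.1}, but now without assuming $d$ odd.

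First I would treat the odd case (part (1)). Here $K/F$ is Galois of odd degree $d^2$ only in the relevant $K/E$ step; more precisely $[K:E] = d$ is odd, so $\lambda_{K/E} = 1$ by Theorem \ref{General Theorem for odd case} since $K/E$ is Galois. Combined with the transitivity $\lambda_{K/F} = \lambda_{K/E}\cdot\lambda_{E/F}^{[K:E]}$ this forces $W(\rho,\psi)^d = W(\chi_K,\psi_K)$ up to the fourth-root-of-unity ambiguity carried by the $\lambda$-factors, and since $d$ is odd those signs are absorbed. Thus $W(\rho,\psi)$ is \emph{some} $d$-th root of $W(\chi_K,\psi_K)$, which is precisely the assertion $W(\rho,\psi) \equiv W(\chi_\rho)' \mod \mu_d$: any two $d$-th roots of the same number differ by an element of $\mu_d$.

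For the even case (part (2)), the obstacle is that the relevant $\lambda$-functions are no longer forced to be trivial — when $d$ is even, $K/E$ can have even degree and $\lambda_{K/E}$ need only be a fourth root of unity (by the functional equation computation in Section 3.2 showing $(\lambda_H^G)^4 = 1$). The idea is to push all such ambiguities into the modulus. Raising $W(\rho,\psi)^d = \lambda_{K/F}(\psi)\cdot W(\chi_K,\psi_K)$ and using that $\lambda_{K/F}(\psi)$ is a fourth root of unity, I would observe that $W(\rho,\psi)^{d}$ and $W(\chi_K,\psi_K)$ agree up to $\mu_4$; taking a further $d$-th root, the discrepancy between $W(\rho,\psi)$ and a chosen $d$-th root $W(\chi_\rho)'$ lies in the group generated by $\mu_d$ together with a $d$-th root of $\mu_4$, which is contained in $\mu_{d'}$ with $d' = \rm{lcm}(4,d)$. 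The main technical point to verify carefully is that $\mu_d$ and the resolved fourth-root ambiguity together generate exactly $\mu_{\rm{lcm}(4,d)}$ and nothing larger; this amounts to checking that any $z$ with $z^d \in \mu_4\cdot(W(\chi_\rho)')^d$ satisfies $z/W(\chi_\rho)' \in \mu_{d'}$, which follows since $(z/W(\chi_\rho)')^d \in \mu_4$ implies $(z/W(\chi_\rho)')^{d'} = 1$.

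The hard part will be bookkeeping the fourth-root-of-unity contributions cleanly in the even case and confirming that no additional ambiguity enters through the choice of $\chi_E$ versus $\chi_E^\sigma$ (Lemma \ref{Lemma 5.1.44} guarantees $W(\chi_E,\psi_E)$ is independent of this choice, which is what makes the statement well posed). Everything else is a direct application of weak inductivity, the transitivity of $\lambda$-functions from Lemma \ref{Lemma 4.1.1}, and the fact that all $\lambda$-factors are fourth roots of unity.
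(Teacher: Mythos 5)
Your proposal follows essentially the same route as the paper's own proof: both rest on the identity $W(\rho,\psi)^d=\lambda_{K/F}(\psi)\cdot W(\chi_K,\psi_K)$ coming from $d\cdot\rho=\mathrm{Ind}_{K/F}(\chi_K)$, with $\lambda_{K/F}=1$ in the odd case via Theorem \ref{General Theorem for odd case} (your detour through $\lambda_{K/E}$ and transitivity lands on the same fact) and with $\lambda_{K/F}\in\mu_4$ pushed into the modulus in the even case. Even the final root-of-unity bookkeeping in part (2), deducing $W(\rho,\psi)\equiv W(\chi_\rho)'\bmod{\mu_{d'}}$ from $(W(\rho,\psi)/W(\chi_\rho)')^d\in\mu_4$, is exactly the step the paper takes, so the argument matches.
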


\begin{proof}
{\bf (1).}
We know that the lambda functions are always fourth roots of unity. In particular, when degree of the Galois extension 
$K/F$ is odd, from Theorem \ref{General Theorem for odd case} we have $\lambda_{K/F}=1$. For proving our assertions we will use these 
facts about $\lambda$-functions.

We know that $\rm{dim}(\rho)\cdot\rho=\rm{Ind}_{K/F}(\chi_K)$, where by class field theory $\chi_K\leftrightarrow\chi_\rho$ 
 is a character of $K^\times$.
When $d$ is odd, we can write 
 $$W(\rho,\psi)^d=\lambda_{K/F}(\psi)\cdot W(\chi_K,\psi_K)= W(\chi_K,\psi_K).$$
 Now let $W(\chi_\rho)'$ be any $d$-th root of $W(\chi_K,\psi_K)$. Then we have 
 $$W(\rho,\psi)^d={W(\chi_\rho)'}^d,$$
 hence $\frac{W(\rho,\psi)}{W(\chi_\rho)'}$ is a $d$-th root of unity. Therefore we have
 $$W(\rho,\psi)\equiv W(\chi_\rho)' \mod{\mu_{d}}.$$
 {\bf (2).}
Similarly, we can give invariant formula for even degree Heisenberg representations. When the dimension $d$ of $\rho$ is even, we have 
\begin{equation}\label{eqn 5.2.10}
 W(\rho,\psi)^d=\lambda_{K/F}(\psi)\cdot W(\chi_K,\psi_K)\equiv W(\chi_K,\psi_K)\mod{\mu_4},
\end{equation}
because $\lambda_{K/F}$ is a fourth root of unity.
Now let $W(\chi_\rho)'$ be any $d$-th root of $W(\chi_K,\psi_K)$, hence $W(\chi_K,\psi_K)=W(\chi_\rho)'^d$. Then from equation 
(\ref{eqn 5.2.10}) we have 
$$\left(\frac{W(\rho,\psi)}{W(\chi_\rho)'}\right)^d\equiv 1\mod{\mu_4}.$$
Therefore we can conclude that 
\begin{equation}
 W(\rho,\psi)\equiv W(\chi_\rho)'\mod{\mu_{d'}},
\end{equation}
where $d'=\rm{lcm}(4, d)$.\\

\end{proof}

In the following theorem we give an invariant formula for $W(\rho,\psi)$, where 
$\rho=\rho(X,\chi_K)$ is a minimal conductor Heisenberg representation of the absolute Galois group $G_F$
of a local field $F/\bbQ_p$ of dimension $m$ which is prime to $p$.

 \begin{thm}\label{invariant formula for minimal conductor representation}
  Let $\rho=\rho(X,\chi_K)$ be a minimal conductor Heisenberg representation of the absolute Galois group $G_F$ of a non-archimedean
  local field $F/\bbQ_p$ of dimension $m$ with $gcd(m,p)=1$. Let $\psi$ be a nontrivial additive character of $F$. Then 
  \begin{equation}
   W(\rho,\psi)=R(\psi,c)\cdot L(\psi,c),
  \end{equation}
where 
$$R(\psi,c):=\lambda_{E/F}(\psi)\Delta_{E/F}(c)=\lambda_{E/F}(c\psi)$$
is a fourth root of unity that depends on $c\in F^\times$ with $\nu_F(c)=1+n(\psi)$ but not on the totally ramified cyclic subextension
$E/F$ in $K/F$, and 
$$L(\psi,c):=\det(\rho)(c)q_F^{-\frac{1}{2}}\sum_{x\in k_F^\times}(\chi_K\circ N_{E_1/F}^{-1})^{-1}(x)\cdot (c^{-1}\psi)(mx),$$
where $E_1/F$ is the unramified extension of $F$ of degree $m$.
 \end{thm}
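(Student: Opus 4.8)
The plan is to reduce the computation of $W(\rho,\psi)$ to an abelian local constant via the inductivity of local constants, and then to split the resulting expression into a $\lambda$-factor part (which is a fourth root of unity) and an explicit Gauss-sum part. Since $\rho$ is a U-isotropic Heisenberg representation of dimension $m$ with $\gcd(m,p)=1$, Corollary \ref{Corollary U-isotropic} tells us $X=X_\eta$ for a character $\eta$ of $U_F/U_F^1$, and by Lemma \ref{Lemma dimension equivalent} the extension $K/F$ corresponding to $\mathrm{Rad}(X_\eta)$ is tamely ramified with $e_{K/F}=f_{K/F}=m$. First I would fix a maximal totally ramified cyclic subextension $E/F$ in $K/F$, so that by Lemma \ref{Lemma 5.1.44} we can write $\rho=\mathrm{Ind}_{E/F}(\chi_E)$ for a suitable character $\chi_E$ with $\chi_E\circ N_{K/E}=\chi_K$. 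By weak inductivity of local constants (the defining property of $\lambda$-functions, equation (\ref{eqn 2.6})) applied with $\rho_0=\mathrm{Ind}_{E/F}(1)-[E\!:\!F]\cdot 1$, one obtains
\begin{equation*}
 W(\rho,\psi)=W(\mathrm{Ind}_{E/F}\chi_E,\psi)=\lambda_{E/F}(\psi)\cdot W(\chi_E,\psi_E),
\end{equation*}
where $\psi_E=\psi\circ\mathrm{Tr}_{E/F}$.

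Next I would compute the two factors separately. For the $\lambda$-factor, I would use Lemma \ref{Lemma 3.2} (equation (\ref{eqn 3.121})) to move to a character of conductor-adjusted type: writing $c\in F^\times$ with $\nu_F(c)=1+n(\psi)$ and recalling $\Delta_{E/F}=\det(\mathrm{Ind}_{E/F}1)$, we get $\lambda_{E/F}(c\psi)=\Delta_{E/F}(c)\cdot\lambda_{E/F}(\psi)$, which is exactly the claimed $R(\psi,c)$, a fourth root of unity. The crucial point here is the claim that $R(\psi,c)$ does \emph{not} depend on the choice of the totally ramified cyclic subextension $E/F$: different such subextensions give conjugate $\chi_E$ and the same induced $\rho$, so $\lambda_{E/F}(c\psi)$ must coincide with $W(\rho,c\psi)/W(\chi_E,c\psi_E)$, and the dependence on $E$ must cancel. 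I would verify this invariance by appealing to Proposition \ref{Proposition arithmetic form of determinant}, which shows $\Delta_{E/F}$ depends only on $\mathrm{rk}_2(\mathrm{Gal}(E/F))$ and the uniquely determined quadratic subextension, data intrinsic to $\rho$ rather than to the choice of $E$.

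For the second factor $W(\chi_E,\psi_E)$, I would transfer from $E$ to the unramified extension $E_1/F$ of degree $m$. Since $\rho$ is simultaneously $\mathrm{Ind}_{E_1/F}(\chi_{E_1})$ (U-isotropic case, Lemma \ref{Lemma U-equivalent}), and both $E/F$ and $E_1/F$ give the same $\rho$, I would instead directly expand $W(\chi_E,\psi_E)$ using the modified abelian formula (\ref{eqn 2.2}). By Remark \ref{Remark 5.1.22}, in the minimal-conductor dimension-prime-to-$p$ case all the characters $\chi_{E_1},\chi_E,\chi_K$ have conductor $1$, so the local constant reduces to a Gauss sum over $k_F^\times$. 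After pulling out $\det(\rho)(c)$ via the twisting formula for the determinant character and rewriting the character in terms of $\chi_K\circ N_{E_1/F}^{-1}$ on the residue field (using the Explicit Lemma \ref{Explicit Lemma} to identify $\chi_K|_{K_F^\times}$ with $\eta$ and Lemma \ref{Lemma 5.1.4} to descend the norm), the sum takes the stated form
\begin{equation*}
 L(\psi,c)=\det(\rho)(c)\,q_F^{-\frac12}\sum_{x\in k_F^\times}(\chi_K\circ N_{E_1/F}^{-1})^{-1}(x)\,(c^{-1}\psi)(mx).
\end{equation*}

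The main obstacle will be the independence-of-$E$ assertion together with correctly tracking the normalization constant $c$ and the factor $m$ appearing inside the additive character $(c^{-1}\psi)(mx)$. The factor $m$ arises from the trace $\mathrm{Tr}_{E_1/F}$ restricted to the residue field, where the degree-$m$ unramified extension contributes multiplication by $m=f_{E_1/F}$ on residues (compare equation (\ref{eqn 3.52}) and the Davenport–Hasse relation (\ref{eqn 5.2.5})); I would need to check carefully, via Lemma \ref{Lemma 3.4} and the conductor bookkeeping of Remark \ref{Remark 5.1.22}, that the shift from $\psi_{E_1}$ back to $\psi$ on $k_F$ produces exactly this $m$ and no extraneous unit factor. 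Reconciling the two descriptions $\rho=\mathrm{Ind}_{E/F}(\chi_E)=\mathrm{Ind}_{E_1/F}(\chi_{E_1})$—one used for the $\lambda$-factor, the other for the Gauss sum—so that the product $R(\psi,c)L(\psi,c)$ is genuinely well-defined, is where the real care is required; everything else is a routine application of the abelian formula (\ref{eqn 2.2}) and the properties of local constants collected in Section 2.3.
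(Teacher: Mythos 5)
Your overall architecture is the same as the paper's: fix a totally ramified cyclic degree-$m$ subextension $E/F$ of $K/F$, write $\rho=\mathrm{Ind}_{E/F}(\chi_E)$ with $a(\chi_E)=1$, expand $W(\chi_E,\psi_E)$ as a Gauss sum over $k_F^\times$ (the $m$ in $(c^{-1}\psi)(mx)$ coming from $\mathrm{Tr}_{E/F}(x)=mx$ for $x\in F$, not from $\mathrm{Tr}_{E_1/F}$ as you suggest -- computing via $E_1$ would give a sum over $k_{E_1}^\times$, not $k_F^\times$), and substitute $\chi_E(c)=\Delta_{E/F}(c)\det(\rho)(c)$ from Proposition \ref{Proposition arithmetic form of determinant} to obtain $W(\rho,\psi)=R(\psi,c)L(\psi,c)$ for that fixed $E$. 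Up to this point your proposal is sound.

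The genuine gap is in your argument that $R(\psi,c)=\lambda_{E/F}(c\psi)$ does not depend on the choice of $E$, which is the actual ``invariance'' content of the theorem. Your appeal to Proposition \ref{Proposition arithmetic form of determinant} fails because the quadratic subextension $E'/F$ of $E/F$ is \emph{not} intrinsic to $\rho$: when $m$ is even, different totally ramified cyclic $E$'s inside $K/F$ contain different ramified quadratic subextensions, whose characters differ by the unramified quadratic character; hence $\Delta_{E/F}(c)=\omega_{E'/F}(c)$ genuinely changes sign with $E$ whenever $\nu_F(c)=1+n(\psi)$ is odd, and $\lambda_{E/F}(\psi)$ changes sign correspondingly (see the table in Remark \ref{Remark 3.4.11}). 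Only the product is invariant, and showing this requires the paper's computation: $R(\psi,c)=\lambda_{E/F}(\psi')$ with $\psi'=c\psi$ of \emph{odd} conductor $2n(\psi)+1$, reduction via Corollary \ref{Lemma 3.10} to the quadratic (and degree-$4$) case, the classical Gauss sum evaluation showing the two ramified quadratic extensions give equal $\lambda(\psi')$ precisely because $n(\psi')$ is odd, and a separate check that $\beta(-1)$ is independent of $E$ when the $2$-part of $m$ is $4$. (Alternatively, one could argue indirectly: $L(\psi,c)$ is manifestly built from $E_1$ and $\det\rho$ only, $|L(\psi,c)|=1\neq 0$, and $W(\rho,\psi)$ is intrinsic, so $R=W/L$ is forced to be independent of $E$; but your sentence about $W(\rho,c\psi)/W(\chi_E,c\psi_E)$ is circular as written, since $W(\chi_E,c\psi_E)$ itself varies with $E$.) As it stands, the invariance claim -- the part the theorem is named for -- is not established by your proposal.
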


 Before proving this Theorem \ref{invariant formula for minimal conductor representation} we need to prove the following
 lemma.
 \begin{lem}\label{Lemma 5.2.17} (With the notation of the above theorem)
  \begin{enumerate}
   \item Let $E/F$ be any totally ramified cyclic extension of degree $m$ inside $K/F$. Then:
   $$\Delta_{E/F}(\epsilon)=:\Delta(\epsilon),\qquad\text{for all $\epsilon\in U_F$},$$
does not depend on $E$ if we restrict to units of $F$.
\item We have $L(\psi,\epsilon c)=\Delta(\epsilon)L(\psi,c)$, and therefore changing $c$ by unit we see that 
$$\Delta_{E/F}(\epsilon c)\cdot L(\psi,\epsilon c)=\Delta(\epsilon)^2\Delta_{E/F}(c)\cdot L(\psi,c)=\Delta_{E/F}(c) L(\psi,c).$$
\item We also have the transformation rule $R(\psi,\epsilon c)=\Delta(\epsilon)\cdot R(\psi,c)$.   
   \end{enumerate}
\end{lem}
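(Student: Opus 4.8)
The plan is to prove the three transformation rules in Lemma~\ref{Lemma 5.2.17} by isolating how each of the quantities $\Delta_{E/F}$, $L(\psi,c)$, and $R(\psi,c)$ responds to multiplying $c$ by a unit $\epsilon\in U_F$, using the explicit structure established in the preceding material (Proposition~\ref{Proposition arithmetic form of determinant}, Proposition~\ref{Proposition conductor}, Remark~\ref{Remark 5.1.22}, and the $U$-isotropic description of Lemma~\ref{Explicit Lemma}).

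For part (1), I would argue that since $\rho$ is a minimal conductor Heisenberg representation of dimension $m$ prime to $p$, the alternating character $X$ is $U$-isotropic (Corollary~\ref{Corollary U-isotropic}), so every totally ramified cyclic subextension $E/F$ of degree $m$ inside $K/F$ is tamely ramified with $e_{E/F}=m$. By Proposition~\ref{Proposition arithmetic form of determinant}, $\Delta_{E/F}$ is either trivial or equals $\omega_{E'/F}$, the quadratic character attached to the unique quadratic subextension $E'/F$ when $\mathrm{rk}_2(\mathrm{Gal}(E/F))=1$. The key point is that $\mathrm{Gal}(E/F)\cong\bbZ_m$ is cyclic, so $\mathrm{rk}_2$ is $1$ exactly when $m$ is even, and then $E'/F$ is the unique quadratic subextension — but since the quadratic character $\omega_{E'/F}$ is determined by $N_{E'/F}(E'^\times)\supseteq {F^\times}^2 U_F^1$ (tameness forces conductor $1$), its restriction to $U_F$ is forced to be the unique ramified quadratic character of $k_F^\times$ pulled back to $U_F$, independent of which cyclic $E$ we chose. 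Thus $\Delta_{E/F}|_{U_F}=:\Delta$ is well-defined, and I would write this out by noting all such $E'/F$ share the same ramified quadratic character on units.

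For part (2), I would expand $L(\psi,c)$ from its definition and track the two $c$-dependent factors. Replacing $c$ by $\epsilon c$ changes $\det(\rho)(c)$ to $\det(\rho)(\epsilon c)=\det(\rho)(\epsilon)\det(\rho)(c)$, and by the determinant formula (\ref{eqn 5.1.12}) together with the conductor-$1$ computation of Remark~\ref{Remark 5.1.22}, one has $\det(\rho)(\epsilon)=\Delta_{E/F}(\epsilon)\cdot(\chi_K\circ N_{K/E}^{-1})(\epsilon)=\Delta(\epsilon)\cdot(\text{unit character contribution})$; simultaneously the Gauss-sum-type character sum over $k_F^\times$ absorbs the unit twist coming from $(c^{-1}\psi)(mx)\mapsto((\epsilon c)^{-1}\psi)(mx)$ by a change of variable $x\mapsto \epsilon x$ in the sum, and the factor $(\chi_K\circ N_{E_1/F}^{-1})^{-1}(\epsilon)$ cancels against the determinant's unit contribution, leaving precisely $L(\psi,\epsilon c)=\Delta(\epsilon)L(\psi,c)$. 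The displayed consequence then follows because $\Delta(\epsilon)^2=1$. For part (3), I would use the definition $R(\psi,c)=\lambda_{E/F}(\psi)\Delta_{E/F}(c)=\lambda_{E/F}(c\psi)$ and apply Lemma~\ref{Lemma 3.2}/equation~(\ref{eqn 3.121}): $\lambda_{E/F}(\epsilon c\psi)=\Delta_{E/F}(\epsilon)\lambda_{E/F}(c\psi)=\Delta(\epsilon)R(\psi,c)$, since $\Delta_{E/F}(\epsilon)=\Delta(\epsilon)$ by part (1).

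The main obstacle I expect is part (2): verifying the exact cancellation in the character sum requires being careful about how the unit twist interacts with both $\det(\rho)(c)$ and the argument of the additive character inside the sum, and confirming that the norm-pullback characters $\chi_K\circ N_{E_1/F}^{-1}$ and $\chi_K\circ N_{K/E}^{-1}$ agree on units up to the explicit factor $\Delta(\epsilon)$. This hinges on the compatibility of the two maximal-isotropic descriptions (the unramified $E_1$ versus the totally ramified $E$) given in Lemma~\ref{Explicit Lemma} and the conductor bookkeeping of Lemma~\ref{Lemma general conductor}; the change of variable $x\mapsto\epsilon x$ in the sum over $k_F^\times$ is the decisive computational move, and I would perform it explicitly to extract the clean factor $\Delta(\epsilon)$ while showing the unramified-character contributions cancel.
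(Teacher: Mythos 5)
Your proposal is correct and follows essentially the same route as the paper's proof: part (1) via the observation that the ramified quadratic characters attached to different totally ramified cyclic $E$ differ only by the unramified quadratic character, hence agree on $U_F$; part (2) via the substitution $x\mapsto\epsilon x$ in the sum and the cancellation of $(\chi_K\circ N_{E_1/F}^{-1})^{-1}(\epsilon)$ against the determinant's unit contribution, using that $N_{K/E}^{-1}$ and $N_{E_1/F}^{-1}$ agree on units; and part (3) via the $\Delta_{E/F}$-twist rule for $\lambda$-functions. The "main obstacle" you flag is exactly the point the paper resolves by noting $U_F\subset\cN_{E_1/F}$ and $(E_1^\times)_F\subset\rm{Ker}(\chi_K)$.
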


 \begin{proof}
  {\bf (1).} Denote $G:=\rm{Gal}(E/F)$. From equation (\ref{eqn 3.31}) and by class field theory we know that 
  \begin{equation}
   \Delta_{E/F}=\begin{cases}
                 \omega_{E'/F} & \text{when $\rm{rk}_2(G)=1$}\\
                 1 & \text{when $\rm{rk}_2(G)=0$},
                \end{cases}
  \end{equation}
where $E'/F$ is a uniquely determined quadratic extension inside $E/F$, and $\omega_{E'/F}$ is the quadratic character of $F^\times$
which corresponds to the extension $E'/F$ by class field theory.

When $m$ is odd, i.e., $\rm{rk}_2(G)=0$, hence $\Delta_{E/F}\cong 1$.
So for odd case, the assertion (1) is obvious. 

When $m$ is even, we choose two different totally ramified cyclic subextensions, namely $L_1/F, \;L_2/F$, in $K/F$ of degree $m$.
Then we can write for all $\epsilon\in U_F$,
$$\Delta_{L_1/F}(\epsilon)=\omega_{E'/F}(\epsilon)
=\eta(\epsilon)\cdot\omega_{E'/F}(\epsilon)=\omega_{E'/F}(\epsilon)=\Delta_{L_2/F}(\epsilon),$$
where $\eta$ is the unramified quadratic character of $F^\times$. This proves that $\Delta_{E/F}$ does not depend on $E$ if we restrict 
to $U_F$.\\
{\bf (2).} From Proposition \ref{Proposition arithmetic form of determinant} we know that 
$\det(\rho)(x)=\Delta_{E/F}(x)\cdot \chi_K\circ N_{K/E}^{-1}(x)$ for all $x\in F^\times$.
Let $E_1/F$ be the unramified subextension in $K/F$ of degree $m$. Then we have $EE_1=K$ and 
$$N_{K/E}|_{E_1}=N_{E_1/F}, \qquad (E_1^\times)_{F}\subseteq K_E^\times\subset\rm{Ker}(\chi_K).$$
Moreover, $U_F\subset\cN_{E_1/F}$ and therefore we may write $N_{K/E}^{-1}(\epsilon)=N_{E_1/F}^{-1}(\epsilon)$
for all $\epsilon\in U_F$. Now we can write:
\begin{align*}
 L(\psi,\epsilon c)
 &=\det(\rho)(\epsilon c)q_F^{-\frac{1}{2}}\sum_{x\in k_F^\times}(\chi_K\circ N_{E_1/F}^{-1})^{-1}(x)\cdot (c^{-1}\psi)(mx/\epsilon)\\
 &=\Delta_{E/F}(\epsilon)\chi_K\circ N_{K/E}^{-1}(\epsilon)\det(\rho)(c)q_F^{-\frac{1}{2}}
 \sum_{x\in k_F^\times}(\chi_K\circ N_{E_1/F}^{-1})^{-1}(\epsilon x)\cdot (c^{-1}\psi)(mx)\\
 &=\Delta(\epsilon)\chi_K\circ N_{E_1/F}^{-1}(\epsilon\epsilon^{-1})\det(\rho)(c)q_F^{-\frac{1}{2}}
 \sum_{x\in k_F^\times}(\chi_K\circ N_{E_1/F}^{-1})^{-1}(x)\cdot (c^{-1}\psi)(mx)\\
 &=\Delta(\epsilon)\cdot \det(\rho)(c)q_F^{-\frac{1}{2}}
 \sum_{x\in k_F^\times}(\chi_K\circ N_{E_1/F}^{-1})^{-1}(x)\cdot (c^{-1}\psi)(mx)\\
 &=\Delta(\epsilon)\cdot L(\psi,c).
\end{align*}
This implies that 
$$\Delta_{E/F}(\epsilon c)\cdot L(\psi,\epsilon c)=\Delta(\epsilon)^2\Delta_{E/F}(c)\cdot L(\psi,c)=\Delta_{E/F}(c)L(\psi,c).$$
{\bf (3).} By the definition of $R(\psi,c)$ we can write:
\begin{align*}
 R(\psi,\epsilon c)
 &=\lambda_{E/F}(\psi)\Delta_{E/F}(\epsilon c)=\lambda_{E/F}(\psi)\Delta_{E/F}(\epsilon)\Delta_{E/F}(c)\\
 &=\Delta(\epsilon)\lambda_{E/F}(\psi)\Delta_{E/F}(c)=\Delta(\epsilon)\cdot R(\psi,c).
\end{align*}
 \end{proof}

Now we are in a position to give a proof of Theorem \ref{invariant formula for minimal conductor representation} by using Lemma 
\ref{Lemma 5.2.17}.

\begin{proof}[{\bf Proof of Theorem \ref{invariant formula for minimal conductor representation}}]
By the given conditions: 
 $\rho=\rho(X,\chi_K)$ is a minimal conductor 
 Heisenberg representation of the absolute Galois group $G_F$ of a local field $F/\bbQ_p$ of dimension
 $m$ which is prime to $p$. This means we are in the situation: $\rho=\rho(X,\chi_K)=\rho(X_\eta,\chi_K)$,
 where $\eta$ is a character of $U_F/U_F^1$, and $\rm{dim}(\rho)=\#\eta=m$.
 
 Since $\rho$ is of minimal conductor, we have $a(\rho_0)=m$. Then from Remark \ref{Remark 5.1.22} we have $a(\chi_K)=1$.
 
 Now we choose $E/F\subset K/F$ a totally ramified cyclic subextension of degree $[E:F]=m$, hence $k_E=k_F$
 the same residue fields, and $K/E$ is unramified of degree $m$. Then we can write $\rho=\rm{Ind}_{E/F}(\chi_E)$, and 
 $a(\chi_E)=1$. Again, from Proposition \ref{Proposition arithmetic form of determinant} we have 
 $$\det(\rho)(x)=\Delta_{E/F}(x)\cdot \chi_K\circ N_{K/E}^{-1}(x)\quad\text{for all $x\in F^\times$}.$$
Then for all $x\in F^\times$, we can write 
$$\chi_K\circ N_{K/E}^{-1}(x)=\chi_E(x)=\Delta_{E/F}(x)\cdot \det(\rho)(x).$$
This is true for all subextensions\footnote{In $K/F$ of type $\bbZ_m\times\bbZ_m$ any cyclic subextension $E/F$ in $K/F$
of degree $m$ will correspond to a maximal isotropic subgroup. But we restrict to choosing $E$
totally ramified or unramified.} $E/F$ in $K/F$ which are cyclic of degree $m$.

Now we come to our particular choice: $\rho=\rm{Ind}_{E/F}(\chi_E)$, with $a(\chi_E)=1$ and $E/F$ is totally ramified.
We can write 
\begin{align*}
 W(\rho,\psi)
 &=W(\rm{Ind}_{E/F}(\chi_E),\psi)=\lambda_{E/F}(\psi)\cdot W(\chi_E,\psi\circ\rm{Tr}_{E/F})\\
 &=\lambda_{E/F}(\psi)\cdot q_E^{-\frac{1}{2}}\chi_E(c_E)\sum_{x\in U_E/U_E^1}\chi_E^{-1}(x)(c_E^{-1}\psi\circ\rm{Tr}_{E/F})(x),
\end{align*}
 where $v_E(c_E)=1+n(\psi\circ\rm{Tr}_{E/F})=e_{E/F}(1+n(\psi))$. This implies that we can choose $c_F\in F^\times$ such that 
 $\nu_F(c_F=c_E)=1+n(\psi)$. 
 Let $E_1/F$ be the unramified subextension in $K/F$, then for each $\epsilon\in U_F$, we have 
 $N_{K/E}^{-1}(\epsilon)=N_{E_1/F}^{-1}(\epsilon)$ where $N_{E_1/F}:=N_{K/E}|_{E_1}$. Since $E/F$ is totally ramified, we have 
 $q_E=q_F$. And when $x\in F^\times$, we have $\rm{Tr}_{E/F}(x)=mx$.
 
 Then the above formula rewrites:
 \begin{align*}
  W(\rho,\psi)
  &=\lambda_{E/F}(\psi)\cdot q_F^{-\frac{1}{2}}\chi_K\circ N_{K/E}^{-1}(c_F)\sum_{x\in k_F^\times}
  (\chi_K\circ N_{K/E}^{-1})^{-1}(x)(c_F^{-1}\psi)(mx)\\
  &=\lambda_{E/F}(\psi)\cdot q_F^{-\frac{1}{2}}\Delta_{E/F}(c_F)\det(\rho)(c_F)\sum_{x\in k_F^\times}
  (\chi_K\circ N_{E_1/F}^{-1})^{-1}(x)(c_F^{-1}\psi)(mx)\\
   &=\lambda_{E/F}(\psi)\Delta_{E/F}(c_F)\cdot\left(\det(\rho)(c_F)q_F^{-\frac{1}{2}}\sum_{x\in k_F^\times}
  (\chi_K\circ N_{E_1/F}^{-1})^{-1}(x)(c_F^{-1}\psi)(mx)\right)\\
  &=R(\psi,c)\cdot L(\psi,c),
 \end{align*}
where $c_F=c\in F^\times$ with $\nu_F(c)=1+n(\psi)$,
$R(\psi,c)=\lambda_{E/F}(\psi)\Delta_{E/F}(c)$, and 
$$L(\psi,c)=\det(\rho)(c_F)q_F^{-\frac{1}{2}}\sum_{x\in k_F^\times}
  (\chi_K\circ N_{E_1/F}^{-1})^{-1}(x)(c^{-1}\psi)(mx).$$
Now it is clear that $L(\psi,c)$ depends on $c$ but not on the totally ramified cyclic extension $E/F$ which we have chosen.

Again we know that $\lambda_{E/F}(\psi)$ is a fourth root of unity and $\Delta_{E/F}(c)\in\{\pm 1\}$. Therefore it is easy to see 
that $R(\psi,c)$ is a fourth root of unity. So to call our expression 
$$W(\rho,\psi)=R(\psi,c)\cdot L(\psi,c)$$
is invariant, we are left to show $R(\psi,c)$ does not depend on the the totally ramified cyclic subextension $E/F$ in $K/F$.

From equation (\ref{eqn 3.121}) we can write here 
$$R(\psi,c)=\lambda_{E/F}(\psi)\Delta_{E/F}(c)=\lambda_{E/F}(c\psi)=\lambda_{E/F}(\psi'),$$
where $\psi'=c\psi$, hence $n(\psi')=\nu_F(c)+n(\psi)=1+n(\psi)+n(\psi)=2n(\psi)+1$.

When $m(=[E:F])$ is odd, we have $\lambda_{E/F}(\psi')=1$, hence $R(\psi,c)=\lambda_{E/F}(c\psi)=1$. Thus in the odd case 
$R(\psi,c)$ is independent of the choice of the totally ramified subextension $E/F$ in $K/F$.

When $m$ is even, we have 
\begin{align*}
 R(\psi,c)
 &=\lambda_{E/F}(\psi')=\lambda_{E/E'}(\psi'')\cdot \lambda_{E'/F}^{[E:E'']}\\
 &=\lambda_{E'/F}(\psi')^{\pm 1},
\end{align*}
where $[E',F]$ is the $2$-primary part of $m$, hence $[E:E']$ is odd. Here the sign only depends on 
$m$ but not on $E$. So we can restrict to the case where $m=[E:F]$ is a power of $2$. Let $E_2/F$ be the unique quadratic subextension
in $E/F$. Since $E/F$ is a cyclic tame extension, from Corollary \ref{Lemma 3.10}(1), we obtain:
\begin{equation}
 \lambda_{E/F}(\psi')=\begin{cases}
                       \lambda_{E_2/F}(\psi') & \text{if $[E:F]\ne 4$}\\
                       \beta(-1)\cdot\lambda_{E_2/F}(\psi') & \text{if $[E:F]=4$},
                      \end{cases}
\end{equation}
where $\beta$ is the character of $F^\times/\cN_{E/F}$ of order $4$.

Since here $n(\psi')=2 n(\psi)+1$ is {\bf odd}\footnote{If $n(\psi')$ is even, 
then from the table of the Remark \ref{Remark 3.4.11}, $\lambda_{E_2/F}(\psi')=-\lambda_{E_2'/F}(\psi')$,
where $E_2'/F$ be the totally ramified quadratic extension different from $E_2/F$.
Therefore $\lambda_{E/F}(\psi')$ depends on $\psi'$.},
from Remark \ref{Remark 3.4.11} (see the table of Remark \ref{Remark 3.4.11})
we can tell that $\lambda_{E_2/F}(\psi')$ is invariant.\\
Finally, we have to see that $\beta(-1)$ does not depend on $E$ if $[E:F]=4$. 

Since $E/F$ is totally ramified of degree $4$, 
we have $F^\times=U_F\cdot N$, hence $F^\times/N=U_F N/N=U_F/U_F\cap N\cong\bbZ_4$, where $N=N_{E/F}(E^\times)$.
Again $U_F^1\subset U_F$, and $U_F^1\subset N$, hence $U_{F}^{1}\subset N\cap U_F\subset U_F$. We know that 
$U_F/U_F^1$ is a cyclic group. Therefore $N\cap U_F$ is determined by its index in $U_F$, which does not 
depend on $E$. Hence,
$U_F\cap N$ does not depend on $E$.

We also know that there are two characters of $U_F/U_F\cap N$ of order $4$, and they are inverse to each other. Then 
$$\beta(-1)=\beta(-1)^{-1}=\beta^{-1}(-1)$$ 
is the same in both cases. 
Since 
$\beta$ is the character which corresponds to $E/F$ by class field theory, we can say $\beta$ is a character of 
$F^\times/U_F^1$, hence $a(\beta)=1$. It clearly shows that $\beta(-1)$ does not depend on $E$. So we can conclude that 
$R(\psi,c)$ does not depend on $E$.



Thus our expression $W(\rho,\psi)=R(\psi,c)\cdot L(\psi,c)$ does not depend on the choice of the totally ramified
cyclic subextension $E/F$ in $K/F$. Moreover, we notice that we have the transformation rules 
$$R(\psi,\epsilon c)=\Delta(\epsilon)R(\psi,c),\qquad L(\psi,\epsilon c)=\Delta(\epsilon)L(\psi,c),$$
for all $\epsilon\in U_F$. Again $\Delta(\epsilon)^2=1$, hence the product $R(\psi,\epsilon c)\cdot L(\psi,\epsilon c)=
R(\psi,c)\cdot L(\psi,c)=W(\rho,\psi)$ does not depend on the choice of $c$.

Therefore, finally, we can conclude our formula $W(\rho,\psi)=R(\psi,c)L(\psi,c)$ is an invariant expression.

\end{proof}

Now let $\rho=\rho(X,\chi_K)$ be a Heisenberg representation of dimension prime to $p$ but the conductor of $\rho$
is {\bf not} minimal. In the following theorem we give an invariant formula of $W(\rho,\psi)$. But before this we need this 
following proposition.

\begin{prop}\label{Proposition-A}
 Let $\rho=\rho(X_\eta,\chi_K)$ be a Heisenberg representation of the 
 absolute Galois group $G_F$ of a non-archimedean local field $F/\bbQ_p$ of
 dimension $m$ prime to $p$. Then it is of minimal conductor $a_F(\rho)=m$ if and only if $\rho$ is a representation of 
 $G_F/V_F$, where $V_F$ is the subgroup of wild ramification.
\end{prop}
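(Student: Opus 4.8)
The plan is to reduce the statement to the vanishing of the Swan conductor and then to identify that vanishing with triviality on the wild inertia $V_F$. Since $\dim(\rho)=m$ is prime to $p$, Corollary~\ref{Corollary U-isotropic} lets me write $\rho=\rho(X_\eta,\chi_K)$ with $\eta$ a character of $U_F/U_F^1$, so that $X_\eta$ is $U$-isotropic and $a(\eta)=1$; by Remark~\ref{Remark 5.1.22} the minimal possible Artin conductor for such a $\rho$ is exactly $m$, so the hypothesis ``$a_F(\rho)=m$'' is precisely the statement that $\rho$ has minimal conductor. I may assume $m>1$ (the genuine Heisenberg case); then, being an irreducible representation of dimension $>1$, $\rho$ cannot be trivial on the inertia group $I_F=G_F^0$, for otherwise it would factor through the procyclic group $G_F/I_F\cong\widehat{\bbZ}$ and hence be one–dimensional. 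Thus $\rho$ is a ramified irreducible representation and $j(\rho)\geq 0$.

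First I would invoke the irreducible-case formulas of equation~(\ref{eqn 5.1.281}), which apply since $\rho$ is irreducible and not an unramified character:
\begin{equation*}
 \mathrm{sw}_F(\rho)=\dim(\rho)\cdot j(\rho)=m\cdot j(\rho),\qquad a_F(\rho)=\dim(\rho)\cdot(j(\rho)+1)=m\cdot(j(\rho)+1).
\end{equation*}
From the second identity it is immediate that $a_F(\rho)=m$ holds if and only if $j(\rho)=0$, equivalently (by the first identity) if and only if $\mathrm{sw}_F(\rho)=0$.

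Next I would translate $j(\rho)=0$ into a statement about $V_F$. By definition $j(\rho)=\max\{i\mid \rho|_{G_F^i}\not\equiv1\}$, and the wild ramification subgroup is $V_F=G_F^{0+}=\overline{\bigcup_{i>0}G_F^i}$, a pro-$p$-group (cf. Remark~\ref{Remark 5.1.3}). Since $\rho$ has open kernel, the ramification filtration meets $\mathrm{Ker}(\rho)$ at a finite level, so $j(\rho)\leq 0$ is equivalent to $\rho|_{G_F^i}\equiv 1$ for all $i>0$, i.e. to $\rho|_{V_F}\equiv1$. Combined with $j(\rho)\geq0$ this gives: $j(\rho)=0$ if and only if $\rho|_{V_F}\equiv1$, that is, if and only if $\rho$ factors through the tame quotient $G_F/V_F$. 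Chaining the three equivalences yields
\begin{equation*}
 a_F(\rho)=m\iff j(\rho)=0\iff \rho|_{V_F}\equiv1\iff \rho\ \text{is a representation of}\ G_F/V_F,
\end{equation*}
which is the assertion.

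The argument is short, so the only points needing care — and hence the ``main obstacle'' — are the two reductions feeding into equation~(\ref{eqn 5.1.281}): establishing that $\rho$ is genuinely ramified (so that $j(\rho)\geq0$ and the irreducible-case conductor formulas are valid) and the bookkeeping that matches the attained maximum $j(\rho)=0$ with triviality on the closure $V_F=\overline{\bigcup_{i>0}G_F^i}$. Both are routine once one notes that $G_F/I_F$ is procyclic and that $\rho$ has finite image; no wild-ramification computation is required, the content being simply that for a dimension-$m$ Heisenberg representation the excess $a_F(\rho)-m=m\cdot j(\rho)$ of the conductor over the dimension is carried entirely by the wild part.
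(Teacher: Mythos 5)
Your proof is correct, but it follows a genuinely different route from the paper's. You work entirely on the level of the irreducible representation $\rho$ itself: invoking the slope formulas (\ref{eqn 5.1.281}), $\mathrm{sw}_F(\rho)=m\cdot j(\rho)$ and $a_F(\rho)=m\cdot(j(\rho)+1)$, you read off that the excess $a_F(\rho)-m$ is exactly $m\cdot j(\rho)$, and then identify $j(\rho)=0$ with triviality of $\rho$ on $V_F=\overline{\bigcup_{i>0}G_F^i}$. The paper instead descends to the inducing character: it uses Lemma \ref{Lemma dimension equivalent} and Remark \ref{Remark 5.1.14} to see that $K/F$ is tame with $d_{K/F}=m-1$, applies the conductor formula (\ref{eqn 5.1.24}) to conclude that $a_F(\rho)=m$ is equivalent to $a(\chi_K)=1$, and then uses class field theory (the extension $L/K$ with $\cN_{L/K}=\mathrm{Ker}(\chi_K)$, so that $\mathrm{Ker}(\rho)=G_L$) to translate $a(\chi_K)=1$ into tameness of $L/F$, i.e.\ $V_F\subseteq\mathrm{Ker}(\rho)$. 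Your argument is shorter and avoids both the discriminant bookkeeping and the class-field-theoretic detour, at the price of leaning on the assertion in (\ref{eqn 5.1.281}) that an irreducible ramified representation has a single break (so that $\mathrm{sw}$ and $a$ are exact multiples of the dimension); the paper's argument buys the extra information of locating the kernel field $L$ explicitly and only uses the elementary induced-conductor formula. Your explicit reduction to $m>1$ and the verification that $\rho$ is genuinely ramified (so $j(\rho)\ge 0$ and $\rho^{I}=0$) are points the paper glosses over, and are worth keeping.
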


\begin{proof}
By the given condition, the dimension $\rm{dim}(\rho)=m$ is prime to $p$. Then from Lemma \ref{Lemma dimension equivalent}
we can conclude that $K/F$ is tamely ramified with $f_{K/F}=e_{K/F}=m$ 
(cf. Remark \ref{Remark 5.1.14})and hence $d_{K/F}=e_{K/F}-1=m-1$. Then from the conductor 
formula (\ref{eqn 5.1.24}) we can easily see that {\bf $a(\rho)=m$ is minimal if and only if $a(\chi_K)=1$}.

Further, for some extension $L/K$, if $\cN_{L/K}=\rm{Ker}(\chi_K)$, then by class field theory we can conclude:
{\bf $L/K$ is tamely ramified if and only if $a(\chi_K)=1$.}

Now suppose that $\rho$ is a Heisenberg representation of $G:=G_F/V_F$ of dimension $m$ prime to $p$. 
This implies $V_F\subset\rm{Ker}(\rho)=\rm{Ker}(\chi_K)=\cN_{L/K},$
where $L/K$ is some tamely ramified extension. Then $a(\chi_K)=1$, hence $a(\rho)=m$ is minimal.

Conversely, when conductor $a(\rho)=m$ is minimal, we have $a(\chi_K)=1$. 
 By class field theory this character $\chi_K$ determines
 an extension $L/K$ such that $\cN_{L/K}=\rm{Ker}(\chi_K)$. Since $a(\chi_K)=1$, here $L/K$ must be tamely ramified, hence 
 $L/F$ is tamely ramified.
 This means $V_F$ sits in the kernel $\rm{Ker}(\rho)=G_L$ , therefore $\rho$ is actually a representation of 
 $G_F/V_F$.

\end{proof}

 \begin{thm}\label{Theorem using Deligne-Henniart}
  Let $\rho=\rho(X_\eta,\chi_K)=\rho_0\otimes\widetilde{\chi_F}$ be a Heisenberg representation of the absolute Galois group
  $G_F$ of a non-archimedean local field $F/\bbQ_p$ of dimension $m$ with $gcd(m,p)=1$, where 
  $\rho_0=\rho_0(X_\eta,\chi_0)$ is a minimal conductor Heisenberg representation of $G_F$
  and $\widetilde{\chi_F}:W_F\to\bbC^\times$
  corresponds to $\chi_F:F^\times\to \bbC^\times$ by class field theory.
  If $a(\chi_F)\ge 2$, then we have 
 \begin{equation}\label{eqn 5.4.9}
  W(\rho,\psi)=W(\rho_0\otimes\widetilde{\chi_F},\psi)=W(\chi_F,\psi)^m\cdot\det(\rho_0)(c),
 \end{equation}
where $\psi$ is a nontrivial additive character of $F$, and $c:=c(\chi_F,\psi)\in F^\times$, satisfies 
\begin{center}
 $\chi_F(1+x)=\psi(c^{-1}x)$ for all $x\in P_{F}^{a(\chi_F)-[\frac{a(\chi_F)}{2}]}$.
\end{center}
\end{thm}

\begin{proof}
 {\bf Step-1:}
 By the given conditions, 
 $$\rho=\rho_0\otimes\widetilde{\chi\chi_F},$$
 where $\rho_0$ is a minimal conductor Heisenberg representation of $G_F$ of dimension $m$ prime to $p$ and 
 $\widetilde{\chi_F}:W_F\to\bbC^\times$ corresponds to $\chi_F:F^\times\to\bbC^\times$ by class field theory. And
 \footnote{We also know that there are $m^2$ characters of $F^\times/{F^\times}^m$ such that 
$\rho_0\otimes\widetilde{\chi}=\rho_0$ (cf. \cite{Z2}, p. 303, Proposition 1.4). So we always have:
\begin{equation*}
 \rho=\rho_0\otimes\widetilde{\chi_F}=\rho_0\otimes\widetilde{\chi\chi_F},
\end{equation*}
where $\chi\in\widehat{F^\times/{F^\times}^m}$, and
$\widetilde{\chi_F}:W_F\to\bbC^\times$ corresponds to $\chi_F$ by class field theory.
}
 here $\chi:F^\times/{F^\times}^m\to\bbC^\times$ such that $\rho_0=\rho_0\otimes\widetilde{\chi}$.

 Let $\zeta$ be a $(q_F-1)$-st
root of unity. Since $U_F^1$ is a pro-p-group and $gcd(p,m)=1$, we have 
\begin{equation}\label{eqn 5.2.23}
  F^\times/{F^\times}^m=<\pi_F>\times<\zeta>\times U_F^1/<\pi_F^m>\times<\zeta>^m\times U_F^1\cong \bbZ_m\times\bbZ_m,
\end{equation}
that is, a direct product of two cyclic group of same order. Hence $F^\times/{F^\times}^m\cong\widehat{F^\times/{F^\times}^m}$.
Since ${F^\times}^m=<\pi_F^m>\times<\zeta>^m\times U_F^1$, and
$F^\times/{F^\times}^m\cong \bbZ_m\times\bbZ_m,$
we have $a(\chi)\le 1$ and $\#\chi$ is a divisor of $m$ for all 
$\chi\in\widehat{F^\times/{F^\times}^m}$. Now if we take a character $\chi_F$ of $F^\times$ conductor $\ge 2$, hence  
$a(\chi_F)\ge 2 a(\chi)$ for all $\chi\in \widehat{F^\times/{F^\times}^m}$. Then by using Deligne's 
formula (\ref{eqn 2.3.17}) we have 
\begin{equation}\label{eqn 5.2.216}
 W(\chi\chi_F,\psi)^m=\chi(c)^m\cdot W(\chi_F,\psi)^m=W(\chi_F,\psi)^m,
\end{equation}
where $c\in F^\times$ with $\nu_F(c)=a(\chi_F)+n(\psi)$, satisfies 
$$\chi_F(1+x)=\psi(c^{-1}x),\quad\text{for all $x\in F^\times$ with $2\nu_F(x)\ge a(\chi)$}.$$

 Now from Proposition \ref{Proposition-A} we can consider the representation $\rho_0$ as a representation
 of $G:=\rm{Gal}(F_{mt}/F)$, where $F_{mt}/F$ is the maximal tamely ramified subextension in $\bar{F}/F$.
 Then we can write 
 $$\rho_0=\rm{Ind}_{E/F}(\chi_{E,0}),\quad\rho=\rm{Ind}_{E/F}(\chi_E),$$
 where $E/F$ is a cyclic tamely ramified subextension of $K/F$ of degree $m$ and 
 $$\chi_E:=\chi_{E,0}\otimes(\chi_F\circ N_{E/F}).$$
 
 {\bf Step-2:}
 Let $G$ be a finite group and $R(G)$ be the character ring provided with the tensor product as multiplication and the 
 unit representation as unit element. Then for any zero dimensional representation
 $\Pi\in R(G)$ we have (cf. Theorem 2.1(h) on p. 40
 of \cite{RB}):
 \begin{equation}\label{eqn 5.2.217}
  \Pi=\sum_{H\leq G}n_H\rm{Ind}_{H}^{G}(\chi_H-1_H),
 \end{equation}
 where $n_H\in \bbZ$ (cf. Proposition 2.24 on p. 48 of \cite{RB}) and $\chi_H\in\widehat{H}$. Moreover, from 
 Theorem 2.1 (k) of \cite{RB} we know that $n_H\ne 0$ {\bf only if} $Z(G)\le H$ and $\chi_H|_{Z(G)}=\chi_Z$, where $Z(G)$ is 
 the center of $G$ and $\chi_Z$ is the center character.
 
 Now we will use this above formula (\ref{eqn 5.2.217}) for the representation $\rho_0-m\cdot 1_F$ and we get
 \begin{equation}\label{eqn 5.2.211}
 \rho_0 - m\cdot 1_F=\sum_{i=1}^{r}n_i\rm{Ind}_{E_i/F}(\chi_{E_i} - 1_{E_i}),
\end{equation}
where $E_i/F$ are intermediate fields of $K/F$ and for nonzero $n_i$, we have the relation 
$$\chi_0=\chi_{E_i}\circ N_{K/E_i}.$$
Since $a(\chi_0)=1$ and $K/E_i$ are cyclic tamely ramified\footnote{The subfields $E_i\subseteq K$ are related to Boltje's 
approach by $\rm{Gal}(K/E_i)=H_i/Z(G)$ and the fact that $\chi_{H_i}$ extends the character $\chi_Z$ which translates 
via class field theory to $\chi_{E_i}\circ N_{K/E_i}=\chi_K$. Moreover, $X=\chi_Z\circ [-,-]$ and $\chi_Z$ extendible to 
$H_i$ means that $\rm{Gal}(K/E_i)=H_i/Z(G)$ must be isotropic for $X$, hence in our situation 
$K/E_i$ must be a cyclic extension of degree dividing $m$.}, we have $a(\chi_{E_i})=1$ for all $i=1,2,\cdots,r$.
Then from equation (\ref{eqn 5.2.211}) we have 
\begin{equation}\label{eqn 5.2.212}
 \det(\rho_0)=\prod_{i=1}^{r}(\chi_{E_i}|_{F^\times})^{n_i}.
\end{equation}

{\bf Step-3:} From equation (\ref{eqn 5.2.211}) we also can write
\begin{equation}\label{eqn 5.2.213}
 \rho_0\otimes\widetilde{\chi\chi_F} - m\cdot\chi\chi_F=\sum_{i=1}^{r}n_i\rm{Ind}_{E_i/F}(\chi_{E_i}\theta_i-\theta_i),
\end{equation}
where $\theta_i:=\chi\chi_F\circ N_{E_i/F}$ for all $i\in\{1,2,\cdots,r\}$.
Since $a(\chi_F)\ge 2$, and $E_i/F$ are tamely ramified, 
the conductors $a(\theta_i)\ge 2$ for all $i\in\{1,2,\cdots,r\}$. Then from equation (\ref{eqn 5.2.213}) we can write
\begin{align}
 W(\rho,\psi)\nonumber
 &=W(\chi\chi_F,\psi)^m\cdot\prod_{i=1}^{r}W(\chi_{E_i}\theta_i-\theta_i,\psi_{E_i})^{n_i}\\\nonumber
 &=W(\chi\chi_F,\psi)^m\cdot\prod_{i=1}^{r}\frac{W(\chi_{E_i}\theta_i,\psi_{E_i})^{n_i}}{W(\theta_i,\psi_{E_i})^{n_i}}\\
 &=W(\chi\chi_F,\psi)^m\cdot\prod_{i=1}^{r}\chi_{E_i}(c_i)^{n_i},\label{eqn 5.2.214}
\end{align}
where $\psi_{E_i}=\psi\circ\rm{Tr}_{E_i/F}$ and $c_i\in E_{i}^{\times}$ such that 
\begin{center}
 $\theta_i(1+y)=\psi_{E_i}(\frac{y}{c_i}),$ for all $y\in P_{E_i}^{a(\theta_i)-[\frac{a(\theta_i)}{2}]}$.
\end{center}
Moreover, here $E_i/F$ are tamely ramified extensions, then from Lemma 18.1 of \cite{BH} on p. 123, we have
$$N_{E_i/F}(1+y)\cong 1+\rm{Tr}_{E_i/F}(y)\pmod{P_F^{a(\chi_F)}},$$
and 
$\rm{Tr}_{E_i/F}(y)\in P_F^{a(\chi_F)-[\frac{a(\chi_F)}{2}]}$. Therefore for all 
$y\in P_{E_i}^{a(\theta_i)-[\frac{a(\theta_i)}{2}]}$ we can write (cf. Proposition 18.1 on p. 124 of \cite{BH}):
\begin{align*}
 \theta_i(1+y)
 &=\chi\chi_F\circ N_{E_i/F}(1+y)=\chi_F(1+\rm{Tr}_{E_i/F}(y))\\
 &=\psi(\frac{\rm{Tr}_{E_i/F}(y)}{c})=\psi_{E_i}(\frac{y}{c}),
\end{align*}
where $c:=c(\chi_F,\psi)$ for which $\chi_F(1+x)=\psi(\frac{x}{c})$ for $x\in P_F^{a(\chi_F)-[\frac{a(\chi_F)}{2}]}$.
This varifies that the choice $c_i(\theta_i,\psi_{E_i})=c_i(\chi\chi_F,\psi_{E_i})=c(\chi_F,\psi)\in F^{\times}$ is right 
for applying Tate-Lamprecht formula.

Then by using equation (\ref{eqn 5.2.212}) in equation (\ref{eqn 5.2.214}) we have:
\begin{equation}\label{eqn 5.2.215}
 W(\rho,\psi)=W(\chi\chi_F,\psi)^{m}\cdot \det(\rho_0)(c).
\end{equation}
Finally, by using equation (\ref{eqn 5.2.216}) from equation (\ref{eqn 5.2.215}) we can write 
\begin{align*}
 W(\rho,\psi)
 &=W(\chi\chi_F,\psi)^{m}\cdot\det(\rho_0)(c)\\
 &=W(\chi_F,\psi)^m\cdot\det(\rho_0)(c).
\end{align*}

\end{proof}

\section{\textbf{Applications of Tate's root-of-unity criterion}}

Let $K/F$ be a finite
 Galois extension of the non-archimedean local field $F$, and $\rho:\mathrm{Gal}(K/F)\to \mathrm{Aut}_{\mathbb{C}}(V)$ a 
 representation of $\mathrm{Gal}(K/F)$ on a complex vector space $V$. 
 Let $P(K/F)$ denote the first {\bf wild} ramification group of $K/F$.
 Let $V^{P}$ be the subspace of all elements of $V$ fixed by $\rho(P(K/F))$. Then $\rho$ induces a representation:
 \begin{center}
  $\rho^{P}:\mathrm{Gal}(K/F)/P(K/F)\to\mathrm{Aut}_{\mathbb{C}}(V^{P})$.
 \end{center}
 Let $\overline{F}$ be an algebraic closure of the local field $F$, and $G_F=\rm{Gal}(\overline{F}/F)$ be the absolute Galois
 group for $\overline{F}/F$.
  Let $\rho$ be a representation of $G_F$.\\
{\bf Then by Tate, $W(\rho)/W(\rho^{P})$ is a root of a unity (cf. \cite{JT1}, p. 112, Corollary 4).}\\
Now let $\rho$ be an irreducible representation $G_F$, then either $\rho^P=\rho$, in which case
$\frac{W(\rho)}{W(\rho^P)}=1$, or else $\rho^P=0$, in this case from Tate's result we can say $W(\rho)$ is a root of unity.
Equivalently:\\
If $W(\rho)$ is not a root of unity then $\rho^P\ne 0$, hence $\rho^P=\rho$ because $\rho$ is irreducible. This means that 
all vectors $v\in V$ of the representation space are fixed under $P$ action on $V$. \\
In other words, if we consider $\rho$ as a 
homomorphism $\rho:G_F\to\rm{Aut}_\bbC(V)$ then the elements from $P$ are mapped to the identity, hence 
\begin{center}
 $\rho^P=\rho$ means $P\subset\rm{Ker}(\rho)$.
\end{center}
Therefore we can state the following lemma.
\begin{lem}
 If $\rho$ is an irreducible representation of $G_F$, such that the subgroup  $P\subset G_F$, of wild ramification
does {\bf not trivially} act on the representation space  $V$ (this gives $\rho^P\ne \rho$, i.e., $\rho^P=0$), 
then   $W(\rho)$  is a root of unity.
\end{lem}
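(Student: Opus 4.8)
The plan is to deduce the statement directly from Tate's root-of-unity criterion recalled immediately above, namely that $W(\rho)/W(\rho^P)$ is a root of unity for every representation $\rho$ of $G_F$. The entire content of the lemma is to show that, under its hypothesis, one has $\rho^P = 0$, so that the denominator $W(\rho^P)$ becomes trivial and the criterion collapses to the assertion that $W(\rho)$ itself is a root of unity.

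First I would record the structural fact that makes the argument work: the wild ramification subgroup $P \subset G_F$ is \emph{normal} (it is the wild inertia subgroup, a characteristic pro-$p$-subgroup of $G_F$). Consequently, for any $v \in V^P$ and any $g \in G_F$ one computes $\rho(\sigma)\rho(g)v = \rho(g)\rho(g^{-1}\sigma g)v = \rho(g)v$ for all $\sigma \in P$, since $g^{-1}\sigma g \in P$ by normality and $v$ is $P$-fixed. Hence $\rho(g)v \in V^P$, so $V^P$ is a $G_F$-invariant subspace of $V$. Now I would invoke irreducibility of $\rho$: an invariant subspace must be either $\{0\}$ or all of $V$, so $V^P$ is $\{0\}$ or $V$. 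The hypothesis that $P$ does not act trivially on $V$ says precisely that not every vector is $P$-fixed, i.e. $V^P \neq V$; therefore $V^P = \{0\}$, equivalently $\rho^P = 0$. This is exactly the dichotomy ``$\rho^P = \rho$ or $\rho^P = 0$'' for irreducible $\rho$ that was announced before the statement.

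Finally I would feed $\rho^P = 0$ into Tate's criterion. Since $W$ is a (weakly) extendible function, its normalization $W(1_F) = 1$ together with multiplicativity on direct sums forces $W$ of the zero representation to equal $1$; thus $W(\rho)/W(\rho^P) = W(\rho)$, and this quotient being a root of unity yields the claim. The only points requiring genuine care — and which I expect to be the main, though mild, obstacle — are justifying the normality of $P$ in $G_F$ (so that $V^P$ is a subrepresentation rather than merely a subspace) and confirming that $W(\rho^P) = W(0) = 1$. Neither is difficult, but both are indispensable for closing the argument, and it is the normality step that does the real conceptual work of translating ``$P$ acts nontrivially'' into ``$\rho^P = 0$''.
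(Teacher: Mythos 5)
Your proposal is correct and follows essentially the same route as the paper: the dichotomy $\rho^P=\rho$ or $\rho^P=0$ for irreducible $\rho$, the hypothesis ruling out the first alternative, and Tate's criterion $W(\rho)/W(\rho^P)\in\mu_\infty$ together with $W(0)=1$. You merely make explicit two points the paper leaves tacit — the normality of $P$ in $G_F$ (so that $V^P$ is a subrepresentation) and the normalization $W(0)=1$ — which is a welcome but not substantively different elaboration.
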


Before going to our next results we need to recall some facts from class field theory.
Let $F$ be a non-archimedean local field. Let $F^{ab}$ be the maximal abelian extension of $F$ and $F_{nr}$ be the maximal 
unramified extension of $F$. Then by local class field theory there is a unique homomorphism
$$\theta_{F}:F^\times\to \rm{Gal}(F^{ab}/F)$$
having certain properties (cf. \cite{JM}, p. 20, Theorem 1.1). 
This local reciprocity map $\theta_F$ is continuous and injective with dense image. 
From class field theory we have the following commutative diagram  
$$\begin{array}{ccccccccc} &&& && v_F &&  \\
0 & \to & U_F & \to & F^\times & \to & \bbZ & \to & 0 \\
&& \quad\downarrow \theta_F && \quad\downarrow\theta_F && \quad\downarrow \rm{id} \\
0 & \to & I_F & \to & \rm{Gal}(F^{ab}/F) & \to & \widehat{\bbZ} & \to & 0, 
\end{array}$$
where $I_F:=\rm{Gal}(F^{ab}/F_{nr})$ is the inertia subgroup of $\rm{Gal}(F^{ab}/F)$, and $\rm{Gal}(F_{nr}/F)$ is identified  with 
$\widehat{\bbZ}$ (cf. \cite{CF}, p. 144). We also know that $\theta_F:U_F\to I_F$ is an isomorphism. Moreover, the descending chain 
$$U_F\supset U_{F}^{1}\supset U_{F}^{2}\cdots$$
is mapped isomorphically by $\theta_F$ to the descending chain of ramification subgroups of $\rm{Gal}(F^{ab}/F)$ in the upper numbering.

Now let $I$ be the inertia subgroup of $G_F$. Let $P$ be the wild 
ramification subgroup of $G_F$.
Then we have $G_F\supset I\supset P$. Parallel with this we have 
$F^\times\supset U_F\supset U_{F}^{1}$. Then we have 
\begin{equation}\label{sequence 5.3.1}
 1\to I/P\cdot[G_F,G_F]\to G_F/P\cdot[G_F,G_F]\to G_F/I\to 1,
\end{equation}
and parallel 
\begin{equation}\label{sequence 5.3.2}
 1\to U_F/U_{F}^{1}\to F^\times/U_{F}^{1}\to F^\times/U_F\to 1.
\end{equation}
Now by class field theory the left terms of sequences (\ref{sequence 5.3.1}) and (\ref{sequence 5.3.2}) 
are isomorphic, but for the right terms we have $G_F/I$ is isomorphic to the total
completion of $\bbZ$
(because here $G_F/I$ is profinite group, hence compact). We also have  $F^\times/U_F=<\pi_F>\times U_F/U_F\cong\bbZ$.
Therefore sequence (\ref{sequence 5.3.2}) is dense in (\ref{sequence 5.3.1}) because $\bbZ$
is dense in the total completion $\widehat{\bbZ}$. But $\bbZ$ and $\widehat{\bbZ}$ have the same finite factor groups. 
{\bf As a consequence $F^\times/U_{F}^{1}$ is also dense in $G_F/P\cdot[G_F,G_F]$.}




Let $\rho$ be a Heisenberg representation of the absolute Galois group $G_F$.
In the following proposition we show that if $W(\rho)$ is not a root of unity, then $\rm{dim}(\rho)|(q_F-1)$.
\begin{prop}\label{Proposition 4.12}
 Let $F/\bbQ_p$ be a local field and let $q_F=p^s$ be the order of its finite residue field. If
$\rho=(Z_\rho,\chi_\rho)=\rho(X_\rho,\chi_K)$ is a Heisenberg representation of the absolute 
Galois group $G_F$ such that $W(\rho)$ is not a root of unity, 
then $dim(\rho)|(q_F-1)$.
\end{prop}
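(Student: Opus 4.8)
The plan is to combine Tate's root-of-unity criterion (Corollary 4 of Tate cited just above the statement) with the dimension theory of Heisenberg representations developed in Section 5.1, especially Theorem~\ref{Dimension Theorem}. The logical skeleton is a contrapositive argument: I will show that whenever $\dim(\rho)\nmid(q_F-1)$, the local constant $W(\rho)$ \emph{is} a root of unity. Equivalently, if $W(\rho)$ is not a root of unity, then $\dim(\rho)\mid(q_F-1)$.

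First I would invoke Theorem~\ref{Dimension Theorem}, which gives the general shape of the dimension of any Heisenberg representation of $G_F$, namely $\dim(\rho)=p^n\cdot d'$ with $n\ge 0$ and $d'\mid(q_F-1)$. Thus $\dim(\rho)\mid(q_F-1)$ fails precisely when $n\ge 1$, i.e.\ when $p\mid\dim(\rho)$. So the heart of the matter is to prove: if $p\mid\dim(\rho)$, then $W(\rho)$ is a root of unity. The natural tool is the lemma immediately preceding the statement, which says that if the wild ramification subgroup $P\subset G_F$ does not act trivially on the representation space of an irreducible $\rho$ (so that $\rho^P=0$), then $W(\rho)$ is a root of unity. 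A Heisenberg representation is irreducible by definition, so I only need to verify that $P\not\subseteq\mathrm{Ker}(\rho)$ whenever $p\mid\dim(\rho)$.

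The key step is therefore the equivalence between ``$p\mid\dim(\rho)$'' and ``$P$ acts nontrivially,'' which I would extract from Lemma~\ref{Lemma dimension equivalent} and Remark~\ref{Remark 5.1.3}. Concretely, Lemma~\ref{Lemma dimension equivalent} tells us that $\dim(\rho)$ is prime to $p$ if and only if the associated abelian extension $K/F$ (corresponding to $\mathrm{Rad}(X_\rho)=\cN_{K/F}$) is tamely ramified, if and only if $X_\rho$ is $U$-isotropic with $a(\eta)=1$. Taking the contrapositive, $p\mid\dim(\rho)$ forces $K/F$ to be wildly ramified, so $U_F^1\not\subseteq\cN_{K/F}$; passing through the reciprocity map $\theta_F$, which carries $U_F^1$ isomorphically onto the wild ramification subgroup of $\mathrm{Gal}(F^{ab}/F)$, this means the image of $P$ in $\mathrm{Gal}(K/F)=G_F/Z_\rho$ is nontrivial. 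From there I would argue that $P$ cannot lie in $\mathrm{Ker}(\rho)$: since $\rho$ is Heisenberg with scalar group $Z_\rho=G_K$ and $K/F$ wildly ramified, the wild inertia is not killed, giving $\rho^P=0$ and hence $W(\rho)$ a root of unity by Tate's criterion.

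The main obstacle I anticipate is the last transition --- cleanly translating ``$K/F$ wildly ramified'' into ``$P\not\subseteq\mathrm{Ker}(\rho)$'' at the level of the full representation rather than merely its central character. One must be careful that $\rho^P\ne\rho$ is genuinely the right dichotomy: because $\rho$ is irreducible, $\rho^P$ is either all of $\rho$ or $0$, so it suffices to produce a single element of $P$ on which $\rho$ acts nontrivially. The cleanest route is via Remark~\ref{Remark 5.1.3}(1), where the case $\rho|_{V_F}$ irreducible (forcing $K/F$ totally and wildly ramified, $\dim(\rho)=p^n$) is already analyzed; the general wild case with a prime-to-$p$ part factors as $\rho=\rho_p\otimes\rho_{p'}$ by the nilpotency decomposition $G=G_p\times G_{p'}$ used in the proof of Theorem~\ref{Dimension Theorem}, and I would handle the $G_p$-factor by the same wild-inertia argument. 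Assembling these pieces, the contrapositive is complete, and the proposition follows.
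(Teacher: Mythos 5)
Your proposal is correct and follows essentially the same route as the paper: Tate's criterion together with the dichotomy $\rho^P\in\{0,\rho\}$ for an irreducible $\rho$, the class-field-theoretic identification of the wild ramification subgroup $P$ with $U_F^1$, and the computation from Lemma~\ref{Lemma dimension equivalent} showing that an alternating character living on a quotient of $F^\times/U_F^1$ forces $\dim(\rho)\mid(q_F-1)$; the paper merely argues in the direct rather than the contrapositive direction ($\rho=\rho^P\Rightarrow P\subseteq\mathrm{Ker}(\rho)\subseteq Z_\rho$, so $X_\rho$ descends to $F^\times/U_F^1$). The only point to tighten is your phrase ``wildly ramified, so $U_F^1\not\subseteq\cN_{K/F}$'': with the paper's definition of wild ramification ($p\mid[K:F]$) this implication is not automatic, and one should instead invoke directly the contrapositive of the step ``$U_F^1\subseteq\cN_{K/F}\Rightarrow\dim(\rho)\mid(q_F-1)$'' from the proof of that lemma, which is what both you and the paper ultimately rely on.
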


\begin{proof}
Let $P$ denote the wild ramification subgroup of $G_F$.
By Tate's root-of-unity criterion, we know that $\gamma:=\frac{W(\rho)}{W(\rho^P)}$ is a root of unity. If $W(\rho)$ is not a 
root of unity, then $\rho=\rho^P$, otherwise $W(\rho)$ must be a root of unity. Again $\rho^P=\rho$ implies 
$P\subset\rm{Ker}(\rho)\subset Z_\rho\subset G_F$. So $G_F/Z_\rho$ is a quotient of $G_F/P$, hence $F^\times/U_F^1$.
 
Moreover,  from the dimension formula (\ref{eqn dimension formula}), we have 
$$\rm{dim}(\rho)=\sqrt{[G_F:Z_\rho]}=\sqrt{[K:F]}=\sqrt{[F^\times:\cN_{K/F}]},$$
where $Z_\rho=G_K$ and $\rm{Rad}(X)=\cN_{K/F}$, hence $F^\times/N$ is a quotient group of $F^\times/U_F^1$.
 Therefore the alternating character $X_\rho$ induces an alternating 
character $X$ on $F^\times/U_F^1.$ 
Then from the proof of Lemma \ref{Lemma dimension equivalent} ((4) to (1)), we can conclude that 
$\rm{dim}(\rho)$ divides $q_F-1$.

\end{proof}

\chapter{{\bf Appendix}}

\section{{\bf Lamprecht-Tate formula for $W(\chi)$}}

\begin{thm}[{\bf Lamprecht-Tate formula}]\label{Theorem 6.1.1}
Let $F$ be a non-archimedean local field.
Let $\chi$ be a character of $F^\times$ of exponential Artin-conductor $a(\chi)=a_F(\chi)$ and let $m$ be a natural number 
such that $2m\le a(\chi)$. Let $\psi_F$ be the canonical additive character of $F$. 
Then there exists $c\in F^\times$, $\nu_F(c)=a(\chi)+d_{F/\bbQ_p}$ such that 
\begin{equation}\label{eqn 5.4.5}
 \chi(1+y)=\psi_F(c^{-1}y)\qquad\text{for all $y\in P_{F}^{a(\chi)-m}$},
\end{equation}
and this will imply:
\begin{equation}\label{eqn 6.0.9}
 W(\chi)=W(\chi,c)=\chi(c)\cdot q_{F}^{-\frac{(a(\chi)-2m)}{2}}\sum_{x\in (1+P_F^m)/(1+P_F^{a(\chi)-m})}\chi^{-1}(x)\psi_F(c^{-1}x).
\end{equation}
{\bf Remark:} Note that the assumption (\ref{eqn 5.4.5}) is obviously fulfilled for $m=0$ because then both sides are $=1$,
and the resulting formula for $m=0$ is the original formula (\ref{eqn 2.2}) for abelian local constant $W(\chi)$.
\end{thm}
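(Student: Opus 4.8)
The plan is to start from the modified abelian local constant formula (\ref{eqn 2.2}), which for the canonical character $\psi_F$ (with $n(\psi_F)=d_{F/\bbQ_p}$) reads
\begin{equation*}
 W(\chi)=\chi(c)q_{F}^{-a(\chi)/2}\sum_{x\in U_F/U_F^{a(\chi)}}\chi^{-1}(x)\psi_F(c^{-1}x),
\end{equation*}
with $\nu_F(c)=a(\chi)+d_{F/\bbQ_p}$. First I would record that the hypothesis (\ref{eqn 5.4.5}) forces the additive character $x\mapsto\psi_F(c^{-1}x)$ and the multiplicative character $\chi$ to agree on the subgroup $1+P_F^{a(\chi)-m}$: indeed for $y\in P_F^{a(\chi)-m}$ we have $\chi(1+y)=\psi_F(c^{-1}y)=\psi_F(c^{-1}(1+y))\cdot\psi_F(c^{-1})^{-1}$, so up to the fixed constant $\psi_F(c^{-1})$ the two characters restricted to that subgroup coincide. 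This is the compatibility that will let the inner sum collapse.

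The key computational step is a \emph{stationary phase} argument on the sum over $U_F/U_F^{a(\chi)}$. I would partition the index set $U_F/U_F^{a(\chi)}$ into cosets of the subgroup $(1+P_F^{a(\chi)-m})/U_F^{a(\chi)}$, writing each $x$ as $x_0 u$ with $u\in 1+P_F^{a(\chi)-m}$. On each coset the summand $\chi^{-1}(x)\psi_F(c^{-1}x)$ factors, and the inner sum over $u$ is $\sum_u \chi^{-1}(u)\psi_F(c^{-1}x_0 u)$. Using the agreement established above (applied with the shift by $x_0$, i.e.\ expanding $\psi_F(c^{-1}x_0 u)=\psi_F(c^{-1}x_0)\psi_F(c^{-1}x_0(u-1))$ and $\chi^{-1}(u)$ via (\ref{eqn 5.4.5}) with $c$ replaced by $c x_0^{-1}$ — valid since $x_0$ is a unit), one sees the inner sum is a sum of an additive character of the quotient group $P_F^{a(\chi)-m}/P_F^{a(\chi)}$ twisted by a fixed phase, hence it vanishes unless that additive character is trivial on the subgroup, in which case it equals $|(1+P_F^{a(\chi)-m})/U_F^{a(\chi)}|=q_F^{m}$. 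The nonvanishing condition singles out exactly the representatives $x_0\in (1+P_F^m)/(1+P_F^{a(\chi)-m})$, because the additive character $\psi_F(c^{-1}x_0\,\cdot)$ is trivial on $P_F^{a(\chi)-m}/P_F^{a(\chi)}$ precisely when $c^{-1}x_0\cdot P_F^{a(\chi)-m}\subset P_F^{-d_{F/\bbQ_p}}=P_F^{-n(\psi_F)}$, i.e.\ when $x_0\equiv 1\bmod P_F^m$.

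Collecting the surviving terms, the sum reduces to $q_F^m\sum_{x\in(1+P_F^m)/(1+P_F^{a(\chi)-m})}\chi^{-1}(x)\psi_F(c^{-1}x)$, and combining the factor $q_F^m$ with the original normalization $q_F^{-a(\chi)/2}$ gives $q_F^{-(a(\chi)-2m)/2}$, which is exactly (\ref{eqn 6.0.9}). The main obstacle I anticipate is making the stationary-phase vanishing rigorous: one must carefully justify the character-orthogonality step, namely that for a unit $x_0$ the map $y\mapsto \chi^{-1}(1+y)\psi_F(c^{-1}x_0 y)$ on $P_F^{a(\chi)-m}/P_F^{a(\chi)}$ is a genuine (additive) character of that finite group, using (\ref{eqn 5.4.5}) to convert $\chi^{-1}(1+y)$ into $\psi_F(-c^{-1}y)$ on the range where $2m\le a(\chi)$ guarantees $a(\chi)-m\ge m$ so that the quadratic cross-terms in expanding $\chi^{-1}(x_0(1+y))$ lie deep enough to be absorbed. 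The bookkeeping of conductors and the precise indexing of the quotient groups is where care is needed; the geometric idea (the exponential sum concentrates on a smaller coset neighborhood of $1$) is straightforward, and the $m=0$ degenerate case is an immediate sanity check since both the hypothesis and the conclusion then reduce to (\ref{eqn 2.2}).
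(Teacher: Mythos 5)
Your computation follows essentially the same route as the paper: decompose $U_F/U_F^{a(\chi)}$ into cosets of $1+P_F^{a(\chi)-m}$, use (\ref{eqn 5.4.5}) to turn the inner sum into a sum of an additive character over $P_F^{a(\chi)-m}/P_F^{a(\chi)}$, apply orthogonality to keep only the cosets with representative in $1+P_F^m$, and absorb the resulting factor $q_F^m$ into the normalization. Two points need attention. First, the theorem also \emph{asserts} the existence of $c$ with $\nu_F(c)=a(\chi)+d_{F/\bbQ_p}$, which you treat purely as a hypothesis; you should add the short argument that $2m\le a(\chi)$ gives $2(a(\chi)-m)\ge a(\chi)$, hence $\chi(1+y)\chi(1+y')=\chi(1+y+y')$ for $y,y'\in P_F^{a(\chi)-m}$, so $y\mapsto\chi(1+y)$ is an additive character of $P_F^{a(\chi)-m}$; extending it to $F^{+}$ and invoking additive duality produces $c$, and comparing conductors ($a(\chi)=-n(c^{-1}\psi_F)=\nu_F(c)-n(\psi_F)$) pins down its valuation. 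Second, your stated triviality criterion, namely that $\psi_F(c^{-1}x_0\,\cdot)$ is trivial on $P_F^{a(\chi)-m}/P_F^{a(\chi)}$ when $c^{-1}x_0\cdot P_F^{a(\chi)-m}\subset P_F^{-n(\psi_F)}$, is wrong as written (for a unit $x_0$ that inclusion never holds when $m\ge 1$); the character that actually appears after combining $\chi^{-1}(u)=\psi_F(-c^{-1}(u-1))$ with the phase is $y\mapsto\psi_F\bigl(c^{-1}(x_0-1)y\bigr)$, and its triviality on $P_F^{a(\chi)-m}/P_F^{a(\chi)}$ is equivalent to $\nu_F(x_0-1)\ge m$, which is the conclusion you correctly draw. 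With the existence of $c$ supplied and that slip corrected, the argument matches the paper's proof.
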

\begin{proof}
 We have seen already that for $m=0$ everything is correct. In general, the assumption $2m\le a(\chi)$ implies 
 $2(a(\chi)-m)\ge a(\chi)$ and therefore 
 $$\chi(1+y)\chi(1+y')=\chi(1+y+y')$$
 for $y,y'\in P_{F}^{a(\chi)-m}$. That is, $y\mapsto \chi(1+y)$ is a character of the additive group $P_F^{a(\chi)-m}$.
 This character extends to a character of $F^{+}$ and, by local additive duality, there is some $c\in F^\times$ such that
 $$\chi(1+y)=\psi_F(c^{-1}y)=(c^{-1}\psi_F)(y),\quad\text{for all $y\in P_F^{a(\chi)-m}$}.$$
 Now comparing the conductors of both sides we must have:
 $$a(\chi)=-n(c^{-1}\psi_F)=\nu_F(c)-n(\psi_F),$$
 hence $\nu_F(c)=a(\chi)+n(\psi_F)=a(\chi)+d_{F/\bbQ_p}$ is the right assumption for our formula.\\
 Now we assume $m\ge 1$ (the case $m=0$ we have checked already) and consider the filtration 
 $$O_F^\times\supseteq 1+P_F^{a(\chi)-m}\supseteq 1+P_{F}^{a(\chi)}.$$
 Then we may represent $x\in O_F^\times/(1+P_F^{a(\chi)})$ as $x=z(1+y)$, where $y\in P_F^{a(\chi)-m}$ and $z$ runs over the 
 system of representatives for $O_F^\times/(1+P_F^{a(\chi)-m})$. Now computing $W(\chi)$ we have to consider the sum 
 \begin{equation}\label{eqn 5.4.6}
  \sum_{x\in O_F^\times/(1+P_F^{a(\chi)})}\chi^{-1}(x)\psi_F(c^{-1}x)=\sum_{z\in O_F^\times/(1+P_F^{a(\chi)-m})}
  \sum_{y\in P_F^{a(\chi)-m}/P_{F}^{a(\chi)}}\chi^{-1}(z(1+y))\psi_F(c^{-1}z(1+y)).
 \end{equation}
Now using (\ref{eqn 5.4.5}) we obtain
$$\chi^{-1}(z(1+y))=\chi^{-1}(z)\chi^{-1}(1+y)=\chi^{-1}(z)\chi(1-y)=\chi^{-1}(z)\psi_F(-c^{-1}y)$$
 and therefore our double sum (\ref{eqn 5.4.6}) rewrites as 
 \begin{equation*}
  \sum_{z\in O_F^\times/(1+P_F^{a(\chi)-m})}\chi^{-1}(z)\psi_F(c^{-1}z)\cdot
  \left(\sum_{y\in P_F^{a(\chi)-m}/P_F^{a(\chi)}}\psi_F(c^{-1}y(z-1))\right).
 \end{equation*}
But the inner sum is the sum on the additive group $P_F^{a(\chi)-m}/P_F^{a(\chi)}$ and 
$(c^{-1}(z-1))\psi_F$ is a character of that group. Hence this sum is equal to $[P_F^{a(\chi)-m}:P_F^{a(\chi)}]=q_{F}^{m}$
if the character is $\equiv1$ and otherwise the sum will be zero. But:
$$n(c^{-1}(z-1)\psi_F)=\nu_F(c^{-1}(z-1))+n(\psi_F)=-a(\chi)+\nu_F(z-1).$$
So the character $(c^{-1}(z-1))\psi_F$ is trivial on $P_F^{a(\chi)-\nu_F(z-1)}$, and therefore it will be $\equiv1$
on $P_{F}^{a(\chi)-m}$ if and only if $\nu_F(z-1)\ge m$, i.e., $z=1+y'\in 1+P_F^m$. Therefore our sum (\ref{eqn 5.4.6})
rewrites as 
  \begin{equation}\label{eqn 5.4.7}
  \sum_{x\in O_F^\times/(1+P_F^{a(\chi)})}\chi^{-1}(x)\psi_F(c^{-1}x)=
  q_F^m \sum_{z\in (1+P_F^m)/(1+P_F^{a(\chi)-m})}\chi^{-1}(z)\psi_F(c^{-1}z).
 \end{equation}
 And substituting this result into our original formula (\ref{eqn 2.2}) we get 
 \begin{align*}
  W(\chi)
  &=\chi(c)q_F^{-\frac{a(\chi)}{2}}\sum_{x\in O_F^\times/(1+P_F^{a(\chi)})}\chi^{-1}(x)\psi_F(c^{-1}x)\\
  &=\chi(c)\cdot q_{F}^{-\frac{(a(\chi)-2m)}{2}}\sum_{x\in (1+P_F^m)/(1+P_F^{a(\chi)-m})}\chi^{-1}(x)\psi_F(c^{-1}x).
 \end{align*}

\end{proof}

\begin{cor}\label{Corollary 6.1.2}
 Let $\chi$ be a character of $F^\times$. Let $\psi$ be a nontrivial additive character of $F$.
\begin{enumerate}
 \item When $a(\chi)=2d\, (d\ge 1)$, we have 
$$W(\chi)=\chi(c)\psi(c^{-1}).$$
\item When $a(\chi_\rho)=2d+1\, (d\ge1)$, we have 
$$W(\chi)=\chi(c)\psi(c^{-1})\cdot q_F^{-\frac{1}{2}}\sum_{x\in P_F^d/P_F^{d+1}}\chi^{-1}(1+x)\psi(c^{-1}x).$$
\item {\bf Deligne's twisting formula (\ref{eqn 2.3.17}):}
If $\alpha, \beta\in \widehat{F^\times}$ with $a(\alpha)\ge 2\cdot a(\beta)$, then 
$$W(\alpha\beta,\psi)=\beta(c)\cdot W(\alpha,\psi).$$
\end{enumerate}
Here $c\in F^\times$ with $F$-valuation $\nu_F(c)=a(\chi)+n(\psi_F)$, and in Case (1) and Case (2), $c$ also satisfies 
\begin{center}
 $\chi(1+x)=\psi(\frac{x}{c})$ for all $x\in F^\times$ with $2\cdot\nu_F(x)\ge a(\chi)$.
\end{center}
And in case (3), we have $\alpha(1+x)=\psi(x/c)$ for all $\nu_F(x)\ge \frac{a(\alpha)}{2}$
\end{cor}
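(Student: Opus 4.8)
The plan is to derive all three parts of Corollary~\ref{Corollary 6.1.2} from the Lamprecht--Tate formula (Theorem~\ref{Theorem 6.1.1}), which is already available. Parts (1) and (2) are immediate specializations obtained by choosing the free parameter $m$ as large as the constraint $2m\le a(\chi)$ permits, whereas part (3) requires a genuine character-sum computation and will be the main work. For part (1), with $a(\chi)=2d$ I would apply Theorem~\ref{Theorem 6.1.1} with $m=d=a(\chi)/2$: this meets $2m=a(\chi)$ with equality, the hypothesis $\chi(1+y)=\psi(c^{-1}y)$ for $y\in P_F^{a(\chi)-m}=P_F^{d}$ is exactly the condition $\chi(1+x)=\psi(x/c)$ for $2\nu_F(x)\ge a(\chi)$ recorded in the corollary, and in formula~(\ref{eqn 6.0.9}) the prefactor $q_F^{-(a(\chi)-2m)/2}=q_F^{0}=1$ while the range $(1+P_F^m)/(1+P_F^{a(\chi)-m})$ collapses (since $a(\chi)-m=d=m$) to the single coset of $1$; hence $W(\chi)=\chi(c)\psi(c^{-1})$. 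For part (2), with $a(\chi)=2d+1$ I would take $m=d$, so $2m=2d<a(\chi)$, the prefactor is $q_F^{-1/2}$, and the range is $(1+P_F^d)/(1+P_F^{d+1})$; writing each summand as $x=1+y$ with $y\in P_F^d/P_F^{d+1}$ and splitting $\psi(c^{-1}(1+y))=\psi(c^{-1})\psi(c^{-1}y)$ pulls out the factor $\chi(c)\psi(c^{-1})$ and leaves precisely the claimed sum.

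The substantive case is part (3). Put $a:=a(\alpha)$; since $a(\alpha)\ge 2a(\beta)>a(\beta)$ (or $\beta$ is unramified) one has $a(\alpha\beta)=a$. Set $m:=\lfloor a/2\rfloor$ and $m':=\lceil a/2\rceil$, so $m+m'=a$ and $m\ge a(\beta)$. I would start from the basic formula~(\ref{eqn 2.2}) for $W(\alpha\beta,\psi)$ with $\nu_F(c)=a+n(\psi)$ and decompose each $x\in U_F/U_F^{a}$ as $x=z(1+y)$ with $z\in U_F/U_F^{m'}$ and $y\in P_F^{m'}/P_F^{a}$; because $2m'\ge a$ this parametrization is well defined modulo $U_F^a$. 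On $1+P_F^{m'}$ the character $\beta$ is trivial (as $m'\ge a(\beta)$) and $\alpha(1+y)=\psi(y/c)$, so $(\alpha\beta)^{-1}(x)\psi(x/c)$ factors and the inner sum over $y$ reduces to $\sum_{y}\psi((z-1)y/c)$, a sum of an additive character over $P_F^{m'}/P_F^{a}$. This sum equals $q_F^{a-m'}$ when $\nu_F(z-1)\ge a-m'=m$ and vanishes otherwise, which restricts $z$ to $(1+P_F^m)/(1+P_F^{m'})$; there $\beta$ is again trivial, so $(\alpha\beta)^{-1}(z)=\alpha^{-1}(z)$. Comparing the surviving sum with the Lamprecht--Tate expression~(\ref{eqn 6.0.9}) for $W(\alpha,\psi)$ at the same $m$ (whose range $(1+P_F^m)/(1+P_F^{a-m})$ coincides with $(1+P_F^m)/(1+P_F^{m'})$) identifies it with $\alpha(c)^{-1}q_F^{(a-2m)/2}W(\alpha,\psi)$. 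Collecting exponents gives $-a/2+(a-m')+(a-2m)/2=a-m-m'=0$, and the constants combine as $(\alpha\beta)(c)\,\alpha(c)^{-1}=\beta(c)$, yielding $W(\alpha\beta,\psi)=\beta(c)\,W(\alpha,\psi)$.

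The main obstacle is the bookkeeping in part (3): choosing the floor/ceiling split $m,m'$ correctly, checking that the decomposition $x=z(1+y)$ is legitimate modulo $U_F^a$, and evaluating the inner Gauss-type character sum over $y$ with the sharp cut-off $\nu_F(z-1)\ge m$ so that the surviving range matches~(\ref{eqn 6.0.9}) exactly. Finally, since Theorem~\ref{Theorem 6.1.1} is phrased for the canonical character $\psi_F$, I would note that the identical derivation applies to an arbitrary nontrivial $\psi$ with $\nu_F(c)=a(\chi)+n(\psi)$ (alternatively, one reduces to $\psi_F$ via the transformation rule $W(\chi,b\psi)=\chi(b)W(\chi,\psi)$), which is the form in which the corollary is stated; this also reconciles the statement with Deligne's twisting formula~(\ref{eqn 2.3.17}).
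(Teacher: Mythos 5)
Your parts (1) and (2) are exactly the paper's proof: specialize Theorem \ref{Theorem 6.1.1} to $m=d$, note that the prefactor becomes $1$ (resp.\ $q_F^{-1/2}$) and that the summation range collapses to the trivial coset (resp.\ to $(1+P_F^d)/(1+P_F^{d+1})$), then split off $\psi(c^{-1})$. For part (3), however, you take a genuinely different and more laborious route. The paper simply applies formula (\ref{eqn 6.0.9}) to the character $\alpha\beta$ with the choice $m=a(\beta)$ (legitimate since $2a(\beta)\le a(\alpha)=a(\alpha\beta)$, and the hypothesis (\ref{eqn 5.4.5}) for $\alpha\beta$ reduces to that for $\alpha$ because $\beta$ is trivial on $1+P_F^{a(\alpha)-a(\beta)}$); on the resulting range $(1+P_F^{a(\beta)})/(1+P_F^{a(\alpha)-a(\beta)})$ the character $\beta$ is identically $1$, so $(\alpha\beta)^{-1}=\alpha^{-1}$ there, the factor $\beta(c)$ pulls out, and what remains is literally (\ref{eqn 6.0.9}) for $\alpha$ at the same $m$, i.e.\ $W(\alpha,\psi)$. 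You instead go back to the raw sum (\ref{eqn 2.2}) for $\alpha\beta$, redo the coset decomposition $x=z(1+y)$ with the maximal split $m=\lfloor a/2\rfloor$, $m'=\lceil a/2\rceil$, evaluate the inner additive character sum, and only then match the surviving sum against (\ref{eqn 6.0.9}) for $\alpha$ at $m=\lfloor a/2\rfloor$. Your bookkeeping is correct (the cut-off $\nu_F(z-1)\ge m$, the triviality of $\beta$ on $1+P_F^m$ since $m\ge a(\beta)$, and the exponent count $-a/2+(a-m')+(a-2m)/2=0$ all check out), but you are in effect reproving Theorem \ref{Theorem 6.1.1} for $\alpha\beta$ at maximal $m$ rather than invoking it once at $m=a(\beta)$; the paper's choice of $m$ makes the whole of part (3) a three-line computation. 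Your closing remark about passing from $\psi_F$ to an arbitrary nontrivial $\psi$ via $W(\chi,b\psi)=\chi(b)W(\chi,\psi)$ is a point the paper glosses over, and is worth keeping.
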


\begin{proof}

From the above formula (\ref{eqn 6.0.9}), the assertions are followed.\\
{\bf (1).} When $a(\chi)=2d$, where $d\ge 1$. In this case, we take $m=d$, and from equation (\ref{eqn 6.0.9}) we obtain
 \begin{equation}
  W(\chi,\psi)=\chi(c)\cdot \sum_{x\in (1+P_F^d)/(1+P_F^d)}\chi^{-1}(x)\psi(c^{-1}x)=\chi(c)\cdot\psi(c^{-1}).
 \end{equation}
{\bf (2).} When $a(\chi)=2d+1$, where $d\ge 1$. In this case, we also take $m=d$, and then from equation (\ref{eqn 6.0.9}) we obtain
\begin{align*}
 W(\chi,\psi)
 &=\chi(c)\cdot q_F^{-\frac{1}{2}}\cdot\sum_{x\in(1+P_F^d)/(1+P_F^{d+1})}\chi^{-1}(x)\psi(c^{-1}x)\\
 &=\chi(c)\cdot\psi(c^{-1})\cdot q_F^{-\frac{1}{2}}\cdot\sum_{x\in P_F^d/P_F^{d+1}}\chi^{-1}(1+x)\psi(c^{-1}x).
\end{align*}
Here $c\in F^\times$ with $\nu_F(c)=a(\chi)+n(\psi)$, satisfies 
\begin{center}
 $\chi(1+x)=\psi(c^{-1}x)$ for all $x\in F^\times$ with $2\nu_F(x)\ge a(\chi)$.
\end{center}
 {\bf (3) Proof of Deligne's twisting formula (\ref{eqn 2.3.17}):}
By the given condition,  
$a(\alpha)\ge 2 a(\beta)$, hence we have $a(\alpha\beta)=a(\alpha)$. 
Now take $m=a(\beta)$, then from equation (\ref{eqn 6.0.9}) we can write:
\begin{align*}
 W(\alpha\beta,\psi)
 &=\alpha\beta(c)\cdot q_{F}^{-\frac{(a(\alpha)-2m)}{2}}\cdot\sum_{x\in (1+P_F^m)/(1+P_F^{a(\chi)-m})}(\alpha\beta)^{-1}(x)\psi(\frac{x}{c})\\
 &=\beta(c)\cdot \alpha(c)q_F^{-\frac{(a(\alpha)-2m)}{2}}\sum_{x\in(1+P_F^m)/(1+P_F^{a(\alpha)-m})}\alpha^{-1}(x)\psi(\frac{x}{c})\\
 &=\beta(c)\cdot W(\alpha,\psi),
\end{align*}
since $a(\beta)=m$, hence $\beta(x)=1$ for all $x\in (1+P_F^m)/(1+P_F^{a(\alpha)-m})$.

\end{proof}

\begin{rem}\label{Remark Appendix}

Let $\mu_{p^\infty}$ denote as the group of roots of unity of $p$-power order. 
Let $F$ be a non-archimedean local field. Let $\psi_F$ be an additive character of $F$. Since $\psi_F$ is additive, its image lies 
in $\mu_{p^{\infty}}$. We also know that $U_F^1$ is a pro-p-group, hence $\chi(U_F^1)\subset \mu_{p^{\infty}}$.

\begin{enumerate}
 \item When $a(\chi)$ is even, we have
 \begin{equation}\label{eqn 5.2.8}
  W(\chi,\psi_F)=\chi(c)\cdot\psi_F(c^{-1}).
 \end{equation}
\item When $a(\chi)=2d+1$ is odd, we have
\begin{equation}\label{eqn 5.2.9}
 W(\chi,\psi_F)=\chi(c)\cdot\psi_F(c^{-1})\cdot q_{F}^{-\frac{1}{2}}\sum_{x\in P_{F}^{d}/P_{F}^{d+1}}\chi^{-1}(1+x)\psi_F(x/c).
\end{equation}
\end{enumerate}
Now we give explicit formula for $W(\chi,\psi_F)$ modulo $\mu_{p^\infty}$.
For this it is sufficient
to know $c\in F^\times\rm{mod}\, 1+P_{F}$.

As for the correcting term it is $\equiv 1\mod{\mu_{p^\infty}}$ in case $p=2$. If $p\ne 2$ we compare the function 
$Q(x):=\chi_{\rho}^{-1}(1+x)\cdot\psi_F(x/c)$ and $H(x):=\psi_F(\frac{x^2}{2c})$ on $P_F^d$. 
It is easy to see (see the review of Henniart's artile \cite{GH} by E.-W. Zink) 
that $(1+x)(1+y)=(1+x+y)(1+\frac{xy}{1+x+y})$. Then we have 
\begin{equation}
 \frac{Q(x+y)}{Q(x)\cdot Q(y)}=\chi_\rho(1+\frac{xy}{1+x+y})=\psi_F(\frac{xy}{c}).
\end{equation}
Similarly,
\begin{equation}
 \frac{H(x+y)}{H(x)\cdot H(y)}=\psi_F(\frac{xy}{c}).
\end{equation}
Then 
\begin{equation}\label{eqn 5.2.11}
\frac{Q(x+y)}{Q(x)\cdot Q(y)}=\frac{H(x+y)}{H(x)\cdot H(y)}=\psi_F(\frac{xy}{c}).
\end{equation}
Therefore $Q$ and $H$ differ only by {\bf an additive} character on $P_{F}^{d}$ and then we can write:
\begin{enumerate}
 \item $W(\chi,\psi_F)\equiv\chi(c)\mod{\mu_{p^\infty}}$ if $a(\chi)$ is even,
 \item $W(\chi,\psi_F)\equiv\chi(c)G(c)\mod{\mu_{p^\infty}}$ if $a(\chi)=2d+1$, where 
 $$G(c):=q_{F}^{-\frac{1}{2}}\cdot\sum_{x\in P_F^d/P_{F}^{d+1}}\psi_F(\frac{x^2}{2c})$$
 depends only on $c\in F^\times\mod{1+P_F}$.
 
\end{enumerate}

\end{rem}

\begin{rem}[{\bf On $G(c)$}]
Let $\bbF_q$ be a finite field with $q$ element and $q$ is odd. Let $\psi_0$ be a nontrivial additive character of $\bbF_q$.
Let $\eta$ be the quadratic character of $\bbF_q^\times$. Then from the generalization (cf. \cite{BRK}, p. 47, Problem 24) of Theorem
1.1.5 of \cite{BRK} on p. 12 we have 
\begin{equation}\label{eqn 6.1.11}
 \sum_{x\in \bbF_q}\psi_0(x^2)=\sum_{x\in\bbF_q^\times}\eta(x)\psi_0(x)=G(\eta,\psi_0).
\end{equation}
Now from Theorem \ref{Theorem 3.5} we can observe that $q^{-\frac{1}{2}}\sum_{x\in\bbF_q}\psi_0(x^2)$ is a {\bf fourth} root of unity.

Since $\nu_F(c)=a(\chi)+n(\psi_F)$, $P_F^d/P_F^{d+1}\cong k_{F}$, we can deduce $G(c)$ as 
$$G(c)=q_F^{-\frac{1}{2}}\sum_{x\in k_F}\psi'(x^2)=q_F^{-\frac{1}{2}}G(\eta,\psi'),$$
where $\psi'$ is a nontrivial additive character of $k_F$ and $\eta$ is the quadratic character of $k_F^\times$.
Therefore we can conclude that $G(c)$ is a fourth root of unity.

Furthermore, since $G(c)\cdot q_F^{\frac{1}{2}}=G(\eta,\psi')$ is a quadratic classical Gauss sum, from the computation of 
$\lambda_{K/F}$, where $K/F$ is a tamely ramified quadratic extension (cf. Subsection 3.4.1 ), we can conclude that $G(c)$ is actually
a $\lambda$-function for some quadratic tamely ramified local extension.
 
\end{rem}


\end{document}